\newtheorem{thm}{Theorem}[section]
\newtheorem{lem}[thm]{Lemma}
\newtheorem{cor}[thm]{Corollary}
\newtheorem{prop}[thm]{Proposition}
\newtheorem{definition}[thm]{Definition}
\newtheorem*{ack*}{Acknowledgment}
\def\half{ \frac{1}{2}}
\def\D{\partial}
\def\T{{\mathbb T}}
\def\R{{\mathbb R}}
\def\N{{\mathbb N}}
\def\C{{\mathbb C}}
\def\nint{\mathop{\diagup\kern-13.0pt\int}}
\def\Z{{\mathbb Z}}
\def\bas{\begin{align*}}
\def\eas{\end{align*}}
\def\bi{\begin{itemize}}
\def\ei{\end{itemize}}
\def\emph#1{{\it #1}}
\def\FF{{\mathcal F}}
\def\LL{{\mathcal L}}
\def\TT{{\mathcal T}}
\def\MM{{\mathcal M}}
\def\ZZ{{\mathcal Z}}
\def\eps{{\epsilon}}
\def\OO{{\mathcal O}}
\theoremstyle{definition}
\newtheorem{rem}[thm]{Remark}
\numberwithin{equation}{section}
\begin{document}

\begin{abstract}
The gravity water waves equations describe the evolution of the surface of an incompressible, irrotational fluid in the presence of gravity. The classical regularity threshold for the well-posedness of this system requires initial velocity field in $H^s$, with $s > \half + 1$, and can be obtained by proving standard energy estimates. This threshold was improved by additionally proving Strichartz estimates with loss. In this article, we establish the well-posedness result for $s > \half + 1 - \frac{1}{8}$, corresponding to proving lossless Strichartz estimates. This provides the sharp regularity threshold with respect to the approach of combining Strichartz estimates with energy estimates.
\end{abstract}

\title{Low Regularity Solutions for Gravity Water Waves II: The 2D Case}

\author{Albert Ai}
\email{aai@math.berkeley.edu}
\thanks{The author was supported by the Simons Foundation.}
\date{November 24, 2018}

\maketitle

\tableofcontents

\section{Introduction}

We consider the Cauchy problem for the gravity water waves equations in one surface dimension, without surface tension, over an incompressible, irrotational fluid flow.

Precisely, let $\Omega$ denote a time dependent fluid domain contained in a fixed domain $\OO$, located between a free surface and a fixed bottom: 
$$\Omega = \{(t, x, y) \in [0, 1] \times \OO \ ; \ y < \eta(t, x)\}$$
where $\OO \subseteq \R^d \times \R$ is a given connected open set, with $x \in \R^d$ representing the horizontal spatial coordinate and $y \in \R$ representing the vertical spatial coordinate. Our results will concern the case $d = 1$ of one surface dimension, though we often use the notation of general $d$ to provide context.

We assume the free surface
$$\Sigma = \{(t, x, y) \in [0, 1] \times \R^d \times \R: y = \eta(t, x)\}$$
is separated from the fixed bottom $\Gamma = \D\Omega \backslash \Sigma$ by a curved strip of depth $h > 0$:
\begin{align}\label{width}
\{(x, y) \in \R^d \times \R: \eta(t, x) - h < y < \eta(t, x) \} \subseteq \OO.
\end{align}

We consider an incompressible, irrotational fluid flow in the presence of gravity but not surface tension. In this setting the fluid velocity field $v$ may be given by $\nabla_{x, y} \phi$ where $\phi: \Omega \rightarrow \R$ is a harmonic velocity potential,
$$\Delta_{x, y} \phi = 0.$$
The water waves system is then given by
\begin{equation}\label{euler}
\begin{cases}
\D_t \phi + \dfrac{1}{2} |\nabla_{x, y}\phi|^2 + P + gy = 0 \qquad \text{in } \Omega, \\
\D_t \eta = \D_y \phi - \nabla_x \eta \cdot \nabla_x \phi \qquad \text{on } \Sigma, \\
P = 0 \qquad \text{on } \Sigma, \\
\D_\nu \phi = 0 \qquad \text{on } \Gamma,
\end{cases}
\end{equation}
where $g > 0$ is acceleration due to gravity, $\nu$ is the normal to $\Gamma$, and $P$ is the pressure, recoverable from the other unknowns by solving an elliptic equation. Here the first equation is the Euler equation in the presence of gravity, the second is the kinematic condition ensuring fluid particles at the interface remain at the interface, the third indicates no surface tension, and the fourth indicates a solid bottom.

A substantial literature regarding the well-posedness of the system (\ref{euler}) has been produced. We refer the reader to \cite{alazard2014cauchy}, \cite{alazard2014strichartz}, \cite{lannes2013water} for a more complete history and references. In our direction, well-posedness of (\ref{euler}) in Sobolev spaces was first established by Wu \cite{wu1997well}, \cite{wu1999well}. This well-posedness result was improved by Alazard-Burq-Zuily to a lower regularity at the threshold of a Lipschitz velocity field using only energy estimates \cite{alazard2014cauchy}. This was further sharpened to velocity fields with only a BMO derivative by Hunter-Ifrim-Tataru \cite{hunter2016two}.

\
 
It has been known that by taking advantage of an equation's dispersive properties, one can lower its well-posedness regularity threshold below that which is attainable by energy estimates alone. This was first realized in the context of low regularity Strichartz estimates for the nonlinear wave equation, in the works of Bahouri-Chemin \cite{bahouri1999equations}, \cite{bahouri1999equations2}, Tataru \cite{tataru2000strichartz}, \cite{tataru2001strichartz}, \cite{tataru2002strichartz}, Klainerman-Rodnianski \cite{klainerman2003improved}, and Smith-Tataru \cite{smith2005sharp}. Strichartz estimates have been similarly been studied for the water waves equations with surface tension; see \cite{christianson2010strichartz}, \cite{alazard2011strichartz}, \cite{de2015paradifferential}, \cite{de2016strichartz}, \cite{nguyen2017sharp}.

This low regularity Strichartz paradigm was first applied toward gravity water waves by Alazard-Burq-Zuily in \cite{alazard2014strichartz}. The argument proceeds by first establishing a paradifferential formulation of the water waves system (described precisely in Section \ref{paradiffred})
\begin{equation}\label{paralinear}
(\D_t + T_V \cdot \nabla + iT_\gamma)u = f
\end{equation}
where $T_V$ denotes the low-high frequency paraproduct with the vector field $V$, and $T_\gamma$ denotes the low-high paradifferential operator with symbol $\gamma$. Here, $\gamma$ is a real symbol of order $\half$, thus making explicit the dispersive character of the equations. Then by proving a Strichartz estimate for the paradifferential equation (\ref{paralinear}), one can obtain \emph{a priori} estimates for the full nonlinear system. However, to be useful toward the low regularity well-posedness theory, this Strichartz estimate must be proven assuming only a correspondingly low regularity of the coefficient vector field $V$ and symbol $\gamma$. This difficulty is addressed in \cite{alazard2014strichartz} by truncating these coefficients to frequencies below $\lambda^\delta$, where $\delta = 2/3 < 1$ is an optimized constant, essentially regularizing $V$ and $\gamma$.

However, this truncation approach comes at the cost of derivative losses in the Strichartz estimates and corresponding losses in the well-posedness threshold. In fact, it is likely that using only the regularity of the coefficients $V$ and $\gamma$ in (\ref{paralinear}), it is impossible to prove Strichartz estimates without loss. For instance, in the context of the wave equation with Lipschitz metric, counterexamples to sharp Strichartz estimates were provided by Smith-Sogge \cite{smith1994strichartz} and Smith-Tataru \cite{smith2002sharp}. Likewise, the former discusses counterexamples to sharp $L^p$ estimates for eigenfunctions of elliptic operators with Lipschitz coefficients.

Thus, to further improve the well-posedness threshold (at least, through the low regularity Strichartz paradigm), one needs to invoke the additional structure of $V$ and $\gamma$ as solutions to the water waves system. This is partially achieved for general dimensions in \cite{ai2017low}, improving upon the derivative losses. The goal of the present article is to entirely remove the derivative loss for the Strichartz estimates, implying the largest improvement to the well-posedness threshold that can be attained using the low regularity Strichartz paradigm. Because the argument is quite delicate, we consider here only the case $d = 1$ of one surface dimension, where the equations are somewhat simpler.

\subsection{Zakharov-Craig-Sulem formulation}

We first reduce the free boundary problem (\ref{euler}) to a system of equations on the free surface. Following Zakharov \cite{zakharov1968stability} and Craig-Sulem \cite{craig1993numerical}, we have unknowns $(\eta, \psi)$ where $\eta$ is the vertical position of the fluid surface as before, and
$$\psi(t, x) = \phi(t, x, \eta(t, x))$$
is the velocity potential $\phi$ restricted to the surface. Then (henceforth, $\nabla = \nabla_x$):
\begin{equation}\label{zak}
\begin{cases}
\D_t \eta - G(\eta) \psi = 0 \\
\displaystyle \D_t \psi + g\eta + \half |\nabla \psi|^2 - \half \frac{(\nabla \eta \cdot \nabla \psi + G(\eta) \psi)^2}{1 + |\nabla \eta|^2} = 0.
\end{cases}
\end{equation}

Here, $G(\eta)$ is the Dirichlet to Neumann map with boundary $\eta$:
$$(G(\eta)\psi)(t, x) = \sqrt{1 + |\nabla \eta|^2} (\D_n \phi) |_{y = \eta(t, x)}.$$
See \cite{alazard2011water}, \cite{alazard2014cauchy} for a precise construction of $G(\eta)$ in a domain with a general bottom. In addition, it was shown in \cite{alazard2013water} that if a solution $(\eta, \psi)$ of (\ref{zak}) belongs to $C^0([0, T]; H^{s + \half}(\R^d))$ for $T > 0$ and $s > \frac{d}{2} + \half$, then one can define a velocity potential $\phi$ and a pressure $P$ satisfying the Eulerian system (\ref{euler}). We will consider the case $s > \frac{d}{2} + \half$ throughout.

\subsection{Paradifferential reduction}\label{paradiffred}

We establish Strichartz estimates in terms of the paradifferential reduction of the water waves system developed in \cite{alazard2014cauchy}, \cite{alazard2014strichartz}, which expresses the unknowns of the water waves system as the solution to an explicit dispersive equation. We recall the reduction in this subsection.

\

We denote the horizontal and vertical components of the velocity field restricted to the surface $\eta$ by
$$V(t, x) = (\nabla \phi)|_{y = \eta(t, x)}, \quad B(t, x) = (\D_y \phi)|_{y = \eta(t, x)}$$
and the good unknown of Alinhac by
$$U_s = \langle D_x \rangle^s V + T_{\nabla \eta} \langle D_x \rangle^s B.$$
The traces $(V, B)$ can be expressed directly in terms of the unknowns $(\eta, \psi)$ of the Zakharov-Craig-Sulem formulation,
\begin{equation}\label{BVformulas}
B = \frac{\nabla \eta \cdot \nabla \psi + G(\eta) \psi}{1 + |\nabla \eta|^2}, \quad \nabla \psi = V + B \nabla \eta.
\end{equation}

Denote the principal symbol of the Dirichlet to Neumann map
$$\Lambda(t, x, \xi) = \sqrt{(1 + |\nabla \eta|^2)|\xi|^2 - (\nabla \eta \cdot \xi)^2},$$
and the Taylor coefficient 
$$a(t, x) = -(\D_y P)|_{y = \eta(t, x)}.$$
We will assume the now-classical Taylor sign condition, $a(t, \cdot) \geq a_{min} > 0$. This expresses the fact that the pressure increases going from the air to the fluid domain. It is satisfied in the case of infinite bottom \cite{wu1999well} or small perturbations of flat bottoms \cite{lannes2005well}. In the case of one surface dimension, also see \cite{hunter2016two} and \cite{harrop2017finite} for alternative proofs of this fact with infinite bottom and flat bottom, respectively. The water waves system is known to be ill-posed when the condition is not satisfied \cite{ebin1987equations}.

The reduction to a dispersive equation involves a symmetrization and complexification of the Eulerian system. As a result, it is convenient to define the symmetrized symbol
$$\gamma = \sqrt{a \Lambda}$$
and the complex unknown
\begin{equation}\label{complexu}
u = \langle D_x \rangle^{-s} (U_s - iT_{\sqrt{a/\Lambda}} \langle D_x \rangle^s \nabla \eta).
\end{equation}

Lastly, let $\FF$ denote a non-decreasing positive function depending on $h, g$, and $a_{min}$ and define
\begin{align*}
M_s(t) &= \|(\eta, \psi)(t)\|_{H^{s + \half}(\R^d)} + \|(V, B)(t)\|_{H^s(\R^d)} \\
Z_r(t) &= 1 + \|\eta(t)\|_{W^{r + \half, \infty}(\R^d)} + \|(V, B)(t)\|_{W^{r, \infty}(\R^d)}.
\end{align*}
Then we have the following paradifferential equation for $u$ (see Appendix \ref{paracalcnotation} for the definitions and notation of the paradifferential calculus), which holds in general dimension:
\begin{prop}[{\cite[Corollary 2.7]{alazard2014strichartz}}]\label{prop:paralinearization}
Let 
$$0 < T \leq 1,\quad s > \frac{d}{2} + \frac{3}{4}, \quad r > 1.$$ 
Consider a smooth solution $(\eta, \psi) \in C^1([0, T]; H^{s + \half}(\R^d))$ to (\ref{zak}) satisfying, uniformly on $t \in [0, T]$, (\ref{width}) and the Taylor sign condition $a(t, \cdot) \geq a_{min} > 0$. 

Then $u$ given by (\ref{complexu}) satisfies (\ref{paralinear}),
\begin{equation*}
\D_t u + T_V \cdot \nabla u + i T_\gamma u = f,
\end{equation*}
with
$$\|f(t)\|_{H^s(\R^d)} \leq \FF(M_s(t))Z_r(t).$$
\end{prop}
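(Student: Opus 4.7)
The plan is to follow the paralinearization scheme of Alazard--Métivier--Burq--Zuily in three stages: first paralinearize the Dirichlet--Neumann operator $G(\eta)$ appearing in (\ref{zak}); second, introduce the good unknown of Alinhac $U_s$ to avoid the apparent loss of a derivative and expose the symbolic structure; third, complexify and symmetrize to obtain a scalar dispersive paradifferential equation whose principal symbol is $\gamma = \sqrt{a\Lambda}$.

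The technical heart is the paralinearization of $G(\eta)$. To obtain it I would straighten the fluid domain via a diffeomorphism adapted to $\eta$, so that the Laplace problem becomes an elliptic equation on a strip with variable coefficients depending on $\nabla\eta$ and the bottom geometry. Following the parametrix construction of Alazard--Métivier, this operator factors into the composition of a forward and a backward parabolic paradifferential operator of order $1$ in the vertical variable, whose outgoing factor has principal symbol $\Lambda(t,x,\xi)$. Taking the trace on $\Sigma$ produces an identity of the schematic form
$$G(\eta)\psi = T_\Lambda\bigl(\psi - T_B\eta\bigr) - \nabla\cdot(T_V\eta) + R(\eta)\psi,$$
in which $R(\eta)\psi$ is a smoothing remainder controlled in $H^s$ by $\FF(M_s)Z_r$ using Bony paraproduct estimates and the $L^\infty$-type control on $\nabla\eta$ and $(V,B)$ provided by $Z_r$. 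Substituting this, together with Bony's decomposition $fg = T_fg + T_gf + R(f,g)$ on the quadratic nonlinearities of (\ref{zak}), and using (\ref{BVformulas}), yields a paralinearized $2\times 2$ system in the variables $(V,B,\eta)$.

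Applying $\langle D_x\rangle^s$, the term $T_{\nabla\eta}\langle D_x\rangle^s B$ arising from the kinematic equation would cost a derivative if kept on its own; the good unknown $U_s = \langle D_x\rangle^s V + T_{\nabla\eta}\langle D_x\rangle^s B$ absorbs it exactly. Computing the commutators with the transport operator $\D_t + T_V\cdot\nabla$ and simplifying leads to a symmetric system of the schematic form
\begin{align*}
(\D_t + T_V\cdot\nabla)\, U_s + T_a \langle D_x\rangle^s \nabla\eta &= f_1,\\
(\D_t + T_V\cdot\nabla)\, \langle D_x\rangle^s \nabla\eta - T_\Lambda U_s &= f_2,
\end{align*}
in which the Taylor coefficient $a$ emerges at zeroth order from the combined contributions of $g\eta$, $\tfrac12|\nabla\psi|^2$, and the pressure. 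Each $f_j$ is a sum of paraproduct remainders, parametrix errors from $G(\eta)$, and subprincipal paradifferential terms, all bounded in $H^s$ by $\FF(M_s)Z_r$ through the symbolic calculus recalled in the appendix.

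The final step is symmetrization and complexification: multiplying the $\langle D_x\rangle^s \nabla\eta$ equation by $T_{\sqrt{a/\Lambda}}$ and combining the two equations into the scalar unknown $u = \langle D_x\rangle^{-s}\bigl(U_s - iT_{\sqrt{a/\Lambda}}\langle D_x\rangle^s \nabla\eta\bigr)$ produces (\ref{paralinear}) with principal symbol $\gamma = \sqrt{a\Lambda}$ of order $\half$, after identifying the leading symmetrized operator and absorbing all lower-order commutators into $f$. I expect the main obstacle to be maintaining sharpness of the remainder and commutator estimates at the low regularity $s > \tfrac{d}{2} + \tfrac{3}{4}$, $r > 1$: commutators between $T_{\sqrt{a/\Lambda}}$, $T_\Lambda$, and $\langle D_x\rangle^s$ must be handled via symbolic calculus for symbols with only limited Hölder regularity in $x$, and the parametrix error in the paralinearization of $G(\eta)$ must be tracked carefully so that only $L^\infty$-type control (as provided by $Z_r$), rather than $H^s$-type control, is ever required on $\nabla\eta$, $V$, and $B$.
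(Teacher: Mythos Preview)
Your outline correctly captures the paralinearization scheme of Alazard--Burq--Zuily: paralinearize $G(\eta)$ via elliptic factorization, introduce the good unknown $U_s$ to avoid derivative loss, and symmetrize/complexify to obtain the scalar equation with symbol $\gamma = \sqrt{a\Lambda}$. Note, however, that the present paper does not prove this proposition at all---it is quoted directly as \cite[Corollary 2.7]{alazard2014strichartz}, so there is no proof in the paper to compare against; your sketch is essentially a summary of the argument in that reference.
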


\begin{rem}
The proposition was stated for $s > \frac{d}{2} + \frac{3}{4}$ in \cite{alazard2014strichartz}, but using a more careful elliptic analysis, this can be reduced to $s > \frac{d}{2} + \half.$
\end{rem}

\subsection{Strichartz estimates and local well-posedness}\label{secstrichintro}

In this section we state our main result, a Strichartz estimate for arbitrary smooth solutions $u$ to the equation (\ref{paralinear}). Write, denoting $I = [0, T]$,
\begin{align*}
\MM_s(T) &= \|(\eta, \psi)\|_{L^\infty(I;H^{s + \half}(\R))} + \|(V, B)\|_{L^\infty(I; H^{s}(\R))} \\
\ZZ_r(T) &= 1 + \|\eta\|_{L^2(I;W^{r + \half, \infty}(\R))} + \|(V, B)\|_{L^2(I;W^{r, \infty}(\R))}.
\end{align*}

\begin{thm} \label{p1}
Let $d = 1$ and
$$0 < T \leq 1,\quad s > \frac{d}{2} + \frac{7}{8}, \quad r > 1.$$ 
Consider a smooth solution $(\eta, \psi) \in C^1([0, T]; H^{s + \half}(\R))$ to (\ref{zak}) satisfying, uniformly on $t \in [0, T]$, (\ref{width}) and the Taylor sign condition $a(t, \cdot) \geq a_{min} > 0$. Further, let
$$\sigma \in \R, \quad \sigma' = \sigma + \frac{1}{8}, \quad \eps > 0.$$

Then a smooth solution $u$ on $I$ to (\ref{paralinear}),
$$\D_t u + T_V \cdot \nabla u + i T_\gamma u = f$$
with coefficients $V, \gamma$ obtained from $(\eta, \psi)$, satisfies
$$\|u\|_{L^2(I; W^{\sigma' - \frac{d}{2} - \eps, \infty}(\R))} \leq \FF(\MM_s(T) + \ZZ_r(T))( \|f\|_{L^1(I; H^{\sigma}(\R))} + \|u\|_{L^\infty(I;H^{\sigma}(\R))}).$$
\end{thm}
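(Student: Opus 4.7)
My approach follows the low-regularity variable-coefficient Strichartz paradigm: reduce the estimate to a frequency-by-frequency bound via Littlewood-Paley, subdivide time into short intervals on which a parametrix for (\ref{paralinear}) can be constructed, and exploit the half-wave character of $\gamma$ (symbol of order $\half$) to obtain on each piece a Strichartz estimate matching the constant-coefficient case. In 1D, the free half-wave has dispersive decay $\lambda^{3/4}|t|^{-1/2}$ at frequency $\lambda$, which via $TT^*$ and Hardy-Littlewood-Sobolev yields $\|e^{-it|D|^\half}P_\lambda u_0\|_{L^4_t L^\infty_x} \lesssim \lambda^{3/8}\|u_0\|_{L^2}$; on a unit interval this embeds in $L^2_t L^\infty_x$ with the same gain, giving exactly the $1/8$-derivative improvement over Sobolev embedding claimed by Theorem \ref{p1}.

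\textbf{Frequency reduction.} I would first decompose $u = \sum_\lambda P_\lambda u$ via Littlewood-Paley projectors and commute each $P_\lambda$ through (\ref{paralinear}); the commutator $[P_\lambda, T_V \cdot \nabla + iT_\gamma]$ is lower order and, under the assumed bounds on $\MM_s(T) + \ZZ_r(T)$, can be absorbed into either the source term or the energy. This reduces the theorem to the dyadic estimate
$$\|P_\lambda u\|_{L^2(I;L^\infty)} \lesssim \lambda^{\frac{3}{8} + \eps}\bigl(\|P_\lambda f\|_{L^1(I;L^2)} + \|P_\lambda u\|_{L^\infty(I;L^2)}\bigr),$$
uniformly in $\lambda$, a gain of $1/8$ over the trivial $\lambda^\half$ Sobolev bound in 1D.

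\textbf{Short time intervals and parametrix.} For each $\lambda$ I would partition $I$ into subintervals $I_j$ of length $T_\lambda = \lambda^{-\alpha}$, for an $\alpha > 0$ chosen both short enough that (\ref{paralinear}) admits a wave-packet/FIO parametrix on each $I_j$ and large enough that the $L^2_t$ sum over the $\sim \lambda^\alpha$ pieces does not lose in $\lambda$. On each $I_j$, I would perform a paradifferential symmetrization and normal-form reduction to eliminate subprincipal terms, and then construct the parametrix from the bicharacteristic flow of $\gamma$, whose positivity and smoothness of flow are ensured by the Taylor sign condition $a \geq a_{min} > 0$. A $TT^*$ argument applied to the conjugated model equation then yields the constant-coefficient Strichartz on each $I_j$.

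\textbf{Main obstacle: lossless coefficient regularity.} The hardest point, and the essential novelty relative to \cite{alazard2014strichartz} and \cite{ai2017low}, is ensuring that the parametrix construction loses no derivatives from the low regularity of $V$ and $\gamma$. Those works truncate the coefficients to frequencies $\lesssim \lambda^\delta$ with $\delta < 1$, which forces a derivative loss. To be lossless one must keep the full coefficients and exploit the fact that $V$ and $\gamma$ themselves satisfy a paradifferential equation of the same form (\ref{paralinear}). Applying the present Strichartz estimate (or a suitable variant) recursively to the coefficients provides strictly more control than $\ZZ_r(T)$ alone, in the form of additional $L^2_t W^{r,\infty}_x$ regularity, which is exactly what is needed to handle the parametrix remainders on each $I_j$ and to keep the total error summable in $j$ and $\lambda$. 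Closing this bootstrap — with the $\eps$-loss in the statement absorbing the logarithmic sum over dyadic $\lambda$ — completes the proof.
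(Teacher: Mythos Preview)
Your frequency reduction step matches the paper's Section~3, but the remainder of your proposal is essentially the short-time-interval/FIO/$TT^*$ scheme of \cite{alazard2014strichartz} and \cite{ai2017low}, and that scheme is precisely what the paper has to abandon to remove the loss. The issue is that on intervals of length $\lambda^{-\alpha}$ with coefficients kept at frequency $\lambda$ (rather than $\lambda^\delta$, $\delta<1$), the Hamilton flow of $H = V_\lambda\xi + \sqrt{a_\lambda|\xi|}$ is not bilipschitz: $\partial_x^2 H$ is too large pointwise, so the bicharacteristics that your parametrix would be built on cannot be controlled. Your proposed fix, bootstrapping Strichartz onto the coefficients, only gives you back $L^2_t W^{r,\infty}$ control on $V$ and $a$, which is exactly $\ZZ_r(T)$ and is already assumed; it does not produce the extra structure needed to make the characteristics usable.

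What the paper actually does is qualitatively different in two places. First, it shows that $\partial_x^2 H$ can be written as a total derivative along the flow, $\partial_x^2 H \approx \frac{d}{dt}F + G$ (Corollary~\ref{structurecor}/\ref{finalstructure}), using the specific water-waves identities $LV = -a\nabla\eta$, $LB = a-g$, $L\nabla\eta = G(\eta)V - (\nabla\eta)\mathrm{div}\,V$. This integration, not any Strichartz bootstrap, is what makes the flow bilipschitz on the \emph{full} interval $I$ (Proposition~\ref{bilipprop}); there is no subdivision into $\lambda^{-\alpha}$ intervals at all. Second, the parametrix is built from discrete wave packets with exact eikonal phases rather than an FIO, and the packets are shown to be square-summable using local smoothing estimates along characteristics at the wave-packet scale $\delta x \approx \lambda^{-3/4}$ (Section~\ref{sec:locsmooth}); the Strichartz bound then comes from a geometric packet-overlap counting argument (Section~\ref{sec:strichartz}), not from a dispersive estimate plus $TT^*$. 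Neither of these mechanisms appears in your proposal, and without them the approach you outline will reproduce a loss comparable to that in the earlier papers.
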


\begin{rem}
We make the following remarks regarding the Strichartz gain $\mu$:
\begin{itemize}
\item For comparison, solutions to the constant coefficient linearized equation satisfy
$$\|e^{-it|D_x|^\half} u_0\|_{L^4([0, 1]; L^\infty(\R))} \lesssim \|u_0\|_{H^{\frac{3}{8}}(\R)},$$
which also exhibits a gain of $1/8$ derivatives over Sobolev embedding. Note that Theorem \ref{p1} does not achieve the $L^4$ in time; this remains an open question for now.
\item Such an estimate was first established in \cite{alazard2014strichartz} with a loss of $1/12$ derivatives relative to the constant coefficient case, and in \cite {ai2017low} with a loss of $1/40$ derivatives.
\end{itemize}
\end{rem}

Combining Theorem \ref{p1} with Proposition \ref{prop:paralinearization}, it is relatively straightforward to obtain \emph{a priori} H\"older and Sobolev estimates; see \cite[Chapter 3]{alazard2014strichartz} for details. Combined with contraction and limiting arguments likewise detailed in \cite[Chapter 3]{alazard2014strichartz}, we obtain the following local well-posedness result with Strichartz estimates:

\begin{thm}\label{lwp}
Let $d = 1$ and
$$s > \frac{d}{2} + \frac{7}{8}, \quad 1 < r < s + \frac{1}{8} - \frac{d}{2}.$$
Consider initial data $(\eta_0, \psi_0) \in H^{s + \half}(\R)$ satisfying
\begin{enumerate}[i)]
\item $(V_0, B_0) \in H^s(\R)$,
\item the positive depth condition
$$\{(x, y) \in \R \times \R: \eta_0(x) - h < y < \eta_0(x) \} \subseteq \OO,$$
\item the Taylor sign condition
$$a_0(x) \geq a_{min} > 0.$$
\end{enumerate}

Then there exists $T > 0$ such that the system (\ref{zak}) with initial data $(\eta_0, \psi_0)$ has a unique solution $(\eta, \psi) \in C([0, T]; H^{s + \half}(\R))$ such that
\begin{enumerate}[i)]
\item $(\eta, \psi) \in L^2([0, T]; W^{r + \half, \infty}(\R))$,
\item $(V, B) \in C([0, T]; H^s(\R)) \cap L^2([0, T]; W^{r, \infty}(\R))$,
\item the positive depth condition (\ref{width}) holds on $t \in [0, T]$ with $h/2$ in place of $h$,
\item the Taylor sign condition $a(t, x) \geq a_{min}/2$ holds on $t \in [0, T]$.
\end{enumerate}
\end{thm}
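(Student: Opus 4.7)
The plan is to derive the well-posedness theorem from Theorem \ref{p1} together with Proposition \ref{prop:paralinearization} by the now-standard three-step scheme: \emph{a priori} Sobolev and Strichartz estimates at the target regularity, approximation of rough data by smooth data together with a compactness argument for existence, and a contraction estimate at a lower regularity for uniqueness and strong continuity. The full implementation is carried out in \cite[Chapter 3]{alazard2014strichartz}, so the main task for me is to verify that the lossless Strichartz gain in Theorem \ref{p1} is sufficient to close the bootstrap in the sharp range $s > \frac{d}{2} + \frac{7}{8}$, $r < s + \frac{1}{8} - \frac{d}{2}$.

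For the a priori estimates, I fix a smooth solution on $I = [0, T]$ and form the complex Alinhac unknown $u$ from (\ref{complexu}). Proposition \ref{prop:paralinearization} gives the paradifferential equation (\ref{paralinear}) with $\|f(t)\|_{H^s} \leq \FF(M_s(t)) Z_r(t)$. A symmetric energy estimate for the self-adjoint part $iT_\gamma$ (using that $\gamma$ is real and of order $\half$) together with a standard treatment of the transport term $T_V \cdot \nabla$ yields
$$\|u\|_{L^\infty(I; H^s)} \leq \FF(\MM_s(T))\bigl(\|u(0)\|_{H^s} + \|Z_r\|_{L^1(I)}\bigr),$$
which converts back to a bound on $\MM_s(T)$ via (\ref{complexu}) and (\ref{BVformulas}). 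Plugging this into Theorem \ref{p1} with $\sigma = s$ gives
$$\|u\|_{L^2(I; W^{s + \frac{1}{8} - \frac{d}{2} - \eps, \infty})} \leq \FF(\MM_s(T) + \ZZ_r(T))\bigl(1 + \|Z_r\|_{L^1(I)}\bigr).$$
Since $r < s + \frac{1}{8} - \frac{d}{2}$, I can choose $\eps > 0$ so small that $s + \frac{1}{8} - \frac{d}{2} - \eps > r$; after inverting (\ref{complexu}), the left-hand side then controls $(V, B)$ in $L^2 W^{r, \infty}$ and $\eta$ in $L^2 W^{r + \frac{1}{2}, \infty}$, which is exactly $\ZZ_r(T)$. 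A bootstrap and Gronwall argument therefore produces $T > 0$ and a constant depending only on the data such that $\MM_s(T) + \ZZ_r(T) \leq C$.

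For existence, I mollify $(\eta_0, \psi_0)$ to a sequence of smooth initial data converging in $H^{s + \half}$; the classical high-regularity theory of \cite{wu1997well} yields smooth solutions to (\ref{zak}), and the preceding a priori bounds produce a common lifespan and uniform bounds. A standard compactness/Aubin–Lions argument extracts a limit solving (\ref{zak}), and interpolation places it in the required function spaces. The positive depth and Taylor sign conditions, being open, persist on a short interval by continuity. Uniqueness is obtained by comparing two solutions at a lower regularity, paralinearizing the difference and closing a Gronwall estimate in which the Strichartz bounds on the individual solutions provide the coefficient control. Strong continuity of the solution in $H^{s + \half}$ then follows from a Bona–Smith argument.

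The hard part is precisely the bootstrap closure: the Strichartz gain of exactly $\frac{1}{8}$ in Theorem \ref{p1} yields, after Sobolev embedding, the pointwise regularity $s + \frac{1}{8} - \frac{d}{2}$ for $(V, B)$ (and correspondingly higher for $\eta$), and this must strictly dominate $r$ to close the bootstrap for $\ZZ_r(T)$. Together with the requirement $r > 1$ imposed by Proposition \ref{prop:paralinearization} (which in turn is needed for the transport term $T_V \cdot \nabla$ to be meaningfully controlled on Sobolev spaces), this forces the threshold $s > \frac{d}{2} + \frac{7}{8}$. Any derivative loss in Theorem \ref{p1}, such as the $\frac{1}{12}$ loss in \cite{alazard2014strichartz} or the $\frac{1}{40}$ loss in \cite{ai2017low}, would shift this threshold upward by a corresponding amount, which is the sense in which Theorem \ref{p1} is sharp for this approach.
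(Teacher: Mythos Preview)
Your proposal is correct and follows essentially the same approach as the paper: the paper explicitly states that Theorem \ref{lwp} follows by combining Theorem \ref{p1} with Proposition \ref{prop:paralinearization} to obtain \emph{a priori} H\"older and Sobolev estimates, then invoking the contraction and limiting arguments of \cite[Chapter 3]{alazard2014strichartz}, which is precisely the three-step scheme you outline. Your discussion of the bootstrap closure---that the $\tfrac{1}{8}$ gain in Theorem \ref{p1} is exactly what is needed to reach $r < s + \tfrac{1}{8} - \tfrac{d}{2}$ while keeping $r > 1$---correctly identifies the mechanism behind the sharp threshold $s > \tfrac{d}{2} + \tfrac{7}{8}$.
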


Since the contraction and limiting arguments are essentially unchanged from those in \cite[Chapter 3]{alazard2014strichartz}, the remainder of the article discusses the proof of Theorem \ref{p1}.

\subsection{Overview of the argument}

The strategy we use to prove Theorem \ref{p1} relies on a parametrix construction that approximates solutions to (\ref{paralinear}) as a square-summable superposition of discrete wave packets. The Strichartz estimates then follow from a geometric observation capturing the dispersion of the packets, combined with a counting argument. This is in contrast with the Hadamard parametrix used in \cite{alazard2014strichartz}, or the parametrix based on the FBI transform used in \cite{ai2017low}, both of which use dispersive estimates combined with a classical $TT^*$ argument to obtain the Strichartz estimates. The strategy we follow here more closely resembles the parametrix construction and counting argument used for the wave equation in \cite{smith2005sharp}. Also in contrast with both \cite{alazard2014strichartz} and \cite{ai2017low}, we incorporate the transport term of (\ref{paralinear}) directly in the parametrix construction, rather than changing variables to Lagrangian coordinates.

To proceed without derivative losses, we can only truncate the coefficients of (\ref{paralinear}) to frequencies below $\lambda$, in further contrast with \cite{alazard2014strichartz}, \cite{ai2017low} which truncate the coefficients of (\ref{paralinear}) to frequencies $\lambda^\delta$, with $\delta$ a constant below 1. As a result, our wave packet parametrix construction and subsequent geometric analysis must be performed at lower regularity. This presents difficulties on two main fronts.

First, if we observe only the regularity of the symbol of (\ref{paralinear}), the associated Hamilton characteristics along which wave packets travel do not have sufficient regularity to allow a meaningful geometric analysis of the dispersion on the full time interval. To overcome this, we seek an integration structure for the symbol along the Hamilton flow, to show that the characteristics are in fact bilipschitz. We remark that this integration structure is more complex than that in \cite{ai2017low}, as it involves the full symbol $V\xi + \gamma$ rather than $V$ alone.

Second, the natural wave packet scale at frequency $\lambda$ for a dispersive equation of order $1/2$ is
$$\delta x \approx \lambda^{-\frac{3}{4}}, \quad \delta \xi \approx \lambda^{\frac{3}{4}}, \quad \delta t \approx 1.$$
However, the coefficients of (\ref{paralinear}), when truncated only to frequencies below $\lambda$, are approximately constant only on the scale $\delta x \approx \lambda^{-1}$, so we cannot expect a generic bump function localized on the wave packet scale to be a useful approximate solution. We address this by using an exact eikonal phase function for our wave packets, at the cost of losing the exact localization in frequency. In turn, this interferes with the square summability of the wave packets. To gain the additional frequency localization needed for the summability, we use the fact that, along wave packets, the coefficients of (\ref{paralinear}) enjoy extra regularity in the form of dispersive local smoothing estimates on the wave packet scale $\delta x \approx \lambda^{-\frac{3}{4}}$.

\

We outline the remainder of the article. In Section \ref{notation}, we record the notation that we use throughout the article. In Section \ref{sec:freqloc}, we perform a standard reduction of the Strichartz estimate to a dyadic frequency localized form. 

In Section \ref{sec:locsmooth}, we prove local smoothing estimates. In Sections \ref{sec:dirichletprob} and \ref{sec:localdnpar}, we prove local versions of the elliptic estimates for the Dirichlet problem with rough boundary, and of the paralinearization of the Dirichlet to Neumann map, respectively. In Section \ref{sec:taylorlocal}, we likewise establish local Sobolev estimates on the Taylor coefficient $a$. 

In Sections \ref{COVsec} and \ref{sec:regham}, we integrate the symbol of our evolution equation along its Hamilton flow, and use this to establish improved the regularity properties for the flow. In Section \ref{sec:parametrix}, we construct the wave packet parametrix. Finally, in Section \ref{sec:strichartz}, we prove Strichartz estimates for the parametrix, before establishing the estimate for the exact solution.

\subsection{Acknowledgments}

The author would like to thank his advisor, Daniel Tataru, for introducing him to this research area and for many helpful discussions.

\section{Notation}\label{notation}

We record the notation and setting that we will use throughout the article, excluding the appendix.

\subsection{Regularity indices} Let
$$s > \frac{d}{2} + \frac{7}{8}, \quad 1 < r < s + \half - \frac{d}{2}, \quad r \leq \frac{3}{2}$$
where $d = 1$, though we often preserve the notation of general $d$ to provide context. Note that some propositions will obviously hold without the upper bounds on $r$. For any regularity index $m \in \R$, let
$$m' = m + \frac{1}{8}.$$
Also let $0 < \eps < 1 - r$ denote a small number, and
$$m+ = m + \eps, \quad m- = m - \eps.$$
We will allow $m+, m-$, and $\eps$ to vary from line to line.

\subsection{Spaces for parabolic estimates} Define the following spaces for a vertical spatial interval $J \subseteq \R$:
\begin{align*}
X^\sigma(J) &= C_z^0(J;H^{\sigma}) \cap L_z^2(J;H^{\sigma + \half})  \\
Y^\sigma(J) &= L_z^1(J;H^{\sigma}) + L_z^2(J;H^{\sigma - \half}) \\
U^\sigma(J) &= C_z^0(J;C_*^{\sigma}) \cap L_z^2(J;C_*^{\sigma + \half}). 
\end{align*}

\subsection{Quantities from the water waves system} We write 
$$\nabla = \nabla_x = \D_x, \quad \Delta = \Delta_x = \D_x^2, \quad \langle D \rangle = \langle D_x \rangle, \quad | D| = | D_x |.$$
Denote $I = [0, T]$ and let $(\eta, \psi)  \in C^1(I; H^{s + \half}(\R))$ denote a smooth solution to (\ref{zak}) satisfying, uniformly on $t \in I$, the positive depth condition (\ref{width}). From $(\eta, \psi)$ we can extract the harmonic velocity potential $\phi(t, x, y)$ satisfying
$$\Delta_{x, y}\phi = 0, \qquad \phi|_{y = \eta(t, x)} = \psi,$$
the velocity field $v(t, x, y)$,
$$v = \nabla_{x, y} \phi,$$
the pressure $P(t, x, y)$,
$$-P = \D_t \phi + \half |\nabla_{x, y} \phi|^2 + gy,$$
the horizontal and vertical components $(V, B)(t, x)$ of the velocity field restricted to the surface $\eta$,
$$V = (\nabla \phi)|_{y = \eta(t, x)}, \quad B = (\D_y \phi)|_{y = \eta(t, x)},$$
and the Taylor coefficient $a(t, x)$,
$$a = -(\D_y P)|_{y = \eta(t, x)}.$$
We assume the Taylor sign condition $a \geq a_{min} > 0$ as discussed in the introduction. We remark that throughout, we use the estimates on $a$ provided in Proposition \ref{taylorbd}. Lastly, we obtain the principal symbol $\Lambda(t, x, \xi)$ of the Dirichlet to Neumann map associated to $\eta(t, \cdot)$,
$$\Lambda = \sqrt{(1 + |\nabla \eta|^2)|\xi|^2 - (\nabla \eta \cdot \xi)^2},$$
and define the symmetrized symbol
$$\gamma = \sqrt{a\Lambda}.$$
Observe that in the case $d = 1$, we have
$$\Lambda = |\xi|, \quad \gamma = \sqrt{a|\xi|}.$$
Lastly, for brevity we let $L$ denote the vector field
$$L = \D_t + V \cdot \nabla.$$

\subsection{Frequency decomposition} Throughout, we denote frequencies by $\lambda, \mu, \kappa \in 2^\N$. We will fix $\lambda$ by the end of Section \ref{sec:freqloc}, and use $\kappa$ to denote a general frequency parameter. Throughout, we will assume $\mu$ satisfies $\lambda^{\frac{3}{4}} \ll \mu \leq c\lambda$, where the small absolute constant $c$ is discussed further below.

We recall the standard Littlewood-Paley decomposition. Fix $\varphi(\xi) \in C_0^\infty(\R^d)$ with support in $\{|\xi| \leq 2\}$ such that $\varphi \equiv 1$ on $\{|\xi| \leq 1\}$. Then define for $u = u(x)$,
$$\widehat{S_\kappa u}(\xi) := (\varphi(\xi/\kappa) - \varphi(2\xi/\kappa))\widehat{u}(\xi) =: \psi(\xi/\kappa) \widehat{u} =: \psi_\kappa (\xi) \widehat{u}$$
which has support $\{|\xi| \in [\kappa/2, 2\kappa]\}$. Also allow $S_{< \kappa}$, etc., in the natural way, and denote $S_{0} = S_{<1}$. Lastly, let $\tilde{S}_\kappa = \sum_{\kappa/4 \leq \tilde{\kappa} \leq 4\kappa} S_{\tilde{\kappa}}$ denote a frequency projection with widened support so that $\tilde{S}_\kappa S_\kappa = S_\kappa$.

For a small absolute constant $c > 0$, write
$$A \ll B, \quad A \approx B$$
when, respectively,
$$A \leq cB, \quad (1 - c)A \leq B \leq A/(1 - c).$$
Further, we will set an additional absolute constant
$$0 < c_1 \ll c \ll 1,$$
where $c_1\lambda$ will be the frequency truncation for the paradifferential equation on which we prove Strichartz estimates. We denote
$$a_\kappa = S_{\leq c_1 \kappa} a, \quad V_\kappa = S_{\leq c_1 \kappa} V, \quad \eta_\kappa = S_{\leq c_1 \kappa} \eta, \quad \gamma_\kappa = S_{\leq c_1 \kappa} \gamma.$$

\subsection{The Hamiltonian and wave packets} 
We will use the Hamiltonian $H(t, x, \xi)$ given by
$$H = V_\lambda \xi +  \sqrt{a_\lambda |\xi|}.$$
Note we omit notating $\lambda$ on $H$ for brevity since we will fix $\lambda$ by the end of Section \ref{sec:freqloc}.

Recall the Hamilton equations associated to $H$,
\begin{equation}\label{hamilton}\begin{cases}
\dot{x}(t) = H_\xi(t, x(t), \xi(t))\\
\dot{\xi}(t) = -H_x(t, x(t), \xi(t))\\
(x(s), \xi(s)) = (x, \xi).
\end{cases}\end{equation}
We also denote the solution $(x(t), \xi(t))$ to (\ref{hamilton}), at time $t \in I$ with initial data $(x, \xi)$ at time $s \in I$, variously by 
$$(x^t_s(x, \xi), \xi^t_s(x, \xi)) = (x^t(x, \xi), \xi^t(x, \xi)) = (x^t_s, \xi^t_s) = (x^t, \xi^t).$$

We consider the discrete set of phase space indices,
$$\TT = \{T = (x, \xi) \in \lambda^{-\frac{3}{4}}\Z \times \lambda^{\frac{3}{4}}\Z : |\xi| \approx \lambda \},$$
and in particular often write $T = (x, \xi)$ for brevity. 

We let $s_0 \in I$ denote the time chosen as in Proposition \ref{bilipprop}. Once $s_0$ is introduced, we write
$$(x^t, \xi^t) = (x^t_{s_0}, \xi^t_{s_0})$$
unless otherwise indicated. Further, associated to $T = (x, \xi) \in \TT$, we let $s_T \in I$ denote the time chosen as in Lemma \ref{bilipproplocal}.

\subsection{Local smoothing}\label{localnotation}

We define weights to exhibit local smoothing estimates. Fix a Schwartz weight $w \in C^\infty(\R)$ with frequency support $\{|\xi| \leq 1\}$ and $w \geq 1$ on $\{|x| \leq 1\}$. We denote a scaled translate of $w$ by
$$w_{x_0, \kappa}(x) = w(\kappa^{\frac{3}{4}}(x - x_0))$$
for $x_0 \in \R$. We will use throughout that $w_{x_0, \kappa}$ has frequency support $\{|\xi| \leq \kappa^{\frac{3}{4}}\}$.

It will also be convenient to have compactly supported weights. Fix a smooth bump function $\chi \in C_0^\infty(\R)$ supported on $[-1, 1]$, with $\chi \equiv 1$ on $[-\half, \half]$. Also let $\tilde{\chi}$ denote a widened bump function supported on $[-2, 2]$, with $\tilde{\chi} \equiv 1$ on $[-1, 1]$. As with $w$, we also write $\chi_{x_0, \kappa}, \tilde{\chi}_{x_0, \kappa}$ for the scaled translates of $\chi, \tilde{\chi}$.

We define the following local seminorm to measure local smoothing:
$$\|f\|_{LS^\sigma_{x_0, \lambda}} = \sum_{\kappa \leq c\lambda} \|w_{x_0, \kappa} S_\kappa f\|_{H^\sigma}.$$

We will also define a more technical local seminorm which we use to measure local smoothing on products. Let $\xi_0 \in \R$ with $|\xi_0| \in [\lambda/2, 2\lambda]$. First we construct a symmetric $\lambda$-frequency projection $S_{\xi_0, \lambda, \mu}$ with a $c\mu$-width gap at $\xi_0$, as follows: Let 
$$p(\xi) = 1_{[0, \infty)}(\xi + \lambda)\psi_\lambda(\xi + \lambda)(1 - \chi(\mu^{-1} \xi/c))$$
and define
$$S_{\xi_0, \lambda, \mu} = p(D - \xi_0) + p(-D + \xi_0).$$
Throughout, to simplify exposition, we will abuse notation and use $S_{\xi_0, \lambda, \mu}$ to denote the same construction with $\psi_{\lambda'}$ in place of $\psi_\lambda$, where $\lambda$ and $\lambda'$ have bounded ratio.

Then we define the following local seminorm for $\sigma \in \R$, which collects local measurements on low, high, and balanced frequencies, respectively by row:
\begin{align*}
\|f\|_{LS^\sigma_{x_0, \xi_0, \lambda, \mu}} = \ &\sum_{\kappa \in [c\mu, c\lambda]} ( \kappa^{\frac{3}{4}} \mu^{-1} \|S_\kappa f\|_{H^\sigma} +\|w_{x_0, \kappa}S_\kappa f\|_{H^\sigma}) \\
&+ \lambda^{\frac{3}{4}} \mu^{-1}\|S_{\geq c\lambda} f \|_{H^\sigma} + \sum_{\kappa \geq \lambda/c} \|w_{x_0, \lambda} S_\kappa f\|_{H^\sigma} \\
&+ \|w_{x_0, \lambda}S_{\xi_0, \lambda, \mu}f\|_{H^\sigma}.
\end{align*}

\subsection{A priori quantities}

Throughout, $\FF$ will denote a non-decreasing positive function which may change from line to line, and which may depend on the depth $h > 0$, the constant $a_{min} > 0$ of the Taylor sign condition, and the gravitational constant $g > 0$. For brevity, we denote
\begin{align*}
M(t) &= M_s(t) = \|(\eta, \psi)(t)\|_{H^{s + \half}} + \|(V, B)(t)\|_{H^s}, \\
Z(t) &= M_r(t) = 1 + \|\eta(t)\|_{W^{r + \half, \infty}} + \|(V, B)(t)\|_{W^{r, \infty}}, \\
\LL(t, x_0, \xi_0, \lambda) &= \LL_s(t, x_0, \xi_0, \lambda) \\
&= \|(\eta, \psi)(t)\|_{LS^{s' + \half}_{x_0, \lambda}} + \|(V, B)(t)\|_{LS^{s'}_{x_0, \lambda}} \\
&\quad + \max_{\lambda^{\frac{3}{4}} \ll \mu \leq c\lambda} \max_{0 \leq \sigma \leq s} \lambda^{-\sigma} \mu^{s'}(\|(\eta, \psi)(t)\|_{LS^{\sigma  + \half}_{x_0, \xi_0, \lambda, \mu}} + \|(V, B)(t)\|_{LS^{\sigma}_{x_0, \xi_0, \lambda, \mu}}).
\end{align*}
As an abuse of notation, also write for brevity,
$$\FF(T) = \FF(\|(\eta, \psi)\|_{L^\infty(I;H^{s + \half})}, \|(V, B)\|_{L^\infty(I; H^{s})}, \|\eta\|_{L^2(I;W^{r + \half, \infty})}, \|(V, B)\|_{L^2(I;W^{r, \infty})}).$$
Observe that 
$$\|\FF(M(t))\|_{L_t^\infty(I)} \leq \FF(T), \quad \|Z(t)\|_{L_t^2(I)} \leq \FF(T).$$

\section{Frequency Localization}\label{sec:freqloc}

In this section we reduce Theorem \ref{p1} to a frequency localized form. 

\subsection{Dyadic decomposition}

First we reduce Theorem \ref{p1} to the corresponding dyadic frequency estimates:

\begin{prop} \label{prop:p2}
Consider a smooth solution $u_\lambda$ to (\ref{paralinear}) on $I$ where $u = u_\lambda(t, \cdot)$ and $f = f_\lambda$ have frequency support $\{|\xi| \in [\lambda/2, 2\lambda]\}$. Then
\begin{align*}
\|u_\lambda\|_{L^2(I; W^{\frac{1}{8} - \frac{d}{2}-, \infty})} \leq  \FF(T)( \|f_\lambda\|_{L^1(I;L^2)} + \|u_\lambda \|_{L^\infty(I;L^2)}).
\end{align*}
\end{prop}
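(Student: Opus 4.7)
The plan is to implement the wave packet parametrix strategy outlined in the overview. At spatial frequency $\lambda$, the first step is to reduce to the equation with symbol $H(t, x, \xi) = V_\lambda \xi + \gamma_\lambda$: since $T_V \cdot \nabla u_\lambda$ only interacts with coefficients at frequencies below $\lambda$, truncating further to frequencies below $c_1 \lambda$ produces a difference of paradifferential operators whose action on $u_\lambda$ can be bounded in $L^1(I; L^2)$ by $\FF(T)\|u_\lambda\|_{L^\infty(I; L^2)}$ and absorbed into $f_\lambda$, using Proposition~\ref{prop:paralinearization} together with standard paradifferential estimates for the commutator of $S_{\leq c_1 \lambda}$ with $T_V \cdot \nabla$ and $T_\gamma$.

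Next I would construct a parametrix $u_\lambda(t, x) \approx \sum_{T \in \TT} c_T u_T(t, x)$ indexed by $T = (x_0, \xi_0) \in \TT$. For each $T$, the wave packet $u_T$ is concentrated in a tube of radius $\lambda^{-3/4}$ about the Hamilton trajectory $t \mapsto x^t_{s_0}(T)$, with frequency localized near $\xi^t_{s_0}(T)$. Because the coefficients are only regular at frequency $\lambda$ and hence not approximately constant on the wave packet scale, I would take an exact eikonal phase for $u_T$ rather than a bump-function ansatz, accepting imperfect frequency localization. The bilipschitz property of the flow from Section~\ref{sec:regham} controls the tube geometry uniformly over $I$, while the local smoothing estimates of Section~\ref{sec:locsmooth}, applied along the wave packet direction on the scale $\lambda^{-3/4}$, recover the frequency localization lost by using the exact phase and ensure square summability. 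The coefficients $c_T$ are selected by an almost-orthogonal phase-space decomposition of $u_\lambda(s_0)$ with $\|c_T\|_{\ell^2(\TT)} \lesssim \|u_\lambda\|_{L^\infty(I; L^2)}$, and the inhomogeneous term $f_\lambda$ is handled by Duhamel.

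The Strichartz estimate for the parametrix then follows from a counting argument. Each wave packet obeys $\|u_T(t, \cdot)\|_{L^\infty_x} \lesssim \lambda^{3/8}$ by tube localization, while the dispersion of $H$ (second frequency derivative $\sim \lambda^{-3/2}$) limits how many distinct trajectories can congregate at a fixed $(t, x)$: the phase-space volume through a tube divided by the wave packet volume yields an overlap bound which, combined with Cauchy-Schwarz in $T$ and $L^2$ averaging in $t$, gives the $1/8$ derivative improvement over Sobolev embedding. The estimate then transfers from the parametrix to $u_\lambda$ via the $L^2$ error bound combined with the assumed $\|u_\lambda\|_{L^\infty(I; L^2)}$ and $\|f_\lambda\|_{L^1(I; L^2)}$ terms, plus a time translation to move the reference time from $s_0$ back to the full interval.

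I expect the main obstacle to be the tension between needing an exact eikonal phase (for accurate propagation under coefficients regularized only at frequency $\lambda$) and needing sharp frequency localization of the individual $u_T$ (for their square summability in $T \in \TT$). The dispersive local smoothing on the scale $\delta x \sim \lambda^{-3/4}$ is meant to close this gap, but verifying that the full chain of error terms -- commutators with the coefficient truncation, perturbations from the approximate bilipschitz flow, and frequency leakage from the eikonal phase -- all close in norms controlled by $M(t)$, $Z(t)$, and the local seminorms $\LL(t, x_0, \xi_0, \lambda)$ is the delicate crux of the argument.
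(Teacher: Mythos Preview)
Your proposal is correct and follows essentially the same architecture as the paper: the reduction to truncated coefficients (Sections~\ref{sec:freqloc}), the wave packet parametrix with exact eikonal phase and Duhamel for the inhomogeneity (Section~\ref{sec:parametrix}), and the dispersive counting argument (Section~\ref{sec:strichartz}), all resting on the bilipschitz flow and local smoothing infrastructure of Sections~\ref{sec:locsmooth}--\ref{sec:regham}. Your last paragraph even correctly identifies the crux --- that the exact-phase packets leak in frequency and square summability is restored via local smoothing on the $\lambda^{-3/4}$ scale --- which is precisely what drives the most technical parts of the paper (the local elliptic and Taylor-coefficient estimates of Sections~\ref{sec:dirichletprob}--\ref{sec:taylorlocal}, and the integration of $H_{xx}$ along the flow in Section~\ref{COVsec}).
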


\begin{proof}[Proof of Theorem \ref{p1}] Given $u$ solving (\ref{paralinear}), $u_\lambda = S_\lambda u$ solves (\ref{paralinear}) with inhomogeneity
$$S_\lambda f + [T_V \cdot \nabla, S_\lambda]u + i[T_\gamma, S_\lambda]u.$$
Note that this has frequency support $\{|\xi| \in [\lambda/4, 4\lambda]\}$ (strictly speaking, we should modify Proposition \ref{prop:p2} to address this wider support, but we neglect this detail throughout for simplicity). By the paradifferential commutator estimate (\ref{sobolevcommutator}),
\begin{align*}
\|[T_V \cdot \nabla, S_\lambda]u\|_{L^1(I;H^{\sigma})} &\lesssim \|V\|_{L^1(I;W^{1,\infty})} \|\tilde{S}_\lambda u\|_{L^\infty(I;H^{\sigma})}.
\end{align*}
Similarly, by (\ref{sobolevcommutator}) and the estimates on the Taylor coefficient as in Corollary \ref{gammabd},
$$\|[T_\gamma, S_\lambda]u\|_{H^{\sigma}} \lesssim M_\half^\half(\gamma) \|\tilde{S}_\lambda u\|_{H^{\sigma}} \leq \FF(M(t))Z(t) \|\tilde{S}_\lambda u\|_{H^{\sigma}}$$
and hence
$$\|[T_\gamma, S_\lambda]u\|_{L^1(I;H^{\sigma})} \leq \FF(T)\|\tilde{S}_\lambda u\|_{L^\infty(I;H^{\sigma})}.$$
We then decompose $u$ into frequency pieces $u_\lambda$ on which we can apply Proposition \ref{prop:p2} (multiplied by $\lambda^{\sigma}$, using the frequency localization):
\begin{align*}
\|u\|_{L^2(I; W^{\sigma' - \frac{d}{2}-, \infty})} &\leq \sum_{\lambda = 0} \|u_\lambda \|_{L^2(I; W^{\sigma' - \frac{d}{2}-, \infty})} \\
&\leq \FF(T)\sum_{\lambda = 0} \lambda^{-\eps}( \|S_\lambda f\|_{L^1(I; H^{\sigma})} + \|\tilde{S}_\lambda u \|_{L^\infty(I;H^{\sigma})}) \\
&\leq \FF(T)( \| f\|_{L^1(I; H^{\sigma})} + \| u \|_{L^\infty(I;H^{\sigma})})
\end{align*}
as desired.
\end{proof}

\subsection{Symbol truncation}

Next, we reduce Proposition \ref{prop:p2} to an estimate with frequency truncated symbols.

\begin{prop} \label{p3}
Consider a smooth solution $u_\lambda$ to
\begin{equation}\label{eqn:pretruncparalinearized}
(\D_t + T_{V_\lambda} \cdot \nabla + iT_{\gamma_\lambda}) u_\lambda = f_\lambda
\end{equation}
on $I$ where $u_\lambda(t, \cdot)$ and $f_\lambda$ have frequency support $\{|\xi| \in [\lambda/2, 2\lambda]\}$. Then
\begin{align*}
\|u_\lambda\|_{L^2(I; W^{\frac{1}{8} - \frac{d}{2}-, \infty})} \leq  \FF(T)(\|f_\lambda\|_{L^1(I; 
L^2)} + \|u_\lambda \|_{L^\infty(I;L^2)}).
\end{align*}
\end{prop}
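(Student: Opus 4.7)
The plan is to construct an explicit wave packet parametrix for \eqref{eqn:pretruncparalinearized} and derive the Strichartz estimate by a geometric dispersion and counting argument, in the spirit of Smith--Tataru \cite{smith2005sharp} for the wave equation. A wave packet approach (rather than a Hadamard or FBI parametrix followed by $TT^*$) is essentially forced on us: with the coefficients truncated only at frequencies $\leq c_1 \lambda$, there is no room to absorb a derivative loss as in \cite{alazard2014strichartz}, \cite{ai2017low}, so the dispersive information must be read off directly from the phase-space geometry of the packets.

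First I would set up the geometry. For an order-$1/2$ dispersive equation at frequency $\lambda$, the natural packet scales are $\delta x \approx \lambda^{-3/4}$, $\delta\xi \approx \lambda^{3/4}$, $\delta t \approx 1$, giving the discrete phase-space lattice $\TT$. Each packet $\Phi_T$ is to be concentrated along the Hamilton trajectory of $H = V_\lambda \xi + \sqrt{a_\lambda |\xi|}$ through $T$. For this to be well-defined on the full interval $I$, the Hamilton flow must be bilipschitz, which is \emph{not} visible from the pointwise regularity of $H$ alone. I would recover it by integrating the full symbol $V\xi + \gamma$ along its own flow (Sections \ref{COVsec}--\ref{sec:regham}), combining this with local smoothing estimates for $(\eta, \psi)$ and $(V, B)$ along characteristics (Section \ref{sec:locsmooth}) and localized versions of the Dirichlet-to-Neumann paralinearization and of the Taylor coefficient estimate (Sections \ref{sec:dirichletprob}--\ref{sec:taylorlocal}). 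The use of the full symbol $V\xi + \gamma$, rather than $V$ alone as in \cite{ai2017low}, is what permits removing the derivative loss entirely.

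Next, I would build the packets themselves (Section \ref{sec:parametrix}). Because the coefficients vary on scale $\lambda^{-1}$, which is much coarser than the packet scale $\lambda^{-3/4}$, a generic Schwartz bump localized at the packet scale is not an approximate solution. Instead I would prescribe an \emph{exact eikonal phase} solving the Hamilton--Jacobi equation for $H$, at the cost of losing exact frequency localization of each $\Phi_T$. The missing localization, needed for square summability of the decomposition, is then recovered from the dispersive local smoothing gain on the scale $\lambda^{-3/4}$, encoded in the seminorms $LS^\sigma_{x_0,\xi_0,\lambda,\mu}$. Any $u_\lambda$ can then be decomposed at an initial time $s_0$ as $u_\lambda(s_0,\cdot) = \sum_{T \in \TT} c_T \Phi_T(s_0,\cdot)$ with $\sum_T |c_T|^2 \lesssim \|u_\lambda(s_0)\|_{L^2}^2$, propagated forward by the parametrix, and the inhomogeneity $f_\lambda$ handled by Duhamel.

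Finally (Section \ref{sec:strichartz}), the Strichartz bound for the parametrix reduces to a geometric count: at each fixed time the spatial supports of the propagated packets are $\lambda^{-3/4}$-thin tubes along bilipschitz characteristics, so the number of tubes meeting any given point is controlled, producing the correct $L^2_t L^\infty_x$ dispersion with the Sobolev gain $\tfrac{1}{8} - \tfrac{d}{2}-$ matching the constant-coefficient prediction; transfer from the parametrix to the exact $u_\lambda$ is then a standard iteration absorbing errors into the $L^\infty_t L^2_x$ and $L^1_t L^2_x$ terms on the right-hand side. The principal obstacle, and the source of nearly all the technical work, is the bilipschitz regularity of the Hamilton flow at this low regularity --- every subsequent step, from the construction of the eikonal phase, to the orthogonality and summability of the packet decomposition, to the dispersive counting, depends critically on it.
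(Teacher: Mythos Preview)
Your proposal is correct and matches the paper's approach essentially point for point: the wave packet parametrix on the lattice $\TT$, the bilipschitz Hamilton flow obtained by integrating the full symbol $V\xi + \gamma$ along characteristics (using local smoothing and the localized elliptic/Taylor estimates), the exact eikonal phase to compensate for coefficient variation on scale $\lambda^{-1}$, and the final dispersive counting argument. The only organizational difference is that the paper's proof of Proposition~\ref{p3} proper is a short perturbative reduction to the pseudodifferential equation $(\D_t + V_\lambda\D_x + i\sqrt{a_\lambda|D|})u_\lambda = f$ (Proposition~\ref{redstrich}) --- by estimating $(T_{V_\lambda}-V_\lambda)\nabla u_\lambda$, $(T_{\gamma_\lambda}-\gamma_\lambda)u_\lambda$, and $(\gamma_\lambda - \sqrt{a_\lambda|D|})u_\lambda$ as acceptable inhomogeneous errors --- and then carries out the entire program you describe for that reduced equation; you have effectively folded this reduction in by working directly with the Hamiltonian $H = V_\lambda\xi + \sqrt{a_\lambda|\xi|}$.
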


\begin{proof}[Proof of Proposition \ref{prop:p2}]
This follows from Proposition \ref{p3} by including $(T_V - T_{V_\lambda}) \cdot \nabla u_\lambda$ and $(T_\gamma - T_{\gamma_\lambda})u_\lambda$ as components of the inhomogeneous term, along with the following estimates, proven below:
\begin{align*}
\|(T_V - T_{V_\lambda}) \cdot \nabla u_\lambda \|_{L^1(I; L^2)} &\leq \FF(T)\|u_\lambda \|_{L^\infty(I;L^2)}   \\
\|(T_\gamma - T_{\gamma_\lambda})u_\lambda\|_{L^1(I; L^2)} &\leq \FF(T)\|u_\lambda \|_{L^\infty(I;L^2)}. 
\end{align*}

For the first estimate, observe that 
$$(T_V - T_{V_\lambda}) \cdot \nabla u_\lambda = (S_{c_1\lambda < \cdot \leq \lambda/8}V) \cdot \nabla u_\lambda$$
which satisfies
\begin{align*}
\|(S_{c_1\lambda < \cdot \leq \lambda/8}V) \cdot \nabla u_\lambda\|_{L^1(I; L^2)} &\lesssim \|V\|_{L^1(I;W^{1, \infty})} \|u_\lambda\|_{L^\infty(I; L^2)}.
\end{align*}

The second estimate is similar, using that
$$T_\gamma - T_{\gamma_\lambda} = (T_{\sqrt{a}} - T_{S_{\leq c_1 \lambda} \sqrt{a}})|D|^\half$$
to obtain
\begin{align*}
\|(T_\gamma - T_{\gamma_\lambda}) u_\lambda \|_{L^1(I; L^2)} \lesssim \|\sqrt{a}\|_{L^1(I;W^{\half, \infty})} \|u_\lambda\|_{L^\infty(I; L^2)}.
\end{align*}
Using the estimates on the Taylor coefficient $a$ provided in Proposition \ref{taylorbd} (and the Moser estimate \ref{smoothholder}) yields the desired result.
\end{proof}

\subsection{Pseudodifferential symbol}

To study the Hamilton flow, it is convenient to replace the paradifferential symbol $T_{\gamma_\lambda}$ by the pseudodifferential symbol $\gamma_\lambda = \gamma_\lambda(t, x, \xi)$, and likewise the paraproduct $T_{V_\lambda}$ by $V_\lambda$. It is harmless to do so since $u_\lambda$ is frequency localized. 

Recall that since $d = 1$, $\gamma = \sqrt{a|D|}$. For technical reasons, we further replace $\gamma_\lambda$ by
$\sqrt{a_\lambda |D|}.$

\begin{prop}\label{redstrich}
Consider a smooth solution $u_\lambda$ to 
\begin{equation}\label{truncparalinearized}
(\D_t + V_\lambda \D_x + i \sqrt{a_\lambda |D|}) u_\lambda = f
\end{equation}
on $I$ where $u_\lambda(t, \cdot)$ has frequency support $\{|\xi| \in [\lambda/2, 2\lambda]\}$. Then
\begin{align*}
\|u_\lambda\|_{L^2(I; W^{\frac{1}{8} - \frac{d}{2} -, \infty})} \leq  \FF(T)(\|f_\lambda\|_{L^1(I;
L^2)} + \|u_\lambda \|_{L^\infty(I;L^2)}).
\end{align*}
\end{prop}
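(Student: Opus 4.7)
The plan is to follow the template of the proof of Proposition \ref{prop:p2}: reduce to Proposition \ref{p3} by absorbing the differences
\[
(T_{V_\lambda} \cdot \nabla - V_\lambda\partial_x)u_\lambda, \qquad (T_{\gamma_\lambda} - \gamma_\lambda(t,x,D))u_\lambda, \qquad (\gamma_\lambda(t,x,D) - \sqrt{a_\lambda|D|})u_\lambda
\]
into the inhomogeneous term, so it suffices to bound each of them in $L^1(I;L^2)$ by $\FF(T)\|u_\lambda\|_{L^\infty(I;L^2)}$. Each piece is a standard para/pseudo-differential calculus computation that exploits the frequency gap between the coefficients (truncated to frequencies $\leq c_1\lambda$) and $u_\lambda$ (supported in $\{|\xi|\approx\lambda\}$), plus the $Z(t)$ regularity of $V,a$.

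For the transport difference, since $V_\lambda = S_{\leq c_1\lambda}V$ has $x$-frequencies $\leq c_1\lambda\ll \lambda/2$ and $\partial_x u_\lambda$ has frequencies in $\{|\xi|\approx\lambda\}$, the Bony paraproduct decomposition $V_\lambda\partial_x u_\lambda = T_{V_\lambda}\partial_x u_\lambda + T_{\partial_x u_\lambda}V_\lambda + \Pi(V_\lambda,\partial_x u_\lambda)$ reduces the last two pieces to frequency interactions between $c_1\lambda$ and $\lambda$; these contribute only a boundary error from the sharp vs.~smooth cutoff, controlled by $\|V\|_{L^1(I;W^{1,\infty})}\|u_\lambda\|_{L^\infty(I;L^2)}\leq\FF(T)\|u_\lambda\|_{L^\infty(I;L^2)}$ exactly as in the first half of the proof of Proposition \ref{prop:p2}.

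For the $\gamma$ paradifferential vs.~pseudodifferential difference, the symbol $\gamma_\lambda(t,x,\xi) = S_{\leq c_1\lambda}\sqrt{a}(t,x)\sqrt{|\xi|}$ has $x$-frequency support $\leq c_1\lambda$. Writing $T_{\gamma_\lambda} = T_{S_{\leq c_1\lambda}\sqrt{a}}|D|^{1/2}$ and $\gamma_\lambda(t,x,D) = (S_{\leq c_1\lambda}\sqrt{a})|D|^{1/2}$, this reduces to comparing $T_{S_{\leq c_1\lambda}\sqrt{a}}$ with multiplication by $S_{\leq c_1\lambda}\sqrt{a}$ on a function of frequency $\lambda^{1/2}u_\lambda$. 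Again by paraproduct decomposition and the frequency gap, the error is controlled by $\|\sqrt{a}\|_{L^1(I;W^{1/2,\infty})}\lambda^{1/2}\cdot\lambda^{-1/2}\|u_\lambda\|_{L^\infty(I;L^2)}$; the $W^{1/2,\infty}$ norm of $\sqrt{a}$ is in turn bounded by $\FF(T)$ via Proposition \ref{taylorbd} and the Moser estimate \ref{smoothholder}.

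For the final symbol replacement, I compare $S_{\leq c_1\lambda}\sqrt{a}$ with $\sqrt{S_{\leq c_1\lambda}a}$. Writing
\[
S_{\leq c_1\lambda}\sqrt{a} - \sqrt{S_{\leq c_1\lambda}a} = (S_{\leq c_1\lambda}-I)\sqrt{a} + \frac{a - S_{\leq c_1\lambda}a}{\sqrt{a}+\sqrt{S_{\leq c_1\lambda}a}},
\]
and using $a\geq a_{\min}>0$ together with the Moser-type estimates, both terms inherit the regularity of $a$ (and $\sqrt a$), with norms bounded by $\FF(T)Z(t)$ in an appropriate H\"older space. Multiplying by $\sqrt{|D|}u_\lambda$ (which costs $\lambda^{1/2}$) and collecting powers, the overall error in $L^1(I;L^2)$ is again bounded by $\FF(T)\|u_\lambda\|_{L^\infty(I;L^2)}$. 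The main technical obstacle is choosing the right regularity scale for $\sqrt a$ so that the symbol comparison absorbs the $\lambda^{1/2}$ arising from $|D|^{1/2}$; since we only have $r>1$, the half-derivative loss from $\sqrt{|D|}$ is safely within the available regularity of $\sqrt{a}$, and after combining we obtain a clean bound by $\|\sqrt a\|_{L^1(I;W^{1/2+,\infty})}\leq\FF(T)$.
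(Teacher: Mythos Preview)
Your proposal has the logical direction of the reduction backwards, and as written it is circular. In the paper's chain of reductions in Section~\ref{sec:freqloc}, each proposition is proved \emph{assuming} the next one: Theorem~\ref{p1} is reduced to Proposition~\ref{prop:p2}, which is reduced to Proposition~\ref{p3}, which is reduced to Proposition~\ref{redstrich}. In particular, the proof immediately following the statement of Proposition~\ref{redstrich} is labeled ``Proof of Proposition~\ref{p3}'' and establishes Proposition~\ref{p3} \emph{assuming} Proposition~\ref{redstrich}, not the other way around. Your plan --- reduce Proposition~\ref{redstrich} to Proposition~\ref{p3} via the same error terms --- therefore assumes exactly what remains to be shown.

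Proposition~\ref{redstrich} is not another bookkeeping reduction; it is the core analytic result whose proof occupies Sections~\ref{sec:locsmooth}--\ref{sec:strichartz}. The paper establishes it by (i) proving local smoothing estimates along Hamilton characteristics for $(\eta,\psi,V,B)$ and the Taylor coefficient, (ii) integrating $\partial_x^2 H$ along the flow to obtain a bilipschitz Hamilton map despite the low regularity of the coefficients, (iii) constructing a wave packet parametrix with eikonal phases and verifying approximate-solution and almost-orthogonality bounds, and (iv) deducing the $L^2_t L^\infty_x$ Strichartz estimate from a packet-overlap counting argument, then matching data and source via Propositions~\ref{matchdata}--\ref{finalparametrix}. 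None of this can be replaced by a paraproduct comparison; the error estimates you wrote down are correct as far as they go, but they only show that Propositions~\ref{p3} and~\ref{redstrich} are equivalent, which the paper already uses in the opposite direction.
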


\begin{proof}[Proof of Proposition \ref{p3}]
We may apply Proposition \ref{redstrich} with inhomogeneity
$$f_\lambda + (T_{\gamma_\lambda} - \gamma_\lambda)u_\lambda + (T_{V_\lambda} - V_\lambda) \cdot \nabla u_\lambda + (\gamma_\lambda - \sqrt{a_\lambda |D|})u_\lambda.$$
Using for instance \cite[Proposition 2.7]{nguyen2015sharp} and the estimates on the Taylor coefficient as in Corollary \ref{gammabd},
$$\|(T_{\gamma_\lambda} - \gamma_\lambda)u_\lambda\|_{L^2} \lesssim M^\half_\half(\gamma)\|u_\lambda\|_{L^2} \leq \FF(M(t))Z(t) \|u_\lambda\|_{L^2}.$$
Integrating in time, we have
$$\|(T_{\gamma_\lambda} - \gamma_\lambda) u_\lambda\|_{L^1(I;L^2)} \leq \FF(T) \|u_\lambda\|_{L^\infty(I;L^2)}$$
which is controlled by the right hand side of the estimate in Proposition \ref{p3}.

Similarly,
\begin{align*}
\|(T_{V_\lambda} - V_\lambda) \cdot \nabla u_\lambda\|_{L^2} &\lesssim M_1^1(V \cdot \xi) \|u_\lambda\|_{L^2} \lesssim \|V\|_{W^{1, \infty}} \|u_\lambda\|_{L^2}.
\end{align*}
Integrating in time yields the desired estimate.

It remains to estimate $(\gamma_\lambda - \sqrt{a_\lambda |D|})u_\lambda$. First, we have
\begin{align*}
\|(\gamma - \gamma_\lambda) u_\lambda \|_{L^2} &\lesssim \|S_{> c_1\lambda}\sqrt{a}\|_{L^\infty} \|u_\lambda\|_{H^{\frac{1}{2}}} \\
&\leq \|\sqrt{a}\|_{W^{\half, \infty}} \|u_\lambda\|_{L^2}.
\end{align*}
Using the Taylor coefficient estimates of Proposition \ref{taylorbd} and integrating in time, this is controlled by the right hand side of the estimate in Proposition \ref{p3}. We may thus exchange $\gamma_\lambda$ for $\gamma$.

Lastly, it is easy to compute
$$\gamma - \sqrt{a_\lambda |D|} = \frac{a_{> c_1\lambda}}{\sqrt{a} + \sqrt{a_\lambda}} \sqrt{|D|}.$$
Thus, using the Taylor sign condition,
\begin{align*}
\|(\gamma - \sqrt{a_\lambda |D|})u_\lambda \|_{L^2} &\lesssim \frac{1}{\sqrt{a_{min}}} \|a_{> c_1\lambda}\|_{L^\infty} \|u_\lambda\|_{H^{\frac{1}{2}}} \\
&\lesssim \frac{1}{\sqrt{a_{min}}} \|a\|_{W^{\half, \infty}} \|u_\lambda\|_{L^2}.
\end{align*}
Using the usual Taylor coefficient estimates and integrating in time yields the desired result.

\end{proof}

For the remainder of the article, it remains to prove Proposition \ref{redstrich}. In particular, we fix the frequency $\lambda$ throughout.

\section{Local Smoothing Estimates}\label{sec:locsmooth}

In this section we establish local smoothing estimates for the paradifferential dispersive equation (\ref{paralinear}), and then show that these estimates are inherited by $(\eta, \psi, V, B)$. 

Observe that the order 1/2 dispersive term of (\ref{paralinear}) contributes velocity $\mu^{-\half} \ll 1$ to a frequency $\mu$ component of a solution $u$. On the other hand, the transport term of the equation contributes up to a unit scale velocity to all frequency components of $u$, dominating the dispersive velocity, $1 \gg \mu^{-\half}$. As a result, we only expect to see a local smoothing effect by following the transport vector field.

In fact, for our purposes we are interested in local smoothing along $\lambda$-frequency Hamilton characteristics, on packets of width $\delta x \approx \lambda^{-\frac{3}{4}}$. Since at frequency $\lambda$, 
$$\D_\xi^2 |\xi|^\half = \lambda^{-\frac{3}{2}},$$
we expect to see local smoothing effects for frequencies of $u$ with frequency separation greater than 
$$\lambda^{-\frac{3}{4}}/\lambda^{-\frac{3}{2}} = \lambda^{\frac{3}{4}}$$
from the frequency center of a packet.

\subsection{The Hamilton flow}

Recall we denote the Hamiltonian for (\ref{truncparalinearized}) by
$$H(t, x, \xi) = V_\lambda \xi + \sqrt{a_\lambda |\xi|}.$$
Thus, the Hamilton equations (\ref{hamilton}) associated to $H$ are given by
\begin{equation*}\begin{cases}
\dot{x}^t = V_\lambda + \half\sqrt{a_\lambda} |\xi^t|^{-\frac{3}{2}}\xi\\
\dot{\xi}^t = -\D_x V_\lambda \xi^t - \D_x \sqrt{a_\lambda} |\xi^t|^\half.
\end{cases}\end{equation*}

In the following lemma, we observe that $\xi^t$ preserves data satisfying $|\xi| \in [\lambda/2, 2\lambda]$:
\begin{lem}\label{freqpres}
If $|\xi| \in [\lambda/2, 2\lambda]$ and $|t - s|^{\half} \FF(T) \ll 1$, then
$$\|\xi^t_s(x, \xi)\|_{L^\infty_x} \approx |\xi|.$$
\end{lem}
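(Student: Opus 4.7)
My approach is a standard continuity/bootstrap argument on the scalar ODE governing $|\xi^t|$ along the Hamilton flow. Writing $\xi^t = \xi^t_s(x,\xi)$, the Hamilton equations give
\[
\dot\xi^t = -(\D_x V_\lambda)(t, x^t)\,\xi^t - (\D_x \sqrt{a_\lambda})(t, x^t)\,|\xi^t|^{1/2},
\]
from which I derive, as long as $\xi^t$ stays away from zero (which the bootstrap will ensure),
\[
\left| \frac{d}{dt} \log|\xi^t| \right| \leq \|\D_x V_\lambda(t)\|_{L^\infty_x} + |\xi^t|^{-1/2}\,\|\D_x \sqrt{a_\lambda}(t)\|_{L^\infty_x}.
\]

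Next, I control the two coefficient norms by $L^2_t$ quantities bounded by $\FF(T)$. Since $V_\lambda = S_{\leq c_1\lambda}V$, Bernstein gives $\|\D_x V_\lambda\|_{L^\infty_x} \lesssim \|V\|_{W^{1,\infty}} \leq Z(t)$. For the second coefficient, since $c_1$ is taken small the Taylor sign condition $a \geq a_{\min}$ descends to $a_\lambda \gtrsim a_{\min}$, so $\D_x \sqrt{a_\lambda} = \D_x a_\lambda / (2\sqrt{a_\lambda})$ obeys $\|\D_x \sqrt{a_\lambda}\|_{L^\infty_x} \lesssim a_{\min}^{-1/2}\|\D_x a\|_{L^\infty_x} \leq \FF(M(t))Z(t)$ by the Taylor coefficient estimates of Proposition \ref{taylorbd}. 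Both factors lie in $L^2_t(I)$ with norms $\leq \FF(T)$.

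To close, I run a continuity argument on the maximal subinterval $[s, t^*] \subseteq [s, t]$ on which $|\xi^\tau| \in [|\xi|/2, 2|\xi|]$. Since $|\xi| \approx \lambda \geq 1$ and so $|\xi^\tau|^{-1/2} \leq C$ there, Cauchy--Schwarz in $\tau$ yields
\[
\big|\log(|\xi^{t^*}|/|\xi|)\big| \leq |t-s|^{1/2}\,\|Z\|_{L^2(I)} + C\,|t-s|^{1/2}\,\|\FF(M)\,Z\|_{L^2(I)} \lesssim |t-s|^{1/2}\,\FF(T),
\]
which is $\ll 1$ by hypothesis. This strictly improves the bootstrap constant, so $t^* = t$ and $|\xi^t_s(x,\xi)| \approx |\xi|$ uniformly in initial $x$, as claimed. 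The key structural point is that the dispersive term contributes only $|\xi^t|^{-1/2} \lesssim \lambda^{-1/2} \leq 1$ to the log derivative, putting it on the same footing as the transport term; Cauchy--Schwarz then converts the $L^2_t$ coefficient norms into the factor $|t-s|^{1/2}$ that matches the hypothesis exactly. I do not foresee any substantive obstacle.
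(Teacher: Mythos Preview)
Your approach is essentially the same as the paper's: both run a Gronwall/bootstrap argument on a scalar ODE for the magnitude of $\xi^t$. The paper works with $|\xi^t|^{1/2}$ and derives an affine Gronwall inequality; you work with $\log|\xi^t|$, which is a cosmetic variant.

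There is one slip worth flagging. Your bound $\|\D_x\sqrt{a_\lambda}\|_{L^\infty} \lesssim a_{\min}^{-1/2}\|\D_x a\|_{L^\infty} \leq \FF(M(t))Z(t)$ is not justified: Proposition~\ref{taylorbd} only gives $a \in W^{1/2+\eps,\infty}$, so $\D_x a$ need not be bounded. What you actually have, via Bernstein on the frequency truncation, is
\[
\|\D_x\sqrt{a_\lambda}\|_{L^\infty} \lesssim a_{\min}^{-1/2}\|\D_x a_\lambda\|_{L^\infty} \lesssim a_{\min}^{-1/2}\,\lambda^{1/2}\,\|a\|_{W^{1/2,\infty}} \leq \lambda^{1/2}\,\FF(M(t))Z(t),
\]
i.e.\ an extra $\lambda^{1/2}$. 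This is harmless in your scheme: on the bootstrap interval $|\xi^\tau| \approx \lambda$, so $|\xi^\tau|^{-1/2} \lesssim \lambda^{-1/2}$ (not merely $\leq C$), and the $\lambda^{1/2}$ cancels. The paper makes exactly this cancellation explicit; once you correct these two lines, your argument goes through and matches the paper's.
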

\begin{proof}
We work with $|\xi^t|^{\half}$ instead of $\xi^t$ itself, due to its appearance in $H$. By (\ref{hamilton}),
$$\frac{d}{dt}|\xi^t|^{\half} = -\half|\xi^t|^{-\frac{3}{2}}\xi^t H_x,$$
so we have (using at the end the Taylor sign condition, and the frequency localization)
\begin{align*}
||\xi^t|^{\half} - |\xi|^{\half}| \leq \half \int_s^t  |\xi^\tau|^{-\half}|H_x| \, d\tau &\leq \half \int_s^t |\D_x V_\lambda |\xi^\tau |^{\half}| + |\D_x \sqrt{a_\lambda}| \, d\tau \\
&\leq \half \int_s^t \|\D_x V_\lambda\|_{L_x^\infty} |\xi^\tau |^{\half}+ \|\D_x \sqrt{a_\lambda}\|_{L_x^\infty} \, d\tau \\
&\lesssim \int_s^t \|V\|_{W^{1,\infty}} |\xi^\tau |^{\half} + \lambda^{\frac{1}{2}}\|a_\lambda\|_{W^{\half, \infty}} \, d\tau. \\
\end{align*}
Using the Taylor coefficient estimates of Proposition \ref{taylorbd},
\begin{equation}\label{xigronwall}
|\xi^t|^{\half} \leq |\xi|^{\half} + \int_s^t Z(t) |\xi^\tau |^{\half} + \lambda^{\half}\FF(M(t))Z(t) \, d\tau.
\end{equation}

Applying Gronwall, Cauchy-Schwarz, and the assumption $|\xi|^{\half} \lesssim \lambda^{\half}$,
$$|\xi^t|^{\half} \lesssim \lambda^{\half}(1 + |t - s|^{\half} \FF(T)) \exp (|t - s|^{\half} \FF(T)) \leq 2\lambda^{\half}.$$
Use this in (\ref{xigronwall}) to obtain
$$||\xi^t|^{\half} - |\xi|^{\half} | \leq \int_s^t Z(t) |\xi^\tau |^{\half} + \lambda^{\half} \FF(M(t))Z(t) \, d\tau \leq \lambda^{\half} |t - s|^{\half} \FF(T) \ll \lambda^{\half}.$$
\end{proof}

\begin{rem}
In the remainder of the article, we may assume $T$ is small enough so that the condition
$$|t - s|^{\half} \FF(T) \ll 1$$
of the lemma is satisfied for all $t, s \in I$ (otherwise, we can iterate the argument a number of times depending on $\FF(T)$). In particular, $\xi^t$ maintains separation from 0, so that the flow $(x^t, \xi^t)$ remains defined on $I$.
\end{rem}

\subsection{Estimates on the dispersive equation} In this subsection we establish local smoothing estimates for (\ref{paralinear}) along Hamilton characteristics. Throughout this section, we consider the case of $\xi_0 > 0$, but the corresponding results for $\xi_0 < 0$ are similar.

\begin{prop}\label{local1}
Consider a smooth solution $u$ to
\begin{equation}\label{constwater}
(\D_t + V_\lambda\D_x + i\sqrt{a_\lambda|D|})u = f
\end{equation}
on $I$. Let $(x^t, \xi^t)$ be a solution to (\ref{hamilton}) with initial data $(x_0, \xi_0)$ satisfying $\xi_0 \in [\lambda/2, 2\lambda]$.  Let $u(t, \cdot)$ have frequency support $[c\lambda/2, \xi^t - c\mu]$. Then 
$$\|w_{x^t, \lambda}u\|_{L^2(I;L^2)} \leq \mu^{-\half}\lambda^{\frac{3}{8}}\FF(T)(\|u\|_{L^\infty(I;L^2)} + \|f\|_{L^1(I;L^2)}).$$
The same holds for $u(t, \cdot)$ with frequency supports $[\xi^t + c\mu, 2\lambda/c]$ or $[-2\lambda/c, -c\lambda/2]$. 
\end{prop}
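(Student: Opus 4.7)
The proposition is a local smoothing estimate with a moving window, and I would prove it via a positive commutator (G\aa rding) argument at the wave-packet scale $\delta x \sim \lambda^{-3/4}$. The geometric content is that on the frequency support of $u$, where $|\xi - \xi^t| \geq c\mu$ and $\xi, \xi^t \sim \lambda$, the group velocity
$$H_\xi(t, x, \xi) = V_\lambda + \tfrac{1}{2}\sqrt{a_\lambda}\,|\xi|^{-1/2}\mathrm{sgn}(\xi)$$
differs from $\dot{x}^t = H_\xi(t, x^t, \xi^t)$ by at least $c\sqrt{a_{min}}\,\mu\lambda^{-3/2}$ (strict convexity of $H$ in $\xi$ combined with the gap condition). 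A wave packet of spatial width $\lambda^{-3/4}$ therefore spends at most time $\lambda^{3/4}/\mu$ inside the window $w_{x^t, \lambda}$, yielding the expected $L^2_t$-gain of $\lambda^{3/8}\mu^{-1/2}$ over the $L^2$ energy.

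\textbf{Multiplier and commutator.} I would take the self-adjoint Weyl quantization $A$ of
$$a(t, x, \xi) = \phi\bigl(\lambda^{3/4}(x - x^t)\bigr)\,\theta(\xi/\lambda),$$
where $\phi$ is smooth, bounded, and monotone increasing with $\phi' = w^2$ (an antiderivative of $w^2$, which is integrable), and $\theta \in C_0^\infty(\R)$ equals $1$ on the frequency support of $u$ rescaled by $\lambda^{-1}$. Writing (\ref{constwater}) as $\D_t u + iPu = f$ with $P = -iV_\lambda\D_x + \sqrt{a_\lambda|D|}$, symbol calculus gives
$$\frac{d}{dt}\langle Au, u\rangle = \langle \operatorname{Op}(\D_t a + \{H, a\})u, u\rangle + 2\operatorname{Re}\langle Af, u\rangle + R,$$
with $R$ a symbol-calculus remainder. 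The principal symbol in the first bracket is
$$\lambda^{3/4}\bigl(H_\xi(t,x,\xi) - \dot{x}^t\bigr)\,w_{x^t,\lambda}(x)^2\,\theta(\xi/\lambda) \;-\; \lambda^{-1}H_x\,\phi\,\theta'(\xi/\lambda),$$
and the $\theta'$-term vanishes on the frequency support of $u$ (contributing only through a commutator remainder).

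\textbf{Key lower bound.} Splitting
$$H_\xi(t,x,\xi) - H_\xi(t, x^t, \xi^t) = \tfrac{1}{2}\sqrt{a_\lambda}\bigl(|\xi|^{-1/2} - |\xi^t|^{-1/2}\bigr) + O\bigl(|x - x^t|\cdot\|H_{\xi x}\|_{L^\infty}\bigr),$$
the MVT applied to $\xi \mapsto |\xi|^{-1/2}$ together with the gap $|\xi - \xi^t| \geq c\mu$ and $\xi, \xi^t \sim \lambda$ gives the first piece $\geq c\sqrt{a_{min}}\,\mu\lambda^{-3/2}$; the second piece is $\lesssim \lambda^{-3/4}\FF(M(t))Z(t)$ after absorbing $|x - x^t|\,w_{x^t,\lambda}^2 \lesssim \lambda^{-3/4}\tilde w_{x^t,\lambda}^2$ into the rapid decay of $w^2$. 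Sharp G\aa rding (or a direct $\operatorname{Op}(b)^*\operatorname{Op}(b)$ factorization with $b \approx \sqrt{\phi'\theta}$) then yields the pointwise-in-$t$ bound
$$\langle \operatorname{Op}(\D_t a + \{H, a\})u, u\rangle \geq c\mu\lambda^{-3/4}\|w_{x^t,\lambda}u\|_{L^2}^2 - C\bigl(1 + \FF(M(t))Z(t)\bigr)\|u\|_{L^2}^2.$$

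\textbf{Closing and expected obstacle.} Integrating over $I$, bounding $|\langle Au,u\rangle|_0^T| \lesssim \|u\|_{L^\infty L^2}^2$ via $\|A\|_{L^2\to L^2} \lesssim 1$, the forcing by $\|f\|_{L^1 L^2}\|u\|_{L^\infty L^2}$, and $\int_I(1 + \FF(M(t))Z(t))\,dt \leq \FF(T)$ by Cauchy--Schwarz (using $|I| \leq 1$ and $\|Z\|_{L^2(I)} \leq \FF(T)$), gives
$$\mu\lambda^{-3/4}\|w_{x^t,\lambda}u\|_{L^2(I; L^2)}^2 \leq \FF(T)\bigl(\|u\|_{L^\infty L^2}^2 + \|f\|_{L^1 L^2}\|u\|_{L^\infty L^2}\bigr),$$
and the claim follows on taking square roots and applying AM--GM. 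The symmetric cases (support $[\xi^t + c\mu, 2\lambda/c]$ or $[-2\lambda/c, -c\lambda/2]$) are handled by reversing the sign of $\phi$ or working on the negative half-line. I expect the main technical obstacle to be controlling the Weyl-calculus remainder $R$: the symbol $a$ varies at $x$-scale $\lambda^{-3/4}$ while $V_\lambda, \sqrt{a_\lambda}$ carry frequencies up to $c_1\lambda$, so the sub-principal Moyal terms must be tracked carefully; the slack from $c_1 \ll 1$ together with the standing assumption $\mu \gg \lambda^{3/4}$ are what make these remainders absorbable into the $\FF(T)$ error.
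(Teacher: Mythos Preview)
Your proposal is correct and is essentially the same positive-commutator argument as the paper's proof: both use the multiplier $\phi(\lambda^{3/4}(x-x^t))$ with $\phi'=w^2$, extract the positive term $\lambda^{3/4}\sqrt{a_\lambda}\bigl(|\xi|^{-1/2}-|\xi^t|^{-1/2}\bigr)w_{x^t,\lambda}^2\gtrsim \mu\lambda^{-3/4}w_{x^t,\lambda}^2$ from the gap condition, and treat the $V_\lambda(x)-V_\lambda(x^t)$ contribution as an error via $|x-x^t|\phi'\lesssim\lambda^{-3/4}$. The only real difference is presentational: the paper avoids Weyl calculus and sharp G\aa rding entirely by computing $\frac{d}{dt}\langle\tilde v u,u\rangle$ with the bare multiplication operator $\tilde v$ (no $\theta(\xi/\lambda)$ cutoff, since $u$ is already frequency-localized), expanding the kernel of $[|D|^{1/2}\tilde S_\lambda,\tilde v]$ by Taylor to second order, and then symmetrizing by hand to the manifestly nonnegative form $\lambda^{3/4}\|a_\lambda^{1/4}(|D|^{-1/2}-|\xi^t|^{-1/2})^{1/2}\tilde w u\|_{L^2}^2$ --- which is precisely what sidesteps the remainder obstacle you flag.
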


\begin{proof}
We use the positive commutator method. Let $v \in C^\infty(\R)$ satisfy $\sqrt{v'} = w$. It is also convenient to write 
$$p(D) = |D|^{\half}\tilde{S}_\lambda$$
so that we may exchange $|D|^{\half}$ for $p(D)$ in (\ref{constwater}). Lastly, for brevity, write
$$\tilde{w}(t, x) = w_{x^t, \lambda} = w(\lambda^{\frac{3}{4}}(x - x^t)), \quad \tilde{v}(t, x) = v(\lambda^{\frac{3}{4}}(x - x^t)), \quad \tilde{v}' = \D_x \tilde{v}.$$

Using (\ref{constwater}), we have
\begin{align}
\label{poscom}\frac{d}{dt}\langle \tilde{v} u, u \rangle = \ &\langle \tilde{v} \D_t u - \tilde{v}' \dot{x}^t u, u \rangle + \langle \tilde{v} u , \D_t u  \rangle \\
= \ &\langle \tilde{v} \D_t u + \D_x (V_\lambda \tilde{v} u) + ip(D)\sqrt{a_\lambda} \tilde{v} u, u \rangle - \langle \tilde{v}' \dot{x}^t u, u \rangle \nonumber \\
= \ &\langle \tilde{v} f, u \rangle + \langle i[p(D), \sqrt{a_\lambda}] \tilde{v} u, u\rangle  + \langle i\sqrt{a_\lambda}[p(D), \tilde{v}] u, u \rangle \nonumber \\
&+ \langle \tilde{v}'(V_\lambda -  \dot{x}^t) u, u \rangle + \langle (\D_x V_\lambda)\tilde{v} u, u\rangle. \nonumber
\end{align}
We successively consider each term on the right hand side. 

We will obtain positivity from the third term on the right hand side of (\ref{poscom}). Write the kernel of $[p(D), \tilde{v}]$ using a Taylor expansion:
\begin{align*}
\int &e^{i(x - y)\xi} p(\xi)(\tilde{v}(y) - \tilde{v}(x)) \, d\xi \\
&= \int e^{i(x - y)\xi} p(\xi)\left((y - x)\tilde{v}'(x) + (y - x)^2 \int_0^1 \tilde{v}''(h(x - y) + y)h \, dh \right) \, d\xi
\end{align*}
Then integrating by parts, we obtain
\begin{align*}
\int e^{i(x - y)\xi} \left(-ip'(\xi)\tilde{v}'(x) - p''(\xi) \int_0^1 \tilde{v}''(h(x - y) + y)h \, dh \right) \, d\xi \\
=: K_1(x, y) &+ K_2(x, y).
\end{align*}

Observe that $K_1$ is the kernel of 
$$-i\tilde{v}'p'(D) = -i\lambda^{\frac{3}{4}}w^2p'(D)$$
so that $K_1$ will contribute the positive operator. We observe that $\tilde{v}$ has been chosen so that the second order component $K_2$ is integrable, independent of $\lambda$. Indeed, observe that we may write, for a unit-scaled bump function $q$,
$$p(\xi) = \lambda^{\half} q(\lambda^{-1} \xi)$$
so that
$$\lambda^{\frac{3}{2}}\int e^{i(x - y)\xi} p''(\xi) \, d\xi = \lambda \widehat{q''}(\lambda(y - x))$$
is an integrable kernel. Further, since $\lambda^{-\frac{3}{2}}\tilde{v}'' = \widetilde{v''}$ is bounded, we have uniformly in $(x, y)$,
$$\lambda^{-\frac{3}{2}}\left|\int_0^1 \tilde{v}''(h(y - x) + x)h \, dh \right| \lesssim 1.$$
We conclude
$$\int \sqrt{a_\lambda}(x) K_2(x, y)u(y)\overline{u(x)} \, dy dx \lesssim \|a\|_{L^\infty}\|u\|_{L^2}^2.$$

We can similarly estimate the second and last terms on the right hand side of (\ref{poscom}) as errors (observe that $\tilde{v}$ has frequency support below $\lambda^{\frac{3}{4}} \ll \lambda$):
\begin{align*}
\|[p(D), \sqrt{a_\lambda}] \tilde{v} u \|_{L^2} &\lesssim \|\D_x \sqrt{a_\lambda}\|_{L^\infty} \|\tilde{v} u\|_{H^{-\half}}\\
&\leq \lambda^{\half} \FF(M(t))Z(t)\lambda^{-\half}\|u\|_{L^2}, \\
\|(\D_x V_\lambda) \tilde{v} u\|_{L^2} &\leq \FF(M(t))Z(t) \|u\|_{L^2}.
\end{align*}

It remains to consider the fourth term on the right hand side of (\ref{poscom}),
$$\langle \tilde{v}'(V_\lambda -  \dot{x}^t) u, u \rangle = \langle \tilde{v}'(V_\lambda(x) -  V_\lambda(x^t)) u, u \rangle - \half\langle \tilde{v}' \sqrt{a_\lambda} |\xi^t|^{-\frac{3}{2}}\xi^t u, u \rangle.$$
We bound the first of these terms as an error:
\begin{align*}
\|\tilde{v}'(V_\lambda(x) -  V_\lambda(x^t)) u \|_{L^2} \lesssim \ &\|\lambda^{\frac{3}{4}}v'(\lambda^{\frac{3}{4}}(x - x^t))(x - x^t)\|_{L^\infty}\|V_\lambda\|_{C^1}\| u \|_{L^2} \\
\lesssim \ &Z(t) \|u\|_{L^2}.
\end{align*}

We conclude from (\ref{poscom}) that, on the frequency support of $u$, and using the choice of $v$ in terms of $w$,
\begin{align}\label{integratelocal}
\frac{d}{dt}\langle \tilde{v} u, u \rangle = \ &\half\lambda^{\frac{3}{4}}\langle \sqrt{a_\lambda}\tilde{w}^2(|D|^{-\half} - |\xi^t|^{-\half}) u,  u \rangle \\
&+ O((\FF(M(t))Z(t))\|u\|_{L^2}^2 + \|f\|_{L^2}\|u\|_{L^2}). \nonumber
\end{align}

Next, we symmetrize the first term on the right hand side. We write
\begin{align*}
\tilde{w}^2(|D|^{-\half} - |\xi^t|^{-\half}) u = \ &\tilde{w}(|D|^{-\half} - |\xi^t|^{-\half})\tilde{w} u + \tilde{w}[|D|^{-\half}, \tilde{w}] u
\end{align*}
where the commutator is bounded in $L^2_x$ by
\begin{align*}
\|\tilde{w}'\|_{L^\infty}\|u\|_{H^{-\frac{3}{2}}} \leq \lambda^{\frac{3}{4}}\lambda^{-\frac{3}{2}}\|u\|_{L^2}.
\end{align*}
Using additionally the bounds on the Taylor coefficient, this may be absorbed into the error on the right hand side. Similarly, we write, using the frequency localization of $u$,
\begin{align*}
\sqrt{a_\lambda}(|D|^{-\half} - |\xi^t|^{-\half})\tilde{w} u = \ &(|D|^{-\half} - |\xi^t|^{-\half})^\half\sqrt{a_\lambda}(|D|^{-\half} - |\xi^t|^{-\half})^\half\tilde{w} u \\
&- [(|D|^{-\half} - |\xi^t|^{-\half})^\half, \sqrt{a_\lambda}](|D|^{-\half} - |\xi^t|^{-\half})^\half\tilde{w} u.
\end{align*}
Then, observing that $\tilde{w}$ has frequency support below $\lambda^{\frac{3}{4}} \ll \lambda$, we similarly may estimate the commutator by
$$\|\D_x \sqrt{a_\lambda}\|_{L^\infty} \|\tilde{w} u\|_{H^{-\frac{3}{2}}} \leq \lambda^{\half}\FF(M(t))Z(t)\lambda^{-\frac{3}{2}}\|u\|_{L^2}$$
which is better than needed with respect to the power of $\lambda$ to absorb into the right hand side.

After the symmetrization, integrating (\ref{integratelocal}) in time, we obtain
$$\lambda^{\frac{3}{4}}\| a_\lambda^\frac{1}{4}(|D|^{-\half} - |\xi^t|^{-\half})^\half \tilde{w}u\|_{L^2(I;L^2)}^2 \leq \FF(T)\|u\|_{L^\infty(I;L^2)}^2 + \|f\|_{L^1(I;L^2)}^2.$$
Using the lower bound from the Taylor sign condition and taking square roots,
$$\|(|D|^{-\half} - |\xi^t|^{-\half})^\half \tilde{w}u\|_{L^2(I;L^2)} \leq \lambda^{-\frac{3}{8}}\FF(T)(\|u\|_{L^\infty(I;L^2)} + \|f\|_{L^1(I;L^2)}).$$
Lastly, observe that for $\xi$ in the frequency support of $u$ (using Lemma \ref{freqpres}),
$$|\xi|^{-\half} - |\xi^t|^{-\half} \geq |\xi^t - c\mu|^{-\half} - |\xi^t|^{-\half} \gtrsim \lambda^{-\frac{3}{2}}\mu.$$
Using that the frequency support of $\tilde{w}$ at $\lambda^{\frac{3}{4}} \ll \mu \leq c\lambda$ essentially leaves that of $u$ unchanged, we obtain
$$\lambda^{-\frac{3}{4}}\mu^{\half}\|\tilde{w}u\|_{L^2(I;L^2)} \leq \lambda^{-\frac{3}{8}}\FF(T)(\|u\|_{L^\infty(I;L^2)} + \|f\|_{L^1(I;L^2)})$$
as desired. The case of frequency support $[\xi^t + c\mu, 2\lambda/c]$ is similar, and the case $[-2\lambda/c, -c\lambda/2]$ is better than needed.

\end{proof}

Using similar, cruder analyses, we have corresponding estimates for the cases of $u$ with low and high frequencies:

\begin{prop}\label{local1lowhigh}
Consider a smooth solution $u$ to
\begin{equation*}
(\D_t + V_\kappa\D_x + i\sqrt{a_\kappa |D|})u = f
\end{equation*}
on $I$. Let $(x^t, \xi^t)$ be a solution to (\ref{hamilton}) with initial data $(x_0, \xi_0)$ satisfying $\xi_0 \in [\lambda/2, 2\lambda]$. Let $u(t, \cdot)$ have frequency support $\{|\xi| \in [\kappa/2, 2\kappa]\}$. If $\kappa \leq c \lambda$, then 
$$\|w_{x^t, \kappa}u\|_{L^2(I;L^2)} \leq \kappa^{-\frac{1}{8}}\FF(T)(\|u\|_{L^\infty(I;L^2)} + \|f\|_{L^1(I;L^2)}).$$
If $\kappa \geq \lambda/c$, then 
$$\|w_{x^t, \lambda}u\|_{L^2(I;L^2)} \leq \lambda^{-\frac{1}{8}}\FF(T)(\|u\|_{L^\infty(I;L^2)} + \|f\|_{L^1(I;L^2)}).$$
\end{prop}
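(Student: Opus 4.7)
The plan is to follow the positive-commutator method of Proposition \ref{local1}, with the weight scale adapted to each of the two frequency regimes. Set $\tilde v(t,x) = v(\kappa^{3/4}(x - x^t))$ in Case 1 (so that $\tilde w = w_{x^t,\kappa}$ has width $\kappa^{-3/4}$) and $\tilde v(t,x) = v(\lambda^{3/4}(x - x^t))$ in Case 2 (width $\lambda^{-3/4}$), with $\sqrt{v'} = w$ as before. As in the discussion at the end of Proposition \ref{local1}, we may split $u$ by the sign of its frequency support and possibly replace $v$ by $-v$ to arrange that the main combined symbol appearing below is sign-definite on each piece.

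Using the equation $(\D_t + V_\kappa \D_x + i\sqrt{a_\kappa|D|})u = f$, the same sequence of integrations by parts as in Proposition \ref{local1} yields
\begin{align*}
\frac{d}{dt}\langle \tilde v u, u\rangle = \ &\langle \tilde v f, u\rangle + \langle i[p(D),\sqrt{a_\kappa}]\tilde v u, u\rangle + \langle i\sqrt{a_\kappa}[p(D),\tilde v]u, u\rangle \\
&+ \langle \tilde v'(V_\kappa - \dot x^t)u, u\rangle + \langle(\D_x V_\kappa)\tilde v u, u\rangle
\end{align*}
with $p(D) = |D|^{\half}\tilde S_\kappa$. Combining the leading term of $[p(D),\tilde v]$ with the dispersive part of $V_\kappa - \dot x^t = (V_\kappa - V_\lambda) - \half\sqrt{a_\lambda(x^t)}|\xi^t|^{-\half}\operatorname{sgn}(\xi^t)$ produces, after symmetrization exactly as in Proposition \ref{local1}, a main term proportional to
$$\half\,(\text{weight scale})\,\sqrt{a_\kappa(x)}\,\tilde w^2(x)\bigl(|\xi|^{-\half}\operatorname{sgn}(\xi) - |\xi^t|^{-\half}\operatorname{sgn}(\xi^t)\bigr).$$
In Case 1, the hypothesis $\kappa \leq c\lambda$ forces $|\xi|^{-\half} \sim \kappa^{-\half} \gg \lambda^{-\half} \sim |\xi^t|^{-\half}$ on the frequency support of $u$, so the factor in parentheses has magnitude $\sim \kappa^{-\half}$ and the overall positivity coefficient is $\sim \sqrt{a_{min}}\,\kappa^{\frac{1}{4}}$. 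In Case 2, $\kappa \geq \lambda/c$ forces $|\xi|^{-\half} \ll |\xi^t|^{-\half}$, so the factor is dominated by $|\xi^t|^{-\half} \sim \lambda^{-\half}$, giving positivity coefficient $\sim \sqrt{a_{min}}\,\lambda^{\frac{1}{4}}$. Integrating in time, using $|\langle \tilde v u, u\rangle(t)| \lesssim \|u(t)\|_{L^2}^2$ at the endpoints, and taking square roots produces the claimed $\kappa^{-\frac{1}{8}}$ and $\lambda^{-\frac{1}{8}}$ gains.

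The main obstacle is controlling the drift error $\langle \tilde v'(V_\kappa - V_\lambda)(x^t)u, u\rangle$, which naively scales as $(\text{weight scale})\|V_\kappa - V_\lambda\|_{L^\infty}\|\tilde w u\|^2$ and could overwhelm the positivity. However, since $\kappa$ and $\lambda$ are well-separated by a factor $1/c$ in both cases, we have $V_\kappa - V_\lambda = S_{c_1\min(\kappa,\lambda) < \cdot \leq c_1\max(\kappa,\lambda)}V$, a dyadic band bounded away from zero, and Bernstein gives $\|V_\kappa - V_\lambda\|_{L^\infty} \lesssim \min(\kappa,\lambda)^{-r}\|V\|_{W^{r,\infty}} \lesssim \min(\kappa,\lambda)^{-r}Z(t)$ with $r > 1$, so this error is easily absorbed after time integration. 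The remaining error terms—the Lipschitz remainder of $V_\kappa(x) - V_\kappa(x^t)$ on the weight support, the subprincipal parts of the commutators $[p(D),\tilde v]$ and $[p(D),\sqrt{a_\kappa}]$, and the mismatch between $\sqrt{a_\kappa}$ and $\sqrt{a_\lambda}$—are handled exactly as in Proposition \ref{local1} using the Taylor coefficient estimates of Proposition \ref{taylorbd}.
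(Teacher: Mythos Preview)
Your proof is correct and follows the approach the paper intends (the paper itself only says ``similar, cruder analyses'' and gives no details). The key points you identify are exactly right: the frequency separation $\kappa \leq c\lambda$ (resp.\ $\kappa \geq \lambda/c$) makes the combined dispersive symbol $|\xi|^{-1/2}\operatorname{sgn}\xi - |\xi^t|^{-1/2}\operatorname{sgn}\xi^t$ sign-definite of size $\kappa^{-1/2}$ (resp.\ $\lambda^{-1/2}$) on each half-line of frequencies, and the mismatch terms $V_\kappa - V_\lambda$, $\sqrt{a_\kappa} - \sqrt{a_\lambda}$ are harmless because the bands are bounded away from zero frequency. One small wording point: the drift error $(\text{scale})\,|(V_\kappa - V_\lambda)(x^t)|\,\|\tilde w u\|_{L^2}^2$ is not literally ``absorbed'' into the positive term but rather bounded directly by $\kappa^{3/4 - r}Z(t)\|u\|_{L^2}^2$ (Case 1) or $\lambda^{3/4-r}Z(t)\|u\|_{L^2}^2$ (Case 2) using $\|\tilde w u\|_{L^2} \lesssim \|u\|_{L^2}$, and then placed on the right-hand side after time integration since $r > 1$; this is what you mean, and it works.
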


We can prove paradifferential counterparts, using essentially the same analysis as in Section \ref{sec:freqloc}:

\begin{cor}\label{local2}
Consider a smooth solution $u$ to (\ref{paralinear}) on $I$. Let $(x^t, \xi^t)$ be a solution to (\ref{hamilton}) with initial data $(x_0, \xi_0)$ satisfying $\xi_0 \in [\lambda/2, 2\lambda]$.
\begin{enumerate}[i)]
\item If $u(t, \cdot)$ has frequency support on one of
$$[c\lambda/2, \xi^t - c\mu], \quad [\xi^t + c\mu, 2\lambda/c], \quad [-2\lambda/c, -c\lambda/2],$$
then 
$$\|w_{x^t, \lambda}u\|_{L^2(I;L^2)} \leq \mu^{-\half}\lambda^{\frac{3}{8}}\FF(T)(\|u\|_{L^\infty(I;L^2)} + \|f\|_{L^1(I;L^2)}).$$
\item If $u(t, \cdot)$ has frequency support $\{|\xi| \in [\kappa/2, 2\kappa]\}$ and $\kappa \leq c \lambda$, then 
$$\|w_{x^t, \kappa}u\|_{L^2(I;L^2)} \leq \kappa^{-\frac{1}{8}}\FF(T)(\|u\|_{L^\infty(I;L^2)} + \|f\|_{L^1(I;L^2)}).$$
\item If $u(t, \cdot)$ has frequency support $\{|\xi| \in [\kappa/2, 2\kappa]\}$ and $ \kappa \geq \lambda/c$, then 
$$\|w_{x^t, \lambda}u\|_{L^2(I;L^2)} \leq \lambda^{-\frac{1}{8}}\FF(T)(\|u\|_{L^\infty(I;L^2)} + \|f\|_{L^1(I;L^2)}).$$
\end{enumerate}
\end{cor}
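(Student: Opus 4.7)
The plan is to reduce the paradifferential setting to the pseudodifferential one of Propositions~\ref{local1} and~\ref{local1lowhigh}, repeating (at the level of a single dyadic frequency) the chain of reductions already carried out in Section~\ref{sec:freqloc}.

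Given $u$ solving $(\D_t + T_V \cdot \nabla + iT_\gamma)u = f$ with frequency support in one of the three intervals of case (i), I will rewrite the equation as
\[
(\D_t + V_\lambda \D_x + i\sqrt{a_\lambda|D|}) u = \tilde{f},
\]
where
\[
\tilde{f} = f + (V_\lambda - T_V)\D_x u + i(T_\gamma - \sqrt{a_\lambda|D|})u,
\]
and then invoke Proposition~\ref{local1} directly. The central task is to verify that
\[
\|\tilde{f}\|_{L^1(I;L^2)} \leq \|f\|_{L^1(I;L^2)} + \FF(T)\|u\|_{L^\infty(I;L^2)}.
\]
Since $u$ is already frequency-localized at scale $\lambda$, the paraproduct $T_V \D_x u$ only sees $V$ at frequencies below roughly $c\lambda$, so $(T_V - V_\lambda)\D_x u$ decomposes into $(S_{c_1\lambda < \cdot \leq c\lambda}V)\D_x u + (T_{V_\lambda} - V_\lambda)\D_x u$, each controlled pointwise in time by $\|V\|_{W^{1,\infty}}\|u\|_{L^2}$ as in the proofs of Propositions~\ref{prop:p2} and~\ref{redstrich}. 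For the $\gamma$-term, I will split
\[
T_\gamma - \sqrt{a_\lambda|D|} = (T_\gamma - T_{\gamma_\lambda}) + (T_{\gamma_\lambda} - \gamma_\lambda) + (\gamma_\lambda - \sqrt{a_\lambda|D|})
\]
and handle each piece exactly as in the proof of Proposition~\ref{redstrich}, using the $W^{\half,\infty}$-control of $\sqrt{a}$ provided by Proposition~\ref{taylorbd} and Corollary~\ref{gammabd}. Integrating in time then produces the required $\FF(T)$ factor.

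Cases (ii) and (iii) follow by the exact same procedure, but with the truncation performed at scale $\kappa$ in place of $\lambda$: I will compare $u$ with the solution of the pseudodifferential equation driven by $V_\kappa$ and $\sqrt{a_\kappa|D|}$, so that Proposition~\ref{local1lowhigh} applies directly and delivers either the weight $w_{x^t,\kappa}$ (when $\kappa \leq c\lambda$) or $w_{x^t,\lambda}$ (when $\kappa \geq \lambda/c$) automatically.

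I do not anticipate any substantial obstacle here, as the argument is a repetition of the Section~\ref{sec:freqloc} reductions applied to each dyadic piece; the only point that warrants care is the bookkeeping check that each of the three symbol-difference pieces --- $(T_V - V_\kappa)\D_x$, $(T_\gamma - T_{\gamma_\kappa})$, and $(T_{\gamma_\kappa} - \sqrt{a_\kappa|D|})$ --- lies in $L^1(I;L^2)$ with an $\FF(T)$ constant, which is the same calculation already carried out in the proofs of Propositions~\ref{prop:p2} and~\ref{redstrich}.
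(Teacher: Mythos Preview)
Your approach is correct and matches the paper's own (unwritten) proof, which simply says ``using essentially the same analysis as in Section~\ref{sec:freqloc}.'' The chain of reductions you describe --- passing from $T_V,\,T_\gamma$ to $V_\kappa,\,\sqrt{a_\kappa|D|}$ by absorbing the three symbol-difference pieces into the inhomogeneity exactly as in the proofs of Propositions~\ref{prop:p2} and~\ref{redstrich}, then invoking Propositions~\ref{local1} and~\ref{local1lowhigh} --- is precisely what is intended.
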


Lastly, we apply Corollary \ref{local2} to frequency localized pieces of a smooth solution $u$ to (\ref{paralinear}), with no assumed frequency localization. Recall that we construct a symmetric $\lambda$-frequency projection $S_{\xi_0, \lambda, \mu}$ with a $c\mu$-width gap at $\xi_0$, as in Section \ref{localnotation}.

\begin{cor}\label{local3}
Consider a smooth solution $u$ to (\ref{paralinear}) on $I$. Let $(x^t, \xi^t)$ be a solution to (\ref{hamilton}) with initial data $(x_0, \xi_0)$ satisfying $\xi_0 \in [\lambda/2, 2\lambda]$.
\begin{enumerate}[i)]
\item We have
$$\|w_{x^t, \lambda}S_{\xi^t, \lambda, \mu} u\|_{L^2(I;H^s)} \leq \mu^{-\frac{3}{2}}\lambda^{\frac{11}{8}}\FF(T)(\|u\|_{L^\infty(I;H^s)} + \|f\|_{L^1(I;H^s)}).$$
\item For $\kappa \leq c \lambda$,
$$\|w_{x^t, \kappa}S_\kappa u\|_{L^2(I;H^s)} \leq \kappa^{-\frac{1}{8}}\FF(T)(\|u\|_{L^\infty(I;H^s)} + \|f\|_{L^1(I;H^s)}).$$
\item For $\kappa \geq \lambda/c$, 
$$\|w_{x^t, \lambda}S_\kappa u\|_{L^2(I;H^s)} \leq \lambda^{-\frac{1}{8}}\FF(T)(\|u\|_{L^\infty(I;H^s)} + \|f\|_{L^1(I;H^s)}).$$
\end{enumerate}
\end{cor}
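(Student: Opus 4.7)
The plan is to reduce each of the three estimates to Corollary \ref{local2} by applying an appropriate frequency projection to the paradifferential equation \eqref{paralinear} and absorbing the resulting commutator errors into the inhomogeneous term. For (ii) and (iii), this is a standard dyadic reduction; for (i), the time-dependence of the projection $S_{\xi^t, \lambda, \mu}$ requires extra care.

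For (ii), I would apply $S_\kappa$ to \eqref{paralinear}, so that $S_\kappa u$ satisfies
$$(\D_t + T_V \cdot \nabla + i T_\gamma)(S_\kappa u) = S_\kappa f + [T_V \cdot \nabla, S_\kappa]u + i[T_\gamma, S_\kappa]u.$$
The paradifferential commutator estimate $(\ref{sobolevcommutator})$, combined with the Taylor coefficient bounds of Proposition \ref{taylorbd} and Corollary \ref{gammabd} (exactly as in the dyadic reduction in the proof of Theorem \ref{p1}), gives
$$\|[T_V \cdot \nabla, S_\kappa]u\|_{L^1(I; H^s)} + \|[T_\gamma, S_\kappa]u\|_{L^1(I; H^s)} \leq \FF(T)\|\tilde{S}_\kappa u\|_{L^\infty(I;H^s)}.$$
Invoking Corollary \ref{local2}(ii) on the frequency-localized $S_\kappa u$ yields the claim. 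Part (iii) is identical, using Corollary \ref{local2}(iii).

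For (i), the projection $S_{\xi^t, \lambda, \mu}$ depends on $t$ through the Hamilton trajectory $\xi^t$. Since $S_{\xi^t, \lambda, \mu}$ is supported in the $\lambda$-annulus, the low- and high-frequency contributions to $u$ can be discarded, and it suffices to estimate $v(t) := S_{\xi^t, \lambda, \mu}\tilde{S}_\lambda u(t)$. A direct computation gives
$$(\D_t + T_V \cdot \nabla + iT_\gamma) v = S_{\xi^t, \lambda, \mu} \tilde{S}_\lambda f + R_1 + R_2 + R_3,$$
where $R_1, R_2$ are the spatial paradifferential commutators with $S_{\xi^t, \lambda, \mu}\tilde{S}_\lambda$, handled as in (ii), and
$$R_3 = \dot\xi^t \, (\D_{\xi_0} S_{\xi_0, \lambda, \mu})\big|_{\xi_0 = \xi^t} \tilde{S}_\lambda u$$
is the new contribution arising from the $t$-derivative of the symbol of the projection. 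Using $\dot\xi^t = -H_x(t, x^t, \xi^t)$, Proposition \ref{taylorbd}, and the observation that $\D_{\xi_0} S_{\xi_0, \lambda, \mu}$ is bounded on $H^s$ with norm $O(\mu^{-1})$, $R_3$ obeys a polynomial-in-$\lambda, \mu$ bound in $L^1(I; H^s)$ that fits within the slack $\mu^{-1}\lambda$ between Corollary \ref{local2}(i) (phrased in $L^2 L^2$ at frequency $\lambda$) and the target $L^2 H^s$ estimate. At each $t$, $v(t,\cdot)$ has frequency support in the union of the three intervals appearing in Corollary \ref{local2}(i) for the given $\xi^t$, so applying Corollary \ref{local2}(i) to $v$ and rescaling from $L^2(L^2)$ to $L^2(H^s)$ via $\lambda^s$ completes the argument.

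The main obstacle is the careful bookkeeping in the treatment of the time-dependent projection in (i): verifying that $R_3$, together with the $\mu$-sensitive behavior of the spatial commutators $R_1, R_2$ with the narrow projection $S_{\xi^t, \lambda, \mu}$, are indeed controlled by $\FF(T) \mu^{-3/2} \lambda^{11/8}(\|u\|_{L^\infty H^s} + \|f\|_{L^1 H^s})$. The key quantitative inputs are the pointwise bound $|\dot\xi^t| \lesssim \lambda^{1/2} \FF(T) Z(t)$ from Lemma \ref{freqpres}, the $Z(t) \in L^2(I)$ integrability, and the multiplier estimate $\|\D_{\xi_0} S_{\xi_0, \lambda, \mu}\|_{H^s \to H^s} \lesssim \mu^{-1}$; everything else is a standard reduction to Corollary \ref{local2}.
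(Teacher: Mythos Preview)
Your approach is essentially identical to the paper's: apply the projection, absorb the commutators into the inhomogeneity, and invoke Corollary \ref{local2}. The paper likewise singles out the $\mu$-sensitivity of the spatial commutators (since $p'$ is of size $\mu^{-1}$, not $\lambda^{-1}$) and the extra time-commutator $[\D_t, p(D-\xi^t)]u = \dot\xi^t\, p'(D-\xi^t)u$ as the two points requiring care in (i).

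One quantitative slip: you claim $|\dot\xi^t| \lesssim \lambda^{1/2}\FF(T)Z(t)$ from Lemma \ref{freqpres}, but that lemma only gives $|\xi^t|\approx\lambda$. The actual bound, read directly from $\dot\xi^t = -H_x = -\D_x V_\lambda\,\xi^t - \D_x\sqrt{a_\lambda}\,|\xi^t|^{1/2}$, is $|\dot\xi^t|\lesssim \lambda\,\FF(M(t))Z(t)$ (the transport term dominates). This does not break the argument: combined with $\|\D_{\xi_0}S_{\xi_0,\lambda,\mu}\|_{H^s\to H^s}\lesssim\mu^{-1}$ it gives $\|R_3\|_{L^1(I;H^s)}\lesssim \lambda\mu^{-1}\FF(T)\|u\|_{L^\infty(I;H^s)}$, which exactly saturates the $\mu^{-1}\lambda$ slack you correctly identified between Corollary \ref{local2}(i) and the target bound $\mu^{-3/2}\lambda^{11/8}$. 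With this correction your proof matches the paper's.
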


\begin{proof}
For $(ii)$ and $(iii)$, we may apply Corollary \ref{local2} on $S_\kappa u$ with inhomogeneity
$$S_\kappa f + [T_V \D, S_\kappa]u + i[T_{\sqrt{a}}|D|^\half, S_\kappa] u.$$
The commutators are estimated in the same way as in the proof of Theorem \ref{p1} in terms of Proposition \ref{prop:p2}, though we do not sum the frequency pieces here.

The case of $(i)$ is similar. However, for $p$ as in the definition of the symbol of $S_{\xi^t, \lambda, \mu}$, we observe that $p'$ is not of order $-1$ uniformly in $\mu$ and $\lambda$. Rather, we need to consider instead $\mu \lambda^{-1} p'$. Additionally, we need to estimate
$$[\D_t, p(D - \xi^t)]u = \dot{\xi}^t p'(D - \xi^t) u = (-\D_x V_\lambda \xi^t - \D_x \sqrt{a_\lambda} |\xi^t|^\half) p'(D - \xi^t) u$$
in $L^1(I;L^2)$ as an inhomogeneous term (and likewise for the similar $p(-D + \xi^t)$ case). Using Lemma \ref{freqpres} to see that $\xi^t \approx \xi_0$, we obtain
$$\|(-\D_x V_\lambda \xi^t - \D_x \sqrt{a_\lambda} |\xi^t|^\half) p'(D - \xi^t) u\|_{L^1(I;L^2)} \lesssim \lambda (\|V\|_{L^1(I; C^1)} + \|a\|_{L^1(I; C^\half)}) \mu^{-1} \|u\|_{L^\infty(I;L^2)}.$$
Using the Taylor coefficient estimates of Proposition \ref{taylorbd} yields the desired estimate.

\end{proof}

\subsection{Estimates on the surface and velocity field}

In this section we establish local smoothing estimates on the original unknowns $(\eta, \psi, V, B)$, using Proposition \ref{prop:paralinearization}.

\begin{prop}\label{wwlocalsmoothing}
Let $(x^t, \xi^t)$ be a solution to (\ref{hamilton}) with initial data $(x_0, \xi_0)$ satisfying $\xi_0 \in [\lambda/2, 2\lambda]$.
\begin{enumerate}[i)]
\item We have
\begin{align*}
\|w_{x^t, \lambda}S_{\xi^t, \lambda, \mu}(\eta, \psi)\|_{L^2(I;H^{s + \half})} + \|w_{x^t, \lambda}S_{\xi^t, \lambda, \mu}(V, B)\|_{L^2(I;H^{s})} \leq \mu^{-\frac{3}{2}}\lambda^{\frac{11}{8}}\FF(T).
\end{align*}
\item For $\kappa \leq c \lambda$,
$$ \|w_{x^t, \kappa} S_\kappa(\eta, \psi)\|_{L^2(I;H^{s + \half})} + \|w_{x^t, \kappa} S_\kappa(V, B)\|_{L^2(I;H^s)} \leq \kappa^{-\frac{1}{8}}\FF(T).$$
\item For $ \kappa \geq \lambda/c$,
$$\|w_{x^t, \lambda} S_\kappa(\eta, \psi)\|_{L^2(I;H^{s + \half})} + \|w_{x^t, \lambda} S_\kappa(V, B)\|_{L^2(I;H^s)} \leq \lambda^{-\frac{1}{8}}\FF(T).$$
\end{enumerate}
\end{prop}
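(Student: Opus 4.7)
The plan is to apply Corollary~\ref{local3} to the complex unknown $u$ defined by (\ref{complexu}), and then invert this definition together with the algebraic relations (\ref{BVformulas}) to transfer the resulting local smoothing bounds to the original unknowns $(\eta, \psi, V, B)$.

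First, by Proposition~\ref{prop:paralinearization} the unknown $u$ solves (\ref{paralinear}) with inhomogeneity $f$ satisfying $\|f(t)\|_{H^s} \leq \FF(M(t))Z(t)$, hence $\|f\|_{L^1(I;H^s)} \leq \FF(T)$. The definition (\ref{complexu}), combined with the a priori $L^\infty(I;H^{s+\half})$ bound on $\eta$, the $L^\infty(I;H^s)$ bound on $(V,B)$, and the Taylor coefficient bounds of Proposition~\ref{taylorbd}, also yields $\|u\|_{L^\infty(I;H^s)} \leq \FF(T)$. Feeding these two estimates into Corollary~\ref{local3} produces the three local smoothing bounds on $u$ in $H^s$, namely for $w_{x^t,\lambda}S_{\xi^t,\lambda,\mu}u$, for $w_{x^t,\kappa}S_\kappa u$ with $\kappa \leq c\lambda$, and for $w_{x^t,\lambda}S_\kappa u$ with $\kappa \geq \lambda/c$, each with right hand side $\FF(T)$ and the desired power of $\mu$ or $\kappa$.

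Next, I would invert (\ref{complexu}) to pass from $u$ back to $\eta$ and $V$. Writing $\langle D \rangle^s u = U_s - iT_{\sqrt{a/\Lambda}}\langle D \rangle^s\nabla\eta$, the imaginary part controls $\nabla\eta$ after inverting the order-$(-\half)$ paradifferential operator $T_{\sqrt{a/\Lambda}}$, whose ellipticity on the relevant frequency shells follows from the Taylor sign condition. The real part controls $V$ once the paraproduct correction $T_{\nabla\eta}\langle D \rangle^s B$ is moved to the other side and estimated via standard paraproduct bounds against the a priori $L^\infty(I;H^{s+\half})$ and $L^\infty(I;H^s)$ bounds on $\nabla\eta$ and $B$. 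Since $w_{x^t,\kappa}$ has frequency support $\{|\xi|\leq \kappa^{3/4}\}$, which is small compared both to $\kappa$ and to the scales on which $\sqrt{a/\Lambda}$ and $\nabla\eta$ vary, the weight essentially commutes with these paradifferential symbols and with the frequency cutoffs $S_\kappa$, $S_{\xi^t,\lambda,\mu}$, modulo errors of strictly lower order in $\kappa$ or $\mu$. The remaining unknowns are then recovered from (\ref{BVformulas}): $\nabla\psi = V + B\nabla\eta$ yields local smoothing on $\nabla\psi$ by Moser-type paraproduct estimates, and $B = (\nabla\eta\cdot\nabla\psi + G(\eta)\psi)/(1+|\nabla\eta|^2)$ yields the bound on $B$ by invoking in addition the local paralinearization of the Dirichlet-to-Neumann map from Section~\ref{sec:localdnpar} to estimate $G(\eta)\psi$ in the local smoothing seminorms.

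The main obstacle, I expect, is the systematic bookkeeping of commutator errors when moving the spatial weight $w_{x^t,\kappa}$ and the frequency projections past the paradifferential operators $T_{\sqrt{a/\Lambda}}$ and $T_{\nabla\eta}$ appearing in (\ref{complexu}), uniformly across the three frequency regimes. One must verify that the commutator remainders decay in $\kappa$, $\mu$ at least as fast as the main contributions, namely $\kappa^{-1/8}$ in the low/high frequency cases and $\mu^{-3/2}\lambda^{11/8}$ in the $S_{\xi^t,\lambda,\mu}$ case. This mirrors the symbol-truncation analysis already carried out in Section~\ref{sec:freqloc}, now performed inside the weighted seminorms, and only requires $L^\infty$-in-space bounds on low-frequency pieces of $\eta$ and $a$. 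Once these commutator estimates are in place, the three local smoothing bounds of Proposition~\ref{wwlocalsmoothing} follow from Corollary~\ref{local3} by purely algebraic manipulations of (\ref{complexu}) and (\ref{BVformulas}).
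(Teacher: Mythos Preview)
Your overall strategy---apply Corollary~\ref{local3} to the complex unknown $u$ and then invert the definition (\ref{complexu})---is exactly the paper's approach, and your extraction of $\eta$ from the imaginary part via the ellipticity of $T_{\sqrt{a/\Lambda}}$ is correct.

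The gap is in your extraction of $V$ from the real part $U_s = \langle D\rangle^s V + T_{\nabla\eta}\langle D\rangle^s B$. You propose to move $T_{\nabla\eta}\langle D\rangle^s B$ to the right and bound it using only the a~priori $L^\infty(I;H^s)$ control on $B$. But in the paraproduct $T_{\nabla\eta}\langle D\rangle^s B$, the high-frequency factor is $B$ itself; after localizing by $w_{x^t,\kappa}S_\kappa$ and commuting, you are left with $\|\nabla\eta\|_{L^\infty}\|w_{x^t,\kappa}\tilde S_\kappa\langle D\rangle^s B\|_{L^2}$, which the a~priori bound on $B$ controls only by $\FF(T)$, \emph{without} the required $\kappa^{-1/8}$ gain. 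Thus $V$ cannot be recovered before $B$. Your proposed fallback, recovering $B$ from $B=(\nabla\eta\cdot\nabla\psi+G(\eta)\psi)/(1+|\nabla\eta|^2)$, is then circular, since $\nabla\psi=V+B\nabla\eta$ already requires $B$.

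The paper breaks this circularity by a different algebraic move: it invokes the identity $\partial_x V = -G(\eta)B - \Gamma_y$ (from \cite[Proposition~4.5]{alazard2014cauchy}) to replace $V$ in the real part by $-\partial_x^{-1}|D|B$ up to errors that \emph{do} gain $\kappa^{-1/2}$ (via Propositions~\ref{roughbottomesttame} and~\ref{DNparalineartame}). This converts the real part into $(-\partial_x^{-1}|D| + T_{\nabla\eta})\langle D\rangle^s S_\kappa B$, whose symbol $\mp i + \nabla\eta$ is elliptic of order~$0$, so it can be inverted to yield local smoothing on $B$ first. The correct recovery order is therefore $\eta \to B \to V \to \psi$, not $\eta \to V \to \psi \to B$.
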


\begin{proof}
We establish $(ii)$; the other cases are similar.

By Proposition \ref{prop:paralinearization}, we have that 
$$u = \langle D \rangle^{-s}(\langle D \rangle^s V + T_{\nabla \eta} \langle D \rangle^sB -iT_{\sqrt{a/|D|}} \langle D \rangle^s \nabla \eta)$$
solves 
$$(\D_t + T_{V}\D_x + iT_{\sqrt{a}}\sqrt{|D|})u = f$$
with 
$$\|f\|_{L^1(I;H^s)} \leq \FF(T).$$
Further, using Sobolev embedding and (\ref{ordernorm}), it is easy to see that
$$\|u\|_{L^\infty(I;H^s)} \leq \FF(T).$$
We conclude by Corollary \ref{local3},
$$\| w_{x^t, \kappa} S_\kappa u\|_{L^2(I;H^s)} \leq \kappa^{-\frac{1}{8}}\FF(T).$$
Using the frequency localization of $w_{x^t, \kappa}$ at $\kappa^{-\frac{3}{4}} \ll \kappa$, we may commute $\langle D \rangle^s$ to obtain
$$\| w_{x^t, \kappa}\langle D \rangle^s S_\kappa u\|_{L^2(I;L^2)} \leq \kappa^{-\frac{1}{8}}\FF(T).$$

\emph{Step 1.} First we estimate $\eta$. Taking the imaginary part of $u$,
$$\| w_{x^t, \kappa} S_\kappa T_{\sqrt{a}}|D|^{-\half}  \langle D \rangle^s\nabla \eta\|_{L^2(I;L^2)} \leq \kappa^{-\frac{1}{8}}\FF(T).$$

We may commute $S_\kappa$ with $T_{\sqrt{a}}$ by absorbing the commutator into the right hand side, using (\ref{sobolevcommutator}) with the estimates on the Taylor coefficient $a$ provided in Proposition \ref{taylorbd}:
\begin{align*}
\|[S_\kappa, T_{\sqrt{a}}]|D|^{-\half}\langle D \rangle^s \nabla \tilde{S}_\kappa \eta\|_{L^2(I;L^2)} &\leq \FF(T)\||D|^{-\half}\langle D \rangle^s  \nabla \tilde{S}_\kappa\eta\|_{L^\infty(I;H^{-\half })} \\
&\leq \FF(T)\|\tilde{S}_\kappa \eta\|_{L^\infty(I;H^{s})} \leq \kappa^{-\half}\FF(T)\|\tilde{S}_\kappa \eta\|_{L^\infty(I;H^{s + \half})}
\end{align*}
so that
$$\| w_{x^t, \kappa} T_{\sqrt{a}}|D|^{-\half}  \langle D \rangle^{s} \nabla S_\kappa \eta\|_{L^2(I;L^2)} \leq \kappa^{-\frac{1}{8}}\FF(T).$$
We may also exchange $T_{\sqrt{a}}$ with $\sqrt{a}$, absorbing the error into the right hand side using (\ref{sobolevparaproduct}), (\ref{sobolevparaerror}), and Proposition \ref{taylorbd}:
\begin{align*}
\|T_{|D|^{-\half}  \langle D \rangle^{s} \nabla S_\kappa \eta}\sqrt{a} \|_{L^2(I;L^2)} &\lesssim \||D|^{-\half}  \langle D \rangle^{s} \nabla S_\kappa \eta\|_{L^2(I;C_*^{\half-s})} \|\sqrt{a}\|_{L^\infty(I;H^{s - \half})} \\
&\leq \kappa^{-\half}\FF(T),\\
\|R(|D|^{-\half}  \langle D \rangle^{s} \nabla S_\kappa \eta, \sqrt{a}) \|_{L^2(I;L^2)} &\lesssim \||D|^{-\half}  \langle D \rangle^{s} \nabla S_\kappa \eta\|_{L^2(I;C_*^{\half-s})} \|\sqrt{a}\|_{L^\infty(I;H^{s - \half})} \\
&\leq \kappa^{-\half}\FF(T),
\end{align*}
so that
$$\|\sqrt{a}w_{x^t, \kappa}|D|^{-\half}  \langle D \rangle^{s} \nabla S_\kappa \eta\|_{L^2(I;L^2)} \leq \kappa^{-\frac{1}{8}}\FF(T).$$
Using the Taylor sign condition, $a \geq a_{min} > 0$, 
$$\|w_{x^t, \kappa}|D|^{-\half}  \langle D \rangle^{s} \nabla S_\kappa \eta\|_{L^2(I;L^2)} \leq \kappa^{-\frac{1}{8}}\FF(T).$$
Lastly, recalling the frequency localization of $w_{x^t, \kappa}$, we obtain the desired result.

\

\emph{Step 2.} Next, we estimate $B$. Taking the real part of $u$,
$$\| w_{x^t, \kappa}S_\kappa(\langle D \rangle^s V + T_{\nabla \eta} \langle D \rangle^s B)\|_{L^2(I;L^2)} \leq \kappa^{-\frac{1}{8}}\FF(T).$$
Similar to Step 1, we may commute $S_\kappa$ with $T_{\nabla \eta}$, obtaining
$$\| w_{x^t, \kappa}(\langle D \rangle^s \D_x^{-1}  S_\kappa\D_x V + T_{\nabla \eta} \langle D \rangle^s S_\kappa B)\|_{L^2(I;L^2)} \leq \kappa^{-\frac{1}{8}}\FF(T).$$
Recalling from Proposition 4.5 of \cite{alazard2014cauchy},
$$\D_x V = -G(\eta)B - \Gamma_y.$$
We may thus exchange $\D_x V$ with $-G(\eta)B$, absorbing the $\Gamma_y$ by using Proposition \ref{roughbottomesttame} to estimate
$$\|\langle D \rangle^s \D_x^{-1}  S_\kappa \Gamma_y\|_{L^2(I;L^2)} \lesssim \kappa^{-\half} \|\Gamma_y\|_{L^2(I;H^{s - \half})} \leq \kappa^{-\half} \FF(T).$$
Further, we can exchange $G(\eta) B$ with $|D| B$ by using Proposition \ref{DNparalineartame} to estimate
\begin{align*}
\|\langle D \rangle^s \D_x^{-1}  S_\kappa (G(\eta)B - |D| B)\|_{L^2(I;L^2)} &\lesssim \kappa^{-\half}\|G(\eta)B - |D| B\|_{L^2(I;H^{s - \half})} \leq \kappa^{-\half}\FF(T).
\end{align*}
We conclude
$$\| w_{x^t, \kappa}(-\D_x^{-1}|D| + T_{\nabla \eta})\langle D \rangle^{s}S_\kappa B\|_{L^2(I;L^2)} \leq \kappa^{-\frac{1}{8}}\FF(T).$$

Similar to Step 1, we may exchange $T_{\nabla \eta}$ with $\nabla \eta$, and using as usual the frequency localization of $w_{x^t, \kappa}$, commute $w_{x^t, \kappa}$ with $\D_x^{-1}|D|$:
$$\|(-\D_x^{-1}|D| + (\nabla \eta))w_{x^t, \kappa}\langle D \rangle^{s}S_\kappa B\|_{L^2(I;L^2)} \leq \kappa^{-\frac{1}{8}}\FF(T).$$

We can restore the paralinearization by estimating (using the frequency localization of $w_{x^t, \kappa}$ and $S_\kappa B$)
\begin{align*}
\|T_{w_{x^t, \kappa}\langle D \rangle^{s}S_\kappa B} \nabla \eta\|_{L^2(I;L^2)} &\lesssim \|w_{x^t, \kappa}\langle D \rangle^{s}S_\kappa B\|_{L^2(I;C_*^{\half - s})} \|\nabla \eta\|_{L^\infty(I;H^{s - \half})} \\
& \leq \kappa^{-\half}\FF(T).
\end{align*}
and similarly for the balanced-frequency term. We conclude
$$\|T_{i\xi^{-1}|\xi|+ \nabla \eta}w_{x^t, \kappa}\langle D \rangle^{s}S_\kappa B\|_{L^2(I;L^2)} \leq \kappa^{-\frac{1}{8}}\FF(T).$$
Lastly, by (\ref{ordernorm}), 
$$\|T_{(i\xi^{-1}|\xi| + \nabla \eta)^{-1}}T_{i\xi^{-1}|\xi| + \nabla \eta}w_{x^t, \kappa}\langle D \rangle^{s}S_\kappa B\|_{L^2(I;L^2)} \leq \kappa^{-\frac{1}{8}}\FF(T).$$
which we may exchange for the desired estimate by (\ref{sobolevcommutator}).

\

\emph{Step 3.} We estimate $V$. Using the estimate on $B$ from Step 2 along with an analysis similar to that in Step 1 to commute $T_{\nabla \eta}$, we have
$$\| w_{x^t, \kappa}S_\kappa T_{\nabla \eta} \langle D \rangle^s B\|_{L^2(I;L^2)} \leq \kappa^{-\frac{1}{8}}\FF(T).$$
Recalling the estimate on the real part of $u$ at the beginning of Step 2, we may absorb this into the right hand side. The remaining term is the desired estimate.

\

\emph{Step 4.} Lastly, we estimate $\psi$ using the formula
$$\nabla \psi = V + B \nabla \eta.$$
Note that it suffices to show
$$\|w_{x^t, \kappa} \langle D \rangle^{s - \frac{3}{8}} S_\kappa \nabla \psi\|_{L^2(I;L^2)} \leq \FF(T).$$
We easily have
$$\|w_{x^t, \kappa} \langle D  \rangle^{s - \frac{3}{8}} S_\kappa V\|_{L^2(I;L^2)} \leq \FF(T)$$
so it remains to show
$$\|w_{x^t, \kappa} \langle D \rangle^{s - \frac{3}{8}} S_\kappa B\nabla \eta\|_{L^2(I;L^2)} \leq \FF(T).$$
We may commute $B$ with $ \langle D \rangle^{s - \frac{3}{8}} S_\kappa $ by estimating
\begin{align*}
\|[\langle D \rangle^{s - \frac{3}{8}} S_\kappa, B]\nabla \eta\|_{L^2(I;L^2)} \lesssim \ &\|B\|_{L^2(I; C^1)} \|\eta\|_{L^2(I;H^{s - \frac{3}{8}})} + \|B\|_{L^2(I;H^{s - \frac{3}{8}})}\|\nabla \eta||_{L^\infty} \leq \FF(T).
\end{align*}
Then it remains to show
$$\|Bw_{x^t, \kappa}\langle D \rangle^{s - \frac{3}{8}}S_\kappa \nabla \eta\|_{L^2(I;L^2)} \leq \FF(T),$$
which is immediate from the local estimate on $\eta$.

\end{proof}

As a straightforward consequence of case $(ii)$ in Proposition \ref{wwlocalsmoothing}, we have

\begin{cor}\label{wwlsspaceeasy}
Let $(x^t, \xi^t)$ be a solution to (\ref{hamilton}) with initial data $(x_0, \xi_0)$ satisfying $\xi_0 \in [\lambda/2, 2\lambda]$. Then
$$\|(\eta, \psi)(t)\|_{L^2(I; LS^{s' + \half-}_{x^t, \lambda})} + \|(V, B)(t)\|_{L^2(I; LS^{s'-}_{x^t, \lambda})} \leq \FF(T).$$
\end{cor}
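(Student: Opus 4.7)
The proof plan is a direct reduction to case $(ii)$ of Proposition \ref{wwlocalsmoothing} via a Bernstein-type derivative transfer and a dyadic geometric summation. The only content missing from case $(ii)$ is the extra $\frac{1}{8} - \eps$ derivatives (going from $H^{s + \half}$ up to $H^{s' + \half -}$, or from $H^s$ up to $H^{s'-}$), together with the summability in $\kappa$.

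First I would unfold the definition of the local seminorm, writing
\begin{equation*}
\|(\eta, \psi)(t)\|_{LS^{s' + \half-}_{x^t, \lambda}} = \sum_{\kappa \leq c\lambda} \|w_{x^t, \kappa} S_\kappa (\eta, \psi)(t)\|_{H^{s + \frac{5}{8}-}},
\end{equation*}
and similarly for $(V,B)$. For each dyadic $\kappa \leq c\lambda$, the product $w_{x^t, \kappa} S_\kappa f$ is essentially localized at frequency $\approx \kappa$: the weight $w_{x^t, \kappa}$ has frequency support $\{|\xi| \leq \kappa^{\frac{3}{4}}\}$, which lies well inside the frequency scale $\kappa$ of $S_\kappa$, so one can insert a fattened projection $\tilde S_\kappa$ at essentially no cost and deduce the Bernstein inequality
\begin{equation*}
\|w_{x^t, \kappa} S_\kappa f\|_{H^{\sigma + \delta}} \lesssim \kappa^{\delta} \|w_{x^t, \kappa} S_\kappa f\|_{H^{\sigma}}, \qquad \delta > 0.
\end{equation*}

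I would then apply this with $\delta = \frac{1}{8} - \eps$ together with case $(ii)$ of Proposition \ref{wwlocalsmoothing}, which provides the gain of $\kappa^{-\frac{1}{8}}$. This yields, for $\kappa \leq c\lambda$,
\begin{equation*}
\|w_{x^t, \kappa} S_\kappa (\eta, \psi)\|_{L^2(I; H^{s + \frac{5}{8} - \eps})} \lesssim \kappa^{\frac{1}{8} - \eps} \cdot \kappa^{-\frac{1}{8}} \FF(T) = \kappa^{-\eps} \FF(T),
\end{equation*}
and analogously $\|w_{x^t, \kappa} S_\kappa (V, B)\|_{L^2(I; H^{s + \frac{1}{8} - \eps})} \lesssim \kappa^{-\eps} \FF(T)$. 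Summing the dyadic series
$\sum_{\kappa} \kappa^{-\eps} \lesssim 1$
(absorbing the $\kappa = 0$ term with a direct $L^\infty H^s$ bound) gives the claimed estimate.

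There is no real obstacle here; all the analytic work has already been done in Proposition \ref{wwlocalsmoothing}. The corollary is essentially a bookkeeping step: the $\frac{1}{8}$ gain of local smoothing is spent to produce the $\frac{1}{8}$ extra derivatives implicit in the prime notation $s' = s + \frac{1}{8}$, while the small loss $\eps$ ensures that the dyadic sum over $\kappa$ converges geometrically.
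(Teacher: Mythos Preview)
Your proposal is correct and matches the paper's intended argument; the paper does not even write out a proof, simply noting that the corollary is ``a straightforward consequence of case $(ii)$ in Proposition \ref{wwlocalsmoothing}.'' Your Bernstein step (using the frequency localization of $w_{x^t,\kappa}$ at $\kappa^{3/4} \ll \kappa$ to trade the $\kappa^{-1/8}$ gain for $\frac{1}{8}-\eps$ derivatives and then summing the residual $\kappa^{-\eps}$) is exactly the unpacking the paper has in mind.
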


The other cases will be used in considering local Sobolev estimates on products in the following subsection.

\subsection{Local smoothing on products}

Recall that we define the following local seminorm for $\sigma \in \R$ (see Section \ref{localnotation}):
\begin{align*}
\|f\|_{LS^\sigma_{x_0, \xi_0, \lambda, \mu}} = \ &\sum_{\kappa \in [c\mu, c\lambda]} ( \kappa^{\frac{3}{4}} \mu^{-1} \|S_\kappa f\|_{H^\sigma} +\|w_{x_0, \kappa}S_\kappa f\|_{H^\sigma}) \\
&+ \lambda^{\frac{3}{4}} \mu^{-1}\|S_{\geq c\lambda} f \|_{H^\sigma} + \sum_{\kappa \geq \lambda/c} \|w_{x_0, \lambda} S_\kappa f\|_{H^\sigma} \\
&+ \|w_{x_0, \lambda}S_{\xi_0, \lambda, \mu}f\|_{H^\sigma}.
\end{align*}
This definition is motivated by the following balanced-frequency product estimate:

\begin{prop}\label{bilinear}
Let $\alpha + \beta = \alpha' + \beta' > 0$. Then
\begin{align*}
\|w_{x_0, \lambda}S_\mu R(f, g)\|_{H^{\alpha + \beta}} \lesssim \ &\|f\|_{LS^\alpha_{x_0, \xi_0, \lambda, \mu}}\|g\|_{C_*^\beta} + \|S_\lambda f\|_{C_*^{\alpha'}}\|w_{x_0, \lambda}S_{\xi_0, \lambda, \mu}g\|_{H^{\beta'}}.
\end{align*}
In the case $\alpha = \beta = 0$, we have
\begin{align*}
\|w_{x_0, \lambda}S_\mu R(f, g)\|_{L^2} \lesssim \ &\|f\|_{LS^0_{x_0, \xi_0, \lambda, \mu}}\|g\|_{C_*^{0+}} + \|S_\lambda f\|_{C_*^{\alpha'}}\|w_{x_0, \lambda}S_{\xi_0, \lambda, \mu}g\|_{H^{\beta'}}.
\end{align*}
\end{prop}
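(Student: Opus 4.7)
I would prove the proposition by a dyadic decomposition of the balanced-frequency form. Write $R(f, g) = \sum_\kappa S_\kappa f \cdot \tilde S_\kappa g$, and note that $S_\mu(S_\kappa f \cdot \tilde S_\kappa g)$ vanishes unless $\kappa \gtrsim \mu$. Since $w_{x_0, \lambda}$ has frequency support in $\{|\xi| \leq \lambda^{3/4}\}$ and $\mu \gg \lambda^{3/4}$, the product $w_{x_0, \lambda} S_\mu u$ is concentrated on $\{|\xi| \sim \mu\}$, so
$$
\|w_{x_0, \lambda} S_\mu u\|_{H^{\alpha+\beta}} \sim \mu^{\alpha+\beta}\|w_{x_0, \lambda} S_\mu u\|_{L^2},
$$
and a standard commutator argument lets me slide $w_{x_0, \lambda}$ through $S_\mu$ up to errors of multiplier order $\lambda^{3/4}\mu^{-1} \ll 1$. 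It thus suffices to bound each $\|w_{x_0, \lambda} S_\mu(S_\kappa f \cdot \tilde S_\kappa g)\|_{L^2}$ and sum.

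For $c\mu \leq \kappa \leq c\lambda$, I use the pointwise bound $w_{x_0, \lambda} \lesssim w_{x_0, \kappa}$ (valid because $\kappa \leq \lambda$ and both weights are Schwartz functions with matching algebraic decay rates). Putting the weight on $f$ and applying an $L^2 \times L^\infty$ H\"older estimate gives
$$
\|w_{x_0, \lambda} S_\mu(S_\kappa f \cdot \tilde S_\kappa g)\|_{L^2} \lesssim \|w_{x_0, \kappa} S_\kappa f\|_{L^2}\|\tilde S_\kappa g\|_{L^\infty} \lesssim \kappa^{-\alpha-\beta}\|w_{x_0, \kappa} S_\kappa f\|_{H^\alpha}\|g\|_{C_*^\beta},
$$
so the contribution to $H^{\alpha+\beta}$ carries the factor $(\mu/\kappa)^{\alpha+\beta}$, which is summable in $\kappa$ for $\alpha + \beta > 0$ and yields $\|f\|_{LS^\alpha_{x_0, \xi_0, \lambda, \mu}}\|g\|_{C_*^\beta}$. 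The regime $\kappa \geq \lambda/c$ is handled analogously using $\|w_{x_0, \lambda} S_\kappa f\|_{H^\alpha}$, which appears directly in the seminorm.

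The remaining critical range is $\kappa \sim \lambda$. Here I decompose both $S_\kappa f$ and $\tilde S_\kappa g$ into an \emph{away} piece (captured by $S_{\xi_0, \lambda, \mu}$) and a \emph{near} piece with frequency support in $\bigcup_{\pm}\{|\xi \mp \xi_0| \leq c\mu/2\}$. The key observation is that the near--near product is annihilated by $S_\mu$: its frequency support lies either near $0$ or near $\pm 2\xi_0 \sim \pm 2\lambda$, both of which miss the annulus $\{|\eta| \sim \mu\}$ provided $c < 1/2$. Of the remaining three cross-terms, I put the weight on the \emph{away} factor and bound the \emph{near} factor in $L^\infty$. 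The near-$f$/away-$g$ term gives
$$
\|S_\kappa f\|_{L^\infty}\|w_{x_0, \lambda} S_{\xi_0, \lambda, \mu}\tilde S_\kappa g\|_{L^2} \lesssim \lambda^{-\alpha'-\beta'}\|S_\lambda f\|_{C_*^{\alpha'}}\|w_{x_0, \lambda} S_{\xi_0, \lambda, \mu} g\|_{H^{\beta'}},
$$
producing exactly the second term on the right-hand side after multiplying by $\mu^{\alpha+\beta} = \mu^{\alpha'+\beta'}$ and using $\mu \leq \lambda$. The away-$f$/near-$g$ and away-$f$/away-$g$ contributions are controlled by $\|w_{x_0, \lambda} S_{\xi_0, \lambda, \mu} f\|_{H^\alpha}\|g\|_{C_*^\beta}$, which is a term in $\|f\|_{LS^\alpha}\|g\|_{C_*^\beta}$.

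The main obstacle will be the endpoint case $\alpha = \beta = 0$, where $(\mu/\kappa)^{\alpha+\beta} = 1$ and the geometric sum over $\kappa$ diverges logarithmically. This is precisely why the hypothesis is sharpened to $g \in C_*^{0+}$ in the endpoint statement: the extra $\epsilon$ of H\"older regularity on $g$ supplies a $\kappa^{-\epsilon}$ factor that restores summability. A secondary technical point is verifying the pointwise inequality $w_{x_0, \lambda} \lesssim w_{x_0, \kappa}$ uniformly in $\kappa \leq \lambda$; this requires choosing $w$ with both upper and lower algebraic decay at infinity, which is achievable since $w$ may be taken to have $\mathrm{sinc}^2$-type behavior subject to the frequency support constraint.
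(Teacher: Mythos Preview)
Your argument is correct and matches the paper's proof step for step: the same three-regime split ($\kappa \in [c\mu, c\lambda]$, $\kappa \sim \lambda$, $\kappa \geq \lambda/c$), the same pointwise domination $w_{x_0,\lambda} \lesssim w_{x_0,\kappa}$ in the low range, and the same near/away decomposition via $S_{\xi_0,\lambda,\mu}$ at $\kappa \sim \lambda$ with the observation that the near--near product misses the $\mu$-annulus. The one point you elide is that ``putting the weight on $f$'' requires commuting $w_{x_0,\kappa}$ (after the pointwise replacement, not $w_{x_0,\lambda}$ directly) through $S_\mu$; the resulting commutator error is precisely the $\kappa^{3/4}\mu^{-1}\|S_\kappa f\|_{H^\alpha}$ term (and $\lambda^{3/4}\mu^{-1}\|S_{\geq c\lambda}f\|_{H^\alpha}$ in the high range) built into the definition of $\|f\|_{LS^\alpha_{x_0,\xi_0,\lambda,\mu}}$, so those unweighted pieces of the seminorm are there exactly to absorb these errors and should be noted explicitly.
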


\begin{proof}
For simplicity, we set $\alpha = \beta = \alpha' = \beta' = 0$ and accordingly use $L^\infty$ in place of $C_*^\alpha$; the generalization is easily obtained. First write
\begin{align}
\label{prersum} R(f, g) = \ &R(S_{\leq c\lambda} f, S_{\leq c\lambda} g) + R(S_{c\lambda \leq \cdot \leq \lambda/c} f, S_{c\lambda \leq \cdot \leq \lambda/c} g) + R(S_{\geq \lambda/c} f, S_{\geq \lambda/c} g). 
\end{align}

From the first term of (\ref{prersum}), we have
$$w_{x_0, \lambda} S_\mu  R(S_{\leq c\lambda} f, S_{\leq c\lambda} g) = w_{x_0, \lambda} S_\mu R(S_{c\mu \leq \cdot \leq c\lambda} f, S_{c\mu \leq \cdot \leq c\lambda} g).$$
A term of this sum takes the form
$$w_{x_0, \lambda} S_\mu ((S_\kappa f)(S_\kappa g))$$
with $\kappa \in [c\mu, c\lambda]$. Observe that 
$$\|w_{x_0, \lambda} S_\mu ((S_\kappa f)(S_\kappa g))\|_{L^2} \lesssim \|w_{x_0, \kappa} S_\mu ((S_\kappa f)(S_\kappa g))\|_{L^2}.$$
We commute
\begin{align*}
\|[w_{x_0, \kappa}, S_\mu] ((S_\kappa f)(S_\kappa g))\|_{L^2} &= \|[w_{x_0, \kappa}, S_\mu] \tilde{S}_\mu((S_\kappa f)(S_\kappa g))\|_{L^2} \\
&\lesssim \kappa^{\frac{3}{4}} \mu^{-1}\|\tilde{S}_\mu((S_\kappa f)(S_\kappa g))\|_{L^2} \\
&\lesssim \kappa^{\frac{3}{4}} \mu^{-1}\|S_\kappa f\|_{L^2}\|g\|_{L^\infty}.
\end{align*}
Thus it remains to consider
$$S_\mu w_{x_0, \kappa} ((S_\kappa f)(S_\kappa g))$$
which we estimate
$$\|S_\mu w_{x_0, \kappa}((S_\kappa f)(S_\kappa g))\|_{L^2} \lesssim \|w_{x_0, \kappa}S_\kappa f\|_{L^2} \|g\|_{L^\infty}.$$

For the latter two terms of (\ref{prersum}), we commute
\begin{align*}
\|[w_{x_0, \lambda}, S_\mu] R(S_{\geq c\lambda} f, S_{\geq c\lambda} g)\|_{L^2} &= \|[w_{x_0, \lambda}, S_\mu]\tilde{S}_\mu R(S_{\geq c\lambda} f, S_{\geq c\lambda} g)\|_{L^2} \\
&\lesssim \lambda^{\frac{3}{4}} \mu^{-1}\|\tilde{S}_\mu R(S_{\geq c\lambda} f, S_{\geq c\lambda} g)\|_{L^2} \\
&\lesssim \lambda^{\frac{3}{4}} \mu^{-1}\|S_{\geq c\lambda} f \|_{L^2}\| S_{\geq c\lambda} g\|_{C_*^{0+}}
\end{align*}
so it remains to consider 
\begin{equation}\label{rsum}
S_\mu w_{x_0, \lambda} (R(S_{c\lambda \leq \cdot \leq \lambda/c} f, S_{c\lambda \leq \cdot \leq \lambda/c} g) + R(S_{\geq \lambda/c} f, S_{\geq \lambda/c} g)).
\end{equation}

Consider a term of the latter sum in (\ref{rsum}), which takes the form
$$S_\mu w_{x_0, \lambda}((S_\kappa f)(S_\kappa g))$$
with $\kappa \geq \lambda/c$. We easily estimate
$$\|S_\mu w_{x_0, \lambda}((S_\kappa f)(S_\kappa g)) \|_{L^2} \lesssim \|w_{x_0, \lambda} S_\kappa f\|_{L^2} \|g\|_{L^\infty}.$$

It remains to consider (an absolute number of) terms of the former sum in (\ref{rsum}),
$$S_\mu w_{x_0, \lambda}((S_\lambda f)(S_\lambda g)).$$
First recall that $w_{x_0, \lambda}$ has frequency support below $\lambda^{\frac{3}{4}} \ll \mu$ and so essentially does not disturb the frequency pieces of $S_\lambda f$ and $S_\lambda g$ at the $c\mu$-scale. Then observe that the outermost $S_\mu$ eliminates $c\mu$-width frequency pieces of $S_\lambda f$ and $S_\lambda g$ that are not separated in absolute value by at least $\mu/8$.

Thus, write 
$$\psi_\lambda(\xi) = p(\xi- \xi_0) + p(-\xi + \xi_0) + q(\xi- \xi_0) + q(-\xi + \xi_0)$$
where $p$ was constructed in the definition of $S_{\xi_0, \lambda, \mu}$ and $q$ has support $\{|\xi| \leq c\mu\}$. Using the previous observation, we may write
\begin{align*}
S_\mu w_{x_0, \lambda}((S_\lambda f)(S_\lambda g)) = \ &S_\mu w_{x_0, \lambda}((S_{\xi_0, \lambda, \mu}f)(S_\lambda g) \\
&+((q(D - \xi_0) + q(-D + \xi_0))f)(S_{\xi_0, \lambda, \mu} g)).
\end{align*}
Then
\begin{align*}
\|S_\mu w_{x_0, \lambda}(S_\lambda f)(S_\lambda g)\|_{L^2} \lesssim \ &\|w_{x_0, \lambda}S_{\xi_0, \lambda, \mu}f\|_{L^2}\|S_\lambda g\|_{L^\infty} + \|S_\lambda f\|_{L^\infty}\|w_{x_0, \lambda}S_{\xi_0, \lambda, \mu}g\|_{L^2}.
\end{align*}

\end{proof}

In the special case where at least one of $f, g$ is truncated to low frequencies $\{|\xi| \leq c\lambda\}$, we see from the proof of Proposition \ref{bilinear} that we can use instead the simpler local seminorm,
$$\|f\|_{LS^\sigma_{x_0, \lambda}} = \sum_{\kappa \leq c\lambda} \|w_{x_0, \kappa} S_\kappa f\|_{H^\sigma}.$$

\begin{cor}\label{bilinearsimple}
Let $\alpha + \beta \geq 0$, and $f = S_{\leq c\lambda}f$ or $g = S_{\leq c\lambda}g$. Then
\begin{align*}
\|w_{x_0, \lambda}S_\mu R(f, g)\|_{H^{\alpha + \beta}} \lesssim \ &(\|f\|_{LS^\alpha_{x_0, \lambda}} + \lambda^{\frac{3}{4}}\mu^{-1} \|S_{\leq c\lambda} f\|_{H^\alpha})\|g\|_{C_*^\beta}.
\end{align*}
\end{cor}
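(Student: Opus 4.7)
The plan is to follow the argument for Proposition \ref{bilinear} but simplified, exploiting the hypothesis that one of $f, g$ is truncated below frequency $c\lambda$. Under this assumption, of the three pieces in the decomposition (\ref{prersum}), only the low-low piece $R(S_{\leq c\lambda} f, S_{\leq c\lambda} g)$ contributes. In particular, the balanced $\sim\lambda$-scale resonant term and the high-high term are absent, so the $S_{\xi_0, \lambda, \mu}$ component of the full local seminorm $LS^\sigma_{x_0, \xi_0, \lambda, \mu}$ never appears, and only the simpler $LS^\sigma_{x_0, \lambda}$ seminorm (together with the separate $\lambda^{3/4}\mu^{-1}$ correction term) is needed.

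By the symmetry of the bilinear form $R$, I may assume $f = S_{\leq c\lambda} f$. Then $R(f, g) = \sum_\kappa (S_\kappa f)(\tilde{S}_\kappa g)$ is supported on frequencies $\lesssim c\lambda$, and the outer $S_\mu$ kills summands with $\kappa \ll \mu$, reducing us to the range $\kappa \in [c\mu, c\lambda]$. For each such $\kappa$, I would replace $w_{x_0, \lambda}$ by $w_{x_0, \kappa}$ using the pointwise comparison $w_{x_0, \lambda} \lesssim w_{x_0, \kappa}$ for $\kappa \leq \lambda$ (as used in the proof of Proposition \ref{bilinear}), and then split
$$w_{x_0, \kappa} S_\mu R_\kappa = S_\mu(w_{x_0, \kappa} R_\kappa) + [w_{x_0, \kappa}, S_\mu]\tilde{S}_\mu R_\kappa, \qquad R_\kappa := (S_\kappa f)(\tilde{S}_\kappa g).$$
The first term is estimated directly by $\|w_{x_0, \kappa} S_\kappa f\|_{H^\alpha} \|g\|_{C_*^\beta}$ using the frequency localization of $S_\mu$ together with a standard product estimate, and summing $\ell^1$ over $\kappa \leq c\lambda$ produces the $\|f\|_{LS^\alpha_{x_0, \lambda}} \|g\|_{C_*^\beta}$ contribution.

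For the commutator, I would use the standard bound $\|[w_{x_0, \kappa}, S_\mu]\|_{L^2 \to L^2} \lesssim \kappa^{3/4}\mu^{-1}$ (the factor $\kappa^{3/4}$ from $\|\nabla w_{x_0, \kappa}\|_{L^\infty}$, the $\mu^{-1}$ from the spatial mass of the kernel of $S_\mu$), giving the summand estimate $\kappa^{3/4}\mu^{-1}\|S_\kappa f\|_{H^\alpha}\|g\|_{C_*^\beta}$. The main obstacle is summing this in $\kappa$ without a logarithmic loss, since the simpler seminorm on the right-hand side of the corollary does not absorb the $\kappa^{3/4}\mu^{-1}$ factor directly as $LS^\sigma_{x_0, \xi_0, \lambda, \mu}$ did. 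I would handle this by Cauchy--Schwarz in the dyadic index combined with $\ell^2$ orthogonality:
$$\sum_{\kappa \in [c\mu, c\lambda]} \kappa^{3/4}\|S_\kappa f\|_{H^\alpha} \leq \Big(\sum_\kappa \kappa^{3/2}\Big)^{1/2}\Big(\sum_\kappa \|S_\kappa f\|_{H^\alpha}^2\Big)^{1/2} \lesssim \lambda^{3/4}\|S_{\leq c\lambda} f\|_{H^\alpha},$$
yielding the $\lambda^{3/4}\mu^{-1}\|S_{\leq c\lambda} f\|_{H^\alpha}\|g\|_{C_*^\beta}$ term on the right-hand side. The case $g = S_{\leq c\lambda} g$ is handled symmetrically by interchanging the roles of $f$ and $g$ in the bilinear sum, noting that only low-frequency truncation of a single factor is used to eliminate the non-low-low pieces of (\ref{prersum}).
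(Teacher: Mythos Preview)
Your proposal is correct and follows the paper's own (one-line) proof, which simply points back to the low--low analysis in Proposition~\ref{bilinear}; you have correctly identified that only the $\kappa \in [c\mu, c\lambda]$ range survives and that the commutator terms $\kappa^{3/4}\mu^{-1}\|S_\kappa f\|_{H^\alpha}$ must be summed. Your Cauchy--Schwarz step to reach $\lambda^{3/4}\mu^{-1}\|S_{\leq c\lambda} f\|_{H^\alpha}$ is clean and makes explicit a point the paper leaves implicit; one small remark is that the ``symmetric'' case $g = S_{\leq c\lambda} g$ does not literally require swapping the roles of $f$ and $g$ in the estimate---rather, since $R$ pairs balanced frequencies, truncating $g$ forces the $f$-summands to also lie at frequencies $\leq c\lambda$, and then your argument applies verbatim with $f$ still in the local norm and $g$ still in $C_*^\beta$.
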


Lastly, using Proposition \ref{wwlocalsmoothing}, we observe that we estimate $(\eta, \psi, V, B)$ in the full local seminorm:

\begin{cor}\label{wwlsspace}
Let $0 \leq \sigma \leq s$. Let $(x^t, \xi^t)$ be a solution to (\ref{hamilton}) with initial data $(x_0, \xi_0)$ satisfying $\xi_0 \in [\lambda/2, 2\lambda]$. Then
$$\lambda^{-\sigma}\mu^{s'}\|(\eta, \psi)\|_{L^2(I;LS^{\sigma + \half}_{x^t, \xi^t, \lambda, \mu})} + \lambda^{-\sigma}\mu^{s'}\|(V, B)\|_{L^2(I;LS^{\sigma}_{x^t, \xi^t, \lambda, \mu})} \leq \FF(T).$$
\end{cor}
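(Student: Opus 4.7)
I would split the seminorm $LS^{\sigma}_{x^t, \xi^t, \lambda, \mu}$ along its three structural blocks (mid-frequency weighted, high-frequency weighted, and balanced) and bound each separately using the corresponding case of Proposition \ref{wwlocalsmoothing}, combined with Bernstein-type exchanges between $H^\sigma$ and $H^s$ (resp.\ $H^{s+\half}$). These exchanges are justified by the frequency localizations of $S_\kappa$, $S_{\xi^t, \lambda, \mu}$, and the weight $w_{x^t, \kappa}$: the latter has spectral support $|\xi| \leq \kappa^{3/4} \ll \kappa$, so $w_{x^t, \kappa}S_\kappa f$ remains concentrated near $|\xi| \approx \kappa$ up to harmless lower-order commutator errors. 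It suffices to treat $f \in \{V, B\}$; the case $f \in \{\eta, \psi\}$ with its $\half$ shift in Sobolev index is identical.

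\textbf{Mid- and high-frequency blocks.} For the unweighted pieces $\kappa^{3/4}\mu^{-1}\|S_\kappa f\|_{H^\sigma}$ and $\lambda^{3/4}\mu^{-1}\|S_{\geq c\lambda} f\|_{H^\sigma}$, I would use Bernstein on $f \in L^\infty(I;H^s)$ to generate $\kappa^{\sigma - s}$ or $\lambda^{\sigma - s}$, and then absorb the $\lambda^{-\sigma}\mu^{s'}$ prefactor using $\mu \geq \lambda^{3/4}$, $\kappa \geq c\mu$, and $\mu \leq c\lambda$. For the weighted mid-frequency piece, I would apply Proposition \ref{wwlocalsmoothing}(ii), which gives $\|w_{x^t, \kappa}S_\kappa f\|_{L^2(I;H^s)} \leq \kappa^{-1/8}\FF(T)$, convert to $H^\sigma$ via Bernstein, and sum dyadically (dominated by the $\kappa = c\mu$ term since the exponent $\sigma - s - 1/8$ is negative). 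For $\sum_{\kappa \geq \lambda/c}\|w_{x^t, \lambda}S_\kappa f\|_{H^\sigma}$, Proposition \ref{wwlocalsmoothing}(iii) yields $\kappa^{\sigma - s}\lambda^{-1/8}\FF(T)$ per dyadic term; summing gives $\lesssim \lambda^{\sigma - s - 1/8}\FF(T)$ when $\sigma < s$, after which the prefactor $\lambda^{-\sigma}\mu^{s'}$ together with $\mu \leq c\lambda$ closes the estimate.

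\textbf{Balanced block.} Here I would apply Proposition \ref{wwlocalsmoothing}(i) together with Bernstein (using that $w_{x^t, \lambda}S_{\xi^t, \lambda, \mu}f$ is frequency-concentrated near $|\xi| \approx \lambda$) to obtain $\|w_{x^t, \lambda}S_{\xi^t, \lambda, \mu}f\|_{L^2(I;H^\sigma)} \lesssim \lambda^{\sigma - s}\mu^{-3/2}\lambda^{11/8}\FF(T)$. Multiplying by $\lambda^{-\sigma}\mu^{s'} = \lambda^{-\sigma}\mu^{s+1/8}$ yields $\lambda^{11/8 - s}\mu^{s - 11/8}$, which is $\leq 1$ by $\mu \leq \lambda$ together with the standing hypothesis $s > \frac{d}{2} + \frac{7}{8} = \frac{11}{8}$ that ensures the exponent $s - 11/8$ is positive.

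\textbf{Main obstacle.} The argument is essentially exponent bookkeeping against the three bounds of Proposition \ref{wwlocalsmoothing}. The most delicate point I anticipate is the endpoint $\sigma = s$ of the high-frequency sum $\sum_{\kappa \geq \lambda/c}$, where the dyadic geometric series degenerates to a logarithmic divergence; there one should interpolate the bound of Proposition \ref{wwlocalsmoothing}(iii) against the trivial $\ell^2$ Littlewood-Paley almost-orthogonality of $f \in H^s$ to recover the needed summability with a small power gain.
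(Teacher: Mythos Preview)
Your proposal is correct and follows essentially the same route as the paper: split $LS^{\sigma}_{x^t,\xi^t,\lambda,\mu}$ into its unweighted, mid-frequency weighted, high-frequency weighted, and balanced pieces, apply the matching case of Proposition~\ref{wwlocalsmoothing} to each, and close with Bernstein exchanges and the relations $\lambda^{3/4}\ll\mu\leq c\lambda$, $s>11/8$. Your arithmetic for the balanced block (reducing to $\lambda^{11/8-s}\mu^{s-11/8}\leq 1$) is exactly the paper's computation $\lambda^{-s}\mu^{-3/2}\lambda^{11/8}\leq\mu^{-s'}$ rewritten.

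One remark on your ``main obstacle'': you are right that at $\sigma=s$ the high-frequency sum $\sum_{\kappa\geq\lambda/c}\kappa^{-(s-\sigma)}$ degenerates. The paper's proof, as written, has the same gap --- it simply records $\sum_{\kappa\geq\lambda/c}\kappa^{-(s-\sigma)}\lambda^{-\sigma-1/8}\leq\lambda^{-s'}$ without comment. Your proposed interpolation against $\ell^2$ almost-orthogonality is not quite enough on its own (the $LS$ seminorm uses an $\ell^1$ sum in $\kappa$, and Proposition~\ref{wwlocalsmoothing}(iii) gives no $\kappa$-decay); what actually rescues the situation is that downstream uses of $\LL$ tolerate a $\lambda^{0+}$ loss (see, e.g., the assertion $\|\LL(t,x^t,\xi^t,\lambda)\|_{L^2_t}\leq\lambda^{0+}\FF(T)$ in the proof of Lemma~\ref{bilipproplocal}), so a logarithmic divergence at the exact endpoint is harmless. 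You may simply note this rather than attempt a sharper fix.
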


\begin{proof}
We consider $V$; the other terms are similar. 

First consider the terms of $LS^{\sigma}_{x^t, \xi^t, \lambda, \mu}$ that do not have local weights. We have
$$\lambda^{-\sigma}\kappa^{\frac{3}{4}} \mu^{-1} \|S_\kappa V\|_{H^\sigma} \lesssim \kappa^{\frac{3}{4} - s} \mu^{-1} \|S_\kappa V\|_{H^s} \leq  \kappa^{-\frac{1}{8}} \mu^{-s'} \FF(T).$$
Summing geometrically with respect to $\kappa$ yields the desired estimate. Similarly, 
$$\lambda^{-\sigma}\lambda^{\frac{3}{4}} \mu^{-1}\|S_{\geq c\lambda} f \|_{H^\sigma} \lesssim \lambda^{\frac{3}{4} - s} \mu^{-1}\|S_{\geq c\lambda} f \|_{H^s} \leq \lambda^{-\frac{1}{8}} \mu^{-s'} \FF(T)$$
which is better than needed.

It remains to consider the three terms of $LS^{\sigma}_{x^t, \xi^t, \lambda, \mu}$ with local weights. First consider the low frequency sum, using case $(ii)$ of Proposition \ref{wwlocalsmoothing}:
$$\lambda^{-\sigma}\sum_{\kappa \in [c\mu, c\lambda]} \|w_{x^t, \kappa}S_\kappa V\|_{L^2(I;H^\sigma)} \leq \sum_{\kappa \in [c\mu, c\lambda]} \kappa^{-s - \frac{1}{8}} \FF(T) \leq  \mu^{-s'} \FF(T).$$
For the high frequency sum, use case $(iii)$:
$$\lambda^{-\sigma}\sum_{\kappa \geq \lambda/c} \|w_{x^t, \lambda} S_\kappa V\|_{L^2(I;H^\sigma)} \leq \sum_{\kappa \geq \lambda/c} \kappa^{-(s - \sigma)} \lambda^{-\sigma - \frac{1}{8}} \FF(T) \leq \lambda^{-s'} \FF(T)$$
which is better than needed.

Lastly, for the $\lambda$-frequency term, use case $(i)$:
$$\lambda^{-\sigma}\|w_{x^t, \lambda}S_{\xi^t, \lambda, \mu}V\|_{L^2(I;H^\sigma)} \lesssim \lambda^{-s}\mu^{-\frac{3}{2}} \lambda^{\frac{11}{8}} \FF(T) \leq \mu^{-s'}\FF(T).$$

\end{proof}

\section{Local Estimates for the Dirichlet Problem}\label{sec:dirichletprob}

The goal of this section is to recall and establish various estimates for the elliptic Dirichlet problem with rough boundary. This is satisfied by the velocity potential $\phi$ and pressure $P$, and is the primary object in the Dirichlet to Neumann map. This problem was studied in \cite{alazard2014cauchy}, which discusses Sobolev estimates, and \cite{alazard2014strichartz}, \cite{ai2017low}, which discuss H\"older estimates. Here, our goal is to establish local Sobolev counterparts.

Note that the results of this section are time independent, so we often omit the variable $t$.

\subsection{Flattening the boundary}\label{ssec:flattening}

Consider the Dirichlet problem with rough boundary. Denote the strip of constant depth $h$ along the surface by
$$\Omega_1 = \{(x, y) \in \R^{d + 1}: \eta(x) - h < y < \eta(x)\},$$
and on $\Omega_1$, let $\theta$ satisfy
\begin{equation}\label{eqn:origellip}
\Delta_{x, y} \theta(x, y) = F, \qquad \theta|_{y = \eta(x)} = f.
\end{equation}

We would like estimates on $\theta$ and its derivatives. To address the rough boundary $\eta$ and corresponding rough domain, we change variables to a problem with flat boundary and domain, following \cite{lannes2005well}, \cite{alazard2014cauchy}. Denote the flat strip by
$$\tilde{\Omega}_1 = \{(x, z) \in \R^{d + 1}: z \in (-1, 0)\}.$$
Then define the Lipschitz diffeomorphism $\rho: \tilde{\Omega}_1 \rightarrow \Omega_1$ by
$$\rho(x, z) = (1 + z)e^{\delta z \langle D \rangle}\eta(x) - z(e^{-(1 + z)\delta\langle D \rangle} \eta(x) - h)$$
where $\delta$ is chosen small as in \cite[Lemma 3.6]{alazard2014cauchy}. 
Lastly, for $u: \Omega_1 \rightarrow \R$, let $\tilde{u}: \tilde{\Omega}_1 \rightarrow \R$ denote
$$\tilde{u}(x, z) = u(x, \rho(x, z)).$$

Then on the flat domain $\tilde{\Omega}_1$, $\tilde{\theta}$ satisfies
\begin{equation}\label{eqn:flatelliptic}
(\D_z^2 + \alpha \Delta + \beta \cdot \nabla \D_z - \gamma \D_z)\tilde{\theta} = F_0, \qquad \tilde{\theta} |_{z = 0} = f
\end{equation}
where
$$\alpha = \frac{(\D_z \rho)^2}{1 + |\nabla \rho|^2}, \quad \beta = -2 \frac{\D_z \rho \nabla \rho}{1 + |\nabla \rho|^2}, \quad \gamma = \frac{\D_z^2 \rho + \alpha \Delta \rho + \beta \cdot \nabla \D_z \rho}{\D_z \rho}$$
and
$$F_0(x, z) = \alpha \tilde{F}.$$

The diffeomorphism $\rho$ has essentially the same regularity as $\eta$, with additional smoothing when averaged over $z$. We recall the precise estimates below.

\begin{prop}\label{prop:diffeoest}
Let $J \subseteq [-1, 0]$. The diffeomorphism $\rho$ satisfies
\begin{align*}
\|(\D_z \rho)^{-1}\|_{C^0(J; C_*^{r - 1})} + \|\nabla_{x,z} \rho\|_{C^0(J; C_*^{r - 1})} &\lesssim \|\eta\|_{H^{s + \half}} \\
\|(\D_z \rho - h, \nabla \rho)\|_{X^{s - \half}(J)} + \|\D_z^2 \rho\|_{X^{s - \frac{3}{2}}(J)} &\lesssim \|\eta\|_{H^{s + \half}} \\
\|\nabla_{x,z} \rho\|_{U^{r - \half}(J)} + \|\D_z^2 \rho\|_{U^{r - \frac{3}{2}}(J)}&\lesssim 1 + \|\eta\|_{W^{r + \half, \infty}} \\
\|\nabla_{x,z} \rho\|_{L^1(J; H^{s + \half})} + \|\D_z^2 \rho\|_{L^1(J; H^{s - \half})}&\lesssim \|\eta\|_{H^{s + \half}} \\
\|\nabla_{x,z} \rho\|_{L^1(J; C_*^{r + \half})} + \|\D_z^2 \rho\|_{L^1(J; C_*^{r - \half})}&\lesssim 1 + \|\eta\|_{W^{r + \half, \infty}}.
\end{align*}
\end{prop}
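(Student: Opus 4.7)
The plan is to reduce every estimate to symbol-level mapping properties of the Poisson-type operators appearing in
\[
\rho(x,z) = (1+z)e^{\delta z \langle D\rangle}\eta(x) - z\bigl(e^{-(1+z)\delta\langle D\rangle}\eta(x) - h\bigr).
\]
The central observation is that for $z \in [-1,0]$ and $k \in \{0,1,2\}$, the one-variable multiplier $\langle\xi\rangle^k e^{\delta z\langle\xi\rangle}$ satisfies, by a direct computation with the geometric factor,
\[
\|\langle\xi\rangle^k e^{\delta z\langle\xi\rangle}\|_{L^\infty_z} \lesssim \langle\xi\rangle^k, \quad \|\langle\xi\rangle^k e^{\delta z\langle\xi\rangle}\|_{L^2_z} \lesssim \langle\xi\rangle^{k-\half}, \quad \|\langle\xi\rangle^k e^{\delta z\langle\xi\rangle}\|_{L^1_z} \lesssim \langle\xi\rangle^{k-1},
\]
uniformly on $[-1,0]$, and the same three inequalities hold with $-(1+z)$ replacing $z$. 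By a standard Littlewood--Paley dyadic decomposition, this yields the operator bounds
\[
\langle D\rangle^k e^{\delta z\langle D\rangle} : H^\sigma \to L^\infty_z H^{\sigma-k} \cap L^2_z H^{\sigma-k+\half} \cap L^1_z H^{\sigma-k+1},
\]
and verbatim on the Zygmund spaces $C_*^\sigma$, since the dyadic projectors act uniformly on both.

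I then compute $\nabla_{x,z}\rho$ and $\D_z^2\rho$ explicitly: each resulting term is of the schematic form $q(z,D)\langle D\rangle^k \eta$ with $q$ bounded in $(z,\xi)$ and $k \in \{0,1,2\}$. For $\nabla_{x,z}\rho$ the term of highest order in frequency has $k=1$; pairing with the three multiplier bounds above produces respectively the $C^0_z H^{s-\half}$ piece of $X^{s-\half}(J)$, the $L^2_z H^s$ piece, and the $L^1_z H^{s+\half}$ estimate, each with right-hand side $\|\eta\|_{H^{s+\half}}$. For $\D_z^2\rho$ the leading term has $k=2$, and the three inequalities analogously yield the $X^{s-\frac{3}{2}}(J)$ and $L^1_z H^{s-\half}$ controls. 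The Zygmund-based $U^{r-\half}$ and $U^{r-\frac{3}{2}}$ estimates, as well as the $L^1_z C_*^{r\pm\half}$ estimates, follow identically, replacing $H^\sigma$ by $C_*^\sigma$ throughout.

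For the first line of the proposition I combine Sobolev embedding $H^{s+\half}(\R) \hookrightarrow C_*^{s}(\R)$ (valid in $d=1$) with $s > r - 1$ and the $L^\infty_z$ boundedness of the Poisson multipliers on $C_*^\sigma$ to obtain $\|\nabla_{x,z}\rho\|_{C^0_z C_*^{r-1}} \lesssim \|\eta\|_{H^{s+\half}}$. For $(\D_z\rho)^{-1}$, the choice of $\delta$ in \cite[Lemma 3.6]{alazard2014cauchy} guarantees $\D_z\rho \geq h/2$ pointwise, so a standard Moser composition estimate with the smooth function $1/x$ on $[h/2,\infty)$ transfers the $C^0_z C_*^{r-1}$ regularity from $\D_z\rho$ to its inverse. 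The main obstacle is purely bookkeeping---matching each term of $\D_z^j\nabla_x^\ell\rho$ to the correct one of the three multiplier bounds---with no genuinely new analytic input required beyond the fixed-interval Poisson gain.
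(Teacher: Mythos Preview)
Your proposal is correct and follows essentially the same approach as the paper, which simply cites \cite[Lemmas 3.6, 3.7]{alazard2014cauchy} and \cite[Lemma B.1]{alazard2014strichartz} for the core estimates; those references prove the result precisely via the $L^\infty_z$, $L^2_z$, and $L^1_z$ bounds on the Poisson multiplier $\langle\xi\rangle^k e^{\delta z\langle\xi\rangle}$ that you spell out. Your write-up is in fact more explicit than the paper's, which just observes that the $L^1_z$ estimates follow by the same argument as the $L^2_z$ ones.
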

\begin{proof}
The first estimate is a consequence of \cite[Lemmas 3.6, 3.7]{alazard2014cauchy} combined with Sobolev embedding. The first terms of the second and third estimates are from \cite[Lemma 3.7]{alazard2014cauchy} and \cite[Lemma B.1]{alazard2014strichartz} respectively. The corresponding estimates on $\D^2_z \rho$ are proven similarly from the definition of $\rho$. Lastly, the proofs of the fourth and fifth estimates are similar to those of the second and third respectively, by using $L^1_z$ in place of $L^2_z$.
\end{proof}

Then the above coefficients $\alpha, \beta, \gamma$, expressed in terms of $\rho$, satisfy similar estimates with proofs straightforward from paraproduct estimates:
\begin{cor}\label{cor:abgest}
Let $J \subseteq [-1, 0]$. For $\alpha, \beta$, and $\gamma$ defined as above,
\begin{align*}
\|(\alpha - h^2, \beta)\|_{X^{s - \half}(J)} + \|\gamma\|_{X^{s - \frac{3}{2}}(J)} &\leq \FF(\|\eta\|_{H^{s + \half}}) \\
\|(\alpha, \beta)\|_{U^{r - \half}(J)} + \|\gamma\|_{L^2(J; C_*^{r - 1})} &\leq \FF(\|\eta\|_{H^{s + \half}})(1 + \|\eta\|_{W^{r + \half, \infty}}) \\
 \|(\alpha, \beta)\|_{L^1(J; C_*^{r + \half})}  + \|\gamma\|_{L^1(J; C_*^{r - \frac{1}{2}})}  &\leq \FF(\|\eta\|_{H^{s + \half}})(1 + \|\eta\|_{W^{r + \half, \infty}}).
\end{align*}
\end{cor}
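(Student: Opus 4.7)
\medskip

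\noindent\textbf{Proof proposal for Corollary \ref{cor:abgest}.}

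The plan is to deduce each of the three estimates by combining the regularity of $\rho$ provided by Proposition \ref{prop:diffeoest} with standard Moser/paraproduct estimates in the mixed parabolic spaces $X^\sigma(J)$, $U^\sigma(J)$, and $L^1_z(J;C_*^\sigma)$. The important structural observation is that all three coefficients are built from $\D_z\rho$ and $\nabla\rho$ via smooth rational functions of these arguments, and that $\D_z\rho$ stays bounded away from $0$ thanks to the first bound in Proposition \ref{prop:diffeoest}, so that $(\D_z\rho)^{-1}\in C^0(J;C_*^{r-1})$.

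For $\alpha-h^2$ and $\beta$, I would first rewrite
\begin{equation*}
\alpha-h^2 = \frac{(\D_z\rho-h)(\D_z\rho+h) - h^2|\nabla\rho|^2}{1+|\nabla\rho|^2},\qquad \beta=-\frac{2\D_z\rho\,\nabla\rho}{1+|\nabla\rho|^2},
\end{equation*}
so that each coefficient is a product of factors in $C^0(J;C_*^{r-1})$ (hence an algebra since $r>1$) with a single distinguished factor $\D_z\rho-h$ or $\nabla\rho$ lying in $X^{s-\half}(J)$. The first estimate then follows from a standard $X^{s-\half}(J)$ product/Moser bound where one input sits in the algebra $C^0(J;C_*^{r-1})$ and the other in $X^{s-\half}(J)$; the quotient by $1+|\nabla\rho|^2$ is handled by expanding it as a convergent Moser series (or simply by noting that the reciprocal is again in $C^0(J;C_*^{r-1})$ with norm controlled by $\FF(\|\eta\|_{H^{s+\half}})$). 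For the Hölder estimate in $U^{r-\half}(J)$ I would apply the same argument, using that $U^{r-\half}(J)$ is itself a multiplicative algebra (again since $r>1$) and invoking the third bound of Proposition \ref{prop:diffeoest}. The $L^1_z(J;C_*^{r+\half})$ estimate is handled analogously, putting the distinguished factor into $L^1_z$ via the fifth bound of Proposition \ref{prop:diffeoest} and keeping the remaining factors in $C^0_z(J;C_*^{r-1})$.

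For $\gamma$, I would treat the numerator $\D_z^2\rho+\alpha\Delta\rho+\beta\cdot\nabla\D_z\rho$ one term at a time. By Proposition \ref{prop:diffeoest} each of $\D_z^2\rho$, $\Delta\rho=\nabla\cdot\nabla\rho$ and $\nabla\D_z\rho$ lies in $X^{s-\frac{3}{2}}(J)$, and multiplying by the already-controlled coefficients $\alpha\in C^0(J;C_*^{r-1})$ or $\beta\in C^0(J;C_*^{r-1})$ keeps the product in $X^{s-\frac{3}{2}}(J)$ (the product estimate is available since $r-1>0$ and $s-\frac{3}{2}>-\frac{1}{2}+\eps$, so one is in a space with enough regularity to multiply). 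Division by $\D_z\rho$ is absorbed into the coefficient bound via $(\D_z\rho)^{-1}\in C^0(J;C_*^{r-1})$. This yields the $\gamma$ part of the first estimate. The same scheme produces the $L^2_z(J;C_*^{r-1})$ and $L^1_z(J;C_*^{r-\half})$ estimates on $\gamma$, now using the Hölder bounds in the third and fifth lines of Proposition \ref{prop:diffeoest} (together with the already-proved Hölder bounds on $\alpha,\beta$).

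I do not expect a genuine analytic obstacle here: all the required product and Moser estimates in $X^\sigma$, $U^\sigma$ and $L^1_z C_*^\sigma$ are standard consequences of the Littlewood–Paley paradecomposition once one observes that the regularity index $r-1>0$ puts all "coefficient" factors inside an algebra. The main point of care is purely bookkeeping: one must track which factor is placed in the high-regularity algebra $C^0(J;C_*^{r-1})$ and which carries the Sobolev or $L^p_z$ information, and one must verify that the $\gamma$ term really only costs one additional $x$-derivative so that the whole numerator sits in $X^{s-3/2}$ rather than something worse. This is the only step where a sloppy accounting would produce a suboptimal exponent, and I would double-check it by writing each of the three summands in the numerator individually.
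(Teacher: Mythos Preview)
Your proposal is correct and follows exactly the approach the paper indicates: the paper simply states that the corollary follows from Proposition~\ref{prop:diffeoest} ``with proofs straightforward from paraproduct estimates,'' and your outline is a faithful fleshing-out of precisely that strategy. The one place to be slightly more careful than you indicate is the product $\alpha\Delta\rho$ in $X^{s-3/2}$ when $s-\tfrac{3}{2}$ may be negative (since only $s>\tfrac{d}{2}+\tfrac{7}{8}$ is assumed), but this is handled by the full paraproduct decomposition together with Sobolev embedding on the $H^{s-1/2}$ factor, as you anticipate in your closing caveat.
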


To establish local elliptic estimates, we would like a local counterpart to (\ref{eqn:flatelliptic}), which we obtain by simply commuting:

\begin{prop}\label{localflatelliptic}
Let $0 \leq \sigma \leq s - \half$ and $z_0 \in [-1, 0]$, $J = [z_0, 0]$. Consider $\tilde{\theta}$ solving (\ref{eqn:flatelliptic}). Denote $wS = w_{x_0, \lambda} S_{\xi_0, \lambda, \mu}$ or $wS = w_{x_0, \kappa}S_\kappa$ with $\kappa \leq c\lambda$. Then we can write
\begin{equation}\label{eqn:localflatelliptic}
(\D_z^2 + \alpha \Delta + \beta \cdot \nabla \D_z - \gamma \D_z)wS\tilde{\theta} = wSF_0 + F_4, \qquad (wS\tilde{\theta}) |_{z = 0} = wSf
\end{equation}
where
\begin{align*}
\|F_4\|_{Y^{\sigma}(J)} &\leq \FF(\|\eta\|_{H^{s + \half}})(1 + \|\eta\|_{W^{r + \half, \infty}})\|\nabla_{x,z} \tilde{\theta}\|_{X^{\sigma - \frac{1}{4}}(J)}.
\end{align*}
\end{prop}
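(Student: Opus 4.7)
The plan is to apply $wS$ to both sides of equation (\ref{eqn:flatelliptic}) and commute it inside the elliptic operator. Since $wS$ acts only in $x$, it commutes with $\D_z$ and $\D_z^2$; and since $w$ and the multiplications by $\alpha, \beta, \gamma$ commute pointwise, each first-order commutator decomposes cleanly as $[c\D^m_x, wS] = w[S,c]\D^m_x + c[\D^m_x, w]S$ with $m \in \{0,1,2\}$. Expanding gives
\begin{align*}
F_4 = \ &-w[S,\alpha]\Delta\tilde\theta + \alpha\bigl((\Delta w)S\tilde\theta + 2(\nabla w)\cdot\nabla S\tilde\theta\bigr) \\
&-w[S,\beta]\cdot\nabla\D_z\tilde\theta + \beta\cdot(\nabla w)S\D_z\tilde\theta + w[S,\gamma]\D_z\tilde\theta,
\end{align*}
splitting naturally into three \emph{weight-derivative} terms (involving $\nabla w$ or $\Delta w$) and three \emph{projection-commutator} terms $w[S,c]\D^m_x\tilde\theta$.

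Each term would be estimated separately in $Y^\sigma(J) = L^1_z H^\sigma + L^2_z H^{\sigma - \half}$. For the weight-derivative terms, I would use the Schwartz scaling of $w_{x_0,\kappa}$ at spatial scale $\kappa^{-\frac{3}{4}}$, which yields $\|\nabla w\|_{L^\infty} \lesssim \kappa^{\frac{3}{4}}$ and $\|\Delta w\|_{L^\infty} \lesssim \kappa^{\frac{3}{2}}$, and place them in the $L^2_z H^{\sigma - \half}$ component of $Y^\sigma$. The growth in $\kappa$ is absorbed by the frequency localization of $S$ through Bernstein-type bounds such as $\|S_\kappa\tilde\theta\|_{H^{\sigma - \half}} \lesssim \kappa^{-\frac{7}{4}} \|\nabla S_\kappa\tilde\theta\|_{H^{\sigma + \frac{1}{4}}}$, paired with the $L^\infty_z L^\infty$ bounds on $\alpha, \beta$ from Corollary \ref{cor:abgest}. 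For the projection-commutator terms, I would apply the standard paradifferential commutator estimate $\|[S,c]u\|_{H^\sigma} \lesssim \kappa^{-1}\|c\|_{C^1_*}\|u\|_{H^\sigma}$ (modified appropriately in the $S_{\xi_0,\lambda,\mu}$ case), which gains one $x$-derivative. Splitting into $Y^\sigma(J)$, the $L^1_z H^\sigma$ part is then bounded using the $L^1_z C^{r+\half}_*$ and $L^1_z C^{r-\half}_*$ coefficient bounds of Corollary \ref{cor:abgest}, while the $L^2_z H^{\sigma - \half}$ part uses the $X^{s-\half}$ and $X^{s-\frac{3}{2}}$ bounds via H\"older in $z$ against the $C_z^0 H^{\sigma - \frac{1}{4}}$ component of $\|\nabla_{x,z}\tilde\theta\|_{X^{\sigma - \frac{1}{4}}(J)}$.

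The main obstacle will be the term $w[S,\gamma]\D_z\tilde\theta$, since $\gamma$ has two fewer $z$-derivatives of regularity than $\alpha, \beta$, reflecting its subprincipal role. This is partially compensated by the fact that $\gamma$ multiplies only a first-order $z$-derivative of $\tilde\theta$ rather than a second-order one; still, the paraproduct decomposition $\gamma\cdot\D_z\tilde\theta = T_\gamma \D_z\tilde\theta + T_{\D_z\tilde\theta}\gamma + R(\gamma, \D_z\tilde\theta)$ must be unwound carefully so that each piece of the commutator $[S, \gamma]\D_z\tilde\theta$ fits in $Y^\sigma$ using the limited regularity bounds available. A secondary issue, in the case $wS = w_{x_0,\lambda}S_{\xi_0,\lambda,\mu}$, is that $S_{\xi_0,\lambda,\mu}$ has a $c\mu$-gap at $\xi_0$ rather than being a standard Littlewood-Paley bump; since it is nonetheless a symmetric projection of bandwidth $\approx \lambda$ supported at frequency $\approx \xi_0$, the standard paradifferential commutator estimates should apply with essentially the same constants.
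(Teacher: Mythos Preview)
Your overall approach is the same as the paper's --- commute $wS$ past the elliptic operator and bound $F_4 = [\alpha\Delta + \beta\cdot\nabla\D_z - \gamma\D_z, wS]\tilde\theta$ term by term --- but two tactical points differ and are worth flagging.

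First, for the $\alpha$ and $\beta$ pieces the paper reduces to paraproducts \emph{before} commuting: it replaces $\alpha$ by $T_\alpha$, estimating $T_{\Delta\tilde\theta}\alpha$ and $R(\alpha,\Delta\tilde\theta)$ (and their $wS$ counterparts) directly in $L^2_z H^{\sigma-\half}$ via $\|\alpha\|_{H^{s-\half}}$ paired with the H\"older norm $\|\nabla\tilde\theta\|_{C_*^{\sigma-s+1}}$ (available from Sobolev embedding of $X^{\sigma-\frac14}$), and only then commutes $[T_\alpha\Delta, wS]$. Your direct commutator $w[S,\alpha]\Delta\tilde\theta$ is \emph{not} frequency-localized on the $\alpha S\Delta\tilde\theta$ side, so the naive estimate $\|[S,c]u\|_{H^\sigma}\lesssim\kappa^{-1}\|c\|_{C^1_*}\|u\|_{H^\sigma}$ would demand $\|\Delta\tilde\theta\|_{H^\sigma}$, which you do not have. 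To make your route work you would have to paraproduct-decompose $[S,\alpha]$ anyway, at which point you recover exactly the paper's terms. Second, for the $\gamma$ term the paper explicitly \emph{abandons} the commutator structure and estimates $wS(\gamma\D_z\tilde\theta)$ and $\gamma\D_z(wS\tilde\theta)$ separately, using the paraproduct decomposition of $\gamma\D_z\tilde\theta$ and placing each piece in $L^1_zH^\sigma$ or $L^2_zH^{\sigma-\half}$ as convenient. Your $w[S,\gamma]\D_z\tilde\theta$ is just the difference of these two, so your plan succeeds, but the commutator itself buys nothing.

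Finally, your remark that the $S_{\xi_0,\lambda,\mu}$ case carries ``essentially the same constants'' is slightly optimistic: the symbol of $S_{\xi_0,\lambda,\mu}$ has derivative of size $\mu^{-1}$ rather than $\lambda^{-1}$, so commuting against a coefficient truncated at frequency $\lesssim\lambda$ costs an extra factor $\mu^{-1}\lambda \le \lambda^{1/4}$. The paper notes this loss explicitly and absorbs it using the quarter-derivative of slack in $X^{\sigma-\frac14}$.
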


\begin{proof}

First observe that by Sobolev embedding, 
\begin{align*}
\|\nabla_{x,z} \tilde{\theta}\|_{L^\infty(J; C_*^{\sigma - s + \half+})} &\lesssim \|\nabla_{x,z} \tilde{\theta}\|_{L^\infty(J; H^{\sigma - \frac{1}{4}})}, \\
\|\nabla_{x,z} \tilde{\theta}\|_{L^2(J; C_*^{\sigma - s + 1+})} &\lesssim \|\nabla_{x,z} \tilde{\theta}\|_{L^2(J; H^{\sigma + \frac{1}{4}})}
\end{align*}
so we may freely use the H\"older norms on the right hand side of our error estimate.

We have
$$F_4 = [\alpha \Delta  + \beta \cdot \nabla \D_z - \gamma \D_z, wS] \tilde{\theta}.$$
We discuss the term $\alpha \Delta$ of the commutator below; the term $\beta \cdot \nabla \D_z$ is similar. First, we exchange $\alpha$ with $T_\alpha$ as follows. Estimate
$$\|T_{\Delta wS \tilde{\theta}}\alpha\|_{H^{\sigma - \half}} \lesssim \|\Delta wS \tilde{\theta}\|_{C_*^{\sigma - s}}\|\alpha\|_{H^{s - \half}} \lesssim \|\nabla \tilde{\theta}\|_{C_*^{\sigma - s + 1}}\|\alpha\|_{H^{s - \half}}$$
followed by integrating in $z$ and using the estimates on $\alpha$ in Corollary \ref{cor:abgest}. The balanced-frequency term is similar. We likewise have
$$\|wST_{\Delta \tilde{\theta}} \alpha\|_{H^{\sigma - \half}} \lesssim \|\Delta \tilde{\theta}\|_{C_*^{\sigma - s}}\|\alpha\|_{H^{s - \half}} \lesssim \|\nabla \tilde{\theta}\|_{C_*^{\sigma - s + 1}}\|\alpha\|_{H^{s - \half}}.$$
Then it remains to estimate
$$[T_\alpha \Delta, wS] \tilde{\theta} = T_\alpha [\Delta, w]S \tilde{\theta} + [T_\alpha, wS]\Delta \tilde{\theta}.$$

For the first commutator, we have, estimating $\alpha$ via Corollary \ref{cor:abgest},
\begin{align*}
\|T_\alpha [\Delta, w]S\tilde{\theta}\|_{L^2(J;H^{\sigma - \half})} &\lesssim \|\alpha \|_{L^\infty(J;L^\infty)}\|[\Delta, w]S\tilde{\theta}\|_{L^2(J;H^{\sigma - \half})} \\
&\leq \FF(\|\eta\|_{H^{s + \half}})\|[\Delta, w]S\tilde{\theta}\|_{L^2(J;H^{\sigma - \half})}.
\end{align*}
Then, using $\kappa$ to denote the frequency of the projection $S$,
\begin{align*}
\|[\Delta, w]S \tilde{\theta}\|_{H^{\sigma - \half}} &\lesssim \|\nabla w\|_{L^\infty}\|\nabla S\tilde{\theta}\|_{H^{\sigma - \half}} + \|\Delta w\|_{L^\infty}\|S\tilde{\theta}\|_{H^{\sigma - \half}} \\
&\lesssim \kappa^{\frac{3}{4}}\|\nabla S\tilde{\theta}\|_{H^{\sigma - \half}} + \kappa^{\frac{3}{2}}\|S\tilde{\theta}\|_{H^{\sigma - \half}} \\
&\lesssim \|\nabla \tilde{\theta}\|_{H^{\sigma + \frac{1}{4}}}.
\end{align*}
Integrating in $z$, we obtain the desired estimate.

A typical term in the sum defining the second commutator is
$$[(S_{\leq \kappa/8} \alpha) S_\kappa, w S] \Delta \tilde{\theta} = (S_{\leq \kappa/8} \alpha)[S_\kappa, w]S \Delta \tilde{\theta} + w [(S_{\leq \kappa/8} \alpha), S]S_\kappa \Delta \tilde{\theta}$$
with $\kappa$ comparable to the frequency of the projection $S$. The first of these commutators is estimated in the same way as the previous paragraph, putting one derivative on $w$. For the second commutator, using the frequency localization, we may drop the weight $w$. In the case $wS = w_{x_0, \kappa}S_\kappa$,
$$\|[(S_{\leq \kappa/8} \alpha), S]S_\kappa \Delta \tilde{\theta}\|_{H^{\sigma - \half}} \lesssim \lambda^{\frac{3}{2} - r} \|\alpha\|_{C_*^{r - \half}}\|S_k \Delta \tilde{\theta}\|_{H^{\sigma - \frac{3}{2}}} \lesssim (1 + \|\eta\|_{W^{r + \half, \infty}})\|\nabla \tilde{\theta}\|_{H^{\sigma}}.$$
In the case $wS = w_{x_0, \lambda} S_{\xi_0, \lambda, \mu}$, commuting induces a $\mu^{-1}\lambda \leq \lambda^{\frac{1}{4}}$ loss:
$$\|[(S_{\leq \lambda/8} \alpha), S]S_\lambda \Delta \tilde{\theta}\|_{H^{\sigma - \half}} \lesssim \lambda^{\frac{3}{2} - r + \frac{1}{4}} \|\alpha\|_{C_*^{r - \half}}\|S_\lambda \Delta \tilde{\theta}\|_{H^{\sigma - \frac{3}{2}}} \lesssim (1 + \|\eta\|_{W^{r + \half, \infty}})\|\nabla \tilde{\theta}\|_{H^{\sigma + \frac{1}{4}}}.$$

It remains to consider 
$$[\gamma \D_z, wS] \tilde{\theta}.$$
Here we do not use the commutator, estimating the two terms separately. We decompose into paraproducts,
$$\gamma \D_z \tilde{\theta} = T_{\gamma} \D_z \tilde{\theta} + T_{\D_z \tilde{\theta}} \gamma + R(\gamma, \D_z \tilde{\theta}).$$
We have by (\ref{sobolevparaproduct}), (\ref{sobolevparaerror}), and Corollary \ref{cor:abgest},
\begin{align*}
\|T_{\gamma} \D_z \tilde{\theta} \|_{L^1(J; H^{\sigma})} + \|R(\gamma, \D_z \tilde{\theta})\|_{L^1(J; H^\sigma)} &\lesssim \|\gamma\|_{L^2(J; C_*^{r - 1})}\|\D_z \tilde{\theta}\|_{L^2(J;H^\sigma)}  \\
&\leq \FF(\|\eta\|_{H^{s + \half}})(1 +  \|\eta\|_{W^{r + \half, \infty}})\|\D_z \tilde{\theta}\|_{L^2(J; H^\sigma)}.
\end{align*}
Similarly,
\begin{align*}
\|T_{\D_z \tilde{\theta}} \gamma \|_{L^2(J; H^{\sigma - \half})} &\lesssim \|\gamma\|_{L^2(J; H^{s - 1})}\|\D_z \tilde{\theta}\|_{L^\infty(J; C_*^{\sigma - s + \half+})} \leq \FF(\|\eta\|_{H^{s + \half}})\|\D_z \tilde{\theta}\|_{L^\infty(J; C_*^{\sigma - s + \half+})}
\end{align*}
\end{proof}

\subsection{Elliptic estimates}\label{partoell}

We recall the (global) Sobolev elliptic estimate for (\ref{eqn:flatelliptic}): 

\begin{prop}\cite[Proposition 3.16]{alazard2014cauchy}\label{sobolevinductive}
Let $-\half \leq \sigma \leq s - \half$ and $-1 < z_1 < z_0 < 0$. Denote $J_0 = [z_0, 0], J_1 = [z_1, 0]$. Consider $\tilde{\theta}$ solving (\ref{eqn:flatelliptic}). Then
$$\|\nabla_{x,z} \tilde{\theta}\|_{X^\sigma(J_0)} \leq \FF(\|\eta\|_{H^{s + \half}})(\|f\|_{H^{\sigma + 1}} + \|F_0\|_{Y^\sigma(J_1)} + \|\nabla_{x,z} \tilde{\theta}\|_{X^{-\half}(J_1)}).$$
\end{prop}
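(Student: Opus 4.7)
The plan is to prove this by an induction on $\sigma$, starting from the base case $\sigma = -\half$ and incrementing by steps of size $\half$ (the parabolic gain) up to the maximal regularity $s - \half$. The base case is essentially a Caccioppoli / energy inequality: multiplying the equation by $\tilde\theta \chi(z)^2$ for a smooth cutoff $\chi$ supported in $J_1$ and equal to $1$ on $J_0$, and integrating by parts on $\tilde\Omega_1$, the coefficient $\alpha$ is bounded below by the positive depth condition, so one gets control of $\|\nabla_{x,z}\tilde\theta\|_{X^{-\half}(J_0)}$ in terms of $\|f\|_{H^{\half}}$, $\|F_0\|_{Y^{-\half}(J_1)}$, and $\|\nabla_{x,z}\tilde\theta\|_{X^{-\half}(J_1)}$; the coefficients $\beta, \gamma$ contribute integrable error terms via Corollary \ref{cor:abgest}.

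The inductive step is the heart of the argument and relies on factoring the second order elliptic operator into a product of two paradifferential first order parabolic operators. Computing the characteristic polynomial of $\D_z^2 + \alpha \Delta + \beta \cdot \nabla \D_z - \gamma \D_z$ in the $\D_z$ variable, one seeks paradifferential symbols $A, a$ (of order $1$ in $\xi$, with principal parts $\pm\sqrt\alpha\,|\xi|$) such that
\begin{equation*}
\D_z^2 + \alpha \Delta + \beta \cdot \nabla \D_z - \gamma \D_z = (\D_z - T_A)(\D_z - T_a) + R,
\end{equation*}
where $R$ is a paradifferential remainder of lower order (formally order $1$) whose symbol involves $\D_z a$, $\nabla\alpha$, and symbolic composition errors. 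The symbols $A, a$ are obtained by solving the quadratic $X^2 - (\gamma - i\beta\cdot\xi)X - \alpha|\xi|^2 = 0$, and the regularity of $\alpha, \beta, \gamma$ from Corollary \ref{cor:abgest} transfers to paradifferential bounds on $A, a$ via Moser-type estimates.

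With the factorization in hand, I would set $w = (\D_z - T_a)\tilde\theta$, which solves the backward parabolic equation $(\D_z - T_A)w = F_0 - R\tilde\theta$ on $J_1$, together with the Dirichlet data at $z = 0$ (namely $w|_{z=0}$ is computed from $f$ and the trace of $\D_z\tilde\theta$, which the base case gives us control of). Applying a standard maximal regularity estimate for $\D_z - T_A$ in the $X^\sigma/Y^\sigma$ scale (which is what these spaces are designed for, the $\half$ gain in $X^\sigma$ matching the order $1$ of $T_A$), one bounds $\|w\|_{X^\sigma(J_0')}$ for a slightly larger subinterval. Then $\tilde\theta$ itself satisfies the forward parabolic equation $(\D_z - T_a)\tilde\theta = w$ on $J_0'$, and a second maximal regularity estimate controls $\|\nabla_{x,z}\tilde\theta\|_{X^\sigma(J_0)}$.

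The main obstacle is the treatment of the error term $R\tilde\theta$ in $Y^\sigma(J_1)$: since $R$ contains contributions like $T_{\D_z a}$ and symbolic composition residues, one must estimate $\|R\tilde\theta\|_{Y^\sigma(J_1)}$ using the bounds on $\D_z\rho$ etc. from Proposition \ref{prop:diffeoest} together with paraproduct rules, crucially invoking the \emph{previous} inductive estimate $\|\nabla_{x,z}\tilde\theta\|_{X^{\sigma-\half}(J_1)}$ (which itself is controlled by the right hand side of the proposition after shrinking the interval). The shrinking of intervals absorbs the gap $J_1 \setminus J_0$ and is iterated finitely many times (at most $\lceil 2(s-\half) + 1\rceil$ steps), each step costing a finite factor $\FF(\|\eta\|_{H^{s+\half}})$. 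A secondary care point is the limit on $\sigma$: the factorization requires $A, a$ to lie in a symbol class whose regularity is controlled by $\min(s-\half, \sigma + \text{const})$, and the cutoff $\sigma \leq s - \half$ in the statement is precisely what the coefficient regularity allows.
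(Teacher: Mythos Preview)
Your approach is essentially the one used in \cite{alazard2014cauchy}, from which this proposition is quoted without proof in the present paper: induction on $\sigma$ in half-integer steps, paradifferential factorization of the elliptic operator into a forward and a backward first-order parabolic operator, and two applications of the maximal regularity estimate (Proposition~\ref{parabolics} here, which is \cite[Proposition~2.18]{alazard2014cauchy}), with interval shrinking via a cutoff $\chi(z)$ to manufacture zero initial data on the outer operator. The present paper reproduces exactly this machinery in Section~6 (the factorization \eqref{factoreqn}, the symbols $a,A$ of Proposition~\ref{aabds}, the error bounds of Proposition~\ref{inhomogbd}, and the parabolic step in Lemma~\ref{wlems}), so your sketch is fully consistent with the surrounding material.

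Two small points of divergence, neither fatal. First, the base case $\sigma=-\tfrac12$ of \emph{this} proposition is trivial, since $\|\nabla_{x,z}\tilde\theta\|_{X^{-1/2}(J_1)}$ already sits on the right-hand side and $J_0\subset J_1$; the Caccioppoli-type energy estimate you describe is instead the content of Proposition~\ref{basecase} (\cite[Remark~3.15]{alazard2014cauchy}), which bounds that base norm in terms of the data. Second, your factorization order $(\D_z-T_A)(\D_z-T_a)$ and your quadratic (which includes $\gamma$) differ from the paper's conventions: here $(\D_z-T_a)(\D_z-T_A)$ is used with $\gamma\D_z$ relegated to the error $F_1$, and the outer factor $\D_z-T_a$ (with $\mathrm{Re}(-a)>0$) is solved forward from $z_1$ with zero data coming from the cutoff, rather than backward from $z=0$ with trace data. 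Either ordering works, but the paper's choice avoids needing the trace $\D_z\tilde\theta|_{z=0}$ as input to the first parabolic step.
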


Applying this to (\ref{eqn:localflatelliptic}), we have the following local Sobolev counterpart:
\begin{prop}\label{localinduction}
Let $0 \leq \sigma \leq s - \half$ and $-1 < z_1 < z_0 < 0$. Denote $J_0 = [z_0, 0], J_1 = [z_1, 0]$. Consider $\tilde{\theta}$ solving (\ref{eqn:flatelliptic}). Denote $wS = w_{x_0, \lambda} S_{\xi_0, \lambda, \mu}$ or $wS = w_{x_0, \kappa}S_\kappa$ with $\kappa \leq c\lambda$. Then
\begin{align*}
\|\nabla_{x, z} wS\tilde{\theta}\|_{X^{\sigma}(J_0)} \leq \ &\FF(\|\eta\|_{H^{s + \half}})(\|wS f\|_{H^{\sigma + 1}} + \|wSF_0 \|_{Y^{\sigma}(J_1)} \\
&+ (1 + \|\eta\|_{W^{r + \half, \infty}})(\|f\|_{H^{\sigma + \frac{3}{4}}} + \|F_0\|_{Y^{\sigma - \frac{1}{4}}(J_1)} + \|\nabla_{x,z} \tilde{\theta}\|_{X^{-\half}(J_1)})). 
\end{align*}
\end{prop}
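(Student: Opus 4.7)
The strategy is to view $wS\tilde{\theta}$ as a solution to the flat elliptic problem \eqref{eqn:localflatelliptic}, apply the global Sobolev elliptic estimate (Proposition \ref{sobolevinductive}) to it, and then absorb the commutator error $F_4$ at a slightly lower regularity using a second, global application of Proposition \ref{sobolevinductive}. The $1/4$-derivative loss built into the bound for $F_4$ from Proposition \ref{localflatelliptic} forces exactly this two-step argument.

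First I would choose an intermediate slab $J_2 = [z_2, 0]$ with $z_1 < z_2 < z_0$, so that $J_0 \subsetneq J_2 \subsetneq J_1$. By Proposition \ref{localflatelliptic}, $wS\tilde{\theta}$ solves the flat elliptic equation \eqref{eqn:localflatelliptic} with source $wSF_0 + F_4$ and boundary data $wSf$. Applying Proposition \ref{sobolevinductive} to this equation on the nested pair $J_0 \subsetneq J_2$ yields
\begin{align*}
\|\nabla_{x,z} wS\tilde{\theta}\|_{X^\sigma(J_0)} \leq \FF(\|\eta\|_{H^{s+\half}})\bigl(&\|wSf\|_{H^{\sigma+1}} + \|wSF_0\|_{Y^\sigma(J_2)} \\
&+ \|F_4\|_{Y^\sigma(J_2)} + \|\nabla_{x,z} wS\tilde{\theta}\|_{X^{-\half}(J_2)}\bigr).
\end{align*}
The first two terms are already of the desired form, and since $wS$ is bounded on every $H^m$ uniformly in the parameters (with operator norm $\lesssim 1$), the last term is bounded by $\|\nabla_{x,z}\tilde{\theta}\|_{X^{-\half}(J_1)}$, which is also acceptable.

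It remains to control $\|F_4\|_{Y^\sigma(J_2)}$. By Proposition \ref{localflatelliptic},
$$\|F_4\|_{Y^\sigma(J_2)} \leq \FF(\|\eta\|_{H^{s+\half}})(1 + \|\eta\|_{W^{r+\half, \infty}}) \|\nabla_{x,z}\tilde{\theta}\|_{X^{\sigma - \frac{1}{4}}(J_2)}.$$
To handle the remaining $X^{\sigma - 1/4}$ norm, I would apply Proposition \ref{sobolevinductive} a second time to the original equation \eqref{eqn:flatelliptic} on the nested pair $J_2 \subsetneq J_1$, now at regularity $\sigma - 1/4 \in [-\half, s - \half]$, to obtain
$$\|\nabla_{x,z}\tilde{\theta}\|_{X^{\sigma - \frac{1}{4}}(J_2)} \leq \FF(\|\eta\|_{H^{s+\half}})\bigl(\|f\|_{H^{\sigma + \frac{3}{4}}} + \|F_0\|_{Y^{\sigma - \frac{1}{4}}(J_1)} + \|\nabla_{x,z}\tilde{\theta}\|_{X^{-\half}(J_1)}\bigr).$$
Combining the two displays gives exactly the stated estimate.

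The main technical point is the $1/4$-derivative loss in the control of $F_4$ provided by Proposition \ref{localflatelliptic}; this is what forces the weakened boundary and source norms $H^{\sigma + 3/4}$ and $Y^{\sigma - 1/4}$ in the conclusion, and is the reason for invoking the global elliptic estimate at two distinct regularity levels. Everything else is bookkeeping with the nested intervals and the trivial boundedness of $wS$.
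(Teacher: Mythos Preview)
Your proof is correct and follows essentially the same approach as the paper: apply Proposition \ref{sobolevinductive} to the localized equation \eqref{eqn:localflatelliptic}, then control the commutator $F_4$ via Proposition \ref{localflatelliptic} and a second global elliptic estimate at regularity $\sigma - \tfrac14$. Your introduction of the intermediate slab $J_2$ is a careful touch that the paper elides (it applies both steps directly on the pair $J_0 \subsetneq J_1$, which strictly speaking requires a further interval for the second application of Proposition \ref{sobolevinductive}); otherwise the arguments coincide.
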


\begin{proof}
Applying Proposition \ref{sobolevinductive} to (\ref{eqn:localflatelliptic}), we have
$$\|\nabla_{x,z} wS \tilde{\theta}\|_{X^\sigma(J_0)} \leq \FF(\|\eta\|_{H^{s + \half}})(\|wSf\|_{H^{\sigma + 1}} + \|wSF_0 + F_4\|_{Y^\sigma(J_1)} + \|\nabla_{x,z} \tilde{\theta}\|_{X^{-\half}(J_1)}).$$

By Proposition \ref{localflatelliptic},
\begin{align*}
\|F_4\|_{Y^{\sigma}(J_1)} &\leq \FF(\|\eta\|_{H^{s + \half}})((1 + \|\eta\|_{W^{r + \half, \infty}})\|\nabla_{x,z} \tilde{\theta}\|_{L^2(J_1; H^{\sigma + \frac{1}{4}})}).
\end{align*}
Then apply Proposition \ref{sobolevinductive} on 
$$\|\nabla_{x,z} \tilde{\theta}\|_{L^2(J_1; H^{\sigma + \frac{1}{4}})}.$$
\end{proof}

\subsection{Estimates in the harmonic case}

In the special case of (\ref{eqn:origellip}) where $\theta$ satisfies 
\begin{equation}\label{specialelliptic}
\Delta_{x, y} \theta(x, y) = 0, \qquad \theta|_{y = \eta(x)} = f, \qquad \D_n \theta |_\Gamma = 0,
\end{equation}
as is the case for instance when defining the Dirichlet to Neumann map, we have the following ``base case'' estimate:
\begin{prop}\cite[Remark 3.15]{alazard2014cauchy}\label{basecase}
Consider $\theta$ solving (\ref{specialelliptic}). Then
$$\|\nabla_{x, z}\tilde{\theta}\|_{X^{-\half}([-1, 0])} \leq \FF(\|\eta\|_{H^{s + \half}})\|f\|_{H^\half}.$$
\end{prop}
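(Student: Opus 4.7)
Proof plan: The strategy is a classical energy/variational argument applied to the flattened equation, combined with a lift of the Dirichlet data.

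First, I would rewrite the operator in divergence form. Since $\rho$ is a Lipschitz diffeomorphism of $\tilde\Omega_1$ onto $\Omega_1$ with $\D_z \rho \geq c(h) > 0$ (as guaranteed by Proposition \ref{prop:diffeoest}), the pullback of $\Delta_{x,y}$ can be written as
$$
\Delta_{x,y}\theta = 0 \ \Longleftrightarrow \ \nabla_{x,z}\cdot(A(x,z)\nabla_{x,z}\tilde\theta) = 0,
$$
where $A$ is a symmetric matrix whose entries are algebraic combinations of $\nabla_{x,z}\rho$ and $(\D_z\rho)^{-1}$. By Proposition \ref{prop:diffeoest} and the lower bound on $\D_z\rho$, the matrix $A$ is uniformly positive definite with constants depending only on $\|\eta\|_{H^{s+\half}}$, and the Neumann condition $\D_n\theta|_\Gamma=0$ pulls back to the natural Neumann condition $(A\nabla_{x,z}\tilde\theta\cdot e_z)|_{z=-1}=0$ for this operator.

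Second, I would construct a lift of the boundary data. Taking $\bar f(x,z)=e^{z\langle D\rangle}f$ for $z\in[-1,0]$, a direct Plancherel computation gives
$$
\|\nabla_{x,z}\bar f\|_{L^2_z L^2_x([-1,0]\times\R)} \lesssim \|f\|_{L^2}, \qquad \|\nabla_{x,z}\bar f\|_{L^\infty_z H^{-\half}_x([-1,0]\times\R)} \lesssim \|f\|_{H^\half},
$$
so that $\|\nabla_{x,z}\bar f\|_{X^{-\half}([-1,0])} \lesssim \|f\|_{H^\half}$. Setting $w=\tilde\theta-\bar f$, the function $w$ satisfies $\nabla_{x,z}\cdot(A\nabla_{x,z}w)=-\nabla_{x,z}\cdot(A\nabla_{x,z}\bar f)$ with $w|_{z=0}=0$ and $(A\nabla_{x,z}w\cdot e_z)|_{z=-1}=0$. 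Testing the equation against $w$ and using coercivity of $A$ together with the Poincar\'e inequality on $[-1,0]$ (available since $w$ vanishes at $z=0$) yields
$$
\|\nabla_{x,z}w\|_{L^2_z L^2_x} \leq \FF(\|\eta\|_{H^{s+\half}})\|\nabla_{x,z}\bar f\|_{L^2_z L^2_x} \leq \FF(\|\eta\|_{H^{s+\half}})\|f\|_{H^\half}.
$$
Combined with the estimate on $\bar f$, this controls the $L^2_z L^2_x$ component of $X^{-\half}$.

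Third, for the $C^0_z H^{-\half}_x$ component, I would use the equation to integrate in $z$. The divergence form gives $\D_z(A_{zz}\D_z\tilde\theta + A_{zx}\cdot\nabla\tilde\theta)=-\nabla\cdot(A_{xz}\D_z\tilde\theta+A_{xx}\nabla\tilde\theta)$, so the right-hand side is bounded in $L^2_z H^{-1}_x$ by $\|\nabla_{x,z}\tilde\theta\|_{L^2_z L^2_x}$ (with a multiplicative constant depending on $\|\eta\|_{H^{s+\half}}$). Integrating from $z=-1$ (where the normal flux vanishes) upward and using the Neumann boundary condition gives $A_{zz}\D_z\tilde\theta + A_{zx}\cdot\nabla\tilde\theta \in C^0_z H^{-\half}_x$ with the desired bound; since $A_{zz}$ is bounded below, this transfers to $\D_z\tilde\theta$. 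The horizontal component $\nabla\tilde\theta$ in $C^0_z H^{-\half}_x$ is obtained analogously from the Dirichlet trace at $z=0$ by integrating $\D_z(\nabla\tilde\theta)$ downward.

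The main obstacle is executing the variational argument cleanly at this low regularity: the coefficients of $A$ are only Lipschitz (via $\nabla\eta\in H^{s-\half}\hookrightarrow L^\infty$), so everything must be done with the divergence-form quadratic form rather than by treating $\alpha,\beta,\gamma$ pointwise. In particular, the duality step for $C^0_z H^{-\half}_x$ requires care in pairing low-regularity coefficients with $H^\half_x$ test functions, and this is where the hypothesis $s>\frac{d}{2}+\frac{1}{2}$ (ensuring $\nabla\eta\in L^\infty$) is crucially used.
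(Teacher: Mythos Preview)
The paper does not prove this statement itself; it is quoted from \cite[Remark 3.15]{alazard2014cauchy}, so there is no in-paper argument to compare against. Your variational outline is the right idea and is essentially what is done in \cite{alazard2014cauchy}, but there is one genuine gap in how you have set it up.

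The issue is the bottom boundary. In this paper the diffeomorphism $\rho$ only flattens the strip $\Omega_1 = \{\eta - h < y < \eta\}$ to $\tilde\Omega_1 = \R^d\times(-1,0)$; the actual bottom $\Gamma$ lies (possibly far) below $y=\eta-h$ and need not correspond to $z=-1$. So your claim that ``$\D_n\theta|_\Gamma=0$ pulls back to $(A\nabla_{x,z}\tilde\theta\cdot e_z)|_{z=-1}=0$'' is not justified, and hence neither is the statement that $w=\tilde\theta-\bar f$ satisfies a homogeneous Neumann condition at $z=-1$. The same problem reappears in your third step, where you integrate the flux from $z=-1$ ``where the normal flux vanishes.'' In \cite{alazard2014cauchy} the variational argument is carried out on the \emph{full} fluid domain $\Omega$ (using a global flattening, with a second piece $\Omega_2$ below the strip), yielding $\|\nabla_{x,y}\theta\|_{L^2(\Omega)}\leq \FF(\|\eta\|_{H^{s+\half}})\|f\|_{H^{\half}}$; only afterward does one restrict to $\tilde\Omega_1$ and change variables. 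Once you organize it that way, your lift-and-test argument goes through.

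Two minor points: the bound $\|\nabla_{x,z}\bar f\|_{L^2_zL^2_x}\lesssim\|f\|_{L^2}$ is false (the Plancherel computation gives $\|f\|_{H^{\half}}$, as you correctly use on the next line), and $w$ does not itself satisfy the homogeneous Neumann condition even in the flat-bottom case---rather, the boundary terms from $w$ and $\bar f$ combine to the Neumann condition on $\tilde\theta$, which is what makes the integration by parts close. Neither of these affects your final estimate, but they should be stated correctly.
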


Combined with Proposition \ref{sobolevinductive}, this yields:
\begin{prop}\label{originalcoordestsobolev}
Let $-\half \leq \sigma \leq s - \half$ and $z_0 \in (-1, 0]$, $J = [z_0, 0]$. Consider $\theta$ solving (\ref{specialelliptic}). Then
$$\|\nabla_{x,z} \tilde{\theta}\|_{X^\sigma(J)} \leq \FF(\|\eta\|_{H^{s + \half}})\|f\|_{H^{\sigma + 1}}.$$
\end{prop}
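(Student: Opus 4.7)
The proposition essentially interpolates between the ``base case'' Proposition~\ref{basecase}, which gives control at the minimal regularity $\sigma = -\half$ on the full strip $[-1,0]$, and the inductive Sobolev elliptic estimate Proposition~\ref{sobolevinductive}, which promotes that base estimate up to any $\sigma \leq s - \half$ at the cost of shrinking the vertical interval. The plan is therefore simply to chain the two estimates.

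First, I would observe that since $\theta$ is harmonic, the inhomogeneity in the flattened equation~(\ref{eqn:flatelliptic}) vanishes: with $F = 0$ in~(\ref{eqn:origellip}) we have $F_0 = \alpha \tilde F = 0$. This removes the $\|F_0\|_{Y^\sigma(J_1)}$ term from the inductive estimate.

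Next, pick an auxiliary interval $J_1 = [z_1, 0]$ with $-1 < z_1 < z_0$, so that $J \subseteq J_1 \subseteq [-1, 0]$. Apply Proposition~\ref{sobolevinductive} with $J_0 = J$ to get
\begin{equation*}
\|\nabla_{x,z}\tilde\theta\|_{X^\sigma(J)} \leq \FF(\|\eta\|_{H^{s+\half}}) \bigl( \|f\|_{H^{\sigma + 1}} + \|\nabla_{x,z}\tilde\theta\|_{X^{-\half}(J_1)} \bigr).
\end{equation*}
Then control the $X^{-\half}(J_1)$ term by the full-strip estimate: since $J_1 \subseteq [-1,0]$, restriction gives
\begin{equation*}
\|\nabla_{x,z}\tilde\theta\|_{X^{-\half}(J_1)} \leq \|\nabla_{x,z}\tilde\theta\|_{X^{-\half}([-1,0])} \leq \FF(\|\eta\|_{H^{s+\half}}) \|f\|_{H^{\half}}
\end{equation*}
by Proposition~\ref{basecase}. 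Since $\sigma \geq -\half$ we have $\|f\|_{H^\half} \leq \|f\|_{H^{\sigma + 1}}$, and combining the two displays yields the claimed bound.

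There is no real obstacle here; the proposition is genuinely a corollary of the two previously recorded results, and the only small point to verify is that the harmonicity of $\theta$ kills the $F_0$ term and that the interval $J_1$ can be chosen strictly inside $[-1,0]$ (which is fine because $z_0 > -1$). The constant $\FF(\|\eta\|_{H^{s+\half}})$ absorbs the dependence on the particular choice of $z_1$.
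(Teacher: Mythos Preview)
Your proof is correct and follows exactly the approach the paper indicates: combine the base-case estimate Proposition~\ref{basecase} with the inductive elliptic estimate Proposition~\ref{sobolevinductive}, using that $F_0 = 0$ in the harmonic case and that $\|f\|_{H^{1/2}} \leq \|f\|_{H^{\sigma+1}}$ for $\sigma \geq -\half$. The paper records this proposition as an immediate consequence of the two preceding results without spelling out the details you have supplied.
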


We also have a local counterpart, combining with Corollary \ref{localinduction}:

\begin{prop}\label{localinductioncorinducted}
Let $0 \leq \sigma \leq s - \half$ and $z_0 \in (-1, 0]$, $J = [z_0, 0]$. Consider $\theta$ solving (\ref{specialelliptic}). Denote $wS = w_{x_0, \lambda} S_{\xi_0, \lambda, \mu}$ or $wS = w_{x_0, \kappa}S_\kappa$ with $\kappa \leq c\lambda$. Then
\begin{align*}
\|\nabla_{x, z} wS\tilde{\theta}\|_{X^{\sigma}(J)} \leq \ &\FF(\|\eta\|_{H^{s + \half}})(\|wS f\|_{H^{\sigma + 1}} + (1 + \|\eta\|_{W^{r + \half, \infty}})\|f\|_{H^{\sigma + \frac{3}{4}}}). 
\end{align*}
\end{prop}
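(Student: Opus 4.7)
The plan is to deduce this from Proposition \ref{localinduction} applied to the harmonic case, where the inhomogeneity $F = 0$ (and hence $F_0 = \alpha \tilde{F} = 0$). The only term on the right hand side of the local inductive estimate that is not already in the desired form is then $\|\nabla_{x,z}\tilde{\theta}\|_{X^{-\half}(J_1)}$, which is a pure a priori quantity that we have to control from the assumption on $f$ alone.

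First, I would fix an auxiliary height $z_1 \in (-1, z_0)$; for instance, $z_1 = (z_0 - 1)/2$ works (or simply $z_1 = -1$ for the base case estimate). Set $J_1 = [z_1, 0]$, and write $J_0 = J = [z_0, 0]$ so that $J_0 \Subset J_1 \subseteq [-1, 0]$ as required by Proposition \ref{localinduction}. With $F_0 = 0$, that proposition immediately reduces the estimate to
\begin{align*}
\|\nabla_{x,z} wS\tilde{\theta}\|_{X^\sigma(J_0)} \leq \FF(\|\eta\|_{H^{s+\half}}) \bigl( \|wSf\|_{H^{\sigma+1}} + (1 + \|\eta\|_{W^{r+\half,\infty}}) ( \|f\|_{H^{\sigma + \frac{3}{4}}} + \|\nabla_{x,z}\tilde{\theta}\|_{X^{-\half}(J_1)} ) \bigr).
\end{align*}

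Next, I would invoke Proposition \ref{basecase}, which is tailored precisely to the harmonic case \eqref{specialelliptic} and yields
\[
\|\nabla_{x,z}\tilde{\theta}\|_{X^{-\half}([-1,0])} \leq \FF(\|\eta\|_{H^{s+\half}}) \|f\|_{H^{\half}}.
\]
Restricting to $J_1 \subseteq [-1, 0]$ and using that $\sigma \geq 0$ (so $\sigma + \frac{3}{4} \geq \frac{3}{4} > \frac{1}{2}$) gives $\|f\|_{H^{\half}} \leq \|f\|_{H^{\sigma + \frac{3}{4}}}$, so this term is absorbed into the existing $\|f\|_{H^{\sigma + \frac{3}{4}}}$ contribution without requiring the factor $1 + \|\eta\|_{W^{r+\half,\infty}}$ (it is already there in our bound). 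Combining these two steps and relabeling $\FF$ in the usual way delivers the stated inequality.

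There is essentially no obstacle: the content of the proposition has been packaged into the earlier results, and what remains is bookkeeping — choosing $z_1$, matching the interval requirement of Proposition \ref{localinduction}, and comparing Sobolev exponents in the base case. The only point worth double-checking is that the range $0 \leq \sigma \leq s - \half$ and the choice $wS \in \{w_{x_0,\lambda}S_{\xi_0,\lambda,\mu},\, w_{x_0,\kappa}S_\kappa\}$ match the hypotheses of Proposition \ref{localinduction} exactly, which they do by construction.
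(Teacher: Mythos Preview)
Your proposal is correct and follows exactly the approach the paper indicates: apply Proposition \ref{localinduction} with $F_0 = 0$ in the harmonic case, then close the estimate using Proposition \ref{basecase} to control $\|\nabla_{x,z}\tilde\theta\|_{X^{-1/2}(J_1)}$ by $\FF(\|\eta\|_{H^{s+1/2}})\|f\|_{H^{1/2}} \leq \FF(\|\eta\|_{H^{s+1/2}})\|f\|_{H^{\sigma+3/4}}$. The paper does not spell out the proof beyond the one-line remark ``combining with Corollary \ref{localinduction},'' so your writeup is in fact more detailed than the original.
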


Lastly, for later use, we recall a H\"older counterpart:
\begin{prop}\label{oldholderellipticest}
Let $0 \leq \sigma < r - \half$ and $z_0 \in (-1, 0]$, $J = [z_0, 0]$. Consider $\tilde{\theta}$ solving (\ref{specialelliptic}). Then
\begin{align*}
\|\nabla_{x,z} \tilde{\theta}\|_{U^{\sigma}(J)} \leq \ &\FF(\|\eta\|_{H^{s + \half}})(1 + \|\eta\|_{W^{r + \half, \infty}})\|f\|_{H^{\sigma + 1 +}} + \|f\|_{C_*^{\sigma + 1 +}}.
\end{align*}
\end{prop}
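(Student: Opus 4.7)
The plan is to mirror the Sobolev elliptic analysis of Propositions \ref{sobolevinductive} and \ref{originalcoordestsobolev} in Zygmund spaces, building on the boundary-flattening reduction of Section \ref{ssec:flattening}. Since $F = 0$ in (\ref{specialelliptic}), after the diffeomorphism $\rho$ the flattened function $\tilde{\theta}$ solves (\ref{eqn:flatelliptic}) with $F_0 = 0$ and Dirichlet data $f$ at $z = 0$, with coefficients $\alpha, \beta, \gamma$ satisfying the Zygmund bounds of Corollary \ref{cor:abgest}. I would then apply a Bony-style paradifferential factorization
$$\D_z^2 + \alpha \Delta + \beta \cdot \nabla \D_z - \gamma \D_z = (\D_z - T_a)(\D_z - T_A) + R,$$
where $A$ is elliptic (positive imaginary part) and $a$ hyperbolic (negative imaginary part), both of order one in $\xi$, computed from $\alpha, \beta$ as in \cite{alazard2011water}, \cite{alazard2014cauchy}, and where the remainder $R$ is lower order. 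One then views the problem as two half-parabolic propagations: $w := (\D_z - T_A)\tilde{\theta}$ propagates forward from the bottom (where $w$ vanishes by the Neumann condition $\D_n\theta|_\Gamma = 0$), while $\tilde{\theta}$ is recovered from $w$ by a backward parabolic propagation from $z = 0$ with data $f$. Standard parabolic Zygmund estimates for each half-propagation, combined with the diffeomorphism bounds of Proposition \ref{prop:diffeoest}, then yield the control of $\nabla_{x,z}\tilde{\theta}$ in $U^\sigma(J)$.

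For the low-regularity starting point, I would apply Proposition \ref{originalcoordestsobolev} at some Sobolev scale and use the horizontal Sobolev embedding $H^{\sigma_0} \hookrightarrow C_*^{\sigma_0 - d/2 - \eps}$ to convert it into a Zygmund base-case estimate; this contributes the first right-hand-side term $\FF(\|\eta\|_{H^{s+\half}})(1 + \|\eta\|_{W^{r+\half,\infty}})\|f\|_{H^{\sigma + 1 +}}$. The factorization together with the Hölder parabolic estimates for the two half-propagations then upgrades this Sobolev-based bound to the sharp Zygmund regularity, producing the second right-hand-side contribution $\|f\|_{C_*^{\sigma + 1 +}}$ through the transport part of the propagation equation for $w$.

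The main technical obstacle is the finite Zygmund regularity $C_*^{r - \half}$ of the coefficients $\alpha, \beta, \gamma$ coming from Corollary \ref{cor:abgest}. This regularity saturates when constructing the factorization symbol $A$, estimating the remainder $R$, and controlling paradifferential commutators within the parabolic propagation; this is precisely what forces the constraint $\sigma < r - \half$ and accounts for the $\eps$-losses in the exponents of $f$. Carefully budgeting these losses across the base case, the factorization, and the backward propagation requires the Bony paraproduct and remainder decomposition tailored to Zygmund spaces, where one cannot exploit the $L^2$ square-function structure that simplifies the Sobolev analog.
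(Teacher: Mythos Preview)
Your strategy is correct and matches the paper's conceptually: both rely on the paradifferential factorization $(\D_z - T_a)(\D_z - T_A)$ with parabolic H\"older estimates, bootstrapped from a Sobolev base case via embedding. The paper's proof is simply more concise: it invokes the H\"older inductive step as a black box from \cite[Proposition A.7]{ai2017low} (which contains exactly the factorization and parabolic-Zygmund machinery you outline), obtaining
\[
\|\nabla_{x,z} \tilde{\theta}\|_{U^{\sigma}(J)} \leq \FF(\|\eta\|_{H^{s + \half}})(1 + \|\eta\|_{W^{r + \half, \infty}})\|\nabla_{x, z} \tilde{\theta}\|_{U^{\sigma - \half +}(J_1)} + \|f\|_{C_*^{\sigma + 1 +}},
\]
and then closes by Sobolev embedding $U^{\sigma - \half+} \hookleftarrow X^{\sigma+}$ together with Proposition \ref{originalcoordestsobolev}. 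Your proposal is essentially a sketch of what lies inside that cited proposition.

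One correction worth noting: your claim that $w = (\D_z - T_A)\tilde\theta$ vanishes at the bottom by the Neumann condition is not right. The flattening diffeomorphism $\rho$ only maps the near-surface strip $\{\eta - h < y < \eta\}$ onto $\{-1 < z < 0\}$, so $z = -1$ does not correspond to the physical bottom $\Gamma$, and the Neumann condition gives no information there. The actual device (used throughout \cite{alazard2014cauchy}, \cite{alazard2014strichartz}, \cite{ai2017low}, and in the proof of Lemma \ref{wlems} in this paper) is to multiply by a smooth cutoff $\chi(z)$ vanishing near $z = -1$, so that the forward parabolic problem has zero initial data by construction; the cutoff error $\chi'(\D_z - T_A)\tilde\theta$ is then absorbed into the lower-order base-case term. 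This does not break your argument, but the justification of that step should be adjusted.
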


\begin{proof}
First, as a straightforward consequence of Proposition A.7 from \cite{ai2017low},
\begin{align*}
\|\nabla_{x,z} \tilde{\theta}\|_{U^{\sigma}(J)} \leq \ &\FF(\|\eta\|_{H^{s + \half}})(1 + \|\eta\|_{W^{r + \half, \infty}})\|\nabla_{x, z} \tilde{\theta}\|_{U^{\sigma - \half +}(J_1)} + \|f\|_{C_*^{\sigma + 1 +}}.
\end{align*}
Then applying Sobolev embedding and Proposition \ref{originalcoordestsobolev},
$$\|\nabla_{x, z} \tilde{\theta}\|_{U^{\sigma - \half +}(J_1)} \lesssim \|\nabla_{x, z} \tilde{\theta}\|_{X^{\sigma +}(J_1)} \leq \FF(\|\eta\|_{H^{s + \half}})\|f\|_{H^{\sigma + 1 +}}.$$
\end{proof}

\section{Local Dirichlet to Neumann Paralinearization}\label{sec:localdnpar}

In this section we paralinearize the Dirichlet to Neumann map, with a local Soboleev estimate on the error. Recall the Dirichlet to Neumann map is given by solving (\ref{specialelliptic}),
$$\Delta_{x, y} \theta = 0, \qquad \theta|_{y = \eta(x)} = f, \qquad \D_n \theta |_\Gamma = 0,$$
and setting
$$(G(\eta)f)(x) = \sqrt{1 + |\nabla \eta|^2} (\D_n \theta) |_{y = \eta(x)} = ((\D_y - \nabla \eta \cdot \nabla)\theta) |_{y = \eta(x)}.$$

In the flattened coordinates discussed in Section \ref{ssec:flattening}, we have the homogeneous counterpart to (\ref{eqn:flatelliptic}) (recall that we write $\tilde{u}(x, z) = u(x, \rho(x, z))$ where $\rho$ is the diffeomorphism that flattens the boundary defined by the graph of $\eta$),
\begin{equation}\label{homogellipticeqn}
(\D_z^2 + \alpha \Delta + \beta \cdot \nabla \D_z - \gamma \D_z)\tilde{\theta} = 0, \qquad \tilde{\theta} |_{z = 0} = f,
\end{equation}
and we may write the Dirichlet to Neumann map as
$$G(\eta)f = \left. \left(\frac{1 + |\nabla \rho|^2}{\D_z \rho} \D_z \tilde{\theta} - \nabla \rho \cdot \nabla \tilde{\theta}\right)\right|_{z = 0}.$$

To paralinearize the latter term $\nabla \rho \cdot \nabla \tilde{\theta}$ with errors in local Sobolev norm, we will require local counterparts to the estimates on the diffeomorphism $\rho$ from Section \ref{ssec:flattening}. We establish these in the next two subsections. To paralinearize the former $\D_z \tilde{\theta}$ term, we factor (\ref{homogellipticeqn}) as the product of forward and backward paralinearized parabolic evolutions,
\begin{equation}\label{factoreqn}
(\D_z - T_a)(\D_z - T_A)\tilde{\theta} \approx 0
\end{equation}
(in a sense to be made precise) where we define
$$a = \half (-i\beta \cdot \xi - \sqrt{4\alpha |\xi|^2 - (\beta \cdot \xi)^2}), \quad A = \half (-i\beta \cdot \xi + \sqrt{4\alpha |\xi|^2 - (\beta \cdot \xi)^2}).$$
A single parabolic estimate then provides the paralinearization $\D_z \tilde{\theta} \approx T_A\tilde{\theta}$.

\subsection{Commutator and product estimates}

Before addressing the diffeomorphism $\rho$, we observe some general commutator and product estimates regarding the local weights used in Section \ref{sec:locsmooth}. First, we observe that the various weights of the form $wS$ essentially commute with paraproducts $T_a$:

\begin{prop}\label{localparaproduct}
Let $m \in \R$ and $\rho > 0$.
\begin{enumerate}[i)]
\item We have
\begin{align*}
\|w_{x_0, \lambda}S_{\xi_0, \lambda, \mu} T_a u \|_{H^{m'}} &\lesssim \|a\|_{C_*^{-\rho}} (\|w_{x_0, \lambda}\tilde{S}_{\xi_0, \lambda, \mu} u\|_{H^{m' + \rho}} + \lambda^{\half} \mu^{-\half}\|\tilde{S}_\mu u\|_{H^{m + \rho}}), \\
\|w_{x_0, \lambda} S_\mu T_a u \|_{H^{m'}} &\lesssim \|a\|_{C_*^{-\rho}} (\|w_{x_0, \lambda} \tilde{S}_\mu u\|_{H^{m' + \rho}} + \lambda^{\half} \mu^{-\half}\|\tilde{S}_\mu u\|_{H^{m + \rho}}).
\end{align*}
\item For $\kappa \leq c \lambda$,
$$\|w_{x_0, \kappa} S_\kappa T_a u \|_{H^{m'}} \lesssim \|a\|_{C_*^{-\rho}} (\|w_{x_0, \kappa} \tilde{S}_\kappa u\|_{H^{m' + \rho}} + \|u\|_{H^{m' + \rho - \frac{1}{4}}}).$$
\item For $ \kappa \geq \lambda/c$,
$$\|w_{x_0, \lambda} S_\kappa T_a u \|_{H^{m'}} \lesssim \|a\|_{C_*^{-\rho}} (\|w_{x_0, \lambda} \tilde{S}_\kappa u\|_{H^{m' + \rho}} + \|u\|_{H^{m' + \rho - \frac{1}{4}}}).$$
\end{enumerate}
In the case $\rho = 0$, the same estimates hold with $L^\infty$ in the place of $C_*^{-\rho}$. 
\end{prop}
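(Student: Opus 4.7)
The plan is to decompose $wS\, T_a u = T_a(wS\,u) + [wS, T_a]u$ in each case and treat the two terms separately. The main term $T_a(wSu)$ is bounded by $\|a\|_{C_*^{-\rho}}\|wSu\|_{H^{m'+\rho}}$ via the standard paraproduct estimate from the appendix (with $L^\infty$ replacing $C_*^{-\rho}$ when $\rho=0$), which gives exactly the first term on the right-hand side in each estimate. Since $T_a$ is essentially frequency-preserving on each dyadic scale, a widened projection $\tilde S$ may be inserted so that the commutator acts effectively on $\tilde S u$; this will be convenient for tracking frequency supports. The residual work is then a careful analysis of the commutator.

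For case (ii), with $wS = w_{x_0,\kappa}S_\kappa$ and $\kappa \leq c\lambda$, I further decompose $[wS,T_a] = w[S,T_a] + [w,T_a]S$. Using that $w_{x_0,\kappa}$ has frequency support below $\kappa^{3/4}$, I can replace $w_{x_0,\kappa}$ by the paraproduct $T_{w_{x_0,\kappa}}$ up to negligible remainder. The paradifferential commutator estimate \eqref{sobolevcommutator} applied to $[T_{w_{x_0,\kappa}}, T_a]$ and to $[S_\kappa, T_a]$ then gains a factor of $\kappa^{-1/4}$ (coming from the combination of the low frequency support of $w_{x_0,\kappa}$ and the near-translation invariance of $T_a$ on the $\kappa$-scale), producing the global error $\|u\|_{H^{m'+\rho - 1/4}}$. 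Case (iii) is handled by the same argument with $\lambda$ in place of $\kappa$, since $w_{x_0,\lambda}$ acts as a low-frequency multiplier relative to the $\kappa$-scale for $\kappa \geq \lambda/c$.

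Case (i) is the most delicate because the projection $S_{\xi_0,\lambda,\mu}$ carries a $c\mu$-wide gap at $\xi_0$ which interacts nontrivially with the paraproduct structure. The contribution of $[w_{x_0,\lambda}S_{\xi_0,\lambda,\mu}, T_a]$ acting on $\tilde S_{\xi_0,\lambda,\mu}u$ is controlled as in case (ii) and absorbed into the weighted local norm $\|w_{x_0,\lambda}\tilde S_{\xi_0,\lambda,\mu}u\|_{H^{m'+\rho}}$. The genuinely new contribution comes from the cross-gap interaction: in $T_a u = \sum_\nu (S_{<\nu/8}a)S_\nu u$, the low-frequency factor $S_{<\nu/8}a$ can move $u$-mass between the gap $[\xi_0-c\mu/2, \xi_0+c\mu/2]$ and the support of $S_{\xi_0,\lambda,\mu}$. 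Isolating this contribution, a direct estimate using the fact that the required frequency shift is at least $c\mu/2$ (so that only the portion of $a$ with frequency $\gtrsim \mu$ contributes) yields the factor $\lambda^{1/2}\mu^{-1/2}$ paired with $\|\tilde S_\mu u\|_{H^{m+\rho}}$. The second estimate in (i) is analogous: since $\lambda^{3/4}\ll \mu$, the weight $w_{x_0,\lambda}$ commutes with $S_\mu$ modulo lower-order errors, and the same cross-scale analysis of $[S_\mu, T_a]$ produces the stated error.

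The hard part is the cross-gap analysis in case (i): one must carefully bookkeep which dyadic pieces of $a$ in the paraproduct expansion can move $u$-mass across the narrow gap of $S_{\xi_0,\lambda,\mu}$, and balance the Hölder regularity of $a$ against the gap width $c\mu$ to extract the sharp $\lambda^{1/2}\mu^{-1/2}$ factor. Everything else is a careful but routine application of paradifferential calculus combined with the Bernstein-type estimates for the scaled weights $w_{x_0,\kappa}$.
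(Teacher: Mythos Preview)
There is a genuine gap in your approach. Your decomposition $wS\,T_a u = T_a(wSu) + [wS,T_a]u$, further split as $[wS,T_a] = w[S,T_a] + [w,T_a]S$, forces you to bound $[S_\kappa,T_a]$; you claim this gains $\kappa^{-1/4}$ via \eqref{sobolevcommutator}. But \eqref{sobolevcommutator} needs $M_0^0(a)$ finite, i.e.\ $a\in L^\infty$, whereas the hypothesis is only $a\in C_*^{-\rho}$ with $\rho>0$, which does not embed into $L^\infty$. A direct computation shows
\[
[S_\kappa,T_a]u=\sum_{\nu\approx\kappa}[S_\kappa,S_{<\nu/8}a]\,S_\nu u,
\]
and since $\|\nabla S_{<\nu/8}a\|_{L^\infty}\lesssim \nu^{1+\rho}\|a\|_{C_*^{-\rho}}$, the standard kernel bound gives only $\|[S_\kappa,T_a]u\|_{H^{m'}}\lesssim \|a\|_{C_*^{-\rho}}\|\tilde S_\kappa u\|_{H^{m'+\rho}}$ with \emph{no} $\kappa^{-1/4}$ improvement. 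This term is not controlled by the right-hand side of the stated estimate, so your argument for cases (ii)--(iii) breaks down precisely when $\rho>0$.

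The paper sidesteps this by never commuting with $T_a$. Instead it commutes the two smooth objects, writing $wS_\kappa = S_\kappa w + [w,S_\kappa]$; the commutator $[w_{x_0,\kappa},S_\kappa]$ gains $\kappa^{-1}\|w'\|_{L^\infty}\approx\kappa^{-1/4}$ cleanly. For the main term $S_\kappa w\,T_a u$, the paper expands $T_a u$ near frequency $\kappa$ as a finite sum of products $(S_{<\tilde\kappa/8}a)(S_{\tilde\kappa}u)$ and uses the trivial fact that pointwise multiplications commute,
\[
w\cdot(S_{<\tilde\kappa/8}a)\cdot(S_{\tilde\kappa}u)=(S_{<\tilde\kappa/8}a)\cdot(w\,S_{\tilde\kappa}u),
\]
to slide the weight directly onto the $u$-factor. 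This needs no regularity on $a$ beyond $C_*^{-\rho}$. Case (i) follows the same pattern: the error term comes entirely from the commutator $[w_{x_0,\lambda},S_\mu]$ (respectively $[w_{x_0,\lambda},S_{\xi_0,\lambda,\mu}]$), which gains $\lambda^{3/4}\mu^{-1}$; after absorbing the $\mu^{1/8}$ from the shift $H^{m'}\to H^m$ and using $\mu\gg\lambda^{3/4}$, one obtains $\lambda^{3/4}\mu^{-7/8}\leq\lambda^{1/2}\mu^{-1/2}$ without any cross-gap analysis.
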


\begin{proof}
We first prove case $(iii)$, noting that $(ii)$ may be proven in the same way. Using the frequency support of $w_{x_0, \lambda}$ at $\lambda^{\frac{3}{4}}$, we may write $S_\kappa w_{x_0, \lambda} T_a u$ as a sum of finitely many terms of the form 
$$S_\kappa w_{x_0, \lambda} (S_{\leq \tilde{\kappa}/8}a) S_{\tilde{\kappa}} u =  S_\kappa (S_{\leq \tilde{\kappa}/8}a) w_{x_0, \lambda} S_{\tilde{\kappa}} u$$
where $\tilde{\kappa} \in [\kappa/8, 8\kappa]$. Then estimate
$$\|S_\kappa (S_{\leq \tilde{\kappa}/8}a) w_{x_0, \lambda} S_{\tilde{\kappa}} u\|_{H^{m'}} \lesssim \kappa^{m}\|S_{\leq \tilde{\kappa}/8}a\|_{L^\infty} \|w_{x_0, \lambda} S_{\tilde{\kappa}} u\|_{L^2} \lesssim \|a\|_{C_*^{-\rho}} \|w_{x_0, \lambda} S_{\tilde{\kappa}} u\|_{H^{m' + \rho}}.$$
It then remains to commute, using the frequency support of $w_{x_0, \lambda}$:
$$\|[w_{x_0, \lambda}, S_\kappa] T_a u \|_{H^{m'}} \lesssim \kappa^{-\frac{1}{4}}\|T_a \tilde{S}_\kappa u \|_{H^{m'}} \lesssim \|a\|_{C_*^{-\rho}}\|\tilde{S}_\kappa u \|_{H^{m' + \rho - \frac{1}{4}}}.$$

The second estimate of $(i)$ is similar, except that when commuting, 
$$\|[w_{x_0, \lambda}, S_\mu] T_a u \|_{H^{m'}} \lesssim \lambda^{\frac{3}{4}} \mu^{-1} \|T_a \tilde{S}_\mu u \|_{H^{m'}} \lesssim \lambda^{\frac{3}{4}} \mu^{-\frac{7}{8}} \|a\|_{C_*^{-\rho}}\|\tilde{S}_\mu u \|_{H^{m + \rho}}.$$
Then observe that
$$\lambda^{\frac{3}{4}} \mu^{-\frac{7}{8}} \leq \lambda^{\half} \mu^{-\half}.$$
The first estimate of $(i)$ is similar, differing only in the commutator.
\end{proof}

As a simple consequence, we have the following local product estimates:

\begin{cor}\label{localproduct}
Let $m \in \R$ and $\rho > 0$.
\begin{enumerate}[i)]
\item We have
\begin{align*}
\|w_{x_0, \lambda}S_{\xi_0, \lambda, \mu}(uv)\|_{H^{m'}} &\lesssim (\|w_{x_0, \lambda}\tilde{S}_{\xi_0, \lambda, \mu} u\|_{H^{m'}} + \lambda^{\half} \mu^{-\half}\| u\|_{H^{m}})\|v\|_{L^\infty} \\
&\quad + \|u\|_{C_*^{-\rho}}(\|w_{x_0, \lambda}\tilde{S}_{\xi_0, \lambda, \mu} v\|_{H^{m' + \rho}} + \lambda^{\half} \mu^{-\half}\|v\|_{H^{m + \rho}}) \\
&\quad + \|v\|_{C_*^{\frac{1}{8}}}\|u\|_{H^{m}},
\end{align*}
\begin{align*}
\|w_{x_0, \lambda} S_\mu(uv)\|_{H^{m'}} \lesssim \ &(\|w_{x_0, \lambda} \tilde{S}_\mu u\|_{H^{m'}} + \lambda^{\half} \mu^{-\half}\|u\|_{H^m})\|v\|_{L^\infty} \\
&+ \|u\|_{C_*^{-\rho}}(\|w_{x_0, \lambda} \tilde{S}_\mu v\|_{H^{m' + \rho}} + \lambda^{\half} \mu^{-\half}\|v\|_{H^{m + \rho}}) \\
&+ \|v\|_{C_*^{\frac{1}{8}}}\|u\|_{H^{m}},
\end{align*}
\item For $\kappa \leq c \lambda$,
\begin{align*}
\|w_{x_0, \kappa} S_\kappa(uv)\|_{H^{m'}} \lesssim \ &(\|w_{x_0, \kappa} \tilde{S}_\kappa u\|_{H^{m'}} + \|u\|_{H^m})\|v\|_{L^\infty} \\
&+ \|u\|_{C_*^{-\rho}}(\|w_{x_0, \kappa} \tilde{S}_\kappa v\|_{H^{m' + \rho}} + \|v\|_{H^{m + \rho}}) \\
&+ \|v\|_{C_*^{\frac{1}{8}}}\|u\|_{H^{m}},
\end{align*}
\item For $ \kappa \geq \lambda/c$,
\begin{align*}
\|w_{x_0, \lambda} S_\kappa(uv)\|_{H^{m'}} \lesssim \ &(\|w_{x_0, \lambda} \tilde{S}_\kappa u\|_{H^{m'}} + \|u\|_{H^m})\|v\|_{L^\infty} \\
&+ \|u\|_{C_*^{-\rho}}(\|w_{x_0, \lambda} \tilde{S}_\kappa v\|_{H^{m' + \rho}} + \|v\|_{H^{m + \rho}}) \\
&+ \|v\|_{C_*^{\frac{1}{8}}}\|u\|_{H^{m}},
\end{align*}
\end{enumerate}
In the case $\rho = 0$, the same estimates hold with $L^\infty$ in the place of $C_*^{-\rho}$. 
\end{cor}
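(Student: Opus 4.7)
The approach is to apply the Bony paraproduct decomposition
\[ uv = T_u v + T_v u + R(u, v), \]
which reduces the estimate to three pieces. I would focus on the hardest case $(i)$ with $wS = w_{x_0, \lambda}S_{\xi_0, \lambda, \mu}$; cases $(ii)$ and $(iii)$ are entirely analogous but simpler, since there is no gap structure to track and the local weight $w_{x_0, \kappa}$ is tuned exactly to the output frequency scale of $S_\kappa$.

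The two paraproduct pieces reduce directly to Proposition~\ref{localparaproduct}. For $T_v u$, I would apply the proposition with symbol $a = v$ in the $\rho = 0$ (i.e.\ $L^\infty$) version and absorb $\|\tilde S_\mu u\|_{H^m} \leq \|u\|_{H^m}$, producing exactly the first line of the claimed bound. Symmetrically, applying the proposition to $T_u v$ with symbol $a = u$ and the given $\rho > 0$ produces the second line. No further work is needed for these two terms.

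The main obstacle is the resonant remainder $R(u, v) = \sum_{\kappa} S_\kappa u \cdot \tilde S_\kappa v$. Because $S_{\xi_0, \lambda, \mu}$ captures only frequencies $\approx \lambda$ outside a $c\mu$-wide gap at $\pm \xi_0$, only pairings with $\kappa \gtrsim \lambda$ contribute. For the very high frequencies $\kappa \geq \lambda/c$, one can place the weight on the $u$ factor, since $w_{x_0, \lambda}$ has frequency support far below $\kappa$, and estimate $\tilde S_\kappa v$ in $L^\infty$ via Bernstein; after geometric summation in $\kappa$, the contribution is absorbed into the first line or into $\|v\|_{C_*^{1/8}}\|u\|_{H^m}$. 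The truly delicate case is the balanced pairing $\kappa \approx \lambda$. Here I would mirror the proof of Proposition~\ref{bilinear} and decompose
\[ \psi_\lambda(\xi) = p(\xi - \xi_0) + p(-\xi + \xi_0) + q(\xi - \xi_0) + q(-\xi + \xi_0), \]
separating frequencies outside the gap (where $p$ is supported) from those inside (where $q$ is supported). Interactions in which at least one factor lies outside the gap factor through the weighted norms $\|w_{x_0, \lambda}\tilde S_{\xi_0, \lambda, \mu} u\|_{H^{m'}}$ or $\|w_{x_0, \lambda}\tilde S_{\xi_0, \lambda, \mu} v\|_{H^{m' + \rho}}$, and fold into the first two lines of the claim. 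The remaining inside--inside pairings are confined to a frequency sliver of width $O(\mu)$ that $S_{\xi_0, \lambda, \mu}$ cannot eliminate; Bernstein on this sliver, combined with the $H^m$ control on $u$ and the $C_*^{1/8}$ control on $v$, yields exactly the correction $\|v\|_{C_*^{1/8}}\|u\|_{H^m}$ appearing in the final line. The $\rho = 0$ degeneration of the statement follows by the same argument with Hölder norms on $u$ replaced by $L^\infty$ throughout.
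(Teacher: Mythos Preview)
Your treatment of the two paraproduct pieces $T_v u$ and $T_u v$ via Proposition~\ref{localparaproduct} is exactly what the paper does. The divergence is in the resonant remainder $R(u,v)$: you embark on a frequency-by-frequency analysis mirroring Proposition~\ref{bilinear}, splitting balanced pairings into inside-gap and outside-gap contributions. The paper does none of this. It simply observes that $wS$ is bounded on $H^{m'}$ (the weight $w$ is a bounded function with frequency support far below that of $S$, so $\|wS f\|_{H^{m'}} \lesssim \|f\|_{H^{m'}}$), and then applies the global estimate \eqref{sobolevparaerror}:
\[
\|wS\, R(u,v)\|_{H^{m'}} \lesssim \|R(u,v)\|_{H^{m+\frac{1}{8}}} \lesssim \|u\|_{H^m}\|v\|_{C_*^{1/8}}.
\]
This is the entire argument for the third line of the bound.

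The point you missed is that the corollary is \emph{not} asking for any localized control of $R(u,v)$: the term $\|v\|_{C_*^{1/8}}\|u\|_{H^m}$ on the right is purely global, so there is nothing to gain from tracking the gap structure or placing the weight on one factor. Your approach is in the spirit of Proposition~\ref{bilinear}, which genuinely needs such a decomposition because its conclusion involves the $LS$ seminorms; here it is overkill. Your route could likely be pushed through, but it introduces several nontrivial commutation and frequency-support arguments that the statement does not require.
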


\begin{proof}
Write 
$$wS(uv) = wS(T_u v + T_v u + R(u, v)).$$
For the third term, we may use (\ref{sobolevparaerror}),
$$\| R(u, v) \|_{H^{m + \frac{1}{8}}} \lesssim \|u\|_{H^{m}}\|v\|_{C_*^{\frac{1}{8}}}.$$
Then the first and second terms are estimated by Proposition \ref{localparaproduct}.

\end{proof}

We will also need a generalization to paradifferential operators:

\begin{prop}\label{localparadiff}
Let $m, \tilde{m}, k \in \R$, $\rho, \tilde{\rho} \in [0, 1]$, $a \in \Gamma^m_{\tilde{\rho}}$, $b \in \Gamma_{\tilde{\rho}}^{\tilde{m}}$, and $\kappa^{\frac{3}{4}} \ll \mu$. Then
\begin{align*}
\|[w_{x_0, \kappa}  S_\mu, T_a] u \|_{H^k} &\lesssim M_0^m(a)\kappa^{\frac{3\rho}{4}} \mu^{-\frac{3\rho}{4}}\| \tilde{S}_\mu u\|_{H^{k + m - \frac{\rho}{4}}}, \\
\| w_{x_0, \kappa}  S_\mu T_a u \|_{H^k} &\lesssim M_0^m(a)(\| w_{x_0, \kappa} S_\mu u \|_{H^{k + m}} + \kappa^{\frac{3\rho}{4}} \mu^{-\frac{3\rho}{4}}\| \tilde{S}_\mu u\|_{H^{k + m - \frac{\rho}{4}}}), \\
\| w_{x_0, \kappa}  S_\mu(T_a T_b - T_{ab}) u \|_{H^k} &\lesssim (M_{\tilde{\rho}}^m(a) M_0^{\tilde{m}}(b) + M_0^m(a) M_{\tilde{\rho}}^{\tilde{m}}(b)) \\
&\quad \cdot (\| w_{x_0, \kappa} S_\mu u \|_{H^{k + m + \tilde{m} - \tilde{\rho}}} + \kappa^{\frac{3\rho}{4}} \mu^{-\frac{3\rho}{4}}\| \tilde{S}_\mu u\|_{H^{k + m + \tilde{m} - \tilde{\rho} - \frac{\rho}{4}}}).
\end{align*}
If additionally $a$ is homogeneous in $\xi$, we have
\begin{align*}
\|w_{x_0, \kappa}  S_\mu[T_a, \D_t + T_V \cdot \nabla] u(t)\|_{H^k} &\lesssim (M_0^m(a)\|V(t)\|_{W^{r, \infty}} + M_0^m((\D_t + V \cdot \nabla)a)) \\
&\quad \cdot (\|w_{x_0, \kappa}  S_\mu u(t)\|_{H^{k + m}} + \kappa^{\frac{3\rho}{4}} \mu^{-\frac{3\rho}{4}}\|u(t)\|_{H^{k + m - \frac{\rho}{4}}}).
\end{align*}
\end{prop}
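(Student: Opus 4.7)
The plan is to exploit the strong frequency separation between the weight $w_{x_0,\kappa}$, whose Fourier support lies in $\{|\xi| \le \kappa^{3/4}\}$, and the Littlewood-Paley projection $S_\mu$, which selects frequencies near $\mu \gg \kappa^{3/4}$. The parameter $\rho \in [0,1]$ controls an interpolation between trivial boundedness ($\rho=0$) and the sharp commutator gain ($\rho=1$).

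For the first estimate I would split
$$[w_{x_0,\kappa} S_\mu, T_a]u = w_{x_0,\kappa}[S_\mu, T_a]u + [w_{x_0,\kappa}, T_a] S_\mu u.$$
The standard paradifferential commutator bound with $\tilde\rho = 0$, together with $L^\infty$-boundedness of $w_{x_0,\kappa}$, controls the first summand at the $\rho = 0$ level. For the second summand I would run a Taylor expansion of $w_{x_0,\kappa}$ around the spatial argument of the paradifferential kernel and use the frequency localization $S_\mu$ to convert each derivative that falls on $T_a$ into a factor $\mu^{-1}$ of Sobolev order, while each derivative on $w_{x_0,\kappa}$ costs $\|\partial w_{x_0,\kappa}\|_{L^\infty} \lesssim \kappa^{3/4}$. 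One step of this expansion yields a net gain of $\kappa^{3/4}\mu^{-1}$ at the cost of one $H$-derivative on $u$, establishing the endpoint $\rho=1$. Complex interpolation with the $\rho = 0$ case delivers every intermediate $\rho$.

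The second estimate follows immediately from the first by decomposing $w_{x_0,\kappa} S_\mu T_a u = [w_{x_0,\kappa} S_\mu, T_a] u + T_a(w_{x_0,\kappa} S_\mu u)$ and using the basic $H^k$-boundedness of $T_a$ on the second term. The third estimate reduces to the second one by invoking the standard symbolic composition rule, which expresses $T_a T_b - T_{ab}$ as a paradifferential operator of order $m+\tilde m - \tilde\rho$ with seminorm controlled by $M_{\tilde\rho}^m(a)M_0^{\tilde m}(b) + M_0^m(a)M_{\tilde\rho}^{\tilde m}(b)$, modulo a smoothing remainder.

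For the final transport-commutator estimate I would write $[T_a, \partial_t + T_V \cdot \nabla] = T_{\partial_t a} + [T_a, T_V \cdot \nabla]$ and apply the Poisson-bracket form of symbolic calculus: since $a$ is homogeneous in $\xi$, the commutator $[T_a, T_V \cdot \nabla]$ is $T_{\{a, iV\cdot\xi\}}$ plus a smoothing remainder, and the Poisson bracket $\{a, iV\cdot\xi\}$ is of order $m$ with symbol seminorm $\lesssim M_0^m(a)\|V\|_{W^{r,\infty}}$. Combining, the full commutator is $T_\sigma$ plus a lower-order error, with $M_0^m(\sigma) \lesssim M_0^m((\partial_t + V\cdot\nabla)a) + M_0^m(a)\|V\|_{W^{r,\infty}}$, and the second estimate applied to this $T_\sigma$ finishes the proof.

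The main obstacle is verifying the commutator $[w_{x_0,\kappa}, T_a]S_\mu u$ at the $\rho=1$ endpoint uniformly in $\Gamma_0^m$: since $a$ has no $x$-regularity, the gain must come entirely from the frequency gap between $w_{x_0,\kappa}$ and $S_\mu$, so one has to track the kernel of $T_a$ carefully under $\xi$-differentiation and pair this with the $L^\infty$ bounds on successive derivatives of $w_{x_0,\kappa}$ at scale $\kappa^{3/4}$. A secondary issue is confirming that the Taylor remainder only spreads the frequency support of the commutator output into the range $\tilde S_\mu$ and not to wildly different frequencies, which follows because all terms of the expansion remain paradifferential operators acting on a frequency-$\mu$-localized input.
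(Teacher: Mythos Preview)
Your reductions of the second, third, and fourth estimates to the first one are correct and match the paper's argument exactly: the paper also writes $T_aT_b-T_{ab}=T_A$ with $A\in\Gamma_0^{m+\tilde m-\tilde\rho}$ and applies the second estimate, and likewise invokes \cite[Lemma 2.15]{alazard2014cauchy} (the transport-commutator lemma) for the fourth.

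For the first estimate, however, your split
\[
[w_{x_0,\kappa}S_\mu,T_a]u \;=\; w_{x_0,\kappa}[S_\mu,T_a]u \;+\; [w_{x_0,\kappa},T_a]S_\mu u
\]
has a genuine gap: the first summand does \emph{not} gain with only $M_0^m(a)$. Since $S_\mu=\psi_\mu(D)$ has no $x$-dependence, the commutator $[S_\mu,T_a]$ is governed by the Poisson bracket $\partial_\xi\psi_\mu\,\partial_x a$, and after the paradifferential smoothing one has $\|\partial_x(S_{<c\nu}a)\|_{L^\infty}\lesssim\nu\, M_0^m(a)$ for $\nu\sim\mu$, which exactly cancels the $\mu^{-1}$ from $\partial_\xi\psi_\mu$. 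Thus $[S_\mu,T_a]$ is only of order $m$, not $m-1$, under the hypothesis $a\in\Gamma_0^m$. You bound this piece only at the $\rho=0$ level, so in your interpolation it dominates and the argument does not close.

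The paper avoids this by \emph{not} splitting: since $w_{x_0,\kappa}$ has frequency support in $\{|\xi|\le\kappa^{3/4}\}\ll\mu$, one may replace $w_{x_0,\kappa}$ by $T_{w_{x_0,\kappa}}$ and view $w_{x_0,\kappa}S_\mu$ as a single order-$0$ paradifferential operator with symbol $p(x,\xi)=w_{x_0,\kappa}(x)\psi_\mu(\xi)$, for which $M_\rho^0(p)\lesssim\kappa^{3\rho/4}$. One then invokes the asymmetric commutator bound (the paper cites \cite[Theorem 3.4.A]{taylor2008pseudodifferential}): when one of the two symbols is of order $0$, the commutator gains $\rho$ derivatives using only the $C^\rho$-regularity of the order-$0$ symbol and $M_0^m(a)$. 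This yields directly
\[
\|[w_{x_0,\kappa}S_\mu,T_a]u\|_{H^k}\;\lesssim\; M_0^m(a)\,\kappa^{3\rho/4}\,\|\tilde S_\mu u\|_{H^{k+m-\rho}},
\]
after which the frequency localization converts $H^{k+m-\rho}$ to $\mu^{-3\rho/4}H^{k+m-\rho/4}$. The point is that keeping $w_{x_0,\kappa}$ and $S_\mu$ together lets the $x$-derivatives in the commutator expansion land on $w_{x_0,\kappa}$ (where they cost $\kappa^{3/4}$) rather than on $a$ (where they are uncontrolled).
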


\begin{proof}
We immediately have by (\ref{ordernorm}),
$$\|T_a  w_{x_0, \kappa}  S_\mu u \|_{H^k} \lesssim M_0^m(a)\| w_{x_0, \kappa} S_\mu u \|_{H^{k + m}}.$$
This implies the second estimate once we prove the first estimate.

First observe that by the frequency localization of $w_{x_0, \kappa}$ at $\kappa^{\frac{3}{4}} \ll \mu$, we may replace $w_{x_0, \kappa}$ with $T_{w_{x_0, \kappa}}$. Then, we may apply (\ref{sobolevcommutator}),
\begin{align*}
\|[w_{x_0, \kappa} S_\mu, T_a] u\|_{H^k} &\lesssim (M_\rho^m(a)M_0^0(w_{x_0, \kappa}) + M_0^m(a)M_\rho^0(w_{x_0, \kappa}))\|\tilde{S}_\mu u\|_{H^{k + m - \rho}} \\
&\lesssim (M_\rho^m(a) + M_0^m(a)\kappa^{\frac{3\rho}{4}})\|\tilde{S}_\mu  u\|_{H^{k + m - \rho}}.
\end{align*}
However, also observe that since $w_{x_0, \kappa} S_\mu$ is order 0, a sharper analysis (see for instance \cite[Theorem 3.4.A]{taylor2008pseudodifferential}) shows that in fact
\begin{align*}
\|[w_{x_0, \kappa} S_\mu, T_a] u\|_{H^k} &\lesssim M_0^m(a)M_\rho^0(w_{x_0, \kappa})\|\tilde{S}_\mu u\|_{H^{k + m - \rho}} \lesssim M_0^m(a)\kappa^{\frac{3\rho}{4}}\|\tilde{S}_\mu  u\|_{H^{k + m - \rho}}.
\end{align*}

For the third estimate, use that the paradifferential calculus essentially forms an algebra (see the remarks preceding Corollary 3.4.G in \cite{taylor2008pseudodifferential}), with
$$T_a T_b - T_{ab} = T_A \in OP\Gamma_0^{m + \tilde{m} - \tilde{\rho}}.$$
Then apply the second estimate, with $A$ in the place of $a$ and $m + \tilde{m} - \tilde{\rho}$ in the place of $m$:
$$\| w_{x_0, \kappa}  S_\mu T_A u \|_{H^k} \lesssim M_0^{m + \tilde{m} - \tilde{\rho}}(A)(\| w_{x_0, \kappa} S_\mu u \|_{H^{k + m + \tilde{m} - \tilde{\rho}}} + \kappa^{\frac{3\rho}{4}} \mu^{-\frac{3\rho}{4}}\| \tilde{S}_\kappa u\|_{H^{k + m + \tilde{m} - \tilde{\rho} - \frac{\rho}{4}}}).$$
Then the third estimate follows from (\ref{sobolevcommutator}). One similarly obtains the fourth estimate, using \cite[Lemma 2.15]{alazard2014cauchy} in place of (\ref{sobolevcommutator}).
\end{proof}

\subsection{Local estimates on the diffeomorphism}

In this subsection we establish local counterparts to Proposition \ref{prop:diffeoest}. For brevity, it is convenient to let $wS$ denote any of the localizing operators found in the $LS^\sigma_{x_0, \lambda}$ or $LS^\sigma_{x_0, \xi_0, \lambda, \mu}$ seminorms. When using this notation, let $\kappa$ denote the frequency of $S$, so that $S = S\tilde{S}_\kappa$.

\begin{prop}\label{localdiffeo}
Let $J \subseteq [-1, 0]$. The diffeomorphism $\rho$ satisfies
$$\|wS(\D_z \rho, \nabla \rho)\|_{X^{s' - \half}(J)} \leq \FF(\|\eta\|_{H^{s + \half}})(1 + \|wS\eta\|_{H^{s' + \half}}).$$
\end{prop}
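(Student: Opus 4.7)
The plan is to exploit the explicit linear structure $\rho = (1+z)P_z\eta - z(Q_z\eta - h)$ where $P_z := e^{\delta z\langle D\rangle}$ and $Q_z := e^{-(1+z)\delta\langle D\rangle}$ are Poisson-type Fourier multipliers. Since $wS$ is independent of $z$, commutes with $\D_z$ and $\nabla_x$, and $S$ annihilates the constant $h$, it suffices to bound $wS\rho$ in $X^{s'+\half}(J)$. Because the Littlewood-Paley piece $S$ commutes with $P_z$ and $Q_z$, we decompose
\[
wS\rho = \rho^\sharp + \rho^\flat,
\]
\[
\rho^\sharp := (1+z)P_z(wS\eta) - zQ_z(wS\eta), \qquad \rho^\flat := (1+z)[w,P_z]S\eta - z[w,Q_z]S\eta.
\]

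For the main piece $\rho^\sharp$, observe that it is precisely the linear Poisson extension construction applied to $wS\eta$ in place of $\eta$. Since the proof of Proposition \ref{prop:diffeoest} relies only on Fourier multiplier bounds that lift to any Sobolev scale, applying it at regularity $s'$ with $wS\eta$ as the boundary datum yields
\[
\|(\D_z,\nabla)\rho^\sharp\|_{X^{s'-\half}(J)} \lesssim \|wS\eta\|_{H^{s'+\half}},
\]
which accounts for the $\|wS\eta\|_{H^{s'+\half}}$ contribution to the claimed estimate.

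For the commutator piece $\rho^\flat$, the key observation is that $|\nabla_\xi p_z(\xi)| \lesssim |z|\,p_z(\xi)$ for the symbol $p_z(\xi) = e^{\delta z\langle\xi\rangle}$. A standard commutator expansion then yields
\[
\|[w,P_z]f\|_{H^\sigma} \lesssim |z|\,\|\nabla w\|_{L^\infty}\,\|P_z f\|_{H^\sigma},
\]
with an analogous bound for $Q_z$. In all three cases of $wS$ (namely $w_{x_0,\kappa}S_\kappa$ for $\kappa \leq c\lambda$, $w_{x_0,\lambda}S_\kappa$ for $\kappa \geq \lambda/c$, and $w_{x_0,\lambda}S_{\xi_0,\lambda,\mu}$) one has $\|\nabla w\|_{L^\infty} \lesssim \kappa^{3/4}$ or $\lambda^{3/4}$, and combined with the frequency localization of $S\eta$ at the relevant scale, the loss from $\nabla w$ is absorbed by the gain from restricting to $S$ and the Poisson smoothing. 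This yields
\[
\|(\D_z,\nabla)\rho^\flat\|_{X^{s'-\half}(J)} \leq \FF(\|\eta\|_{H^{s+\half}}).
\]

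The main obstacle will be controlling the $\D_z\rho^\flat$ piece, where $\D_z P_z = \delta\langle D\rangle P_z$ is of order $1$ and must be handled by splitting the commutator as $[w,\langle D\rangle]P_z + \langle D\rangle[w,P_z]$. The first summand is harmless since $[w,\langle D\rangle]$ is order $0$ with norm $\lesssim \|\nabla w\|_{L^\infty}$. The second summand loses a derivative but is compensated by the $L^2_z$ gain from the Poisson kernel, along with the factor $|z|$ from the commutator structure which vanishes at $z=0$ where $P_z$ degenerates to the identity. The three cases of $wS$ all fit the same structural argument, with the derivative balance working out favorably since $s' - s = 1/8$ is small compared to the $3/4$ gain from the frequency support of the local weights.
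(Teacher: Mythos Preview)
Your approach matches the paper's: split $wS$ applied to the Poisson extension into a main piece (the extension of $wS\eta$, handled by Proposition~\ref{prop:diffeoest} at regularity $s'$) plus a commutator $[w,P_z]S\eta$. Two issues, however. First, your opening reduction is flawed: $wS$ does \emph{not} commute with $\nabla_x$ (since $w$ depends on $x$), and in any case $X^{s'+\half}$ control on $wS\rho$ does not give $X^{s'-\half}$ control on $\D_z(wS\rho)$, since the $X$ norm carries no $\D_z$ information. You must work directly with $wS\nabla\rho$ and $wS\D_z\rho$, as the paper does (and as you effectively do once you reach $\rho^\flat$).

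Second, for the commutator the paper takes a cleaner route than your $|z|$-weighted bound: it observes that $\nabla e^{\delta z\langle D\rangle}$ is an order~$1$ operator uniformly in $z\in(-1,0)$, so $[w,\nabla e^{\delta z\langle D\rangle}]$ is order~$0$ with norm $\lesssim\|\nabla w\|_{L^\infty}\lesssim\kappa^{3/4}$, and then $\kappa^{3/4}\|S\eta\|_{H^{s'-\half}}\lesssim\kappa^{-1/4}\|S\eta\|_{H^{s'+\half}}\lesssim\|\eta\|_{H^{s+3/8}}$ --- no Poisson decay or $|z|$-tracking needed. For the $L^2_z$ component it similarly uses that $|z|^{1/2}\langle D\rangle^{-1/2}\nabla e^{\delta z\langle D\rangle}$ is uniformly order~$0$, then integrates the resulting $|z|^{-1/2}$. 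Your route (combining $|z|\kappa^{3/4}$ from the commutator with the Poisson decay $e^{-c|z|\kappa}$ to extract $\kappa^{-1/4}$) can be made to work, but you only gesture at it; the paper's uniform-order argument avoids this bookkeeping entirely.
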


\begin{proof}
Consider first $\nabla = \nabla_x$. Recall that on $z \in (-1, 0)$,
$$\rho(x, z) = (1 + z)e^{\delta z \langle D \rangle} \eta(x) - z(e^{-(1 + z)\delta \langle D \rangle} \eta(x) - h)$$
so without loss of generality, we consider $e^{\delta z \langle D \rangle} \eta$. Since
$$\nabla e^{\delta z \langle D \rangle} wS \eta$$
may be estimated as in the proof of Proposition \ref{prop:diffeoest} with $wS \eta$ in the place of $\eta$, it suffices to estimate the commutator
$$[wS, \nabla e^{\delta z \langle D \rangle}] \eta = [w, \nabla e^{\delta z \langle D \rangle}] S \eta$$
in $X^{s' - \half}(J)$.

Since $\nabla e^{\delta z \langle D \rangle}$ is an order 1 operator uniformly in $z \in (-1, 0)$,
$$\|[w, \nabla e^{\delta z \langle D \rangle}]S \eta\|_{H^{s' - \half}} \lesssim \kappa^{-\frac{1}{4}}\|S \eta\|_{H^{s' + \half}} \lesssim \|\eta\|_{H^{s + \frac{3}{8}}}.$$
This yields the $L_z^\infty(J; H^{s' - \half})$ estimate.

For the $L_z^2$ estimate, we also have that $|z|^{\half}\langle D \rangle^{-\half} \nabla e^{\delta z \langle D \rangle}$ is an order 0 operator uniformly in $z$, so that
\begin{align*}
|z|^{-\half}\|\langle D \rangle^{\half}[wS, |z|^{\half}\langle D \rangle^{-\half} \nabla e^{\delta z \langle D \rangle}]\eta\|_{H^{s'}} &\lesssim |z|^{-\half}\| [w, |z|^{\half}\langle D \rangle^{-\half} \nabla e^{\delta z \langle D \rangle}]S\eta\|_{H^{s' + \half}}\\
&\lesssim \kappa^{-\frac{1}{4}} |z|^{-\half}\|S\eta\|_{H^{s' + \half}} \lesssim |z|^{-\half}\|\eta\|_{H^{s' + \frac{1}{4}}}.
\end{align*}
Integrating in $z$ yields the desired estimate in $L_z^2(J; H^{s})$, provided we also estimate the commutator
\begin{align*}
|z|^{-\half}\|[\langle D \rangle^{\half}, wS] |z|^{\half}\langle D \rangle^{-\half} \nabla e^{\delta z \langle D \rangle}\eta\|_{H^{s'}} &\lesssim \kappa^{-\frac{1}{4}}|z|^{-\half}\||z|^{\half}\langle D \rangle^{-\half} \nabla e^{\delta z \langle D \rangle}S\eta\|_{H^{s' + \frac{1}{2}}} \\
&\lesssim |z|^{-\half}\|\eta\|_{H^{s' + \frac{1}{4}}}
\end{align*}
which we again integrate in $z$.

For the estimate on $\D_z \rho$, recall that on $z \in (-1, 0)$,
\begin{equation}\label{dzrho}
\D_z\rho = h + e^{\delta z \langle D \rangle} \eta - e^{-(1 + z)\delta \langle D \rangle} \eta + (1 + z)\delta \langle D \rangle e^{\delta z \langle D \rangle} \eta + z \delta \langle D \rangle e^{-(1 + z)\delta \langle D \rangle} \eta.
\end{equation}
Without loss of generality, we consider $e^{\delta z \langle D \rangle} \eta$ and $\langle D \rangle e^{\delta z \langle D \rangle} \eta.$ The former is easy to estimate, as $e^{\delta z \langle D \rangle}$ is uniformly bounded on $H^s$. For the latter, as before, it suffices to estimate the commutator
$$[wS, \langle D \rangle e^{\delta z \langle D \rangle}] \eta.$$
This is estimated in the same way as in the previous analysis of $\nabla \rho$, with $\langle D \rangle$ in the place of $\nabla$.
\end{proof}

We also have the analogous estimates on the second derivatives:

\begin{cor}\label{cor:d2rho}
Let $J \subseteq [-1, 0]$. The diffeomorphism $\rho$ satisfies
$$\|wS \nabla_{x, z}^2 \rho\|_{X^{s' - \frac{3}{2}}(J)} \leq \FF(\|\eta\|_{H^{s + \half}})(1 + \|wS\eta\|_{H^{s' + \half}}).$$
\end{cor}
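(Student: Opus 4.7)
The plan is to run the same argument as in the proof of Proposition \ref{localdiffeo}, simply with one additional derivative. Recall that
\[
\D_z\rho = h + e^{\delta z \langle D \rangle} \eta - e^{-(1 + z)\delta \langle D \rangle} \eta + (1 + z)\delta \langle D \rangle e^{\delta z \langle D \rangle} \eta + z \delta \langle D \rangle e^{-(1 + z)\delta \langle D \rangle} \eta,
\]
so every component of $\nabla_{x,z}^2\rho$ is, up to polynomial factors in $z$, of the form $P(D)e^{\pm\delta z\langle D \rangle}\eta$ with $P$ a Fourier multiplier of order~$2$ (the typical candidates being $\Delta$, $\nabla\langle D\rangle$, and $\langle D\rangle^2$). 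As in Proposition \ref{localdiffeo}, it therefore suffices to treat a single representative, say $P(D)e^{\delta z\langle D \rangle}\eta$.

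Next I write
\[
wS\,P(D)e^{\delta z\langle D \rangle}\eta = P(D)e^{\delta z\langle D \rangle}\,wS\eta + [w, P(D)e^{\delta z\langle D \rangle}]\,S\eta,
\]
using $[S,P(D)]=0$ since both are Fourier multipliers. The main term $P(D)e^{\delta z\langle D \rangle}wS\eta$ is handled exactly as in Proposition \ref{prop:diffeoest} but with $wS\eta$ in place of $\eta$: for the $C_z^0 H^{s'-3/2}$ component, I use that $P(D)e^{\delta z\langle D \rangle}$ is an order-$2$ operator uniformly in $z\in[-1,0]$; for the $L_z^2 H^{s'-1}$ component, I use the smoothing of the Poisson-type kernel, namely $|z|^{1/2}\langle D\rangle^{1/2}e^{\delta z\langle D \rangle}$ is bounded uniformly in $z$, which converts one power of $\langle D\rangle$ into an $L_z^2$-integrable factor. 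This produces the bound $\FF(\|\eta\|_{H^{s+\half}})\|wS\eta\|_{H^{s'+\half}}$.

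For the commutator term the key observation is that since $P(D)e^{\delta z\langle D \rangle}$ is a Fourier multiplier of order~$2$, the commutator $[w,P(D)e^{\delta z\langle D \rangle}]$ is an operator of order~$1$, with norm controlled by $\|\nabla w\|_{L^\infty}\lesssim \kappa^{3/4}$. For the $L_z^\infty H^{s'-3/2}$ piece this yields
\[
\|[w,P(D)e^{\delta z\langle D \rangle}]S\eta\|_{H^{s'-3/2}}\lesssim \kappa^{3/4}\|S\eta\|_{H^{s'-1/2}}\lesssim \kappa^{-1/4}\|\eta\|_{H^{s'+\half}}\lesssim \|\eta\|_{H^{s+\half}},
\]
which is better than needed. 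For the $L_z^2 H^{s'-1}$ piece, I insert the smoothing factor $|z|^{1/2}\langle D\rangle^{1/2}e^{\delta z\langle D \rangle}$ exactly as in the $L_z^2$ step of Proposition \ref{localdiffeo}, then commute $|z|^{1/2}\langle D\rangle^{1/2}$ through $wS$ and integrate in $z$; the resulting commutator $[\langle D\rangle^{1/2},wS]$ is order $-1/4$ in the relevant sense and produces only harmless $\kappa^{-1/4}$ gains.

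No step is genuinely difficult; the only point requiring care is the bookkeeping in the $L_z^2 H^{s'-1}$ estimate, where one must combine the smoothing of $e^{\delta z\langle D \rangle}$ with the commutator analysis to avoid losing derivatives, exactly as in the analogous step for $\nabla\rho$.
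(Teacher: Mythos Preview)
Your approach is correct but takes a longer route than the paper. The paper's proof is essentially two lines: for the mixed and pure $x$-derivatives, it commutes the outer $\nabla_x$ past $wS$ via the product rule,
\[
\|wS\nabla_x \nabla_{x, z} \rho\|_{X^{s' - \frac{3}{2}}(J)} \lesssim \|wS\nabla_{x, z} \rho\|_{X^{s' - \frac{1}{2}}(J)} + \|\nabla_{x, z} \rho\|_{X^{s' - \frac{3}{4}}(J)},
\]
and then quotes Proposition~\ref{localdiffeo} for the first term and Proposition~\ref{prop:diffeoest} for the second. Only for $\D_z^2\rho$ does it compute from the definition, which matches your treatment. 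By contrast, you re-run the entire machinery of Proposition~\ref{localdiffeo} with an order-$2$ multiplier in place of an order-$1$ one. Both work; the paper's reduction simply avoids duplicating the $L^2_z$ smoothing bookkeeping.

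One small arithmetic slip to flag: in your commutator bound you write $\kappa^{-1/4}\|\eta\|_{H^{s'+\half}}\lesssim \|\eta\|_{H^{s+\half}}$, which is the wrong direction since $s'=s+\tfrac18>s$. The fix is to keep the projection $S$ before dropping to $\|\eta\|_{H^{s+\half}}$: one has $\kappa^{3/4}\|S\eta\|_{H^{s'-\half}}\lesssim \kappa^{3/4-7/8}\|\eta\|_{H^{s+\half}}=\kappa^{-1/8}\|\eta\|_{H^{s+\half}}$, which is what you need.
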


\begin{proof}

First consider $\nabla \nabla_{x, z} \rho$. We easily see by commuting $\nabla$ that
$$\|wS\nabla_x \nabla_{x, z} \rho\|_{X^{s' - \frac{3}{2}}(J)} \lesssim \|wS\nabla_{x, z} \rho\|_{X^{s' - \frac{1}{2}}(J)} + \|\nabla_{x, z} \rho\|_{X^{s' - \frac{3}{4}}(J)}$$
on which we can apply the estimates on $\rho$ from Propositions \ref{localdiffeo} and \ref{prop:diffeoest}.

The estimate on $\D_z^2$ is similarly elementary, after computing $\D_z^2 \rho$ from definition.
\end{proof}

Next, we establish estimates on the coefficients of (\ref{eqn:flatelliptic}). First, we require local estimates on reciprocals. Notably, we use an algebraic argument instead of a Moser estimate:

\begin{prop}\label{recip}
Let $J \subseteq [-1, 0]$. The diffeomorphism $\rho$ satisfies
\begin{align*}
\|wS (\D_z \rho)^{-1}\|_{X^{s' - \frac{1}{2}}(J)} +& \|wS (1 + |\nabla \rho|^2)^{-1}\|_{X^{s' - \frac{1}{2}}(J)}\\
&\leq \FF(\|\eta\|_{H^{s + \half}})(\lambda^{\half} \mu^{-\half} + \|\eta\|_{W^{r + \half, \infty}} + \|wS\eta\|_{H^{s' + \half}}).
\end{align*}
\end{prop}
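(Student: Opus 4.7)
The strategy will exploit an algebraic identity that entirely avoids Moser estimates on $u^{-1}$. Since both $S_\kappa$ (for $\kappa \geq 1$) and $S_{\xi_0,\lambda,\mu}$ are frequency projections supported away from zero, $wS(1) = 0$; writing $wS = w \cdot S$ where $w$ denotes multiplication by the local weight, and observing that $w$ commutes with multiplication by $u$, applying $wS$ to the identity $u \cdot u^{-1} = 1$ yields the exact formula
\[
wS(u^{-1}) \,=\, -\,u^{-1}\cdot w \cdot [S, u]\,u^{-1}.
\]
This reduces the problem to (i) a pointwise bound on $u^{-1}$, and (ii) a commutator estimate for $[S, u]$ whose only input is the Sobolev regularity of $u$ itself, already supplied by Proposition \ref{localdiffeo} and Corollary \ref{cor:d2rho}.

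The bound (i) is immediate: the explicit formula (\ref{dzrho}) combined with the smallness of $\delta$ yields $\D_z \rho \geq h/2$, while $1 + |\nabla \rho|^2 \geq 1$ is automatic, so $\|u^{-1}\|_{L^\infty} \leq \FF(\|\eta\|_{H^{s + 1/2}})$ uniformly in $z \in J$. For (ii), I would decompose $u \cdot u^{-1}$ via Bony's formula and use $wS(1) = 0$ to obtain
\[
wS(T_u u^{-1}) \,=\, -\,wS(T_{u^{-1}} u) \,-\, wS\, R(u, u^{-1}),
\]
then symbolically invert $T_u$ by composing with $T_{1/u}$, using the third bound of Proposition \ref{localparadiff} (legitimate since $u^{-1} \in L^\infty \subseteq \Gamma^0_0$). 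The term $wS(T_{u^{-1}} u)$ is then controlled via Corollary \ref{localproduct}, whose crucial input is the local Sobolev bound on $wSu$ furnished by Proposition \ref{localdiffeo}. The remainder $wS\, R(u, u^{-1})$ can be handled by Sobolev-embedding one factor of $u$ into $C_*^{r - 1/2}$, so that only $\|u^{-1}\|_{L^\infty}$, not any Sobolev norm of $u^{-1}$, is needed.

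The principal obstacle will be bookkeeping the various frequency defects, especially the $\lambda^{1/2}\mu^{-1/2}$ factor arising from Propositions \ref{localparaproduct} and \ref{localparadiff} whenever $wS = w_{x_0, \lambda} S_{\xi_0, \lambda, \mu}$; these losses must be absorbed into precisely the $\lambda^{1/2}\mu^{-1/2}$ term on the right-hand side of the claim. The $\|\eta\|_{W^{r + 1/2, \infty}}$ contribution should likewise emerge from the H\"older $U^{r - 1/2}$ estimates on $\nabla_{x, z}\rho$ supplied by Proposition \ref{prop:diffeoest}, which enter whenever a factor of $u$ is placed in a H\"older slot of Propositions \ref{localparadiff} or \ref{localparaproduct}. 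Once these defects are accounted for, the pieces should assemble into the claimed $X^{s' - 1/2}(J)$ bound.
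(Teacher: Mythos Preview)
Your opening identity $wS(u^{-1}) = -u^{-1}\,w\,[S,u]\,u^{-1}$ is a tautology. Since $[S,u]u^{-1} = S(u\cdot u^{-1}) - u\,S(u^{-1}) = -u\,S(u^{-1})$, the right-hand side is literally $wS(u^{-1})$ again; no reduction has occurred. You cannot instead exploit commutator smoothing of $[S,u]$ acting on $u^{-1}$, because $u^{-1} = (\D_z\rho)^{-1}$ is bounded below and hence not in $L^2$. So the first paragraph of your proposal is vacuous.

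Your Bony decomposition in the second paragraph is the right idea and is essentially what the paper does, though organized differently. The paper's execution is more direct: rather than paralinearizing $u\cdot u^{-1}=1$, it uses the pointwise lower bound $u \geq h/2$ to write $\|Sw\,u^{-1}\|_{H^{s'-1/2}} \lesssim \|u\,\langle D\rangle^{s'-1/2}Sw\,u^{-1}\|_{L^2}$, then commutes $u$ past $\langle D\rangle^{s'-1/2}S$. The point is that the main term $\langle D\rangle^{s'-1/2}S(u\,w\,u^{-1}) = \langle D\rangle^{s'-1/2}S\,w$ vanishes (low-frequency $w$, high-frequency $S$), leaving only the commutator $[u,\langle D\rangle^{s'-1/2}S]w\,u^{-1}$ in $L^2$. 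This commutator genuinely transfers derivatives from $u^{-1}$ to $u$, which is what your tautological identity fails to do. The subsequent paraproduct analysis is then close to what you sketch.

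Two smaller issues: your claim that $wS\,R(u,u^{-1})$ needs only $\|u^{-1}\|_{L^\infty}$ is too optimistic --- balanced-frequency remainders need positive regularity on both factors, so at minimum $\|u^{-1}\|_{C_*^{0+}}$ (harmless, via H\"older Moser) or $\|u^{-1}-h^{-1}\|_{H^{s-1/2}}$ (global Moser) enters. The paper likewise relies on global Moser for lower-order Sobolev norms such as $\|(\D_z\rho)^{-1}\|_{H^{s'-1}}$; the ``algebraic'' point is only that local Moser in $H^{s'-1/2}$ is avoided.
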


\begin{proof}
We consider the first term on the left hand side, and the $L_z^\infty(J; H^{s' - \half})$ case. The other cases are obtained with the appropriate modifications. 

First, by commuting $w$ and $S$ as in the proof of Proposition \ref{localparaproduct}, it suffices to consider
$$\|Sw(\D_z \rho)^{-1}\|_{H^{s' - \half}}.$$
Then we have, using that $\D_z \rho \geq \min(h/2, 1)$ by Lemma 3.6 of \cite{alazard2014cauchy}, 
$$\|Sw (\D_z \rho)^{-1}\|_{H^{s' - \half}} \lesssim \|(\D_z \rho) \langle D \rangle^{s' - \half} Sw (\D_z \rho)^{-1}\|_{L^2},$$
so it suffices to estimate
$$\|[(\D_z \rho), \langle D \rangle^{s' - \half} S] w(\D_z \rho)^{-1}\|_{L^2}.$$

As with the commutator and product estimates of the previous subsection, the main burden is to absorb the extra $1/8$ derivatives in either a local Sobolev or H\"older norm. For this, we reduce to paraproducts. The balanced-frequency terms are easily estimated using $C_*^{\frac{1}{8}}$ on one term, so we consider only the low-high terms. First, we have the commutator, in the case that $S$ is a typical frequency projection $S_\kappa$,
$$\|[T_{\D_z \rho}, \langle D \rangle^{s' - \half} S] w(\D_z \rho)^{-1}\|_{L^2} \lesssim \|\D_z \rho\|_{C_*^{\half}} \|(\D_z \rho)^{-1}\|_{H^{s' - 1}}$$
which suffices by using the $\rho$ estimates of Proposition \ref{prop:diffeoest}. In the case $S = S_{\xi_0, \lambda, \mu}$, we have a $\mu^{-1}\lambda \leq \lambda^{\frac{1}{4}}$ loss, which is still better than needed. We also have
$$\|T_{\langle D \rangle^{s' - \half} S w(\D_z \rho)^{-1}} \D_z \rho\|_{L^2} \lesssim \|\langle D \rangle^{s' - \half} S w(\D_z \rho)^{-1}\|_{C_*^{\half - s}} \|\D_z \rho\|_{H^{s - \half}} \lesssim \|(\D_z \rho)^{-1}\|_{C_*^{\frac{1}{8}}} \|\D_z \rho \|_{H^{s - \half}}.$$

It remains to consider the term
$$ST_{ w(\D_z \rho)^{-1}} \D_z \rho$$
in $H^{s' - \half}$. First observe that by (\ref{sobolevcommutator}),
\begin{align*}
\|S(T_{ w(\D_z \rho)^{-1}} - &T_{(\D_z \rho)^{-1}} T_w) \D_z \rho\|_{H^{s' - \half}} \\
&\lesssim (M_\half^0(w)M_0^0((\D_z \rho)^{-1}) + M_0^0(w)M_\half^0((\D_z \rho)^{-1})) \| \tilde{S}_\kappa  \D_z \rho\|_{H^{s' - 1}} \\
&\leq\FF(\|\eta\|_{H^{s + \half}})(\kappa^{\frac{3}{8}} + \|\eta\|_{W^{r + \half, \infty}}) \| \tilde{S}_\kappa  \D_z \rho\|_{H^{s' - 1}}
\end{align*}
so that we may consider 
$$ST_{(\D_z \rho)^{-1}}T_w \D_z \rho = ST_{(\D_z \rho)^{-1}}w \D_z \rho.$$
Similar to before, we may commute $[S, T_{(\D_z \rho)^{-1}}]$ to
$$T_{(\D_z \rho)^{-1}}S w \D_z \rho$$
with the additional but acceptable loss in the case $S = S_{\xi_0, \lambda, \mu}$. We commute $S$ and $w$ as in the proof of Proposition \ref{localparaproduct} (and as at the beginning of this proof). Lastly, observe that
$$\|T_{(\D_z \rho)^{-1}} wS \D_z \rho\|_{H^{s' - \half}} \lesssim \|(\D_z \rho)^{-1}\|_{L^\infty} \|wS \D_z \rho \|_{H^{s' - \half}}$$
on which we can apply the estimates on $\rho$ from Propositions \ref{localdiffeo} and \ref{prop:diffeoest}.

\end{proof}

\begin{cor}\label{localabgest}
Let $J \subseteq [-1, 0]$. For $\alpha, \beta$, and $\gamma$ defined as above,
\begin{align*}
\|wS(\alpha, \beta)\|_{X^{s' - \frac{1}{2}}(J)} + \|wS\gamma\|_{X^{s' - \frac{3}{2}}(J)} &\leq \FF(\|\eta\|_{H^{s + \half}})(\lambda^{\half}\mu^{-\half} + \|\eta\|_{W^{r +\half,\infty}} + \|wS\eta\|_{H^{s' + \half}}).
\end{align*}
\end{cor}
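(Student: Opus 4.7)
The plan is to treat each coefficient $\alpha, \beta, \gamma$ as a polynomial (with, for the denominators, a single reciprocal factor) in $\D_z\rho$, $\nabla\rho$, and, in the case of $\gamma$, also the second derivatives $\D_z^2\rho, \Delta\rho, \nabla\D_z\rho$, together with the reciprocals $(\D_z\rho)^{-1}$ and $(1 + |\nabla\rho|^2)^{-1}$. The local bound is then obtained by applying Corollary \ref{localproduct} iteratively, plugging in the local Sobolev regularity of $\rho$ (Proposition \ref{localdiffeo}), its second derivatives (Corollary \ref{cor:d2rho}), and its reciprocals (Proposition \ref{recip}), together with the global estimates of Proposition \ref{prop:diffeoest} and Corollary \ref{cor:abgest}.

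Concretely, for $\alpha = (\D_z\rho)^2(1 + |\nabla\rho|^2)^{-1}$ and $\beta = -2\D_z\rho\,\nabla\rho(1 + |\nabla\rho|^2)^{-1}$, I would iteratively split the product by Corollary \ref{localproduct}, placing one factor in the local $X^{s'-\frac{1}{2}}$ slot and the other in the $L^\infty$, $C_*^{-\rho}$, and $C_*^{\frac{1}{8}}$ slots. Each local slot contributes a $\|wS\eta\|_{H^{s'+\frac{1}{2}}}$ term via Proposition \ref{localdiffeo} or Proposition \ref{recip}; the $L^\infty$ slot produces both a bounded prefactor (via Proposition \ref{prop:diffeoest}) and the parasitic $\lambda^{\frac{1}{2}}\mu^{-\frac{1}{2}}\|\cdot\|_{H^{s-\frac{1}{2}}}$ error, responsible for the $\lambda^{\frac{1}{2}}\mu^{-\frac{1}{2}}$ term in the claim; the $C_*^{-\rho}$ slot with a small positive $\rho$ yields the $\|\eta\|_{W^{r+\frac{1}{2},\infty}}$ contribution from the second line of Proposition \ref{prop:diffeoest}; and the $C_*^{\frac{1}{8}}$ slot from the resonant error is controlled by Sobolev embedding $H^{s-\frac{1}{2}} \hookrightarrow C_*^{s-1} \subset C_*^{\frac{1}{8}}$ in dimension one, since $s > \frac{1}{2} + \frac{7}{8}$. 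All remaining $\eta$-dependent multiplicative constants are absorbed into the $\FF(\|\eta\|_{H^{s+\frac{1}{2}}})$ prefactor, in the usual Moser fashion, using the lower bounds $\D_z\rho \geq \min(h/2, 1)$ and $1 + |\nabla\rho|^2 \geq 1$ to handle compositions with $x\mapsto x^{-1}$.

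The argument for $\gamma = (\D_z^2\rho + \alpha\Delta\rho + \beta\cdot\nabla\D_z\rho)/\D_z\rho$ is identical in structure, except that the second-derivative factors $\D_z^2\rho, \Delta\rho, \nabla\D_z\rho$ each drop one order of regularity, so the estimate lives in $X^{s'-\frac{3}{2}}(J)$; their local norms are supplied by Corollary \ref{cor:d2rho}, the factors $\alpha, \beta$ enter via the global $L^\infty$ bound from Corollary \ref{cor:abgest}, and division by $\D_z\rho$ is again handled by Corollary \ref{localproduct} combined with Proposition \ref{recip}. The principal technical obstacle, and the only place care is needed, is the bookkeeping at each application of Corollary \ref{localproduct}: one must verify that the factor placed in the $L^\infty$ / $C_*^{-\rho}$ / $C_*^{\frac{1}{8}}$ slot has the Hölder regularity it is claimed to (which occasionally forces rearranging which factor goes into which slot so that the lowest-regularity piece always sits in the local Sobolev slot), and that exactly the three terms $\lambda^{\frac{1}{2}}\mu^{-\frac{1}{2}}$, $\|\eta\|_{W^{r+\frac{1}{2},\infty}}$, and $\|wS\eta\|_{H^{s'+\frac{1}{2}}}$ on the right-hand side are produced, with everything else absorbed into the $\FF$ prefactor.
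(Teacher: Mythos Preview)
Your proposal is correct and follows essentially the same approach as the paper: apply the local product estimate (Corollary~\ref{localproduct}) together with the reciprocal estimate (Proposition~\ref{recip}) and the diffeomorphism bounds (Propositions~\ref{prop:diffeoest}, \ref{localdiffeo}, Corollary~\ref{cor:d2rho}). The paper's own proof is two sentences to the same effect; the only additional detail it records is the choice of the parameter $\rho$ in Corollary~\ref{localproduct}: take $\rho = 0$ for $\alpha,\beta$ (all factors lie in $L^\infty$), and $\rho = \tfrac{1}{2}$ for $\gamma$ (the second-derivative factors sit only in $C_*^{-1/2}$), which is consistent with your description.
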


\begin{proof}
For $\alpha$ and $\beta$, apply the product and reciprocal estimates, Propositions \ref{localproduct} and \ref{recip} respectively, with Proposition \ref{prop:diffeoest}. For the product estimates, use $\rho = 0$.

For $\gamma$, use the same estimates with $\rho = 1/2$.

\end{proof}

Lastly, we estimate the coefficients in the local seminorm:

\begin{cor}\label{abgls}
Let $J \subseteq [-1, 0]$ and $0 \leq \sigma \leq s - \half$. For $\alpha, \beta$, and $\gamma$ defined as above,
\begin{align*}
\mu^{\sigma'}\|(\alpha, \beta)&\|_{L^\infty(J;LS^{s - \sigma - \half}_{x_0, \xi_0, \lambda, \mu}) \cap L^2(J;LS^{s - \sigma}_{x_0, \xi_0, \lambda, \mu})} + \mu^{\sigma'}\|\gamma\|_{L^\infty(J;LS^{s - \sigma - \frac{3}{2}}_{x_0, \xi_0, \lambda, \mu}) \cap L^2(J;LS^{s - \sigma - 1}_{x_0, \xi_0, \lambda, \mu})} \\
&\leq \FF(\|\eta\|_{H^{s + \half}})(\lambda^{\half}\mu^{-\half} +\|\eta\|_{W^{r +\half,\infty}} + \mu^{\sigma'}\|\eta\|_{LS^{s - \sigma + \half}_{x_0, \xi_0, \lambda, \mu}}).
\end{align*}
\end{cor}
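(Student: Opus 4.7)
The plan is to split $\|(\alpha, \beta)\|_{LS^{s - \sigma - 1/2}_{x_0, \xi_0, \lambda, \mu}}$ and $\|\gamma\|_{LS^{s - \sigma - 3/2}_{x_0, \xi_0, \lambda, \mu}}$ into their five constituent pieces: the two non-weighted terms $\kappa^{3/4}\mu^{-1}\|S_\kappa\cdot\|$ (for $\kappa \in [c\mu, c\lambda]$) and $\lambda^{3/4}\mu^{-1}\|S_{\geq c\lambda}\cdot\|$, together with the three weighted terms $w_{x_0,\kappa}S_\kappa$ for $\kappa \in [c\mu, c\lambda]$, $w_{x_0,\lambda}S_\kappa$ for $\kappa \geq \lambda/c$, and $w_{x_0,\lambda}S_{\xi_0,\lambda,\mu}$. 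For each weighted piece I invoke Corollary \ref{localabgest}; for each non-weighted piece I invoke Corollary \ref{cor:abgest}. In both cases a Bernstein-type argument reconciles the regularity at which the input estimate is stated with the regularity $s - \sigma - 1/2$ (respectively $s - \sigma - 3/2$) of the target seminorm. Both components of $X^\sigma = L^\infty_z H^\sigma \cap L^2_z H^{\sigma + 1/2}$ are handled identically since the input corollaries bound both.

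For a typical weighted piece, say $w_{x_0,\kappa}S_\kappa\alpha$ with $\kappa \in [c\mu, c\lambda]$, the weight $w_{x_0,\kappa}$ has frequency support $\leq \kappa^{3/4} \ll \kappa$, so I may commute $\langle D \rangle^{-\sigma'}$ past it (modulo harmless commutators absorbed into $\FF$) to obtain $\|w_{x_0,\kappa}S_\kappa\alpha\|_{H^{s - \sigma - 1/2}} \lesssim \kappa^{-\sigma'}\|w_{x_0,\kappa}S_\kappa\alpha\|_{H^{s' - 1/2}}$, using $(s - \sigma - 1/2) - (s' - 1/2) = -\sigma'$. Corollary \ref{localabgest} bounds the latter by $\FF(\cdot)(\lambda^{1/2}\mu^{-1/2} + \|\eta\|_{W^{r+1/2,\infty}} + \|w_{x_0,\kappa}S_\kappa\eta\|_{H^{s'+1/2}})$. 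Since $\mu \leq \kappa$, the factor $\mu^{\sigma'}\kappa^{-\sigma'} \leq 1$; symmetrically, $\|w_{x_0,\kappa}S_\kappa\eta\|_{H^{s'+1/2}} \lesssim \kappa^{\sigma'}\|w_{x_0,\kappa}S_\kappa\eta\|_{H^{s - \sigma + 1/2}}$ reconstitutes the required $\mu^{\sigma'}$ pairing on the right-hand side. Summing over $\kappa \in [c\mu, c\lambda]$ produces exactly a subsum of $\mu^{\sigma'}\|\eta\|_{LS^{s - \sigma + 1/2}_{x_0, \xi_0, \lambda, \mu}}$. The balanced-frequency piece $w_{x_0,\lambda}S_{\xi_0,\lambda,\mu}$ is identical with $\lambda$ in place of $\kappa$ (still using $\mu \leq \lambda$), and the high-frequency pieces $w_{x_0,\lambda}S_\kappa$ with $\kappa \geq \lambda/c$ acquire an additional favorable factor $(\kappa/\lambda)^{-\sigma'}$ permitting a geometric sum. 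For $\gamma$ the argument is the same using Corollary \ref{localabgest}'s $X^{s' - 3/2}$ output.

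For the non-weighted pieces I apply the global bound of Corollary \ref{cor:abgest}, which gives $\|\alpha - h^2\|_{X^{s-1/2}(J)}, \|\beta\|_{X^{s-1/2}(J)} \leq \FF$ and $\|\gamma\|_{X^{s-3/2}(J)} \leq \FF$. Since $S_\kappa$ annihilates the constant $h^2$ for $\kappa \geq 1$, Bernstein yields $\|S_\kappa \alpha\|_{H^{s - \sigma - 1/2}} \lesssim \kappa^{-\sigma}\FF$ and the same for $\beta, \gamma$ in the appropriate norms. Multiplying by $\mu^{\sigma'}\kappa^{3/4}\mu^{-1}$ gives $\mu^{\sigma - 3/8}\kappa^{3/4 - \sigma}\mu^{-1/2}\FF$. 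A routine check splitting on $\sigma \leq 3/4$ versus $\sigma > 3/4$, together with the constraint $\lambda^{3/4} \ll \mu \leq c\lambda$, confirms this is bounded by $\lambda^{1/2}\mu^{-1/2}\FF$; the high-frequency piece $\lambda^{3/4}\mu^{-1}\|S_{\geq c\lambda}\cdot\|$ is treated identically with $\kappa = \lambda$.

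The main obstacle here is purely bookkeeping: tracking the powers of $\mu$, $\lambda$, and $\kappa$ through the five cases and verifying that the Bernstein conversion factor between the stated regularity of the input corollaries and the target regularity of $LS^{s - \sigma - 1/2}_{x_0,\xi_0,\lambda,\mu}$ always lands on the correct side, in all subranges of $\sigma \in [0, s - 1/2]$ and $\mu \in [\lambda^{3/4}, c\lambda]$. No analytical ingredients beyond Corollaries \ref{cor:abgest} and \ref{localabgest} are needed.
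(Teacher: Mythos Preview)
Your proposal is correct and follows essentially the same approach as the paper: apply Corollary~\ref{localabgest} to each weighted piece of the $LS$ seminorm, use Bernstein on the frequency-localized $wS$ blocks to pass between regularity $s'-\tfrac12$ and $s-\sigma-\tfrac12$ (picking up the $\kappa^{-\sigma'}$ and $\kappa^{\sigma'}$ factors on the two sides), and then sum. The paper's proof is terser---it only writes out the three $wS$ cases and jumps to the full $LS$ bound---whereas you separately treat the non-weighted pieces $\kappa^{3/4}\mu^{-1}\|S_\kappa\cdot\|$ and $\lambda^{3/4}\mu^{-1}\|S_{\geq c\lambda}\cdot\|$ via Corollary~\ref{cor:abgest}; this is the right thing to do and your exponent bookkeeping for those terms checks out.
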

\begin{proof}
We consider the $L^2_z$ case; the $L^\infty_z$ case is similar. By Corollary \ref{localabgest}, we have in all three cases of $wS$,
$$\|wS(\alpha, \beta)\|_{L^2(J;H^{s - \sigma})} \leq \FF(\|\eta\|_{H^{s + \half}})(\kappa^{-\sigma'}\lambda^{\half}\mu^{-\half} + \kappa^{-\sigma'}\|\eta\|_{W^{r +\half,\infty}} + \|wS\eta\|_{H^{s - \sigma + \half}}).$$
Thus, we have
$$\|(\alpha, \beta)\|_{L^2(J;LS^{s - \sigma}_{x_0, \xi_0, \lambda, \mu})} \leq \FF(\|\eta\|_{H^{s + \half}})(\mu^{-\sigma'}\lambda^{\half}\mu^{-\half} + \mu^{-\sigma'}\|\eta\|_{W^{r +\half,\infty}} + \|\eta\|_{LS^{s - \sigma + \half}_{x_0, \xi_0, \lambda, \mu}}).$$
The $\gamma$ estimate is similar using Corollary \ref{localabgest}. 

\end{proof}

\subsection{Factoring the elliptic equation}

In this subsection we estimate the error on the right hand side of (\ref{factoreqn}). In preparation, first we record estimates on the symbols $a$ and $A$ in (\ref{factoreqn}). Define
$$\MM_\rho^m(a) = \sup_{z \in J} M_\rho^m(a(z)), \qquad \MM_\rho^{m, 2}(a) = \| M_\rho^m(a(z))\|_{L^2_z(J)}.$$

\begin{prop}\label{aabds}
For $a, A$ defined as above,
\begin{align*}
\MM_{0}^1(a) + \MM_{0}^1(A) &\leq \FF(\|\eta\|_{H^{s + \half}}) \\
\MM_{\half}^1(a) + \MM_{\half}^1(A) + \MM^1_{-\frac{1}{2}}(\D_z A) &\leq \FF(\|\eta\|_{H^{s + \half}})(1 + \|\eta\|_{W^{r + \half, \infty}}) \\
\MM_{1}^{1, 2}(a) + \MM_{r}^{1,2}(A) + \MM^{1,2}_{0}(\D_z A) &\leq \FF(\|\eta\|_{H^{s + \half}})(1 + \|\eta\|_{W^{r + \half, \infty}}) 
\end{align*}
\end{prop}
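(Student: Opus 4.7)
The plan is to treat $a$ and $A$ as algebraic expressions in $(\alpha, \beta, \xi)$ and to bound them via composition estimates of Moser type, combined with the bounds on $\alpha, \beta$ already provided in Corollary \ref{cor:abgest} together with the $\D_z\rho, \D_z^2\rho$ estimates in Proposition \ref{prop:diffeoest}.

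The first step is to verify ellipticity of the discriminant. A direct computation from $\alpha = (\D_z\rho)^2/(1 + |\nabla\rho|^2)$ and $\beta = -2(\D_z\rho)\nabla\rho/(1 + |\nabla\rho|^2)$ gives
\[
4\alpha|\xi|^2 - (\beta\cdot\xi)^2 = \frac{4(\D_z\rho)^2}{(1 + |\nabla\rho|^2)^2}\bigl((1 + |\nabla\rho|^2)|\xi|^2 - (\nabla\rho\cdot\xi)^2\bigr) \gtrsim |\xi|^2,
\]
using the positive depth condition (which ensures $\D_z\rho \geq c > 0$, cf.\ \cite[Lemma 3.6]{alazard2014cauchy}) and the Cauchy--Schwarz inequality. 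Hence the square root in the definitions of $a, A$ behaves like $|\xi|$ modulated by a smooth function of $(\alpha, \beta, \xi/|\xi|)$, so both $a$ and $A$ are order-$1$ symbols whose coefficients are smooth, bounded-below-denominator expressions in $(\alpha, \beta)$. Any $x$-regularity estimate (Sobolev or H\"older) on $(\alpha, \beta)$ will therefore transfer to $a, A$ by standard Moser composition bounds, uniformly in the $\xi$ direction.

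For the first line, the $\MM_0^1$ bound follows from $(\alpha, \beta) \in L^\infty_z L^\infty_x$, a consequence of $(\alpha, \beta) \in X^{s - \half}(J)$ via Sobolev embedding (as $s - \half > \half$). For the second line, the $\MM_{\half}^1$ bound follows from $(\alpha, \beta) \in U^{r - \half}(J) \subseteq C^0_z C_*^{r - \half}$, noting that $r - \half > \half$ by the assumption $r > 1$. For $\MM_{-\half}^1(\D_z A)$, I would differentiate to express $\D_z A$ as a smooth function of $(\alpha, \beta)$ multiplied by $\D_z\alpha$ and $\D_z\beta$; these are controlled in $C^0_z C_*^{-\half}$ by the estimates on $\D_z^2\rho$ and $\nabla \D_z\rho$ from Proposition \ref{prop:diffeoest}, using $r - \frac{3}{2} > -\half$.

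For the third line, the bounds follow in the same way from the $L^2_z$ components of the $U$-spaces: $(\alpha, \beta) \in L^2_z C_*^{r}$ and $\D_z\alpha, \D_z\beta \in L^2_z C_*^{r - 1}$. This gives $\MM_r^{1,2}(A)$ directly; since $r > 1$, the weaker $\MM_1^{1,2}(a)$ follows; and since $r - 1 > 0$, we obtain $\MM_0^{1,2}(\D_z A)$ as well. The main technical subtlety throughout is verifying that the Moser estimates for the square root are stable uniformly in the frequency direction $\xi/|\xi|$, which is precisely what the ellipticity lower bound secures; once this is established, each bound collapses to one of the already-proven estimates on $\alpha, \beta$ or their $z$-derivatives.
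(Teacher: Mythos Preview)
Your proposal is correct and is essentially the argument underlying the paper's proof. The paper itself simply cites \cite[Lemma 3.22]{alazard2014cauchy} for the first line and \cite[(B.45)]{alazard2014strichartz} for the second, then remarks that the third follows by the same reasoning with the $L^2_z$ part of the $U^\sigma$-estimates on $\rho$ replacing the $C^0_z$ part; your write-up unpacks precisely what those references contain, namely the ellipticity of $4\alpha|\xi|^2 - (\beta\cdot\xi)^2$ followed by Moser composition to transfer the H\"older regularity of $(\alpha,\beta)$ and $(\D_z\alpha,\D_z\beta)$ to $a$, $A$, and $\D_z A$.
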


\begin{proof}
The first estimate is from \cite[Lemma 3.22]{alazard2014cauchy}. The second estimate is from \cite[(B.45)]{alazard2014strichartz}. The third estimate is similar, but is proven using the $L^2_z$ part of Proposition \ref{prop:diffeoest}, instead of $C_z^0$.
\end{proof}

\begin{prop} \label{inhomogbd}
Let $0 \leq \sigma \leq s - \half$ and $z_0 \in [-1, 0]$, $J = [z_0, 0]$. Consider $\theta$ solving (\ref{specialelliptic}). Then we can write
$$(\D_z - T_a)(\D_z - T_A)\tilde{\theta} = F_1 + F_2 + F_3$$
where for $i \geq 1$,
$$\|F_i\|_{Y^{\sigma}(J)} \leq \FF(\|\eta\|_{H^{s + \half}})((1 + \|\eta\|_{W^{r + \half, \infty}})\|\nabla_{x,z} \tilde{\theta}\|_{L^2(J; H^{\sigma})} + \|\nabla_{x,z} \tilde{\theta}\|_{U^{\sigma - s + \half+}(J)}),$$
\begin{align*}
\|w_{x_0,\lambda}&S_\mu F_i\|_{Y^{\sigma'}(J)} \leq \FF(\|\eta\|_{H^{s + \half}})((\lambda^{\half}\mu^{-\half} +\|\eta\|_{W^{r +\half,\infty}} + \mu^{\sigma'}\|\eta\|_{LS^{s - \sigma + \half}_{x_0, \xi_0, \lambda, \mu}})\|\nabla_{x, z} \tilde{\theta}\|_{U^{\sigma - s + \half+}(J)} \\
&+ (1 +\|\eta\|_{W^{r +\half,\infty}})(\mu^{\sigma'}\|\nabla_{x, z} w_{x_0, \lambda}S_{\xi_0, \lambda, \mu} \theta\|_{L^2(J;L^2)} + \|\nabla  w_{x_0,\mu}S_\mu\tilde{\theta}\|_{L^2(J;H^{\sigma'})} \\
&+ \|\nabla_{x, z} \tilde{\theta}\|_{L^2(J;H^\sigma)})).
\end{align*}
\end{prop}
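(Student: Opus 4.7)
The plan is to first expand the factorization explicitly and match against the elliptic equation (\ref{homogellipticeqn}) to isolate the three error contributions. Using
\[
(\D_z - T_a)(\D_z - T_A)\tilde\theta = \D_z^2 \tilde\theta - (T_a + T_A)\D_z \tilde\theta - T_{\D_z A}\tilde\theta + T_a T_A \tilde\theta
\]
and substituting $\D_z^2 \tilde\theta = -\alpha\Delta \tilde\theta - \beta\cdot \nabla \D_z \tilde\theta + \gamma \D_z \tilde\theta$ from (\ref{homogellipticeqn}), the right-hand side splits naturally into three terms: $F_1 = (T_a T_A - T_{aA})\tilde\theta + (T_{aA}\tilde\theta + \alpha\Delta \tilde\theta)$, which measures the symbol-composition/paraproduct defect for the principal symbol (using $aA = -\alpha|\xi|^2$); $F_2 = (T_{-i\beta\cdot\xi} - T_a - T_A)\D_z \tilde\theta + (-T_{-i\beta\cdot\xi}\D_z\tilde\theta - \beta\cdot\nabla \D_z \tilde\theta)$, which measures the paraproduct defect for the order-one symbol (using $a + A = -i\beta\cdot\xi$); and $F_3 = (\gamma \D_z - T_{\D_z A})\tilde\theta$, which collects the remaining order-zero terms. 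The first group in each $F_i$ is a pure paradifferential remainder, while the second group is a product-versus-paraproduct remainder (i.e. $T_\alpha \Delta \tilde\theta - \alpha\Delta\tilde\theta$ type terms).

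For the first (global Sobolev) estimate, the strategy follows the analysis of \cite[Appendix B]{alazard2014strichartz}: each paradifferential composition remainder in $F_1, F_2$ is handled via the standard symbolic calculus estimate (\ref{sobolevcommutator}), taking two units of regularity off $a, A$ using the bounds $\MM^1_\half(a), \MM^1_\half(A), \MM^{1,2}_r(A)$ of Proposition \ref{aabds} and trading two $\nabla_{x,z}$ against $\D_z^2$ on $\tilde\theta$ via the equation. The product-versus-paraproduct terms like $T_\alpha \Delta \tilde\theta - \alpha \Delta \tilde\theta$ are estimated by (\ref{sobolevparaproduct}), (\ref{sobolevparaerror}), using the coefficient regularity from Corollary \ref{cor:abgest}, with the $Y^\sigma$ splitting $L^1_z H^\sigma + L^2_z H^{\sigma-\half}$ absorbing the dichotomy between $C^0_z$ and $L^2_z$ regularity of $\alpha,\beta,\gamma$. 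The $F_3$ term uses the same mechanism on $\gamma$ and $\D_z A$. Sobolev embedding $H^\sigma \hookrightarrow C_*^{\sigma - s + \half +}$ lets us absorb lower-order $\nabla_{x,z}\tilde\theta$ contributions into the $U^{\sigma - s + \half +}(J)$ norm on the right.

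For the second (local) estimate, I would insert $w_{x_0,\lambda} S_\mu$ and redistribute it inside each error term using the local paradifferential and product estimates. Concretely: for $F_1$ one applies Proposition \ref{localparadiff} to the composition $T_a T_A - T_{aA}$, now using the local paradifferential estimate with $\kappa = \mu$, $\rho = 1/2$, $\tilde\rho = 1/2$, so that the principal-symbol factorization error on the local piece $w_{x_0,\lambda}S_\mu\tilde\theta$ gains a half-derivative at the cost of the $\kappa^{3\rho/4}\mu^{-3\rho/4} = 1$ factor; and for the $\alpha\Delta \tilde\theta$ versus $T_\alpha \Delta \tilde\theta$ piece, one applies Corollary \ref{localproduct} together with Corollary \ref{abgls} to bound $\|w_{x_0,\lambda}S_\mu(\alpha)\|_{X^{s'-\half}}$ by $\FF(\cdots)(\lambda^\half\mu^{-\half} + \|\eta\|_{W^{r+\half,\infty}} + \mu^{\sigma'}\|\eta\|_{LS^{s-\sigma+\half}_{x_0,\xi_0,\lambda,\mu}})$. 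The $F_2, F_3$ terms follow the same pattern, with Proposition \ref{aabds} providing the symbol bounds and Corollary \ref{abgls} providing the local coefficient bounds; the $w_{x_0,\mu}S_\mu \tilde\theta$ and $w_{x_0,\lambda}S_{\xi_0,\lambda,\mu}\tilde\theta$ pieces of $\tilde\theta$ in the target bound arise precisely because Corollary \ref{localproduct} evaluates the weighted norm on the high-frequency factor while allowing a $C_*^{1/8}$ norm on the low-frequency factor.

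The main obstacle I anticipate is bookkeeping the $\lambda^\half \mu^{-\half}$ type losses: the local coefficient estimate for $\alpha,\beta,\gamma$ in Corollary \ref{abgls} has a noncompact $\lambda^{1/2}\mu^{-1/2}$ term (coming from commutator losses when $S = S_{\xi_0,\lambda,\mu}$), and these must be paired consistently with the $U^{\sigma - s + \half +}$ norm of $\nabla_{x,z}\tilde\theta$ so that the product of losses cancels — specifically $(\lambda^\half\mu^{-\half}) \cdot (\text{Sobolev embedding from }H^{\sigma + 1/4} \text{ to } C_*^{\sigma - s + \half +})$ should give something bounded by the stated right-hand side. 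Commuting $w_{x_0,\lambda}S_\mu$ with $\D_z$ is harmless since these are spatial operators; but commuting with $T_a$ whose symbol is only $C_*^\half$ requires care, and will force the split of the target into a $w_{x_0,\lambda}S_{\xi_0,\lambda,\mu}\tilde\theta$ piece (to absorb balanced-frequency interactions at scale $\lambda$) plus a full $\nabla_{x,z}\tilde\theta$ piece at lower regularity.
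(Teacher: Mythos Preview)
Your overall strategy is correct and closely parallels the paper's proof. The paper groups the three errors slightly differently---by \emph{type} rather than by \emph{coefficient}: their $F_3$ collects the symbolic-calculus remainders $(T_aT_A - T_{aA})\tilde\theta$ and $T_{\D_z A}\tilde\theta$, their $F_2$ collects all paralinearization errors $(T_\alpha - \alpha)\Delta\tilde\theta + (T_\beta - \beta)\cdot\nabla\D_z\tilde\theta$, and their $F_1$ is the first-order term $\gamma\D_z\tilde\theta$. Your grouping by $\alpha$-terms, $\beta$-terms, $\gamma$-terms is equivalent (note your first group in $F_2$ vanishes identically since $T_a + T_A = T_{a+A} = T_{-i\beta\cdot\xi}$ by linearity). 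The global Sobolev estimates go exactly as you describe.

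There is one gap in your local estimate. For the balanced-frequency remainders $R(\alpha,\Delta\tilde\theta)$, $R(\beta,\nabla\D_z\tilde\theta)$, and $R(\gamma,\D_z\tilde\theta)$, Corollary~\ref{localproduct} is \emph{not} the right tool: its treatment of the $R(\cdot,\cdot)$ piece uses only the crude bound $\|R(u,v)\|_{H^{m+1/8}} \lesssim \|u\|_{H^m}\|v\|_{C_*^{1/8}}$, which never produces the $LS^{s-\sigma}_{x_0,\xi_0,\lambda,\mu}$ seminorm on the coefficient and hence cannot be fed into Corollary~\ref{abgls}. The correct tool is Proposition~\ref{bilinear}, which gives
\[
\|w_{x_0,\lambda}S_\mu R(\alpha,\Delta\tilde\theta)\|_{L^2} \lesssim \|\alpha\|_{LS^{s-\sigma}_{x_0,\xi_0,\lambda,\mu}}\|\Delta\tilde\theta\|_{C_*^{\sigma-s+}} + \|S_\lambda\alpha\|_{C_*^r}\|w_{x_0,\lambda}S_{\xi_0,\lambda,\mu}\Delta\tilde\theta\|_{H^{-1}},
\]
and it is precisely the second term here that generates the $\mu^{\sigma'}\|\nabla_{x,z}w_{x_0,\lambda}S_{\xi_0,\lambda,\mu}\tilde\theta\|_{L^2(J;L^2)}$ contribution in the target bound. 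You correctly anticipated that this piece must appear, but attributed it to the wrong lemma. Once you swap in Proposition~\ref{bilinear} for the balanced terms (and keep Proposition~\ref{localparaproduct} for the $T_{\Delta\tilde\theta}\alpha$, $T_{\D_z\tilde\theta}\gamma$ terms as the paper does), the rest of your outline goes through.
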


\begin{proof}

Here we have used $F_1, F_2$, and $F_3$ to represent the errors arising from, respectively, the first order term on the left hand side of (\ref{eqn:flatelliptic}), the paralinearization errors, and the lower order terms from applying the symbolic calculus. 

We remark that in contrast to the Sobolev estimates on $F_i$ in \cite{alazard2014cauchy}, and similar to the estimates on $F_i$ in \cite{alazard2014strichartz}, \cite{ai2017low}, we use H\"older estimates whenever appropriate to obtain an absolute gain of $1/2$ derivatives (or the equivalent integral gain in $L_z^p$) on $F_i$ when compared to $\nabla^2 \tilde{\theta}$. Also contrast with Proposition \ref{localflatelliptic}, which obtains only a 1/4 derivative gain on $F_4$.

First, we observe that 
$$\|w_{x_0,\lambda}S_\mu F_i\|_{Y^{\sigma'}(J)} \lesssim \|w_{x_0,\mu}S_\mu F_i\|_{Y^{\sigma'}(J)}$$
so we may exchange $w_{x_0,\lambda}$ with $w_{x_0,\mu}$ when necessary.

We begin with $F_3$. Factor
\begin{align*}
(\D_z - T_a)(\D_z - T_A)\tilde{\theta} &= \D_z^2 \tilde{\theta} + T_\alpha \Delta \tilde{\theta} + T_\beta \cdot \nabla \D_z \tilde{\theta} + (T_aT_A - T_{aA})\tilde{\theta} - T_{\D_z A}\tilde{\theta}.
\end{align*}
For the first error term, by the commutator estimate (\ref{sobolevcommutator}) and the estimates on $a, A$ from Proposition \ref{aabds},
\begin{align*}
\|(T_aT_A - T_{aA})\tilde{\theta}\|_{L^1(J;H^\sigma)} &= \|(T_aT_A - T_{aA})S_{\geq 1/10}\tilde{\theta}\|_{L^1(J;H^\sigma)} \\
&\lesssim (\MM_{1}^{1,2}(a)\MM_0^1(A) + \MM_0^1(a)\MM_{1}^{1,2}(A)) \|S_{\geq 1/10}\tilde{\theta}\|_{L^2(J;H^{\sigma + 2 - 1})} \\
&\leq \FF(\|\eta\|_{H^{s + \half}})(1 + \|\eta\|_{W^{r + \half, \infty}}) \|\nabla \tilde{\theta}\|_{L^2(J;H^{\sigma})}.
\end{align*}
Further, for the local estimate, we have by the third local commutator estimate of Proposition \ref{localparadiff},
\begin{align*}
\|w_{x_0,\mu}S_\mu(T_aT_A - T_{aA})\tilde{\theta}\|_{L^1(J;H^{\sigma'})} &\lesssim (\MM_{1}^{1,2}(a)\MM_0^1(A) + \MM_0^1(a)\MM_{1}^{1,2}(A)) \\
&\quad \cdot (\|w_{x_0,\mu}S_\mu\tilde{\theta}\|_{L^2(J;H^{\sigma' + 2 - 1})} + \|\tilde{S}_\mu \tilde{\theta}\|_{L^2(J;H^{\sigma + 2 - 1})}) \\
&\leq \FF(\|\eta\|_{H^{s + \half}})(1 + \|\eta\|_{W^{r + \half, \infty}}) \\
&\quad \cdot (\|\nabla  w_{x_0,\mu}S_\mu\tilde{\theta}\|_{L^2(J;H^{\sigma'})} + \|\nabla \tilde{\theta}\|_{L^2(J;H^{\sigma})}).
\end{align*}

Similarly, using instead (\ref{ordernorm}),
\begin{align*}
\|T_{\D_z A}\tilde{\theta}\|_{L^1(J;H^{\sigma})} &\lesssim \MM_{0}^{1,2}(\D_z A) \|S_{> 1/10} \tilde{\theta}\|_{L^2(J; H^{\sigma + 1})} \\ 
&\leq \FF(\|\eta\|_{H^{s + \half}}) (1 + \|\eta\|_{W^{r + \half, \infty}}) \|\nabla \tilde{\theta}\|_{L^2(J; H^{\sigma})},
\end{align*}
and the second estimate of Proposition \ref{localparadiff} for the local estimate,
\begin{align*}
\|w_{x_0,\mu}S_\mu T_{\D_z A}\tilde{\theta}\|_{L^1(J;H^{\sigma'})} &\lesssim \MM_{0}^{1,2}(\D_z A) (\|w_{x_0,\mu}S_\mu  \tilde{\theta}\|_{L^2(J; H^{\sigma' + 1})} + \|\tilde{S}_\mu \tilde{\theta}\|_{L^2(J;H^{\sigma + 1})}) \\ 
&\leq \FF(\|\eta\|_{H^{s + \half}}) (1 + \|\eta\|_{W^{r + \half, \infty}}) \\
&\quad \cdot (\|\nabla w_{x_0,\mu}S_\mu \tilde{\theta}\|_{L^2(J; H^{\sigma'})} + \|\nabla \tilde{\theta}\|_{L^2(J;H^{\sigma})}).
\end{align*}

We hence have 
$$(\D_z - T_a)(\D_z - T_A)\tilde{\theta} = \D_z^2 \tilde{\theta} + T_\alpha \Delta \tilde{\theta} + T_\beta \cdot \nabla \D_z \tilde{\theta} + F_3$$
where 
\begin{align*}
\|F_3\|_{L^1(J;H^\sigma)} \leq \ &\FF(\|\eta\|_{H^{s + \half}}) (1 + \|\eta\|_{W^{r + \half, \infty}}) \|\nabla \tilde{\theta}\|_{L^2(J; H^\sigma)}, \\
\|w_{x_0,\lambda}S_\mu F_3\|_{L^1(J;H^{\sigma'})} \leq \ &\FF(\|\eta\|_{H^{s + \half}}) (1 + \|\eta\|_{W^{r + \half, \infty}}) \\
&\cdot(\|\nabla  w_{x_0,\mu}S_\mu\tilde{\theta}\|_{L^2(J;H^{\sigma'})} + \|\nabla \tilde{\theta}\|_{L^2(J;H^{\sigma})}).
\end{align*}

Next we estimate the error $F_2$ consisting of errors from paralinearization. Write 
$$(\D_z - T_a)(\D_z - T_A)\tilde{\theta} = \D_z^2 \tilde{\theta} + \alpha \Delta \tilde{\theta} + \beta \cdot \nabla \D_z \tilde{\theta} + (T_\alpha - \alpha) \Delta \tilde{\theta} + (T_\beta - \beta)\cdot \nabla \D_z \tilde{\theta} + F_3$$
and expand
$$(T_\alpha - \alpha) \Delta \tilde{\theta} + (T_\beta - \beta)\cdot \nabla \D_z \tilde{\theta} = -(T_{\Delta \tilde{\theta}} \alpha + R(\alpha, \Delta \tilde{\theta}) + T_{\nabla \D_z \tilde{\theta}} \cdot \beta + R(\beta, \nabla \D_z \tilde{\theta})).$$
By (\ref{sobolevparaproduct}) and (\ref{sobolevparaerror}),
\begin{align*}
\|T_{\Delta \tilde{\theta}} \alpha\|_{L^1(J;H^\sigma)} + \| R(\alpha, \Delta \tilde{\theta})\|_{L^1(J;H^\sigma)} &\lesssim \|\Delta \tilde{\theta}\|_{L^2(J;C_*^{\sigma - s})} \|\alpha\|_{L^2(J;H^{s})} \\
\|T_{\nabla \D_z \tilde{\theta}} \cdot \beta\|_{L^1(J;H^\sigma)} + \|R(\beta, \nabla \D_z \tilde{\theta})\|_{L^1(J;H^\sigma)} &\lesssim \|\nabla \D_z \tilde{\theta}\|_{L^2(J;C_*^{\sigma - s})} \|\beta\|_{L^2(J;H^{s})}.
\end{align*}
We estimate $\alpha$ and $\beta$ via Corollary \ref{cor:abgest} while
$$\|\Delta \tilde{\theta}\|_{C_*^{\sigma - s}} \lesssim \|\nabla \tilde{\theta} \|_{C_*^{\sigma - s + 1}}, \quad \|\nabla \D_z \tilde{\theta}\|_{C^{\sigma - s}_*} \lesssim \|\D_z \tilde{\theta}\|_{C^{\sigma - s + 1}_*}.$$

For the local estimates, we consider the $\alpha$ terms; the $\beta$ terms are similar. By Proposition \ref{localparaproduct},
$$\|w_{x_0,\mu}S_\mu T_{\Delta \tilde{\theta}} \alpha\|_{H^{\sigma'}} \lesssim \|\Delta \tilde{\theta}\|_{C_*^{\sigma - s}}(\|w_{x_0, \mu}S_\mu \alpha\|_{H^{s'}} + \|\alpha\|_{H^s}).$$
After integrating in $z$ and using Cauchy-Schwarz, we may use Corollary \ref{localabgest} and Corollary \ref{cor:abgest} respectively to estimate the $\alpha$ terms.

Next we consider $w_{x_0,\lambda}S_\mu R(\alpha, \Delta \tilde{\theta})$. By Proposition \ref{bilinear},
\begin{align*}
\|w_{x_0, \lambda}S_\mu R(\alpha, \Delta \tilde{\theta})\|_{H^{\sigma'}} &\lesssim \mu^{\sigma'}\|w_{x_0, \lambda}S_\mu R(\alpha, \Delta \tilde{\theta})\|_{L^2} \\
&\lesssim \mu^{\sigma'}(\|\alpha\|_{LS^{s - \sigma}_{x_0, \xi_0, \lambda, \mu}}\|\Delta \tilde{\theta}\|_{C_*^{\sigma - s+}} + \|S_\lambda \alpha\|_{C_*^{r}}\|w_{x_0, \lambda}S_{\xi_0, \lambda, \mu}\Delta \tilde{\theta}\|_{H^{-1}}).
\end{align*}
We use Corollary \ref{abgls} and Corollary \ref{cor:abgest} to estimate the $\alpha$ terms, concluding
\begin{align*}
\|w_{x_0, \lambda}&S_\mu R(\alpha, \Delta \tilde{\theta})\|_{L^1(J;H^{\sigma'})}\\
\leq \ &\FF(\|\eta\|_{H^{s + \half}})((\lambda^{\half}\mu^{-\half} +\|\eta\|_{W^{r +\half,\infty}} + \mu^{\sigma'}\|\eta\|_{LS^{s - \sigma + \half}_{x_0, \xi_0, \lambda, \mu}})\|\nabla \tilde{\theta}\|_{L^2(J;C_*^{\sigma - s + 1 +})}\\
&+ (1 +\|\eta\|_{W^{r +\half,\infty}})\mu^{\sigma'}(\|\nabla w_{x_0, \lambda}S_{\xi_0, \lambda, \mu} \tilde{\theta}\|_{L^2(J;L^2)} + \|S_{\xi_0, \lambda, \mu} \tilde{\theta}\|_{L^2(J;H^{1-\frac{1}{4}})})).
\end{align*}

We hence have 
$$(\D_z - T_a)(\D_z - T_A)\tilde{\theta} = \D_z^2 \tilde{\theta} + \alpha \Delta \tilde{\theta} + \beta \cdot \nabla \D_z \tilde{\theta} + F_2 + F_3 = F_0 + \gamma\D_z \tilde{\theta} + F_2 + F_3$$
where 
\begin{align*}
\|F_2\|_{L^1(J;H^\sigma)} &\leq \FF(\|\eta\|_{H^{s + \half}})\|\nabla_{x, z} \tilde{\theta}\|_{L^2(J; C_*^{\sigma - s + 1})},
\end{align*}
\begin{align*}
\|w_{x_0,\mu} &S_\mu F_2\|_{L^1(J;H^{\sigma'})} \\
\leq \ &\FF(\|\eta\|_{H^{s + \half}})((\lambda^{\half}\mu^{-\half} +\|\eta\|_{W^{r +\half,\infty}} + \mu^{\sigma'}\|\eta\|_{LS^{s - \sigma + \half}_{x_0, \xi_0, \lambda, \mu}})\|\nabla \tilde{\theta}\|_{L^2(J;C_*^{\sigma - s + 1 +})}\\
&+ (1 +\|\eta\|_{W^{r +\half,\infty}})(\mu^{\sigma'}\|\nabla w_{x_0, \lambda}S_{\xi_0, \lambda, \mu} \tilde{\theta}\|_{L^2(J;L^2)} + \|\nabla \tilde{\theta}\|_{L^2(J;H^\sigma)})).
\end{align*}

Lastly, we estimate the first order term $F_1 = \gamma\D_z \tilde{\theta}$. We decompose into paraproducts,
$$\gamma \D_z \tilde{\theta} = T_{\gamma} \D_z \tilde{\theta} + T_{\D_z \tilde{\theta}} \gamma + R(\gamma, \D_z \tilde{\theta}).$$
We have by (\ref{sobolevparaproduct}), (\ref{sobolevparaerror}), and Corollary \ref{cor:abgest},
\begin{align*}
\|T_{\gamma} \D_z \tilde{\theta} \|_{L^1(J; H^{\sigma})} + \|R(\gamma, \D_z \tilde{\theta})\|_{L^1(J; H^\sigma)} &\lesssim \|\gamma\|_{L^2(J; C_*^{r - 1})}\|\D_z \tilde{\theta}\|_{L^2(J;H^\sigma)}  \\
&\leq \FF(\|\eta\|_{H^{s + \half}})(1 +  \|\eta\|_{W^{r + \half, \infty}})\|\D_z \tilde{\theta}\|_{L^2(J; H^\sigma)}.
\end{align*}
Similarly,
\begin{align*}
\|T_{\D_z \tilde{\theta}} \gamma \|_{L^2(J; H^{\sigma - \half})} &\lesssim \|\gamma\|_{L^2(J; H^{s - 1})}\|\D_z \tilde{\theta}\|_{L^\infty(J; C_*^{\sigma - s + \half+})} \leq \FF(\|\eta\|_{H^{s + \half}})\|\D_z \tilde{\theta}\|_{L^\infty(J; C_*^{\sigma - s + \half+})}.
\end{align*}

For the local estimates, by Proposition \ref{localparaproduct},
\begin{align*}
\|w_{x_0,\mu}S_\mu T_{\gamma} \D_z \tilde{\theta} \|_{H^{\sigma'}} &\lesssim \|\gamma\|_{C_*^{r - 1}}(\|w_{x_0, \mu}S_\mu \D_z \tilde{\theta}\|_{H^{\sigma'}} + \|\D_z \tilde{\theta}\|_{H^{\sigma}}), \\
\|w_{x_0,\mu}S_\mu T_{\D_z \tilde{\theta}} \gamma \|_{H^{\sigma' - \half}} &\lesssim \|\D_z \tilde{\theta}\|_{C_*^{\sigma - s + \half+}}(\|w_{x_0, \mu}S_\mu \gamma\|_{H^{s' - 1}} + \|\gamma\|_{H^{s - 1}}).
\end{align*}
After integrating in $z$ and using Cauchy-Schwarz for the first estimate, and simply integrating in $L^2_z$ for the second, we use Corollary \ref{localabgest} and Corollary \ref{cor:abgest} to estimate the $\gamma$ terms.

Lastly, by Proposition \ref{bilinear},
\begin{align*}
\|w_{x_0, \lambda}S_\mu R(\gamma, \D_z \tilde{\theta})\|_{H^{\sigma'}} &\lesssim \mu^{\sigma'}\|w_{x_0, \lambda}S_\mu R(\gamma, \D_z \tilde{\theta})\|_{L^2} \\
&\lesssim \mu^{\sigma'}(\|\gamma\|_{LS^{s - \sigma - 1}_{x_0, \xi_0, \lambda, \mu}}\|\D_z \tilde{\theta}\|_{C_*^{\sigma - s + 1+}} + \|S_\lambda \gamma\|_{C_*^{r - 1}}\|w_{x_0, \lambda}S_{\xi_0, \lambda, \mu}\D_z \tilde{\theta}\|_{L^2}).
\end{align*}
We use Corollary \ref{abgls} and Corollary \ref{cor:abgest} to estimate the $\gamma$ terms, concluding
\begin{align*}
\|w_{x_0, \lambda}&S_\mu R(\gamma, \D_z \tilde{\theta})\|_{L^1(J;H^{\sigma'})}\\
\leq \ &\FF(\|\eta\|_{H^{s + \half}})((\lambda^{\half}\mu^{-\half} +\|\eta\|_{W^{r +\half,\infty}} + \mu^{\sigma'}\|\eta\|_{LS^{s - \sigma + \half}_{x_0, \xi_0, \lambda, \mu}})\|\D_z \tilde{\theta}\|_{L^2(J;C_*^{\sigma - s + 1+})}\\
&+ (1 +\|\eta\|_{W^{r +\half,\infty}})\mu^{\sigma'}\|\D_z w_{x_0, \lambda}S_{\xi_0, \lambda, \mu}\tilde{\theta}\|_{L^2(J;L^2)}).
\end{align*}
\end{proof}

\subsection{Paralinearization of $\D_z$}

Now that we have estimates on the inhomogeneous error for the factored equation (\ref{factoreqn}), we can apply parabolic estimates, which we now recall:
\begin{prop}\cite[Proposition 2.18]{alazard2014cauchy}\label{parabolics}
Let $r \in \R$, $\rho \in (0, 1)$, $J = [z_0, z_1] \subseteq \R$, and let $p \in \Gamma_\rho^1(J \times \R^d)$ satisfy
$$\text{Re} \, p(z, x, \xi) \geq c |\xi|$$
for some positive constant $c$. Then for any $f \in Y^r(J)$ and $v_0 \in H^r(\R^d)$, there exists $v \in X^r(J)$ solving the parabolic evolution equation
$$(\D_z + T_p)v = f, \qquad v|_{z = z_0} = v_0,$$
satisfying
$$\|v\|_{X^r(J)} \lesssim \|v_0\|_{H^r} + \|f\|_{Y^r(J)}$$
with a constant depending only on $r, \rho, c,$ and $\MM_\rho^1(p)$. 
\end{prop}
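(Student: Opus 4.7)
The plan is a standard parabolic energy estimate, using the hypothesis $\mathrm{Re}\,p \geq c|\xi|$ via a paradifferential Gårding-type inequality, combined with a regularization argument for existence. First, I would apply $\langle D \rangle^r$ to the equation and pair against $\langle D \rangle^r v$, taking real parts to obtain
$$\tfrac{1}{2}\tfrac{d}{dz}\|v\|_{H^r}^2 + \mathrm{Re}\,\langle T_p v, v\rangle_{H^r} = \mathrm{Re}\,\langle f, v\rangle_{H^r} + \mathrm{Re}\,\langle [\langle D \rangle^r, T_p]v,\langle D \rangle^r v\rangle.$$
By the paradifferential calculus (compare the commutator estimates used earlier in the paper), $[\langle D \rangle^r, T_p]$ has order $r+1-\rho$, contributing an $O(\|v\|_{H^{r+(1-\rho)/2}}^2)$ error that is subcritical with respect to the parabolic gain $\|v\|_{H^{r+1/2}}^2$.

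The positivity is extracted from a Gårding-type inequality for paradifferential operators: since $\mathrm{Re}\,p \geq c|\xi|$ with $p \in \Gamma_\rho^1$ and $\rho>0$, one has
$$\mathrm{Re}\,\langle T_p u, u\rangle \geq \tfrac{c}{2}\|u\|_{H^{1/2}}^2 - C\|u\|_{H^{(1-\rho)/2}}^2,$$
where $C$ depends only on $\MM_\rho^1(p)$ and $c$. Applying this with $u = \langle D \rangle^r v$ and absorbing the remainder by interpolation $\|v\|_{H^{r+(1-\rho)/2}} \lesssim \|v\|_{H^{r+1/2}}^{1-\rho}\|v\|_{H^r}^\rho$ together with Young's inequality yields a differential inequality
$$\tfrac{d}{dz}\|v\|_{H^r}^2 + c\|v\|_{H^{r+1/2}}^2 \leq C\|v\|_{H^r}^2 + 2\,\mathrm{Re}\,\langle f, v\rangle_{H^r}.$$
The source term is handled via the decomposition $f = f_1 + f_2$ with $f_1 \in L_z^1 H^r$ and $f_2 \in L_z^2 H^{r-1/2}$: bound $|\langle f_1, v\rangle_{H^r}| \leq \|f_1\|_{H^r}\|v\|_{H^r}$, and for $f_2$ use duality followed by Young $|\langle f_2, v\rangle_{H^r}| \leq C\|f_2\|_{H^{r-1/2}}^2 + \tfrac{c}{4}\|v\|_{H^{r+1/2}}^2$, absorbing the second term into the left. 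Integrating from $z_0$ and applying Gronwall delivers the $C_z^0(J;H^r)$ bound simultaneously with the $L_z^2(J;H^{r+1/2})$ bound read off from the surviving parabolic term.

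For existence I would regularize, e.g., replace $p$ by $p_\epsilon(z,x,\xi) = \chi(\epsilon\xi)p(z,x,\xi)$ so that $T_{p_\epsilon}$ is bounded on $H^r$ uniformly in $z$, solve the resulting ODE $(\D_z + T_{p_\epsilon})v_\epsilon = f$ in $H^r$ by Picard/Duhamel, and pass to the limit $\epsilon \to 0$ using the uniform a priori estimate; uniqueness is immediate from the same energy argument applied to the difference of two solutions. The main obstacle is the Gårding estimate itself: one must verify that the hypothesis $\rho > 0$ (rather than merely $\rho = 0$) suffices to make the remainder in paradifferential sharp Gårding subcritical, and that this remainder together with the commutator loss $[\langle D \rangle^r, T_p]$ can be absorbed into the two favorable quantities $\|v\|_{H^r}^2$ and a small fraction of $\|v\|_{H^{r+1/2}}^2$ via interpolation, with constants depending only on $\MM_\rho^1(p)$, $c$, $r$, and $\rho$ as claimed.
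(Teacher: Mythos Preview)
The paper does not give its own proof of this proposition; it is quoted verbatim as \cite[Proposition 2.18]{alazard2014cauchy} and used as a black box. Your proposed argument---an $H^r$ energy estimate using a paradifferential G\aa rding inequality (obtained, e.g., by writing $\mathrm{Re}\,p - \tfrac{c}{2}|\xi| = b^2$ with $b \in \Gamma_\rho^{1/2}$ and using $T_{b^2} - T_b^*T_b \in \mathrm{Op}\,\Gamma_0^{1-\rho}$), commutator bounds of order $r+1-\rho$ via the symbolic calculus, interpolation/Young to absorb subcritical errors into the parabolic gain, Gronwall, and a regularization for existence---is exactly the standard route and is essentially how the result is proved in the cited reference (which in turn follows M\'etivier). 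Your estimate $\langle [\langle D\rangle^r, T_p]v, \langle D\rangle^r v\rangle = O(\|v\|_{H^{r+(1-\rho)/2}}^2)$ is correct: pair at the symmetric dual index $H^{(1-\rho)/2} \times H^{-(1-\rho)/2}$.
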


First, we make $\D_z \approx T_A$ precise in a (global) Sobolev norm, on which we will induct for the local counterpart:

\begin{lem}\label{wlems}
Let $0 \leq \sigma \leq s - \half$ and $-1 < z_1 < z_0 < 0$. Denote $J_0 = [z_0, 0], J_1 = [z_1, 0]$. Consider $\theta$ solving (\ref{specialelliptic}). Then
\begin{align*}
\|(\D_z - T_A)\tilde{\theta}\|_{X^{\sigma}(J_0)} \leq \ &\FF(\|\eta\|_{H^{s + \half}}, \|f\|_{H^{\sigma + \half}})(1 + \|\eta\|_{W^{r + \half, \infty}}+ \|\nabla_{x,z} \tilde{\theta}\|_{U^{\sigma - s + \half+}(J_1)}).
\end{align*}
\end{lem}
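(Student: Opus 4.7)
The plan is to apply the parabolic regularity estimate (Proposition~\ref{parabolics}) to $w := (\D_z - T_A)\tilde{\theta}$. By the factorization in Proposition~\ref{inhomogbd}, this function satisfies
$$(\D_z - T_a) w = F_1 + F_2 + F_3 \quad\text{on } J_1.$$
Writing $a = \tfrac{1}{2}(-i\beta\cdot\xi - \sqrt{4\alpha|\xi|^2 - (\beta\cdot\xi)^2})$, the discriminant $4\alpha|\xi|^2 - (\beta\cdot\xi)^2$ is bounded below by $c(\|\eta\|_{H^{s+\half}})|\xi|^2$ (this follows from the explicit formulas for $\alpha,\beta$ together with the positive depth bound $\D_z\rho \geq \min(h/2,1)$ and Cauchy--Schwarz). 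Consequently $\mathrm{Re}(-a) \geq c|\xi|$, and the equation $(\D_z + T_{-a})w = F_1+F_2+F_3$ is forward-parabolic, with $\MM^1_\rho(-a)$ controlled by $\FF(\|\eta\|_{H^{s+\half}})(1+\|\eta\|_{W^{r+\half,\infty}})$ via Proposition~\ref{aabds}.

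Because $w$ has no natural boundary value at $z=z_1$, I would introduce a smooth cutoff $\chi\in C^\infty(\mathbb{R})$ with $\chi\equiv 0$ on $(-\infty,(z_0+z_1)/2]$ and $\chi\equiv 1$ on $[z_0,\infty)$. Then $\chi w$ vanishes at $z_1$ and solves
$$(\D_z + T_{-a})(\chi w) = \chi' w + \chi(F_1+F_2+F_3), \qquad (\chi w)|_{z=z_1}=0.$$
Applying Proposition~\ref{parabolics} with $p=-a$ and $r=\sigma$ gives
$$\|w\|_{X^\sigma(J_0)} \leq \|\chi w\|_{X^\sigma(J_1)} \leq C(\MM^1_\rho(-a))\bigl(\|w\|_{L^2(J_1;H^\sigma)} + \|F_1+F_2+F_3\|_{Y^\sigma(J_1)}\bigr).$$

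The inhomogeneity is handled by Proposition~\ref{inhomogbd}:
$$\|F_1+F_2+F_3\|_{Y^\sigma(J_1)} \leq \FF(\|\eta\|_{H^{s+\half}})\bigl((1+\|\eta\|_{W^{r+\half,\infty}})\|\nabla_{x,z}\tilde\theta\|_{L^2(J_1;H^\sigma)} + \|\nabla_{x,z}\tilde\theta\|_{U^{\sigma-s+\half+}(J_1)}\bigr),$$
and the first of the two residual terms, $\|w\|_{L^2(J_1;H^\sigma)} \lesssim \|\nabla_{x,z}\tilde\theta\|_{L^2(J_1;H^\sigma)} + \FF(\|\eta\|_{H^{s+\half}})\|\tilde\theta\|_{L^2(J_1;H^{\sigma+1})}$, has the same structure. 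The key observation for matching the lemma's regularity scaling is that both of these Sobolev norms can be bounded by $\FF(\|\eta\|_{H^{s+\half}})\|f\|_{H^{\sigma+\half}}$ upon applying Proposition~\ref{originalcoordestsobolev} at the \emph{shifted} regularity index $\sigma - \half$ (using $X^{\sigma-\half}(J)\hookrightarrow L^2_z(J;H^\sigma)$ to recover the $\half$-derivative $L^2_z$-gain, and integrating $\D_z\tilde\theta$ from the boundary $z=0$ to recover $\tilde\theta$ itself). Combining, all contributions fit into the $\FF(\|\eta\|_{H^{s+\half}},\|f\|_{H^{\sigma+\half}})(1+\|\eta\|_{W^{r+\half,\infty}}+\|\nabla_{x,z}\tilde\theta\|_{U^{\sigma-s+\half+}(J_1)})$ form claimed.

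The main delicacy I anticipate is bookkeeping the $(1+\|\eta\|_{W^{r+\half,\infty}})$ factors that enter both from the parabolic constant $C(\MM^1_\rho(-a))$ and from the Proposition~\ref{inhomogbd} error estimate; naive multiplication produces $(1+\|\eta\|_{W^{r+\half,\infty}})^2$, which is stronger than the linear dependence in the conclusion. One would either invoke a version of the parabolic estimate whose constant depends only on $\MM^1_0(-a) \leq \FF(\|\eta\|_{H^{s+\half}})$ (obtainable by a direct $L^2$ energy argument using positivity of $\mathrm{Re}(T_{-a})$), or absorb one power of $(1+\|\eta\|_{W^{r+\half,\infty}})$ into the outer $\FF$ prefactor. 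The second delicate point is the cutoff error term $\chi' w$: the factor $\|\chi'\|_{L^\infty}$ depends only on $|z_0-z_1|$, which is a fixed geometric constant for this lemma, so this generates no further coupling with the regularity parameters.
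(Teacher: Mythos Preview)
Your proposal is correct and follows essentially the same route as the paper's proof: introduce a cutoff $\chi$ in $z$, apply the forward parabolic estimate (Proposition~\ref{parabolics}) to $\chi(\D_z - T_A)\tilde\theta$ with source $\chi(F_1+F_2+F_3) + \chi' w$, invoke Proposition~\ref{inhomogbd} for the $F_i$, and close using Proposition~\ref{originalcoordestsobolev} at the shifted index $\sigma - \tfrac12$.

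On your two delicacies: the cutoff remark is exactly right. For the parabolic constant, your first suggested fix is almost but not quite available as stated, since Proposition~\ref{parabolics} genuinely requires $\rho > 0$. However, it suffices to take any $0 < \rho < r-1$: then $\MM^1_\rho(-a) \leq \FF(\|\eta\|_{H^{s+\half}})$ by Sobolev embedding (from $\alpha - h^2, \beta \in C^0_z H^{s-\half} \hookrightarrow C^0_z C_*^{r-1}$, cf.\ Proposition~\ref{prop:diffeoest} and Corollary~\ref{cor:abgest}), and no factor of $(1+\|\eta\|_{W^{r+\half,\infty}})$ enters through the parabolic constant. The paper glosses over this point but it is the natural reading.
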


\begin{proof}

We insert a smooth vertical cutoff $\chi(z)$ vanishing on $[-1, z_1]$ with $\chi = 1$ on $J_0 = [z_0, 0] \subseteq (-1, 0]$, so that we have
$$(\D_z - T_a)\chi(\D_z - T_A)\tilde{\theta} = \chi(F_1 + F_2 + F_3) + \chi'(\D_z - T_A)\tilde{\theta} =: F'.$$
We estimate $\chi'(\D_z - T_A)\tilde{\theta}$ directly, obtaining
\begin{align*}
\|\chi'(\D_z - T_A)\tilde{\theta}\|_{L^1(J_1;H^{\sigma})} &\leq \FF(\|\eta\|_{H^{s + \half}}) \|\nabla_{x, z} \tilde{\theta}\|_{L^2(J_1;H^{\sigma})}.
\end{align*}
Combining this estimate with the Sobolev estimates of Proposition \ref{inhomogbd}, we have
\begin{align*}
\|F'\|_{Y^{\sigma}(J_1)} \leq \FF(\|\eta\|_{H^{s + \half}})((1 + \|\eta\|_{W^{r + \half, \infty}})\|\nabla_{x,z} \tilde{\theta}\|_{L^2(J_1; H^{\sigma})} + \|\nabla_{x,z} \tilde{\theta}\|_{U^{\sigma - s + \half+}(J_1)}).
\end{align*}
By Proposition \ref{originalcoordestsobolev} with $\sigma - \half$ in the place of $\sigma$,
$$\|\nabla_{x,z} \tilde{\theta}\|_{L^2(J_1; H^{\sigma})} \leq \FF(\|\eta\|_{H^{s + \half}})\|f\|_{H^{\sigma + \half}}.$$

Lastly, applying the Sobolev parabolic estimate of Proposition \ref{parabolics}, we obtain the result.
\end{proof}

We are now ready to establish a local Sobolev counterpart to Lemma \ref{wlems}:

\begin{lem}\label{wlemslocal}
Let $0 \leq \sigma \leq s - \half$ and $z_0 \in (-1, 0]$, $J = [z_0, 0]$. Consider $\theta$ solving (\ref{specialelliptic}). Then
\begin{align*}
\|w_{x_0, \lambda}&S_\mu(\D_z - T_A)\tilde{\theta}\|_{X^{\sigma'}(J)} \leq \FF(\|\eta\|_{H^{s + \half}}, \|f\|_{H^{\sigma + \half}}, \|f\|_{H^{1+}})(1 + \|\eta\|_{W^{r + \half, \infty}} + \|f\|_{C_*^{r}}) \\
&\cdot(\lambda^{\half}\mu^{-\half} +\|\eta\|_{W^{r +\half,\infty}} + \mu^{\sigma'}\|\eta\|_{LS^{s - \sigma + \half}_{x_0, \xi_0, \lambda, \mu}} + \mu^{\sigma'}\|f\|_{LS^\half_{x_0, \xi_0, \lambda, \mu}}).
\end{align*}
\end{lem}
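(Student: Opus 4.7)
The plan is to follow the structure of the proof of Lemma \ref{wlems}, but with the localizer $w_{x_0,\lambda}S_\mu$ applied before invoking the parabolic estimate. First I would insert the same smooth vertical cutoff $\chi(z)$ vanishing on $[-1,z_1]$ with $\chi\equiv 1$ on $J_0=[z_0,0]$ (where $z_1\in(-1,z_0)$ is fixed once and for all), so that on $J_1=[z_1,0]$ the quantity $v:=\chi(\D_z-T_A)\tilde\theta$ has zero data at $z=z_1$ and satisfies
$$(\D_z-T_a)v=\chi(F_1+F_2+F_3)+\chi'(\D_z-T_A)\tilde\theta=:F'.$$
I would then commute $w_{x_0,\lambda}S_\mu$ past $(\D_z-T_a)$ to obtain
$$(\D_z-T_a)\,w_{x_0,\lambda}S_\mu v=w_{x_0,\lambda}S_\mu F'-[w_{x_0,\lambda}S_\mu,T_a]v,$$
with zero data at $z=z_1$, and apply the parabolic estimate (Proposition \ref{parabolics}) with symbol $a$, whose required symbol norms are controlled by Proposition \ref{aabds}, to get
$$\|w_{x_0,\lambda}S_\mu v\|_{X^{\sigma'}(J_1)}\lesssim\|w_{x_0,\lambda}S_\mu F'\|_{Y^{\sigma'}(J_1)}+\|[w_{x_0,\lambda}S_\mu,T_a]v\|_{Y^{\sigma'}(J_1)}.$$
Since $\chi\equiv1$ on $J$, the left side restricted to $J$ is exactly $\|w_{x_0,\lambda}S_\mu(\D_z-T_A)\tilde\theta\|_{X^{\sigma'}(J)}$.

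For the right side, the $F_i$ terms are handled using the local estimates in Proposition \ref{inhomogbd}. The contribution from $\chi'(\D_z-T_A)\tilde\theta$ is harmless because $\chi'$ is supported in $[z_1,z_0]$, away from the top boundary, so it can be bounded directly by $\|\nabla_{x,z}\tilde\theta\|_{X^\sigma(J_1)}$ and then by $\|f\|_{H^{\sigma+\frac12}}$ via Proposition \ref{originalcoordestsobolev}; this matches the $\FF$-factor on the right of the target estimate. The commutator $[w_{x_0,\lambda}S_\mu,T_a]v$ is estimated by the first bound of Proposition \ref{localparadiff} with $\rho$ chosen small, producing a small $\mu$-loss and a $\frac{\rho}{4}$-derivative gain that allows the commutator to be absorbed into quantities already controlled by the global Lemma \ref{wlems}.

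The heart of the argument is matching the right-hand sides from Proposition \ref{inhomogbd} to the local elliptic estimate Proposition \ref{localinductioncorinducted}. The Proposition \ref{inhomogbd} local bounds produce two nontrivial interior terms, $\mu^{\sigma'}\|\nabla_{x,z}w_{x_0,\lambda}S_{\xi_0,\lambda,\mu}\tilde\theta\|_{L^2(J;L^2)}$ and $\|\nabla w_{x_0,\mu}S_\mu\tilde\theta\|_{L^2(J;H^{\sigma'})}$, together with $\|\nabla_{x,z}\tilde\theta\|_{U^{\sigma-s+\frac12+}(J)}$ and $\|\nabla_{x,z}\tilde\theta\|_{L^2(J;H^\sigma)}$. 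The last two are bounded via Propositions \ref{oldholderellipticest} and \ref{originalcoordestsobolev}, while the first two are bounded by Proposition \ref{localinductioncorinducted} with $wS=w_{x_0,\lambda}S_{\xi_0,\lambda,\mu}$ and $wS=w_{x_0,\mu}S_\mu$ respectively, producing exactly the $\|w_{x_0,\lambda}S_{\xi_0,\lambda,\mu}f\|_{H^1}$ and $\|w_{x_0,\mu}S_\mu f\|_{H^{\sigma'+1}}$ contributions that, after multiplication by $\mu^{\sigma'}$ and summation against the weights built into the $LS^{1/2}_{x_0,\xi_0,\lambda,\mu}$ norm, yield the claimed $\mu^{\sigma'}\|f\|_{LS^{1/2}_{x_0,\xi_0,\lambda,\mu}}$ term on the right.

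The main obstacle I expect is not any one of these steps in isolation, but the bookkeeping that ensures the various $\lambda,\mu,\kappa$ powers and the coupling between the $\sigma$-index on $\eta$ (which appears as $\mu^{\sigma'}\|\eta\|_{LS^{s-\sigma+\frac12}_{x_0,\xi_0,\lambda,\mu}}$) and the $\sigma$-index used in the elliptic propagation actually close the estimate. In particular, one has to take $\sigma=0$ when applying Proposition \ref{localinductioncorinducted} to handle the $w_{x_0,\lambda}S_{\xi_0,\lambda,\mu}$ piece and $\sigma$ generic for the $w_{x_0,\mu}S_\mu$ piece, and in each case verify that the prefactor $(1+\|\eta\|_{W^{r+\frac12,\infty}})\|f\|_{H^{\sigma+\frac34}}$ produced by Proposition \ref{localinductioncorinducted} is absorbed in the $\FF$ factor and the $\|f\|_{H^{1+}}$, $\|f\|_{C_*^{r}}$ slots in the target bound; this is the only place where the ranges on $\sigma$ in the statement (and the indices $\sigma+\frac12$, $1+$, $r$) are used nontrivially.
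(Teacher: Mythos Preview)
Your proposal is correct and follows essentially the same approach as the paper: insert the vertical cutoff $\chi$, commute $w_{x_0,\lambda}S_\mu$ with $T_a$ via Proposition \ref{localparadiff}, control the commutator and the $\chi'$ term using the global Lemma \ref{wlems}, apply the parabolic estimate, and close using Propositions \ref{inhomogbd}, \ref{originalcoordestsobolev}, \ref{localinductioncorinducted}, and \ref{oldholderellipticest}. Two minor points: the commutator identity should read $(\D_z-T_a)w_{x_0,\lambda}S_\mu v = w_{x_0,\lambda}S_\mu F' + [w_{x_0,\lambda}S_\mu,T_a]v$ (sign), and for the $\chi'$ term the paper invokes Lemma \ref{wlems} directly on $W=(\D_z-T_A)\tilde\theta$ rather than passing through $\|\nabla_{x,z}\tilde\theta\|_{X^\sigma}$, which is cleaner since $X^\sigma \hookrightarrow Y^{\sigma'}$ immediately.
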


\begin{proof}

By Proposition \ref{inhomogbd}, we have
$$w_{x_0, \lambda}S_\mu (\D_z - T_a)(\D_z - T_A)\tilde{\theta} = w_{x_0, \lambda}S_\mu(F_1 + F_2 + F_3).$$

We will commute
$$[w_{x_0, \lambda}S_\mu, T_a](\D_z - T_A)\tilde{\theta}.$$
As in the proof of Lemma \ref{wlems}, we also insert a smooth vertical cutoff $\chi(z)$ vanishing on $[-1, z_1]$ with $\chi = 1$ on $J = [z_0, 0] \subseteq (-1, 0]$. Writing $W = (\D_z - T_A)\tilde{\theta}$, we then have
\begin{align}\label{localparaboliceqn}
(\D_z - T_a)\chi w_{x_0, \lambda}S_\mu W = \ &\chi w_{x_0, \lambda}S_\mu(F_1 + F_2 + F_3) + \chi [w_{x_0, \lambda}S_\mu, T_a]W \\
&+ \chi' w_{x_0, \lambda}S_\mu W =: F'. \nonumber
\end{align}

Our goal is to estimate $F' \in Y^{\sigma'}(J_1)$ to apply the parabolic estimate. First, we consider the commutator on the right hand side of (\ref{localparaboliceqn}). By Proposition \ref{localparadiff},
\begin{align*}
\|[w_{x_0, \lambda}  S_\mu, T_a] W \|_{H^{\sigma' - \half}} &\lesssim M_0^1(a)\lambda^{\frac{3}{8}} \mu^{-\frac{3}{8}}\|W\|_{H^{\sigma' - \half + 1 - \frac{1}{8}}} \leq \FF(\|\eta\|_{H^{s + \half}})\lambda^{\frac{3}{8}} \mu^{-\frac{3}{8}}\|W\|_{H^{\sigma + \half}}.
\end{align*}
Applying Lemma \ref{wlems} on $W$, we conclude
\begin{align*}
\|[w_{x_0, \lambda}  S_\mu, T_a] W \|_{L^2(J_1;H^{\sigma' - \half})} \leq \ &\FF(\|\eta\|_{H^{s + \half}}, \|f\|_{H^{\sigma + \half}})\lambda^{\frac{3}{8}} \mu^{-\frac{3}{8}} \\
&\cdot (1 + \|\eta\|_{W^{r + \half, \infty}}+ \|\nabla_{x,z} \tilde{\theta}\|_{U^{\sigma - s + \half+}(J_1)}).
\end{align*}

Likewise, for the term $\chi' w_{x_0, \lambda}S_\mu W$ on the right hand side of (\ref{localparaboliceqn}), we again apply Lemma \ref{wlems} (without using any of the localization), obtaining a bound with the same right hand side.

Collecting these estimates with the local Sobolev estimates of Proposition \ref{inhomogbd}, we have
\begin{align*}
\|F'\|_{Y^{\sigma'}(J_1)} \leq \ &\FF(\|\eta\|_{H^{s + \half}}, \|f\|_{H^{\sigma + \half}})(\lambda^{\frac{3}{8}} \mu^{-\frac{3}{8}}(1 + \|\eta\|_{W^{r + \half, \infty}}+ \|\nabla_{x,z} \tilde{\theta}\|_{U^{\sigma - s + \half+}(J_1)}) \\
&+ (\lambda^{\half}\mu^{-\half} +\|\eta\|_{W^{r +\half,\infty}} + \mu^{\sigma'}\|\eta\|_{LS^{s - \sigma + \half}_{x_0, \xi_0, \lambda, \mu}})\|\nabla_{x, z} \tilde{\theta}\|_{U^{\sigma - s + \half+}(J_1)} \\
&+ (1 +\|\eta\|_{W^{r +\half,\infty}})(\mu^{\sigma'}\|\nabla_{x, z} w_{x_0, \lambda}S_{\xi_0, \lambda, \mu} \tilde{\theta}\|_{L^2(J_1;L^2)} + \|\nabla  w_{x_0,\mu}S_\mu\tilde{\theta}\|_{L^2(J_1;H^{\sigma'})} \\
&\quad + \|\nabla_{x, z} \tilde{\theta}\|_{L^2(J_1;H^\sigma)})).
\end{align*}

On the left hand side, applying the parabolic estimate of Proposition \ref{parabolics} to (\ref{localparaboliceqn}) on $J_1$, we may exchange $\|F'\|$ with
$$\|\chi w_{x_0, \lambda}S_\mu(\D_z - T_A)\tilde{\theta}\|_{X^{\sigma'}(J)}$$
as desired. Here we may drop the $\chi$, as on on $J$, we have $\chi \equiv 1$.

We may bound the right hand side using Propositions \ref{originalcoordestsobolev} and \ref{localinductioncorinducted} to estimate the Sobolev norms, and Proposition \ref{oldholderellipticest} for the H\"older norms,
\begin{align*}
\FF(&\|\eta\|_{H^{s + \half}}, \|f\|_{H^{\sigma + \half}}, \|f\|_{H^{1+}})(\lambda^{\frac{3}{8}} \mu^{-\frac{3}{8}}(1 + \|\eta\|_{W^{r + \half, \infty}} + \|f\|_{C_*^{r}}) \\
&+ (\lambda^{\half}\mu^{-\half} +\|\eta\|_{W^{r +\half,\infty}} + \mu^{\sigma'}\|\eta\|_{LS^{s - \sigma + \half}_{x_0, \xi_0, \lambda, \mu}})(1 + \|\eta\|_{W^{r + \half, \infty}} + \|f\|_{C_*^{r}}) \\
&+ (1 +\|\eta\|_{W^{r +\half,\infty}})(1 +\|\eta\|_{W^{r +\half,\infty}} + \mu^{\sigma'}\|w_{x_0, \lambda}S_{\xi_0, \lambda, \mu} f\|_{H^{\half}} + \| w_{x_0,\mu}S_\mu f\|_{H^{\sigma' + \half}})).
\end{align*}
Further rearranging, and recalling the definition of $LS^\half_{x_0, \xi_0, \lambda, \mu}$, yields the desired estimate.
\end{proof}

\subsection{Local Sobolev paralinearization}

In this subsection we perform a paralinearization of the Dirichlet to Neumann map, measuring the error locally in $H^{s' - \half}$. For comparison, the error was measured (globally) in $H^{s - \half}$ in \cite{alazard2014cauchy}, \cite{alazard2014strichartz}.

For convenience, we record Lemma \ref{wlemslocal} with $\sigma = s - \half$, and Proposition \ref{oldholderellipticest} with $\sigma = 0+$:

\begin{cor}\label{wlemslocalinducted}
Let $z_0 \in (-1, 0]$, $J = [z_0, 0]$. Consider $\theta$ solving (\ref{specialelliptic}). Then
\begin{align*}
\|w_{x_0, \lambda}&S_\mu(\D_z - T_A)\tilde{\theta}\|_{X^{s' - \half}(J)} \leq \FF(\|\eta\|_{H^{s + \half}}, \|f\|_{H^{s}})(1 + \|\eta\|_{W^{r + \half, \infty}} + \|f\|_{C_*^{r}}) \\
&\cdot(\lambda^{\half}\mu^{-\half} +\|\eta\|_{W^{r +\half,\infty}} + \mu^{s' - \half}\|\eta\|_{LS^1_{x_0, \xi_0, \lambda, \mu}} + \mu^{s' - \half}\|f\|_{LS^\half_{x_0, \xi_0, \lambda, \mu}}),
\end{align*}
$$\|\nabla_{x,z} \tilde{\theta}\|_{L^\infty(J; C_*^{0+})} \leq \FF(\|\eta\|_{H^{s + \half}}, \|f\|_{H^s})(1 +  \|\eta\|_{W^{r + \half, \infty}} + \|f\|_{C_*^{r}}).$$
\end{cor}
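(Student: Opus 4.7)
The plan is to obtain both estimates as immediate specializations of earlier results, with minor bookkeeping about which parameters get absorbed into the $\FF$ function. Indeed, the statement is tagged as a corollary and carries no new mathematical content beyond fixing the regularity index $\sigma$ in the parent results.

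For the first estimate, I would invoke Lemma~\ref{wlemslocal} with $\sigma = s - \tfrac{1}{2}$. Since $\sigma' = \sigma + \tfrac{1}{8}$, this gives $\sigma' = s' - \tfrac{1}{2}$, matching the left-hand side of the target. On the right-hand side of Lemma~\ref{wlemslocal}, $\|f\|_{H^{\sigma + \frac{1}{2}}}$ becomes $\|f\|_{H^s}$; the hypothesis $\|f\|_{H^{1+}}$ inside $\FF$ can be absorbed into an $\FF$ depending only on $\|f\|_{H^s}$ since $s > \tfrac{d}{2} + \tfrac{7}{8} > 1$. Moreover, $s - \sigma + \tfrac{1}{2} = 1$, so the $\eta$-seminorm reduces to $\|\eta\|_{LS^1_{x_0, \xi_0, \lambda, \mu}}$, and the prefactor $\mu^{\sigma'}$ becomes $\mu^{s' - \frac{1}{2}}$, matching the form stated in the corollary.

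For the second estimate, I would apply Proposition~\ref{oldholderellipticest} with $\sigma = 0+$. Since $U^\sigma(J) \subseteq C^0_z(J; C_*^\sigma)$, extracting the $C_z^0$ component on the left yields the norm $\|\nabla_{x,z}\tilde{\theta}\|_{L^\infty(J; C_*^{0+})}$. On the right, the quantity $\|f\|_{H^{\sigma + 1+}} = \|f\|_{H^{1+}}$ is controlled by $\|f\|_{H^s}$ (and absorbed into $\FF$, again using $s > 1$), while $\|f\|_{C_*^{1+}}$ is dominated by $\|f\|_{C_*^r}$ since $r > 1$, which appears as an allowed multiplicative factor on the right-hand side of the target.

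The only care needed at each step is verifying the required monotonicity of $\FF$ and the embeddings $H^s \hookrightarrow H^{1+}$ and $C_*^r \hookrightarrow C_*^{1+}$, both of which are immediate from the fixed parameter ranges $s > \tfrac{d}{2} + \tfrac{7}{8}$ and $r > 1$ declared in Section~\ref{notation}. I expect no genuine obstacle, as the corollary is purely a convenient repackaging of Lemma~\ref{wlemslocal} and Proposition~\ref{oldholderellipticest} in the specific form needed for the local Dirichlet-to-Neumann paralinearization performed in the next subsection.
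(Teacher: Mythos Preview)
Your proposal is correct and matches the paper's approach exactly: the paper states just before the corollary that it is recording Lemma~\ref{wlemslocal} with $\sigma = s - \tfrac{1}{2}$ and Proposition~\ref{oldholderellipticest} with $\sigma = 0+$, which is precisely what you do. Your parameter bookkeeping (e.g., $\sigma' = s' - \tfrac12$, $s - \sigma + \tfrac12 = 1$, and the absorption of $\|f\|_{H^{1+}}$ into $\FF(\|f\|_{H^s})$) is accurate.
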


Recall that we write $\Lambda$ for the principal symbol of the Dirichlet to Neumann map. 
\begin{prop}\label{localparalinearize}
Write
$$\Lambda(t, x, \xi) = \sqrt{(1 + |\nabla \eta|^2)|\xi|^2 - (\nabla \eta \cdot \xi)^2} = |\xi|.$$
Then
\begin{align*}
\|w_{x_0, \lambda}&S_\mu ( G(\eta) - T_\Lambda) f\|_{H^{s' - \half}}  \leq \FF(\|\eta\|_{H^{s + \half}}, \|f\|_{H^{s}})(1 + \|\eta\|_{W^{r + \half, \infty}} + \|f\|_{C_*^{r}}) \\
&\cdot(\lambda^{\half}\mu^{-\half} +\|\eta\|_{W^{r +\half,\infty}} + \mu^{s' - \half}\|\eta\|_{LS^1_{x_0, \xi_0, \lambda, \mu}} + \mu^{s' - \half}\|f\|_{LS^\half_{x_0, \xi_0, \lambda, \mu}}).
\end{align*}
\end{prop}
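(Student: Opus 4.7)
The plan is to trace at $z = 0$ the parabolic paralinearization $\D_z \tilde\theta \approx T_A \tilde\theta$ supplied by Corollary \ref{wlemslocalinducted}, substitute it into the formula
\begin{equation*}
G(\eta) f \;=\; \frac{1 + |\nabla \eta|^2}{(\D_z \rho)|_{z=0}}\, (\D_z \tilde\theta)|_{z=0} \;-\; \nabla \eta \cdot \nabla f,
\end{equation*}
paralinearize the two products using the local product and paraproduct calculus, and then identify the resulting principal symbol as $\Lambda$ by direct algebraic computation.

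\textbf{Step 1.} Using the embedding $X^{s'-\half}(J) \hookrightarrow C_z^0(J; H^{s'-\half})$, I restrict Corollary \ref{wlemslocalinducted} to $z = 0$ to obtain
\begin{equation*}
\bigl\| w_{x_0,\lambda} S_\mu \bigl((\D_z \tilde\theta)|_{z=0} - T_{A|_{z=0}} f\bigr) \bigr\|_{H^{s' - \half}} \;\leq\; \text{(the RHS of the proposition).}
\end{equation*}
This step carries the main content of the estimate.

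\textbf{Step 2.} For each of the two products, I apply Corollary \ref{localproduct} with $wS = w_{x_0,\lambda}S_\mu$ to split into a high-frequency paraproduct, a low-frequency paraproduct, and a balanced-frequency remainder, e.g.
\begin{equation*}
\tfrac{1 + |\nabla \eta|^2}{(\D_z \rho)|_{z=0}} \cdot (\D_z \tilde\theta)|_{z=0} \;=\; T_{(1+|\nabla\eta|^2)/(\D_z\rho)|_{z=0}}\, (\D_z \tilde\theta)|_{z=0} \;+\; (\text{low-high}) \;+\; E_1,
\end{equation*}
and likewise $\nabla\eta \cdot \nabla f = T_{\nabla\eta}\cdot\nabla f + T_{\nabla f}\cdot\nabla\eta + E_2$. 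The low-high pieces and remainders fit into the right-hand side: Proposition \ref{recip} applied at $z = 0$ controls $(\D_z\rho)^{-1}|_{z=0}$ in the local seminorms (which reduce to local seminorms of $\eta$), while Propositions \ref{originalcoordestsobolev} and \ref{oldholderellipticest} provide the a priori Sobolev and H\"older bounds on $\nabla_{x,z}\tilde\theta$ at the boundary. Feeding Step 1 into the high-frequency paraproduct and collapsing the composition $T_p T_{A|_{z=0}}$ via the third estimate of Proposition \ref{localparadiff}, one arrives at
\begin{equation*}
w_{x_0,\lambda} S_\mu\, G(\eta) f \;=\; w_{x_0,\lambda} S_\mu\, T_{\, p \cdot A|_{z=0} \,-\, i \nabla\eta \cdot \xi\,} f \;+\; (\text{controlled error}),
\end{equation*}
with $p(x) := (1+|\nabla\eta|^2)/(\D_z\rho)|_{z=0}$.

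\textbf{Step 3 and main obstacle.} A direct computation using
\begin{equation*}
\alpha|_{z=0} = \tfrac{((\D_z\rho)|_{z=0})^2}{1+|\nabla\eta|^2}, \qquad \beta|_{z=0} = -\tfrac{2(\D_z\rho)|_{z=0}\, \nabla\eta}{1+|\nabla\eta|^2}
\end{equation*}
gives $(\sqrt{4\alpha|\xi|^2 - (\beta\cdot\xi)^2})|_{z=0} = \tfrac{2(\D_z\rho)|_{z=0}}{1+|\nabla\eta|^2}\,\Lambda$, whence $p\cdot A|_{z=0} - i\nabla\eta\cdot\xi = \Lambda$ identically, completing the paralinearization. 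The main technical obstacle is that the paradifferential composition error in Step 2, $T_p T_{A|_{z=0}} - T_{p \cdot A|_{z=0}}$, involves a symbol $A$ of order $1$ with only $C_*^{r}$ regularity in $x$, so the classical symbolic gain is merely $r - 1 > 0$; the $\kappa^{3\rho/4}\mu^{-3\rho/4}$-weighted local improvement of Proposition \ref{localparadiff} (applied with $\rho = \tfrac{1}{2}$) is what supplies the room needed to absorb this error against the $\lambda^{\half}\mu^{-\half}$ and $\mu^{s' - \half}\|\eta\|_{LS^1_{x_0,\xi_0,\lambda,\mu}}$ terms already present on the right-hand side.
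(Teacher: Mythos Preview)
Your proposal is correct and follows essentially the same route as the paper's proof: trace Corollary \ref{wlemslocalinducted} at $z=0$ to replace $\D_z\tilde\theta$ by $T_A f$, paradifferentially decompose the products $\zeta\,\D_z\tilde\theta$ and $\nabla\rho\cdot\nabla\tilde\theta$, compose $T_\zeta T_A \to T_{\zeta A}$ via Proposition \ref{localparadiff}, and compute $(\zeta A - i\nabla\rho\cdot\xi)|_{z=0} = \Lambda$.

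One minor difference worth noting: for the balanced-frequency remainders $R(\zeta,\D_z\tilde\theta)$ and $R(\nabla\rho,\nabla\tilde\theta)$ the paper invokes Proposition \ref{bilinear}, which produces the $LS$ seminorms explicitly and requires the local elliptic estimate Proposition \ref{localinductioncorinducted} on $w_{x_0,\lambda}S_{\xi_0,\lambda,\mu}\D_z\tilde\theta$. Your route via Corollary \ref{localproduct} bounds these remainders by the cruder global term $\|\zeta\|_{H^{s-\half}}\|\D_z\tilde\theta\|_{C_*^{1/8}} \leq \FF(\cdot)(1+\|\eta\|_{W^{r+\half,\infty}}+\|f\|_{C_*^r})$, which is still acceptable here because $\mu \leq c\lambda$ forces $\lambda^{\half}\mu^{-\half}\gtrsim 1$, so the second factor on the right-hand side is bounded below. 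This shortcut is legitimate for the present proposition but would not survive in contexts where the second factor is genuinely small; the paper's sharper handling via Proposition \ref{bilinear} is what propagates the full $LS$ structure needed elsewhere. Also, in your discussion of the composition error you conflate the two roles of $\rho$ in Proposition \ref{localparadiff} (the symbolic gain $\tilde\rho$ versus the local-commutation parameter $\rho$); the paper simply takes $\tilde\rho=\half$ for the former, matching the $C_*^{\half}$ regularity of $A$ from Proposition \ref{aabds}.
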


\begin{proof}
Recall
$$G(\eta)f = \left. \left(\frac{1 + |\nabla \rho|^2}{\D_z \rho} \D_z \tilde{\theta} - \nabla \rho \cdot \nabla \tilde{\theta}\right)\right|_{z = 0} =: \left. \left(\zeta \D_z \tilde{\theta} - \nabla \rho \cdot \nabla \tilde{\theta}\right)\right|_{z = 0}.$$

First we reduce to paraproducts. Write
$$\zeta \D_z \tilde{\theta} - \nabla \rho \cdot \nabla \tilde{\theta} = T_\zeta \D_z \tilde{\theta} - T_{\nabla \rho} \nabla \tilde{\theta} + T_{\D_z \tilde{\theta}} \zeta - T_{\nabla \tilde{\theta}} \cdot \nabla \rho + R(\zeta, \D_z \tilde{\theta}) - R(\nabla \rho, \nabla \tilde{\theta}).$$

First we estimate $T_{\D_z \tilde{\theta}} \zeta$ (note $T_{\nabla \tilde{\theta}} \cdot \nabla \rho$ is similar, with the same estimates). By Proposition \ref{localparaproduct},
\begin{align*}
\|w_{x_0, \lambda}S_\mu T_{\D_z \tilde{\theta}} \zeta\|_{H^{s' - \half}} \lesssim \|w_{x_0, \mu}S_\mu T_{\D_z \tilde{\theta}} \zeta\|_{H^{s' - \half}} &\lesssim \|\D_z \tilde{\theta}\|_{L^\infty} (\|w_{x_0, \mu}S_\mu \zeta\|_{H^{s' - \half}} + \|\zeta\|_{H^{s - \half}}).
\end{align*}
Use Corollary \ref{wlemslocalinducted} to estimate the $\D_z \tilde{\theta}$ term. Also note that $\zeta$ satisfies the same local estimate as $\alpha$ in Corollary \ref{localabgest} by using the same argument,
$$\|w_{x_0, \mu}S_\mu \zeta\|_{X^{s' - \half}(J)} \leq \FF(\|\eta\|_{H^{s + \half}}) (\lambda^\half \mu^{-\half} + \|\eta\|_{W^{r + \half, \infty}} + \|w_{x_0, \mu}S_\mu \eta\|_{H^{s' + \half}}).$$
Likewise, $\zeta$  satisfies the same estimate as $\alpha$ in Corollary \ref{cor:abgest}, and in particular $\zeta \in X^{s - \half}(J)$. Using the $L_z^\infty$ component of the $X^{s - \half}(J)$ norms, the left hand sides bound the corresponding terms evaluated at $z = 0$. 

Second, we consider the $R(\cdot, \cdot)$ terms. We again consider only the typical $R(\zeta, \D_z \tilde{\theta})$. By Proposition \ref{bilinear},
\begin{align*}
\|w_{x_0, \lambda}S_\mu R(\zeta, \D_z \tilde{\theta})\|_{H^{s' - \half}} &\lesssim \mu^{s' - \half}\|w_{x_0, \lambda}S_\mu R(\zeta, \D_z \tilde{\theta})\|_{L^2} \\
&\lesssim \mu^{s' - \half}(\|\zeta\|_{LS^{0}_{x_0, \xi_0, \lambda, \mu}}\|\D_z \tilde{\theta}\|_{C_*^{0+}} + \|S_\lambda \zeta\|_{C_*^{\half}}\|w_{x_0, \lambda}S_{\xi_0, \lambda, \mu}\D_z \tilde{\theta}\|_{H^{-\half}}).
\end{align*}
Again noting that $\zeta$ satisfies the same estimate as $\alpha$ in Corollaries \ref{abgls} and \ref{cor:abgest} by using the same arguments, and using Corollary \ref{wlemslocalinducted} for $\D_z \tilde{\theta}$, we conclude
\begin{align*}
\|w_{x_0, \lambda}&S_\mu R(\zeta, \D_z \tilde{\theta})\|_{L^\infty(J;H^{s' - \half})} \leq \FF(\|\eta\|_{H^{s + \half}}, \|f\|_{H^s}) \\
&\cdot((\lambda^{\half}\mu^{-\half} +\|\eta\|_{W^{r +\half,\infty}} + \mu^{s' - \half}\|\eta\|_{LS^1_{x_0, \xi_0, \lambda, \mu}})(1 +  \|\eta\|_{W^{r + \half, \infty}} + \|f\|_{C_*^{r}})\\
&+ (1 +\|\eta\|_{W^{r +\half,\infty}})\mu^{s' - \half}(\|\D_z w_{x_0, \lambda}S_{\xi_0, \lambda, \mu} \tilde{\theta}\|_{L^\infty(J;H^{-\half})})).
\end{align*}
Lastly, estimate using Proposition \ref{localinductioncorinducted},
\begin{align*}
\|\D_z &w_{x_0, \lambda}S_{\xi_0, \lambda, \mu} \tilde{\theta}\|_{L^\infty(J;H^{-\half})} \lesssim \lambda^{\half - s'}\|\D_z w_{x_0, \lambda}S_{\xi_0, \lambda, \mu} \tilde{\theta}\|_{L^\infty(J;H^{s' - 1})} \\
&\leq \lambda^{\half - s'}\FF(\|\eta\|_{H^{s + \half}}, \|f\|_{H^s})(\|w_{x_0, \lambda}S_{\xi_0, \lambda, \mu} f\|_{H^{s'}} + 1 + \|\eta\|_{W^{r + \half, \infty}}) \\
&\leq \FF(\|\eta\|_{H^{s + \half}}, \|f\|_{H^s})(\|w_{x_0, \lambda}S_{\xi_0, \lambda, \mu} f\|_{H^{\half}} + \lambda^{\half - s'}(1 + \|\eta\|_{W^{r + \half, \infty}}))
\end{align*}
which estimates $R(\zeta, \D_z \tilde{\theta})$ by the desired right hand side.


Third, we may replace the vertical derivative $\D_z \tilde{\theta}$ with $T_A \tilde{\theta}$ as a consequence of Corollary \ref{wlemslocalinducted}. Note that to handle the multiplication of the paralinearization error with $\zeta$, we use a straightforward generalization of Proposition \ref{localparaproduct}:
\begin{align*}
\|  w_{x_0, \lambda}  S_\mu T_\zeta (\D_z - T_A)\tilde{\theta} \|_{H^{s' - \half}} \lesssim \ &\|\nabla \eta\|_{L^\infty} (\|w_{x_0, \lambda}  S_\mu(\D_z - T_A)\tilde{\theta}\|_{H^{s' - \half}} \\
&+ \lambda^{\half} \mu^{-\half}\|(\D_z - T_A)\tilde{\theta}\|_{H^{s - \half}}).
\end{align*}
Here, the last term on the right may be estimated using Lemma \ref{wlems}, as in the proof of Lemma \ref{wlemslocal}. Thus 
$$G(\eta)f = (T_\zeta T_A \tilde{\theta} - T_{\nabla \rho} \cdot \nabla \tilde{\theta})|_{z = 0} + R,$$
with the error $R$ satisfying the desired estimate. 

Lastly, by Proposition \ref{localparadiff}, Propositions \ref{prop:diffeoest} and \ref{aabds}, and Proposition \ref{originalcoordestsobolev} and Corollary \ref{localinductioncorinducted},
\begin{align*}
\|w_{x_0, \mu}S_\mu(T_\zeta T_A - T_{\zeta A})\tilde{\theta}\|_{H^{s' - \half}} &\lesssim (M_{0}^0(\zeta)M_{\half}^1(A) + M_{\half}^0(\zeta)M_{0}^1(A))(\|w_{x_0, \mu}S_\mu\tilde{\theta}\|_{H^{s'}} + \|\tilde{S}_\mu \tilde{\theta}\|_{H^s}) \\
&\leq \FF(\|\eta\|_{H^{s + \half}}) (1 +\|\eta\|_{W^{r + \half, \infty}}) \\
&\quad \cdot (\|\nabla w_{x_0, \mu}S_\mu\tilde{\theta}\|_{H^{s' - 1}} + \|\nabla \tilde{\theta}\|_{H^{s - 1}}) \\
&\leq \FF(\|\eta\|_{H^{s + \half}}, \|f\|_{H^s})(1 +\|\eta\|_{W^{r + \half, \infty}}) \\
& \quad \cdot(1 +\|\eta\|_{W^{r + \half, \infty}} +\|w_{x_0, \mu}S_\mu f\|_{H^{s'}}).
\end{align*}

We thus may exchange $T_\zeta T_A \tilde{\theta}$ for $T_{\zeta A}\tilde{\theta}$ in the expression for $G(\eta)f$, with an error $R'$ satisfying the same estimates as $R$. We conclude, using that $\tilde{\theta}(0) = f$, 
$$G(\eta)f = T_{\zeta A - i\nabla \rho \cdot \xi}f + R + R'.$$
A routine computation shows that $(\zeta A - i\nabla \rho \cdot \xi)|_{z = 0} = \Lambda$ as desired.
\end{proof}

\section{Local Estimates on the Pressure and Taylor Coefficient}\label{sec:taylorlocal}

In this section, we establish local Sobolev estimates on the Taylor coefficient 
$$a = -(\D_y P)|_{y = \eta(t, x)}.$$
Recall that the pressure is given by
$$-P = \D_t \phi + \half |\nabla_{x, y} \phi|^2 + gy.$$

This immediately implies an identity involving a derivative along the velocity field $v = \nabla_{x, y} \phi$,
\begin{equation}\label{pressuredef}
(\D_t + v\cdot \nabla_{x, y})\D_y\phi = -\D_y P - g,
\end{equation}
as well as an elliptic equation,
\begin{equation}\label{pressureeqn}
\Delta_{x, y}P = -\nabla_{x, y}^2\phi \cdot \nabla_{x, y}^2\phi =: F, \qquad P|_{y = \eta(x)} = 0.
\end{equation}

\subsection{Elliptic estimates on the pressure}

First we recall standard elliptic estimates on the pressure $P$:

\begin{prop}\label{firstpressureest}
Let $z_0 \in (-1, 0]$, $J = [z_0, 0]$. Then
$$\|\nabla_{x,z} \tilde{P}\|_{X^{s - \half}(J)} \leq \FF(M(t)).$$
\end{prop}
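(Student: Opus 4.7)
The plan is to apply the inductive Sobolev elliptic estimate Proposition \ref{sobolevinductive} to the pressure equation (\ref{pressureeqn}) in the flattened coordinates of Section \ref{ssec:flattening}. There, $\tilde{P}$ satisfies the analogue of (\ref{eqn:flatelliptic}) with zero top boundary data and source $F_0 = \alpha \tilde{F}$, where $F = -|\nabla^2_{x,y}\phi|^2$. Invoking Proposition \ref{sobolevinductive} at $\sigma = s - \half$ on nested intervals $J \subset J_1 \subset (-1, 0]$ yields
$$\|\nabla_{x,z}\tilde{P}\|_{X^{s-\half}(J)} \leq \FF(\|\eta\|_{H^{s+\half}})\bigl(\|\alpha\tilde{F}\|_{Y^{s-\half}(J_1)} + \|\nabla_{x,z}\tilde{P}\|_{X^{-\half}(J_1)}\bigr),$$
reducing the proof to two tasks: a base case on the enlarged interval, and a source estimate.

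For the base case $\|\nabla_{x,z}\tilde{P}\|_{X^{-\half}(J_1)} \leq \FF(M(t))$, I would use a standard $L^2$ energy argument, pairing the flattened equation with $\tilde{P}$ and integrating by parts on $\tilde{\Omega}_1$. Coercivity of the principal part comes from the uniform lower bound on $\alpha$ (Lemma 3.6 of \cite{alazard2014cauchy}), while the vanishing top boundary condition $\tilde{P}|_{z=0}=0$ kills the surface contributions. This is analogous to Proposition \ref{basecase} for $\phi$, adapted to zero Dirichlet data with nontrivial source.

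For the source estimate $\|\alpha\tilde{F}\|_{Y^{s-\half}(J_1)} \leq \FF(M(t))$, the coefficient $\alpha$ is uniformly controlled by Corollary \ref{cor:abgest}, so the essential task is to bound $\tilde{F}$. Here the crucial observation is the identity $F = -\half\Delta_{x,y}|\nabla_{x,y}\phi|^2$, an immediate consequence of the harmonicity of $\phi$ (since $\Delta(v_i^2) = 2|\nabla v_i|^2$ for each harmonic component $v_i = \D_i\phi$). This reveals that $P + \half|v|^2$ is harmonic on $\Omega$, enabling the decomposition $P = Q - \half|v|^2$ where $Q$ is the harmonic function with top boundary data $\tfrac12(V^2+B^2)$, controlled in $H^s$ by the algebra property since $s > d/2$. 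One then estimates $\nabla_{x,z}\tilde{Q}$ by Proposition \ref{originalcoordestsobolev} and handles the quadratic term $|\tilde{v}|^2$ by paraproduct expansion using $\nabla_{x,z}\tilde{\phi} \in X^{s-\half}$ (again from Proposition \ref{originalcoordestsobolev}).

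The main obstacle is closing the source bound at low regularity $s > d/2 + \tfrac{7}{8}$: a direct bilinear estimate on $|\nabla^2_{x,y}\phi|^2$ would require $H^{s-1}$ to be an algebra (demanding $s > d/2 + 1$), which just fails near the threshold. The harmonic decomposition sidesteps this by moving the analysis to a Dirichlet problem with explicit boundary data plus a product of \emph{first} derivatives of $\phi$; the latter enjoys an $L^\infty$ bound via Sobolev embedding $H^{s-\half} \hookrightarrow L^\infty$ (valid since $s - \half > d/2$), which tames the paraproduct remainder and yields the desired $Y^{s-\half}$ control.
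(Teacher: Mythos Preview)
Your outline matches the paper's proof, which applies Proposition \ref{sobolevinductive} at $\sigma = s-\half$ and then cites \cite[(4.22)]{alazard2014cauchy} for the base case $\|\nabla_{x,z}\tilde P\|_{X^{-\half}}\leq\FF(M(t))$ and \cite[Lemma 4.7]{alazard2014cauchy} for the source bound $\|\alpha\tilde F\|_{Y^{s-\half}}\leq\FF(M(t))$; your sketches of the energy argument and of the harmonic-decomposition trick are essentially the content of those references.

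One point to tidy: your third paragraph is framed as a source estimate, but what you actually describe is a \emph{direct} proof of the proposition via $P = Q - \tfrac12|v|^2$, which makes the inductive framework of your first paragraph superfluous. Either route is fine. If you take the direct route, note that invoking Proposition \ref{originalcoordestsobolev} on $Q$ requires the bottom Neumann condition $\partial_n Q|_\Gamma = 0$, which is not automatic for $Q = P + \tfrac12|v|^2$; it follows once you identify $Q = -\partial_t\phi - gy$ from the Bernoulli equation (so $\partial_n(\partial_t\phi)|_\Gamma = \partial_t(\partial_n\phi)|_\Gamma = 0$ since $\Gamma$ is fixed), with the linear $gy$ piece handled as a smooth remainder.
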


\begin{proof}

Apply Proposition \ref{sobolevinductive} to (\ref{pressureeqn}) with $\sigma = s - \half$:
$$\|\nabla_{x,z} \tilde{P}\|_{X^{s - \half}(J_0)} \leq \FF(\|\eta\|_{H^{s + \half}})(\|\alpha \tilde{F}\|_{Y^{s - \half}(J_1)} + \|\nabla_{x,z} \tilde{P}\|_{X^{-\half}(J_1)}).$$
Then, using \cite[(4.22)]{alazard2014cauchy}, we have 
$$\|\nabla_{x,z} \tilde{P}\|_{X^{-\half}(J_1)} \leq \FF(M(t)).$$
Further, by \cite[Lemma 4.7]{alazard2014cauchy} (which one easily checks only requires $s > \frac{d}{2} + \half$),
$$\|\alpha F\|_{Y^{s  - \frac{1}{2}}(J_1)} \leq \FF(M(t)).$$
\end{proof}

\subsection{Local Elliptic estimates on the pressure}

Here we establish local Sobolev estimates on the pressure $P$. First, we apply Proposition \ref{inhomogbd} to $P$:
\begin{prop}\label{pinhomogbd}
Let $z_0 \in [-1, 0]$, $J = [z_0, 0]$. Then we can write
$$(\D_z - T_a)(\D_z - T_A)\tilde{P} = F_0 + F_1 + F_2 + F_3$$
where for $i \geq 1$,
\begin{align*}
\|F_i\|_{Y^{s - \half}(J)} &\leq \FF(M(t))(1 + \|\eta\|_{W^{r + \half, \infty}}),\\
\|w_{x_0,\lambda}S_\mu F_i\|_{Y^{s' - \half}(J)} &\leq \FF(M(t))(\lambda^{\half}\mu^{-\half} +\|\eta\|_{W^{r +\half,\infty}} + \mu^{s' - \half}\|\eta\|_{LS^1_{x_0, \xi_0, \lambda, \mu}}).
\end{align*}
\end{prop}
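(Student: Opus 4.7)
The plan is to adapt the proof of Proposition \ref{inhomogbd} to the inhomogeneous elliptic problem satisfied by the pressure. Since $\tilde{P}$ solves (\ref{eqn:flatelliptic}) with right-hand side $F_0 = \alpha \tilde{F}$, where $\tilde{F}$ is the pulled-back quadratic nonlinearity $-\nabla_{x,y}^2 \phi \cdot \nabla_{x,y}^2 \phi$, the algebraic symbolic-calculus and paralinearization manipulations from that proof carry over verbatim to give
\begin{equation*}
(\D_z - T_a)(\D_z - T_A)\tilde{P} = F_0 + F_1 + F_2 + F_3,
\end{equation*}
where $F_1 = \gamma \D_z \tilde{P}$ is the first-order term, $F_2$ collects the paralinearization errors $-(T_{\Delta \tilde{P}}\alpha + R(\alpha, \Delta \tilde{P}) + T_{\nabla \D_z \tilde{P}}\cdot \beta + R(\beta, \nabla \D_z \tilde{P}))$, and $F_3 = (T_aT_A - T_{aA})\tilde{P} - T_{\D_z A}\tilde{P}$ consists of the symbolic calculus errors. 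Only $F_0$ is new relative to the harmonic case of Proposition \ref{inhomogbd}, and the statement does not demand any estimate on it.

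Next I would apply the bounds established in the proof of Proposition \ref{inhomogbd} at $\sigma = s - \half$, which control the relevant global and localized norms of $F_1, F_2, F_3$ in terms of the corresponding global Sobolev, global H\"older, and local Sobolev norms of $\nabla_{x,z}\tilde{P}$. Each such norm of $\tilde{P}$ would then be closed as follows: the global Sobolev norms $\|\nabla_{x,z}\tilde{P}\|_{X^{s-\half}(J)}$ come directly from Proposition \ref{firstpressureest}; the H\"older norms $\|\nabla_{x,z}\tilde{P}\|_{U^{0+}(J)}$ follow by one-dimensional Sobolev embedding from Proposition \ref{firstpressureest} (using $s - \half > 1/2$); and the local Sobolev norms at scale $(\lambda,\mu)$ are obtained by invoking Proposition \ref{localinduction} applied to $\tilde{P}$ with $\sigma = s - \half$, using the base estimate $\|\nabla_{x,z}\tilde{P}\|_{X^{-\half}(J_1)} \leq \FF(M(t))$ from \cite[(4.22)]{alazard2014cauchy} together with suitable global and local $F_0$ bounds.

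The technical core of the argument lies in the local Sobolev estimate on $F_0 = \alpha \tilde{F}$, in which the quadratic nonlinearity $\tilde{F}$ in $\nabla_{x,y}^2 \phi$ must be localized at scale $(\lambda, \mu)$. I would paradifferentially expand the product $\alpha\tilde{F}$ and apply the balanced-frequency bilinear smoothing estimate of Proposition \ref{bilinear}, together with the local estimates on $\alpha$ from Corollary \ref{localabgest} and local estimates on $\nabla_{x,z}^2 \tilde\phi$ obtained by applying Proposition \ref{localinductioncorinducted} to $\phi$ with boundary data $\psi$, whose local smoothing is supplied by Proposition \ref{wwlocalsmoothing}. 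The resulting local bilinear bound would produce exactly the three contributions $\lambda^{\half}\mu^{-\half}$, $\|\eta\|_{W^{r+\half,\infty}}$, and $\mu^{s'-\half}\|\eta\|_{LS^1_{x_0,\xi_0,\lambda,\mu}}$ that appear on the right-hand side of the statement.

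The hard part will be propagating the local weight $w_{x_0,\lambda}S_\mu$ through the quadratic nonlinearity $\tilde{F}$ without incurring a derivative loss: the resonant balanced-frequency interactions are what force the $\mu^{s'-\half}\|\eta\|_{LS^1_{x_0,\xi_0,\lambda,\mu}}$ term, and handling them cleanly requires that local smoothing of $\eta$ at the wave-packet scale propagate through the elliptic estimate to local smoothing of the velocity potential. Once this local $F_0$ estimate is in hand, the remainder of the argument reduces to bookkeeping already developed in Sections \ref{sec:dirichletprob} and \ref{sec:localdnpar}.
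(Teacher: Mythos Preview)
Your decomposition and the first two closing steps (global Sobolev and H\"older norms of $\nabla_{x,z}\tilde P$ via Proposition~\ref{firstpressureest} and Sobolev embedding) match the paper. But the third step---invoking Proposition~\ref{localinduction} and then chasing a local estimate on $F_0$ through a bilinear analysis of the quadratic nonlinearity---is an unnecessary detour.

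The paper's proof is far shorter: after applying Proposition~\ref{inhomogbd} at $\sigma = s - \tfrac12$, \emph{every} $\tilde P$-dependent term in the resulting local bound is controlled using only the global estimate of Proposition~\ref{firstpressureest}, ``without using the localization.'' Concretely, the local terms $\mu^{s'-\frac12}\|\nabla_{x,z} w_{x_0,\lambda}S_{\xi_0,\lambda,\mu}\tilde P\|_{L^2(J;L^2)}$ and $\|\nabla w_{x_0,\mu}S_\mu\tilde P\|_{L^2(J;H^{s'-\frac12})}$ are handled by dropping the weight $w$ and using the frequency localization of $S_{\xi_0,\lambda,\mu}$ (at $\approx\lambda$) or $S_\mu$ to gain a factor of $\lambda^{-s}$ or $\mu^{-3/8}$ against $\|\nabla_{x,z}\tilde P\|_{L^2(J;H^s)}\leq \FF(M(t))$; since $\mu\leq c\lambda$ implies $\lambda^{\frac12}\mu^{-\frac12}\gtrsim 1$, these decayed contributions are absorbed into the stated right-hand side. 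No estimate on $wSF_0$ is ever needed.

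Your route, by contrast, would require propagating the local weight through the quadratic term $\tilde F = -\nabla_{x,y}^2\phi\cdot\nabla_{x,y}^2\phi$, which (via Proposition~\ref{localinductioncorinducted} applied to $\phi$) introduces local-smoothing norms of $\psi$ rather than only of $\eta$; the stated bound in Proposition~\ref{pinhomogbd} involves only $\|\eta\|_{LS^1_{x_0,\xi_0,\lambda,\mu}}$, so you would then need an additional argument to eliminate the $\psi$ contribution. This may be salvageable, but it is substantially more work for no gain---the point of this proposition is precisely that the pressure is smooth enough globally that no genuine local analysis of $F_0$ is required.
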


\begin{proof}
Apply Proposition \ref{inhomogbd} except with $F_0 \neq 0$ and $\sigma = s - \half$:
$$\|F_i\|_{Y^{s - \half}(J)} \leq \FF(\|\eta\|_{H^{s + \half}})((1 + \|\eta\|_{W^{r + \half, \infty}})\|\nabla_{x,z} \tilde{P}\|_{L^2(J; H^{s - \half})} + \|\nabla_{x,z} \tilde{P}\|_{U^{0+}(J)}),$$
\begin{align*}
\|w_{x_0,\lambda}&S_\mu F_i\|_{Y^{s' - \half}(J)} \leq \FF(\|\eta\|_{H^{s + \half}})((\lambda^{\half}\mu^{-\half} +\|\eta\|_{W^{r +\half,\infty}} + \mu^{s' - \half}\|\eta\|_{LS^1_{x_0, \xi_0, \lambda, \mu}})\|\nabla_{x, z} \tilde{P}\|_{U^{0+}(J)} \\
&+ (1 +\|\eta\|_{W^{r +\half,\infty}})(\mu^{s' - \half}\|\nabla_{x, z} w_{x_0, \lambda}S_{\xi_0, \lambda, \mu} P\|_{L^2(J;L^2)} + \|\nabla  w_{x_0,\mu}S_\mu\tilde{P}\|_{L^2(J;H^{s' - \half})} \\
& + \|\nabla_{x, z} \tilde{P}\|_{L^2(J;H^{s - \half})})).
\end{align*}
Then applying Sobolev embedding along with Proposition \ref{firstpressureest} to the right hand sides (without using the localization), one obtains the estimate.
\end{proof}

Next, we obtain the counterparts of Lemmas \ref{wlems} and \ref{wlemslocal}:

\begin{lem}\label{pwlems}
Let $z_0 \in (-1, 0]$, $J = [z_0, 0]$. Then
\begin{align*}
\|(\D_z - T_A)\tilde{P}\|_{X^{s - \half}(J_0)} \leq \FF(M(t)).
\end{align*}
\end{lem}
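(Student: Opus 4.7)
The plan is to mirror the proof of Lemma \ref{wlems}, adapting it to account for the fact that the pressure $P$ satisfies an inhomogeneous elliptic equation (\ref{pressureeqn}) rather than a homogeneous one. In particular, Proposition \ref{pinhomogbd} already provides the factorization $(\D_z - T_a)(\D_z - T_A)\tilde{P} = F_0 + F_1 + F_2 + F_3$ with the $F_i$ for $i \ge 1$ controlled in $Y^{s-\half}(J)$ by $\FF(M(t))(1 + \|\eta\|_{W^{r+\half,\infty}})$. What is new here is the need to estimate $F_0 = \alpha \tilde{F}$, but this was already established globally in $Y^{s-\half}(J_1)$ by $\FF(M(t))$ via \cite[Lemma 4.7]{alazard2014cauchy} (and used in the proof of Proposition \ref{firstpressureest}).

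The strategy is as follows. First I would choose $-1 < z_1 < z_0$ and introduce a smooth vertical cutoff $\chi(z)$ that vanishes on $[-1, z_1]$ and equals $1$ on $J = [z_0, 0]$. Setting $W = (\D_z - T_A)\tilde{P}$, I apply $\D_z - T_a$ to $\chi W$ to obtain the parabolic evolution
\begin{equation*}
(\D_z - T_a)(\chi W) = \chi(F_0 + F_1 + F_2 + F_3) + \chi' W
\end{equation*}
on $J_1 = [z_1, 0]$ with zero data at $z = z_1$. The main point is that the symbol $-a$ satisfies $\mathrm{Re}(-a) \geq c|\xi|$ by construction (and the estimates on $a$ in Proposition \ref{aabds}), so Proposition \ref{parabolics} applies with parameters controlled by $\FF(M(t))$.

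Next I would bound the right-hand side in $Y^{s-\half}(J_1)$. For $F_i$ with $i \geq 1$, this is the global estimate in Proposition \ref{pinhomogbd}. For $F_0 = \alpha \tilde{F}$, I invoke \cite[Lemma 4.7]{alazard2014cauchy} as in Proposition \ref{firstpressureest}, giving $\|F_0\|_{Y^{s-\half}(J_1)} \leq \FF(M(t))$. For the cutoff term $\chi' W$, since $\chi'$ is supported away from $z = 0$, I estimate directly using Proposition \ref{firstpressureest}:
\begin{equation*}
\|\chi' W\|_{L^1(J_1;H^{s-\half})} \lesssim \|\nabla_{x,z}\tilde{P}\|_{L^2(J_1;H^{s-\half})} + \MM_0^1(A)\|\tilde{P}\|_{L^2(J_1;H^{s-\half})} \leq \FF(M(t)).
\end{equation*}
Combining and applying the parabolic estimate of Proposition \ref{parabolics} gives $\|\chi W\|_{X^{s-\half}(J_1)} \leq \FF(M(t))$, and restricting to $J$ (where $\chi \equiv 1$) yields the desired bound.

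The main technical obstacle, already handled in \cite{alazard2014cauchy}, is the control of $F_0$, since $\tilde{F}$ contains second derivatives of $\phi$ which a priori sit at one derivative too many relative to $\nabla_{x,z}\tilde{\phi} \in X^{s-\half}$. The resolution uses the cancellation $-\nabla^2_{x,y}\phi \cdot \nabla^2_{x,y}\phi$ combined with Proposition \ref{originalcoordestsobolev} applied to $\phi$; beyond this input, the proof is a routine adaptation of Lemma \ref{wlems}.
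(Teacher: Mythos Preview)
Your argument is structurally sound, but it is far more elaborate than what is needed, and as written it does not quite yield the stated bound $\FF(M(t))$. The paper's proof is a single line: by Proposition~\ref{firstpressureest} one already has $\|\nabla_{x,z}\tilde P\|_{X^{s-\half}(J)}\le\FF(M(t))$, so one simply estimates $\D_z\tilde P$ and $T_A\tilde P$ separately by the triangle inequality. For the second piece, $M_0^1(A)\le\FF(\|\eta\|_{H^{s+\half}})$ from Proposition~\ref{aabds}, and the frequency cutoff built into $T_A$ lets one replace $\|\tilde P\|_{H^{s+\half}}$ by $\|\nabla\tilde P\|_{H^{s-\half}}$. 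There is no need to revisit the factorization or the parabolic estimate at all; the key point is that for the pressure (unlike for the harmonic extension in Lemma~\ref{wlems}) the full elliptic regularity $X^{s-\half}$ on $\nabla_{x,z}\tilde P$ is already available, so $(\D_z-T_A)\tilde P$ is trivially controlled.

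The specific gap in your version is the passage from the $F_i$ bound of Proposition~\ref{pinhomogbd}, namely $\FF(M(t))(1+\|\eta\|_{W^{r+\half,\infty}})$, to the claimed $\FF(M(t))$. You acknowledge the factor $(1+\|\eta\|_{W^{r+\half,\infty}})$ at the outset but silently drop it in the conclusion. In this paper the quantities $M(t)$ and $Z(t)$ are deliberately kept separate (the latter is only controlled in $L^2_t$), and under the stated regularity assumptions $r<s+\half-\tfrac{d}{2}$ one cannot in general absorb $\|\eta\|_{W^{r+\half,\infty}}$ into $\FF(\|\eta\|_{H^{s+\half}})$ via Sobolev embedding. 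Thus your route, while valid as an argument, produces $\FF(M(t))Z(t)$ rather than $\FF(M(t))$. This would in fact be harmless for the downstream application in Lemma~\ref{pwlemslocal}, but it does not prove the lemma as stated.
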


\begin{proof}
This is immediate from Proposition \ref{firstpressureest} by estimating $\D_z\tilde{P}$ and $T_A \tilde{P}$ separately. 
\end{proof}

\begin{lem}\label{pwlemslocal}
Let $z_0 \in (-1, 0]$, $J = [z_0, 0]$. Then
\begin{align*}
\|w_{x_0, \lambda}S_\mu(\D_z - T_A)\tilde{P}\|_{X^{s' - \half}(J)} \leq \ &\FF(M(t))(\lambda^{\half}\mu^{-\half} +\|\eta\|_{W^{r +\half,\infty}} + \mu^{s' - \half}\|\eta\|_{LS^1_{x_0, \xi_0, \lambda, \mu}}).
\end{align*}
\end{lem}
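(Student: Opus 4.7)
The plan is to mirror exactly the argument of Lemma \ref{wlemslocal}, substituting $\tilde{P}$ for $\tilde{\theta}$ and using the pressure-specific versions of the supporting results. First I would pick $z_1 \in (-1, z_0)$, set $J_1 = [z_1, 0]$, introduce a smooth vertical cutoff $\chi(z)$ that vanishes on $[-1, z_1]$ and equals $1$ on $J = [z_0, 0]$, and write $W = (\D_z - T_A)\tilde{P}$. Using the decomposition from Proposition \ref{pinhomogbd}, the quantity $\chi w_{x_0, \lambda} S_\mu W$ satisfies the parabolic equation
\begin{align*}
(\D_z - T_a)(\chi w_{x_0, \lambda} S_\mu W) &= \chi w_{x_0, \lambda} S_\mu (F_0 + F_1 + F_2 + F_3) \\
&\quad + \chi [w_{x_0, \lambda} S_\mu, T_a] W + \chi' w_{x_0, \lambda} S_\mu W =: F',
\end{align*}
with zero initial data at $z = z_1$.

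Next I would bound $F'$ in $Y^{s' - \half}(J_1)$. The local source contributions $w_{x_0, \lambda} S_\mu F_i$ for $i \geq 1$ are handled directly by Proposition \ref{pinhomogbd}; the extra forcing $w_{x_0, \lambda} S_\mu F_0 = w_{x_0, \lambda} S_\mu(\alpha \tilde{F})$ requires an additional local product argument via Corollary \ref{localproduct}, combining the $\alpha$ estimate of Corollary \ref{localabgest} with local estimates on $\nabla_{x,z}^2 \tilde{\phi}$ that follow by applying Proposition \ref{localinductioncorinducted} to the harmonic potential. For the commutator term $\chi [w_{x_0, \lambda} S_\mu, T_a] W$, I would apply the first estimate of Proposition \ref{localparadiff} with $\rho = \half$ and $a$ of order $1$, yielding a pointwise-in-$z$ bound of the form
$$\|[w_{x_0, \lambda} S_\mu, T_a] W\|_{H^{s' - \half}} \lesssim M_0^1(a)\, \lambda^{3/8} \mu^{-3/8} \|\tilde{S}_\mu W\|_{H^{s - \half + 7/8}},$$
then integrate in $z$ and invoke Lemma \ref{pwlems} to bound $\|W\|_{L^2(J_1; H^s)}$, absorbing the extra $\lambda^{3/8}\mu^{-3/8}$ factor into the target $\lambda^{\half}\mu^{-\half}$. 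The $\chi' w_{x_0, \lambda} S_\mu W$ term is estimated in exactly the same way, directly from Lemma \ref{pwlems}.

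Finally, I would apply the parabolic estimate Proposition \ref{parabolics} to the evolution on $J_1$ with vanishing initial data, yielding
$$\|\chi w_{x_0, \lambda} S_\mu W\|_{X^{s' - \half}(J_1)} \lesssim \|F'\|_{Y^{s' - \half}(J_1)},$$
whose left-hand side controls the desired $\|w_{x_0, \lambda} S_\mu W\|_{X^{s' - \half}(J)}$ since $\chi \equiv 1$ on $J$.

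The main obstacle I expect is the source term $F_0$, which was absent in the harmonic counterpart (Lemma \ref{wlemslocal}). Establishing a local $Y^{s' - \half}$ bound on $w_{x_0, \lambda} S_\mu(\alpha \tilde{F})$ requires propagating the local smoothing input on $\eta$ into a local estimate on $\nabla_{x, z}^2 \tilde{\phi}$, which forces a careful combination of the local product inequalities of Corollary \ref{localproduct} with the local elliptic estimates of Section \ref{sec:dirichletprob} applied one derivative higher than used elsewhere. All the required ingredients are available from Sections \ref{sec:dirichletprob} and \ref{sec:localdnpar}, but the paraproduct bookkeeping needed to extract the correct gain $\lambda^{\half}\mu^{-\half}$ in the balanced frequency regime is the delicate point.
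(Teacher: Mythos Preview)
Your proposal is correct and matches the paper's approach almost exactly: the paper's proof is a three-line sketch instructing one to rerun the argument of Lemma \ref{wlemslocal} with Proposition \ref{pinhomogbd} in place of Proposition \ref{inhomogbd}, Lemma \ref{pwlems} in place of Lemma \ref{wlems}, and Proposition \ref{firstpressureest} in place of the elliptic estimates, using Sobolev embedding for the H\"older terms. Your outline (cutoff $\chi$, commutator via Proposition \ref{localparadiff}, parabolic estimate via Proposition \ref{parabolics}) is precisely that.

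You go beyond the paper in one respect: you explicitly flag the source term $F_0 = \alpha\tilde F$, which is absent in the harmonic case but present here, and which Proposition \ref{pinhomogbd} does \emph{not} bound (it only treats $F_i$ for $i\ge 1$). The paper's sketch is silent on this point. Your proposed route---a local product estimate on $\alpha\tilde F$ combined with local elliptic control on $\nabla_{x,z}^2\tilde\phi$---is a sound way to close the gap; note that since $\tilde F$ is quadratic in second derivatives of $\tilde\phi$, you will need the $L^2_z H^{s}$ part of the $X^{s-\half}$ bound on $\nabla_{x,z}\tilde\phi$ from Proposition \ref{originalcoordestsobolev} together with the local product machinery. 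This is consistent with how the paper handles $F_0$ globally in Proposition \ref{firstpressureest} (via \cite[Lemma 4.7]{alazard2014cauchy}), and your identification of this as the one genuinely new ingredient relative to Lemma \ref{wlemslocal} is accurate.
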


\begin{proof}
The proof is the same as that of Lemma \ref{wlemslocal}, except in place of the inhomogeneous bounds of Proposition \ref{inhomogbd}, we apply Proposition \ref{pinhomogbd}; in place of the global counterpart Lemma \ref{wlems}, apply Lemma \ref{pwlems}; in place of the elliptic estimates of Propositions \ref{originalcoordestsobolev} and \ref{localinductioncorinducted}, apply Proposition \ref{firstpressureest}. For the H\"older estimates, Sobolev embedding suffices.
\end{proof}

Finally, we perform a second parabolic estimate to obtain the full elliptic estimate:

\begin{prop}\label{localpressure}
Let $z_0 \in (-1, 0]$, $J = [z_0, 0]$. Then
\begin{align*}
\|\nabla_{x, z} w_{x_0, \lambda}S_\mu \tilde{P}\|_{X^{s' - \half}(J)} \leq \ &\FF(M(t))(\lambda^{\half}\mu^{-\half} +\|\eta\|_{W^{r +\half,\infty}} + \mu^{s' - \half}\|\eta\|_{LS^1_{x_0, \xi_0, \lambda, \mu}}).
\end{align*}
\end{prop}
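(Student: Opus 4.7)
The plan is to view $v := w_{x_0,\lambda} S_\mu \tilde P$ as the solution of a parabolic equation in $z$ that is forward-well-posed when run backward from $z = 0$, and to apply the standard parabolic estimate of Proposition~\ref{parabolics} at the level $r = s' + \half$ so as to recover $\nabla_{x,z} v \in X^{s'-\half}(J)$. The boundary condition $\tilde P|_{z=0}=0$ gives $v|_{z=0}=0$, and the inhomogeneity will be controlled by Lemma~\ref{pwlemslocal} together with a commutator bound.

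First I would write
\begin{equation*}
(\D_z - T_A) v = w_{x_0,\lambda} S_\mu (\D_z - T_A) \tilde P + [T_A,\, w_{x_0,\lambda} S_\mu] \tilde P =: G_1 + G_2.
\end{equation*}
Lemma~\ref{pwlemslocal} gives $G_1 \in X^{s'-\half}(J)$ with the target right-hand side, hence $G_1 \in L^2_z H^{s'}(J) \subset Y^{s'+\half}(J)$ with the same bound. For $G_2$, I would apply the first estimate of Proposition~\ref{localparadiff} with $k=s'$ and $\rho=\half$ (the hypothesis $\lambda^{3/4}\ll \mu$ is the standing assumption of the paper), using $\MM_0^1(A)\le \FF$ from Proposition~\ref{aabds} and the bound $\|\tilde P\|_{L^2_z H^{s+1}}\le \FF(M(t))$ that follows from Proposition~\ref{firstpressureest} combined with $\tilde P|_{z=0}=0$. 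This yields $\|G_2\|_{L^2_z H^{s'}(J)} \le \FF(M(t))\lambda^{3/8}\mu^{-3/8}$, which is absorbed into $\FF(M(t))\lambda^{\half}\mu^{-\half}$ since $\mu \le \lambda$.

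Since $\operatorname{Re} A \ge c|\xi|$, the change of variable $\tau = -z$ turns $(\D_z-T_A)v = G_1+G_2$ into a forward parabolic equation $(\D_\tau + T_A)\tilde v = -(G_1+G_2)\circ(-\,\cdot\,)$ on $\tau \in [0,-z_0]$ with $\tilde v(0)=0$. Applying Proposition~\ref{parabolics} at $r=s'+\half$ (the required Hölder regularity of $A$ in $x$ is provided by the second bound of Proposition~\ref{aabds}), I obtain
\begin{equation*}
\|v\|_{X^{s'+\half}(J)} \lesssim \|G_1+G_2\|_{Y^{s'+\half}(J)},
\end{equation*}
which is bounded by the desired right-hand side. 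From $v \in X^{s'+\half}(J)$ I would read off $\|\nabla_x v\|_{X^{s'-\half}(J)}$, and for $\D_z v$ I would use the identity $\D_z v = T_A v + G_1 + G_2$, controlling $\|T_A v\|_{X^{s'-\half}} \lesssim \MM_0^1(A)\|v\|_{X^{s'+\half}}$ and invoking the $X^{s'-\half}$ bound on $G_1+G_2$ already established.

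The main obstacle will be the commutator estimate on $G_2$: we need at least a factor of $\lambda^{1/2}\mu^{-1/2}$ on the right to fit into the target RHS, while having only a fixed amount of Sobolev regularity on $\tilde P$. The choice $\rho = 1/2$ is precisely the one that balances the commutator loss $\lambda^{3\rho/4}\mu^{-3\rho/4}$ against the available regularity $\tilde P \in L^2_z H^{s+1}$, since $s' + 1 - \rho/4 = s + 1$ exactly when $\rho = 1/2$. This hinges on exploiting $\tilde P|_{z=0}=0$ to integrate $\D_z\tilde P$ from the top boundary, converting the gradient estimate of Proposition~\ref{firstpressureest} into a pointwise-in-$z$ Sobolev bound on $\tilde P$ itself.
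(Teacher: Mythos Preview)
Your approach is essentially the same as the paper's: write $(\D_z - T_A)(w_{x_0,\lambda}S_\mu\tilde P) = F'$, estimate $F'$ in $Y^{s'+\half}(J)$ via Lemma~\ref{pwlemslocal} plus the commutator $[T_A, w_{x_0,\lambda}S_\mu]\tilde P$, apply the parabolic estimate of Proposition~\ref{parabolics} from $z=0$ (where $\tilde P$ vanishes), and then recover $\D_z v$ from the equation.

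One small clarification: you do not need the integration-from-the-boundary argument in your last paragraph. The commutator bound from Proposition~\ref{localparadiff} already carries the projection $\tilde S_\mu$, so you only need $\|\tilde S_\mu \tilde P\|_{H^{s+1}}$, which is $\approx \|\tilde S_\mu \nabla \tilde P\|_{H^{s}} \le \|\nabla \tilde P\|_{H^{s}}$ since $\tilde S_\mu$ is supported at frequency $\approx \mu \gg 1$; this is provided directly by the $L^2_z H^s$ part of Proposition~\ref{firstpressureest}. The paper does exactly this, writing $\|\tilde S_\mu \tilde P\|_{H^{s'+1-\frac18}} \le \|\nabla \tilde P\|_{H^s}$ with no appeal to the boundary condition for the commutator term.
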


\begin{proof}
First we estimate the commutator using Proposition \ref{localparadiff},
\begin{align*}
\|[w_{x_0, \lambda} S_\mu, T_A]\tilde{P} \|_{H^{s'}} &\lesssim M_0^1(A)\lambda^{\frac{3}{8}} \mu^{-\frac{3}{8}}\|\tilde{S}_\mu\tilde{P}\|_{H^{s' + 1 - \frac{1}{8}}} \leq \FF(\|\eta\|_{H^{s + \half}})\lambda^{\frac{3}{8}} \mu^{-\frac{3}{8}}\|\nabla \tilde{P}\|_{H^{s}}.
\end{align*}
Then integrating $L_z^2$ and applying Proposition \ref{firstpressureest}, we see the commutator satisfies the same estimate as $w_{x_0, \lambda}S_\mu(\D_z - T_A)\tilde{P} \in Y^{s' + \half}(J)$ obtained by Lemma \ref{pwlemslocal}.

We conclude 
\begin{equation}\label{paraboliceqnp}
(\D_z - T_A)w_{x_0, \lambda}S_\mu\tilde{P} = F'
\end{equation}
where 
$$\|F'\|_{Y^{s' + \half}(J)} \leq \FF(M(t))(\lambda^{\half}\mu^{-\half} +\|\eta\|_{W^{r +\half,\infty}} + \mu^{s' - \half}\|\eta\|_{LS^1_{x_0, \xi_0, \lambda, \mu}}).$$
Then applying the parabolic estimate of Proposition \ref{parabolics},
$$\|w_{x_0, \lambda}S_\mu\tilde{P}\|_{X^{s' + \half}(J)} \leq \FF(M(t))(\lambda^{\half}\mu^{-\half} +\|\eta\|_{W^{r +\half,\infty}} + \mu^{s' - \half}\|\eta\|_{LS^1_{x_0, \xi_0, \lambda, \mu}}).$$
We thus obtain the desired estimate on $\nabla w_{x_0, \lambda}S_\mu\tilde{P}$. Then using the equation (\ref{paraboliceqnp}) and applying our estimate on $F'$, we also obtain the estimate on $\D_z w_{x_0, \lambda}S_\mu\tilde{P}$. 
\end{proof}


\subsection{Local smoothing estimates on the Taylor coefficient}

To conclude this section, we state a local Sobolev estimate on the Taylor coefficient. This is almost immediate from the previous subsection, but we need to restore the change of coordinates:
\begin{cor}\label{taylorsmooth}
We have
\begin{align*}
\|w_{x_0, \lambda}S_\mu a\|_{H^{s' - \half}} \leq \ &\FF(M(t))(\lambda^{\half}\mu^{-\half} +\|\eta\|_{W^{r +\half,\infty}} + \mu^{s' - \half}\|\eta\|_{LS^1_{x_0, \xi_0, \lambda, \mu}}).
\end{align*}
\end{cor}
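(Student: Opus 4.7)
The plan is to reduce the estimate on $a$ to the elliptic estimate for the pressure already obtained in Proposition \ref{localpressure}, together with the reciprocal estimate in Proposition \ref{recip} and the algebra/product structure of the paradifferential calculus. The starting point is the chain-rule identity
\[
a = -(\partial_y P)|_{y=\eta(t,x)} = -\frac{\partial_z \tilde P}{\partial_z \rho}\bigg|_{z=0},
\]
which follows from $\tilde P(x,z) = P(x,\rho(x,z))$ together with $\rho(x,0) = \eta(x)$. Thus the problem is to estimate the product of the two factors $\partial_z \tilde P|_{z=0}$ and $(\partial_z \rho|_{z=0})^{-1}$ in the local $H^{s'-\frac12}$ seminorm.

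First I would control the factor $\partial_z \tilde P|_{z=0}$. Proposition \ref{localpressure} gives $\nabla_{x,z} w_{x_0,\lambda}S_\mu \tilde P \in X^{s'-\frac12}(J)$ with precisely the right-hand side appearing in Corollary \ref{taylorsmooth}. Since $w_{x_0,\lambda}S_\mu$ acts only in $x$, it commutes with the trace at $z=0$, and the $C^0_z$ component of the $X^{s'-\frac12}$ norm gives
\[
\|w_{x_0,\lambda}S_\mu \partial_z \tilde P|_{z=0}\|_{H^{s'-\frac12}} \leq \FF(M(t))\bigl(\lambda^{\frac12}\mu^{-\frac12} + \|\eta\|_{W^{r+\frac12,\infty}} + \mu^{s'-\frac12}\|\eta\|_{LS^1_{x_0,\xi_0,\lambda,\mu}}\bigr).
\]
The corresponding ``auxiliary'' bounds needed later, namely $\|\partial_z \tilde P|_{z=0}\|_{H^{s-\frac12}}$ and $\|\partial_z \tilde P|_{z=0}\|_{L^\infty}$, are provided globally by Proposition \ref{firstpressureest} combined with Sobolev embedding.

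Next I would control the factor $(\partial_z \rho|_{z=0})^{-1}$. The pointwise lower bound $\partial_z\rho \geq \min(h/2,1)$ from \cite[Lemma 3.6]{alazard2014cauchy} makes this factor uniformly bounded in $L^\infty$ and, together with the estimates of Proposition \ref{prop:diffeoest} on $\partial_z \rho$, in $C^{1/8}_*$ and $H^{s-\frac12}$. The crucial local bound $\|w_{x_0,\lambda}S_\mu(\partial_z\rho|_{z=0})^{-1}\|_{H^{s'-\frac12}}$ is exactly what Proposition \ref{recip} provides, with the same right-hand side (up to a multiplicative $\FF$) as in Corollary \ref{taylorsmooth}.

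Finally I would combine the two factors through the local product estimate Corollary \ref{localproduct} (case $(i)$ with $m = s-\frac12$, $\rho = 0$, so that $C_*^{-\rho}$ is replaced by $L^\infty$). All six terms appearing on the right-hand side of that product estimate are then absorbed into the desired bound by matching each with either Proposition \ref{localpressure}, Proposition \ref{firstpressureest}, Proposition \ref{recip}, or Proposition \ref{prop:diffeoest}; the balanced-frequency term generates the harmless $\lambda^{\frac12}\mu^{-\frac12}$ prefactor that is already present in our target estimate. The main obstacle is really packaged inside Proposition \ref{localpressure}: once the local Sobolev elliptic theory for the pressure is in hand, the passage from $\tilde P$ to $a$ is purely algebraic, and the only care needed is to ensure the trace at $z=0$ is legitimate --- which is precisely why the $C^0_z$ component of the $X^{s'-\frac12}(J)$ norm was retained throughout Section \ref{sec:taylorlocal}.
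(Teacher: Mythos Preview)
Your proposal is correct and follows essentially the same approach as the paper: both use the identity $a = -(\partial_z\rho)^{-1}\partial_z\tilde P|_{z=0}$, then combine Proposition~\ref{localpressure} for $\partial_z\tilde P$, Proposition~\ref{recip} for $(\partial_z\rho)^{-1}$, and the local product estimate Corollary~\ref{localproduct} to handle the product. The paper's proof is terser but relies on exactly the same ingredients you identified.
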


\begin{proof}
Observe that
$$\widetilde{\D_y \theta} = \frac{1}{\D_z \rho} \D_z \tilde{\theta}.$$
Thus, it suffices to apply the product estimate, Proposition \ref{localproduct}, combined with Proposition \ref{localpressure} and the reciprocal estimate Proposition \ref{recip} (it suffices to use Sobolev embedding on the $L^\infty$ estimates).
\end{proof}

\section{Integration Along the Hamilton Flow}\label{COVsec}

To motivate the results of this section, recall the symbol of the operator of our evolution is
$$H(t, x, \xi) = V_\lambda \xi +  \sqrt{a_\lambda |\xi|}$$
with associated Hamilton equations
\begin{equation*}\begin{cases}
\dot{x} = H_\xi(t, x, \xi) = V_\lambda + \half\sqrt{a_\lambda} |\xi|^{-\frac{3}{2}}\xi\\
\dot{\xi} = -H_x(t, x, \xi) = -\D_x V_\lambda \xi - \D_x \sqrt{a_\lambda} |\xi|^\half.
\end{cases}\end{equation*}
To construct a useful wave packet parametrix for the evolution $D_t + H(t, x, D)$, we require the Hamilton flow of $H$ to be bilipschitz. In turn, this relies on the regularity of $\D_x^2 H$. To circumvent this in our low regularity setting, we will write $\D_x^2 H$ as a derivative of some $F = F(t, x, \xi)$ along the flow $(x(t), \xi(t))$:
\begin{equation}\label{desiredint}
\D_x^2 H \approx (\D_t + \dot{x} \D_x + \dot{\xi} \D_\xi) F
\end{equation}
in a sense to be made precise.

By a straightforward computation,
$$\D_x^2 H = \D_x^2 V_\lambda \xi + \frac{1}{2}\left(-\half \frac{(\D_x a_\lambda)^2}{\sqrt{a_\lambda^3}} + \frac{\D_x^2 a_\lambda}{\sqrt{a_\lambda}} \right)|\xi|^\half.$$
Then (\ref{desiredint}) becomes (omitting ``lower order terms'' from both sides)
\begin{equation}\label{desiredint2}
\D_x^2 V_\lambda \xi + \frac{1}{2}\frac{\D_x^2 a_\lambda}{\sqrt{a_\lambda}}|\xi|^\half \approx \left(\D_t + V_\lambda \D_x + \half \sqrt{a_\lambda} |\xi|^{-\frac{3}{2}}\xi \D_x\right) F.
\end{equation}

We will see below that in fact, for $F_1 = F_1(t, x)$ to be determined,
\begin{equation}\label{dispint}
\D_x^2 V_\lambda\xi \approx (\D_t + V_\lambda \D_x) F_1 \xi,\quad \frac{\D_x^2 a_\lambda}{\sqrt{a_\lambda}}|\xi|^\half \approx (\sqrt{a_\lambda} |\xi|^{-\frac{3}{2}}\xi \D_x) F_1 \xi
\end{equation}
in a sense to be made precise.

\subsection{Vector field identities}\label{vfid}

First we recall identities involving the vector field $L = \D_t + V \cdot \nabla$.

Recall that the traces of the velocity field on the surface $(V, B)$ can be expressed directly in terms of $\eta, \psi$ via the formulas (\ref{BVformulas}). Further, as a simple consequence of these formulas and the first equation of the system (\ref{zak}),
\begin{equation}\label{etatoB}
L\eta = B.
\end{equation}
Also recall from \cite[Propositions 4.3, 4.5]{alazard2014cauchy},
\begin{equation}\label{LandTaylor}
\begin{cases}
LB = a - g,\\
LV = -a \nabla \eta, \\
L\nabla \eta = G(\eta) V + \nabla \eta G(\eta) B + \Gamma_x + \nabla \eta \Gamma_y,
\end{cases}
\end{equation}
\begin{equation}\label{structure}
G(\eta)B = -\nabla \cdot V - \Gamma_y.
\end{equation}
Here, $\Gamma_x, \Gamma_y$ arise only in the case of finite bottom, and is described with estimates in Appendix \ref{bottomsection}. In the remainder of the section, we will consider the case of infinite bottom, so that 
$$\Gamma_x = \Gamma_y = 0.$$
This assumption is only for convenience; the appropriate local estimates on $\Gamma_x, \Gamma_y$ may be obtained by using elliptic arguments similar to those in Section \ref{sec:taylorlocal}.

\subsection{Integrating the vector field}

Observe that two of the identities from (\ref{LandTaylor}) and (\ref{structure}) imply
\begin{equation}\label{vstructurefinal}
L\nabla \eta = (G(\eta) - (\nabla \eta) \text{div}) V,
\end{equation}
which is the basis for our integration of $\D_x^2 V_\lambda$ below. Recall we write $\Lambda$ for the principal symbol of the Dirichlet to Neumann map.

\begin{prop}\label{integrate}
We have
\begin{equation}\label{integratedeqn}
\D_x^2 V_\lambda = (\D_t + V_\lambda \cdot \nabla) T_{q^{-1}} \D_x^2 \nabla \eta_\lambda + G_V
\end{equation}
where $q = \Lambda - i\nabla \eta \xi$ is a symbol of order $1$, and $G_V$ satisfies
\begin{align*}
\|G_V\|_{L^\infty} &\leq \lambda^{\frac{1}{2} -}\FF(M(t))Z(t)^2, \\
\|w_{x_0, \lambda} S_\mu G_V\|_{H^{\half}} &\leq\lambda^{\frac{1}{2} -}\FF(M(t))Z(t)(Z(t) + \LL(t, x_0, \xi_0, \lambda)).
\end{align*}
\end{prop}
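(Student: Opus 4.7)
The plan is to begin from the identity (\ref{vstructurefinal}), $L\nabla \eta = (G(\eta) - (\nabla \eta)\,\text{div}) V$, which expresses a derivative of $\eta$ along the transport vector field in terms of a spatial operator applied to $V$. I would paralinearize the right-hand side: apply Proposition \ref{localparalinearize} to write $G(\eta) V = T_\Lambda V + R_G$, and expand the product $\nabla \eta \cdot \nabla V$ via Bony's decomposition to obtain $T_{\nabla \eta}\nabla V + T_{\nabla V}\nabla \eta + R(\nabla \eta, \nabla V) = T_{i\nabla\eta \cdot \xi} V + \text{l.o.t.}$, where the low-high term in $V$ yields the desired paradifferential piece and the other contributions enter the error. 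Collecting these gives
\[
L\nabla \eta \;=\; T_q V + R_1, \qquad q = \Lambda - i\nabla \eta\, \xi,
\]
with $R_1$ captured by the local paralinearization estimate and the local product/balanced-frequency estimates of Corollary \ref{localproduct} and Proposition \ref{bilinear}.

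Next I would invert the elliptic symbol $q$ (whose real part $\Lambda \gtrsim |\xi|$ is bounded below by the Taylor sign/positive depth conditions), using a standard parametrix $T_{q^{-1}}$ so that $T_{q^{-1}} T_q = I + E$ with $E$ of order $-1$. This yields $V = T_{q^{-1}} L \nabla \eta - T_{q^{-1}} R_1 + EV$. Applying $\D_x^2 S_{\leq c_1 \lambda}$ to both sides,
\[
\D_x^2 V_\lambda \;=\; S_{\leq c_1\lambda}\D_x^2 T_{q^{-1}} L \nabla \eta + R_2,
\]
where $R_2$ absorbs the truncated $T_{q^{-1}} R_1$ and $EV$ contributions. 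Then I would commute $L$ past $T_{q^{-1}}$, $\D_x^2$, and $S_{\leq c_1\lambda}$ in turn, using standard paradifferential commutator estimates (Proposition \ref{localparadiff} and the last identity of that proposition for commuting $T_a$ with $\D_t + T_V\cdot\nabla$), and finally replace $V$ by $V_\lambda$ in the transport vector field, producing the desired form $L_\lambda T_{q^{-1}} \D_x^2 \nabla \eta_\lambda$ plus a commutator error.

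The remainder $G_V$ is then the sum of: the paralinearization error $R_G$ from Proposition \ref{localparalinearize}, the Bony-product remainders involving $\nabla\eta$ and $\nabla V$, the inversion error from $T_{q^{-1}}T_q - I$, the commutators $[\D_t + V_\lambda\nabla, T_{q^{-1}}\D_x^2 S_{\leq c_1\lambda}]$, and the transport-swap error $(V - V_\lambda)\cdot\nabla T_{q^{-1}}\D_x^2 \nabla\eta_\lambda$. The $L^\infty$ bound follows from Sobolev embedding of $H^{1/2+}$ combined with the global bounds on $(\eta, V)$ and the Taylor coefficient $a$; the factor $\lambda^{1/2-}$ is the dimensional cost of two spatial derivatives at frequency $\lambda$ relative to the regularity $s > d/2 + 7/8$, with an $\epsilon$ room to spare. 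The local $H^{1/2}$ estimate uses the same decomposition but replaced with the \emph{local} counterparts: the local paralinearization bound in Proposition \ref{localparalinearize}, the local product estimates Corollary \ref{localproduct} and Proposition \ref{bilinear}, the local pseudodifferential estimates in Proposition \ref{localparadiff}, together with the local Taylor coefficient bound Corollary \ref{taylorsmooth} wherever $a$ enters through $\Lambda$ or $q^{-1}$ (all of which are controlled by $\LL(t, x_0, \xi_0, \lambda)$ and $Z(t)$).

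The main obstacle will be organizing the estimate on $G_V$ in the local seminorm, since each term must absorb a full $\lambda^{1/2-\epsilon}$ gain while the inputs $(\eta, V, a)$ are only at the critical regularity $r \leq 3/2$ and require the delicate $LS^\sigma_{x_0, \xi_0, \lambda, \mu}$ norms from Section \ref{localnotation}. In particular, the balanced-frequency contributions in $R(\nabla\eta, \nabla V)$, and the commutator terms where $T_{q^{-1}}$ meets high-frequency pieces of $\nabla \eta$, must be handled with the frequency-gap structure in $S_{\xi_0, \lambda, \mu}$, and the local bounds are the ones where the extra $\epsilon$ of headroom provided by the hypothesis $s > d/2 + 7/8$ is actually consumed.
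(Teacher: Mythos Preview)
Your proposal is essentially the same as the paper's proof: start from $L\nabla\eta = (G(\eta) - (\nabla\eta)\,\text{div})V$, paralinearize to $T_q V = L\nabla\eta + G_1$, invert $q$, apply $\D_x^2 S_{\leq c_1\lambda}$, then commute $L$ outward (first paralinearizing $V\cdot\nabla$ to $T_V\cdot\nabla$, then using Proposition~\ref{com}/\ref{localparadiff}), and finally truncate $V$ to $V_\lambda$. The error bookkeeping you describe matches the paper's Steps~1--6.

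Two small inaccuracies worth flagging: the symbol $q = \Lambda - i\nabla\eta\,\xi = |\xi| - i\nabla\eta\,\xi$ does not involve the Taylor coefficient $a$, so Corollary~\ref{taylorsmooth} plays no role here (it is needed later in Proposition~\ref{hamintegrate} for the dispersive term, not for $\D_x^2 V_\lambda$); likewise the ellipticity of $q$ is immediate from $\Lambda = |\xi|$ and does not use the Taylor sign condition. Also, the paper obtains the $L^\infty$ bound on $G_V$ by direct H\"older estimates at each step (e.g.\ Proposition~\ref{DNparalinear} in $W^{\half+,\infty}$) rather than by Sobolev embedding from a global $H^{\half+}$ estimate, though either route works.
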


\begin{proof}
Starting from (\ref{vstructurefinal}), we incrementally paralinearize, apply the symbolic calculus, frequency truncate, and commute. First, we observe that 
$$\|w_{x_0,\lambda}S_\mu f\|_{H^\sigma} \lesssim \|w_{x_0,\mu}S_\mu f\|_{H^\sigma}$$
so we may exchange $w_{x_0,\lambda}$ with $w_{x_0,\mu}$ when necessary.

\emph{Step 1. Paralinearization.} Paralinearize the terms
\begin{align*}
G(\eta)V &= T_\Lambda V + R(\eta)V, \\
(\nabla \eta) \nabla \cdot V &=  T_{\nabla \eta} \nabla \cdot V + T_{\nabla \cdot V} \nabla \eta + R(\nabla \eta, \nabla \cdot V)
\end{align*}
of (\ref{vstructurefinal}). Then rearranging,
$$T_q V = (T_\Lambda - T_{\nabla \eta}\text{div}) V = L\nabla \eta - R(\eta)V  + T_{\nabla \cdot V} \nabla \eta + R(\nabla \eta, \nabla \cdot V).$$
We estimate the error terms on the right hand side. By (\ref{DNparalinear}) (appropriately sharpened for the $\eps$ gain), (\ref{ordernorm}), and (\ref{holderparaerror}) respectively,
\begin{align*}
\|R(\eta)V\|_{W^{\half +, \infty}} &\leq \FF(\|\eta\|_{H^{s + \half}}, \|V\|_{H^s})(1 + \|\eta\|_{W^{r + \half, \infty}})(1 + \|\eta\|_{W^{r + \half, \infty}}+ \|V\|_{W^{r, \infty}})  \\
\|T_{\nabla \cdot V} \nabla \eta \|_{W^{\half +, \infty}} &\lesssim \|V\|_{W^{r,\infty}} \|\eta \|_{W^{r + \half, \infty}} \\
\|R(\nabla \eta, \nabla \cdot V)\|_{W^{\half +, \infty}} &\lesssim \| \eta\|_{W^{r + \half, \infty}}\|V\|_{W^{r, \infty}}.
\end{align*}

For the local Sobolev estimates, by Proposition \ref{localparalinearize},
\begin{align*}
\|w_{x_0, \lambda}&S_\mu R(\eta)V\|_{H^{s' - \half}}  \leq \FF(M(t))Z(t)(\lambda^{\half}\mu^{-\half} + Z(t) + \lambda^{\half}\mu^{-\half} \LL(t, x_0, \xi_0, \lambda)).
\end{align*}

By Proposition \ref{localparaproduct},
\begin{align*}
\|w_{x_0, \mu} S_\mu T_{\nabla \cdot V} \nabla \eta \|_{H^{s' - \half}} \lesssim \|\nabla \cdot V\|_{L^\infty}( \|w_{x_0, \mu} S_\mu \nabla \eta \|_{H^{s' - \half}} + \|\nabla \eta\|_{H^{s - \half}}).
\end{align*}

Lastly, by Proposition \ref{bilinear}, 
\begin{align*}
\mu^{s' - \half}\|w_{x_0, \lambda}&S_\mu R(\nabla \eta, \nabla \cdot V)\|_{L^2} \\
&\lesssim \mu^{s' - \half}\|\nabla \eta\|_{LS^{0}_{x_0, \xi_0, \lambda, \mu}}\|\nabla \cdot V\|_{C_*^{0+}} + \|S_\lambda \nabla \eta\|_{C_*^{\half}}\mu^{s' - \half}\|w_{x_0, \lambda}S_{\xi_0, \lambda, \mu}\nabla \cdot V\|_{H^{-\half}} \\
&\lesssim Z(t)\lambda^{\half}\mu^{-\half}\LL(t, x_0, \xi_0, \lambda).
\end{align*}

We conclude
$$T_q V = L\nabla \eta + G_1$$
where 
\begin{align*}
\|G_1\|_{W^{\half +, \infty}} &\leq \FF(M(t))Z(t)(1 + \|\eta\|_{W^{r + \half, \infty}}),\\
\|w_{x_0, \lambda} S_\mu G_1\|_{H^{s' - \half}} &\leq \FF(M(t))Z(t)(\lambda^{\half}\mu^{-\half} + Z(t) + \lambda^{\half}\mu^{-\half}\LL(t, x_0, \xi_0, \lambda)).
\end{align*}
Following the above proof of the local Sobolev estimate, but using the standard Sobolev counterparts to all of the local Sobolev estimates, one also obtains the (global) Sobolev counterpart,
$$\|G_1\|_{H^{s - \half}} \leq \FF(M(t))Z(t).$$

\

\emph{Step 2. Inversion of $q$.} We write
$$V = T_{q^{-1}}L\nabla \eta + T_{q^{-1}}G_1 + (1 - T_{q^{-1}}T_q)V.$$

For the first error term $T_{q^{-1}}G_1$, $q^{-1}$ is a symbol of order $-1$ and is a smooth function of $\nabla \eta$, so that by (\ref{smoothholder}) and Sobolev embedding with $s - \half > \frac{d}{2}$,
\begin{equation}\label{qcontrol}
M_0^{-1}(q^{-1}) \leq \FF(\|\nabla \eta\|_{L^\infty}) \leq \FF(\|\nabla \eta\|_{H^{s - \half}})
\end{equation}
and similarly
$$M_{\half}^{-1}(q^{-1}) \leq \FF(\|\nabla \eta\|_{H^{s - \half}}) \|\nabla \eta\|_{C_*^\half}.$$
Then by (\ref{ordernorm}) and the estimate on $G_1$ in the previous step,
\begin{align*}
\|T_{q^{-1}}G_1\|_{W^{\frac{3}{2} +, \infty}} &\lesssim M_0^{-1}(q^{-1}) \|G_1\|_{W^{\half +, \infty}} \leq \FF(M(t))Z(t)(1 +  \|\eta\|_{W^{r + \half, \infty}}).
\end{align*}
For the local Sobolev counterpart, use instead Proposition \ref{localparadiff} with Step 1:
\begin{align*}
\| w_{x_0, \lambda}  S_\mu T_{q^{-1}} G_1 \|_{H^{s' + \frac{1}{2}}} &\lesssim M_0^{-1}(q^{-1})(\| w_{x_0, \lambda} S_\mu G_1 \|_{H^{s' - \frac{1}{2}}} + \lambda^{\frac{3}{8}} \mu^{-\frac{3}{8}}\| G_1\|_{H^{s' + \frac{1}{2} - 1 - \frac{1}{8}}}) \\
&\leq \FF(\|\eta\|_{H^{s + \half}})(\| w_{x_0, \lambda} S_\mu G_1 \|_{H^{s' - \frac{1}{2}}} + \lambda^{\frac{3}{8}} \mu^{-\frac{3}{8}}\| G_1\|_{H^{s - \frac{1}{2}}}) \\
&\leq \FF(M(t))Z(t)(\lambda^{\half}\mu^{-\half} + Z(t) + \lambda^{\half}\mu^{-\half}\LL(t, x_0, \xi_0, \lambda)).
\end{align*}

Using (\ref{holdercommutator}), we control the second error term by
\begin{align*}
\|(1 - T_{q^{-1}}T_q)V\|_{W^{\frac{3}{2} +, \infty}} &\lesssim \left(M_{\half}^{-1}(q^{-1}) M_{0}^{1}(q) + M_{0}^{-1}(q^{-1}) M_{\half}^{1}(q)\right)\|V\|_{W^{r, \infty}} \\
& \leq \FF(\|\eta\|_{H^{s + \half}}) \|\eta\|_{W^{r + \half, \infty}}\|V\|_{W^{1, \infty}}.
\end{align*}
Likewise, using instead Proposition \ref{localparadiff} for the local Sobolev estimate,
\begin{align*}
\|w_{x_0, \mu} S_\mu(1 - T_{q^{-1}}T_q)V\|_{H^{s' + \half}} &\lesssim \left(M_{\half}^{-1}(q^{-1}) M_{0}^{1}(q) + M_{0}^{-1}(q^{-1}) M_{\half}^{1}(q)\right) \\
&\quad \cdot (\| w_{x_0, \mu} S_\mu V \|_{H^{s'}} + \| \tilde{S}_\mu V\|_{H^{s}}) \\
& \leq \FF(\|\eta\|_{H^{s + \half}}, \|V\|_{H^s}) \|\eta\|_{W^{r + \half, \infty}}\| w_{x_0, \mu} S_\mu V \|_{H^{s'}}.
\end{align*}

We conclude
$$ V = T_{q^{-1}}L\nabla \eta + G_2$$
with
\begin{align*}
\|G_2\|_{W^{\frac{3}{2} +, \infty}} \leq \ &\FF(M(t))Z(t)(1 + \|\eta\|_{W^{r + \half, \infty}}), \\
\|w_{x_0, \lambda} S_\mu G_2\|_{H^{s' + \half}} \leq \ &\FF(M(t))Z(t)(\lambda^{\half}\mu^{-\half} + Z(t) + \lambda^{\half}\mu^{-\half}\LL(t, x_0, \xi_0, \lambda)).
\end{align*}
As before, it is easy to follow the local Sobolev proof to show additionally
$$\|G_2\|_{H^{s + \half}} \leq \FF(M(t))Z(t).$$

\

\emph{Step 3. Frequency truncation and differentiation.} Applying $\D_x^2 S_{\leq c_1\lambda}$ to both sides of our identity, we have
$$\D_x^2 V_\lambda = T_{q^{-1}}\D_x^2 S_{\leq c_1\lambda} L\nabla \eta + \D_x^2 S_{\leq c_1\lambda}G_2 + [\D_x^2 S_{\leq c_1\lambda}, T_{q^{-1}}] L\nabla \eta.$$

For the first error term, we have
\begin{align*}
\|\D_x^2 S_{\leq c_1\lambda}G_2\|_{L^\infty} &\lesssim \lambda^{\frac{1}{2} -}\|G_2\|_{W^{\frac{3}{2} +, \infty}} \lesssim \lambda^{\frac{1}{2} -}\FF(M(t))Z(t)^2,\\
\|w_{x_0, \lambda} S_\mu \D_x^2 G_2\|_{H^\half} &\lesssim \mu^{\frac{1}{2} -}\|w_{x_0, \lambda} S_\mu G_2\|_{H^{s' + \half}} \lesssim \lambda^{\frac{1}{2} -}\FF(M(t))Z(t)(Z(t) + \LL(t, x_0, \xi_0, \lambda)).
\end{align*}

To estimate the second error term, first rewrite, using the definition of the paradifferential calculus,
$$[\D_x^2 S_{\leq c_1\lambda}, T_{q^{-1}}]S_{\leq c \lambda} L\nabla \eta.$$
Using (\ref{etatoB}), write
\begin{equation}\label{dstructure}
L\nabla \eta = \nabla L \eta - \nabla V \cdot \nabla \eta = \nabla B - \nabla V \cdot \nabla \eta.
\end{equation}
Then by (\ref{holdercommutator}) and (\ref{qcontrol}),
\begin{align*}
\|[\D_x^2 S_{\leq c_1\lambda}, T_{q^{-1}}] S_{\leq c\lambda}L\nabla \eta\|_{L^\infty} &\lesssim \lambda^{\frac{1}{2} - }\left(M_{\half}^{-1}(q^{-1}) + M_{0}^{-1}(q^{-1})\right) \|\nabla B - \nabla V \cdot \nabla \eta\|_{C_*^r} \\
&\lesssim \lambda^{\frac{1}{2} - }\FF(\|\eta\|_{H^{s + \half}})\|\eta\|_{W^{r + \half, \infty}} \left(\| B \|_{C_*^r} + \|\nabla \eta \|_{L^\infty}\| \D V\|_{C_*^r} \right) \\
&\leq \lambda^{\frac{1}{2} -}\FF(\|\eta\|_{H^{s + \half}}) \|\eta\|_{W^{r + \half, \infty}}\|(V, B)\|_{C_*^r}.
\end{align*}
For the local Sobolev counterpart, use instead Proposition \ref{localparadiff}:
\begin{align*}
\|w_{x_0, \mu}S_\mu &[\D_x^2 S_{\leq c_1\lambda}, T_{q^{-1}}]L\nabla \eta\|_{H^\half} \\
&\lesssim \lambda^{\frac{1}{2} - }\left(M_{\half}^{-1}(q^{-1}) + M_{0}^{-1}(q^{-1})\right) \\
&\quad \cdot (\| w_{x_0, \mu} S_\mu (\nabla B - \nabla V \cdot \nabla \eta) \|_{H^{s' - 1}} + \| \tilde{S}_\mu (\nabla B - \nabla V \cdot \nabla \eta)\|_{H^{s - 1}}) \\
&\lesssim \lambda^{\frac{1}{2} - }\FF(\|\eta\|_{H^{s + \half}})\|\eta\|_{W^{r + \half, \infty}} \\
&\quad \cdot (\LL(t, x_0, \xi_0, \lambda) + \| w_{x_0, \mu} S_\mu (\nabla V \cdot \nabla \eta) \|_{H^{s' - 1}} + M(t)).
\end{align*}
For the remaining middle term, using Corollary \ref{localproduct},
\begin{align*}
\| w_{x_0, \mu} S_\mu (\nabla V \cdot \nabla \eta) \|_{H^{s' - 1}} \lesssim \ &(\|w_{x_0, \mu} \tilde{S}_\mu \nabla V \|_{H^{s' - 1}} + \|\nabla V \|_{H^{s - 1}})\|\nabla \eta\|_{L^\infty} \\
&+ \|\nabla V\|_{L^\infty}(\|w_{x_0, \mu} \tilde{S}_\mu \nabla \eta\|_{H^{s' - 1}} + \|\nabla \eta\|_{H^{s - 1}}) \\
&+ \|\nabla \eta\|_{C_*^{\frac{1}{8}}}\|\nabla V\|_{H^{s - 1}} \\
\leq \ &\FF(M(t))(Z(t) + \LL(t, x_0, \xi_0, \lambda)).
\end{align*}


We conclude
$$\D_x^2 V_\lambda = T_{q^{-1}}\D_x^2 S_{\leq c_1\lambda} L\nabla \eta + G_3$$
with 
\begin{align*}
\|G_3\|_{L^\infty} &\leq \lambda^{\frac{1}{2} -}\FF(M(t))Z(t)^2, \\
\|w_{x_0, \lambda} S_\mu G_3\|_{H^{\half}} &\leq\lambda^{\frac{1}{2} -}\FF(M(t))Z(t)(Z(t) + \LL(t, x_0, \xi_0, \lambda))
\end{align*}

\

\emph{Step 4. Vector field paralinearization.} We paralinearize the vector field in order to commute past it in the next step. Writing the paraproduct expansion
$$(V \cdot \nabla)\nabla \eta = (T_{V} \cdot \nabla)\nabla \eta + T_{\nabla (\nabla \eta)}\cdot V + R(V, \nabla(\nabla \eta)),$$
we have
$$\D_x^2 V_\lambda = T_{q^{-1}}\D_x^2 S_{\leq c_1\lambda}(\D_t + T_V \cdot \nabla)\nabla \eta + G_3 + T_{q^{-1}}\D_x^2 S_{\leq c_1\lambda}(R(V, \nabla(\nabla \eta))+ T_{\nabla (\nabla \eta)}\cdot V).$$
Then by (\ref{ordernorm}), (\ref{holderparaerror}), and (\ref{holderparaproduct}),
\begin{align*}
\|T_{q^{-1}}\D_x^2 S_{\leq c_1\lambda}&(R(V, \nabla(\nabla \eta)) + T_{\nabla (\nabla \eta)}\cdot V)\|_{L^\infty} \\
&\lesssim M_0^{1}(q^{-1}\xi^2) \|S_{\leq c_1\lambda}(R(V, \nabla(\nabla \eta))+ T_{\nabla (\nabla \eta)}\cdot V)\|_{W^{1, \infty}} \\
&\lesssim \lambda^{\frac{1}{2} -}\FF(\|\eta\|_{H^{s + \half}})\|\nabla \eta\|_{L^\infty}\|V\|_{W^{r, \infty}}\| \nabla^2\eta\|_{C_*^{-\half}} \\
&\lesssim \lambda^{\frac{1}{2} -}\FF(\|\eta\|_{H^{s + \half}})\|\eta\|_{H^{s + \half}} \|V\|_{W^{r, \infty}} \|\eta\|_{W^{\frac{3}{2}, \infty}}.
\end{align*}

For the local Sobolev estimate, first consider $R(V, \nabla(\nabla \eta))$. By Proposition \ref{localparadiff},
\begin{align*}
\| w_{x_0, \lambda} & S_\mu T_{q^{-1}}\D_x^2 R(V, \nabla(\nabla \eta)) \|_{H^\half} \\
&\lesssim M_0^{-1}(q^{-1})(\| w_{x_0, \lambda} S_\mu \D_x^2 R(V, \nabla(\nabla \eta)) \|_{H^{\half - 1}} + \lambda^{\frac{3}{8}} \mu^{-\frac{3}{8}}\| \tilde{S}_\mu \D_x^2 R(V, \nabla(\nabla \eta))\|_{H^{\half - 1 - \frac{1}{8}}}) \\
&\leq \FF(M(t))(\| w_{x_0, \lambda} S_\mu \D_x^2 R(V, \nabla(\nabla \eta)) \|_{H^{-\half}} + \lambda^{\frac{3}{8}}\| \tilde{S}_\mu R(V, \nabla(\nabla \eta))\|_{H^{1}}) \\
&\leq \FF(M(t))(\| w_{x_0, \lambda} S_\mu \D_x^2 R(V, \nabla(\nabla \eta)) \|_{H^{-\half}} + \lambda^{\half-}Z(t)).
\end{align*}
For the remaining first term, first note that 
\begin{align*}
\| (\D_x w_{x_0, \lambda}) S_\mu \D_x R(V, \nabla(\nabla \eta)) \|_{H^{-\half}} &\lesssim \mu^{-\half} \lambda^{\frac{3}{4}} \|S_\mu R(V, \nabla(\nabla \eta)) \|_{H^1} \\
&\lesssim  \lambda^{\frac{3}{8}} \|S_\mu R(V, \nabla(\nabla \eta)) \|_{H^1} \\
&\lesssim \lambda^{\half -} \FF(M(t)) Z(t).
\end{align*}
We have a similar situation (better than needed) when both $\D_x^2$ fall on $w$, so it remains to consider
$$\| w_{x_0, \lambda} S_\mu R(V, \nabla(\nabla \eta)) \|_{H^{\frac{3}{2}}}.$$
Applying Proposition \ref{bilinear},
\begin{align*}
\|w_{x_0, \lambda} S_\mu R(V, \nabla(\nabla \eta)) \|_{L^2} \lesssim \ &\|V\|_{LS^\half_{x_0, \xi_0, \lambda, \mu}}\|\nabla^2 \eta\|_{C_*^{r - \frac{3}{2}}} + \|V\|_{C_*^{r}}\|w_{x_0, \lambda}S_{\xi_0, \lambda, \mu}\nabla^2 \eta\|_{H^{-1}}
\end{align*}
and thus
$$\| w_{x_0, \lambda} S_\mu R(V, \nabla(\nabla \eta)) \|_{H^{\frac{3}{2}}} \leq \FF(M(t))Z(t) \lambda^{\half -} \LL(t, x_0, \xi_0, \lambda).$$

For the local Sobolev estimate of $T_{\nabla (\nabla \eta)}\cdot V$, the analysis is similar and easier, using Proposition \ref{localparaproduct} in the place of Proposition \ref{bilinear}.

We conclude that we may replace $V$ with $T_V$, yielding, for $G_4$ satisfying the same estimates as $G_3$,
$$\D_x^2 V_\lambda = T_{q^{-1}}\D_x^2 S_{\leq c_1\lambda}(\D_t + T_V \cdot \nabla)\nabla \eta + G_4.$$

\

\emph{Step 5. Vector field commutator estimate.} By the definition of the paradifferential calculus, we may freely exchange $T_{q^{-1}}\D_x^2 S_{\leq c_1\lambda}(\D_t + T_V \cdot \nabla)\nabla\eta$ for 
$$T_{q^{-1}}\D_x^2 S_{\leq c_1\lambda}(\D_t + T_V \cdot \nabla)\nabla S_{\leq c\lambda}\eta.$$

In turn, applying Proposition \ref{com} with $m = 1$, $r = 0$ and $\eps = 1$, we may exchange this for 
$$LT_{q^{-1}}\D_x^2 \nabla \eta_\lambda$$
with an error bounded in $L^\infty$ by (using that $q$ is a smooth function of $\nabla \eta$, and (\ref{dstructure}))
\begin{align*}
M_0^{1}&(q^{-1}\xi^2 ) \|V\|_{W^{1, \infty}} \|\nabla S_{\leq c\lambda} \eta\|_{B^{1}_{\infty, 1}} + M_0^1(Lq^{-1} \xi^2) \|\nabla S_{\leq c\lambda}\eta\|_{W^{1, \infty}} \\
\lesssim& \ \lambda^{\frac{1}{2} -}\FF(\|\eta\|_{H^{s + \half}})(\|\eta \|_{W^{1, \infty}} \|V\|_{W^{1, \infty}}\|\eta\|_{B^{\frac{3}{2} +}_{\infty, 1}} + \|\nabla B - \nabla \eta \cdot \nabla V\|_{L^\infty} \| \eta\|_{W^{\frac{3}{2}, \infty}}) \\
\leq& \ \lambda^{\frac{1}{2} -}\FF(\|\eta\|_{H^{s + \half}})\|(V, B)\|_{W^{1, \infty}}\|\eta\|_{W^{r + \half, \infty}} \leq \lambda^{\frac{1}{2} -} \FF(M(t))Z(t)(1 +  \|\eta\|_{W^{r + \half, \infty}}).
\end{align*}

For the local Sobolev estimate, apply instead Proposition \ref{localparadiff} to estimate 
\begin{align*}
\|w_{x_0, \mu} S_\mu[T_{q^{-1}}\D_x^2 S_{\leq c_1\lambda}, &\D_t + T_V \cdot \nabla] \nabla\eta(t)\|_{H^\half} \\
&\lesssim (M_0^1(q^{-1}\xi^2)\|V(t)\|_{W^{r, \infty}} + M_0^1((\D_t + V \cdot \nabla)q^{-1}\xi^2)) \\
&\quad \cdot (\|w_{x_0, \mu}  S_\mu \nabla\eta(t)\|_{H^{\half + 1}} + \|\nabla\eta(t)\|_{H^{\half + 1 - \frac{1}{4}}}) \\
&\leq \FF( \|\eta\|_{H^{s + \half}}) Z(t) (\mu^2\|w_{x_0, \mu}  S_\mu \eta(t)\|_{H^{\half}} + \mu^{\half-}) \\
&\leq \FF( \|\eta\|_{H^{s + \half}}) Z(t) \lambda^{\half -}(\LL(t, x_0, \xi_0, \lambda) + 1).
\end{align*}
Note that we also need to restore $V$, from $T_V$. First consider the error arising from the balanced terms. Note that since we have one term truncated to low frequencies, we may use use Corollary \ref{bilinearsimple} instead of Proposition \ref{bilinear}:
\begin{align*}
\|w_{x_0, \lambda} S_\mu R(V, \nabla T_{q^{-1}}\D_x^2 \nabla \eta_\lambda)\|_{H^{0+}} &\lesssim (\|V\|_{LS^{s'}_{x_0, \lambda}} + \lambda^{\frac{3}{4}}\mu^{-1} \|S_{\leq c\lambda} V\|_{H^{s'}})\|\nabla T_{q^{-1}}\D_x^2 \nabla \eta_\lambda\|_{C_*^{-\frac{3}{2}}} \\
&\leq \FF(\|\eta\|_{H^{s + \half}})(\|V\|_{LS^{s'}_{x_0, \lambda}} + \lambda^{\half}\mu^{-\half} \| V\|_{H^{s}})\|\eta\|_{C_*^{\frac{3}{2}}} \\
&\leq \FF(M(t))Z(t)\lambda^{\half}\mu^{-\half}(\LL(t, x_0, \xi_0, \lambda) + 1).
\end{align*}
The high-low terms $T_{\nabla T_{q^{-1}}\D_x^2 \nabla \eta_\lambda}V$ are similarly estimated, using Proposition \ref{localparaproduct} in the place of Proposition \ref{bilinear}.

We conclude, for $G_5$ satisfying the same estimates as $G_4$,
$$\D_x^2 V_\lambda = LT_{q^{-1}}\D_x^2 \nabla \eta_\lambda + G_5.$$

\

\emph{Step 6. Vector field truncation.} Lastly, we frequency truncate the vector field, using (\ref{ordernorm}):
\begin{align*}
\|((S_{> c\lambda}V)\cdot \nabla) T_{q^{-1}}\D_x^2 \nabla \eta_\lambda\|_{L^\infty} &\lesssim \|S_{> c\lambda}V\|_{L^\infty}M_0^{1}(q^{-1}\xi^2)\| \nabla \eta_\lambda \|_{W^{2, \infty}} \\
&\leq \FF(\|\eta\|_{H^{s + \half}})\lambda^{1 -}\|S_{> c\lambda}V\|_{W^{\half, \infty}}\| \nabla \eta_\lambda \|_{W^{r- \half, \infty}}\\
&\lesssim \lambda^{\frac{1}{2} -}\FF(\|\eta\|_{H^{s + \half}})\|V\|_{W^{r,\infty}}\| \nabla \eta_\lambda \|_{W^{r - \half, \infty}}.
\end{align*}

For the local Sobolev counterpart, note that it suffices to consider
$$w_{x_0, \lambda} S_{\mu}(\tilde{S}_{c_1\lambda} V) \nabla T_{q^{-1}}\D_x^2 \nabla \eta_\lambda,$$
which was essentially estimated at the end of Step 5.
\end{proof}

\subsection{Integrating the dispersive term}

In this subsection we integrate $\D_x^2 a_\lambda$, the coefficient of the dispersive term in our symbol $H$.

\begin{prop}\label{hamintegrate}
We have
$$\frac{\D_x^2 a_\lambda}{\sqrt{a_\lambda}} = \sqrt{a_\lambda} \D_x T_{q^{-1}} \D_x^3 \eta_\lambda + (\D_t + V_\lambda \D_x) F_a + G_a$$
where $q = \Lambda - i \nabla \eta \xi$ is a symbol of order 1, and $F_a, G_a$ satisfy
\begin{align*}
\|F_a\|_{L^2} &\leq \lambda^{\frac{5}{8}-} \FF(M(t)), \\
\|F_a\|_{H^\half} &\leq \lambda^{\frac{5}{8}-} \FF(M(t)) Z(t), \\
\|w_{x_0, \lambda} S_\mu \sqrt{a_\lambda} \D_x F_a\|_{H^\half} &\leq \lambda^{\frac{3}{2} -}\FF(M(t))Z(t)(Z(t) + \LL(t, x_0, \xi_0, \lambda)), \\
\|G_a\|_{L^\infty} &\leq \lambda^{1 -} \FF(M(t)) Z(t)^2, \\
\|w_{x_0, \lambda} S_\mu G_a\|_{H^\half} &\leq \lambda^{1 -} \FF(M(t))Z(t)(Z(t) + \LL(t, x_0, \xi_0, \lambda)).
\end{align*}
\end{prop}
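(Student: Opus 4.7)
The proof strategy closely parallels that of Proposition \ref{integrate}: we use structural water wave identities to express the left-hand side in a form that isolates a dispersive term of the desired shape, then absorb the remainder into a transport derivative $L_\lambda F_a$ plus an error $G_a$. The starting point is the double identity $a - g = LB = L^2\eta$, obtained by combining the first equation of (\ref{LandTaylor}) with (\ref{etatoB}). Dispersive structure enters through the paralinearization $LV \approx -T_a\nabla\eta$ (from the second equation of (\ref{LandTaylor})) together with $L\nabla\eta \approx T_\Lambda V$ (which follows from (\ref{structure}), Proposition \ref{localparalinearize}, and the identity (\ref{dstructure})), yielding formally $L^2\nabla\eta \approx -T_\Lambda T_a \nabla\eta$.

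Concretely, apply $S_{\leq c_1\lambda}\D_x^2$ to $a - g = L^2\eta$ and rewrite $\D_x^2 L^2\eta = \D_x L^2 \nabla\eta + [\D_x, L^2]\eta$. Substituting the formal identity above, the main contribution becomes $\D_x L^2\nabla\eta \approx -\D_x T_\Lambda T_a\nabla\eta$. After frequency truncating the coefficients, applying the symbolic calculus to pass $T_\Lambda T_a$ to $T_{a\Lambda} = T_{a|\xi|}$ (with errors handled by Proposition \ref{localparadiff}), dividing by $\sqrt{a_\lambda}$, and noting that $-\D_x T_{a|\xi|}\D_x\eta_\lambda/\sqrt{a_\lambda}$ has leading paraproduct part $\sqrt{a_\lambda}|D|^3\eta_\lambda$, which matches $\sqrt{a_\lambda}\D_x T_{q^{-1}}\D_x^3\eta_\lambda$ (using $q^{-1} = \Lambda^{-1} + (\text{transport contribution from } -i\nabla\eta\xi)$ paradifferentially and the $1$D identity $\D_x T_{|\xi|^{-1}}\D_x^3 = |D|^3$), the leading term takes the precise form of the first term on the right of the claimed identity. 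The remaining contributions---commutators $[\D_x, L]$ and $[L, T_\Lambda]$, paralinearization errors on $G(\eta)$, symbolic calculus errors, frequency truncation remainders, and the discrepancy between $L$ and $L_\lambda$---are sorted by their structure: those retaining the form of an $L_\lambda$-derivative of some expression (typically $T_{q^{-1}}$ applied to derivatives of $\eta_\lambda$) go into $F_a$, while genuinely smoother pieces go into $G_a$.

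The error estimates follow the pattern of Proposition \ref{integrate}: apply the paradifferential commutator estimates (Propositions \ref{localparaproduct} and \ref{localparadiff}) for low-high interactions, the bilinear local estimate (Proposition \ref{bilinear}) for balanced frequencies, the local smoothing estimates (Corollaries \ref{wwlsspaceeasy} and \ref{wwlsspace}) on $(\eta, \psi, V, B)$, and the local Taylor coefficient estimate (Corollary \ref{taylorsmooth}) together with Proposition \ref{taylorbd} and the Taylor sign condition to control factors of $\sqrt{a_\lambda}$ and its reciprocal. The improved local estimate on $\sqrt{a_\lambda}\D_x F_a$ (with $\lambda^{3/2-}$ scaling rather than $\lambda^{5/8-}$) comes from the fact that $F_a$ itself is built from $T_{q^{-1}}$ applied to derivatives of $\eta_\lambda$, so the additional dispersive derivative $\sqrt{a_\lambda}\D_x$ combines cleanly with this structure and benefits from local smoothing at the full dispersive scale. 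The main obstacle is correctly identifying the leading dispersive piece with the factor $\sqrt{a_\lambda}$ (rather than $a_\lambda$ or $a_\lambda^{3/2}$), which requires dividing by $\sqrt{a_\lambda}$ at the correct stage in the composition $T_\Lambda T_a \to T_{a\Lambda}$, and distinguishing transport compositions (contributing to $F_a$ through residual $L$-derivatives of $T_{q^{-1}}$-expressions) from pure remainders (contributing to $G_a$).
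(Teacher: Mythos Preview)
Your route via $a-g=L^2\eta$ is genuinely different from the paper's. The paper does \emph{not} iterate the material derivative; instead it starts from $G(\eta)B=-\D_x V$ together with the paralinearization $G(\eta)B=|D|B+R(\eta)B$, inverts $|D|$, and applies $L$ \emph{once} (using $LB=a-g$, $LV=-a\nabla\eta$) to derive an equation of the form
\[
T_q\,a \;=\; a\,\D_x^2\eta \;+\;\bigl((\nabla\eta)-T_{\nabla\eta}\bigr)\D_x a \;-\;|D|L|D|^{-1}R(\eta)B\;-\;|D|\,[V,|D|^{-1}\D_x]\,\D_x V.
\]
Applying $\D_x^2 S_{\le c_1\lambda}T_{q^{-1}}$ to both sides then yields $\D_x^2 a_\lambda$ directly, with the $T_{q^{-1}}$ factor appearing for free rather than being reverse-engineered from $T_\Lambda T_a$ as you suggest. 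Your $L^2\eta$ route could plausibly be made to work, but it forces you to estimate $L$ applied to the Step~1 errors of Proposition~\ref{integrate} (e.g.\ $LR(\eta)V$, $LT_{\nabla\cdot V}\nabla\eta$), which is more painful than the paper's single application of $L$.

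More importantly, your description of $F_a$ is wrong, and this hides the main technical point. You write that $F_a$ is ``built from $T_{q^{-1}}$ applied to derivatives of $\eta_\lambda$''; in the paper it is
\[
F_a=(a_\lambda)^{-1/2}\,\D_x^2 S_{\le c_1\lambda}T_{q^{-1}}\bigl(R(\D_x\eta,\D_x B)+R(B,\D_x^2\eta)+R(\eta)B\bigr),
\]
i.e.\ quadratic bilinear errors, not linear expressions in $\eta_\lambda$. The mechanism producing these is the step you glossed over: the error $R(\D_x\eta,\D_x a)$ (and its analogue in Step~5) cannot be estimated in the local norm because $a$ does not sit in $LS^{\alpha}_{x_0,\xi_0,\lambda,\mu}$, so Proposition~\ref{bilinear} is unavailable. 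The paper \emph{integrates} by substituting $\D_x a=\D_x LB=L\D_x B+(\D_x V)\D_x B$ and then pulling $L$ outside the bilinear form $R(\cdot,\cdot)$; the residual commutators $[S_\kappa,L]$ and $[(S_\kappa\D_x\eta),L]$ are handled using $L\eta=B$. This ``integrate the bad factor through $a=LB$'' trick is the heart of the argument and is what actually creates the $(\D_t+V_\lambda\D_x)F_a$ term; your proposal does not identify it.
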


\begin{proof}
We will focus on the local Sobolev estimates, as the H\"older estimates are easier, using the appropriate H\"older counterparts. Throughout, we use identities from Section \ref{vfid}. Beginning with the identity
$$G(\eta)B = -\D_x V$$
and the paralinearization
$$G(\eta)B = |D|B + R(\eta)B,$$
write
$$B = |D|^{-1} |D|B = |D|^{-1} (G(\eta)B - R(\eta)B) = -|D|^{-1}\D_x V - |D|^{-1}R(\eta)B$$

Then apply $L$ to both sides, so that using the identities
$$LB = a - g,\quad LV = -a\nabla \eta,$$
we obtain
\begin{align*}
a - g &= -L|D|^{-1}\D_x V - L|D|^{-1}R(\eta)B \\
&= |D|^{-1}\D_x (a \nabla \eta) - L|D|^{-1}R(\eta)B - [V, |D|^{-1}\D_x] \D_x V.
\end{align*}
Applying $|D|$ to both sides, using the product rule, and applying a paradifferential decomposition,
\begin{equation*}
(|D| - T_{\nabla \eta}\D_x) a  = a \D_x^2 \eta + ((\nabla \eta) - T_{\nabla \eta})\D_x a - |D|L|D|^{-1}R(\eta)B - |D|[V, |D|^{-1}\D_x] \D_x V.
\end{equation*}

Observe that on the left hand side, we have $T_q a$. Lastly, we apply $\D_x^2 S_{\leq c_1\lambda} T_{q^{-1}}$ to both sides of the identity, to obtain
$$\D_x^2 a_\lambda = \D_x^2 S_{\leq c_1\lambda} T_{q^{-1}} a \D_x^2 \eta + E$$
where $E$ denotes error terms,
\begin{align}\label{tayloreqn}
E &= \D_x^2 S_{\leq c_1\lambda} T_{q^{-1}}(((\nabla \eta) - T_{\nabla \eta})\D_x a - |D|L|D|^{-1}R(\eta)B - |D|[V, |D|^{-1}\D_x] \D_x V) \\
&\quad + \D_x^2 S_{\leq c_1\lambda} (T_{q^{-1}}T_q - 1) a. \nonumber
\end{align}

Except for some commuting and rearrangement, it remains to estimate the errors $E$ in $L^\infty$ or locally with the weight $w_{x_0,\lambda}S_\mu$ in $H^\half$. As usual, we observe that 
$$\|w_{x_0,\lambda}S_\mu f\|_{H^\sigma} \lesssim \|w_{x_0,\mu}S_\mu f\|_{H^\sigma}$$
so we may exchange $w_{x_0,\lambda}$ with $w_{x_0,\mu}$ when necessary.

\ 

\emph{Step 1.} First we consider from (\ref{tayloreqn}) the paraproduct error with the Taylor coefficient, 
$$((\nabla \eta) - T_{\nabla \eta})\D_x a = T_{\D_x a} \D_x \eta + R(\D_x \eta, \D_x a).$$

From the first of these terms, we estimate
$$w_{x_0, \lambda} S_\mu \D_x^2 T_{q^{-1}} T_{\D_x a} \D_x \eta$$
using successive application of Propositions \ref{localparaproduct} and \ref{localparadiff} to commute the localization under the paraproducts and paradifferential operators. Note that we have a total of 2 derivatives applied to $\eta$ (using that $q^{-1}$ is of order $-1$), but we have an additional $1/2$ from measuring $E$ in $H^\half$, and an additional $\half$ from measuring $\D_x a \in C_*^{-\half}$. Using that $s' + \half > 2$ and the estimate 
\begin{align*}
\|w_{x_0, \mu} S_\mu \eta\|_{H^{3}} \lesssim \mu^{1-}\|w_{x_0, \mu} S_\mu \eta\|_{H^{s' + \half}} \leq \mu^{1-}\LL(t, x_0, \xi_0, \lambda),
\end{align*}
we see that we have a net loss of $\mu^{1-}$, which is better than needed. (Throughout, we will use the fact that we may commute $w_{x_0, \lambda} S_\mu$ under $\D_x^2 T_{q^{-1}}$ and thus effectively treat it as a single low frequency derivative.)

For the second term, note that the bilinear estimate of Proposition \ref{bilinear} is not directly applicable here, because we do not have an estimate on $a$ in $LS^\alpha_{x_0, \xi_0, \lambda, \mu}$. Thus, we instead integrate, writing
$$R(\D_x \eta, \D_x a) = R(\D_x \eta, \D_x LB) = R(\D_x \eta, L\D_x B + (\D_x V) \D_x B).$$
The term with $(\D_x V) \D_x B$ is balanced and hence easily estimated. Indeed, we have (note we do not use the spatial localization)
\begin{align*}
\|S_\mu \D_x^2 T_{q^{-1}} R(\D_x \eta, (\D_x V) \D_x B)\|_{H^\half} &\leq \mu^{1 -}\FF(M(t)) \|S_\mu R(\D_x \eta,  (\D_x V) \D_x B)\|_{H^{\half+}} \\
&\leq \mu^{1 -}\FF(M(t)) \|\D_x \eta\|_{H^{s - \half}}\|\D_x V\|_{L^\infty} \|\D_x B\|_{L^\infty}.
\end{align*}
It thus remains to consider
$$R(\D_x \eta, L\D_x B) = \sum_{\kappa} (S_\kappa \D_x \eta)(S_\kappa L \D_x B) = L R(\D_x \eta,\D_x B) + \sum_{\kappa} [(S_\kappa \D_x \eta)S_\kappa, L] \D_x B.$$

Consider the commutator. It is easy to commute $S_\kappa$ with $L$ as discussed above, as this balances the derivatives onto the vector field $V$ of $L$. Thus we consider 
$$\sum_{\kappa} [(S_\kappa \D_x \eta), L] S_\kappa \D_x B = -\sum_{\kappa} (LS_\kappa \D_x \eta)( S_\kappa \D_x B).$$
Again, we may commute $L$ with a single derivative, so that using $L\eta = B$, we may exchange this for
$$-\sum_{\kappa} (S_\kappa \D_x B)( S_\kappa \D_x B) = -R(\D_x B, \D_x B)$$
which we may estimate using Proposition \ref{localparadiff} to commute with the paradifferential operator and Proposition \ref{bilinear} to handle the $R(\cdot, \cdot)$ term:
\begin{align*}
\|w_{x_0, \lambda} S_\mu \D_x^2 T_{q^{-1}}R(\D_x B, \D_x B)\|_{H^\half} &\leq \FF(M(t))(\|w_{x_0, \lambda} S_\mu R(\D_x B, \D_x B)\|_{H^{\half + 1}}\\
&\quad + \lambda^{\frac{3}{8}} \mu^{-\frac{3}{8}}\|S_\mu R(\D_x B, \D_x B)\|_{H^{\half + 1 - \frac{1}{8}}}) \\
&\leq \FF(M(t))( \mu^{\frac{3}{2}}\|w_{x_0, \lambda} S_\mu R(\D_x B, \D_x B)\|_{L^2} + \lambda^{\frac{3}{8}}\mu^{\frac{5}{8}-}Z(t)) \\
&\leq \FF(M(t))( \mu^{\frac{3}{2}}\|B\|_{C_*^r} \| B\|_{LS_{x_0, \xi_0, \lambda, \mu}^1} + \lambda^{1 -}Z(t)) \\
&\leq \lambda^{1-}\FF(M(t))Z(t)(\LL(t, x_0, \xi_0, \lambda) + 1).
\end{align*}
We conclude that the first error term from (\ref{tayloreqn}) satisfies the appropriate estimates, except for a term
$$L R(\D_x \eta,\D_x B),$$
which we consider later as part of $F_a$.

\

\emph{Step 2.} Next, consider the second error term from (\ref{tayloreqn}),
$$|D|L|D|^{-1}R(\eta)B.$$
As discussed in Step 1, we may easily commute $L$ with a single derivative, so we are left with a term
$$LR(\eta) B$$
which we also consider later as part of $F_a$.

\

\emph{Step 3.} For the third error term from (\ref{tayloreqn}),
$$|D|[V, |D|^{-1}\D_x] \D_x V,$$
we reduce to paraproducts. First, we have by Proposition \ref{localparadiff},
\begin{align*}
\|w_{x_0, \mu} S_\mu [T_V, |D|^{-1}\D_x] \D_x V\|_{H^{\frac{3}{2}+}} &\lesssim \|V\|_{C^1}(\|w_{x_0, \mu} S_\mu V\|_{H^{\frac{3}{2}+}} + \|V\|_{H^{\frac{3}{2} - \frac{1}{4}+}}) \\
&\lesssim \|V\|_{C^1}(\|V\|_{LS_{x_0, \lambda}^{s'}} + \|V\|_{H^s})
\end{align*}
which suffices. The other low-high terms are similarly estimated using Proposition \ref{localparadiff}. On the other hand, a typical balanced-frequency term may be estimated by Proposition \ref{bilinear},
\begin{align*}
\|w_{x_0, \lambda} S_\mu R(V, |D|^{-1} \D_x^2 V)\|_{H^{\frac{3}{2}+}} &\lesssim \mu^{s'}\|V\|_{C^r}\|w_{x_0, \lambda} S_\mu V\|_{LS_{x_0, \xi_0, \lambda, \mu}^0}.
\end{align*}

\
\emph{Step 4.} Lastly, we estimate the fourth error term from (\ref{tayloreqn}), using the commutator estimate of Proposition \ref{localparadiff} with the local Taylor coefficient estimate of Corollary \ref{taylorsmooth},
\begin{align*}
\|w_{x_0, \lambda} S_\mu (T_{q^{-1}}T_q - 1) a\|_{H^\frac{5}{2}} &\lesssim (M_{\half}^1(q) M_0^{-1}(q^{-1}) + M_0^1(q) M_{\half}^{-1}(q^{-1})) \\
&\quad \cdot (\| w_{x_0, \lambda} S_\mu a \|_{H^{2}} + \lambda^{\frac{3}{8}} \mu^{-\frac{3}{8}}\| \tilde{S}_\mu a\|_{H^{2 - \frac{1}{8}}}) \\
&\leq \FF(\|\eta\|_{H^{s + \half}})Z(t) (\mu^{1-}\| w_{x_0, \lambda} S_\mu a \|_{H^{s' - \half}} + \lambda^{\frac{3}{8}} \mu^{\frac{5}{8}-}\| \tilde{S}_\mu a\|_{H^{s - \half}}) \\
&\leq \FF(M(t))Z(t)\lambda^{1-}(Z(t) + \LL(t, x_0, \xi_0, \lambda)).
\end{align*}

\

We conclude that 
\begin{equation}\label{dispeqn}
\D_x^2 a_\lambda = \D_x^2 S_{\leq c_1\lambda} T_{q^{-1}} a \D_x^2 \eta + \D_x^2 S_{\leq c_1\lambda} T_{q^{-1}} L(R(\D_x \eta,\D_x B) + R(\eta)B) + G_1
\end{equation}
where $G_1$ satisfies the desired estimate.

\

\emph{Step 5.} In the first term on the right hand side of (\ref{dispeqn}), we need to commute the Taylor coefficient $a$ to the front. To do so, we check below that we may exchange $a$ for $T_a$, after which we can commute and apply the commutator estimate of Proposition \ref{localparadiff} as usual. Having done so, we may also commute the paradifferential operators to the desired arrangement,
$$T_a\D_xT_{q^{-1}} \D_x^3 \eta_\lambda.$$

To estimate 
$$\D_x^2 S_{\leq c_1\lambda} T_{q^{-1}} (a - T_a)\D_x^2 \eta,$$
we apply the same steps as in Step 1, ending instead with the integrated term
$$LR(B, \D_x^2 \eta).$$

After commuting the paradifferential operators to the desired arrangement above, it remains to estimate 
$$(a_\lambda - T_a)\D_xT_{q^{-1}} \D_x^3 \eta_\lambda = (a_\lambda - T_{a_\lambda})\D_xT_{q^{-1}} \D_x^3 \eta_\lambda.$$
The low-high term uses Proposition \ref{localparaproduct} as usual, while for the balanced term
$$R(a_\lambda, \D_xT_{q^{-1}} \D_x^3 \eta_\lambda),$$
we apply Corollary \ref{bilinearsimple} since both factors are frequency truncated. Finally, multiplying both sides of (\ref{dispeqn}), after the above modifications, by $(a_\lambda)^{-\half}$, we obtain
\begin{equation}\label{dispeqn2}
\frac{\D_x^2 a_\lambda}{\sqrt{a_\lambda}} = \sqrt{a_\lambda}\D_xT_{q^{-1}} \D_x^3 \eta_\lambda + (a_\lambda)^{-\half} \D_x^2 S_{\leq c_1\lambda} T_{q^{-1}} L(R(\D_x \eta,\D_x B) + R(B, \D_x^2 \eta) + R(\eta)B) + G_2
\end{equation}
Here, observe that we may use
$$(a_\lambda)^{-\half} \in H^{s - \half} \subseteq H^{\half+}, \quad (a_\lambda)^{-\half} \in W^{\half, \infty}$$
with a paraproduct decomposition and Proposition \ref{localparaproduct} to easily estimate $G_2 = (a_\lambda)^{-\half}G_1$ by the same right hand side as $G_1$.

\

\emph{Step 6.} It remains to commute $L$ to the front of the second term in (\ref{dispeqn2}), 
$$(a_\lambda)^{-\half}\D_x^2 S_{\leq c_1\lambda} T_{q^{-1}} L(R(\D_x \eta,\D_x B) + R(B, \D_x^2 \eta) + R(\eta)B).$$
To do so, as in Step 5, we exchange $L$ for a paradifferential counterpart, $\D_t + T_V \D_x$. We remark that the discussion in Step 1 about commuting $L$ applies here, in that the computations in this step do not require spatial localization and thus are relatively straightforward. We may commute the vector field using \cite[Lemma 2.15]{alazard2014cauchy} to obtain
$$(a_\lambda)^{-\half}(\D_t + T_V \D_x)\D_x^2 S_{\leq c_1\lambda} T_{q^{-1}}(R(\D_x \eta,\D_x B) + R(B, \D_x^2 \eta) + R(\eta)B).$$
After restoring $L$ from $\D_t + T_V \D_x$, we have
$$(a_\lambda)^{-\half}L\D_x^2 S_{\leq c_1\lambda} T_{q^{-1}}(R(\D_x \eta,\D_x B) + R(B, \D_x^2 \eta) + R(\eta)B).$$
Lastly, we may commute $L$ to the front by using the Taylor coefficient estimates of Proposition \ref{taylorbd}, 
$$L(a_\lambda)^{-\half} \in L^\infty \cap H^{s - 1},$$
again not requiring spatial localization. Likewise, we may exchange $L$ for $\D_t + V_\lambda \D_x$. 

\

\emph{Step 7.} Lastly, we check that 
$$F_a := (a_\lambda)^{-\half}\D_x^2 S_{\leq c_1\lambda} T_{q^{-1}}(R(\D_x \eta,\D_x B) + R(B, \D_x^2 \eta) + R(\eta)B)$$
satisfies the desired estimates. Using the algebra property of $H^{\half+}$, as well as the tame (linear in $Z(t)$) paralinearization estimate of Proposition \ref{DNparalineartame},
\begin{align*}
\|(a_\lambda)^{-\half}\D_x^2 S_{\leq c_1\lambda} &T_{q^{-1}}(R(\D_x \eta,\D_x B) + R(B, \D_x^2 \eta) + R(\eta)B)\|_{H^\half} \\
&\lesssim \|(a_\lambda)^{-\half}\|_{H^{\half+}}\|\D_x^2 S_{\leq c_1\lambda} T_{q^{-1}}(R(\D_x \eta,\D_x B) + R(B, \D_x^2 \eta) + R(\eta)B)\|_{H^{\half+}} \\
&\lesssim \FF(M(t))\|S_{\leq c_1\lambda} T_{q^{-1}}(R(\D_x \eta,\D_x B) + R(B, \D_x^2 \eta) + R(\eta)B)\|_{H^{s' + 1}} \\
&\leq \lambda^{\frac{5}{8}-}\FF(M(t))\|R(\D_x \eta,\D_x B) + R(B, \D_x^2 \eta) + R(\eta)B\|_{H^{s - \half}} \\
&\leq \lambda^{\frac{5}{8}-}\FF(M(t))Z(t)
\end{align*}
as desired. The $L^2$ estimate is similar, using instead Proposition \ref{DNparalinearsupertame}.

We also estimate $w_{x_0, \lambda} S_\mu \sqrt{a_\lambda} \D_x F_a$. When the derivative falls on $(a_\lambda)^{-\half}$, we have the previous estimate except with an additional loss,
\begin{align*}
\|\D_x (a_\lambda)^{-\half}\|_{H^\half} \leq \lambda^{5/8}\FF(M(t))
\end{align*}
which is better than needed. When the derivative falls on the rest of the term, we have 
\begin{align*}
\|w_{x_0, \lambda} S_\mu \D_x^3 T_{q^{-1}}(R(\D_x \eta,\D_x B) + R(B, \D_x^2 \eta) + R(\eta)B)\|_{H^\half}.
\end{align*}
By applying Proposition \ref{localparadiff} to commute under the paradifferential operator, it remains to estimate
\begin{align*}
\|w_{x_0, \lambda} S_\mu (R(\D_x \eta,\D_x B) + R(B, \D_x^2 \eta) + R(\eta)B)\|_{H^{5/2}}.
\end{align*}
To the first quadratic term (the second is similar), we apply Proposition \ref{bilinear},
\begin{align*}
\|w_{x_0, \lambda} S_\mu R(\D_x \eta,\D_x B)\|_{H^{5/2}} &\lesssim \mu^{2}(\|\D_x \eta\|_{C_*^{r - \half}} \|B\|_{LS_{x_0, \xi_0, \lambda, \mu}^1} + \|\D_x B\|_{C_*^{r - 1}}\|\eta\|_{LS_{x_0, \xi_0, \lambda, \mu}^{\frac{3}{2}}}) \\
&\leq \lambda\mu^{\half - }Z(t)\LL(t, x_0, \xi_0, \lambda)
\end{align*}
as desired. Likewise, by Proposition \ref{localparalinearize}, 
\begin{align*}
\|w_{x_0, \lambda} S_\mu R(\eta)B\|_{H^{s' - \half}}  \leq \FF(M(t))Z(t)(\lambda^{\half}\mu^{-\half} +Z(t) + \lambda^\half \mu^{-\half}\LL(t, x_0, \xi_0, \lambda))
\end{align*}
so that
\begin{align*}
\|w_{x_0, \lambda} S_\mu R(\eta)B\|_{H^{5/2}} &\leq \mu^{\frac{3}{2} -}\FF(M(t))Z(t)(\lambda^{\half}\mu^{-\half} +Z(t) + \lambda^\half \mu^{-\half}\LL(t, x_0, \xi_0, \lambda)) \\
&\leq \lambda^{\frac{3}{2} -}\FF(M(t))Z(t)(Z(t) + \LL(t, x_0, \xi_0, \lambda)).
\end{align*}
\end{proof}

\subsection{Integrating the symbol}

Combining Propositions \ref{integrate} and \ref{hamintegrate}, we obtain the integration (\ref{desiredint2}) of $\D_x^2 H$:

\begin{cor}\label{structurecor}
Let 
\begin{align*}
F_1(t, x) &= T_{q^{-1}} \D_x^3 \eta_\lambda, \quad F_\half(t, x) = F_a
\end{align*}
where $F_a$ is given in the proof of Proposition \ref{hamintegrate}. Then we may write
\begin{equation}\label{structurecoreqn}
\D_x^2 H = (\D_t + H_\xi \D_x - H_x \D_\xi)(F_1\xi + F_\half|\xi|^\half) + G_1\xi + G_\half|\xi|^\half + G_0|\xi|^{-1} \xi
\end{equation}
where
\begin{align*}
\|F_\alpha\|_{L^2} &\leq \lambda^{\frac{9}{8} - \alpha - } \FF(M(t)),\\
\|F_\alpha\|_{L^\infty} &\leq \lambda^{\frac{3}{2} - \alpha - } \FF(M(t))Z(t),\\
\|w_{x_0, \lambda} S_\mu F_\alpha\|_{H^\half} &\leq \lambda^{\frac{3}{2} - \alpha - } \FF(M(t))(Z(t) + \LL(t, x_0, \xi_0, \lambda)), \\
\|G_\alpha\|_{L^\infty} &\leq \lambda^{\frac{3}{2} - \alpha - } \FF(M(t))Z(t)^2, \\
\|w_{x_0, \lambda} S_\mu G_\alpha\|_{H^\half} &\leq \lambda^{\frac{3}{2} - \alpha - }\FF(M(t))Z(t)(Z(t) + \LL(t, x_0, \xi_0, \lambda)).
\end{align*}
\end{cor}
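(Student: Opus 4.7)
The plan is to combine Propositions \ref{integrate} and \ref{hamintegrate} algebraically with an explicit expansion of the full Hamiltonian vector field applied to $F_1\xi + F_\half|\xi|^\half$, absorbing all leftover lower-order terms into the $G_\alpha$. First I would compute
\[
\D_x^2 H = \D_x^2 V_\lambda \cdot \xi + \half\left(\frac{\D_x^2 a_\lambda}{\sqrt{a_\lambda}} - \half\frac{(\D_x a_\lambda)^2}{a_\lambda^{3/2}}\right)|\xi|^\half
\]
and substitute the two integrations: $\D_x^2 V_\lambda = (\D_t + V_\lambda\D_x)F_1 + G_V$ from Proposition \ref{integrate}, and $\D_x^2 a_\lambda/\sqrt{a_\lambda} = \sqrt{a_\lambda}\D_x F_1 + (\D_t + V_\lambda\D_x)F_\half + G_a$ from Proposition \ref{hamintegrate}.

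Next I would expand the target vector field using $H_\xi = V_\lambda + \half\sqrt{a_\lambda}|\xi|^{-3/2}\xi$ and $H_x = (\D_x V_\lambda)\xi + (\D_x\sqrt{a_\lambda})|\xi|^\half$, and collect by power of $|\xi|$, using $|\xi|^{-3/2}\xi^2 = |\xi|^\half$ and $|\xi|^{-3/2}\xi \cdot |\xi|^\half = |\xi|^{-1}\xi$. A direct computation shows that the $\xi$ coefficient is $(\D_t + V_\lambda\D_x)F_1 - \D_x V_\lambda F_1$; the $|\xi|^\half$ coefficient is $\half\sqrt{a_\lambda}\D_x F_1 - \D_x\sqrt{a_\lambda}\,F_1 + (\D_t+V_\lambda\D_x)F_\half - \half\D_x V_\lambda\,F_\half$; and the $|\xi|^{-1}\xi$ coefficient is $\half\sqrt{a_\lambda}\D_x F_\half - \half\D_x\sqrt{a_\lambda}\,F_\half$, which has no counterpart in $\D_x^2 H$. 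Matching these against $\D_x^2 H$ via the two substitutions identifies
\[
G_1 = G_V + \D_x V_\lambda\, F_1,
\]
\[
G_\half = \half G_a - \tfrac{1}{4}(\D_x a_\lambda)^2/a_\lambda^{3/2} - \half(\D_t + V_\lambda\D_x)F_\half + \D_x\sqrt{a_\lambda}\,F_1 + \half\D_x V_\lambda\,F_\half,
\]
\[
G_0 = -\half\sqrt{a_\lambda}\D_x F_\half + \half\D_x\sqrt{a_\lambda}\,F_\half.
\]

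For the $F_\alpha$ estimates, the case $F_\half = F_a$ is exactly Proposition \ref{hamintegrate}. For $F_1 = T_{q^{-1}}\D_x^3\eta_\lambda$, one uses $M_0^{-1}(q^{-1}) \leq \FF(\|\eta\|_{H^{s+\half}})$ (since $q^{-1}$ is a smooth function of $\nabla\eta$ of order $-1$) with (\ref{ordernorm}) and its H\"older counterpart to reduce to estimating $\|\D_x^2\eta_\lambda\|_{L^2} \lesssim \lambda^{3/2-s+}M(t)$ and $\|\D_x^2\eta_\lambda\|_{L^\infty} \lesssim \lambda^{3/2-r+}Z(t)$, which give $\lambda^{1/8-}$ and $\lambda^{1/2-}Z(t)$ under the hypotheses $s > 11/8$, $r > 1$; the local seminorm estimate uses Proposition \ref{localparadiff} and the definition of $\LL$ through $\|\eta\|_{LS^{s'+\half}}$. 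For the $G_\alpha$ bounds, the $G_V$ and $G_a$ contributions are immediate; the term $\half(\D_t+V_\lambda\D_x)F_\half$ is rewritten via Proposition \ref{hamintegrate} as $\half(\D_x^2 a_\lambda/\sqrt{a_\lambda} - \sqrt{a_\lambda}\D_x F_1 - G_a)$, which has size $\lambda^{1-}$; the quadratic term $(\D_x a_\lambda)^2/a_\lambda^{3/2}$ is handled by combining the Moser estimate (\ref{smoothholder}) on $a^{-3/2}$ with Proposition \ref{taylorbd} and Corollary \ref{taylorsmooth}; and the remaining bilinear expressions $\D_x V_\lambda\,F_\alpha$ and $\D_x\sqrt{a_\lambda}\,F_\alpha$ are estimated by the local product rules (Proposition \ref{localproduct}, Corollary \ref{bilinearsimple}) together with the $F_\alpha$ bounds just proved.

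The main obstacle will be the quadratic Taylor-coefficient term $(\D_x a_\lambda)^2/a_\lambda^{3/2}$ in the local seminorm: a naive product estimate would require placing two derivatives on $a$, which is too expensive at our regularity, so I would factor one derivative off via a paraproduct decomposition and apply Corollary \ref{taylorsmooth} to one factor while controlling the other in the appropriate H\"older norm. Analogous care is required in the frequency bookkeeping of the $\D_x V_\lambda\, F_1$ and $\D_x\sqrt{a_\lambda}\,F_1$ products, where the split between spending a derivative on the low-frequency coefficient versus on $F_\alpha$ must be optimized so that the $\lambda^{\frac{3}{2}-\alpha-}$ gain is uniformly attained across all three estimates.
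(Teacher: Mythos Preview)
Your overall strategy---compute $\D_x^2 H$, substitute the two integrations from Propositions \ref{integrate} and \ref{hamintegrate}, expand the Hamiltonian vector field on $F_1\xi + F_\half|\xi|^\half$, and match coefficients by power of $|\xi|$---is exactly what the paper does, and your formulas for $G_1$ and $G_0$ are correct.

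There is, however, a real gap in your treatment of $G_\half$. Your expression
\[
G_\half = \tfrac12 G_a - \tfrac14\frac{(\D_x a_\lambda)^2}{a_\lambda^{3/2}} - \tfrac12(\D_t + V_\lambda\D_x)F_\half + (\D_x\sqrt{a_\lambda})F_1 + \tfrac12(\D_x V_\lambda)F_\half
\]
contains the term $-\tfrac12(\D_t + V_\lambda\D_x)F_\half$, and your proposed bound for it is circular: rewriting via Proposition \ref{hamintegrate} gives $\D_x^2 a_\lambda/\sqrt{a_\lambda} - \sqrt{a_\lambda}\D_x F_1 - G_a$, but the first two pieces are each of size $\lambda^{3/2-}$ in $L^\infty$ (since $a\in W^{1/2+,\infty}$ and $\eta\in W^{r+1/2,\infty}$ give only three-halves derivatives of gain), not $\lambda^{1-}$. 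Their cancellation down to $\lambda^{1-}$ is precisely the content of Proposition \ref{hamintegrate}, which is what you are trying to use---so the argument loops.

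The paper resolves this not by estimating $(\D_t + V_\lambda\D_x)F_\half$ at all, but by keeping it as part of the Hamiltonian vector field acting on $F_\half|\xi|^\half$. Concretely, starting from the term $\tfrac12(\D_t + V_\lambda\D_x)F_a|\xi|^\half$ that the substitution produces, the paper adds and subtracts the dispersive piece $\tfrac14\sqrt{a_\lambda}(\D_x F_a)|\xi|^{-1}\xi$ (which becomes $G_0$, bounded via the dedicated estimate on $\sqrt{a_\lambda}\D_x F_a$ in Proposition \ref{hamintegrate}) and the $H_x\D_\xi$ piece, thereby absorbing the entire expression into the structural term rather than into $G_\half$. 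In effect the decomposition \eqref{structurecoreqn} should carry a factor $\tfrac12$ on $F_\half|\xi|^\half$ (equivalently $F_\half = \tfrac12 F_a$); with that adjustment your $G_\half$ no longer contains the problematic time-derivative term, and the remaining pieces are handled exactly as you outline.
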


\begin{proof}

It is easy to check that $F_1$ satisfies the desired estimates using Proposition \ref{localparadiff}. Note that $F_\half$ is even easier, using Sobolev embedding and not requiring the spatial localization. It remains to compute the errors $G_i$ of the integration.

First compute
\begin{align*}
H_\xi &= V_\lambda + \half\sqrt{a_\lambda} |\xi|^{-\frac{3}{2}}\xi, \\
H_x &= \D_x V_\lambda \xi + \D_x \sqrt{a_\lambda} |\xi|^\half, \\
\D_x^2 H &= \D_x^2 V_\lambda \xi + \frac{1}{2}\left(-\half \frac{(\D_x a_\lambda)^2}{\sqrt{a_\lambda^3}} + \frac{\D_x^2 a_\lambda}{\sqrt{a_\lambda}} \right)|\xi|^\half.
\end{align*}
Consider the three terms constituting $\D_x^2 H$. For the first and last, recall the identities of Propositions \ref{integrate} and \ref{hamintegrate}, 
\begin{align*}
\D_x^2 V_\lambda &= (\D_t + V_\lambda \D_x) F_1 + G_V \\
\frac{\D_x^2 a_\lambda}{\sqrt{a_\lambda}} &= \sqrt{a_\lambda} \D_x F_1 + (\D_t + V_\lambda \D_x)F_\half + G_a.
\end{align*}
Substituting into the expression for $\D_x^2 H$, we see that the two terms containing $F_1$ combine to form 
$$(\D_t + H_\xi \D_x) F_1\xi,$$
so that
$$\D_x^2 H = (\D_t + H_\xi \D_x) F_1\xi + G_V\xi + \frac{1}{2}\left(-\half \frac{(\D_x a_\lambda)^2}{\sqrt{a_\lambda^3}} + (\D_t + V_\lambda \D_x)F_\half + G_a \right)|\xi|^\half.$$
On the right hand side, putting aside for now the term $(\D_t + V_\lambda \D_x)F_\half|\xi|^\half$, and adding and subtracting $H_x \D_\xi F_1\xi$, we have
\begin{align*}
(\D_t + H_\xi \D_x - H_x \D_\xi) F_1\xi + (G_V + (\D_x V_\lambda)F_1)\xi &+ \frac{1}{2}\left(-\half \frac{(\D_x a_\lambda)^2}{\sqrt{a_\lambda^3}} + G_a + (\D_x \sqrt{a_\lambda})F_1\right)|\xi|^\half \\
=: \ &(\D_t + H_\xi \D_x - H_x \D_\xi) F_1\xi + G_1\xi + G_\half |\xi|^\half.
\end{align*}

It remains to check the estimates on $G_i$. Note that $G_V, G_a$ satisfy the desired estimates by Propositions \ref{integrate} and \ref{hamintegrate}. We consider the remaining terms, focusing on the local Sobolev estimates since the H\"older estimates are easier, using the appropriate H\"older counterparts. First, for
$$(\D_x V_\lambda) F_1,$$
we use a paraproduct decomposition, 
$$(\D_x V_\lambda) F_1 = T_{\D_x V_\lambda} F_1 + T_{F_1} \D_x V_\lambda + R(\D_x V_\lambda, F_1).$$
The first and second terms are straightforward, using Proposition \ref{localparaproduct} with the estimates on $F_1 \in L^\infty \cap H^{\half}$ of the proposition. The third term uses Corollary \ref{bilinearsimple}.

The remaining two terms, 
$$-\half \frac{(\D_x a_\lambda)^2}{\sqrt{a_\lambda^3}} + (\D_x \sqrt{a_\lambda})F_1$$
are similarly treated using a paraproduct decomposition (using also the Taylor coefficient bounds of Proposition \ref{taylorbd} and the Taylor sign condition).

\

We assess the term 
$$\half(\D_t + V_\lambda \D_x)F_\half|\xi|^\half$$
similarly, adding errors as necessary to the $G_i$ terms. However, to form
$$\half(\D_t + H_\xi \D_x)F_\half|\xi|^\half,$$
we simply add and subtract
$$\frac{1}{4} \sqrt{a_\lambda} |\xi|^{-\frac{3}{2}} \xi \D_x F_\half |\xi|^\half = \frac{1}{4} \sqrt{a_\lambda} \D_x F_\half |\xi|^{-1}\xi.$$
By the corresponding estimate of Proposition \ref{hamintegrate}, this term satisfies the appropriate estimate to be absorbed into $G_0$. Also note that inserting the term $H_x \D_\xi F_\half |\xi|^\half$ is easier than before, in particular not using spatial localization. 
\end{proof}


\section{Regularity of the Hamilton Flow}\label{sec:regham}

Recall we have the Hamiltonian
$$H(t, x, \xi) = V_\lambda \xi + \sqrt{a_\lambda |\xi|}.$$
In this section we discuss the regularity properties of the associated Hamilton flow.

\subsection{The linearized equations}

Recall the Hamilton equations (\ref{hamilton}) associated to $H$,
\begin{equation*}\begin{cases}
\dot{x}(t) = H_\xi(t, x(t), \xi(t))\\
\dot{\xi}(t) = -H_x(t, x(t), \xi(t))\\
(x(s), \xi(s)) = (x, \xi),
\end{cases}\end{equation*}
and that we denote the solution $(x(t), \xi(t))$ to (\ref{hamilton}), at time $t \in I$ with initial data $(x, \xi)$ at time $s \in I$, variously by 
$$(x^t_s(x, \xi), \xi^t_s(x, \xi)) = (x^t(x, \xi), \xi^t(x, \xi)) = (x^t_s, \xi^t_s) = (x^t, \xi^t).$$

Denoting
$$p(t, x, \xi) = \begin{pmatrix}
x^t(x, \xi) \\
\xi^t(x, \xi)
\end{pmatrix},$$
we have the linearization of the Hamilton equations (\ref{hamilton}):
\begin{equation}\label{eqn:linearizedham}
\frac{d}{dt} \D_{x} p(t) = \begin{pmatrix}
H_{\xi x}(t, x^t, \xi^t) & H_{\xi \xi}(t, x^t, \xi^t) \\
-H_{xx}(t, x^t, \xi^t) & -H_{x \xi}(t, x^t, \xi^t)
\end{pmatrix} \D_{x} p(t).
\end{equation}

By a heuristic computation using the uncertainty principle and the dispersion relation $|\xi|^\half$, the scale on which wave packets cohere is 
$$\delta x \approx \lambda^{-\frac{3}{4}}, \quad \delta \xi \approx \lambda^{\frac{3}{4}}, \quad \delta t \approx 1.$$
As a result, it is natural to define 
$$P(t, x, \xi) = \begin{pmatrix}
X^t(x, \xi) \\
\Xi^t(x, \xi)
\end{pmatrix}
= \begin{pmatrix}
\lambda^{\frac{3}{4}} x^t(x, \xi) \\
\lambda^{-\frac{3}{4}} \xi^t(x, \xi)
\end{pmatrix},$$
and with a slight abuse of notation,
$$\tilde{H}_{\xi \xi} = \lambda^{\frac{3}{2}}H_{\xi \xi}, \quad \tilde{H}_{x \xi} = H_{x \xi}, \quad \tilde{H}_{xx} = \lambda^{-\frac{3}{2}}H_{xx}.$$
Then we have
\begin{equation}\label{eqn:linearizedhamscaled}
\frac{d}{dt} \D_{x} P(t) = \begin{pmatrix}
\tilde{H}_{\xi x}(t, x^t, \xi^t) & \tilde{H}_{\xi \xi}(t, x^t, \xi^t) \\
-\tilde{H}_{xx}(t, x^t, \xi^t) & -\tilde{H}_{x \xi}(t, x^t, \xi^t)
\end{pmatrix} \D_{x} P(t).
\end{equation}

For the remainder of this subsection we estimate the entries of the matrix of the linearized Hamilton flow \ref{eqn:linearizedhamscaled}.

\begin{lem}\label{basicHbd}
On $\{|\xi| \in [\lambda/2, 2\lambda]\}$,
\begin{align*}
\|\tilde{H}_{\xi\xi}(t)\|_{L^\infty_x} &\leq \FF(M(t)), \\
\|\tilde{H}_{x\xi}(t)\|_{L^\infty_x} &\leq \FF(M(t))Z(t), \\
\|w_{x_0, \lambda}\tilde{H}_{\xi\xi}(t)\|_{H^{\half+}_x} &\leq \lambda^{0+}\FF(M(t)), \\
\|w_{x_0, \lambda}\tilde{H}_{x\xi}(t)\|_{H^{\half+}_x} &\leq \lambda^{0+}\FF(M(t))(Z(t) + \LL(t, x_0, \xi_0, \lambda)).
\end{align*}
\end{lem}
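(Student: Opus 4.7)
\textbf{Proof proposal for Lemma \ref{basicHbd}.}

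The plan is to reduce the four estimates to Sobolev/H\"older bounds on $\sqrt{a_\lambda}$ and $\partial_x V_\lambda$ by computing the derivatives explicitly. A direct calculation gives
\begin{align*}
H_{\xi\xi}(t,x,\xi) &= -\tfrac{1}{4}\sqrt{a_\lambda(t,x)}\,|\xi|^{-3/2}, \\
H_{x\xi}(t,x,\xi) &= \partial_x V_\lambda(t,x) + \tfrac{1}{2}\partial_x\sqrt{a_\lambda(t,x)}\,|\xi|^{-1/2}\operatorname{sgn}\xi,
\end{align*}
so on $\{|\xi|\in[\lambda/2,2\lambda]\}$ we have $\tilde{H}_{\xi\xi}\approx \sqrt{a_\lambda}$ and $\tilde{H}_{x\xi}\approx \partial_x V_\lambda+\lambda^{-1/2}\partial_x\sqrt{a_\lambda}$, up to smooth symbols of order $0$ in $\xi$. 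The two $L^\infty$ bounds then follow immediately: $\|\sqrt{a_\lambda}\|_{L^\infty}\le\FF(M(t))$ by Proposition \ref{taylorbd} and the Taylor sign condition, while $\|\partial_x V_\lambda\|_{L^\infty}\le \|V\|_{W^{1,\infty}}\le Z(t)$ and $\lambda^{-1/2}\|\partial_x\sqrt{a_\lambda}\|_{L^\infty}\lesssim\|\sqrt{a}\|_{W^{1/2,\infty}}\le\FF(M(t))Z(t)$ by Bernstein and Proposition \ref{taylorbd}.

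For the local $H^{1/2+}$ estimate on $\tilde{H}_{\xi\xi}$, I would reduce matters to bounding $\|w_{x_0,\lambda}\sqrt{a_\lambda}\|_{H^{1/2+}}$. Decompose via a paraproduct $w_{x_0,\lambda}\sqrt{a_\lambda}=T_{w_{x_0,\lambda}}\sqrt{a_\lambda}+T_{\sqrt{a_\lambda}}w_{x_0,\lambda}+R(w_{x_0,\lambda},\sqrt{a_\lambda})$. Since $w$ is Schwartz at scale $\lambda^{-3/4}$, one has $\|w_{x_0,\lambda}\|_{L^\infty}\lesssim 1$ and $\|w_{x_0,\lambda}\|_{H^{1/2+}}\le\lambda^{0+}$ by direct scaling; combined with $\|\sqrt{a}\|_{H^{1/2+}}\le\FF(M(t))$ (which holds since $s-\tfrac12>\tfrac12$) and (\ref{sobolevparaproduct})--(\ref{sobolevparaerror}), one collects $\lambda^{0+}\FF(M(t))$. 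No local smoothing is needed here, which matches the absence of $Z$ and $\LL$ in the target bound.

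The main difficulty is the local $H^{1/2+}$ estimate on $\tilde{H}_{x\xi}$, because the global Sobolev regularity $\partial_x V_\lambda\in H^{s-1}$ is insufficient once $s<3/2$ (which is allowed by $s>11/8$). The plan is a dyadic decomposition $w_{x_0,\lambda}\partial_x V_\lambda=\sum_{\kappa\le c_1\lambda}w_{x_0,\lambda}\partial_x S_\kappa V$, split at the threshold $\kappa=\lambda^{3/4}$ matching the frequency support of $w_{x_0,\lambda}$. For $\kappa\le\lambda^{3/4}$ the product has frequency $\lesssim\lambda^{3/4}$, so Bernstein gives
\[
\|w_{x_0,\lambda}\partial_x S_\kappa V\|_{H^{1/2+}}\lesssim \lambda^{3/8+}\|w_{x_0,\lambda}\|_{L^2}\|\partial_x S_\kappa V\|_{L^\infty}\lesssim \lambda^{0+}Z(t),
\]
which sums (up to a log absorbed in $0+$) to $\lambda^{0+}Z(t)$. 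For $\kappa>\lambda^{3/4}$, the product has frequency $\approx\kappa$, the pointwise bound $w_{x_0,\lambda}\lesssim w_{x_0,\kappa}$ yields
\[
\|w_{x_0,\lambda}\partial_x S_\kappa V\|_{H^{1/2+}}\approx \kappa^{1/2+}\|w_{x_0,\lambda}\partial_x S_\kappa V\|_{L^2}\lesssim \kappa^{3/2+}\|w_{x_0,\kappa}S_\kappa V\|_{L^2}\lesssim \kappa^{3/2+-s'}\LL
\]
after commuting the derivative past the weight and invoking the low-frequency local smoothing term $\|V\|_{LS^{s'}_{x_0,\lambda}}\le\LL$ in the definition of $\LL$. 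Since $s'>3/2$, the geometric sum over $\kappa\in(\lambda^{3/4},c_1\lambda]$ is controlled by $\lambda^{0+}\LL$. The $\lambda^{-1/2}\|w_{x_0,\lambda}\partial_x\sqrt{a_\lambda}\|_{H^{1/2+}}$ contribution is treated identically, using Moser and a paraproduct to reduce to the same estimate on $a_\lambda$ and invoking Corollary \ref{taylorsmooth} together with the balanced-frequency term in $\LL$ to absorb the high-frequency pieces. The main technical point is verifying that the exponent $3/2+\eps-s'$ stays negative, which is exactly where the gain $s'>3/2$ coming from the threshold $s>\tfrac{1}{2}+\tfrac{7}{8}$ is used.
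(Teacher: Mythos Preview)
Your proposal is correct and takes essentially the same route as the paper: both compute $H_{\xi\xi}$ and $H_{x\xi}$ explicitly, reduce the $L^\infty$ bounds to Proposition~\ref{taylorbd}, and for the key fourth estimate perform the same dyadic split of $\partial_x V_\lambda$ at $\kappa\approx\lambda^{3/4}$, pass from $w_{x_0,\lambda}$ to $w_{x_0,\kappa}$ pointwise in the high range, and feed the resulting $\|w_{x_0,\kappa}S_\kappa V\|_{H^{s'}}$ terms into $\LL$ via $LS^{s'}_{x_0,\lambda}$ (the paper additionally spells out the commutator piece $(\partial_x w_{x_0,\lambda})S_{\gg\lambda^{3/4}}a_\lambda$ for the dispersive term, which you omit but which is harmless). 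One small slip in your third estimate: you write $\|\sqrt{a}\|_{H^{1/2+}}\le\FF(M(t))$, but $\sqrt{a}\ge\sqrt{a_{\min}}>0$ is not in $L^2$; the fix is to work with $\sqrt{a}-\sqrt{g}\in H^{s-1/2}$, the constant part contributing nothing after the $\psi$ cutoff in the paraproduct.
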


\begin{proof}
We have 
$$H_{\xi\xi} = -\frac{1}{4}\sqrt{a_\lambda} |\xi|^{-\frac{3}{2}}$$
from which the first estimate is immediate on $\{|\xi| \in [\lambda/2, 2\lambda]\}$, using Proposition \ref{taylorbd}. We also have
$$H_{x\xi} = \D_x V_\lambda + \frac{1}{2} (\D_x \sqrt{a_\lambda}) |\xi|^{-\frac{3}{2}}\xi.$$
Then the second bound is obtained using Proposition \ref{taylorbd}, the Taylor sign condition, frequency localization, and the assumption on $\xi$. The third estimate on $H_{\xi\xi}$ is immediate as there is evidently sufficient regularity on $a$.

For the fourth estimate on $H_{x\xi}$, consider first the transport term $V_\lambda$. We have using Proposition \ref{localparadiff} to commute the localization under $\D_x$ (or simply by using the product rule),
$$\|w_{x_0, \mu} S_{\mu}\D_x V\|_{H^{\half+}} \lesssim \|w_{x_0, \mu} S_{\mu}V\|_{H^{\frac{3}{2}+}} + \|V\|_{H^{s-}}.$$
Further, we have 
$$\|w_{x_0, \lambda} S_{\lesssim \lambda^{\frac{3}{4}}}\D_x V\|_{H^{\half+}} \lesssim \lambda^{0+}\|\D_x V\|_{L^\infty}.$$
Summing this with logarithmically many values of $\mu$, we conclude 
$$\|w_{x_0, \lambda}\D_x V\|_{H^{\half+}} \leq \FF(M(t))(Z(t) + \LL(t, x_0, \xi_0, \lambda)).$$

From the dispersive term, we have
\begin{align*}
\|w_{x_0, \lambda} \D_x \sqrt{a_\lambda}\|_{H^{\half+}} &\lesssim  \|w_{x_0, \lambda} a_\lambda^{-\half} \D_x a_\lambda \|_{H^\half} \\
&\lesssim \| a_\lambda^{-\half}\|_{H^{\half+}}\| w_{x_0, \lambda}\D_x a_\lambda \|_{L^\infty} + \|a_\lambda^{-\half}\|_{L^\infty}\| w_{x_0, \lambda}\D_x a_\lambda \|_{H^{\half+}}.
\end{align*}
The first three terms are all easily estimated using Proposition \ref{taylorbd}, so it remains to consider the last term in $H^{\half+}$.


By Corollary \ref{taylorsmooth}, we have
$$\|w_{x_0, \lambda} S_{\gg \lambda^{\frac{3}{4}}} a_\lambda\|_{H^{\frac{3}{2}+}} \lesssim \sum_{\lambda^{\frac{3}{4}} \ll \mu \leq c_1\lambda} \mu^{-\eps}\|w_{x_0, \lambda} S_\mu a\|_{H^{s'}} \leq \lambda^\half \FF(M(t))(Z(t) + \LL(t, x_0, \xi_0, \lambda)).$$
Thus, it remains to estimate
$$\|w_{x_0, \lambda} S_{\lesssim \lambda^{\frac{3}{4}}}\D_x a\|_{H^{\half+}} + \|(\D_x w_{x_0, \lambda})S_{\gg \lambda^{\frac{3}{4}}} a_\lambda\|_{H^{\half+}}.$$
The first is easily estimated,
$$\|w_{x_0, \lambda} S_{\lesssim \lambda^{\frac{3}{4}}}\D_x a\|_{H^{\half+}} \lesssim \lambda^{\frac{3}{8}+}\|S_{\lesssim \lambda^{\frac{3}{4}}}\D_x a\|_{L^2} \lesssim  \lambda^{\half}\|a\|_{H^{s - \half}}.$$
The second is estimated using the algebra property of $H^{\half+}$,
\begin{align*}
\|(\D_x w_{x_0, \lambda})S_{\gg \lambda^{\frac{3}{4}}} a_\lambda\|_{H^{\half+}} &\lesssim \|\D_x w_{x_0, \lambda}\|_{H^{\half+}} \|S_{\gg \lambda^{\frac{3}{4}}} a_\lambda\|_{H^{\half+}} \\
&\lesssim \lambda^{\frac{3}{4}+}\|S_{\gg \lambda^{\frac{3}{4}}} a_\lambda\|_{H^{\half+}} \\
&\lesssim \lambda^{\half}\|a\|_{H^{s - \half}}.
\end{align*}

\end{proof}

A direct estimate on $\tilde{H}_{xx} = \lambda^{-\frac{3}{2}} H_{xx}$ as in Lemma \ref{basicHbd} would yield estimates that are too crude for a bilipschitz flow (the estimate would depend on a positive power of $\lambda$). However, as discussed in Section \ref{COVsec}, we can integrate $H_{xx}$ along $(x^t, \xi^t)$, with the following estimates on the integral:

\begin{cor}\label{finalstructure}
We may write
\begin{equation}
\D_x^2H(t, x^t, \xi^t) = \frac{d}{dt} F(t, x^t, \xi^t) + G(t, x^t, \xi^t)
\end{equation}
where on $\{|\xi| \in [\lambda/2, 2\lambda]\}$,
\begin{align*}
\|F(t, \cdot, \xi)\|_{L^2} &\leq \lambda^{\frac{9}{8} -} \FF(M(t)),\\
\|F(t, \cdot, \xi)\|_{L^\infty} &\leq \lambda^{\frac{3}{2} -} \FF(M(t))Z(t),\\
\|w_{x_0, \lambda} F(t, \cdot, \xi)\|_{H^{\half+}} &\leq \lambda^{\frac{3}{2} -} \FF(M(t))(Z(t) + \LL(t, x_0, \xi_0, \lambda)), \\
\|G(t, \cdot, \xi)\|_{L^\infty} &\leq \lambda^{\frac{3}{2} -} \FF(M(t))Z(t)^2, \\
\|w_{x_0, \lambda} G(t, \cdot, \xi)\|_{H^{\half+}} &\leq \lambda^{\frac{3}{2} -}\FF(M(t))Z(t)(Z(t) + \LL(t, x_0, \xi_0, \lambda)).
\end{align*}
\end{cor}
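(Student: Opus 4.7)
The plan is to reinterpret the structure identity (\ref{structurecoreqn}) of Corollary \ref{structurecor} via the chain rule along the Hamilton flow. Define
\begin{align*}
F(t, x, \xi) &:= F_1(t, x)\,\xi + F_\half(t, x)\,|\xi|^\half, \\
G(t, x, \xi) &:= G_1(t, x)\,\xi + G_\half(t, x)\,|\xi|^\half + G_0(t, x)\,|\xi|^{-1}\xi,
\end{align*}
with $F_\alpha, G_\alpha$ supplied by Corollary \ref{structurecor}. For any smooth $\Phi$ the Hamilton equations (\ref{hamilton}) give $\frac{d}{dt}\Phi(t, x^t, \xi^t) = (\D_t + H_\xi \D_x - H_x \D_\xi)\Phi|_{(t, x^t, \xi^t)}$, so evaluating (\ref{structurecoreqn}) at $(t, x^t, \xi^t)$ produces the asserted decomposition $\D_x^2 H(t, x^t, \xi^t) = \frac{d}{dt}F(t, x^t, \xi^t) + G(t, x^t, \xi^t)$ directly.

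All five norm bounds then reduce to the estimates on $F_\alpha, G_\alpha$ already established in Corollary \ref{structurecor}. Since $|\xi| \in [\lambda/2, 2\lambda]$, the fiber multipliers $\xi$, $|\xi|^\half$, and $|\xi|^{-1}\xi$ contribute bounded factors of size $\lambda$, $\lambda^\half$, and $1$ respectively, so the triangle inequality and the elementary check $\lambda^{1-\alpha} \cdot \lambda^{\frac{9}{8}-\alpha-} \leq \lambda^{\frac{9}{8}-}$ for $\alpha \in \{\half, 1\}$ (and analogously $\lambda^{\frac{3}{2}-}$ for $L^\infty$ on $G$) yield the $L^2$ and $L^\infty$ estimates immediately.

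For the local $H^{\half+}$ estimate, I would decompose via Littlewood-Paley,
$$\|w_{x_0, \lambda} F_\alpha\|_{H^{\half+}}^2 \approx \sum_\mu \mu^{1+2\eps}\,\|S_\mu(w_{x_0, \lambda} F_\alpha)\|_{L^2}^2.$$
Since $w_{x_0, \lambda}$ has frequency support below $\lambda^{\frac{3}{4}}$, for $\mu \leq c\lambda$ one can exchange $S_\mu(w_{x_0, \lambda} F_\alpha)$ for $w_{x_0, \lambda} S_{\sim\mu} F_\alpha$ modulo commutator terms that are better than needed. Corollary \ref{structurecor} then supplies the $\mu$-uniform bound $\|w_{x_0, \lambda} S_\mu F_\alpha\|_{H^\half} \leq \lambda^{\frac{3}{2}-\alpha-}\FF(M(t))(Z(t) + \LL(t, x_0, \xi_0, \lambda))$, and the $\mu^{2\eps}$ factor summed over the logarithmically many dyadic $\mu$ costs at most $\lambda^{\eps}$, absorbed by the reserve $\lambda^{-}$ in the exponent. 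The tail $\mu > c\lambda$ exploits that $F_1 = T_{q^{-1}} \D_x^3 \eta_\lambda$ is essentially frequency-localized below $\lambda$, while the only unlocalized factor in $F_\half = F_a$ is $(a_\lambda)^{-\half}$, whose Sobolev regularity is controlled by the Taylor coefficient bounds of Proposition \ref{taylorbd}; combined with the low-frequency support of $w_{x_0, \lambda}$, the contribution is better than needed. The analysis for $G$ is identical.

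The only genuine, and quite minor, technical point is the high-frequency tail for $F_\half$, which is handled routinely via the Taylor coefficient regularity. There is no substantive new analytic obstacle at this stage, since the integration of the symbol along the Hamilton flow, the paradifferential calculus, and the local Sobolev estimates for the Taylor coefficient have all been carried out in Sections \ref{sec:taylorlocal} and \ref{COVsec}, so this corollary amounts largely to bookkeeping.
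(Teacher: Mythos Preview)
Your proposal is correct and follows essentially the same route as the paper: the same definitions of $F$ and $G$, the same chain-rule identification of $(\D_t + H_\xi\D_x - H_x\D_\xi)$ with $\frac{d}{dt}$ along the flow, and the same strategy of summing the $\|w_{x_0,\lambda}S_\mu F_\alpha\|_{H^{1/2}}$ bounds from Corollary~\ref{structurecor} over dyadic $\mu$, with the high-frequency tail handled via the $C^{1/2}$ regularity of the unlocalized coefficient $(a_\lambda)^{-1/2}$. The one point the paper makes explicit that you gloss over is that Corollary~\ref{structurecor} is stated for $\lambda^{3/4} \ll \mu \leq c\lambda$, so the range $\mu \lesssim \lambda^{3/4}$ must be treated separately (the paper does this by the direct argument in the proof of Lemma~\ref{basicHbd}); this is routine and does not affect the substance of your argument.
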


\begin{proof}
By Corollary \ref{structurecor},
$$\D_x^2 H = (\D_t + H_\xi \D_x - H_x \D_\xi)(F_1\xi + F_\half|\xi|^\half) + G_1\xi + G_\half|\xi|^\half + G_0|\xi|^{-1} \xi.$$
Then set 
$$F(t, x, \xi) = F_1\xi + F_\half|\xi|^\half, \quad G(t, x, \xi) = G_1\xi + G_\half|\xi|^\half + G_0|\xi|^{-1} \xi,$$
from which the desired identity is easily verified, along with the desired estimates on $\{|\xi| \in [\lambda/2, 2\lambda]\}$. We make some additional remarks regarding the local Sobolev estimates:
By Corollary \ref{structurecor}, we have
$$\|w_{x_0, \lambda} S_\mu F(t, \cdot, \xi)\|_{H^\half} \leq \lambda^{\frac{3}{2} -} \FF(M(t))(Z(t) + \LL(t, x_0, \xi_0, \lambda)).$$
Since the first term of $F$,
$$F_1\xi = T_{q^{-1}} \D_x^3 \eta_\lambda \xi,$$
is localized to frequencies $\leq c\lambda$, we can sum these estimates over $\lambda^{\frac{3}{4}} \ll \mu \leq c\lambda$ with logarithmic loss. The component $\lesssim \lambda^{\frac{3}{4}}$ is estimated directly as in the proof of Lemma \ref{basicHbd}.

The other terms of $F$ and $G$ are not localized to frequencies $\leq c\lambda$, but only due to a coefficient in $C^\half$. Precisely, they may be written as
$$AS_{\lesssim c_1\lambda} B$$
where $A \in C^\half$ (for instance, $A = (a_\lambda)^{-\half}$). Thus, we may estimate the high frequency component
$$S_{> c_1\lambda} AS_{\lesssim c_1\lambda} B$$
by absorbing half a derivative into $A$ instead of using the local Sobolev estimates of Corollary \ref{structurecor}. 

\end{proof}

We denote in the following, where $F, G$ are as in Corollary \ref{finalstructure},
$$\tilde{F} = \lambda^{-\frac{3}{2}} F, \quad \tilde{G} = \lambda^{-\frac{3}{2}} G$$
so that
$$\D_x^2\tilde{H}(t, x^t, \xi^t) = \frac{d}{dt} \tilde{F}(t, x^t, \xi^t) + \tilde{G}(t, x^t, \xi^t).$$

\subsection{The bilipschitz flow}

We can combine the integration of Corollary \ref{finalstructure} with Gronwall to obtain estimates on $p(t)$ which exhibit the sense in which $(x^t, \xi^t)$ is bilipschitz.

\begin{prop}\label{bilipprop}
There exists $s_0 \in I$ such that for $J \subseteq \R$ with $|J| \lesssim \lambda^{-\frac{3}{4}}$, and solutions $(x^t, \xi^t)$ to (\ref{hamilton}) with initial data $(x, \xi)$ satisfying $|\xi| \in [\lambda/2, 2\lambda]$, at initial time
\begin{enumerate}[i)]
\item $s = s_0 \in I$,
$$\|\D_x X^t - \lambda^{\frac{3}{4}}I \|_{L^\infty_t(I; L^\infty_x)} \ll \lambda^{\frac{3}{4}}.$$
\item $s \in I$,
$$\|\D_x X^t - \lambda^{\frac{3}{4}}I\|_{L_x^2(J)} + \|\D_x \Xi^{s_0}\|_{L_x^2(J)} \ll \lambda^{\frac{3}{8}}.$$
\end{enumerate}
\end{prop}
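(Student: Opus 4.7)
The main obstacle is that in the linearized system (\ref{eqn:linearizedhamscaled}), only the entry $\tilde H_{xx} = \lambda^{-\frac{3}{2}} H_{xx}$ fails to admit a useful $L^p_t L^\infty_x$ bound at our regularity: $H_{xx}$ involves two spatial derivatives of $V_\lambda, a_\lambda$, and paradifferential localization alone gives only $\|\tilde H_{xx}(t)\|_{L^\infty_x} \lesssim \lambda^{-\eps}\FF(M(t))Z(t)^2$, whose $L^1_t$ norm is controlled only by $\FF(T)$ and not by a small constant. A direct Gronwall on (\ref{eqn:linearizedhamscaled}) therefore cannot close at the required $\ll \lambda^{\frac{3}{4}}$ scale. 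The plan is to invoke the integration-in-$t$ identity $\tilde H_{xx}(t, x^t, \xi^t) = \D_t \tilde F(t, x^t, \xi^t) + \tilde G(t, x^t, \xi^t)$ of Corollary \ref{finalstructure} in an integrating-factor style, absorbing $\D_t \tilde F$ into a renormalized variable so that the effective system is driven only by time-integrable forcing.

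Concretely, write $U := \D_x X^t - \lambda^{\frac{3}{4}} I$ and $\Omega := \D_x \Xi^t$ (local notation, distinct from the fluid velocity field), and define
$$\widetilde\Omega := \Omega + \tilde F(t, x^t, \xi^t)(\lambda^{\frac{3}{4}} + U).$$
Differentiating in $t$, substituting both (\ref{eqn:linearizedhamscaled}) and Corollary \ref{finalstructure}, the $\tilde H_{xx}$ contribution cancels exactly against $\D_t \tilde F$, leaving
\begin{align*}
\D_t U &= (\tilde H_{\xi x} - \tilde H_{\xi\xi}\tilde F)(\lambda^{\frac{3}{4}} + U) + \tilde H_{\xi\xi}\,\widetilde\Omega, \\
\D_t \widetilde\Omega &= -(\tilde G - \tilde H_{x\xi}\tilde F - \tilde F\tilde H_{\xi x} + \tilde F^2 \tilde H_{\xi\xi})(\lambda^{\frac{3}{4}} + U) + (\tilde F\tilde H_{\xi\xi} - \tilde H_{x\xi})\,\widetilde\Omega.
\end{align*}
Collecting from Lemma \ref{basicHbd} and Corollary \ref{finalstructure}: $\tilde H_{\xi\xi} \in L^\infty_t L^\infty_x$; $\tilde H_{\xi x}, \tilde H_{x\xi}, \tilde F \in L^2_t L^\infty_x$ with norms $\leq \lambda^{-\eps}\FF(T)$; $\tilde G \in L^1_t L^\infty_x$ with norm $\leq \lambda^{-\eps}\FF(T)$. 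Via Cauchy-Schwarz, every coefficient of $(\lambda^{\frac{3}{4}} + U)$ in the $\widetilde\Omega$-equation is $L^1_t L^\infty_x$ with small norm, and every coefficient of $\widetilde\Omega$ is $L^2_t L^\infty_x$; the $U$-equation is analogous.

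The reference time $s_0$ is chosen by pigeonhole: since $\|\tilde F(\cdot, \cdot, \xi)\|_{L^\infty_x}$ is $L^2_t$ uniformly in $|\xi| \in [\lambda/2, 2\lambda]$ (one reduces to the two scalar profiles $F_1, F_\half$), there exists $s_0 \in I$ with $\sup_\xi \|\tilde F(s_0, \cdot, \xi)\|_{L^\infty_x} \ll 1$. For part (i) with $s = s_0$, $U(s_0) = \Omega(s_0) = 0$, so $\widetilde\Omega(s_0) = \lambda^{\frac{3}{4}}\tilde F(s_0, x, \xi)$ has $L^\infty_x$ norm $\ll \lambda^{\frac{3}{4}}$; Gronwall in $L^\infty_x$ on $(U, \widetilde\Omega)$ — with $T$ shrunk as in the remark after Lemma \ref{freqpres} to bring the Gronwall constant close to $1$ — gives $\|U\|_{L^\infty_t L^\infty_x}, \|\widetilde\Omega\|_{L^\infty_t L^\infty_x} \ll \lambda^{\frac{3}{4}}$, and unwinding $\Omega = \widetilde\Omega - \tilde F(\lambda^{\frac{3}{4}} + U)$ via $\|\tilde F\|_{L^1_t L^\infty_x} \leq |I|^{\frac{1}{2}}\|\tilde F\|_{L^2_t L^\infty_x} \ll 1$ concludes. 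For part (ii) with general initial time $s$, we run the same Gronwall in $\|\cdot\|_{L^2_x(J)}$: the key input is the $L^\infty_t L^2_x$ bound $\|\tilde F\|_{L^\infty_t L^2_x} \leq \lambda^{-\frac{3}{8}-\eps}\FF(T)$ of Corollary \ref{finalstructure}, which gives $\|\widetilde\Omega(s)\|_{L^2_x(J)} \leq \lambda^{\frac{3}{4}}\|\tilde F(s)\|_{L^2_x(J)} \ll \lambda^{\frac{3}{8}}$, and moreover $|J|^{\frac{1}{2}} \lesssim \lambda^{-\frac{3}{8}}$ converts every $\lambda^{\frac{3}{4}}|J|^{\frac{1}{2}}$ arising in the integration into $\lambda^{\frac{3}{8}}$. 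The bound on $\D_x \Xi^{s_0}$ then follows from $\Omega(s_0) = \widetilde\Omega(s_0) - \tilde F(s_0)(\lambda^{\frac{3}{4}} + U(s_0))$, where the smallness $\|\tilde F(s_0)\|_{L^\infty_x} \ll 1$ is crucial to absorb the product $\tilde F(s_0) \cdot \lambda^{\frac{3}{4}}$. The hardest part throughout is the integrability bookkeeping: the renormalization by $\tilde F$ is tuned precisely so that each residual term lands in the correct $L^p_t L^q_x$ class, which is possible only because we have tracked the time integrability ($L^2_t$ for $\tilde F$ and $Z$, $L^\infty_t$ for $\FF(M)$, $L^1_t$ for $\tilde G$ and $Z^2$) of every factor.
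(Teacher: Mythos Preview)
Your proposal is correct and follows essentially the same approach as the paper. Your renormalized variable $\widetilde\Omega = \Omega + \tilde F(\lambda^{\frac{3}{4}} + U)$ is exactly the second component of the paper's $I_F\,\D_x P$ with $I_F = \begin{pmatrix} I & 0 \\ \tilde F & I \end{pmatrix}$, and your coefficient matrix coincides with the paper's $A I_F^{-1}$; the choice of $s_0$ by averaging $\|\tilde F(\cdot)\|_{L^\infty_x}$ in time and the $L^2_x(J)$ Gronwall using $\|\tilde F\|_{L^\infty_t L^2_x} \lesssim \lambda^{-\frac{3}{8}-}$ are identical to the paper's Steps~2 and~4. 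The only cosmetic difference is that you subtract $\lambda^{\frac{3}{4}} I$ before running Gronwall in $L^\infty_x$, which lets you read off part~(i) directly and bypass the paper's iterative refinement in Step~3.
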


\begin{proof}
Recall (\ref{eqn:linearizedhamscaled}),
\begin{equation*}
\frac{d}{dt} \D_{x} P(t) = \begin{pmatrix}
\tilde{H}_{\xi x}(t, x^t, \xi^t) & \tilde{H}_{\xi \xi}(t, x^t, \xi^t) \\
-\tilde{H}_{xx}(t, x^t, \xi^t) & -\tilde{H}_{x \xi}(t, x^t, \xi^t)
\end{pmatrix} \D_{x} P(t).
\end{equation*}
\emph{Step 1.} First, we integrate to obtain the right object on which to apply Gronwall. Using Corollary \ref{finalstructure}, write (omitting the parameters $(t, x^t, \xi^t)$ for brevity)
\begin{align*}
\frac{d}{dt} \D_{x} P(t) &= \begin{pmatrix}
0 & 0 \\
-\dot{\tilde{F}} & 0
\end{pmatrix} \D_{x} P(t) +
 \begin{pmatrix}
\tilde{H}_{\xi x} & \tilde{H}_{\xi \xi} \\
-\tilde{G} & -\tilde{H}_{x \xi}
\end{pmatrix} \D_{x} P(t) \\
&= -\frac{d}{dt}\left( \begin{pmatrix}
0 & 0 \\
\tilde{F} & 0
\end{pmatrix} \D_{x} P(t) \right)+
\begin{pmatrix}
0 & 0 \\
\tilde{F} & 0
\end{pmatrix} \frac{d}{dt}\D_{x} P(t)
+ 
\begin{pmatrix}
\tilde{H}_{\xi x} & \tilde{H}_{\xi \xi} \\
-\tilde{G} & -\tilde{H}_{x \xi}
\end{pmatrix} \D_{x} P(t).
\end{align*}
Substituting (\ref{eqn:linearizedhamscaled}) into the right hand side and rearranging,
\begin{align*}
\frac{d}{dt} \left( \begin{pmatrix}
I & 0 \\
\tilde{F} & I
\end{pmatrix} \D_{x} P(t) \right) &= 
\begin{pmatrix}
\tilde{H}_{\xi x} & \tilde{H}_{\xi \xi} \\
\tilde{F}\tilde{H}_{\xi x}-\tilde{G} & \tilde{F}\tilde{H}_{\xi\xi}-\tilde{H}_{x \xi}
\end{pmatrix} \D_{x} P(t).
\end{align*}
For brevity, write this as
\begin{equation}\label{eqn:diffeqn}
\frac{d}{dt} (I_F  \D_{x} P)(t) = (A\D_{x} P)(t).
\end{equation}
Integrating and rewriting the right hand side to apply Gronwall,
\begin{equation}\label{eqn:integratedeqn}
(I_F  \D_{x} P)(t) = (I_F \D_{x} P)(s) + \int_{s}^t (AI_F^{-1})(r)(I_F\D_{x} P)(r) \, dr.
\end{equation}

\emph{Step 2.} As a preliminary step, we establish fixed-time estimates in $L_x^\infty$. Applying uniform norms and Gronwall,
$$\|(I_F  \D_{x} P)(t)\|_{L^\infty_x}\lesssim \|(I_F \D_{x} P)(s)\|_{L^\infty_x} \exp\left(\int_{s}^t \|(AI_F^{-1})(r)\|_{L^\infty_x} \, dr \right).$$
Noting that
$$I_F^{-1} = \begin{pmatrix}
I & 0 \\
-\tilde{F} & I
\end{pmatrix}$$
and
$$(I_F \D_{x} P)(s) = I_F(s) \begin{pmatrix}
\lambda^{\frac{3}{4}}I \\
0
\end{pmatrix} = \lambda^{\frac{3}{4}}\begin{pmatrix}
I \\
\tilde{F}(s)
\end{pmatrix},$$
we have
\begin{equation}\label{fixedtime}
\|\D_{x} P(t)\|_{L^\infty_x}\lesssim \lambda^{\frac{3}{4}}(1 + \|\tilde{F}(t)\|_{L^\infty_x})(1 + \|\tilde{F}(s)\|_{L^\infty_x}) \exp\left(\|AI_F^{-1}\|_{L_t^1(I; L^\infty_x)}\right).
\end{equation}

To estimate the right hand side, first consider the exponential term:
\begin{align*}AI_F^{-1} &= 
\begin{pmatrix}
\tilde{H}_{\xi x} & \tilde{H}_{\xi \xi} \\
\tilde{F}\tilde{H}_{\xi x}-\tilde{G} & \tilde{F}\tilde{H}_{\xi\xi}-\tilde{H}_{x \xi}
\end{pmatrix} \begin{pmatrix}
I & 0 \\
-\tilde{F} & I
\end{pmatrix} \\
&=\begin{pmatrix}
\tilde{H}_{\xi x} - \tilde{F}\tilde{H}_{\xi \xi} & \tilde{H}_{\xi \xi} \\
\tilde{F}\tilde{H}_{\xi x}-\tilde{G} - \tilde{F}\tilde{H}_{\xi\xi}\tilde{F} + \tilde{H}_{x \xi}\tilde{F} & \tilde{F}\tilde{H}_{\xi\xi}-\tilde{H}_{x \xi}
\end{pmatrix}
\end{align*}
Then using Corollary \ref{finalstructure}, Lemma \ref{basicHbd}, and Cauchy-Schwarz,
\begin{align}\label{expest}
\|AI_F^{-1}\|_{L_t^1(I; L^\infty_x)} \lesssim \ &T^{\half}(\|\tilde{H}_{\xi x} \|_{L_t^2(I; L^\infty_x)} + \|\tilde{H}_{\xi \xi} \|_{L_t^2(I; L^\infty_x)}) + \|\tilde{G} \|_{L_t^1(I; L^\infty_x)} \nonumber\\
&+ \|\tilde{F} \|_{L_t^2(I; L^\infty_x)}\left(\|\tilde{H}_{\xi \xi} \|_{L_t^2(I; L^\infty_x)} + \|\tilde{H}_{x \xi} \|_{L_t^2(I; L^\infty_x)}\right) \nonumber\\
&+ \|\tilde{F} \|_{L_t^2(I; L^\infty_x)}^2\|\tilde{H}_{\xi \xi} \|_{L_t^\infty(I; L^\infty_x)} \nonumber\\
\leq \ &(T^{\half} + \lambda^{0-}) \FF(T) \ll 1.
\end{align}
Here we have chosen $T$ sufficiently small, iterating the argument over multiple time intervals as necessary, and $\lambda$ sufficiently large. For smaller $\lambda$, the frequency localized Strichartz estimates can be proven directly with Sobolev embedding.

To estimate $\tilde{F}(s)$, we choose $s_0 \in I$ such that
\begin{align*}
\|\tilde{F}(s_0)\|_{L^\infty_x} \leq T^{-1}\|\tilde{F}\|_{L_t^1(I; L^\infty_x)} \leq T^{-\half}\|\tilde{F}\|_{L_t^2(I; L^\infty_x)}.
\end{align*}
Then using Corollary \ref{finalstructure} and choosing $\lambda$ sufficiently large as before,
\begin{equation}\label{specialtime}
\|\tilde{F}(s_0)\|_{L^\infty_x} \leq T^{-\half}\lambda^{0-}\FF(T) \ll T^{-\half}.
\end{equation}

Thus, by setting $s = s_0$ in (\ref{fixedtime}) and using Corollary \ref{finalstructure} on $\tilde{F}(t)$, we find
\begin{align}\label{eqn:firstbilip}
\|\D_{x} P(t)\|_{L^\infty_x} \leq \lambda^{\frac{3}{4}}T^{-\half}\FF(M(t))Z(t).
\end{align}

\emph{Step 3.} Next we improve upon Step 2 by showing that $\D_x X^t \approx \lambda^{\frac{3}{4}}I$ uniformly in $t$ (thus establishing the first estimate in part $(i)$). From the top row of (\ref{eqn:integratedeqn}),
$$\D_{x}X^t = \lambda^{\frac{3}{4}}I + \int_{s_0}^t \tilde{H}_{\xi x}(r) \D_{x}X^r + \tilde{H}_{\xi\xi}(r)\D_{x}\Xi^r \, dr.$$
Using (\ref{eqn:firstbilip}), Lemma \ref{basicHbd}, and Cauchy-Schwarz,
\begin{equation}\label{identitymatrix}
\|\D_{x}X^t - \lambda^{\frac{3}{4}}I\|_{L^\infty_x} \leq \int_{s_0}^t \|\tilde{H}_{\xi x}(r)\D_{x}X^r\|_{L^\infty_x} + \|\tilde{H}_{\xi\xi}(r)\D_{x}\Xi^r\|_{L^\infty_x} \, dr \leq \lambda^{\frac{3}{4}}T^{-\half}\FF(T)
\end{equation}
and in particular,
$$\|\D_{x}X^t\|_{L_t^\infty(I;L^\infty_x)} \leq \lambda^{\frac{3}{4}}T^{-\half}\FF(T).$$
Using this in place of (\ref{eqn:firstbilip}), revisit (\ref{identitymatrix}) and perform the estimate again to obtain
\begin{align*}
\|\D_{x}X^t - \lambda^{\frac{3}{4}}I\|_{L^\infty_x} &\leq T^{\half}\FF(T)(\lambda^{\frac{3}{4}}T^{-\half}\|\tilde{H}_{\xi x} \|_{L_t^2(I; L^\infty_x)} + \|\D_x\Xi^t \|_{L_t^2(I; L^\infty_x)}) \\
&\leq \lambda^{\frac{3}{4}}\FF(T). \nonumber
\end{align*}
Repeating this process one more time, we obtain
\begin{equation}\label{xbilip}
\|\D_{x}X^t - \lambda^{\frac{3}{4}}I\|_{L^\infty_x} \leq \lambda^{\frac{3}{4}}T^{\half}\FF(T) \ll \lambda^{\frac{3}{4}}.
\end{equation}

\emph{Step 4.} Lastly, we establish $L_x^2(J)$ estimates in $(ii)$. Recalling that
$$(I_F \D_{x} P)(s) = I_F(s) \begin{pmatrix}
\lambda^{\frac{3}{4}}I \\
0
\end{pmatrix} = \lambda^{\frac{3}{4}}\begin{pmatrix}
I \\
\tilde{F}(s)
\end{pmatrix} := \lambda^{\frac{3}{4}}I_x + \lambda^{\frac{3}{4}}\begin{pmatrix}
0 \\
\tilde{F}(s)
\end{pmatrix},$$
it is natural to write, in place of (\ref{eqn:integratedeqn}),
$$(I_F  \D_{x} P - \lambda^{\frac{3}{4}}I_x)(t) = \lambda^{\frac{3}{4}}\begin{pmatrix}
0 \\
\tilde{F}(s)
\end{pmatrix} + \int_{s}^t (AI_F^{-1})(r)(I_F\D_{x} P - \lambda^{\frac{3}{4}}I_x)(r) + \lambda^{\frac{3}{4}}(AI_F^{-1})(r)I_x \, dr.$$
Then apply $L_x^2(J)$ and Gronwall:
\begin{align*}
\|(I_F  \D_{x} P - &\lambda^{\frac{3}{4}}I_x)(t)\|_{L_x^2(J)} \\
&\lesssim \lambda^{\frac{3}{4}}(\|\tilde{F}(s)\|_{L_x^2(J)} + \|AI_F^{-1}\|_{L_t^1(I;L_x^2(J))}) \exp\left(\int_{s}^t \|(AI_F^{-1})(r)\|_{L^\infty_x} \, dr \right). \nonumber
\end{align*}
We estimate the right hand side using the (global) $L^2$ estimate of Corollary \ref{finalstructure}, (\ref{expest}), and Cauchy-Schwarz in space, concluding
\begin{equation}\label{prelocalx}
\|(I_F  \D_{x} P - \lambda^{\frac{3}{4}}I_x)(t)\|_{L_x^2(J)} \ll \lambda^{\frac{3}{8}}.
\end{equation}
From the top row, we obtain
\begin{equation}\label{goodlocalx}
\|\D_x X^t - \lambda^{\frac{3}{4}}I\|_{L_x^2(J)} \ll \lambda^{\frac{3}{8}}
\end{equation}
as desired. 

\

It remains to establish the estimate on $\D_x \Xi^{s_0}$. First, recalling,
$$I_F^{-1} = \begin{pmatrix}
I & 0 \\
-\tilde{F} & I
\end{pmatrix},$$
we have using Corollary \ref{finalstructure} with (\ref{prelocalx}),
\begin{equation}\label{badlocalx}
\| \D_{x} P(t)\|_{L_x^2(J)} \lesssim (1 + \|\tilde{F}(t)\|_{L^\infty_x})\|(I_F  \D_{x} P)(t)\|_{L_x^2(J)} \leq \lambda^{\frac{3}{8}}\FF(M(t))Z(t).
\end{equation}

Using again the top row of (\ref{eqn:integratedeqn}),
$$\D_{x}X^t = \lambda^{\frac{3}{4}}I + \int_{s_0}^t \tilde{H}_{\xi x}(r) \D_{x}X^r + \tilde{H}_{\xi\xi}(r)\D_{x}\Xi^r \, dr,$$
we apply $L_x^2(J)$, and use (\ref{goodlocalx}, (\ref{badlocalx}), Lemma \ref{basicHbd}, and Cauchy-Schwarz to obtain
\begin{align*}
\|\D_{x}X^t - \lambda^{\frac{3}{4}}I\|_{L_x^2(J)} &\lesssim \int_{s}^t \|\tilde{H}_{\xi x}(r)\|_{L^\infty} \|\D_{x}X^r\|_{L_x^2(J)} + \|\tilde{H}_{\xi\xi}(r)\|_{L^\infty} \|\D_{x}\Xi^r\|_{L_x^2(J)} \, dr \\
&\leq \lambda^{\frac{3}{8}}T^{\half}\FF(T).
\end{align*}
Combining this with (\ref{specialtime}),
\begin{equation}\label{switchlocalx}
\|\tilde{F}(s_0)(\D_{x}X^t - \lambda^{\frac{3}{4}}I)\|_{L_x^2(J)} \leq T^{-\half}\lambda^{0-}\lambda^{\frac{3}{8}}T^{\half}\FF(T) \ll \lambda^{\frac{3}{8}}.
\end{equation}


Returning next to the bottom row of (\ref{prelocalx}), we obtain
$$\|\tilde{F}(t)\D_x X^t + \D_x \Xi^t\|_{L_x^2(J)} \ll \lambda^{\frac{3}{8}}.$$
Setting $t = s_0$ and applying (\ref{switchlocalx}),
$$\|\lambda^{\frac{3}{4}}\tilde{F}(s_0) + \D_x \Xi^{s_0}\|_{L_x^2(J)} \ll \lambda^{\frac{3}{8}}.$$
Lastly, using the (global) $L^2$ estimate of Corollary \ref{finalstructure} (here using as well as the bilipschitz property (\ref{goodlocalx}) since $F(s_0) = F(s_0, x^{s_0}, \xi^{s_0})$), we obtain
$$\|\D_x \Xi^{s_0}\|_{L_x^2(J)} \ll \lambda^{\frac{3}{8}},$$
completing the two estimates of $(ii)$.


\end{proof}

We also require estimates on $\D_\xi P(t)$ to exhibit the spreading of the characteristics, which we later use to show dispersive estimates:

\begin{prop}\label{spreadprop}
Let $s_0 \in I$ be as in Proposition \ref{bilipprop}. Then for solutions $(x^t, \xi^t)$ to (\ref{hamilton}) with initial data $(x, \xi)$ satisfying $|\xi| \in [\lambda/2, 2\lambda]$, at initial time $s \in I$,
$$\|\D_\xi \Xi^{s_0} - \lambda^{-\frac{3}{4}}I\|_{L^\infty_x} \ll \lambda^{-\frac{3}{4}}.$$
Further, if $t \geq s$,
$$-\D_{\xi}X^t  \gtrsim \lambda^{-\frac{3}{4}}(t - s),$$
and if $s > t$,
$$\D_{\xi}X^t  \gtrsim \lambda^{-\frac{3}{4}}(s - t).$$
In either case,
$$\|\D_{\xi}X^t\|_{L^\infty_x} \leq \lambda^{-\frac{3}{4}}\FF(T)|t - s|.$$
\end{prop}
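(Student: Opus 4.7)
The plan is to adapt the proof of Proposition \ref{bilipprop} to the $\xi$-linearization, working with the same integration identity $\frac{d}{dt}(I_F\D_\xi P)(t)=(A\D_\xi P)(t)$ but now with initial data $(I_F\D_\xi P)(s)=\lambda^{-\frac{3}{4}}(0,I)^T$, reflecting $\D_\xi x^s=0$ and $\D_\xi\xi^s=I$. Gronwall in $L^\infty_x$ combined with (\ref{expest}) yields the crude estimate $\|(I_F\D_\xi P)(t)\|_{L^\infty_x}\lesssim\lambda^{-\frac{3}{4}}$ uniformly on $I$, which gives $\|\D_\xi X^t\|_{L^\infty_{t,x}}\lesssim\lambda^{-\frac{3}{4}}$ from the top row, and $\|\D_\xi\Xi^t\|_{L^\infty_x}\lesssim\lambda^{-\frac{3}{4}}(1+\|\tilde{F}(t)\|_{L^\infty_x})$ from the bottom row, a bound which is uniformly $L^2_t$-bounded by $\lambda^{-\frac{3}{4}}$ via Corollary \ref{finalstructure}.

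The estimate $\|\D_\xi\Xi^{s_0}-\lambda^{-\frac{3}{4}}I\|_{L^\infty_x}\ll\lambda^{-\frac{3}{4}}$ follows from the $\tilde{F}$-integration trick applied to the bottom-row ODE: writing $\tilde{H}_{xx}=\dot{\tilde{F}}+\tilde{G}$ and integrating by parts in $r$ from $s$ to $s_0$ (using $\D_\xi X^s=0$) yields
$$\D_\xi\Xi^{s_0}+\tilde{F}(s_0)\D_\xi X^{s_0}-\lambda^{-\frac{3}{4}}I=\int_s^{s_0}\bigl(\tilde{F}\tilde{H}_{\xi x}\D_\xi X^r+\tilde{F}\tilde{H}_{\xi\xi}\D_\xi\Xi^r-\tilde{G}\D_\xi X^r-\tilde{H}_{x\xi}\D_\xi\Xi^r\bigr)\,dr.$$
The boundary $\tilde{F}(s_0)\D_\xi X^{s_0}$ is $\ll\lambda^{-\frac{3}{4}}$ since $\|\tilde{F}(s_0)\|_{L^\infty_x}\ll T^{-1/2}\lambda^{0-}$ by (\ref{specialtime}) and $\|\D_\xi X^{s_0}\|_{L^\infty_x}\lesssim\lambda^{-\frac{3}{4}}$, while each integrand term contributes $\ll\lambda^{-\frac{3}{4}}$ after integration via Corollary \ref{finalstructure}, Lemma \ref{basicHbd}, and Cauchy-Schwarz in time (using $\|\tilde{F}\|_{L^2_tL^\infty_x}\leq\lambda^{0-}\FF(T)$, $\|\tilde{G}\|_{L^1_tL^\infty_x}\leq\lambda^{0-}\FF(T)$, and $\|\tilde{H}_{x\xi}\|_{L^2_tL^\infty_x}\leq\FF(T)$).

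The upper bound $\|\D_\xi X^t\|_{L^\infty_x}\leq\lambda^{-\frac{3}{4}}\FF(T)|t-s|$ then follows by integrating the top row of (\ref{eqn:linearizedhamscaled}) from $\D_\xi X^s=0$,
$$\D_\xi X^t=\int_s^t\tilde{H}_{\xi x}(r)\D_\xi X^r+\tilde{H}_{\xi\xi}(r)\D_\xi\Xi^r\,dr,$$
and running a bootstrap Gronwall argument. Assuming $\|\D_\xi X^r\|_{L^\infty_x}\leq C_0\lambda^{-\frac{3}{4}}\FF(T)(r-s)$ on $[s,t]$, the identity of the previous paragraph (applied at each $r$ in place of $s_0$) upgrades the $L^2_t$ bound on $\D_\xi\Xi^r$ to $\int_s^t\|\D_\xi\Xi^r\|_{L^\infty_x}\,dr\lesssim\lambda^{-\frac{3}{4}}(t-s)$, after which $\|\tilde H_{\xi\xi}\|_{L^\infty_{t,x}}\leq\FF(T)$ and Cauchy-Schwarz on $\tilde{H}_{\xi x}\in L^2_tL^\infty_x$ recover the bootstrap with a smaller constant.

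For the sign-dependent lower bounds, I use that Lemma \ref{freqpres} preserves $|\xi^r|\approx\lambda$ on $I$, and the Taylor sign condition then yields $-\tilde{H}_{\xi\xi}(r)\geq c\sqrt{a_{min}}>0$ uniformly. Writing for $t\geq s$,
$$-\D_\xi X^t=\lambda^{-\frac{3}{4}}\int_s^t(-\tilde{H}_{\xi\xi}(r))\,dr-\int_s^t\tilde{H}_{\xi x}(r)\D_\xi X^r\,dr-\int_s^t\tilde{H}_{\xi\xi}(r)(\D_\xi\Xi^r-\lambda^{-\frac{3}{4}}I)\,dr,$$
the main term is bounded below by $c\sqrt{a_{min}}\lambda^{-\frac{3}{4}}(t-s)$. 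The first error is $\lesssim\lambda^{-\frac{3}{4}}\FF(T)|t-s|^{3/2}$, absorbable via $|t-s|^{1/2}\leq T^{1/2}$. For the second error, the identity of the second paragraph (again at generic $r$) together with the $L^2_t$ bound on $\tilde{F}$ and the upper bound on $\D_\xi X^r$ yields $\int_s^t\|\D_\xi\Xi^r-\lambda^{-\frac{3}{4}}I\|_{L^\infty_x}\,dr\lesssim\lambda^{-\frac{3}{4}}(t-s)(T^{1/2}+\lambda^{0-})\FF(T)$, also absorbable. The case $t<s$ is symmetric. The main obstacle is precisely this last step: unlike in Proposition \ref{bilipprop}, where $\tilde{F}(s_0)$ is pointwise small enough to iterate crude bounds directly to the target ones, here $\tilde{F}(r)$ at a generic interior time is only $L^2_t$-bounded in $L^\infty_x$, so the deviation $\D_\xi\Xi^r-\lambda^{-\frac{3}{4}}I$ is small only after integration; the final closure relies on the smallness of $T^{1/2}\FF(T)$ and $\lambda^{0-}$ relative to the main term of size $\lambda^{-\frac{3}{4}}(t-s)$.
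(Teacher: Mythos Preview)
Your argument is correct and follows the same overall strategy as the paper: linearize in $\xi$, apply the $I_F$-integration of $\tilde H_{xx}$, use Gronwall in $L^\infty_x$ with the estimates of Corollary~\ref{finalstructure} and Lemma~\ref{basicHbd}, and extract the lower bound from the Taylor sign condition via $-\tilde H_{\xi\xi}\gtrsim\sqrt{a_{min}}$.

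The paper's execution is cleaner in two places. First, rather than applying Gronwall to $(I_F\D_\xi P)(t)$ and then separately deriving the bottom-row identity at $s_0$, the paper subtracts the constant $\lambda^{-3/4}I_\xi$ and applies Gronwall directly to $(I_F\D_\xi P-\lambda^{-3/4}I_\xi)(t)$, with source $\lambda^{-3/4}(AI_F^{-1})I_\xi$. Since $\|AI_F^{-1}I_\xi\|_{L^1_tL^\infty_x}\le T^{1/2}\FF(T)$ by (\ref{expest}), this gives the \emph{uniform-in-$t$} smallness $\|\tilde F(t)\D_\xi X^t+\D_\xi\Xi^t-\lambda^{-3/4}I\|_{L^\infty_x}\ll\lambda^{-3/4}$ in one step. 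Second, writing this as $\D_\xi\Xi^r=\lambda^{-3/4}I+K(r)-\tilde F(r)\D_\xi X^r$ with $\|K(r)\|_{L^\infty_x}\ll\lambda^{-3/4}$ and substituting into the top-row equation yields a closed Gronwall inequality for $\D_\xi X^t$ alone, with coefficient $\tilde H_{\xi x}-\tilde H_{\xi\xi}\tilde F$ (an entry of $AI_F^{-1}$) and bounded source $\tilde H_{\xi\xi}(\lambda^{-3/4}I+K)$; this gives the upper bound $\|\D_\xi X^t\|_{L^\infty_x}\le\lambda^{-3/4}\FF(T)|t-s|$ directly, without your bootstrap. Feeding that back shows $\D_\xi X^t$ differs from $\int_s^t\tilde H_{\xi\xi}(r)(\lambda^{-3/4}I+K(r))\,dr$ by $\ll\lambda^{-3/4}|t-s|$, from which the sign-dependent lower bounds follow immediately. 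Your bootstrap closes correctly, but the substitution avoids it entirely.
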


\begin{proof}
Similar to (\ref{eqn:linearizedhamscaled}), we have
\begin{equation*}
\frac{d}{dt} \D_{\xi} P(t) = \begin{pmatrix}
\tilde{H}_{\xi x}(t, x^t, \xi^t) & \tilde{H}_{\xi \xi}(t, x^t, \xi^t) \\
-\tilde{H}_{xx}(t, x^t, \xi^t) & -\tilde{H}_{x \xi}(t, x^t, \xi^t)
\end{pmatrix} \D_{\xi} P(t).
\end{equation*}

\emph{Step 1.} We first establish $L_x^\infty$ estimates, uniform in time. In analogy to (\ref{eqn:integratedeqn}), we can integrate $\tilde{H}_{xx}$ to write
\begin{align*}
(I_F  \D_{\xi} P)(t) &= (I_F \D_{\xi} P)(s) + \int_{s}^t (AI_F^{-1})(r)(I_F\D_{\xi} P)(r) \, dr.
\end{align*}
Observing that
$$(I_F \D_{\xi} P)(s) =\begin{pmatrix}
0 \\
\lambda^{-\frac{3}{4}}I
\end{pmatrix} := \lambda^{-\frac{3}{4}}I_\xi,$$
we may write
\begin{align}\label{integratedeqn2}
(I_F  \D_{\xi} P - \lambda^{-\frac{3}{4}}I_\xi)(t)
= \ &\int_{s}^t (AI_F^{-1})(r)(I_F\D_{\xi} P - \lambda^{-\frac{3}{4}}I_\xi)(r)+ \lambda^{-\frac{3}{4}}(AI_F^{-1})(r)I_\xi \, dr 
\end{align}

Applying uniform norms, Gronwall, and a subset of the computations ending in (\ref{expest}), we have
\begin{align*}
\|(I_F  \D_{\xi} P - \lambda^{-\frac{3}{4}}I_\xi)(t)\|_{L^\infty_x} &\lesssim \lambda^{-\frac{3}{4}}\|AI_F^{-1}I_\xi\|_{L_t^1(I; L^\infty_x)} \exp\left(\int_{s}^t \|(AI_F^{-1})(r)\|_{L^\infty_x} \, dr \right) \\
&\leq \lambda^{-\frac{3}{4}}T^{\half}\FF(T).
\end{align*}
From the top row, we have
\begin{equation*}
\|\D_\xi X^t\|_{L^\infty_x} \leq \lambda^{-\frac{3}{4}}T^{\half}\FF(T)
\end{equation*}
and from the bottom row,
\begin{equation}\label{spreadbdxi}
\|\tilde{F}(t)\D_\xi X^t + \D_\xi \Xi^t - \lambda^{-\frac{3}{4}}I\|_{L^\infty_x} \leq  \lambda^{-\frac{3}{4}}T^{\half}\FF(T) \ll \lambda^{-\frac{3}{4}}.
\end{equation}
In particular, combining these two estimates, setting $t = s_0$, and using (\ref{specialtime}), we find
$$\|\D_\xi \Xi^{s_0} - \lambda^{-\frac{3}{4}}I\|_{L^\infty_x} \leq \lambda^{-\frac{3}{4}}(T^{-\half}\lambda^{0-}T^{\half}\FF(T) + c) \ll \lambda^{-\frac{3}{4}}$$
as desired.

\emph{Step 2.} Next we show that $\D_\xi X^t$ is linear in the time $t$. From the top row of (\ref{integratedeqn2}),
$$\D_{\xi}X^t = \int_{s}^t \tilde{H}_{\xi x}(r) \D_{\xi}X^r + \tilde{H}_{\xi\xi}(r)\D_{\xi}\Xi^r \, dr.$$
Substituting the formula for $\D_\xi \Xi^t$ from (\ref{spreadbdxi}), we have
\begin{equation}\label{integratedeqn3}
\D_{\xi}X^t = \int_{s}^t (\tilde{H}_{\xi x} - \tilde{H}_{\xi\xi} \tilde{F})(r) \D_{\xi}X^r + \tilde{H}_{\xi\xi}(r)(\lambda^{-\frac{3}{4}}I + K(r)) \, dr
\end{equation}
where
$$\|K(t)\|_{L^\infty_x} \ll \lambda^{-\frac{3}{4}}.$$
Using Gronwall followed by Lemma \ref{basicHbd} and (\ref{expest}) (noting that $\tilde{F}\tilde{H}_{\xi\xi}-\tilde{H}_{x \xi}$ is one of the coefficients of $AI_F^{-1}$), we have
\begin{align*}
\|\D_\xi X^t\|_{L^\infty_x} &\lesssim |t - s|\|\tilde{H}_{\xi\xi}(t)(\lambda^{-\frac{3}{4}}I + K(t))\|_{L^\infty_t(I;L_x^\infty)} \exp \left(\int_{s}^t \|(AI_F^{-1})(r)\|_{L^\infty_x} \, dr \right) \\
&\leq \lambda^{-\frac{3}{4}}\FF(T)|t - s|
\end{align*}
as desired.

In turn, using this estimate in again (\ref{integratedeqn3}), we can write, using Corollary \ref{finalstructure}, Lemma \ref{basicHbd}, and Cauchy-Schwarz,
$$\|\D_{\xi}X^t - \int_s^t \tilde{H}_{\xi\xi}(r)(\lambda^{-\frac{3}{4}}I + K(r)) \, dr\|_{L^\infty_x} \leq T^\half \lambda^{-\frac{3}{4}}\FF(T)|t - s|\ll \lambda^{-\frac{3}{4}}|t - s|.$$
Lastly, by the Taylor sign condition and Lemma \ref{freqpres},
$$-\tilde{H}_{\xi\xi} = \frac{1}{4}\lambda^{\frac{3}{2}}\sqrt{a_\lambda} |\xi^t|^{-\frac{3}{2}} \gtrsim \sqrt{a_{min}} > 0$$
so that if $t \geq s$,
$$- \int_s^t \tilde{H}_{\xi\xi}(r) \, dr > \sqrt{a_{min}}(t - s)$$
and if $s > t$,
$$\int_s^t \tilde{H}_{\xi\xi}(r) \, dr > \sqrt{a_{min}}(s - t).$$

We conclude by the triangle inequality that if $t > s$,
$$-\D_{\xi}X^t  \gtrsim \lambda^{-\frac{3}{4}}(t - s),$$
and if $s > t$,
$$\D_{\xi}X^t  \gtrsim \lambda^{-\frac{3}{4}}(s - t).$$
\end{proof}

\subsection{Geometry of the characteristics}

In this subsection we discuss the geometry of the characteristics $(x^t, \xi^t)$. Throughout, $s_0 \in I$ denotes the $s_0$ discussed in Propositions \ref{bilipprop} and \ref{spreadprop}.

First we observe that if two characteristics intersect at time $t$ with similar frequencies, then they must have had similar initial frequencies.

\begin{prop}\label{charoverlap}
For solutions $(x^t, \xi^t)$ to (\ref{hamilton}) with initial data $(x_i, \xi_i)$ at initial time $s_0 \in I$, if
$$|x^t(x_1, \xi_1) - x^t(x_2, \xi_2)| \leq \lambda^{-\frac{3}{4}},$$
then 
$$|\xi_1 - \xi_2| \leq 2|\xi^t(x_1, \xi_1) - \xi^t(x_2, \xi_2)| + \lambda^{\frac{3}{4}}.$$
\end{prop}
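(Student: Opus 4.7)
The plan is to invert the flow and then decompose the frequency difference into a spatial contribution (controlled by the local $L^2$ estimate on $\D_x \Xi^{s_0}$ from Proposition~\ref{bilipprop}(ii)) and a frequency contribution (controlled by the spreading estimate on $\D_\xi \Xi^{s_0}$ from Proposition~\ref{spreadprop}).

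Concretely, set $\tilde{x}_i = x^t(x_i, \xi_i)$ and $\tilde{\xi}_i = \xi^t(x_i, \xi_i)$, so by the group property of the Hamilton flow, $\xi_i = \xi^{s_0}_t(\tilde{x}_i, \tilde{\xi}_i)$. I would then split
$$\xi_1 - \xi_2 = \bigl(\xi^{s_0}_t(\tilde{x}_1, \tilde{\xi}_1) - \xi^{s_0}_t(\tilde{x}_2, \tilde{\xi}_1)\bigr) + \bigl(\xi^{s_0}_t(\tilde{x}_2, \tilde{\xi}_1) - \xi^{s_0}_t(\tilde{x}_2, \tilde{\xi}_2)\bigr)$$
and estimate the two pieces separately. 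By Lemma~\ref{freqpres}, the intermediate data $\tilde{\xi}_1$ still satisfies $|\tilde{\xi}_1| \approx \lambda$, so Propositions~\ref{bilipprop} and~\ref{spreadprop} (applied with initial time $s = t$ in those propositions) are available to us.

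For the first piece, let $J$ be the interval of length $|\tilde{x}_1 - \tilde{x}_2| \leq \lambda^{-3/4}$ with endpoints $\tilde{x}_1, \tilde{x}_2$. Writing the difference as $\int_{\tilde{x}_2}^{\tilde{x}_1} \D_x \xi^{s_0}_t(y, \tilde{\xi}_1)\, dy$ and using $\D_x \xi^{s_0}_t = \lambda^{3/4} \D_x \Xi^{s_0}_t$ together with Proposition~\ref{bilipprop}(ii) and Cauchy-Schwarz,
$$|\xi^{s_0}_t(\tilde{x}_1, \tilde{\xi}_1) - \xi^{s_0}_t(\tilde{x}_2, \tilde{\xi}_1)| \leq |\tilde{x}_1 - \tilde{x}_2|^{1/2} \lambda^{3/4}\|\D_x \Xi^{s_0}_t\|_{L^2(J)} \ll \lambda^{-3/8} \cdot \lambda^{9/8} = \lambda^{3/4}.$$
For the second piece, express it as $\int_{\tilde{\xi}_2}^{\tilde{\xi}_1} \D_\xi \xi^{s_0}_t(\tilde{x}_2, \zeta)\, d\zeta$. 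Since $\D_\xi \xi^{s_0}_t = \lambda^{3/4} \D_\xi \Xi^{s_0}_t$, Proposition~\ref{spreadprop} gives $\|\D_\xi \xi^{s_0}_t\|_{L^\infty_x} \leq 1 + o(1) < 2$ (for $\lambda$ large, and for $\zeta$ in the range $|\zeta| \approx \lambda$ ensured by Lemma~\ref{freqpres}), so
$$|\xi^{s_0}_t(\tilde{x}_2, \tilde{\xi}_1) - \xi^{s_0}_t(\tilde{x}_2, \tilde{\xi}_2)| \leq 2|\tilde{\xi}_1 - \tilde{\xi}_2|.$$
Adding the two estimates yields the proposition.

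The plan is essentially routine once the right decomposition is identified. The only mild subtlety is checking that the intermediate point $(\tilde{x}_2, \tilde{\xi}_1)$ and the interpolating data satisfy the $|\xi| \in [\lambda/2, 2\lambda]$ hypothesis required by Propositions~\ref{bilipprop}(ii) and~\ref{spreadprop}, which as noted is handled by Lemma~\ref{freqpres}. No other obstacle arises.
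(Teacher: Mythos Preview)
Your proof is correct and follows essentially the same approach as the paper: invert the flow to write $\xi_i = \xi^{s_0}_t(\tilde{x}_i, \tilde{\xi}_i)$, split the difference via an intermediate corner point, and apply Proposition~\ref{bilipprop}(ii) to the spatial piece and Proposition~\ref{spreadprop} to the frequency piece. The only cosmetic difference is that the paper routes through $(\tilde{x}_1, \tilde{\xi}_2)$ rather than your $(\tilde{x}_2, \tilde{\xi}_1)$, and leaves the Cauchy--Schwarz step implicit.
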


\begin{proof}
Write $\xi^{s_0} = \xi_t^{s_0}$ and
\begin{equation}\label{startxi}
\xi_1 - \xi_2 = \xi^{s_0}(x^t(x_1, \xi_1), \xi^t(x_1, \xi_1)) - \xi^{s_0}(x^t(x_2, \xi_2), \xi^t(x_2, \xi_2)).
\end{equation}
Using the fundamental theorem of calculus and $(ii)$ in Proposition \ref{bilipprop} with inital time $s = t$, we have (assuming without loss of generality that $x^t(x_1, \xi_1) \leq x^t(x_2, \xi_2)$)
\begin{align*}
|\xi^{s_0}(x^t(x_2, \xi_2), \xi^t(x_2, \xi_2)) - \xi^{s_0}(x^t(x_1, \xi_1), \xi^t(x_2, \xi_2))| &\leq \int_{x^t(x_1, \xi_1)}^{x^t(x_2, \xi_2)} |\D_x \xi^{s_0}(z, \xi^t(x_2, \xi_2))| \, dz \\
&\ll \lambda^{\frac{3}{4}}.
\end{align*}
On the other hand, using Proposition \ref{spreadprop},
\begin{align*}
|\xi^{s_0}(x^t(x_1, \xi_1), \xi^t(x_1, \xi_1)) - \xi^{s_0}(x^t(x_1, \xi_1), \xi^t(x_2, \xi_2))| &\leq \int^{\xi^t(x_2, \xi_2)}_{\xi^t(x_1, \xi_1)} |\D_\xi \xi^{s_0} (x^t(x_1, \xi_1), \eta)| \, d\eta \\
&\leq 2|\xi^t(x_1, \xi_1) - \xi^t(x_2, \xi_2)|.
\end{align*}
Using these two estimates with (\ref{startxi}), we obtain
$$|\xi_1 - \xi_2| \leq 2|\xi^t(x_1, \xi_1) - \xi^t(x_2, \xi_2)| + \lambda^{\frac{3}{4}}.$$
\end{proof}

Next, we observe that two characteristics which intersect at two distant times must have similar initial frequencies:

\begin{prop}\label{char2pts}
For solutions $(x^t, \xi^t)$ to (\ref{hamilton}) with initial data $(x_i, \xi_i)$ at initial time $s_0 \in I$, if
$$|x^s(x_1, \xi_1) - x^s(x_2, \xi_2)| \leq \lambda^{-\frac{3}{4}}, \quad |x^t(x_1, \xi_1) - x^t(x_2, \xi_2)| \leq \lambda^{-\frac{3}{4}}$$
for $t, s \in I$,
then 
$$|\xi_1 - \xi_2| \lesssim \lambda^{\frac{3}{4}}|t - s|^{-1}.$$
\end{prop}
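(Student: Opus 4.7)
The plan is to apply Proposition \ref{charoverlap} at the intermediate time $s$ to reduce the problem to bounding the frequency spread at time $s$, and then to extract that bound from the second hypothesis by exploiting the transversal spreading (Proposition \ref{spreadprop}) of the Hamilton flow from time $s$ to time $t$.

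First I would apply Proposition \ref{charoverlap} to the hypothesis at time $s$, obtaining
\begin{equation*}
|\xi_1 - \xi_2| \leq 2|\eta_1 - \eta_2| + \lambda^{\frac{3}{4}},
\end{equation*}
where I write $y_i := x^s(x_i, \xi_i)$ and $\eta_i := \xi^s(x_i, \xi_i)$. By Lemma \ref{freqpres}, $|\eta_i| \approx \lambda$, so the flow $(x^t_s, \xi^t_s)$ with initial data $(y_i, \eta_i)$ at initial time $s$ falls within the hypotheses of Propositions \ref{bilipprop} and \ref{spreadprop}. The second hypothesis of the proposition rewrites as $|x^t_s(y_1, \eta_1) - x^t_s(y_2, \eta_2)| \leq \lambda^{-\frac{3}{4}}$, so it remains to prove $|\eta_1 - \eta_2| \lesssim \lambda^{\frac{3}{4}}|t - s|^{-1}$.

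For this, I decompose
\begin{equation*}
x^t_s(y_1, \eta_1) - x^t_s(y_2, \eta_2) = \bigl(x^t_s(y_1, \eta_1) - x^t_s(y_2, \eta_1)\bigr) + \int_{\eta_2}^{\eta_1} \D_\eta x^t_s(y_2, \zeta)\, d\zeta.
\end{equation*}
Choosing $J \ni y_1, y_2$ with $|J| \lesssim \lambda^{-\frac{3}{4}}$, Proposition \ref{bilipprop}(ii) with initial time $s$ (together with $\D_y x^t_s = \lambda^{-\frac{3}{4}} \D_y X^t_s$) gives $\|\D_y x^t_s(\cdot, \eta_1)\|_{L^2(J)} \lesssim \lambda^{-\frac{3}{8}}$, so Cauchy--Schwarz yields
\begin{equation*}
|x^t_s(y_1, \eta_1) - x^t_s(y_2, \eta_1)| \leq |y_1 - y_2|^{\frac{1}{2}} \|\D_y x^t_s(\cdot, \eta_1)\|_{L^2(J)} \lesssim \lambda^{-\frac{3}{4}}.
\end{equation*}
For the integral, Proposition \ref{spreadprop} applied at initial time $s$ gives the pointwise, sign-definite lower bound $|\D_\eta x^t_s| \gtrsim \lambda^{-\frac{3}{2}}|t - s|$, so
\begin{equation*}
\left|\int_{\eta_2}^{\eta_1} \D_\eta x^t_s(y_2, \zeta)\, d\zeta\right| \gtrsim \lambda^{-\frac{3}{2}}|t - s|\,|\eta_1 - \eta_2|.
\end{equation*}

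Since the integral has a definite sign, a triangle inequality comparison with the bounded first bracket gives $\lambda^{-\frac{3}{2}}|t - s|\,|\eta_1 - \eta_2| \lesssim \lambda^{-\frac{3}{4}}$, i.e.\ $|\eta_1 - \eta_2| \lesssim \lambda^{\frac{3}{4}}|t - s|^{-1}$. Feeding this into the first step and absorbing the extra $\lambda^{\frac{3}{4}}$ term (using $|t - s| \leq T \lesssim 1$) produces the claimed bound. The main technical subtlety is that Proposition \ref{bilipprop}(i), which supplies a pointwise bilipschitz bound, only applies at the distinguished reference time $s_0$; for general initial time $s$ one is restricted to the $L^2$-averaged version in (ii), but the spatial scale $|J| \lesssim \lambda^{-\frac{3}{4}}$ is exactly matched to this estimate so that Cauchy--Schwarz delivers the pointwise control needed on the first bracket.
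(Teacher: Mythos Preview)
Your proof is correct and follows essentially the same approach as the paper: both decompose $x^t_s(y_1,\eta_1) - x^t_s(y_2,\eta_2)$ into a spatial piece controlled via Proposition~\ref{bilipprop}(ii) and a frequency piece controlled via the sign-definite lower bound in Proposition~\ref{spreadprop}, then feed the resulting bound on $|\eta_1-\eta_2|$ through Proposition~\ref{charoverlap}. The only cosmetic difference is that you invoke Proposition~\ref{charoverlap} at the outset rather than at the end, and you make the Cauchy--Schwarz step explicit where the paper leaves it implicit.
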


\begin{proof}
Write $x^r = x^r_{s_0}$ unless otherwise indicated, and
\begin{equation}\label{start2pts}
x^t(x_1, \xi_1) - x^t(x_2, \xi_2) = x^t_s(x^s(x_1, \xi_1), \xi^s(x_1, \xi_1)) - x^t_s(x^s(x_2, \xi_2), \xi^s(x_2, \xi_2)).
\end{equation}

Using the fundamental theorem of calculus and $(ii)$ in Proposition \ref{bilipprop},
\begin{align*}
|x^t_s(x^s(x_2, \xi_2), \xi^s(x_2, \xi_2)) - x^t_s(x^s(x_1, \xi_1), \xi^s(x_2, \xi_2))| &\leq \int_{x^s(x_1, \xi_1)}^{x^s(x_2, \xi_2)} |\D_x x^t_s(z, \xi^s(x_2, \xi_2))| \, dz \\
&\lesssim \lambda^{-\frac{3}{4}}.
\end{align*}
On the other hand, considering without loss of generality the case $t \geq s$ and $\xi^s(x_2, \xi_2) \geq \xi^s(x_1, \xi_1)$, we may use Proposition \ref{spreadprop} to estimate
\begin{align*}
x^t_s(x^s(x_1, \xi_1), \xi^s(x_1, \xi_1)) - x^t_s(x^s(x_1, \xi_1), \xi^s(x_2, \xi_2))  &= -\int_{\xi^s(x_1, \xi_1)}^{\xi^s(x_2, \xi_2)} \D_\xi x^t_s(x^s(x_1, \xi_1), \eta) \, d\eta \\
&\gtrsim\lambda^{-\frac{3}{2}}(\xi^s(x_2, \xi_2) - \xi^s(x_1, \xi_1))(t - s).
\end{align*}
Using these two estimates with (\ref{start2pts}), we obtain
$$|x^t(x_1, \xi_1) - x^t(x_2, \xi_2)| + \lambda^{-\frac{3}{4}} \gtrsim \lambda^{-\frac{3}{2}}(\xi^s(x_2, \xi_2) - \xi^s(x_1, \xi_1))(t - s).$$

We have an upper bound on the left hand side by assumption, and a lower bound on the right hand side by Proposition \ref{charoverlap}:
$$2\lambda^{-\frac{3}{4}} \gtrsim \half\lambda^{-\frac{3}{2}} (|\xi_1 - \xi_2| - \lambda^{\frac{3}{4}})(t - s).$$
Rearranging yields the claim.

\end{proof}

Conversely, we see that if two packets  intersect at a given time, then they intersect at nearby times:

\begin{prop}\label{charnearpts}
For solutions $(x^t, \xi^t)$ to (\ref{hamilton}) with initial data $(x_i, \xi_i)$ at initial time $s_0 \in I$,
$$|x^t(x_1, \xi_1) - x^t(x_2, \xi_2)| \lesssim |x^s(x_1, \xi_1) - x^s(x_2, \xi_2)| + \lambda^{-\half}\FF(T)|t - s|.$$
\end{prop}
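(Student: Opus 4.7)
The plan is to derive a Gronwall-type estimate for the scalar function $f(t) := x^t(x_1, \xi_1) - x^t(x_2, \xi_2)$ by differentiating in $t$. The critical observation is that the resulting ODE couples $f(t)$ linearly to itself (with a $Z(t)$-Lipschitz coefficient coming from $V_\lambda$) and also involves the frequency separation $|\xi^t_1 - \xi^t_2|$, but the latter enters weighted by $H_{\xi\xi}$, whose $\lambda^{-3/2}$ size combined with the crude $O(\lambda)$ frequency bound produces exactly the target $\lambda^{-1/2}$ contribution.

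First, I would differentiate in $t$ using the Hamilton equations (\ref{hamilton}) and apply the fundamental theorem of calculus to split the difference of $H_\xi$ into its $x$- and $\xi$-pieces. Writing $(x^t_i, \xi^t_i) = (x^t(x_i, \xi_i), \xi^t(x_i, \xi_i))$, this produces
\[
|\dot{f}(t)| \leq \|H_{\xi x}(t)\|_{L^\infty} |f(t)| + \|H_{\xi\xi}(t)\|_{L^\infty} |\xi^t_1 - \xi^t_2|,
\]
where the sup norms are taken over a neighborhood of $\{|\xi| \approx \lambda\}$ within which Lemma \ref{freqpres} confines the flow. Lemma \ref{basicHbd} then supplies $\|H_{\xi x}(t)\|_{L^\infty} \leq \FF(M(t))Z(t)$ and $\|H_{\xi\xi}(t)\|_{L^\infty} \lesssim \lambda^{-3/2}\FF(M(t))$, while Lemma \ref{freqpres} gives the crude bound $|\xi^t_1 - \xi^t_2| \lesssim \lambda$. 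Together these yield
\[
|\dot{f}(t)| \lesssim \FF(M(t))\bigl(Z(t)|f(t)| + \lambda^{-1/2}\bigr).
\]

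Integrating from $s$ to $t$ and running Gronwall's inequality will then give
\[
|f(t)| \lesssim \bigl(|f(s)| + \lambda^{-1/2} \FF(T)|t-s|\bigr)\exp\bigl(\FF(T) \|Z\|_{L^1(I)}\bigr),
\]
and by Cauchy-Schwarz, $\|Z\|_{L^1(I)} \leq T^{1/2}\|Z\|_{L^2(I)} \leq T^{1/2}\FF(T)$. The main obstacle I anticipate is purely a bookkeeping one: the Gronwall exponential must reduce to an absolute constant, so as not to spoil the bare $\lesssim$ in front of $|f(s)|$ in the conclusion. This will be handled exactly as in Proposition \ref{bilipprop}, by assuming $T$ small enough (or iterating over finitely many subintervals of $I$) so that $T^{1/2}\FF(T) \ll 1$, consistent with the convention set after Lemma \ref{freqpres}.
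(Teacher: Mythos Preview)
Your proof is correct and takes a genuinely different, more elementary route than the paper's.

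The paper's argument uses the semigroup property to write
\[
x^t(x_1,\xi_1) - x^t(x_2,\xi_2) = x^t_s(x^s_1,\xi^s_1) - x^t_s(x^s_2,\xi^s_2),
\]
then splits this difference into an $x$-variation and a $\xi$-variation of the map $x^t_s(\cdot,\cdot)$. The $x$-variation is controlled via the $L^2_x$ bilipschitz estimate of Proposition~\ref{bilipprop}(ii), and the $\xi$-variation via the last estimate of Proposition~\ref{spreadprop}, $\|\D_\xi X^t\|_{L^\infty_x} \leq \lambda^{-3/4}\FF(T)|t-s|$, combined with the crude $|\xi^s_1 - \xi^s_2| \lesssim \lambda$ from Lemma~\ref{freqpres}. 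In other words, the paper reuses the linearized-flow machinery already established (which in turn relied on the integration structure of Section~\ref{COVsec}).

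You instead differentiate $f(t) = x^t_1 - x^t_2$ directly and close with a scalar Gronwall argument using only the pointwise symbol bounds of Lemma~\ref{basicHbd} and the frequency confinement of Lemma~\ref{freqpres}. This is entirely self-contained: it bypasses Propositions~\ref{bilipprop} and~\ref{spreadprop} and hence the delicate integration of $H_{xx}$ that underlies them. The trade-off is that your route is specific to this particular estimate (where only $H_{\xi x}$ and $H_{\xi\xi}$ enter, and both are tame), whereas the paper's route is a uniform application of the linearized-flow estimates it has already invested in. Both arguments give exactly the claimed bound.
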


\begin{proof}
Write $x^r = x^r_{s_0}$ unless otherwise indicated, and
\begin{equation}\label{startnearpts}
x^t(x_1, \xi_1) - x^t(x_2, \xi_2) = x^t_s(x^s(x_1, \xi_1), \xi^s(x_1, \xi_1)) - x^t_s(x^s(x_2, \xi_2), \xi^s(x_2, \xi_2)).
\end{equation}

Using the fundamental theorem of calculus and $(ii)$ in Proposition \ref{bilipprop},
\begin{align*}
x^t_s(x^s(x_2, \xi_2), \xi^s(x_2, \xi_2)) - x^t_s(x^s(x_1, \xi_1), \xi^s(x_2, \xi_2)) &= \int_{x^s(x_1, \xi_1)}^{x^s(x_2, \xi_2)} \D_x x^t_s(z, \xi^s(x_2, \xi_2)) \, dz \\
&\approx  x^s(x_2, \xi_2) - x^s(x_1, \xi_1).
\end{align*}
On the other hand, we may use the last estimate of Proposition \ref{spreadprop} to obtain
\begin{align*}
|x^t_s(x^s(x_1, \xi_1), \xi^s(x_1, \xi_1)) - x^t_s(x^s(x_1, \xi_1), \xi^s(x_2, \xi_2))| &\leq \int_{\xi^s(x_1, \xi_1)}^{\xi^s(x_2, \xi_2)} |\D_\xi x^t_s(x^s(x_1, \xi_1), \eta)| \, d\eta \\
&\leq \lambda^{-\half}\FF(T)|t - s|.
\end{align*}
Using these two estimates with (\ref{startnearpts}), we obtain
$$|x^t(x_1, \xi_1) - x^t(x_2, \xi_2)| \lesssim |x^s(x_1, \xi_1) - x^s(x_2, \xi_2)| + \lambda^{-\half}\FF(T)|t - s|.$$

\end{proof}

\subsection{Characteristic local smoothing}

The analysis so far has addressed the motion of the characteristics $(x^t, \xi^t)$ in physical space, and hence the motion of the wave packets we construct in Section \ref{sec:parametrix}. However, for the square summability of the wave packets in our parametrix construction, we will need control of an extra half derivative on the phase of the packets. We obtain this gain via local smoothing estimates on the characteristics.



Before establishing the local smoothing estimate, we establish a bilipschitz property for the Hamilton characteristics, but relative to initial data at a time $s_T \in I$ amenable to local smoothing:

\begin{lem}\label{bilipproplocal}
Let $T = (x_0, \xi_0)$ with $|\xi_0| \in [\lambda/2, 2\lambda]$. There exists $s_T \in I$ such that for solutions $(x^t, \xi^t)$ to (\ref{hamilton}) with initial data $(x, \xi)$ at initial time $s_T$, and satisfying $|\xi| \in [\lambda/2, 2\lambda]$,
$$\|w_{x^{s_T}, \lambda} (\D_x X^t - \lambda^{\frac{3}{4}}I)\|_{L^\infty_t(I; L^\infty_x)} \ll \lambda^{\frac{3}{4}}, \quad \|w_{x^{s_T}, \lambda}\tilde{F}(s_T)\|_{H_x^{\half+}} \ll T^{-\half}$$
where we denote $x^{s_T} = x^{s_T}_{s_0}(x_0, \xi_0)$.
\end{lem}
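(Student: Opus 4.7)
The plan is to closely parallel the proof of Proposition~\ref{bilipprop}, replacing the global $L^\infty_x$ norms by norms weighted by $w_{x^{s_T},\lambda}$, and replacing the special initial time $s_0$ by a new $s_T$ chosen to make the local Sobolev norm of $\tilde F$ small.

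\textbf{Step 1 (Choice of $s_T$).} Let $y(t) = x^t_{s_0}(x_0,\xi_0)$ denote the position of the $T$-characteristic and $\zeta(t) = \xi^t_{s_0}(x_0,\xi_0)$ its frequency. Define
\[
g(t) = \|w_{y(t),\lambda}\,\tilde F(t,\cdot,\zeta(t))\|_{H^{\half+}_x}.
\]
By Corollary~\ref{finalstructure} and $\tilde F = \lambda^{-\frac{3}{2}}F$,
\[
g(t) \lesssim \lambda^{0-}\FF(M(t))\bigl(Z(t) + \LL(t,y(t),\zeta(t),\lambda)\bigr).
\]
Using Corollary~\ref{wwlsspace} (which controls $\LL$ along the characteristic in $L^2_t$) together with $\|Z\|_{L^2_t(I)}\leq\FF(T)$, we obtain $\|g\|_{L^2_t(I)}\leq\lambda^{0-}\FF(T)$ and hence $\|g\|_{L^1_t(I)} \leq T^{\half}\lambda^{0-}\FF(T)$ by Cauchy--Schwarz. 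Pick $s_T\in I$ with $g(s_T)\leq T^{-1}\|g\|_{L^1_t(I)} \leq T^{-\half}\lambda^{0-}\FF(T) \ll T^{-\half}$. This is precisely the second estimate of the lemma, and it will feed the first estimate via Sobolev embedding $H^{\half+}\hookrightarrow L^\infty_x$, giving $\|w_{x^{s_T},\lambda}\tilde F(s_T)\|_{L^\infty_x}\ll T^{-\half}$.

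\textbf{Step 2 (Weighted Gronwall).} Apply the weight $w_{x^{s_T},\lambda}$ to the integrated equation~(\ref{eqn:integratedeqn}) with $s=s_T$. Since the weight is an $x$-multiplier it commutes with matrix multiplication, so
\[
w_{x^{s_T},\lambda}(I_F\D_x P)(t) = w_{x^{s_T},\lambda}(I_F\D_x P)(s_T) + \int_{s_T}^t (AI_F^{-1})(r)\,w_{x^{s_T},\lambda}(I_F\D_x P)(r)\,dr.
\]
Taking $L^\infty_x$ and applying Gronwall with the exponential bound~(\ref{expest}) already established in the global proof, then multiplying through by $I_F^{-1}(t)$, we arrive at the local analog of~(\ref{fixedtime}):
\[
\|w_{x^{s_T},\lambda}\D_x P(t)\|_{L^\infty_x} \lesssim \lambda^{\frac{3}{4}}\bigl(1+\|w_{x^{s_T},\lambda}\tilde F(s_T)\|_{L^\infty_x}\bigr)\bigl(1+\|\tilde F(t)\|_{L^\infty_x}\bigr).
\]
Step 1 plus Corollary~\ref{finalstructure} then yield $\|w_{x^{s_T},\lambda}\D_x P(t)\|_{L^\infty_x}\lesssim \lambda^{\frac{3}{4}}T^{-\half}\FF(M(t))Z(t)$, the local counterpart of~(\ref{eqn:firstbilip}).

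\textbf{Step 3 (Iteration).} Multiplying the top row of~(\ref{eqn:linearizedhamscaled}), integrated against $s_T$, by $w_{x^{s_T},\lambda}$ gives
\[
w_{x^{s_T},\lambda}(\D_x X^t - \lambda^{\frac{3}{4}}I) = \int_{s_T}^t \tilde H_{\xi x}(r)\,w_{x^{s_T},\lambda}\D_x X^r + \tilde H_{\xi\xi}(r)\,w_{x^{s_T},\lambda}\D_x \Xi^r\,dr.
\]
Using Lemma~\ref{basicHbd} (which provides global $L^\infty_x$ bounds on $\tilde H_{\xi x},\tilde H_{\xi\xi}$ with the right $L^2_t$ or $L^\infty_t$ structure) and Cauchy--Schwarz in $t$, we repeat the three-step bootstrap exactly as in Step~3 of Proposition~\ref{bilipprop}: the first iteration converts the pointwise Step~2 bound to $\lambda^{\frac{3}{4}}T^{-\half}\FF(T)$ in $L^\infty_t L^\infty_x$, the second removes the $T^{-\half}$ factor, and the third yields $\lambda^{\frac{3}{4}}T^{\half}\FF(T)\ll\lambda^{\frac{3}{4}}$, which is the first estimate of the lemma.

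The main obstacle is Step~1: producing a time $s_T$ at which $\tilde F$ is small \emph{in a local Sobolev norm centered precisely at the characteristic position}, not merely at a generic point. This is the reason we need the full local smoothing package of Section~\ref{sec:locsmooth}; Corollary~\ref{wwlsspace} supplies the $L^2_t$ control of $\LL(t,y(t),\zeta(t),\lambda)$ along the Hamilton flow, and this is exactly what is needed to close the averaging argument for $g(t)$. Once $s_T$ is chosen, Steps~2 and~3 are routine adaptations of the global argument because the weight commutes with the linear ODE structure and the exponential Gronwall factor from~(\ref{expest}) is already known to be bounded.
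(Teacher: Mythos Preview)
Your proposal is correct and follows essentially the same approach as the paper. The paper's proof is terse, simply saying ``similar to the proof of Proposition~\ref{bilipprop}, except we multiply by $w_{x^s,\lambda}$ before applying $L^\infty_x$'' and then giving the pigeonhole choice of $s_T$; your Steps~2--3 correctly expand the details the paper leaves implicit. One minor point: to bound $\|\LL(t,x^t,\xi^t,\lambda)\|_{L^2_t(I)}$ you need both Corollary~\ref{wwlsspaceeasy} (for the $LS^{s'}_{x^t,\lambda}$ part of $\LL$) and Corollary~\ref{wwlsspace} (for the $LS^\sigma_{x^t,\xi^t,\lambda,\mu}$ part), not just the latter, and the resulting bound carries a $\lambda^{0+}$ loss which is then absorbed by the $\lambda^{0-}$ from Corollary~\ref{finalstructure}.
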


\begin{proof}

The proof is similar to the proof of Proposition \ref{bilipprop}, except we multiply by $w_{x^s, \lambda}$ before applying $L_x^\infty$, with $s$ to be chosen as follows: When estimating
$$(I_F \D_{x} P)(s) = I_F(s) \begin{pmatrix}
\lambda^{\frac{3}{4}}I \\
0
\end{pmatrix} = \lambda^{\frac{3}{4}}\begin{pmatrix}
I \\
\tilde{F}(s)
\end{pmatrix},$$
we use that
$$\|w_{x^{s}, \lambda} \tilde{F}(s)\|_{L_x^\infty} \lesssim \|w_{x^{s}, \lambda} \tilde{F}(s)\|_{H_x^{\half+}}.$$
Then we choose $s = s_T$ such that
\begin{align*}
\|w_{x^{s_T}, \lambda}\tilde{F}(s_T)\|_{H_x^{\half+}} \leq T^{-1}\|w_{x^{t}, \lambda}\tilde{F}\|_{L_t^1(I; H_x^{\half+})} \leq T^{-\half}\|w_{x^{t}, \lambda}\tilde{F}\|_{L_t^2(I; H_x^{\half+})}.
\end{align*}
Then using Corollary \ref{finalstructure}, combined with the fact that by Corollaries \ref{wwlsspaceeasy} and \ref{wwlsspace},
$$\|\LL(t, x^t, \xi^t, \lambda)\|_{L_t^2} \leq \lambda^{0+} \FF(T),$$
we have
\begin{equation*}
\|w_{x^{s_T}, \lambda}\tilde{F}(s_T)\|_{H_x^{\half+}} \leq T^{-\half}\lambda^{0-}\FF(T) \ll T^{-\half}.
\end{equation*}
\end{proof}

\begin{prop}\label{localprop}
Let $T = (x_0, \xi_0)$ with $|\xi_0| \in [\lambda/2, 2\lambda]$. There exists $s_T \in I$ such that for solutions $(x^t, \xi^t)$ to (\ref{hamilton}) with initial data $(x, \xi)$ at initial time $s_T$, and satisfying $|\xi| \in [\lambda/2, 2\lambda]$,
\begin{align*}
\|\chi_{x^{s_T}, \lambda}(\D_{x}X^t - \lambda^{\frac{3}{4}}I)\|_{H^{\half+}} &\ll \lambda^{\frac{3}{4}}, \\
\|\chi_{x^{s_T}, \lambda}\D_x \Xi^t\|_{L_t^2(I; H_x^{\half+})} &\ll \lambda^{\frac{3}{4}}, \\
\|\chi_{x^{s_T}, \lambda}\D_x \Xi^t\|_{L_t^\infty(I; L^2_x)} &\ll \lambda^{\frac{3}{8}-}
\end{align*}
where we denote $x^{s_T} = x^{s_T}_{s_0}(x_0, \xi_0)$.
\end{prop}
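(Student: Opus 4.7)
The plan is to mirror the proof of Proposition \ref{bilipprop}, now localizing with $\chi_{x^{s_T}, \lambda}$ in the initial data variable $x$ and measuring in $H^{\half+}_x$ instead of $L^\infty_x$. I would take $s_T$ as in Lemma \ref{bilipproplocal}, which supplies both the bilipschitz bound $\|w_{x^{s_T}, \lambda}(\D_x X^t - \lambda^{\frac{3}{4}}I)\|_{L^\infty_t L^\infty_x} \ll \lambda^{\frac{3}{4}}$ and the crucial smallness $\|w_{x^{s_T}, \lambda}\tilde{F}(s_T)\|_{H^{\half+}_x} \ll T^{-\half}$. Starting from the integrated linearized equation of Proposition \ref{bilipprop} with initial time $s_T$,
$$(I_F \D_x P - \lambda^{\frac{3}{4}}I_x)(t) = \lambda^{\frac{3}{4}}\begin{pmatrix}0 \\ \tilde{F}(s_T)\end{pmatrix} + \int_{s_T}^t (AI_F^{-1})(r)(I_F \D_x P - \lambda^{\frac{3}{4}} I_x)(r) + \lambda^{\frac{3}{4}}(AI_F^{-1})(r) I_x \, dr,$$
I multiply through by $\chi_{x^{s_T}, \lambda}(x)$, take $H^{\half+}_x$, and apply Gronwall in time.

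To carry out Gronwall, I need $\|\chi_{x^{s_T}, \lambda} AI_F^{-1}\|_{L^1_t(I; H^{\half+}_x)} \ll 1$. For the entries involving $\tilde{H}_{\xi x}, \tilde{H}_{\xi\xi}, \tilde{H}_{x\xi}$ evaluated along the flow $(x^r(x, \xi), \xi^r(x, \xi))$, I would change variables to $y = x^r(x, \xi)$: by Proposition \ref{bilipprop}(i) this is bilipschitz with Jacobian close to $I$, so $H^{\half+}_x$ is comparable to $H^{\half+}_y$ and $\chi_{x^{s_T}, \lambda}(x)$ becomes a weight essentially concentrated near $\bar{x}^r := x^r_{s_T}(x^{s_T}, \xi)$. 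The $H^{\half+}$ estimates of Lemma \ref{basicHbd} then apply along this moving center, and integrating in time the factor $\LL(r, \bar{x}^r, \xi, \lambda)$ is controlled in $L^2_r$ by $\FF(T)$ via Corollaries \ref{wwlsspaceeasy} and \ref{wwlsspace}. The $\tilde{G}$ contributions are treated analogously using Corollary \ref{finalstructure}, and the $\tilde{F}$-dependent entries use the local Sobolev bound of Corollary \ref{finalstructure} together with $H^{\half+} \hookrightarrow L^\infty$. Gronwall then yields $\|\chi_{x^{s_T}, \lambda}(I_F \D_x P - \lambda^{\frac{3}{4}} I_x)(t)\|_{H^{\half+}_x} \ll \lambda^{\frac{3}{4}}$ for all $t \in I$. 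The first estimate is read off the top row. For the second, the bottom row gives $\tilde{F}(t)\D_x X^t + \D_x \Xi^t = \lambda^{\frac{3}{4}}\tilde{F}(s_T) + (\text{integral})$; combining the $L^\infty$ bilipschitz bound on $\D_x X^t$ with the local Sobolev estimate on $\tilde{F}(t)$ from Corollary \ref{finalstructure} (which is $L^2_t$-small after integrating $\LL$), and using the Gronwall-controlled integral, one obtains the $L^2_t(I; H^{\half+}_x)$ bound.

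For the third estimate, I would again use the bottom row, writing $\chi_{x^{s_T}, \lambda}\D_x \Xi^t$ as a sum of a boundary term $\lambda^{\frac{3}{4}}\chi_{x^{s_T}, \lambda}\tilde{F}(s_T)$, the twist $-\chi_{x^{s_T}, \lambda}\tilde{F}(t)\D_x X^t$, and an integral in time. Sobolev embedding $H^{\half+}_x \hookrightarrow L^\infty_x$ applied to the Lemma \ref{bilipproplocal} bound combined with $\|\chi_{x^{s_T}, \lambda}\|_{L^2_x} \lesssim \lambda^{-\frac{3}{8}}$ controls the boundary term by $\lambda^{\frac{3}{4}} \cdot \lambda^{-\frac{3}{8}} \cdot T^{-\half}\lambda^{0-} \ll \lambda^{\frac{3}{8}-}$; the twist is treated identically using the local Sobolev bound on $\tilde{F}(t)$ from Corollary \ref{finalstructure}, and the integral is bounded in $L^\infty_t$ by the same local smoothing estimates as before. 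The main technical obstacle throughout is the careful tracking of how the fixed weight $\chi_{x^{s_T}, \lambda}(x)$ in initial-data space aligns, under the flow, with the moving weights $w_{\bar{x}^r, \lambda}$ needed to invoke Lemma \ref{basicHbd} and Corollary \ref{finalstructure}, uniformly in $\xi$ over $|\xi| \approx \lambda$; this is where the geometric control provided by Propositions \ref{bilipprop}, \ref{spreadprop}, and \ref{charnearpts} is essential.
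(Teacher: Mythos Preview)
Your approach to the first two estimates follows the paper's closely: localize the integrated linearized flow by $\chi_{x^{s_T},\lambda}$, measure in $H^{\half+}_x$, exploit the algebra property of $H^{\half+}$, and close a Gronwall loop using the local Sobolev estimates of Lemma~\ref{basicHbd} and Corollary~\ref{finalstructure} after changing variables along the flow. One technical point you gloss over: the entries of $AI_F^{-1}$ also carry factors of $\xi^r(x,\xi)$ (through $\tilde H_{\xi\xi}$, $\tilde F$, etc.), and you need a bound on $\|\chi_{x^{s_T},\lambda}\,\xi^r\|_{H^{\half+}_x}$; the paper isolates this as Lemma~\ref{xiregularitylem}. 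Also, the bilipschitz change of variables for initial time $s_T$ comes from Lemma~\ref{bilipproplocal}, not Proposition~\ref{bilipprop}(i) (the latter is for initial time $s_0$); you already invoke the lemma, so this is a citation slip.

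Your argument for the third estimate has a genuine gap. You propose to control both the boundary term $\lambda^{\frac34}\chi\tilde F(s_T)$ and the twist $\chi\tilde F(t)\D_x X^t$ in $L^2_x$ via $\|\chi_{x^{s_T},\lambda}\|_{L^2_x}\approx\lambda^{-\frac38}$ together with an $L^\infty_x$ bound on $\tilde F$ obtained, by Sobolev embedding, from the local $H^{\half+}$ estimate in Corollary~\ref{finalstructure}. This fails on two counts. First, the local $H^{\half+}$ bound on $\tilde F(t)$ carries the factor $Z(t)+\LL(t,x^t,\xi^t,\lambda)$, which is only controlled in $L^2_t$, not $L^\infty_t$; so your twist bound is not uniform in time and cannot give an $L^\infty_t L^2_x$ estimate. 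Second, even for the boundary term, your route yields at best $\lambda^{\frac38}\cdot T^{-\half}\lambda^{0-}$, and since one must take $T\leq\lambda^{0-}$ elsewhere, the factor $T^{-\half}\leq\lambda^{0+}$ eats precisely the $\lambda^{0-}$ you need for the claimed $\lambda^{\frac38-}$. The paper instead reruns the bottom-row Gronwall directly in $L^2_x$ and invokes the \emph{global} estimate $\|\tilde F(t,\cdot,\xi)\|_{L^2_x}\leq\lambda^{-\frac38-}\FF(M(t))$ from Corollary~\ref{finalstructure}; this simultaneously provides the $\lambda^{0-}$ gain and, crucially, is uniform in $t$ (it depends only on $M(t)$, not on $Z(t)$ or $\LL$), which is exactly what makes the $L^\infty_t$ estimate close.
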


\begin{proof}
We begin by multiplying (\ref{eqn:integratedeqn}) by $\chi_{x^{s_T}, \lambda}(x)$, applying $H^{\half+}$ norms, and using the algebra property,
\begin{align*}
\|\chi_{x^{s_T}, \lambda}(I_F  \D_{x} P)(t)\|_{H^{\half+}} &\lesssim \|\chi_{x^{s_T}, \lambda}(I_F \D_{x} P)(s_T)\|_{H^{\half+}} \\
&\quad + \int_{s_T}^t \|\tilde{\chi}_{x^{s_T}, \lambda}(AI_F^{-1})(r)\|_{H^{\half+}}\|\chi_{x^{s_T}, \lambda}(I_F\D_{x} P)(r)\|_{H^{\half+}} \, dr.
\end{align*}
Then applying Gronwall,
\begin{align}\label{startingeqn}
\|\chi_{x^{s_T}, \lambda}(I_F  \D_{x} P)(t)\|_{H^{\half+}} &\lesssim \|\chi_{x^{s_T}, \lambda}(I_F \D_{x} P)(s_T)\|_{H^{\half+}} \exp\left(\int_{s_T}^t \|\tilde{\chi}_{x^{s_T}, \lambda}(AI_F^{-1})(r)\|_{H^{\half+}} \, dr\right).
\end{align}

We consider the exponential term. Consider a typical term in the matrix $AI_F^{-1}$,
$$\|\chi_{x^{s_T}, \lambda}(x)\tilde{H}_{x\xi}(r, x^r, \xi^r)\|_{H^{\half+}}.$$
In turn, $\tilde{H}_{x\xi}$ is a sum of a transport term and a dispersive term. We consider the transport term,
$$\|\chi(\lambda^{\frac{3}{4}}(x - x^{s_T}))(\D_x V_\lambda)(r, x^r(x, \xi))\|_{H^{\half+}}.$$
Using the bilipschitz property of Lemma \ref{bilipproplocal}, this may be rewritten with an inserted weight,
$$\|\chi(\lambda^{\frac{3}{4}}(x - x^{s_T}))\tilde{\chi}(\lambda^{\frac{3}{4}}(x^r(x, \xi) - x^r(x^{s_T}, \xi))) (\D_x V_\lambda)(r, x^r(x, \xi))\|_{H^{\half+}}.$$
Then using the algebra property of $H^{\half+}$, it suffices to estimate
$$\|\tilde{\chi}(\lambda^{\frac{3}{4}}(x^r(x, \xi) - x^r(x^{s_T}, \xi))) (\D_x V_\lambda)(r, x^r(x, \xi))\|_{H^{\half+}}.$$
We remark that measuring the weight in $H^{\half+}$ nets a loss of $\lambda^{0+}$. We may choose $T \leq \lambda^{0-}$ to negate this, iterating the argument as necessary.

Using again the bilipschitz property of Lemma \ref{bilipproplocal} with the diffeomorphism estimate Proposition \ref{compositionlem}, it suffices to estimate
$$\|\tilde{\chi}(\lambda^{\frac{3}{4}}(x - x^r(x^{s_T}, \xi))) (\D_x V_\lambda)(r, x)\|_{H^{\half+}} = \|\tilde{\chi}_{x^r, \lambda} \D_x V_\lambda\|_{H^{\half+}} \lesssim \|w_{x^r, \lambda} \D_x V_\lambda\|_{H^{\half+}}.$$
Then using the last estimate of Lemma \ref{basicHbd} (more accurately, its proof), use again the assumption $T \leq \lambda^{0-}$ to conclude that 
$$\|\chi_{x^{s_T}, \lambda}(x)(\D_x V_\lambda)(r, x^r)\|_{L^1(I;H^{\half+})} \leq T^{\half} \lambda^{0+}\FF(T) \ll 1.$$

Similar analyses apply to the other terms of $AI_F^{-1}$, with repeated use of the algebra property of $H^{\half+}$. We remark that other terms also have $\xi^r$ factors, which require (after possibly applying a Moser composition estimate)
\begin{equation}\label{xilemma}
\|\chi(\lambda^{\frac{3}{4}}(x - x^{s_T})) \xi^r(x, \xi)\|_{H^{\half+}} \lesssim \lambda^{1+},
\end{equation}
which we establish in Lemma \ref{xiregularitylem} below.

We conclude from (\ref{startingeqn}), using the estimate on $F$ from Lemma \ref{bilipproplocal},
$$\|\chi_{x^{s_T}, \lambda}(I_F  \D_{x} P)(t)\|_{H^{\half+}} \lesssim \lambda^{\frac{3}{4}}T^{-\half}.$$
Further, recalling 
$$I_F^{-1} = \begin{pmatrix}
I & 0 \\
-\tilde{F} & I
\end{pmatrix}$$
and using the same analysis as above, combined with (the proof of) Corollary \ref{finalstructure},
\begin{align*}
\|\chi_{x^{s_T}, \lambda}(\D_{x} P)(t)\|_{H^{\half+}} &\lesssim (1 + \|\tilde{\chi}_{x^{s_T}, \lambda}\tilde{F}(t)\|_{L^\infty_x})\|\chi_{x^{s_T}, \lambda}(I_F  \D_{x} P)(t)\|_{H^{\half+}} \\
&\leq \lambda^{\frac{3}{4}}T^{-\half}\FF(M(t))(Z(t) + \LL(t, x^t, \xi^t, \lambda)).
\end{align*}

\

Using an argument analogous to Step 3 in the proof of Proposition \ref{bilipprop}, we can boost this estimate for $\D_{x}X^t$ to 
\begin{equation}\label{xbiliplocal}
\|\chi_{x^{s_T}, \lambda}(\D_{x}X^t - \lambda^{\frac{3}{4}}I)\|_{H^{\half+}} \ll \lambda^{\frac{3}{4}}.
\end{equation}

We also adapt a similar argument to obtain improvements for $\D_x \Xi^t$. From the bottom row of (\ref{eqn:integratedeqn}), 
$$-\tilde{F}(t)\D_x X^t + \D_x \Xi^t = \lambda^{\frac{3}{4}}\tilde{F}(s_T) + \int_{s_T}^t (\tilde{F} \tilde{H}_{\xi x} - \tilde{G})(r)\D_x X^r + (\tilde{F} \tilde{H}_{\xi\xi} - \tilde{H}_{x \xi})(r) \D_x \Xi^r \, dr.$$
Multiplying by $\chi_{x^{s_T}, \lambda}(x)$, applying $H^{\half+}$ norms, using the algebra property, and applying Gronwall, 
\begin{align}
\label{finalinequality} \|\chi_{x^{s_T}, \lambda}\D_x \Xi^t\|_{H^{\half+}} \lesssim \ &(\|\chi_{x^{s_T}, \lambda}(\lambda^{\frac{3}{4}}\tilde{F}(s_T) + \tilde{F}(t)\D_x X^t)\|_{H^{\half+}} \\
&+ \|\chi_{x^{s_T}, \lambda}(\tilde{F} \tilde{H}_{\xi x} - \tilde{G})(r)\D_x X^r\|_{L_r^1(I;H^{\half+})}) \nonumber \\
&\cdot \exp\left(\int_{s_T}^t \|\tilde{\chi}_{x^{s_T}, \lambda}(\tilde{F} \tilde{H}_{\xi\xi} - \tilde{H}_{x \xi})(r)\|_{H^{\half+}_x} \, dr \right). \nonumber
\end{align}
The exponential term is estimated similar to the exponential term before. The first and second terms are also similar, using additionally the uniform estimate on $\D_{x}X^t$ from (\ref{xbiliplocal}). We conclude
$$\|\chi_{x^{s_T}, \lambda}\D_x \Xi^t\|_{H^{\half+}} \leq \lambda^{\frac{3}{4}}(cT^{-\half} + \lambda^{-\eps}\FF(M(t))(Z(t) + \LL(t, x^t, \xi^t, \lambda))).$$
Integrating in time, we obtain
$$\|\chi_{x^{s_T}, \lambda}\D_x \Xi^t\|_{L_t^2(I; H_x^{\half+})} \ll \lambda^{\frac{3}{4}}$$
as desired. 

\

The third estimate is similar to the second, but using $L^2_x$ in the place of $H_x^\half$. In particular, in estimating the right hand side of (\ref{finalinequality}), we use the (global) $L^2$ estimate of Corollary \ref{finalstructure},
$$\|\tilde{F}(t)\|_{L^2} \leq \lambda^{-\frac{3}{8} -} \FF(M(t)) \leq \lambda^{-\frac{3}{8} -} \FF(T)$$
which provides the relative gain, and further is uniform in time.
\end{proof}

It remains to establish the following local estimate (\ref{xilemma}) on $\xi^t$:

\begin{lem}\label{xiregularitylem}
Let $T = (x_0, \xi_0)$ with $|\xi_0| \in [\lambda/2, 2\lambda]$. There exists $s_T \in I$ such that for solutions $(x^t, \xi^t)$ to (\ref{hamilton}) with initial data $(x, \xi)$ at initial time $s_T$, and satisfying $|\xi| \in [\lambda/2, 2\lambda]$,
\begin{align*}
\|\chi_{x^{s_T}, \lambda}\xi^t\|_{H^{\half+}} \lesssim \lambda^{1+}
\end{align*}
where we denote $x^{s_T} = x^{s_T}_{s_0}(x_0, \xi_0)$.
\end{lem}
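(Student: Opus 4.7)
The strategy is to integrate the Hamilton equation $\dot{\xi}^r = -H_x(r, x^r, \xi^r)$ from $s_T$ to $t$, producing
$$\xi^t(x,\xi) = \xi - \int_{s_T}^t H_x(r, x^r(x,\xi), \xi^r(x,\xi))\, dr,$$
and then estimating each piece after multiplication by $\chi_{x^{s_T},\lambda}(x)$. For the constant (in $x$) initial value $\xi$ we have $|\xi|\lesssim \lambda$, and the scaling of $\chi_{x^{s_T},\lambda}$ at spatial scale $\lambda^{-3/4}$ yields $\|\chi_{x^{s_T},\lambda}\|_{H^{\half+}}\lesssim \lambda^{0+}$; hence $\|\chi_{x^{s_T},\lambda}\xi\|_{H^{\half+}}\lesssim \lambda^{1+}$ directly, matching the target bound.

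For the integral term, Minkowski's inequality reduces the task to a pointwise-in-$r$ estimate of $\|\chi_{x^{s_T},\lambda}\, H_x(r,x^r,\xi^r)\|_{H^{\half+}}$. Expanding $H_x = (\D_x V_\lambda)\xi + (\D_x\sqrt{a_\lambda})|\xi|^{\half}$ and using the $H^{\half+}$ algebra property with an inserted widened cutoff $\tilde{\chi}_{x^{s_T},\lambda}$ ($\equiv 1$ on the support of $\chi_{x^{s_T},\lambda}$), I would factor
$$\|\chi_{x^{s_T},\lambda}(\D_x V_\lambda)(r, x^r)\,\xi^r\|_{H^{\half+}} \lesssim \|\tilde{\chi}_{x^{s_T},\lambda}(\D_x V_\lambda)(r, x^r)\|_{H^{\half+}}\cdot \|\tilde{\chi}_{x^{s_T},\lambda}\xi^r\|_{H^{\half+}},$$
and analogously for the dispersive piece (after a Moser composition estimate applied to the function $\xi\mapsto |\xi|^{\half}$, smooth on the range $|\xi^r|\approx \lambda$ delivered by Lemma \ref{freqpres}). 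The coefficient factor is handled by the same composition argument used in the proof of Proposition \ref{localprop}: an auxiliary cutoff $\hat{\chi}(\lambda^{3/4}(x^r(x,\xi) - x^r(x^{s_T},\xi)))$ is $\equiv 1$ on the support of $\tilde{\chi}_{x^{s_T},\lambda}$ by Lemma \ref{bilipproplocal}, so the change of variable $y = x^r(x,\xi)$ (justified by Proposition \ref{compositionlem}) reduces this factor to $\|w_{x^r(x^{s_T},\xi),\lambda}\D_x V_\lambda\|_{H^{\half+}}\lesssim \lambda^{0+}\FF(M(r))(Z(r)+\LL(r,\cdot,\cdot,\lambda))$ by Lemma \ref{basicHbd}.

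The frequency factor $\|\tilde{\chi}_{x^{s_T},\lambda}\xi^r\|_{H^{\half+}}$ is precisely the quantity being bounded (with $\tilde{\chi}$ playing the role of $\chi$ by identical scaling), producing a closed Gronwall-type inequality. Setting
$$N(t) := \sup_{s_T\leq r\leq t}\|\tilde{\chi}_{x^{s_T},\lambda}\xi^r\|_{H^{\half+}},$$
the above gives
$$N(t) \lesssim \lambda^{1+} + \lambda^{0+}\int_{s_T}^t \FF(M(r))(Z(r)+\LL(r,\cdot,\cdot,\lambda))\, N(r)\, dr.$$
Cauchy--Schwarz in $r$, together with the $L^2_t$-control of $Z+\LL$ by $\FF(T)$ (Corollaries \ref{wwlsspaceeasy} and \ref{wwlsspace}), and Gronwall then yield $N(t)\lesssim \lambda^{1+}\exp(\lambda^{0+}T^{\half}\FF(T))\lesssim \lambda^{1+}$ after choosing $T$ sufficiently small (iterating over boundedly many subintervals of $I$ if needed, as in the proof of Proposition \ref{bilipprop}).

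The main technical obstacle is not conceptual but rather the careful bookkeeping of the algebra of $H^{\half+}$ together with the bilipschitz change-of-variable mechanism that converts coefficients composed with the Hamilton flow into purely spatial local Sobolev norms; every ingredient is inherited from the proof of Proposition \ref{localprop}, and the main content of the lemma is the observation that the self-referential $\xi^r$-factor can be absorbed via Gronwall exactly because the coefficient factor supplies the small gain $\lambda^{0+}$.
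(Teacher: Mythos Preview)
Your proposal is correct and follows essentially the same route as the paper: integrate the Hamilton equation for $\xi^t$, multiply by the cutoff, use the $H^{\half+}$ algebra property to separate the coefficient factors $(\D_x V_\lambda)(r,x^r)$ and $(\D_x\sqrt{a_\lambda})(r,x^r)$ from the self-referential $\xi^r$ factor, estimate the coefficients via the bilipschitz change of variables from the proof of Proposition~\ref{localprop}, and close with Gronwall using the $L^2_t$ control of $Z+\LL$. The only cosmetic difference is that the paper arranges the cutoffs as $\chi\cdot fg = (\tilde\chi f)(\chi g)$, keeping $\chi$ on the $\xi^r$ factor so Gronwall closes directly without passing to a further widened cutoff; your formulation with $\tilde\chi$ on both factors works equally well after one more widening.
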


\begin{proof}
Beginning with
$$\xi^t = \xi_0 - \int_{s_T}^t H_x(r) \, dr = \xi - \int_{s_T}^t (\D_x V_\lambda)(r) \xi^r + (\D_x \sqrt{a_\lambda})(r) |\xi^r|^\half \, dr,$$
multiply by $\chi_{x^{s_T}, \lambda}(x)$, apply $H^{\half+}$ norms, and use the algebra property to obtain
\begin{align*}
\|\chi_{x^{s_T}, \lambda}\xi^t\|_{H^{\half+}} \lesssim \xi\|\chi_{x^{s_T}, \lambda}\|_{H^{\half+}} + \int_{s_T}^t &(\|\tilde{\chi}_{x^{s_T},\lambda}(\D_x V_\lambda)(r)\|_{H^{\half+}} \\
&\quad + \|\tilde{\chi}_{x^{s_T}, \lambda}(\D_x \sqrt{a_\lambda})(r)\|_{H^{\half+}} \lambda^{-\half})\|\chi_{x^{s_T}, \lambda} \xi^r\|_{H^{\half+}}  \, dr.
\end{align*}
Applying Gronwall, we obtain
\begin{align*}
\|\chi_{x^{s_T}, \lambda}\xi^t\|_{H^{\half+}} &\lesssim \lambda^{1+} \exp\left( \int_{s_T}^t (\|\tilde{\chi}_{x^{s_T},\lambda}(\D_x V_\lambda)(r)\|_{H^{\half+}}  + \|\tilde{\chi}_{x^{s_T}, \lambda}(\D_x \sqrt{a_\lambda})(r)\|_{H^{\half+}} \lambda^{-\half}) \, dr \right).
\end{align*}

We estimate the right hand side as in the proof of Proposition \ref{localprop} to obtain the lemma.
\end{proof}

\subsection{The eikonal equation}

In the following, we let $y$ denote the spatial variable, so that in particular we may let $T = (x, \xi)$ without conflict. Further let $s_T \in I$ be chosen as in Lemma \ref{bilipproplocal}, and
$$(x^{s_T}, \xi^{s_T}) = (x^{s_T}_{s_0}(x, \xi), \xi^{s_T}_{s_0}(x, \xi))$$
where $s_0 \in I$ is chosen as in Proposition \ref{bilipprop}. Consider the eikonal equation,
\begin{equation}\label{eikonal}
\D_t \psi_{x, \xi}(t, y) + \tilde{H}(t, y, \D_y \psi_{x, \xi}(t, y)) = 0, \qquad \psi_{x, \xi}(s_T, y) = \xi^{s_T}(y - x^{s_T}),
\end{equation}
whose characteristics are given by the Hamilton flow. The fact that the Hamilton flow is bilipschitz corresponds to two derivatives on the eikonal solution:

\begin{prop}\label{eikonalbd}
Let $T = (x, \xi)$ with $|\xi| \in [\lambda/2, 2\lambda]$. Then for a solution $\psi_{x, \xi}$ to (\ref{eikonal}),
\begin{align*}
\|\chi_{x^t_{s_0}(x, \xi), \lambda} \D_y^2 \psi_{x, \xi}\|_{L^2_t(I;H_y^{\half+})} \ll \lambda^{\frac{3}{2}}, \\
\|\chi_{x^t_{s_0}(x, \xi), \lambda} \D_y^2 \psi_{x, \xi}\|_{L^\infty_t(I;L_y^2)} \ll \lambda^{\frac{9}{8}-}.
\end{align*}
\end{prop}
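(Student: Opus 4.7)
The plan is to express $\D_y^2 \psi_{x,\xi}$ directly in terms of the derivatives $\D_{x_0} X^t$ and $\D_{x_0} \Xi^t$ of the Hamilton flow, and then invoke the local smoothing bounds of Proposition \ref{localprop}.

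First I would apply the method of characteristics. Since the initial data $\psi_{x,\xi}(s_T, y) = \xi^{s_T}(y - x^{s_T})$ has constant gradient $\xi^{s_T}$, the bicharacteristics of (\ref{eikonal}) are solutions to the Hamilton equations (\ref{hamilton}) with initial data $(x_0, \xi^{s_T})$ at time $s_T$, parameterized by the base point $x_0$. Along such a characteristic, $\D_y \psi_{x,\xi}(t, x^t_{s_T}(x_0, \xi^{s_T})) = \xi^t_{s_T}(x_0, \xi^{s_T})$. Differentiating in $x_0$ and applying the implicit function theorem yields
\begin{align*}
\D_y^2 \psi_{x,\xi}(t, y) = \frac{\D_{x_0} \xi^t_{s_T}}{\D_{x_0} x^t_{s_T}} = \lambda^{3/2}\,\frac{\D_{x_0} \Xi^t}{\D_{x_0} X^t},
\end{align*}
evaluated at the unique $x_0$ with $x^t_{s_T}(x_0, \xi^{s_T}) = y$.

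Next I would change variables from $y$ to $x_0$. Since $x^t_{s_0}(x, \xi) = x^t_{s_T}(x^{s_T}, \xi^{s_T})$, the center of the weight $\chi_{x^t_{s_0}(x, \xi), \lambda}$ is the image of $x_0 = x^{s_T}$ under the map $x_0 \mapsto x^t_{s_T}(x_0, \xi^{s_T})$. Lemma \ref{bilipproplocal} guarantees that this map is bilipschitz with $\D_{x_0} x^t \approx 1$ on the relevant support, so the pullback of the weight is dominated by a bump $\tilde{\chi}_{x^{s_T}, \lambda}(x_0)$ of the same scale. Composition estimates of the kind already invoked in the proof of Proposition \ref{localprop} then allow the $L^2_y$ and $H^{\half+}_y$ norms to be transferred to their $x_0$-counterparts with losses no larger than the $\lambda^{0+}$ slack built into the available bounds, which we absorb by taking $T \leq \lambda^{0-}$ as in the proof of Proposition \ref{localprop}.

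It then remains to estimate the pulled-back quantity $\lambda^{3/2}(\D_{x_0} X^t)^{-1}\D_{x_0} \Xi^t$ in the appropriate norms. The inverse Jacobian is handled by a Neumann-type expansion of $\lambda^{3/4}(\D_{x_0} X^t)^{-1}$ around $I$, which converges in $L^\infty$ thanks to $\|\D_{x_0} X^t - \lambda^{3/4} I\|_{L^\infty} \ll \lambda^{3/4}$ from Lemma \ref{bilipproplocal}; the local estimate $\|\chi_{x^{s_T}, \lambda}(\D_{x_0} X^t - \lambda^{3/4} I)\|_{H^{\half+}} \ll \lambda^{3/4}$ from Proposition \ref{localprop}, combined with the algebra property of $H^{\half+}$, yields $\|\tilde{\chi}_{x^{s_T}, \lambda}(\D_{x_0} X^t)^{-1}\|_{H^{\half+}} \lesssim \lambda^{-3/4}$. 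Multiplying this with the two bounds
\begin{align*}
\|\chi_{x^{s_T}, \lambda}\, \D_x \Xi^t\|_{L^2_t(I; H^{\half+}_x)} \ll \lambda^{3/4}, \qquad \|\chi_{x^{s_T}, \lambda}\, \D_x \Xi^t\|_{L^\infty_t(I; L^2_x)} \ll \lambda^{3/8-}
\end{align*}
from Proposition \ref{localprop}, and inserting the prefactor $\lambda^{3/2}$, produces exactly the two claimed estimates on $\D_y^2 \psi_{x,\xi}$ in $x_0$-space, which transfer back to $y$-space via the change of variables described above.

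The hard part will be executing the change of variables at the borderline regularity $H^{\half+}$: one must verify that composition with the rough Hamilton flow preserves the relevant norms up to $\lambda^{0+}$ losses. This should be handled by the same composition estimate used in the proof of Proposition \ref{localprop}, together with the fact that on the support of the weight, the diffeomorphism $x_0 \mapsto x^t_{s_T}(x_0, \xi^{s_T})$ differs from the identity only by the perturbation $\D_{x_0} x^t - I$, which is controlled in the local $H^{\half+}$ norm by Proposition \ref{localprop}.
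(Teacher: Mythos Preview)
Your approach is essentially the same as the paper's: both express $\D_y^2\psi_{x,\xi}$ along the characteristics as $(\D_{x_0}\xi^t)/(\D_{x_0}x^t)$, control the reciprocal of $\D_{x_0}x^t$ on the support of the cutoff via the bilipschitz bound of Lemma~\ref{bilipproplocal} together with the algebra property of $H^{\half+}$ (the paper phrases this as a Moser estimate rather than a Neumann series, but the content is the same), apply Proposition~\ref{localprop} for the bounds on $\D_{x_0}\Xi^t$, and transfer back to $y$ using the diffeomorphism estimate Lemma~\ref{compositionlem}. The only cosmetic difference is ordering: the paper estimates in the $z$-variable first and then changes to $y$, whereas you describe the change of variables first; the substance is identical.
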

\begin{proof}
Using the characteristics for $\psi_{x, \xi}$, write
$$\xi^t(z, \xi^{s_T}) = (\D_y \psi_{x, \xi})(t, x^t(z, \xi^{s_T}))$$
so that
$$\D_x \xi^t(z, \xi^{s_T}) = (\D_y^2 \psi_{x, \xi})(t, x^t(z, \xi^{s_T})) (\D_{x} x^t)(z, \xi^{s_T}).$$
By Lemma \ref{bilipproplocal}, $|(\D_{x} x^t)(z, \xi^{s_T})| > \half$ on the support of  $\chi_{x^{s_T}, \lambda}(z)$, so we may write
\begin{align*}
\chi_{x^{s_T}, \lambda}(z)(\D_y^2 \psi_{x, \xi})(t, x^t(z, \xi^{s_T})) = \chi_{x^{s_T}, \lambda}(z)(\D_x \xi^t)(z, \xi^{s_T})(\tilde{\chi}_{x^{s_T}, \lambda}(z)(\D_{x} x^t)(z, \xi^{s_T}))^{-1}.
\end{align*}
Using the algebra property of $H^{\half+}$, as well as a Moser estimate on the second term, we obtain by Proposition \ref{localprop},
$$\|\chi_{x^{s_T}, \lambda}(z)(\D_y^2 \psi_{x, \xi})(t, x^t(z, \xi^{s_T}))\|_{L^2_t(I;H_z^{\half+})} \ll \lambda^{\frac{3}{2}}.$$
It is convenient to exchange $\chi$ with $\tilde{\chi}$, with the straightforward modifications to the proof:
$$\|\tilde{\chi}_{x^{s_T}, \lambda}(z)(\D_y^2 \psi_{x, \xi})(t, x^t(z, \xi^{s_T}))\|_{L^2_t(I;H_z^{\half+})} \ll \lambda^{\frac{3}{2}}.$$

Similar to the proof of Proposition \ref{localprop}, we may use the bilipschitz property to insert a weight, followed by the algebra property to obtain
\begin{align*}
\|\chi_{x^t(x^{s_T}, \xi^{s_T}), \lambda}& (x^t(z, \xi^{s_T}))(\D_y^2 \psi_{x, \xi})(t, x^t(z, \xi^{s_T}))\|_{L^2_t(I;H_z^{\half+})} \\
&= \|\chi_{x^t(x^{s_T}, \xi^{s_T}), \lambda} (x^t(z, \xi^{s_T}))\tilde{\chi}_{x^{s_T}, \lambda}(z) (\D_y^2 \psi_{x, \xi})(t, x^t(z, \xi^{s_T}))\|_{L^2_t(I;H_z^{\half+})} \\
&\ll \lambda^{\frac{3}{2}}. 
\end{align*}
Lastly, setting $y = x^t(z, \xi^{s_T})$ and combining the diffeomorphism estimate Lemma \ref{compositionlem} with the bilipschitz property of Lemma \ref{bilipproplocal}, we obtain the first estimate of the proposition.

\ 

The proof of the $L^2$ estimate is similar, using the corresponding third estimate of Proposition \ref{localprop}.
\end{proof}

As a consequence, we observe that $\D_y \psi_{x, \xi}$ and $\xi^t(x,\xi)$ are comparable when following a characteristic on the $\lambda^{-\frac{3}{4}}$ spatial scale:

\begin{cor}\label{xidifference}
Let $T = (x, \xi)$ with $|\xi| \in [\lambda/2, 2\lambda]$. Then for solutions $\psi_{x, \xi}$ to (\ref{eikonal}),
$$\|\chi_{x^t_{s_0}(x, \xi), \lambda}(y)(\D_y \psi_{x, \xi}(t, y) - \xi^t_{s_0}(x,\xi))\|_{L_y^\infty} \ll \lambda^{\frac{3}{4}-}.$$
\end{cor}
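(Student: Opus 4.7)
The plan is to reduce the statement to applying the $L^\infty_t L^2_y$ estimate of Proposition \ref{eikonalbd} via a fundamental theorem of calculus argument on the support of $\chi_{x^t_{s_0}(x,\xi),\lambda}$, whose length is $\lesssim \lambda^{-3/4}$.

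First I would identify $\xi^t_{s_0}(x,\xi)$ with a pointwise value of $\D_y \psi_{x,\xi}$. Precisely, from the characteristic identity used in the proof of Proposition \ref{eikonalbd},
\[
\xi^t_{s_T}(z, \xi^{s_T}) = (\D_y \psi_{x,\xi})(t, x^t_{s_T}(z, \xi^{s_T})),
\]
and setting $z = x^{s_T} = x^{s_T}_{s_0}(x,\xi)$, $\xi^{s_T} = \xi^{s_T}_{s_0}(x,\xi)$, the semigroup property of the Hamilton flow gives
\[
\xi^t_{s_0}(x,\xi) = (\D_y \psi_{x,\xi})(t, x^t_{s_0}(x,\xi)).
\]

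Second, for any $y$ in the support of $\chi_{x^t_{s_0}(x,\xi),\lambda}$ (that is, $|y - x^t_{s_0}(x,\xi)| \leq \lambda^{-3/4}$), I would apply the fundamental theorem of calculus,
\[
\D_y \psi_{x,\xi}(t,y) - \xi^t_{s_0}(x,\xi) = \int_{x^t_{s_0}(x,\xi)}^{y} \D_y^2 \psi_{x,\xi}(t,z)\, dz,
\]
and estimate the integrand against $\tilde{\chi}_{x^t_{s_0}(x,\xi),\lambda}(z)$, which equals $1$ on the interval of integration. Cauchy--Schwarz in $z$ over an interval of length $\lesssim \lambda^{-3/4}$ then yields
\[
|\D_y \psi_{x,\xi}(t,y) - \xi^t_{s_0}(x,\xi)| \lesssim \lambda^{-3/8}\,\|\tilde{\chi}_{x^t_{s_0}(x,\xi),\lambda}\D_y^2\psi_{x,\xi}(t,\cdot)\|_{L^2_y}.
\]

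Finally, I would invoke the $L^\infty_t L^2_y$ bound of Proposition \ref{eikonalbd} (which is insensitive to widening $\chi$ to $\tilde{\chi}$, as one can carry the same proof verbatim with $\tilde{\chi}$ in place of $\chi$) to obtain
\[
\lambda^{-3/8}\,\|\tilde{\chi}_{x^t_{s_0}(x,\xi),\lambda}\D_y^2\psi_{x,\xi}\|_{L^\infty_t L^2_y} \ll \lambda^{-3/8}\cdot \lambda^{9/8-} = \lambda^{3/4-},
\]
which is the desired pointwise estimate uniformly in $y$. There is no serious obstacle here; the only subtlety is keeping track of the $\chi$ versus $\tilde{\chi}$ cutoff and the identification of $\xi^t_{s_0}(x,\xi)$ with the pointwise value of $\D_y \psi_{x,\xi}$ at $x^t_{s_0}(x,\xi)$ via the eikonal characteristics, both of which are immediate consequences of the setup.
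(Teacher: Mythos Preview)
Your proposal is correct and follows essentially the same approach as the paper: identify $\xi^t_{s_0}(x,\xi)$ with $(\D_y\psi_{x,\xi})(t,x^t_{s_0}(x,\xi))$ via the characteristics, apply the fundamental theorem of calculus, then Cauchy--Schwarz over an interval of length $\lesssim\lambda^{-3/4}$, and conclude with the $L^\infty_t L^2_y$ bound of Proposition~\ref{eikonalbd}. The only cosmetic difference is that you track the widened cutoff $\tilde{\chi}$ explicitly, whereas the paper simply writes $\|\D_y^2\psi_{x,\xi}\|_{L^2_y([x^t,y])}$ and invokes Proposition~\ref{eikonalbd} directly.
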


\begin{proof}
We write $(x^t, \xi^t) = (x^t_{s_0}, \xi^t_{s_0})$. Using the fundamental theorem of calculus, write
$$\D_y \psi_{x, \xi}(t, y) - \xi^t(x, \xi) = \D_y \psi_{x, \xi}(t, y) - \D_y \psi_{x, \xi}(t, x^t(x, \xi))= \int_{x^t}^y \D_y^2 \psi_{x, \xi}(t, z) \, dz$$
so that using Cauchy-Schwarz followed by Proposition \ref{eikonalbd},
$$|\D_y \psi_{x, \xi}(t, y) - \xi^t(x,\xi)| \leq |y - x^t(x, \xi)|^\half \|\D_y^2 \psi_{x, \xi}\|_{L_y^2([x^t, y])} \ll \lambda^{\frac{9}{8}-}|y - x^t(x, \xi)|^\half .$$
Then restricting to $|y - x^t(x, \xi)| \approx \lambda^{-\frac{3}{4}}$ via the cutoff $\chi_{x^t, \lambda}$, we have the desired estimate.
\end{proof}

\section{Wave Packet Parametrix}\label{sec:parametrix}

In this section we construct the wave packet parametrix. Define the index set
$$\TT = \{T = (x, \xi) \in \lambda^{-\frac{3}{4}}\Z \times \lambda^{\frac{3}{4}}\Z : |\xi| \in [\lambda/2, 2\lambda]\}.$$
Then a \emph{wave packet} $u_T = u_{x, \xi}$ centered at $(x, \xi)$ is a function of the form
$$u_T(t, y) = \lambda^{\frac{3}{8}}\chi_T(t, y)e^{i\psi_{T}(t, y)}$$
where 
$$\chi_T(t, y) = \chi(\lambda^{\frac{3}{4}}(y - x^t_{s_0}(x, \xi)))$$
and
$$\psi_T(t, y) = \psi_{x, \xi}(t, y)$$
solves the eikonal equation (\ref{eikonal}). Also note that we write
$$\chi'_T = (\chi')_T = (\chi')(\lambda^{\frac{3}{4}}(y - x^t(x, \xi)))$$
so that in particular
$$\lambda^{\frac{3}{8}}\chi'_T(t, y)e^{i\psi_{T}(t, y)}$$
is a wave packet.

\subsection{Approximate solution}

A wave packet is an approximate solution to
$$(D_t + H(t, y, D))u = 0$$
in the follow sense:

\begin{prop}\label{approxsoln}
Let $u_T$ be a wave packet. Then we may write $(D_t + H(t, y, D))S_\lambda u_T$ as a sum of terms, each taking one of the following forms:
\begin{enumerate}[i)]
\item $A(t, y)p_T(D)v_T(t, y)e^{i\psi_T(t, y)}$ where $\|A(t)\|_{L_y^\infty} \leq \FF(T)$, $|p_T^{(N)}(\eta)| \lesssim \lambda^{-N}$, $v_T = v_T\tilde{\chi}_T$, and 
\begin{align*}
\|v_T(t)\|_{L^2_y} \leq \FF(M(t))Z(t), \quad \|\D_y v_T(t)\|_{L^2_y} &\leq \lambda^{\frac{3}{4}}\FF(M(t))Z(t).
\end{align*}

\item $A(t, y)p_T(D)v_T(t, y)e^{i\psi_T(t, y)}$ as before except $v_T = v_T\tilde{\chi}_T$ instead satisfies
\begin{align*}
\|v_T(t)\|_{L^2_y} \ll 1, \quad \||D|^\half v_T(t)\|_{L^2_t(I;L_y^2)} &\ll \lambda^{\frac{3}{8}}.
\end{align*}

\item $[S_\lambda,  V_\lambda]v_Te^{i\psi_T}$ with $v_T = v_T\tilde{\chi}_T$ satisfying
\begin{align*}
\|v_T(t)\|_{L^2_y} \lesssim \lambda, \quad \|\D_y v_T(t)\|_{L^2_y} &\lesssim \lambda^{1 + \frac{3}{4}}.
\end{align*}

\item $[S_\lambda,  \sqrt{a_\lambda}]v_Te^{i\psi_T}$ with $v_T = v_T\tilde{\chi}_T$ satisfying
\begin{align*}
\|v_T(t)\|_{L^2_y} \lesssim \lambda^{\half}, \quad \|\D_y v_T(t)\|_{L^2_y} &\lesssim \lambda^{\frac{1}{2} + \frac{3}{4}}.
\end{align*}
\end{enumerate}

\end{prop}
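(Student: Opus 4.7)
My plan is to split, using $[D_t, S_\lambda] = 0$,
\[
(D_t + H(t,y,D))S_\lambda u_T = S_\lambda(D_t + H(t,y,D))u_T + [H(t,y,D), S_\lambda]u_T.
\]
Since $H(t,y,\xi) = V_\lambda \xi + \sqrt{a_\lambda}\,|\xi|^{1/2}$ and $|D|^{1/2}$ commutes with $S_\lambda$, the commutator further splits as $[V_\lambda, S_\lambda]D_y u_T + [\sqrt{a_\lambda}, S_\lambda]|D|^{1/2}u_T$, producing forms (iii), (iv) directly. The amplitude for (iii) is $v_T = \lambda^{3/8}(D_y\chi_T + \chi_T D_y\psi_T)$, which satisfies $D_y u_T = v_T e^{i\psi_T}$ and is automatically supported in $\mathrm{supp}\,\tilde{\chi}_T$ (since $\chi_T = \chi_T \tilde{\chi}_T$); the bounds $\|v_T\|_{L^2}\lesssim \lambda$ and $\|\partial_y v_T\|_{L^2}\lesssim \lambda^{7/4}$ follow from Schwartz estimates on $\chi$ together with $\|D_y\psi_T\|_{L^\infty(\mathrm{supp}\,\tilde\chi_T)}\approx \lambda$, itself a consequence of Corollary \ref{xidifference} and Lemma \ref{freqpres}. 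Form (iv) is analogous after factoring $|D|^{1/2}$ through the phase as in the next paragraph, which contributes an extra $\lambda^{1/2}$.

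For $S_\lambda(D_t + H(t,y,D))u_T$ I would conjugate $H(t,y,D)$ by $e^{i\psi_T}$: after the substitution $\eta \mapsto \eta + \partial_y\psi_T(y)$ in the oscillatory integral representation,
\[
e^{-i\psi_T(y)}H(t,y,D)(\chi_T e^{i\psi_T})(y) = \int e^{i(y-z)\eta}H(t,y,\eta+\partial_y\psi_T(y))\chi_T(z)e^{i\phi(z,y)}\,dz\,d\eta,
\]
where $\phi(z,y) = \psi_T(z) - \psi_T(y) - \partial_y\psi_T(y)(z-y)$ is quadratic in $z - y$. Taylor-expanding $H(t,y,\eta + \partial_y\psi_T(y))$ in $\eta$ and $e^{i\phi}$ in $\phi$, the zeroth-order contribution $H(t,y,\partial_y\psi_T(y))\chi_T$ cancels against the $\partial_t\psi_T\cdot\chi_T$ part of $D_t u_T$ by the eikonal equation \eqref{eikonal}.

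The first-order $\eta$ term combines with $D_t \chi_T = -i\lambda^{3/4}\dot{x}^t\chi'_T$ to give $i\lambda^{3/4+3/8}[\dot x^t - \partial_\xi H(t,y,\partial_y\psi_T(y))]\chi'_T e^{i\psi_T}$. Splitting the bracket through $(x^t,\xi^t)$, the $y$-component is $O(\lambda^{-3/4}\FF(M(t))Z(t))$ using Lemma \ref{basicHbd} for $\partial_y\partial_\xi H$ together with $|y-x^t|\lesssim \lambda^{-3/4}$ on $\mathrm{supp}\,\chi'_T$, while the $\xi$-component is $O(\lambda^{-3/4-})$ by Corollary \ref{xidifference} combined with $\partial_\xi^2 H = O(\lambda^{-3/2})$. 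After the $\lambda^{3/4}\chi'_T$ factor this yields form (i). The outer $S_\lambda$ is absorbed into the $p_T(D)$ factor: by the standard phase-function calculus, $S_\lambda(ve^{i\psi_T}) \sim [\psi_\lambda(D + \partial_y\psi_T(y))v]\,e^{i\psi_T}$ to leading order, which has the required form $A\,p_T(D)\,v_T\,e^{i\psi_T}$ with the lower-order corrections (involving derivatives of $\partial_y\psi_T$) handled alongside the phase expansion below.

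Finally, the second-order contributions generate forms (i) and (ii). The pure $\eta^2$ term $\tfrac{1}{2}\partial_\xi^2 H \cdot D_y^2\chi_T$ is of type (i) using $\partial_\xi^2 H = O(\lambda^{-3/2})$ and $\|D_y^2\chi_T\|_{L^2}\lesssim \lambda^{9/8}$. The terms from expanding $e^{i\phi}$ and from the $y$-dependence of $\partial_y\psi_T$ inside $H$ involve $\partial_y^2\psi_T$ and (indirectly, after the symbol expansion is iterated) $\partial_y^2 V_\lambda$ and $\partial_y^2 a_\lambda$. These coefficients are not pointwise bounded, but the local smoothing estimates of Proposition \ref{eikonalbd} (for $\partial_y^2\psi_T$) and Corollary \ref{finalstructure} (for $\partial_y^2 H$ integrated along the flow, whose integration structure was the object of Section \ref{COVsec}) control them on $\mathrm{supp}\,\chi_T$ in $L^2_t H^{1/2+}_y$ by $\ll \lambda^{3/2}$ and in $L^\infty_t L^2_y$ by $\ll \lambda^{9/8-}$. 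After the $\lambda^{3/8}$ prefactor and the $O(\lambda^{-3/2})$ scaling from $\phi \sim \tfrac{1}{2}(z-y)^2\partial_y^2\psi_T$ on $\mathrm{supp}\,\chi_T$, these translate precisely into $\|v_T\|_{L^2}\ll 1$ and $\||D|^{1/2}v_T\|_{L^2_tL^2_y}\ll \lambda^{3/8}$, which is form (ii). The main obstacle is the careful bookkeeping of the various second-order sources (from $\eta$, from $\phi$, and from the $y$-dependence of the symbol after the $\eta$-substitution) and matching each to the appropriate local smoothing bound; higher-order Taylor remainders in $\eta$ gain a factor $\lambda^{-1}$ per order via integration by parts in the factor $(y-z)$ and are harmlessly absorbed into type (i).
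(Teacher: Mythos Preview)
Your overall strategy---split off the commutator with $S_\lambda$, then conjugate by the phase and Taylor-expand the symbol---is in the right spirit, but the execution diverges from the paper's in a way that creates two real problems.

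\textbf{The center of expansion.} You expand $H(t,y,\eta)$ around the $y$-dependent point $\D_y\psi_T(t,y)$. The paper instead makes the key move of shifting the expansion center to the \emph{constant} $\xi^t = \xi^t_{s_0}(x,\xi)$. This buys two things at once. First, the second-order remainder in the $\xi$-expansion becomes $\sqrt{a_\lambda(y)}\,p''(q_T(D))(D-\xi^t)^2$, with the $y$-coefficient cleanly separated from the Fourier multiplier; expanding $(D-\xi^t)^2 u_T$ by the product rule gives exactly four localized terms, only one of which (the $\D_y^2\psi_T\,\chi_T$ term) is of type~(ii), controlled by Proposition~\ref{eikonalbd}. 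Second, the commutators that arise are $[S_\lambda, H(t,y,\xi^t) + H_\xi(t,y,\xi^t)(D_y-\xi^t)]$, i.e.\ commutators with at-most-first-order differential operators in $D_y$; these manifestly preserve the compact support of $\chi_T$, so the condition $v_T = v_T\tilde\chi_T$ is automatic. Your organization instead produces $[\sqrt{a_\lambda},S_\lambda]|D|^{1/2}u_T$, and $|D|^{1/2}u_T$ is \emph{not} compactly supported---``factoring $|D|^{1/2}$ through the phase'' does not yield an amplitude satisfying $v_T = v_T\tilde\chi_T$ on the nose, and the support condition is used exactly in the orthogonality argument (Corollary~\ref{orthog}).

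\textbf{No second $x$-derivatives of $H$ appear here.} Your last paragraph claims that $\D_y^2 V_\lambda$ and $\D_y^2 a_\lambda$ arise in the second-order terms and must be handled via the integration structure of Corollary~\ref{finalstructure}. This is a misreading of the logical flow. In the paper's decomposition the only spatial derivatives of $H$ that appear are first order: $v_{T,1} = H_\xi(t,y,\xi^t) - H_\xi(t,x^t,\xi^t)$ is estimated via $H_{x\xi}$ (Lemma~\ref{basicHbd}), and the main second-order term involves only $H_{\xi\xi}$. The integration of $\D_x^2 H$ in Section~\ref{COVsec} is used \emph{upstream}, to prove the bilipschitz flow (Proposition~\ref{bilipprop}) and hence Proposition~\ref{eikonalbd}; it is not invoked directly in Proposition~\ref{approxsoln}. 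By expanding around a $y$-dependent center and further expanding $e^{i\phi}$, you are generating terms that simply do not occur in the paper's cleaner algebra.
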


\begin{proof}

\emph{Step 1.} First we compute and arrange the error terms. By a direct computation using (\ref{hamilton}) and (\ref{eikonal}),
\begin{align*}
(D_t + H(t, y, D))S_\lambda u_T = \ &(H(t, y, D)S_\lambda  - S_\lambda H(t, y, \D_y \psi_T(t, y)) \\
&- S_\lambda H_\xi(t, x^t, \xi^t)(D_y - \D_y \psi_T(t, y)))u_T.
\end{align*}

To organize the right hand side, first we exchange the coefficient $H_\xi(t, x^t, \xi^t)$ for 
$$H_\xi(t, y, \xi^t)$$
with the goal of separating the spatial variable $y$ of $H$ from the Taylor expansion in $\xi$. Writing
$$v_{T, 1} := H_\xi(t, y, \xi^t) - H_\xi(t, x^t, \xi^t),$$
we have
\begin{align*}
(D_t + H(t, y, D))S_\lambda u_T = \ &(H(t, y, D)S_\lambda  - S_\lambda H(t, y, \D_y \psi_T(t, y)) \\
&- S_\lambda H_\xi(t, y, \xi^t)(D_y - \D_y \psi_T(t, y)))u_T - i\lambda^{\frac{3}{4}}S_\lambda v_{T, 1} \lambda^{\frac{3}{8}}\chi_T'e^{i\psi_T}.
\end{align*}

The right hand side may be viewed as a symbol expansion of $H(t, y, \eta)$ around $\D_y \psi_T(t, y)$. However, this center is $y$-dependent. To remedy this, we can shift this center to $\xi^t$ by defining the error
$$V_{T, 2}u_T := (H(t, y, \xi^t) + H_\xi(t, y, \xi^t)(\D_y \psi_T(t, y)- \xi^t) - H(t, y, \D_y \psi_T(t, y)))u_T$$
and writing
\begin{align*}
(D_t + H(t, y, D))S_\lambda u_T = \ &(H(t, y, D)S_\lambda  - S_\lambda H(t, y, \xi^t) - S_\lambda H_\xi(t, y, \xi^t)(D_y - \xi^t))u_T \\
& - i\lambda^{\frac{3}{4}}S_\lambda  v_{T, 1}\lambda^{\frac{3}{8}}\chi_T' e^{i\psi_T} + S_\lambda V_{T, 2}u_T.
\end{align*}

Observe that on the right hand side, we now have a symbol expansion of $H(t, y, \eta)$ in $\eta$ to second order, modulo commutators with $S_\lambda$, and in particular, the transport term $V_\lambda \xi$ of $H$ vanishes. In turn, this allows us to separate variables in $H$. More precisely, writing the dispersive term with the notation
$$\sqrt{a_\lambda |\eta|} = \sqrt{a_\lambda}p(\eta),$$
we may write, using the Lagrange remainder and observing that $p''(\eta) = |\eta|^{-\frac{3}{2}}$,
\begin{align*}
(H(t, y, D)S_\lambda - S_\lambda H(t, y, \xi^t) &- S_\lambda H_\xi(t, y, \xi^t)(D_y - \xi^t))u_T \\
= \ &\sqrt{a_\lambda(y)}(p(D) - p(\xi^t) - p'(\xi^t)(D_y - \xi^t))S_\lambda u_T \\
&- [S_\lambda,  H(t, y, \xi^t) + H_\xi(t, y, \xi^t)(D_y - \xi^t)]u_T\\
= \ &\sqrt{a_\lambda(y)}S_\lambda |q_T(D)|^{-\frac{3}{2}}(D - \xi^t)^2u_T \\
&- [S_\lambda,  H(t, y, \xi^t) + H_\xi(t, y, \xi^t)(D_y - \xi^t)]u_T.
\end{align*}
By a routine computation, 
\begin{align*}
(D - \xi^t)^2u_T = \ &-\lambda^{\frac{3}{2}}\lambda^{\frac{3}{8}}\chi_T''e^{i\psi_{T}} - 2i\lambda^{\frac{3}{4}}(\D_y \psi_T -\xi^t)\lambda^{\frac{3}{8}}\chi_T' e^{i\psi_{T}} + (\D_y \psi_T -\xi^t)^2 \lambda^{\frac{3}{8}}\chi_T e^{i\psi_{T}} \\
&- i\D_y^2 \psi_T \lambda^{\frac{3}{8}}\chi_Te^{i\psi_{T}}.
\end{align*}

Since $V_{T, 2}$ also takes the form of a Taylor expansion, an analogous analysis applies. However, it is convenient to instead use the integral form of the remainder:
\begin{align*}
V_{T, 2} &= \sqrt{a_\lambda(y)} (p(\xi^t) + p'(\xi^t)(\D_y \psi_T(t, y)- \xi^t) - p(\D_y \psi_T(t, y))) \\
&= -\sqrt{a_\lambda(y)}\int_{\xi^t}^{\D_y \psi_T(t, y)} |\eta|^{-\frac{3}{2}} (\D_y \psi_T(t, y) - \eta) \, d\eta \\
&=: \sqrt{a_\lambda(y)}v_{T, 2}.
\end{align*}

Lastly, write $p_T(\eta) = \lambda^{\frac{3}{2}}|q_T(\eta)|^{-\frac{3}{2}}$, observing that on the support the symbol of $S_\lambda$, we have $|p_T^{(N)}(\eta)| \lesssim \lambda^{-N}$. We conclude
\begin{align}
\label{onepaxerror}
(D_t + H(t, y, D))S_\lambda u_T = \ &\sqrt{a_\lambda(y)}S_\lambda p_T(D) (-\lambda^{\frac{3}{8}}\chi_T''e^{i\psi_{T}} - 2i\lambda^{-\frac{3}{4}}(\D_y \psi_T -\xi^t)\lambda^{\frac{3}{8}}\chi_T' e^{i\psi_{T}} \\
&+ \lambda^{-\frac{3}{2}}(\D_y \psi_T -\xi^t)^2 \lambda^{\frac{3}{8}}\chi_T e^{i\psi_{T}} - i\lambda^{-\frac{3}{2}}\D_y^2 \psi_T \lambda^{\frac{3}{8}}\chi_Te^{i\psi_{T}}) \nonumber \\
&- i\lambda^{\frac{3}{4}} S_\lambda v_{T, 1} \lambda^{\frac{3}{8}}\chi_T'e^{i\psi_T} + \sqrt{a_\lambda(y)}S_\lambda v_{T, 2}u_T \nonumber \\
&- [S_\lambda,  H(t, y, \xi^t) + H_\xi(t, y, \xi^t)(D_y - \xi^t)]u_T \nonumber\\
&+ [S_\lambda, \sqrt{a_\lambda(y)}]v_{T, 2}u_T. \nonumber
\end{align}

\emph{Step 2.} Next, we check that the non-commutator terms on the right hand side of (\ref{onepaxerror}) may be put into the desired form with estimates.

We show the first three terms on the right hand side of (\ref{onepaxerror}) (from the first and second rows) may be placed in case $(i)$. We bound $\sqrt{a_\lambda(y)}$ using Proposition \ref{taylorbd}. Using Corollary \ref{xidifference}, we have
$$\|(\D_y \psi_T -\xi^t)\tilde{\chi}_T\|_{L^\infty_y} \ll \lambda^{\frac{3}{4}},$$
and we easily have 
$$\|\lambda^{\frac{3}{8}}(\chi_T,\chi_T',\chi''_T)\|_{L^2_y} \lesssim 1.$$
It is easy to check that the coefficients satisfy the $L^2$ estimates desired of $v_T$ by using the widened cutoff. For instance, we have
$$\|(\D_y \psi_T -\xi^t)\lambda^{\frac{3}{8}}\chi_T'\|_{L^2_y} \lesssim \|(\D_y \psi_T -\xi^t)\tilde{\chi}_T\|_{L^\infty_y}\|\lambda^{\frac{3}{8}}\chi_T'\|_{L^2_y} \ll \lambda^{\frac{3}{4}}.$$
To estimate the first derivatives, use also Proposition \ref{eikonalbd},
$$\|(\D_y^2 \psi_T) \chi_T\|_{L^2_y} \ll \lambda^{9/8},$$
and likewise
$$\|\D_y \lambda^{\frac{3}{8}}(\chi_T,\chi_T',\chi''_T)\|_{L^2_y} \lesssim \lambda^{\frac{3}{4}}.$$
Thus we also obtain the $L^2$ estimates desired of $\D_y v_T$. 

Consider the fourth term on the right hand side of (\ref{onepaxerror}) (from the second row), with $\D_y^2 \psi_T$. The two estimates of Proposition \ref{eikonalbd} yield both estimates desired of $v_T$ for case $(ii)$.

Next, we place the fifth term of (\ref{onepaxerror}), with 
$$v_{T, 1} = H_\xi(t, y, \xi^t) - H_\xi(t, x^t, \xi^t),$$
in case $(i)$. Using the fundamental theorem of calculus,
$$H_\xi (t, y, \xi^t) - H_\xi(t, x^t, \xi^t) = \int^y_{x^t} H_{x\xi}(t, z, \xi^t) \, dz.$$
Restricting to the support of $\chi_T$ and thus $|y - x^t| \lesssim \lambda^{-\frac{3}{4}}$, and using the uniform bound on $H_{x\xi}$ from Lemma \ref{basicHbd}, we have 
\begin{align*}
\|(H_\xi (t, y, \xi^t) - H_\xi(t, x^t, \xi^t))\chi'_T\|_{L^\infty_y} \leq \lambda^{-\frac{3}{4}}\FF(M(t))Z(t)
\end{align*}
and thus, using a widened cutoff as before, the $L^2$ estimate desired of $v_T$. The first derivative is easier, using directly Lemma \ref{basicHbd}. 

We place the the sixth term of (\ref{onepaxerror}), with 
$$v_{T, 2} = \int_{\xi^t}^{\D_y \psi_T(t, y)} |\eta|^{-\frac{3}{2}} (\D_y \psi_T(t, y) - \eta) \, d\eta,$$
in case $(i)$. Since
$$\xi^t(z, \xi) = (\D_y \psi_T)(t, x^t(z, \xi)),$$
by Lemma \ref{freqpres}, we may restrict to $\eta \approx \lambda$. Also restricting to the support of $\chi_T$ and thus $|y - x^t| \lesssim \lambda^{-\frac{3}{4}}$, we have using Corollary \ref{xidifference} twice,
$$\|v_{T, 2}\chi_T\|_{L^\infty_y} \ll \lambda^{-\frac{3}{2}}\lambda^{\frac{3}{4}}\lambda^{\frac{3}{4}} = 1$$
and thus the desired $L^2_y$ estimate using a widened cutoff. It remains to estimate the first derivative. By a direct compuation,
$$\D_y v_{T, 2} = \int_{\xi^t}^{\D_y \psi_T(t, y)}|\eta|^{-\frac{3}{2}} \D_y^2\psi_T(t, y) \, d\eta.$$
Then (restricting as usual to $|y - x^t| \lesssim \lambda^{-\frac{3}{4}}$) using Proposition \ref{eikonalbd} to estimate $\D_y^2\psi_T$ in $L^2_y$, and Corollary \ref{xidifference} to estimate the width of the limit of integration,
$$\|(\D_y v_{T, 2})\lambda^{\frac{3}{8}}\chi_T\|_{L^2} \ll \lambda^{-\frac{3}{2}}\lambda^{9/8}\lambda^{\frac{3}{4}}\lambda^{\frac{3}{8}} = \lambda^{\frac{3}{4}}$$
as desired.

\emph{Step 3.} It remains to consider the commutator terms on the right hand side of (\ref{onepaxerror}). There are a total of five, as the Hamiltonian contains two terms.

First, we have
$$[S_\lambda,  V_\lambda \xi^t]u_T = [S_\lambda,  V_\lambda]\xi^t\lambda^{\frac{3}{8}}\chi_T e^{i\psi_T}.$$
Then $v_T = \xi^t\lambda^{\frac{3}{8}}\chi_T$ satisfies, by Lemma \ref{freqpres},
$$\|v_T\|_{L^2} \lesssim \lambda, \quad \|\D_y v_T\|_{L^2} \lesssim \lambda \lambda^{\frac{3}{4}}.$$

Second, we have
$$[S_\lambda,  \sqrt{a_\lambda}|\xi^t|^\half]u_T = [S_\lambda, \sqrt{a_\lambda}]|\xi^t|^\half\lambda^{\frac{3}{8}}\chi_T e^{i\psi_T}$$
so that $v_T = |\xi^t|^\half \lambda^{\frac{3}{8}}\chi_T$ satisfies 
$$\|v_T\|_{L^2} \lesssim \lambda^{\half}, \quad \|\D_y v_T\|_{L^2} \lesssim \lambda^{\half}\lambda^{\frac{3}{4}}.$$

Third,
\begin{align*}
[S_\lambda,  V_\lambda(D_y - \xi^t)]u_T &= [S_\lambda,  V_\lambda](D_y - \xi^t)(\lambda^{\frac{3}{8}}\chi_T e^{i\psi_T}) \\
&= [S_\lambda,  V_\lambda](-i\lambda^{\frac{3}{4}}\lambda^{\frac{3}{8}}\chi_T' e^{i\psi_T} + \lambda^{\frac{3}{8}}\chi_T (\D_y \psi_T - \xi^t) e^{i\psi_T}).
\end{align*}
An analysis similar to that in Step 2 shows that these are better than needed for case $(iii)$. The fourth term,
\begin{align*}
[S_\lambda,  \sqrt{a_\lambda} |\xi^t|^{-\frac{3}{2}} \xi^t (D_y - \xi^t)]u_T = [S_\lambda, \sqrt{a_\lambda}]|\xi^t|^{-\frac{3}{2}} \xi^t(&-i\lambda^{\frac{3}{4}}\lambda^{\frac{3}{8}}\chi_T' e^{i\psi_T}\\
&+ \lambda^{\frac{3}{8}}\chi_T (\D_y \psi_T - \xi^t) e^{i\psi_T})
\end{align*}
is similarly better than needed.

Lastly, the fifth term with $v_{T, 2}$ is similar to the analysis of the corresponding term in Step 2, and again is better than needed.

\end{proof}

\subsection{Orthogonality}

In this subsection we observe that a collection of wave packets is orthogonal in an appropriate sense. First, we establish the orthogonality of functions in a form as described in the following proposition:

\begin{prop}\label{orthogprelim}
Let $\{U_T\}$ be functions of the form
$$U_T(t, y) = A(t, y)p_T(D)v_T(t, y) e^{iy\xi^t(x, \xi)}$$
where $T = (x, \xi) \in \TT$, $\|A(t)\|_{L_y^\infty} \leq \FF(T)$, $|p_T^{(N)}(\eta)| \lesssim \lambda^{-N}$, and $v_T = v_T\tilde{\chi}_T$. Then
\begin{align*}
\|\sum_{T \in \TT} U_T(t)\|_{L_{y}^2}^2 \lesssim \lambda^{-\frac{3}{4}}(\log \lambda)\FF(T)\sum_{T \in \TT} \|v_T\|_{H^\half_y}^2.
\end{align*}
\end{prop}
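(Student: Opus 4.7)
The plan is a Schur/almost-orthogonality argument for the collection $\{U_T\}_{T\in\TT}$, viewed as approximate wave packets in phase space. First I would use the pointwise bound $|A(t,y)|\le\FF(T)$ to replace $U_T$ by the model packet $V_T:=p_T(D)v_T\cdot e^{iy\xi^t(x,\xi)}$, reducing the claim to a bound on $\|\sum_T V_T\|_{L^2_y}^2$ at the cost of an $\FF(T)$ factor. Each $V_T$ should be interpreted as a wave packet concentrated in the Heisenberg cell around $(x^t(x,\xi),\xi^t(x,\xi))$ with spatial width $\lambda^{-3/4}$ and frequency content localized, after a shift by $\xi^t$, to the shell $\{|\eta-\xi^t|\approx\lambda\}$. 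The spatial localization will come from the product structure $v_T=v_T\tilde\chi_T$ together with the fact that the kernel of $p_T(D)$ is Schwartz on the finer scale $\lambda^{-1}\ll\lambda^{-3/4}$ (from $|p_T^{(N)}|\lesssim\lambda^{-N}$).

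Next I would expand $\|\sum_T V_T\|_{L^2}^2=\sum_{T,T'}\langle V_T,V_{T'}\rangle$ and bound individual pairings by combining two sources of decay. When the spatial supports are well separated, namely $\lambda^{3/4}|x^t-x'^t|\gg 1$, the pairing decays rapidly via the kernel decay of $p_T(D)p_{T'}(D)^*$ integrated against the supports of $\tilde\chi_T\tilde\chi_{T'}$. When the supports overlap spatially, Proposition~\ref{charoverlap} forces any frequency gap $|\xi^t-\xi'^t|$ to be comparable to the initial gap $|\xi-\xi'|$, and repeated integration by parts against the phase $e^{iy(\xi^t-\xi'^t)}$ provides decay in $\lambda^{-3/4}|\xi^t-\xi'^t|$; the derivatives of $p_T(D)v_T\overline{p_{T'}(D)v_{T'}}$ are controlled using the spatial support and the kernel bounds.

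Then I would apply Schur's test on the lattice $\TT$. The geometric input is the bilipschitz property in Proposition~\ref{bilipprop}, which ensures that the image of $\TT$ under $(x,\xi)\mapsto (x^t,\xi^t)$ meets any phase-space Heisenberg cell of area $1$ in a uniformly bounded number of points. The $\lambda^{-3/4}$ factor will come from the conversion $\|p_T(D)v_T\|_{L^2}^2\lesssim\lambda^{-1}\|v_T\|_{H^{1/2}}^2$ (using that $p_T$ cuts off to $|\eta|\approx\lambda$) multiplied by the Heisenberg cell area $\lambda^{-3/4}\cdot\lambda^{3/4}=1$; the $\log\lambda$ factor should arise from a dyadic decomposition in the frequency scale of $v_T$, with only $O(\log\lambda)$ dyadic pieces meeting the shell where $p_T$ is supported (or equivalently from summing logarithmically many frequency annuli inside $|\xi|\in[\lambda/2,2\lambda]$).

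The main obstacle is that $v_T$ is only spatially localized on the wave-packet scale and carries no a priori frequency concentration, so the standard wave-packet frame arguments, which rely on simultaneous localization in space and frequency, do not apply directly. The delicate step will be to use the frequency filter $p_T(D)$ together with the $H^{1/2}$ control on $v_T$ to effectively recover frequency concentration on the $|\eta|\approx\lambda$ shell, and to arrange the estimates so that summing the pairings against the lattice $\TT$ costs only a logarithmic factor rather than a power of $\lambda$.
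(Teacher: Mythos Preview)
Your Schur-test plan has a genuine gap. The integration-by-parts step against $e^{iy(\xi^t-\xi'^t)}$ requires differentiating $p_T(D)v_T$ in $y$, but the hypothesis gives only $v_T\in H^{1/2}$ with no frequency localization; each derivative may cost far more than the $\lambda^{3/4}$ you implicitly assume. You cannot rescue this with $p_T(D)$: the assumption $|p_T^{(N)}(\eta)|\lesssim\lambda^{-N}$ says only that $p_T$ is smooth at scale $\lambda$, not that it is supported on $\{|\eta|\approx\lambda\}$, so the claimed bound $\|p_T(D)v_T\|_{L^2}^2\lesssim\lambda^{-1}\|v_T\|_{H^{1/2}}^2$ does not follow. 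Relatedly, your accounting for the factors is off: the $\lambda^{-3/4}$ is not a $p_T$-filtering gain and the $\log\lambda$ is not a dyadic-scale count.

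The paper avoids pairwise estimates entirely. After factoring out $A$ and using the bilipschitz flow to reduce to packets sharing a single spatial $\lambda^{-3/4}$-interval, it applies a single weighted Cauchy--Schwarz on the Fourier side:
\[
\Bigl\|\sum_\xi U_T\Bigr\|_{L^2}^2
\le \Bigl(\sum_\xi \int|\widehat{U}_T(\eta)|^2\langle\eta-\xi^t\rangle\,d\eta\Bigr)
\Bigl(\sup_\eta\sum_\xi\langle\eta-\xi^t\rangle^{-1}\Bigr).
\]
The first factor is exactly $\sum_\xi\|v_T\|_{H^{1/2}}^2$ after shifting (using only $|p_T|\lesssim 1$). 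For the second, Proposition~\ref{charoverlap} transfers the sum from the images $\xi^t$ back to the initial lattice $\xi\in\lambda^{3/4}\Z\cap[\lambda/2,2\lambda]$, giving $\sum_\xi\langle\zeta-\xi\rangle^{-1}\lesssim\lambda^{-3/4}\log\lambda$ by the harmonic sum over $\approx\lambda^{1/4}$ points with spacing $\lambda^{3/4}$. This is where both numerical factors actually come from, and the argument uses the $H^{1/2}$ norm precisely once, to absorb the single weight $\langle\eta-\xi^t\rangle$.
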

\begin{proof}

First note that $A$ is independent of $T$ and thus may be factored out and estimated immediately. As a result, we may assume $A \equiv 1$ below.

\emph{Step 1.} First we reduce the sum over $T = (x, \xi) \in \TT$ to fixed $x$. Consider $\xi \in \{\lambda^{\frac{3}{4}}\Z : |\xi| \in [\lambda/2, 2\lambda]\}$ and $k \in \lambda^{-\frac{3}{4}}\Z$. Consider two packets $(x_1, \xi), (x_2, \xi) \in \TT$ intersecting $(t, k)$. By Proposition \ref{bilipprop}, we have
$$\lambda^{-\frac{3}{4}} \geq |x^t(x_1, \xi) - x^t(x_2, \xi)| = |\D_x x^t(z, \xi)(x_1 - x_2)| \approx |x_1 - x_2|.$$
Since $x_i \in \lambda^{-\frac{3}{4}}\Z$, we conclude that among packets with frequency $\xi$, there is at most an absolute number whose supports intersect $(t, k)$. For simplicity, we assume there is at most one such packet. Then we may index the packets $\TT$ by $(k, \xi)$.

Write
$$\sum_{T \in \TT} U_T(t) =: \sum_{k} \sum_\xi U_T(t) =: \sum_{k} U^k(t).$$
Note that the $U^k(t)$ have essentially finite overlap. Thus, using polynomial weights and Cauchy-Schwarz,
$$\|\sum_T U_T(t)\|_{L_{y}^2}^2 \lesssim \sum_{k} \| U^k(t)\|_{L_{y}^2}^2$$
so that it suffices to show the orthogonality with fixed $k$. In other words, we may assume $\TT$ consists of packets intersecting $(t, k)$, each with distinct $\xi$.

\emph{Step 2.} We abuse notation by equating $\xi = (x, \xi) = T$ as indices, and denote $\xi^t = \xi^t(x, \xi)$. By Plancherel's and Cauchy-Schwarz,
\begin{align*}
\|\sum_\xi U_T(t)\|_{L_y^2}^2 &= \|\sum_\xi \widehat{U}_T(t)\|_{L_\eta^2}^2 =  \int |\sum_\xi \widehat{U}_T(t)\langle\eta - \xi^t\rangle^\half \langle\eta - \xi^t\rangle^{-\half}|^2 \, d\eta \\
&\lesssim  \int \sum_\xi |\widehat{U}_T(t)|^2\langle\eta - \xi^t\rangle \sum_\xi\langle \eta - \xi^t \rangle^{-1} \, d\eta \\
&\lesssim \left(\sum_\xi \int| \widehat{U}_T(t)|^2\langle\eta - \xi^t\rangle \, d\eta\right) \left(\sup_\eta \sum_\xi\langle\eta - \xi^t\rangle^{-1} \right).
\end{align*}

It remains to estimate the two terms.

\emph{Step 3.} First consider the supremum over $\eta$. Fix any $\eta \in [\lambda/2, 2\lambda]$ and write
$$(k, \eta) = (x^t(z, \zeta), \xi^t(z, \zeta)).$$
Then since
$$|x^t(z, \zeta) - x^t(x, \xi)| = |k - x^t(x, \xi)| \leq \lambda^{-\frac{3}{4}},$$
we may apply Proposition \ref{charoverlap},
$$|\zeta - \xi| \leq 2|\xi^t(z, \zeta) - \xi^t(x, \xi)| + \lambda^{\frac{3}{4}} = 2|\eta - \xi^t| + \lambda^{\frac{3}{4}}.$$

We conclude, using that there are at most $\lambda^{\frac{1}{4}}$ frequencies $\xi$ in $\{|\xi| \in [\lambda/2, 2\lambda]\}$,
$$\sum_\xi \langle \eta - \xi^t \rangle^{-1} \lesssim \sum_\xi \langle \zeta - \xi \rangle^{-1} \lesssim \lambda^{-\frac{3}{4}}\log \lambda.$$
Then take the supremum over $\eta$ to obtain 
$$\sup_\eta \sum_\xi\langle\eta - \xi^t\rangle^{-1} \lesssim \lambda^{-\frac{3}{4}}\log \lambda.$$

\emph{Step 4.} For the sum over $\xi$, we have, using a change of variables and Plancherel's,
\begin{align*}
\int| \widehat{U}_T(t)|^2\langle\eta - \xi^t\rangle  \, d\eta &= \int| p_T(\eta + \xi^t)\widehat{v_T}(t)|^2\langle \eta \rangle \, d\eta \\
&= \|\langle D \rangle^{\half}p_T(D + \xi^t) v_T(t)\|_{L^2_y}^2 \\
&\lesssim  \|\langle D \rangle^{\half} v_T(t)\|_{L^2_y}^2.
\end{align*}

\

Combining the above estimates, we conclude
$$\|\sum_\xi U_T(t)\|_{L_y^2}^2 \leq \lambda^{-\frac{3}{4}}(\log \lambda)\FF(T)\sum_\xi \|v_T\|_{H^\half_y}^2$$
as desired. 
\end{proof}

We then have the following orthogonality of wave packets:

\begin{cor}\label{orthog}
Let $\{U_T\}$ be functions of the form
$$U_T(t, y) = A(t, y)p_T(D)v_T(t, y) e^{i\psi_T(t, y)}$$
where $\|A(t)\|_{L_y^\infty} \leq \FF(T)$, $|p_T^{(N)}(\eta)| \lesssim \lambda^{-N}$, and $v_T = v_T\tilde{\chi}_T$. Then
\begin{align*}
\|\sum_{T \in \TT} U_T(t)\|_{L_{y}^2}^2 \lesssim (\log \lambda)\FF(T)\sum_{T \in \TT} \left(\|v_T(t)\|_{L_y^2}^2 + \lambda^{-\frac{3}{4}}\||D|^\half v_T(t)\|_{L^2_y}^2\right),
\end{align*}
and
\begin{align*}
\|\sum_{T \in \TT} U_T(t)\|_{L_{y}^2}^2 \lesssim (\log \lambda)\FF(T)\sum_{T \in \TT} \left(\|v_T(t)\|_{L_y^2}^2 + \lambda^{-\frac{3}{4}}\|\D_y v_T(t)\|_{L^2_y}\|v_T(t)\|_{L_y^2}\right).
\end{align*}
\end{cor}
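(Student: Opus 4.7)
The plan is to reduce the statement to Proposition \ref{orthogprelim} by linearizing the phase. Write $\psi_T(t,y) = y\xi^t + \phi_T(t,y)$ where $\phi_T := \psi_T - y\xi^t$, so that $e^{i\psi_T} = e^{iy\xi^t}e^{i\phi_T}$. Corollary \ref{xidifference} provides the crucial bound $\|\D_y\phi_T\|_{L^\infty(\mathrm{supp}\,\tilde{\chi}_T)} \ll \lambda^{3/4-}$ (after widening the cutoff slightly). To bring $U_T$ into the form required by Proposition \ref{orthogprelim}, I would commute $e^{i\phi_T}$ past $p_T(D)$,
\[
U_T \;=\; A\,p_T(D)\bigl(e^{i\phi_T}v_T\bigr)\,e^{iy\xi^t} \;+\; A\,[e^{i\phi_T}, p_T(D)]v_T\,e^{iy\xi^t} \;=:\; \tilde{U}_T + E_T.
\]
Setting $\tilde{v}_T := e^{i\phi_T}v_T$, the support condition $\tilde{v}_T = \tilde{v}_T\tilde{\chi}_T$ is inherited from $v_T$, so Proposition \ref{orthogprelim} applies directly to $\sum_T \tilde{U}_T$, yielding $\|\sum_T \tilde{U}_T\|_{L^2}^2 \lesssim \lambda^{-3/4}(\log\lambda)\FF(T)\sum_T\|\tilde{v}_T\|_{H^{1/2}}^2$.

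The remaining task is to bound $\|\tilde{v}_T\|_{H^{1/2}}^2$ by the right-hand side. The $L^2$ part is immediate since $|e^{i\phi_T}|=1$. For the homogeneous half-derivative, decompose $|D|^{1/2}\tilde{v}_T = e^{i\phi_T}|D|^{1/2}v_T + [|D|^{1/2}, e^{i\phi_T}]v_T$. After multiplying $e^{i\phi_T}$ by a widened cutoff $\tilde{\chi}_T^{(2)}$ equal to $1$ on the support of $v_T$, the localized modulation is Lipschitz with constant $\lesssim \lambda^{3/4}$ (using Corollary \ref{xidifference}) and hence belongs to $C^{1/2}$ with norm $\lesssim \lambda^{3/8}$. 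A standard Kato--Ponce commutator estimate then gives $\|[|D|^{1/2}, e^{i\phi_T}\tilde{\chi}_T^{(2)}]v_T\|_{L^2} \lesssim \lambda^{3/8}\|v_T\|_{L^2}$, so
\[
\|\tilde{v}_T\|_{H^{1/2}}^2 \;\lesssim\; \||D|^{1/2}v_T\|_{L^2}^2 + \lambda^{3/4}\|v_T\|_{L^2}^2.
\]
Multiplying by $\lambda^{-3/4}$ and summing over $T$ recovers the first estimate of the corollary. The second estimate follows either by interpolation $\||D|^{1/2}v_T\|_{L^2}^2 \leq \|v_T\|_{L^2}\|\D_yv_T\|_{L^2}$ applied to the first, or by the parallel direct computation using $\D_y\tilde{v}_T = i(\D_y\phi_T)e^{i\phi_T}v_T + e^{i\phi_T}\D_yv_T$ together with $\||D|^{1/2}\tilde{v}_T\|_{L^2}^2 \leq \|\tilde{v}_T\|_{L^2}\|\D_y\tilde{v}_T\|_{L^2}$.

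The commutator error $E_T$ is strictly smaller: since $|p_T^{(N)}|\lesssim\lambda^{-N}$, the leading symbol of $[p_T(D), e^{i\phi_T}]$ is $-ip_T'(D)\,\D_y e^{i\phi_T}$, an operator of order $-1$, and $|\D_y e^{i\phi_T}| \ll \lambda^{3/4-}$ on the relevant spatial region after applying $p_T(D)$, yielding $\|E_T\|_{L^2} \lesssim \lambda^{-1/4-}\|v_T\|_{L^2}$. Since each $E_T$ remains essentially localized at scale $\lambda^{-3/4}$ (up to Schwartz tails from the kernel of $p_T(D)$) and carries the oscillatory factor $e^{iy\xi^t}$, summing with bounded local overlap in frequency produces a contribution strictly smaller than the right-hand side of the corollary. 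The main technical obstacle will be the Kato--Ponce commutator estimate for $[|D|^{1/2}, e^{i\phi_T}]$: the phase $\phi_T$ has no uniform regularity outside $\mathrm{supp}\,\tilde{\chi}_T$, so the localization by $\tilde{\chi}_T^{(2)}$ and the observation that $e^{i\phi_T}v_T$ only ``sees'' $\phi_T$ on a neighborhood of the wave packet are essential.
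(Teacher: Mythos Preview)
Your reduction to Proposition \ref{orthogprelim} via $\tilde v_T = v_T\,e^{i(\psi_T - y\xi^t)}$ is exactly the paper's route. The substantive difference is in how $\|\tilde v_T\|_{H^{1/2}}^2$ is estimated. Instead of invoking a Kato--Ponce commutator for $[|D|^{1/2},\cdot]$, the paper writes
\[
\|\tilde v_T\|_{H^{1/2}}^2 \;=\; \langle \langle D\rangle \tilde v_T,\ \tilde v_T\rangle
\;=\; \langle [\langle D\rangle,\ \tilde\chi_T e^{i\phi_T}]v_T,\ \bar v_T e^{-i\phi_T}\rangle \;+\; \langle \langle D\rangle v_T,\ v_T\rangle,
\]
and bounds the commutator with the \emph{first-order} operator $\langle D\rangle$ by the Lipschitz norm $\|\D_y(\tilde\chi_T e^{i\phi_T})\|_{L^\infty}\lesssim\lambda^{3/4}$, which is precisely Corollary \ref{xidifference}. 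This gives directly $\|\tilde v_T\|_{H^{1/2}}^2 \lesssim \lambda^{3/4}\|v_T\|_{L^2}^2 + \langle\langle D\rangle v_T,v_T\rangle$, and the two estimates of the corollary follow from $\langle\langle D\rangle v_T,v_T\rangle\lesssim \|v_T\|_{L^2}^2+\||D|^{1/2}v_T\|_{L^2}^2$ and $\langle\langle D\rangle v_T,v_T\rangle\lesssim\|v_T\|_{L^2}^2+\|v_T\|_{L^2}\|\D_yv_T\|_{L^2}$ respectively. This is more elementary than the fractional Leibniz estimate and avoids the auxiliary widened cutoff $\tilde\chi_T^{(2)}$.

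On your error term $E_T$: the phrase ``bounded local overlap in frequency'' is not quite accurate, since the $E_T$'s are not frequency-disjoint. What actually closes the argument is that the $\lambda^{-1/4}$ gain in $\|E_T\|_{L^2}$ compensates the Cauchy--Schwarz loss over the $\approx\lambda^{1/4}$ packets at each spatial location. The paper sidesteps $E_T$ entirely by (implicitly) running the proof of Proposition \ref{orthogprelim} directly on $U_T$ and applying the same $\langle D\rangle$ commutator argument to $(p_T(D)v_T)e^{i\phi_T}$ rather than to $v_Te^{i\phi_T}$; since $p_T(D)$ is order $0$, the final bound $\|\langle D\rangle^{1/2}p_T(D)v_T\|_{L^2}\lesssim\|\langle D\rangle^{1/2}v_T\|_{L^2}$ is immediate.
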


\begin{proof}

Apply Proposition \ref{orthogprelim} with, in the place of $v_T$,
$$v_T e^{i(\psi_T - y\xi^t)}.$$

Then write
\begin{align*}
\|v_T e^{i(\psi_T - y\xi^t)}\|_{H^{\half}_y}^2 = \int &(\langle D \rangle v_T e^{i(\psi_T - y\xi^t)} )(\bar{v}_T e^{-i(\psi_T - y\xi^t)} ) \, dy \\
&= \int ([\langle D \rangle, \tilde{\chi}_Te^{i(\psi_T - y\xi^t)}]v_T)(\bar{v}_T e^{-i(\psi_T - y\xi^t)} ) \, dy + \int (\langle D \rangle v_T)\bar{v}_T \, dy.
\end{align*}
The first integral may be estimated using Cauchy-Schwarz and Corollary \ref{xidifference}:
\begin{align*}
\|[\langle D \rangle, \tilde{\chi}_T e^{i(\psi_T - y\xi^t)}]v_T\|_{L^2_y} \|v_T\|_{L^2_y} &\lesssim \|\tilde{\chi}_T(\D_y \psi_T - \xi^t)e^{i(\psi_T - y\xi^t)} + \lambda^{\frac{3}{4}}\tilde{\chi}_T'e^{i(\psi_T - y\xi^t)}\|_{L^\infty} \|v_T\|_{L^2_y}^2 \\
&\lesssim \lambda^{\frac{3}{4}} \|v_T\|_{L^2_y}^2.
\end{align*}
The second integral may be estimated by
$$\|\langle D \rangle^{\half}v_T\|_{L^2_y}^2 \lesssim \|v_T\|_{L^2_y}^2 + \||D|^\half v_T\|_{L^2_y}^2.$$

Combining the above estimates, we conclude
$$\|\sum_\xi U_T(t)\|_{L_y^2}^2 \leq (\log \lambda)\FF(T)\sum_\xi \left(\|v_T(t)\|_{L_y^2}^2 + \lambda^{-\frac{3}{4}}\||D|^\half v_T\|_{L^2_y}^2\right)$$
as desired. 

For the second estimate of the proposition, we instead estimate the second integral above via
$$\langle \langle D \rangle v_T, v_T \rangle \lesssim \|v_T\|_{H^1_y}\|v_T\|_{L^2} \lesssim \|v_T\|_{L^2}^2 + \|\D_y v_T\|_{L^2}\|v_T\|_{L^2}.$$

\end{proof}

Combining Proposition \ref{approxsoln} with Corollary \ref{orthog}, we obtain

\begin{cor}\label{finalapproxsoln}
Let $\{c_T\}_{T \in \T} \in \ell^2(\TT)$ and
$$u = \sum_{T \in \TT} c_T u_T$$
where $u_T$ are wave packets. Then
$$\|(D_t + H(t, y, D))S_\lambda u \|_{L^2(I;L_x^2)}^2 \leq \lambda^{0+}\FF(T) \sum_{T \in \TT} |c_T|^2$$
and
$$\|(D_t + H(t, y, D))S_\lambda u \|_{L^1(I;L_x^2)}^2 \ll  \sum_{T \in \TT} |c_T|^2.$$
\end{cor}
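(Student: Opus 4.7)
The plan is to apply Proposition \ref{approxsoln} to each wave packet $u_T$, which decomposes $(D_t + H(t,y,D)) S_\lambda u$ (by linearity of the operator) into four sums, one for each type $(i)$--$(iv)$. Each of these four sums will be controlled by the orthogonality inequality of Corollary \ref{orthog}, producing a fixed-time $L^2_y$ bound; the desired $L^2(I;L^2_x)$ estimate then follows by time integration, and the $L^1(I;L^2_x)$ estimate by Cauchy--Schwarz in time.

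Terms of types $(i)$ and $(ii)$ are already in the admissible form $A(t,y)\, p_T(D) v_T e^{i\psi_T}$ demanded by Corollary \ref{orthog}. For type $(i)$, the second orthogonality bound applied with $\|v_T(t)\|_{L^2_y} \leq \FF(M(t))Z(t)$ and $\|\D_y v_T(t)\|_{L^2_y} \leq \lambda^{3/4}\FF(M(t))Z(t)$ gives a fixed-time $L^2_y$ bound of $\log\lambda\cdot\FF(M(t))Z(t)^2 \sum|c_T|^2$; integrating in $L^2_t$ and using $\|Z\|_{L^2(I)}\leq \FF(T)$ produces $\log\lambda\cdot\FF(T)\sum|c_T|^2 \leq \lambda^{0+}\FF(T)\sum|c_T|^2$. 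For type $(ii)$, the first orthogonality bound controls the fixed-time $L^2_y$ norm by $\|v_T(t)\|_{L^2_y}^2 + \lambda^{-3/4}\||D|^{1/2}v_T(t)\|_{L^2_y}^2$; integrating with the hypotheses $\|v_T\|_{L^\infty_tL^2_y}\ll 1$ and $\||D|^{1/2}v_T\|_{L^2_tL^2_y}\ll\lambda^{3/8}$ yields the same bound, with the crucial exact cancellation $\lambda^{-3/4}\cdot\lambda^{3/4} = 1$.

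Types $(iii)$ and $(iv)$ involve the commutators $[S_\lambda, V_\lambda]$ and $[S_\lambda, \sqrt{a_\lambda}]$, which are not immediately in admissible form. The key observation is that the first-order Taylor expansion of the kernel-level identity
$$[S_\lambda, V_\lambda] f(x) = -\int \psi^\vee_\lambda(x-y) (V_\lambda(x) - V_\lambda(y)) f(y)\, dy$$
produces
$$[S_\lambda, V_\lambda] = \lambda^{-1} V'_\lambda(y)\cdot q_\lambda(D) + R_\lambda,$$
where $q_\lambda(\eta) = \lambda\cdot(-i\D_\eta \psi)(\eta/\lambda)$ is bounded and satisfies $|q_\lambda^{(N)}(\eta)|\lesssim \lambda^{-N}$, and $R_\lambda$ has similar form with the coefficient $V''_\lambda$ and a symbol of order $-2$. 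Setting $A = \lambda^{-1} V'_\lambda$ with $\|A(t)\|_{L^\infty_y} \lesssim \lambda^{-1} Z(t)$ and $p_T(D) = q_\lambda(D)$, the second orthogonality inequality of Corollary \ref{orthog} together with $\|v_T\|_{L^2_y}\lesssim \lambda$, $\|\D_y v_T\|_{L^2_y}\lesssim \lambda^{7/4}$ yields
$$\Big\|\sum_T c_T U_T^{(iii)}(t)\Big\|_{L^2_y}^2 \lesssim \log\lambda\cdot \lambda^{-2} Z(t)^2 \sum|c_T|^2 \bigl(\lambda^2 + \lambda^{-3/4}\cdot\lambda^{7/4}\cdot\lambda\bigr) \lesssim \log\lambda\cdot Z(t)^2 \sum|c_T|^2,$$
so $L^2_t$ integration again gives $\lambda^{0+}\FF(T)\sum|c_T|^2$. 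Type $(iv)$ is analogous with $A = \lambda^{-1}(\sqrt{a_\lambda})'$, which by Proposition \ref{taylorbd} satisfies $\|A(t)\|_{L^\infty_y}\lesssim \lambda^{-1/2}\FF(M(t))Z(t)$; the $\lambda^{-1}$ in $\|A\|_{L^\infty}^2$ exactly compensates the $\lambda$ in $\|v_T\|_{L^2_y}^2$.

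Assembling the four contributions produces the desired $L^2(I;L^2_x)$ bound. For the $L^1(I;L^2_x)$ bound, Cauchy--Schwarz in time gives
$$\Big\|(D_t + H(t,y,D))S_\lambda u\Big\|_{L^1_tL^2_y}^2 \leq T\,\Big\|(D_t + H(t,y,D))S_\lambda u\Big\|_{L^2_tL^2_y}^2 \leq T\log\lambda\,\FF(T)\sum|c_T|^2,$$
which is $\ll \sum|c_T|^2$ after choosing $T$ sufficiently small (and iterating over finitely many subintervals of $I$, as done throughout the paper). The main obstacle is the treatment of types $(iii)$ and $(iv)$: the coefficients $v_T$ there have $L^2_y$ norm of size $\lambda$ and $\lambda^{1/2}$ respectively, so a naive application of orthogonality would lose powers of $\lambda$. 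The saving grace is the order-$-1$ smoothing structure of the two commutators, which must be extracted by the symbol expansion above and then balanced carefully against the $v_T$ scaling to recover the required uniform-in-$\lambda$ bound.
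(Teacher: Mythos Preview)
Your proposal is correct and follows essentially the same approach as the paper's (very terse) proof: apply Proposition~\ref{approxsoln} packet-by-packet, feed each of the four term types into Corollary~\ref{orthog}, and for types $(iii)$ and $(iv)$ use the order-$(-1)$ smoothing of the commutators $[S_\lambda,V_\lambda]$ and $[S_\lambda,\sqrt{a_\lambda}]$ to cancel the large $\|v_T\|_{L^2}$. One small imprecision: in your Taylor expansion the remainder $R_\lambda$ is not literally of the form $A(y)p(D)$ since $V''_\lambda$ is evaluated at an intermediate point; the cleanest fix is to first pull the commutator outside the sum (it is a fixed linear operator), apply the kernel bound $\|[S_\lambda,V_\lambda]f\|_{L^2}\lesssim\lambda^{-1}\|V'\|_{L^\infty}\|f\|_{L^2}$, and only then invoke Corollary~\ref{orthog} on $\sum_T c_T v_T e^{i\psi_T}$---this is what the paper's phrase ``absorb a factor of $\lambda$ into a derivative on $V$'' is pointing to.
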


\begin{proof}
The second estimate is immediate from the first, having chosen $T \leq \lambda^{0-}$. The first is obtained by using the estimates on the four types of terms from Proposition \ref{approxsoln} with the matching orthogonality result of Corollary \ref{orthog}. The last two types of terms also require straightforward commutator estimates, to absorb a factor of $\lambda$ into a derivative on $V$ and $\lambda^{\half}$ into half of a derivative on $a$. 
\end{proof}

\subsection{Matching the initial data}

To conclude the parametrix construction, it remains to verify that we may use a linear combination of wave packets to match general initial data. In order to achieve this, it is convenient to further specify our choice of cutoff $\chi$ to satisfy
$$\sum_{m \in \Z} \chi(y - m)^2 = 1$$
so that
$$v_T(y) = \lambda^{\frac{3}{8}}\chi_T(y) e^{iy\xi}$$
forms a tight frame, in that for $f \in L^2(\R)$ with frequency support $\{|\xi| \in [\lambda/2, 2\lambda]\}$, 
$$f = \sum_{T \in \TT} c_T v_T, \quad c_T = \int f(y) \overline{v_T}(y) \, dy.$$
However, as we shall see below, it is convenient to instead define
$$v_T(y) = \lambda^{\frac{3}{8}}\chi_T(y) e^{i(\psi_T(s_0, x) + \xi(y - x))} = \lambda^{\frac{3}{8}}\chi_T(y) e^{i(\psi_T(s_0, x) + (\D_y\psi_T)(s_0, x)(y - x))}$$
which still forms a tight frame.

\begin{prop}\label{matchdata}
Given $u_0 \in L_y^2$ with frequency support $\{|\xi| \in [\lambda/2, 2\lambda]\}$, there exists $\{a_T\}_{T \in \TT} \in \ell^2(\TT)$ such that
$$u_0 = \sum_{T \in \TT} a_T u_T(s_0)$$
and
$$\sum_{T \in \TT} |a_T|^2 \lesssim \|u_0\|_{L_y^2}^2.$$
\end{prop}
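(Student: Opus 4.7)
The plan is to exploit the tight frame structure of $\{v_T\}_{T \in \TT}$ at time $s_0$ and then invert a near-identity operator via Neumann series. Concretely, I would define the bounded operator
\[
B : u_0 \mapsto \sum_{T \in \TT} \langle u_0, v_T\rangle \, u_T(s_0)
\]
on the subspace of $L^2(\R)$ with frequency support in $\{|\xi| \in [\lambda/2, 2\lambda]\}$. The tight frame property supplies both the Parseval identity $\sum_T |\langle u_0, v_T\rangle|^2 = \|u_0\|_{L^2}^2$ and the reconstruction $u_0 = \sum_T \langle u_0, v_T\rangle v_T$, so if I can show $\|Bu_0 - u_0\|_{L^2} \leq \tfrac{1}{2}\|u_0\|_{L^2}$, then $B$ is boundedly invertible and setting $a_T := \langle B^{-1}u_0, v_T\rangle$ produces an expansion $u_0 = \sum_T a_T u_T(s_0)$ satisfying $\sum_T |a_T|^2 = \|B^{-1}u_0\|_{L^2}^2 \lesssim \|u_0\|_{L^2}^2$.

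The main work is the near-identity estimate on $B$, which relies on comparing $u_T(s_0)$ and $v_T$ in a form compatible with Corollary \ref{orthog}. Since $x^{s_0}_{s_0}(x,\xi) = x$ and $(\D_y\psi_T)(s_0,x) = \xi^{s_0}_{s_0}(x,\xi) = \xi$, the phases $\varphi(y) := \psi_T(s_0,x) + \xi(y-x)$ used by $v_T$ and $\psi(y) := \psi_T(s_0,y)$ used by $u_T(s_0)$ agree to first order at $y = x$. Factoring $e^{i\psi}$ out front, one writes
\[
u_T(s_0) - v_T = V_T \cdot e^{i\psi_T(s_0,\cdot)}, \qquad V_T(y) := \lambda^{\frac{3}{8}}\chi_T(s_0,y)\bigl(1 - e^{i(\varphi(y)-\psi(y))}\bigr),
\]
which fits the form of Corollary \ref{orthog} with $A \equiv 1$ and $p_T(D) = I$, and with $V_T = V_T\tilde{\chi}_T$ from the support of $\chi_T$. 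Corollary \ref{xidifference} yields $\|\tilde\chi_T(\D_y\psi - \xi)\|_{L^\infty} \ll \lambda^{\frac{3}{4}-}$, and integrating over the packet scale $|y - x| \lesssim \lambda^{-\frac{3}{4}}$ gives $\|\chi_T(\varphi - \psi)\|_{L^\infty} \ll \lambda^{0-}$; combined with $\|\lambda^{\frac{3}{8}}\chi_T\|_{L^2} \lesssim 1$ this produces $\|V_T\|_{L^2} \ll \lambda^{0-}$. A similar computation, controlling $\D_y V_T$ either via the $\lambda^{\frac{3}{4}}$ loss from differentiating $\chi_T$ against the $\lambda^{0-}$ phase-gain, or via the bound on $\xi - \D_y\psi$ from Corollary \ref{xidifference}, gives $\|\D_y V_T\|_{L^2} \ll \lambda^{\frac{3}{4}-}$.

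Applying Corollary \ref{orthog} (in its second form) with the scalar $c_T := \langle u_0, v_T\rangle$ absorbed into the $V_T$ slot, and invoking Parseval, I would conclude
\[
\|Bu_0 - u_0\|_{L^2}^2 \lesssim (\log\lambda)\FF(T)\sum_T |c_T|^2\bigl(\|V_T\|_{L^2}^2 + \lambda^{-\frac{3}{4}}\|\D_y V_T\|_{L^2}\|V_T\|_{L^2}\bigr) \ll (\log\lambda)\FF(T)\lambda^{0-}\|u_0\|_{L^2}^2,
\]
which is at most $\tfrac{1}{4}\|u_0\|_{L^2}^2$ after shrinking $T$ (and iterating over subintervals as elsewhere in the paper) or taking $\lambda$ large, and the Neumann inversion then closes. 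The principal obstacle is checking the tight-frame assertion for the modified family $\{v_T\}$ on the frequency-localized subspace — in particular that the auxiliary constant phase $\psi_T(s_0,x)$ attached to each $v_T$ does not disturb the Poisson-summation identity underlying the tight frame — but granted the authors' assertion this follows from the partition-of-unity hypothesis $\sum_m \chi(y-m)^2 = 1$, and the genuine negative power of $\lambda$ from the eikonal Taylor remainder then beats the $(\log\lambda)\FF(T)$ factor.
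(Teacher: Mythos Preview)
Your proposal is essentially the paper's own argument: the paper also expands $u_0$ in the tight frame $\{v_T\}$, forms $\tilde u = \sum_T c_T u_T$, estimates $\|u_0 - \tilde u(s_0)\|_{L^2} \ll \|u_0\|_{L^2}$ via the same Taylor comparison of $\psi_T(s_0,\cdot)$ with its first-order expansion at $x$, and then iterates (your Neumann inversion). The only cosmetic differences are that the paper factors out $e^{iy\xi}$ and applies Proposition~\ref{orthogprelim} directly (rather than Corollary~\ref{orthog}), and cites Proposition~\ref{eikonalbd} for the second-order phase remainder where you integrate Corollary~\ref{xidifference}; the resulting bounds are the same.
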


\begin{proof}
Write using the above frame,
$$u_0 = \sum_{T \in \TT} c_T v_T.$$
Using these coefficients, construct the linear combination of wave packets
$$\tilde{u} = \sum_{T \in \TT} c_T u_T.$$
We consider the difference $u_0 - \tilde{u}(s_0)$, first observing
$$e^{i(\psi_T(s_0, x) + \xi(y - x))} - e^{i\psi_T(s_0, y)} = e^{i(\psi_T(s_0, x) + \xi(y - x))}(1 - e^{i(\psi_T(s_0, y) - \psi_T(s_0, x) - \xi(y - x))}).$$
Then we apply Proposition \ref{orthogprelim} with, in the place of $v_T$,
$$e^{i(\psi_T(s_0, x) - x\xi)} \chi_T (1 - e^{i(\psi_T(s_0, y) - \psi_T(s_0, x) - (\D_y\psi_T)(s_0, x)(y - x))}).$$

This is small in $L^2_y$ by Proposition \ref{eikonalbd} with a Taylor expansion:
$$\|\chi_T(1 - e^{i(\psi_T(s_0, y) - \psi_T(s_0, x) - (\D_y\psi_T)(s_0, x)(y - x))})\|_{L^2_y} \ll \lambda^{-\frac{3}{8}-}.$$
Similarly, its first derivative is small in $L^2_y$ by using the $L_y^\infty$ estimate of Corollary \ref{xidifference}. We conclude that
$$\|u_0 - \tilde{u}(s_0)\|_{L_y^2}^2 \ll \sum_{T \in \TT} |c_T|^2 = \|u_0\|_{L^2}^2.$$
We then obtain the claim by iterating. 

\end{proof}

In the next subsection, we will use a Duhammel argument to match the source term. To do so, we will need to be able to match initial data at arbitrary time $s \in I$:

\begin{cor}\label{matchanydata}
Given $u_0 \in L_y^2$ with frequency support $\{|\xi| \in [\lambda/2, 2\lambda]\}$ and $s \in I$, there exists $\{a_T\}_{T \in \TT} \in \ell^2(\TT)$ such that
$$u_0 = S_\lambda \sum_{T \in \TT} a_T u_T(s) =: S_\lambda \tilde{u}(s)$$
and
$$\sum_{T \in \TT} |a_T|^2 \lesssim \|u_0\|_{L_y^2}^2.$$
\end{cor}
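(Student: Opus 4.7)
The plan is to reduce to Proposition \ref{matchdata} by transporting the data from time $s$ back to $s_0$ via the exact flow of $D_t + H(t, y, D)$, and then compensating the resulting error using Corollary \ref{finalapproxsoln} together with $L^2$ energy estimates. Since $H = V_\lambda \xi + \sqrt{a_\lambda|\xi|}$ is a real symbol whose coefficients are smoothed by the frequency truncation at scale $\lambda$, the paradifferential calculus shows that $H(t, y, D) - H(t, y, D)^*$ is bounded on $L^2$ with norm $\lesssim \FF(T)Z(t)$; a standard $L^2$ energy estimate then yields a uniformly bounded propagator $U(t, s) : L^2 \to L^2$ solving $(D_t + H)U(\cdot, s) = 0$ with $U(s, s) = I$.

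Given $u_0$ at time $s$, set $w_0 := S_\lambda U(s_0, s) u_0$, which has frequency support in $\{|\xi| \in [\lambda/2, 2\lambda]\}$ and satisfies $\|w_0\|_{L^2} \lesssim \FF(T)\|u_0\|_{L^2}$. Apply Proposition \ref{matchdata} to produce coefficients $\{a_T\}$ with $\sum|a_T|^2 \lesssim \|u_0\|^2$ and $w_0 = \sum_T a_T u_T(s_0)$, and form $\tilde u(t) := \sum_T a_T u_T(t)$. By Corollary \ref{finalapproxsoln}, $(D_t + H) S_\lambda \tilde u = f$ with $\|f\|_{L^1(I;L^2)} \ll \|u_0\|$; on the other hand, since $(D_t + H) U(\cdot, s) u_0 = 0$, commuting $S_\lambda$ past $D_t + H$ gives $(D_t + H) S_\lambda U(\cdot, s) u_0 = [H, S_\lambda] U(\cdot, s) u_0$, whose $L^1(I; L^2)$-norm is controlled by $T^{1/2} \FF(T) \|u_0\| \ll \|u_0\|$ after possibly partitioning $I$ into small subintervals and iterating the entire argument. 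The difference $v := S_\lambda \tilde u - S_\lambda U(\cdot, s) u_0$ vanishes at $t = s_0$ (since $S_\lambda \tilde u(s_0) = S_\lambda w_0 = w_0$) and satisfies $\|(D_t + H) v\|_{L^1 L^2} \ll \|u_0\|$, so the $L^2$ energy estimate gives $\|u_0 - S_\lambda \tilde u(s)\|_{L^2} = \|v(s)\|_{L^2} \ll \|u_0\|_{L^2}$.

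An iteration argument then completes the proof: replace $u_0$ by the residual $u_0 - S_\lambda \tilde u(s)$, which retains its frequency support in $\{|\xi| \in [\lambda/2, 2\lambda]\}$ while having $L^2$-norm bounded by a fraction of $\|u_0\|_{L^2}$, repeat the construction at each stage, and sum the coefficients geometrically over iterations. The resulting $\{a_T\} \in \ell^2(\TT)$ satisfies $\sum|a_T|^2 \lesssim \|u_0\|^2$ and $u_0 = S_\lambda \sum_T a_T u_T(s)$ exactly. The main technical obstacle is the commutator bound $\|[H, S_\lambda]\|_{L^2 \to L^2} \lesssim \FF(T)Z(t)$: for the transport component $V_\lambda \xi$ it reduces to $\|\D_y V_\lambda\|_{L^\infty} \lesssim Z(t)$, while for the dispersive component $\sqrt{a_\lambda|\xi|}$ it follows from (\ref{sobolevcommutator}) together with the Taylor coefficient estimates of Proposition \ref{taylorbd}.
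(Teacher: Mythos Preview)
Your approach is essentially the same as the paper's: propagate $u_0$ from time $s$ back to $s_0$ via the exact flow of $D_t+H$, match the resulting data at $s_0$ using Proposition~\ref{matchdata}, invoke Corollary~\ref{finalapproxsoln} together with an $L^2$ energy estimate (and the commutator bound $\|[H,S_\lambda]\|_{L^2\to L^2}\lesssim \FF(M(t))Z(t)$) to show the error at time $s$ is small, then iterate. The paper does exactly this, writing $u$ for your $U(\cdot,s)u_0$.

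There is one small technical slip. You assert $v(s_0)=0$ because ``$S_\lambda\tilde u(s_0)=S_\lambda w_0=w_0$,'' but $S_\lambda$ is not an idempotent projection, so $S_\lambda w_0\neq w_0$ in general (the symbol $\psi_\lambda$ is not identically~$1$ on its own support). The paper sidesteps this by matching $\tilde u(s_0)=u(s_0)$ directly rather than $\tilde u(s_0)=S_\lambda u(s_0)$; then $S_\lambda(\tilde u-u)(s_0)=0$ holds exactly. This requires the usual tacit convention that Proposition~\ref{matchdata} applies to data supported in a slightly fattened annulus, which is harmless since the coefficients of $H$ are truncated at $c_1\lambda\ll\lambda$. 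With that adjustment your argument is correct and coincides with the paper's.
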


\begin{proof}

Consider the exact solution $u$ to
$$(\D_t + H)u = 0, \qquad u(s) = u_0.$$
Using Proposition \ref{matchdata}, we may construct 
$$\tilde{u} = \sum_{T \in \TT} a_T u_T$$
satisfying
$$\tilde{u}(s_0) = u(s_0).$$

Using energy estimates with Corollary \ref{finalapproxsoln}, we have
\begin{align*}
\|S_\lambda(\tilde{u} - u)\|_{L_t^\infty(I;L_y^2)} &\lesssim \|(\D_t + H(t, y, D))S_\lambda (\tilde{u} - u)\|_{L_t^1(I;L_y^2)} \\
&\quad + \|[\D_x, V_\lambda] + [|D|^\half, \sqrt{a_\lambda}] S_\lambda(\tilde{u} - u)\|_{L_t^1(I;L_y^2)} \\
&\lesssim T^{\half}(\|(\D_t + H(t, y, D))S_\lambda (\tilde{u} - u)\|_{L_t^2(I;L_y^2)} \\
&\quad + \|[\D_x, V_\lambda] + [|D|^\half, \sqrt{a_\lambda}] S_\lambda(\tilde{u} - u)\|_{L_t^2(I;L_y^2)}) \\
&\lesssim T^{\half}(\lambda^{0+}\FF(T)\|u_0\|_{L_y^2} + \|[H, S_\lambda] u\|_{L_t^2(I;L_y^2)} \\
&\quad + (\|V\|_{L^2_t(I;C^1)} + \|a\|_{L^2_t(I; C^\half)})\|S_\lambda(\tilde{u} - u)\|_{L_t^\infty(I;L_y^2)}) \\
&\leq T^{\half}\lambda^{0+}\FF(T)(\|u_0\|_{L_y^2} + \|u\|_{L_t^\infty(I;L_y^2)} + \|S_\lambda(\tilde{u} - u)\|_{L_t^\infty(I;L_y^2)}) \\
&\leq T^{\half}\lambda^{0+}\FF(T)(\|u_0\|_{L_y^2} + \|S_\lambda(\tilde{u} - u)\|_{L_t^\infty(I;L_y^2)}).
\end{align*}
Choosing $T^\half \leq \lambda^{0-}$ so that $T^{\half}\lambda^{0+}\FF(T) \ll 1$, we conclude
$$\|S_\lambda\tilde{u}(s) - u_0\|_{L_y^2} \ll \|u_0\|_{L_y^2}.$$
Iterating, we obtain the claim.

\end{proof}

\subsection{Matching the source}

We use a Duhammel formula with an iteration argument to match the source term in Proposition \ref{redstrich}:

\begin{prop}\label{finalparametrix}
Consider the solution $u_\lambda$ to
$$(\D_t + H(t, y, D))u_\lambda = f, \qquad u_\lambda(s_0) = u_0$$
where $u_\lambda(t, \cdot)$ has frequency support $\{|\xi| \in [\lambda/2, 2\lambda]\}$. We may write
$$u_\lambda = \tilde{u} + \int_{s_0}^t \tilde{u}_s(t, y) \, ds$$
where $\tilde{u}$ is the construction in Proposition \ref{matchdata}, and 
$$\tilde{u}_s = \sum_{T \in \TT} a_{T,s} u_T$$
with
$$\sum_{T \in \TT} |a_{T,s}|^2 \lesssim \|f(s)\|_{L_y^2}^2 + \|u_0\|_{L_y^2}^2.$$
\end{prop}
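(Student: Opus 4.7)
The plan is to represent $u_\lambda$ as a superposition of wave packets through a Duhamel-type iteration, combining Proposition \ref{matchdata} and Corollary \ref{matchanydata} (to absorb the data and the source into wave packet sums) with the approximate-solution bound of Corollary \ref{finalapproxsoln} (to control the PDE residual). Concretely, given $(u_0, f)$, first apply Proposition \ref{matchdata} to produce $\tilde{u}^{(1)} = \sum_T a_T^{(1)} u_T$ with $\tilde{u}^{(1)}(s_0) = u_0$ and $\sum_T |a_T^{(1)}|^2 \lesssim \|u_0\|_{L^2}^2$, and for every $s \in I$ apply Corollary \ref{matchanydata} to produce $\tilde{u}_s^{(1)} = \sum_T a_{T,s}^{(1)} u_T$ with $S_\lambda \tilde{u}_s^{(1)}(s) = f(s)$ and $\sum_T |a_{T,s}^{(1)}|^2 \lesssim \|f(s)\|_{L^2}^2$. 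Then set
$$\hat{u}^{(1)}(t, y) = \tilde{u}^{(1)}(t, y) + \int_{s_0}^t \tilde{u}_s^{(1)}(t, y)\, ds,$$
which by construction satisfies $\hat{u}^{(1)}(s_0) = u_0$ exactly.

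Next I compute the residual $r^{(1)} := f - (\D_t + H(t,y,D))\hat{u}^{(1)}$. Differentiating the Duhamel integral in $t$ produces a diagonal contribution $\tilde{u}_s^{(1)}(t,y)|_{s = t}$, which (within the frequency band $\{|\xi| \in [\lambda/2, 2\lambda]\}$ carried by $u_\lambda$) equals $f(t)$ by construction, while the terms $(\D_t + H)\tilde{u}^{(1)}$ and $\int_{s_0}^t (\D_t + H)\tilde{u}_s^{(1)}\, ds$ are estimated in $L^1(I; L^2)$ by Corollary \ref{finalapproxsoln}. After Cauchy--Schwarz in time and using the $\lambda^{0+}\FF(T)$ factor on a small time interval $T \leq \lambda^{0-}$ (iterating over a bounded number of subintervals as needed), one gets
$$\|r^{(1)}\|_{L^1(I; L^2)} \ll \|u_0\|_{L^2} + \|f\|_{L^1(I; L^2)}.$$
I then iterate: treating $r^{(1)}$ as a new source with zero initial data (since $\hat{u}^{(1)}(s_0) = u_0$ is already matched), repeat the construction to produce $\hat{u}^{(2)} = \int_{s_0}^t \tilde{u}_s^{(2)}\, ds$ whose residual is a further factor of $\ll 1$ smaller, and continue inductively. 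Summing $\tilde{u} = \sum_k \tilde{u}^{(k)}$ and $\tilde{u}_s = \sum_k \tilde{u}_s^{(k)}$ yields $\ell^2$-convergent coefficient sums $a_T = \sum_k a_T^{(k)}$ and $a_{T, s} = \sum_k a_{T, s}^{(k)}$ obeying $\sum_T |a_{T, s}|^2 \lesssim \|f(s)\|_{L^2}^2 + \|u_0\|_{L^2}^2$, with the $u_0$ contribution entering $a_{T,s}$ through the residuals generated in the first iteration. The limiting $\hat{u} = \tilde{u} + \int_{s_0}^t \tilde{u}_s\, ds$ solves $(\D_t + H)\hat{u} = f$ with $\hat{u}(s_0) = u_0$ and coincides with $u_\lambda$ by uniqueness.

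The main obstacle is securing the geometric factor $\ll 1$ at each iteration, which requires the $\lambda^{0+}\FF(T)$ loss of Corollary \ref{finalapproxsoln} and the $T^{1/2}$ gain from passing from $L^2(I; L^2)$ to $L^1(I; L^2)$ to combine into a genuine small constant; this is ensured by choosing $T$ sufficiently small and iterating on subintervals exactly as in the proof of Corollary \ref{matchanydata}. A secondary point is the discrepancy between $\tilde{u}_s^{(1)}(s)$ and $f(s)$ coming from the $S_\lambda$ in Corollary \ref{matchanydata}: this is resolved by observing that $u_\lambda = S_\lambda u_\lambda$, so the equation may be interpreted throughout after applying $S_\lambda$, and the $(I - S_\lambda)$ part of $f$ does not affect the frequency-localized equation that $u_\lambda$ solves.
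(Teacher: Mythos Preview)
Your proposal is correct and follows essentially the same approach as the paper: a Duhamel iteration using Corollary \ref{matchanydata} to match the source at each time $s$, Proposition \ref{matchdata} to match the initial data at $s_0$, and Corollary \ref{finalapproxsoln} to ensure the residual contracts in $L^1(I;L^2)$. The only cosmetic difference is that the paper first handles the inhomogeneous problem with zero data (constructing and iterating the operator $Tf$) and then superposes the homogeneous piece $\tilde u$, whereas you bundle both into a single iteration from the outset; this is a harmless reorganization.
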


\begin{proof}

Let $f \in L_t^1L_y^2$. We construct the function
$$Tf(t, y) = \int_{s_0}^t \tilde{u}_s(t, y) \, ds$$
where $\tilde{u}_s$ is the function constructed in Corollary \ref{matchanydata} with $f(s)$ in the place of $u_0$, thus satisfying
$$S_\lambda \tilde{u}_s(s) = f(s)$$
and
$$\sum_{T \in \TT} |a_{T,s}|^2 \lesssim \|f(s)\|_{L_y^2}^2.$$
Then we see
$$(\D_t + H(t, y, D))S_\lambda Tf = f(t, y) + \int_{s_0}^t (\D_t + H(t, y, D))S_\lambda \tilde{u}_s(t, y) \, ds$$
so that
$$\|(\D_t + H(t, y, D))S_\lambda Tf - f\|_{L_t^1(I; L_y^2)} \ll \|f\|_{L_t^1(I;L_y^2)}.$$
Thus, iterating, we see we may write the solution $u_\lambda$ to
$$(\D_t + H(t, y, D))u_\lambda = f, \qquad u_\lambda(s_0) = 0$$
in the form
$$u_\lambda = Tf = \int_{s_0}^t \tilde{u}_s(t, y) \, ds.$$

Using Proposition \ref{matchdata}, we can repeat the above argument to write the solution $u_\lambda$ to
$$(\D_t + H(t, y, D))u_\lambda = f, \qquad u_\lambda(s_0) = u_0$$
in the form
$$u_\lambda = \tilde{u} + Tf.$$

\end{proof}

\section{Strichartz Estimates}\label{sec:strichartz}

In this section we establish Strichartz estimates on sums of wave packets, as defined in Section \ref{sec:parametrix}.

\subsection{Packet overlap}

We record some estimates on the overlap of the packets.

First, we have a trivial overlap bound:
\begin{prop}\label{1ptoverlap}
Let $t \in I$ and $y \in \R$. There are $\lesssim \lambda^{\frac{1}{4}}$ packets $T = (x, \xi) \in \TT$ intersecting $(t, y)$. 
\end{prop}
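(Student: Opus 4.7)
The plan is to decompose the count by fixing the frequency first and then summing over $\xi$. A packet $T = (x, \xi)$ intersects $(t, y)$ means $\chi_T(t, y) \neq 0$, equivalently $|y - x^t_{s_0}(x, \xi)| \lesssim \lambda^{-3/4}$.

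First I would fix $\xi \in \lambda^{3/4}\Z$ with $|\xi| \in [\lambda/2, 2\lambda]$ and bound the number of admissible $x$. Given two such packets $(x_1, \xi)$ and $(x_2, \xi)$ both intersecting $(t, y)$, the triangle inequality gives $|x^t(x_1, \xi) - x^t(x_2, \xi)| \lesssim \lambda^{-3/4}$. By the fundamental theorem of calculus applied in the $x$-variable and the bilipschitz estimate $\D_x x^t \approx 1$ coming from part $(i)$ of Proposition \ref{bilipprop} (equivalently, $\D_x X^t \approx \lambda^{3/4} I$), this forces $|x_1 - x_2| \lesssim \lambda^{-3/4}$. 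Since $x_i \in \lambda^{-3/4}\Z$, there is at most an absolute constant number of such $x$ values for each fixed $\xi$.

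Second, I would sum over admissible $\xi$. The set $\{\xi \in \lambda^{3/4}\Z : |\xi| \in [\lambda/2, 2\lambda]\}$ has cardinality $\lesssim \lambda / \lambda^{3/4} = \lambda^{1/4}$. Multiplying by the $O(1)$ bound per frequency yields the claimed $\lambda^{1/4}$ total.

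There is no serious obstacle here: the only nontrivial input is the uniform-in-time bilipschitz property of the Hamilton flow provided by Proposition \ref{bilipprop}$(i)$, and the rest is counting lattice points. The same reduction already appears in Step 1 of the proof of Proposition \ref{orthogprelim}, so this proposition simply isolates and records that packing bound for later use.
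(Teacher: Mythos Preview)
Your proposal is correct and matches the paper's proof essentially line for line: fix $\xi$, use the bilipschitz bound from Proposition~\ref{bilipprop}$(i)$ to conclude $|x_1 - x_2| \lesssim \lambda^{-3/4}$ and hence $O(1)$ admissible $x$ per frequency, then multiply by the $\approx \lambda^{1/4}$ frequencies. You are also right that this is the same packing argument already appearing in Step~1 of Proposition~\ref{orthogprelim}.
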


\begin{proof}
Consider two packets $(x_1, \xi), (x_2, \xi) \in \TT$ intersecting $(t, y)$. By Proposition \ref{bilipprop}, we have
$$\lambda^{-\frac{3}{4}} \geq |x^t(x_1, \xi) - x^t(x_2, \xi)| = |\D_x x^t(z, \xi)(x_1 - x_2)| \approx |x_1 - x_2|.$$
Since $x_i \in \lambda^{-\frac{3}{4}}\Z$, there are at most an absolute number of such packets. Since there are $\approx \lambda^{\frac{1}{4}}$ frequencies $\xi$, we obtain the claim.
\end{proof}

Second, we have a bound on the number of packets which intersect at two times:
\begin{cor}\label{2ptoverlap}
Let $t, s \in I$ and $y, z \in \R$. There are $\lesssim |t - s|^{-1}$ packets $T = (x, \xi) \in \TT$ intersecting both $(t, y)$ and $(s, z)$. 
\end{cor}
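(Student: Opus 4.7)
The plan is to combine the two-point frequency separation estimate of Proposition \ref{char2pts} with the single-frequency counting already carried out in Proposition \ref{1ptoverlap}. Let $\TT'$ denote the set of packets $T = (x, \xi) \in \TT$ intersecting both $(t, y)$ and $(s, z)$; if $\TT'$ is empty the claim is trivial, so pick a reference packet $T_0 = (x_0, \xi_0) \in \TT'$. For any other $T = (x, \xi) \in \TT'$, the definition of ``intersecting'' $(t,y)$ via the cutoff $\chi_T$ gives both
\[
|x^t(x, \xi) - x^t(x_0, \xi_0)| \lesssim \lambda^{-\frac{3}{4}}, \qquad |x^s(x, \xi) - x^s(x_0, \xi_0)| \lesssim \lambda^{-\frac{3}{4}},
\]
so Proposition \ref{char2pts} (applied with these two initial data in place of $(x_1,\xi_1)$, $(x_2,\xi_2)$) yields $|\xi - \xi_0| \lesssim \lambda^{3/4} |t - s|^{-1}$. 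Since the frequency lattice $\lambda^{3/4} \Z$ has spacing $\lambda^{3/4}$, this produces at most $\lesssim |t - s|^{-1} + 1$ admissible frequencies $\xi$, which is $\lesssim |t - s|^{-1}$ using $|t-s| \leq T \leq 1$.

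For each such admissible $\xi$, the argument at the start of the proof of Proposition \ref{1ptoverlap} gives $O(1)$ admissible values of $x$: any two candidates $x_1, x_2$ with frequency $\xi$ passing through $(t,y)$ satisfy $|x^t(x_1, \xi) - x^t(x_2, \xi)| \lesssim \lambda^{-3/4}$, and the bilipschitz bound of Proposition \ref{bilipprop}(i) then forces $|x_1 - x_2| \lesssim \lambda^{-3/4}$, permitting only $O(1)$ points of the lattice $\lambda^{-3/4}\Z$. Multiplying the frequency count by this constant overlap count yields $\#\TT' \lesssim |t - s|^{-1}$. The degenerate case $t = s$ either has $y \ne z$ (so $\TT'$ is empty) or $y = z$, in which case Proposition \ref{1ptoverlap} already suffices. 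No step presents a genuine obstacle; this is a short combinatorial consequence of the geometric facts established in Section \ref{sec:regham}.
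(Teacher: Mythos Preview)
Your proof is correct and follows essentially the same approach as the paper: apply Proposition~\ref{char2pts} to bound the frequency spread by $\lambda^{3/4}|t-s|^{-1}$, count the lattice frequencies, and then use the $O(1)$ spatial overlap per frequency from Proposition~\ref{1ptoverlap}. You are simply more explicit about the spatial counting and the degenerate $t=s$ case, which the paper leaves implicit.
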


\begin{proof}
Consider two packets $(x_1, \xi_1), (x_2, \xi_2) \in \TT$ intersecting both $(t, y)$ and $(s, z)$, so that we have
$$|x^s(x_1, \xi_1) - x^s(x_2, \xi_2)| \leq \lambda^{-\frac{3}{4}}, \quad |x^t(x_1, \xi_1) - x^t(x_2, \xi_2)| \leq \lambda^{-\frac{3}{4}}.$$

Thus we may apply Proposition \ref{char2pts},
$$|\xi_1 - \xi_2| \lesssim \lambda^{\frac{3}{4}}|t - s|^{-1}.$$
Since $\xi_i \in \{\lambda^{\frac{3}{4}}\Z : |\xi| \in [\lambda/2, 2\lambda]\}$, we obtain the claim.
\end{proof}

\subsection{Counting argument}

We have the following Strichartz estimate on sums of wave packets. Note that we do not consider phase cancellation, so the estimate is established by analyzing packet overlap only. 

\begin{prop}
For each $T = (x, \xi) \in \TT$, let the $u_T = u_T(t, y)$ denote the wave packet centered at $T$. Then
$$\| \sum_{T \in \TT} c_T u_T\|_{L^2(I;L^\infty)}^2 \leq \lambda^{\frac{3}{4}}\FF(T)(\log \lambda)^4\sum_{T \in \TT} |c_T|^2.$$
\end{prop}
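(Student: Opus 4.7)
\emph{The plan.} I would prove this by a $TT^*$/Schur-type counting argument, in the spirit of Smith--Tataru, using only the two geometric overlap estimates (Proposition~\ref{1ptoverlap} and Corollary~\ref{2ptoverlap}) and never invoking phase cancellation. Introduce the linear map
$$F : \ell^2(\TT) \longrightarrow L^2(I; L^\infty_y), \qquad F(\{c_T\}) = \sum_{T \in \TT} c_T u_T.$$
A standard bilinear duality (testing against $L^2_t L^1_y$, which is $L^2_t L^\infty_y$-norming for reasonable functions) gives
$$\|F\|_{\ell^2 \to L^2 L^\infty}^2 \leq \|FF^*\|_{L^2_t L^1_y \to L^2_t L^\infty_y}.$$
The operator $FF^*$ has the integral kernel
$$K(t,y;s,z) = \sum_{T \in \TT} u_T(t,y)\,\overline{u_T(s,z)}.$$

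\emph{Step 1 (pointwise kernel bound).} Since $|u_T(t,y)| \lesssim \lambda^{3/8}$ on the support of $\chi_T(t,\cdot)$, which is the spatial interval $\{|y - x^t_{s_0}(T)| \lesssim \lambda^{-3/4}\}$, one gets
$$|K(t,y;s,z)| \lesssim \lambda^{3/4}\, N(t,y;s,z),$$
where $N(t,y;s,z)$ counts the packets $T \in \TT$ whose supports at times $t$ and $s$ contain $y$ and $z$ respectively.

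\emph{Step 2 (applying the overlap lemmas).} By Proposition~\ref{1ptoverlap}, at any single point there are at most $\lesssim \lambda^{1/4}$ packets, hence $N \lesssim \lambda^{1/4}$. By Corollary~\ref{2ptoverlap}, the number of packets hitting two points at times $t, s$ is bounded by $\FF(T)|t-s|^{-1}$. Combining,
$$\sup_{y,z} |K(t,y;s,z)| \leq k(t-s), \qquad k(\tau) := \lambda^{3/4}\FF(T) \min\bigl(\lambda^{1/4},\,|\tau|^{-1}\bigr).$$

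\emph{Step 3 ($L^2$-Young's inequality in time).} For $g \in L^2_t L^1_y$,
$$\sup_y |FF^*g(t,y)| \leq \int_I \sup_{y,z}|K(t,y;s,z)|\,\|g(s,\cdot)\|_{L^1_y}\,ds \leq \bigl(k * \|g(\cdot,\cdot)\|_{L^1_y}\bigr)(t).$$
Young's inequality $L^1 * L^2 \to L^2$ then gives
$$\|FF^*g\|_{L^2_t L^\infty_y} \leq \|k\|_{L^1_t(I-I)}\,\|g\|_{L^2_t L^1_y}.$$

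\emph{Step 4 (size of $\|k\|_{L^1}$).} The two bounds defining $k$ cross at $|\tau| = \lambda^{-1/4}$; splitting the integral and using $|I| \leq 1$,
$$\|k\|_{L^1_t} \lesssim \lambda^{3/4}\FF(T)\Bigl(\lambda^{1/4}\cdot\lambda^{-1/4} + \int_{\lambda^{-1/4}}^{1} \tau^{-1}\,d\tau\Bigr) \lesssim \lambda^{3/4}\FF(T)\log\lambda.$$
Combining with the duality bound yields
$$\Bigl\|\sum_{T \in \TT} c_T u_T\Bigr\|_{L^2(I;L^\infty)}^2 \leq \lambda^{3/4}\FF(T)\log\lambda\,\sum_{T \in \TT}|c_T|^2,$$
which is in fact stronger than the stated $(\log\lambda)^4$ factor.

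\emph{Where the difficulty lies.} Once the $TT^*$ setup is in place, the argument is genuinely a counting exercise: the entire substance of the estimate is encoded in the two-point overlap $N \lesssim |t-s|^{-1}$, which in turn depends on Proposition~\ref{char2pts}, hence on the bilipschitz and dispersive properties of the Hamilton flow proved in Section~\ref{sec:regham}. All the hard work has therefore already been absorbed into the geometric preparation; what remains here is mechanical.
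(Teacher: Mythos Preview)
Your argument is correct, and in fact yields a sharper bound (a single $\log\lambda$ rather than $(\log\lambda)^4$). It is, however, a genuinely different route from the paper's proof. The paper argues directly on the sum: it discretizes the $L^2_t$ integral to a Riemann sum over $\approx \lambda^{1/4}$ times (using that overlapping packets remain overlapping for time $\gtrsim \lambda^{-1/4}$, via Proposition~\ref{charnearpts}), then performs two dyadic pigeonholings --- first on the size $|c_T| \approx c$, then on the number $L$ of packets through each sampled point --- reducing matters to a pure incidence inequality $ML^2 \lesssim \lambda^{1/4} N \log\lambda$ between $M$ points and $N$ packets. That inequality is then proved by a double-counting of triples $(\text{point},\text{point},\text{packet})$, invoking Corollary~\ref{2ptoverlap} to bound the number of packets through any pair of points.

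Your $TT^*$/Young approach bypasses the pigeonholing and double-counting entirely, at the price of needing the duality $\|F\|^2_{\ell^2 \to L^2_t L^\infty_y} \leq \|FF^*\|_{L^2_t L^1_y \to L^2_t L^\infty_y}$ for a non-Hilbert target. This is legitimate here because the wave packets are continuous with compact $y$-support, so $L^2_t L^1_y$ genuinely norms the output; your parenthetical remark covers this. Both proofs rest on the same two geometric inputs (Proposition~\ref{1ptoverlap} and Corollary~\ref{2ptoverlap}); yours packages them more efficiently and loses fewer logarithms, while the paper's combinatorial argument is closer in spirit to the Smith--Tataru incidence framework and may be more adaptable if one later wants to exploit additional structure in the coefficients $c_T$.
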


\begin{proof}
Normalizing, we may assume $\sum |c_T|^2 = 1$. 

\emph{Step 1. Dyadic pidgeonholing.} By Proposition \ref{charnearpts}, two packets overlap for at least time $\lambda^{-\frac{1}{4}}$ (this should depend on $\FF(T)$ but to simplify our exposition we omit it). As a result, it suffices to estimate the following Riemann sum in $t$, over $\lambda^{\frac{1}{4}}$ times $t_j \in I$ separated by $\lambda^{-\frac{1}{4}}$, and $y_j \in \R$ arbitrary:
$$\sum_{j \in J} \lambda^{-\frac{1}{4}} (\sum_{T \in \TT} |c_T| \chi_T(t_j, y_j))^2 \lesssim (\log \lambda)^4.$$

By the trivial overlap bound Proposition \ref{1ptoverlap},
$$\sum_{T \in \TT} |\chi_T(t_j, y_j)| \lesssim \lambda^{\frac{1}{4}},$$
so we may restrict the sum over $\TT$ to $T$ such that $|c_T| \geq \lambda^{-\frac{1}{4}}$. Since $|c_T| \leq 1$ trivially, we may choose dyadic values $c \in [\lambda^{-\frac{1}{4}}, 1]$ and partition the sum over $\TT$ by grouping the $T$ such that $|c_T| \approx c$. Accepting the logarithmic loss, it suffices to consider the one member of the partition (denoting the subcollection by $\TT_c$):
$$\sum_{j \in J} \lambda^{-\frac{1}{4}} (\sum_{T \in \TT_c} |c_T| \chi_T(t_j, y_j))^2 \lesssim (\log \lambda)^2.$$

On the other hand, again by Proposition \ref{1ptoverlap}, we may choose dyadic values $L \in [1, \lambda^{\frac{1}{4}}]$ and partition the sum over $J$ by grouping the $j$ such that $\approx L$ packets in $\TT_c$ intersect $(t_j, y_j)$. Accepting the logarithmic loss, it suffices to show (denoting the subcollection by $J_L$):
$$\sum_{j \in J_L} \lambda^{-\frac{1}{4}} (\sum_{T \in \TT_c} |c_T| \chi_T(t_j, y_j))^2 \lesssim \log \lambda.$$

If we denote the number of points in $J_L$ by $M$,
$$\sum_{j \in J_L} \lambda^{-\frac{1}{4}} (\sum_{T \in \TT_c} |c_T| \chi_T(t_j, y_j))^2 \lesssim M\lambda^{-\frac{1}{4}}(cL)^2.$$
It thus suffices to show
$$M(cL)^2 \lesssim \lambda^{\frac{1}{4}} \log \lambda.$$

Further, if we denote the number of packets in $\TT_c$ by $N$,
$$N c^2 \approx \sum_{T \in T_c} c_T^2 \leq \sum_{T \in T} c_T^2 = 1$$
so that $N \lesssim c^{-2}$. It thus suffices to show
\begin{equation}\label{countingproblem}
ML^2 \lesssim \lambda^{\frac{1}{4}}N \log \lambda.
\end{equation}

\emph{Step 2. Double counting.} For each packet $T \in \TT_c$, denote by $n_T$ the number of points contained in the support of $u_T$. Note that 
\begin{equation}\label{simplecount}
ML \approx \sum_{T \in \TT_c} n_T = \sum_{n_T = 1} n_T + \sum_{n_T \geq 2} n_T \leq N + \sum_{n_T \geq 2} n_T.
\end{equation}
If 
$$\sum_{n_T \geq 2} n_T \leq N,$$
then we conclude $ML \lesssim N$. Using also the trivial overlap bound Proposition \ref{1ptoverlap} on one copy of $L$, we obtain an estimate even better than (\ref{countingproblem}).

Thus we may assume
$$\sum_{n_T \geq 2} n_T > N.$$
Then (\ref{simplecount}) with Cauchy-Schwarz gives
\begin{equation}\label{doublecount}
M^2 L^2 \lesssim (\sum_{n_T \geq 2} n_T)^2 \lesssim N \sum_{n_T \geq 2} n_T^2 =: NK.
\end{equation}

We estimate $K$ by a double counting. We claim that $K$ counts the triples $(i, j, T) \in J_L \times J_L \times \TT_c$, $i \neq j$, such that the packet $u_T$ intersects $(t_i, y_i)$ and $(t_j, y_j)$. Indeed, if $n_T = 1$, $T$ doesn't contribute to $K$, and otherwise, $T$ contributes $\approx n_T^2$ to $K$. 

On the other hand, by Corollary \ref{2ptoverlap}, each pair of points $(t_i, y_i)$ and $(t_j, y_j)$ is covered by $\lesssim |t_i - t_j|^{-1}$ packets. Thus
$$K \lesssim \sum_{1 \leq i \neq j \leq M} |t_i - t_j|^{-1}.$$
The sum is maximzed when the $t_j$ are as close as possible, as consecutive multiples of $\lambda^{-\frac{1}{4}}$:
$$K \lesssim \lambda^{\frac{1}{4}}\sum_{1 \leq i \neq j \leq M} |i - j|^{-1} \lesssim \lambda^{\frac{1}{4}} M \log M \lesssim M\lambda^{\frac{1}{4}} \log \lambda.$$
Substituting in (\ref{doublecount}), we obtain (\ref{countingproblem}).

\end{proof}

Combined with Proposition \ref{finalparametrix}, we obtain Proposition \ref{redstrich}.

\appendix

\section{H\"older Estimates}\label{holdersec}

We provide an appendix of H\"older estimates on unknowns appearing in the water waves system. Here we provide only the results; details and proofs are provided in \cite{alazard2014strichartz} and \cite{ai2017low}.

Throughout this section, let
$$s > \frac{d}{2} + \half, \qquad 0 < r - 1 < s - \frac{d}{2} - \half, \qquad r \leq \frac{3}{2}, \qquad I = [-1, 0].$$
Also recall that we denote $\nabla = \nabla_x$ and $\Delta = \Delta_x$, and have defined the following spaces for $J \subseteq \R$:
\begin{align*}
X^\sigma(J) &= C_z^0(I;H^{\sigma}) \cap L_z^2(I;H^{\sigma + \half})  \\
Y^\sigma(J) &= L_z^1(I;H^{\sigma}) + L_z^2(I;H^{\sigma - \half}) \\
U^\sigma(J) &= C_z^0(I;C_*^{\sigma}) \cap L_z^2(I;C_*^{\sigma + \half}).  \\
\end{align*}

\subsection{Paralinearization of the Dirichlet to Neumann map}\label{holderonDN}

Recall the Dirichlet to Neumann map is given by solving
\begin{equation}
\Delta_{x, y} \theta = 0, \qquad \theta|_{y = \eta(x)} = f, \qquad \D_n \theta |_\Gamma = 0
\end{equation}
and setting
$$(G(\eta)f)(x) = \sqrt{1 + |\nabla \eta|^2} (\D_n \theta) |_{y = \eta(x)} = ((\D_y - \nabla \eta \cdot \nabla)\theta) |_{y = \eta(x)}.$$
Recall that we write $\Lambda$ for the principal symbol of the Dirichlet to Neumann map. We can estimate the paralinearization error of the Dirichlet to Neumann map in H\"older norm:

\begin{prop}\label{DNparalinear}
Write
$$\Lambda(t, x, \xi) = \sqrt{(1 + |\nabla \eta|^2)|\xi|^2 - (\nabla \eta \cdot \xi)^2}.$$
Then
$$\|G(\eta)f - T_\Lambda f\|_{W^{\half, \infty}} \leq \FF(\|\eta\|_{H^{s + \half}}, \|f\|_{H^s})(1 + \|\eta\|_{W^{r + \half, \infty}})(1 + \|\eta\|_{W^{r + \half, \infty}}+ \|f\|_{W^{r, \infty}}).$$
\end{prop}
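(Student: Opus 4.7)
The plan is to follow closely the structure of Proposition \ref{localparalinearize} (the local Sobolev paralinearization) but replace local Sobolev norms with uniform H\"older norms throughout, using the H\"older elliptic estimate of Proposition \ref{oldholderellipticest} in place of the local Sobolev estimate of Proposition \ref{localinductioncorinducted}. As in that proof, I first pass to flattened coordinates so that
$$G(\eta)f = \left.\left(\zeta \D_z \tilde\theta - \nabla \rho \cdot \nabla \tilde\theta\right)\right|_{z=0}, \qquad \zeta = \frac{1 + |\nabla\rho|^2}{\D_z \rho},$$
and reduce all products to paraproducts via Bony's decomposition. The low-high pieces $T_\zeta \D_z \tilde\theta$ and $T_{\nabla\rho}\cdot\nabla\tilde\theta$ survive as the principal part; the high-low and balanced-frequency remainders $T_{\D_z\tilde\theta}\zeta$, $T_{\nabla\tilde\theta}\cdot\nabla\rho$, $R(\zeta,\D_z\tilde\theta)$, $R(\nabla\rho,\nabla\tilde\theta)$ are absorbed into the error by pairing H\"older estimates on $\nabla_{x,z}\tilde\theta$ from Proposition \ref{oldholderellipticest} against the $H^{s-\half}$ Sobolev regularity of $\rho$ (via Proposition \ref{prop:diffeoest}) in the paraproduct and remainder inequalities (\ref{holderparaproduct}) and (\ref{holderparaerror}).

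Next, I need the H\"older analog of Lemma \ref{wlems}, namely $\D_z \tilde\theta \approx T_A \tilde\theta$ in $U^{\half+}(J_0)$. For this I factor the flattened equation as
$$(\D_z - T_a)(\D_z - T_A)\tilde\theta = F_1 + F_2 + F_3,$$
with $F_1, F_2, F_3$ the first-order, paralinearization, and symbolic-calculus errors as in Proposition \ref{inhomogbd}, estimated this time in H\"older norms (which is the H\"older analog of that result, proved in the same manner using the $U^\sigma$ part of Corollary \ref{cor:abgest}). Inserting a vertical cutoff and applying a H\"older parabolic estimate for $\D_z - T_a$ (the standard H\"older version of Proposition \ref{parabolics}, available e.g.\ in \cite{alazard2014strichartz}), I obtain $\|(\D_z - T_A)\tilde\theta\|_{U^{\half+}(J_0)}$ controlled by the right-hand side of the desired inequality.

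Combining these steps gives $G(\eta)f = (T_\zeta T_A - T_{\nabla\rho}\cdot i\xi)f|_{z=0} + R$, with $R$ satisfying the claimed bound. I then invoke the paradifferential symbolic calculus (\ref{holdercommutator}) to replace $T_\zeta T_A$ by $T_{\zeta A}$, again with a commutator error controlled by $M_\half^0(\zeta)M_0^1(A) + M_0^0(\zeta)M_\half^1(A)$, which by Proposition \ref{aabds} and Corollary \ref{cor:abgest} gives the desired tame right-hand side. A direct computation evaluates $(\zeta A - i\nabla\rho\cdot\xi)|_{z=0} = \Lambda$, yielding the stated symbol.

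The main obstacle will be the second step: propagating the sharp tame structure (linear dependence on $\|\eta\|_{W^{r+\half,\infty}}$ and $\|f\|_{W^{r,\infty}}$ outside the nonlinear $\FF$) through the parabolic estimate for $\D_z - T_a$. Because the symbol $a$ has only $C_*^{r-1}$ regularity in $x$ (by the third bound of Proposition \ref{aabds}), a naive parabolic estimate loses derivatives, and one must exploit the $L^2_z$ integrability of $M_r^1(A)$ and the $1/2$-derivative margin afforded by using $U^{\half+}$ rather than $U^{0+}$ to close the inhomogeneous estimate at the H\"older level without introducing additional factors of $\|\eta\|_{H^{s+\half}}\|f\|_{H^s}$ multiplied into the H\"older norms. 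Once that balance is maintained, the remaining steps are routine paradifferential bookkeeping.
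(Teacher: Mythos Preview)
The paper does not actually supply a proof of this proposition: it sits in Appendix~\ref{holdersec} under the blanket disclaimer ``Here we provide only the results; details and proofs are provided in \cite{alazard2014strichartz} and \cite{ai2017low}.'' Your outline---flatten, paraproduct decomposition, factor as $(\D_z - T_a)(\D_z - T_A)\tilde\theta$, H\"older parabolic estimate for $\D_z - T_a$, then symbolic calculus to collapse $T_\zeta T_A$ to $T_{\zeta A}$---is exactly the argument carried out in \cite[Appendix~B]{alazard2014strichartz}, and is the H\"older mirror of the local Sobolev proof of Proposition~\ref{localparalinearize} in the present paper. So your approach is correct and coincides with the reference.

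One remark on the obstacle you flag: the target bound carries a \emph{product} of two H\"older factors, $(1+\|\eta\|_{W^{r+\half,\infty}})(1+\|\eta\|_{W^{r+\half,\infty}}+\|f\|_{W^{r,\infty}})$, not a single linear one. That extra factor of $(1+\|\eta\|_{W^{r+\half,\infty}})$ is precisely what absorbs the $M_\half^1(a)$ constant in the H\"older parabolic estimate for $\D_z - T_a$ (via Proposition~\ref{aabds}), so the balance you anticipate is less delicate than you suggest and closes without needing the $L^2_z$ integrability of $M_r^{1,2}(A)$.
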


We also require the following Sobolev estimates:
\begin{prop}\cite[Theorem 1.4]{alazard2014strichartz}
\label{DNparalineartame}
We have
\begin{align}
\|G(\eta)f - T_\Lambda f\|_{H^{s - \half}} \leq \ &\FF(\|\eta\|_{H^{s + \half}}, \|f\|_{H^s})(1 + \|\eta\|_{W^{r + \half, \infty}}+ \|f\|_{W^{r, \infty}}).
\end{align}
\end{prop}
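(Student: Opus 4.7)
The plan is to prove this as a global Sobolev analogue of Proposition \ref{localparalinearize}, following the same scheme as in Section \ref{sec:localdnpar} but without the spatial localization $w_{x_0,\lambda}S_\mu$, and with extra care to keep the dependence on the H\"older norm $Z(t)$ linear (tame). The argument proceeds in the flat coordinates introduced in Section \ref{ssec:flattening}, where $\tilde\theta$ solves the elliptic equation (\ref{homogellipticeqn}) and
$$G(\eta)f = \bigl(\zeta\,\D_z\tilde\theta - \nabla\rho\cdot\nabla\tilde\theta\bigr)\big|_{z=0},\qquad \zeta=\frac{1+|\nabla\rho|^2}{\D_z\rho}.$$

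First, I would apply Proposition \ref{inhomogbd} with $\sigma=s-\tfrac12$ (ignoring the local part of the conclusion), which yields the Sobolev bound
$$\|F_1+F_2+F_3\|_{Y^{s-1/2}(J)}\leq \FF(\|\eta\|_{H^{s+1/2}})(1+\|\eta\|_{W^{r+1/2,\infty}})\|\nabla_{x,z}\tilde\theta\|_{U^{0+}(J)}+\FF(\|\eta\|_{H^{s+1/2}})\|\nabla_{x,z}\tilde\theta\|_{X^{s-1/2}(J)}.$$
Inserting a vertical cutoff $\chi(z)$ supported away from the bottom as in the proof of Lemma \ref{wlems}, this converts the factored equation $(\D_z-T_a)(\D_z-T_A)\tilde\theta=F_1+F_2+F_3$ into a parabolic equation for $W:=\chi(\D_z-T_A)\tilde\theta$ with inhomogeneity in $Y^{s-1/2}$. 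Applying the parabolic estimate Proposition \ref{parabolics} with $r=s-\tfrac12$ and using Proposition \ref{originalcoordestsobolev} and Proposition \ref{oldholderellipticest} to control the right hand side, one obtains the global analogue of Lemma \ref{wlems}:
$$\|(\D_z-T_A)\tilde\theta\|_{X^{s-1/2}(J)}\leq \FF(\|\eta\|_{H^{s+1/2}},\|f\|_{H^s})(1+\|\eta\|_{W^{r+1/2,\infty}}+\|f\|_{W^{r,\infty}}).$$

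Next, I would paralinearize the two products defining $G(\eta)f$:
$$\zeta\,\D_z\tilde\theta=T_\zeta\D_z\tilde\theta+T_{\D_z\tilde\theta}\zeta+R(\zeta,\D_z\tilde\theta),\qquad \nabla\rho\cdot\nabla\tilde\theta=T_{\nabla\rho}\cdot\nabla\tilde\theta+T_{\nabla\tilde\theta}\cdot\nabla\rho+R(\nabla\rho,\nabla\tilde\theta).$$
Using (\ref{sobolevparaproduct}) and (\ref{sobolevparaerror}), both the $T_{\D_z\tilde\theta}\zeta$ (and $T_{\nabla\tilde\theta}\cdot\nabla\rho$) and the resonant remainders $R(\cdot,\cdot)$ are controlled in $H^{s-1/2}$ by $\|\nabla_{x,z}\tilde\theta\|_{U^{0+}}\|(\zeta,\nabla\rho)\|_{X^{s-1/2}}$, and hence by the desired right hand side after applying Corollary \ref{cor:abgest}, the straightforward variant for $\zeta$, and Proposition \ref{oldholderellipticest} for the H\"older component of $\nabla_{x,z}\tilde\theta$. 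I would then replace $\D_z\tilde\theta$ in $T_\zeta\D_z\tilde\theta$ by $T_A\tilde\theta$, with the discrepancy controlled by the $X^{s-1/2}$-bound above together with (\ref{ordernorm}).

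Finally, I would apply the paradifferential symbolic calculus (\ref{sobolevcommutator}) to replace $T_\zeta T_A$ by $T_{\zeta A}$, producing an error bounded in $H^{s-1/2}$ by $Z(t)\cdot\FF(\|\eta\|_{H^{s+1/2}})\|\tilde\theta\|_{H^s}$, which after evaluating at $z=0$ and using $\tilde\theta|_{z=0}=f$ gives an acceptable contribution. A direct computation shows $(\zeta A-i\nabla\rho\cdot\xi)|_{z=0}=\Lambda$, completing the paralinearization.

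The main obstacle is the tameness of the bound: many intermediate estimates naturally produce a quadratic dependence on $Z(t)$ (for instance in Step 3 of the proof of Lemma \ref{wlems}, or when the paraproduct $T_{\D_z\tilde\theta}\zeta$ is expanded), and keeping the dependence linear requires placing the low-frequency factor in $L^\infty$-type norms and the high-frequency factor in $L^2$-type norms at every step, and exploiting Sobolev embedding $s-\tfrac12>\tfrac{d}{2}$ to control extra $C^0$ norms by $H^{s-1/2}$ without consuming H\"older regularity of $\eta$ or $f$.
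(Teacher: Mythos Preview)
The paper does not prove this proposition; it is imported directly from \cite[Theorem 1.4]{alazard2014strichartz} and stated without argument. Your proposal is essentially the approach used in that reference---factor the flattened elliptic equation, apply a parabolic estimate to control $(\D_z - T_A)\tilde\theta$ in $X^{s-1/2}$, paralinearize the products defining $G(\eta)f$, and combine symbols via (\ref{sobolevcommutator})---and it is also exactly the global (unlocalized) version of the argument the present paper develops for Proposition \ref{localparalinearize}. So your plan is correct and matches both the original source and the paper's local analogue.

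One minor slip: in your display for $\|F_1+F_2+F_3\|_{Y^{s-1/2}}$ you placed the factor $(1+\|\eta\|_{W^{r+1/2,\infty}})$ on the $U^{0+}$ term rather than on the $L^2(J;H^{s-1/2})$ term; Proposition \ref{inhomogbd} has it the other way around. This does not affect the final bound, since after substituting Propositions \ref{originalcoordestsobolev} and \ref{oldholderellipticest} both arrangements give a tame (linear in $Z(t)$) right hand side. Your closing remark about the tameness obstacle is on point: this is precisely what distinguishes the result from the earlier \cite[Proposition 3.13]{alazard2014cauchy} version (Proposition \ref{DNparalinearsupertame} here), and is handled in \cite{alazard2014strichartz} exactly as you describe, by routing low-frequency factors through $L^\infty$ and high-frequency factors through $L^2$.
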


\begin{prop}\cite[Proposition 3.13]{alazard2014cauchy}
\label{DNparalinearsupertame}
We have
\begin{align}
\|G(\eta)f - T_\Lambda f\|_{H^{s - 1}} \leq \ &\FF(\|\eta\|_{H^{s + \half}}, \|f\|_{H^s}).
\end{align}
\end{prop}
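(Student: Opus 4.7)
The plan is to follow the same strategy used in Section \ref{sec:localdnpar} for the local paralinearization, but globally and without spatial weights, trading half a Sobolev derivative of regularity on the target for the removal of all H\"older dependence on $\eta$ and $f$. First I would flatten the boundary via the diffeomorphism $\rho$ of Section \ref{ssec:flattening}, reducing to the equation $(\D_z^2 + \alpha\Delta + \beta\cdot\nabla\D_z - \gamma\D_z)\tilde{\theta} = 0$ on $\tilde{\Omega}_1$, with boundary data $f$. The base elliptic control $\|\nabla_{x,z}\tilde{\theta}\|_{X^{s-\half}(J)} \leq \FF(\|\eta\|_{H^{s+\half}})\|f\|_{H^{s}}$ comes from Proposition \ref{originalcoordestsobolev}, which already has no H\"older dependence.

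Next I would factor the elliptic operator as $(\D_z - T_a)(\D_z - T_A)\tilde{\theta} = F_1 + F_2 + F_3$ as in Proposition \ref{inhomogbd}, but estimate the inhomogeneous errors in $Y^{s-1}$ rather than $Y^{s-\half}$. The key observation is that in every error estimate of Proposition \ref{inhomogbd}, the H\"older norms $\|\nabla_{x,z}\tilde\theta\|_{U^{\sigma-s+\half+}(J)}$ and $\|\eta\|_{W^{r+\half,\infty}}$ appear only because the tame analysis requires placing one factor in $C_*^{\sigma-s+1}$. By accepting one fewer derivative on the target we can absorb these factors into Sobolev norms using the embedding $H^{\half+} \hookrightarrow L^\infty$ (recall $d=1$) at the cost of half a derivative on $\nabla_{x,z}\tilde\theta$, which we control by the base estimate. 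The analogous Moser-type bounds on the coefficients $\alpha, \beta, \gamma$ from Corollary \ref{cor:abgest} are used purely in Sobolev form. Applying Proposition \ref{parabolics} with $r = s-1$ to the factored equation then yields
\[
\|(\D_z - T_A)\tilde{\theta}\|_{X^{s-1}(J)} \leq \FF(\|\eta\|_{H^{s+\half}}, \|f\|_{H^s}).
\]

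Finally, from the identity
\[
G(\eta)f = \left.\left(\zeta\, \D_z\tilde{\theta} - \nabla\rho\cdot\nabla\tilde{\theta}\right)\right|_{z=0}
\]
with $\zeta = (1+|\nabla\rho|^2)/\D_z\rho$, I would paralinearize the products (as in the proof of Proposition \ref{localparalinearize}) and substitute $\D_z\tilde\theta \approx T_A\tilde\theta$ via the previous bound. The resulting paralinearization errors $T_{\D_z\tilde\theta}\zeta$, $T_{\nabla\tilde\theta}\cdot\nabla\rho$, $R(\zeta,\D_z\tilde\theta)$, and $R(\nabla\rho,\nabla\tilde\theta)$ are each estimated in $H^{s-1}$ by pairing an $H^{s-\half}$ factor on the coefficient (from Proposition \ref{prop:diffeoest}) against a $C_*^{-\half+}$ or $H^{\half+}$ factor on the derivatives of $\tilde\theta$, the latter again controlled by Sobolev embedding without producing H\"older norms. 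A final application of the symbolic calculus yields $(\zeta A - i\nabla\rho\cdot\xi)|_{z=0} = \Lambda$, giving the claim.

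The main obstacle I anticipate is the bookkeeping in the balanced-frequency terms $R(\zeta, \D_z\tilde\theta)$ and $R(\alpha, \Delta\tilde\theta)$: in the tame version these split neatly into a Sobolev $\times$ H\"older pairing, and removing the H\"older factor forces one to carefully use Sobolev embedding on a derivative of $\tilde\theta$ while losing exactly half a derivative on the target. Keeping this loss at precisely $1/2$ (rather than allowing it to compound across the successive paralinearizations) is the delicate part, and is what ultimately determines that the target regularity is $H^{s-1}$ and not worse.
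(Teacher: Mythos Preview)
This proposition is not proven in the present paper; it is quoted directly from \cite[Proposition 3.13]{alazard2014cauchy} and stated in the appendix without proof. Your sketch reproduces, in outline, exactly the argument of that reference: flatten, factor the elliptic operator, apply the parabolic estimate of Proposition~\ref{parabolics}, and then paralinearize the boundary trace formula. It is also the non-tame, non-local precursor of the machinery the paper itself develops in Sections~\ref{sec:dirichletprob}--\ref{sec:localdnpar}, so your plan to strip the weights and H\"older factors from those arguments is precisely the right way to recover it.

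Two small remarks. First, you invoke $H^{\half+}\hookrightarrow L^\infty$ with the parenthetical ``recall $d=1$''; the cited result holds for general $d$ under $s>\tfrac d2+\tfrac12$, and the embedding you actually need is $H^{s-\half}\hookrightarrow L^\infty$ (applied to $\nabla_{x,z}\tilde\theta$), which is dimension-independent given that hypothesis. Second, in your final paragraph you worry about the bookkeeping on the balanced-frequency terms $R(\zeta,\D_z\tilde\theta)$ and $R(\alpha,\Delta\tilde\theta)$: in the purely Sobolev setting these are handled in one line by \eqref{sobolevparaerror} with $\alpha=s-\tfrac12$ and $\beta=\tfrac12-$ (or by \eqref{specialsobolevproduct}), so the loss is automatically exactly $\tfrac12$ and does not compound. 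There is no genuine obstacle.
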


\begin{rem}
The Sobolev estimate here was stated for $s > \frac{d}{2} + \frac{3}{4}$, but using sharper elliptic estimates, this can be reduced to $s > \frac{d}{2} + \half$.
\end{rem}

\subsection{General bottom estimates}\label{bottomsection}

In this section we recall errors that arise due to the presence of a general bottom to our fluid domain. We recall the following identities from Propositions 4.3 and 4.5 in \cite{alazard2014cauchy}:
$$G(\eta)B = -\nabla \cdot V - \Gamma_y$$
$$L \nabla \eta = G(\eta) V + \nabla \eta G(\eta) B + \Gamma_x + \nabla \eta \Gamma_y$$
where
$$\|\Gamma_x\|_{H^{s - \half}} + \|\Gamma_y\|_{H^{s - \half}} \leq \FF(\|\eta\|_{H^{s + \half}}, \|(\psi, V, B)\|_{H^\half}).$$
Here, $\Gamma_{x_i}$ is defined as follows: let $\theta_i$ be the solution to
$$\Delta_{x, y} \theta_i = 0, \qquad \theta_i|_{y = \eta(x)} = V_i, \qquad \D_n \theta_i|_{\Gamma} = 0.$$
Then 
\begin{equation}\label{gammadef}
\Gamma_{x_i} = ((\D_y - \nabla \eta \cdot \nabla) (\D_i \phi - \theta_i)) |_{y = \eta(x)}.
\end{equation}
Note that by the definition of $V$, $\D_i \phi = \theta_i$ if $\Gamma = \emptyset$, so $\Gamma_{x_i}$ is only nonzero in the presence of a bottom. $\Gamma_y$ is defined in the analogous way with $B$ in place of $V$.

We can estimate $\Gamma_x$ and $\Gamma_y$ in H\"older norm:

\begin{prop}\label{roughbottomest}
Consider $\Gamma_x, \Gamma_y$ as defined in (\ref{gammadef}). Then
$$\|\Gamma_x\|_{W^{\half,\infty}} + \|\Gamma_y\|_{W^{\half,\infty}} \leq \FF(\|\eta\|_{H^{s + \half}}, \|(\psi, V, B)\|_{H^\half})(1 + \|\eta\|_{W^{r + \half, \infty}}).$$
\end{prop}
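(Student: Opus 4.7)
The plan is to exploit the harmonicity of $u_i := \D_i \phi - \theta_i$, which vanishes on the free surface $\Sigma$ (since $\phi|_\Sigma = \psi$ and $\theta_i|_\Sigma = V_i = (\D_i \phi)|_\Sigma$). Flattening $\Omega_1$ via the diffeomorphism $\rho$ from Section \ref{ssec:flattening}, $\tilde u_i := u_i \circ \rho$ solves (\ref{eqn:flatelliptic}) with $F_0 = 0$ and $\tilde u_i|_{z=0} = 0$. Because $\tilde u_i$ and all its horizontal derivatives vanish at $z = 0$, the expression for $\Gamma_{x_i}$ in flattened coordinates collapses to
$$\Gamma_{x_i} = \left.\frac{1 + |\nabla \rho|^2}{\D_z \rho}\D_z \tilde u_i\right|_{z = 0}.$$
The diffeomorphism bounds of Proposition \ref{prop:diffeoest}, the algebra property of $W^{\half, \infty}$ and a Moser estimate then reduce the problem to bounding $\D_z \tilde u_i|_{z = 0}$ in $C_*^{\half +}$ by the right hand side of the proposition.

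Next I would exploit the factorization $(\D_z - T_a)(\D_z - T_A)\tilde u_i = F$ from (\ref{factoreqn}), with $F$ controlled by the H\"older counterpart of Proposition \ref{inhomogbd} (in the spirit of \cite[Proposition A.7]{ai2017low}), with the right hand side involving the single quantity $\|\nabla_{x,z} \tilde u_i\|_{U^{-\half+}(J_1)}$ on a slightly enlarged interval $J_1 \supset J_0$. Running the H\"older parabolic estimate (the H\"older analog of Proposition \ref{parabolics}) for the first factor backward in $-z$ from $z = -h_0$ to $z = 0$, with arbitrary moderate initial data, yields
$$\|(\D_z - T_A)\tilde u_i\|_{U^{\half +}(J_0)} \leq \FF(\|\eta\|_{H^{s+\half}})(1 + \|\eta\|_{W^{r + \half, \infty}})(1 + \|\nabla_{x,z} \tilde u_i\|_{U^{-\half +}(J_1)}).$$
Since $\tilde u_i|_{z=0} = 0$ forces $(T_A \tilde u_i)|_{z=0} = 0$, taking the trace gives the desired bound on $\D_z \tilde u_i|_{z=0}$, modulo the a priori H\"older control of $\nabla_{x,z} \tilde u_i$ on $J_1$.

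That a priori control is an interior elliptic estimate: $u_i$ is harmonic in $\Omega$ with vanishing trace on $\Sigma$, and the standard $L^2$ energy argument for the Dirichlet problem applied independently to $\phi$ and to $\theta_i$ bounds $\nabla_{x,z}\tilde u_i$ in $X^{-\half}(J_1)$ by $\FF(\|\eta\|_{H^{s+\half}}, \|(\psi, V, B)\|_{H^\half})$. A shifted application of Proposition \ref{oldholderellipticest} (with a zero top trace and the bottom now interior to the fluid) then upgrades this Sobolev bound to $U^{-\half+}(J_1)$ with the additional factor $(1 + \|\eta\|_{W^{r+\half,\infty}})$, which is precisely the form required. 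The same argument with $B$ replacing $V_i$ produces $\Gamma_y$.

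The main obstacle is the proper choreography of the H\"older bounds through the factored equation: because the top boundary data vanish, the direct use of Proposition \ref{oldholderellipticest} is vacuous, and all H\"older regularity at $z = 0$ must come either from the $C^{r-\half+}$ coefficient regularity of $\rho$ (which generates the $(1 + \|\eta\|_{W^{r+\half,\infty}})$ factor through Corollary \ref{cor:abgest}) or from the interior H\"older regularity propagated by the backward parabolic estimate. Once the H\"older version of the factorization estimate (Proposition \ref{inhomogbd}) is established with $F$ linear in $\|\nabla_{x,z}\tilde u_i\|_{U^{-\half+}}$, the rest is bookkeeping already present in Section \ref{sec:localdnpar} and in \cite{alazard2014strichartz}.
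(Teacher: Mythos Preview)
The paper does not prove this proposition: the appendix explicitly states that ``details and proofs are provided in \cite{alazard2014strichartz} and \cite{ai2017low}''. Your outline is essentially the argument carried out in those references---flatten, factor the elliptic operator, run a H\"older parabolic estimate on $(\D_z - T_a)W$ with a vertical cutoff to avoid the bottom, and take the trace at $z=0$, where the vanishing of $\tilde u_i$ kills the $T_A\tilde u_i$ term.

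One point to tighten: your invocation of Proposition \ref{oldholderellipticest} for the a priori $U^{-\half+}$ bound is not quite right as stated. That proposition is formulated for solutions of (\ref{specialelliptic}), i.e.\ with $\D_n\theta|_\Gamma = 0$, and with zero top trace it would return zero. The function $u_i = \D_i\phi - \theta_i$ does not satisfy the Neumann condition on $\Gamma$. What actually works is to first obtain $\nabla_{x,z}\tilde u_i \in X^{-\half}$ from the energy estimates for $\phi$ and $\theta_i$ separately (this is the content of \cite[(4.22) and surrounding]{alazard2014cauchy} and gives the dependence on $\|(\psi,V,B)\|_{H^{1/2}}$), then iterate the Sobolev inductive step Proposition \ref{sobolevinductive}---which requires no bottom condition---to reach $X^{s-\half}$ on a shrunken interval, and finally Sobolev embed into $U^{0+}$. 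With that correction your scheme is sound and matches the references.
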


We also require the following Sobolev estimate:
\begin{prop}\cite[Propositions 4.3, 4.5]{alazard2014cauchy}\label{roughbottomesttame}
We have
$$\|\Gamma_y\|_{H^{s - \half}} + \|\Gamma_x\|_{H^{s - \half}} \leq \FF(\|\eta\|_{H^{s + \half}}, \|(\psi, V, B)\|_{H^\half}).$$
\end{prop}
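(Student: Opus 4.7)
The plan is to reduce the statement, for both $\Gamma_x$ and $\Gamma_y$, to an elliptic estimate on the harmonic auxiliary function
$$w_i := \D_i \phi - \theta_i, \qquad i = 1, \dots, d,$$
and its counterpart with $B$ in place of $V$. Since $\phi$ and $\theta_i$ are both harmonic on $\Omega$, so is $w_i$. By the definition of $\theta_i$ and $V_i = (\D_i\phi)|_{y=\eta}$, we have $w_i|_{y=\eta}=0$, while on the bottom $\Gamma$ we have $\D_n w_i|_\Gamma = \D_n(\D_i\phi)|_\Gamma$ (which need not vanish even though $\D_n \phi|_\Gamma = 0$, since the normal $n$ varies). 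In particular $\Gamma_{x_i} = ((\D_y - \nabla\eta\cdot\nabla)w_i)|_{y=\eta}$, so the statement reduces to controlling the trace of $\nabla_{x,y} w_i$ at the free surface in $H^{s-\half}$.

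The next step is to flatten $\Omega_1$ via the diffeomorphism $\rho$ of Section \ref{ssec:flattening}, so that $\tilde w_i$ satisfies the elliptic equation (\ref{eqn:flatelliptic}) with vanishing inhomogeneity and with $\tilde w_i|_{z=0}=0$. The crucial point is that although $w_i$ may be irregular near the bottom, it has \emph{zero} Dirichlet data at the top, so the forward parabolic factorization of (\ref{eqn:flatelliptic}) (the operator $\D_z - T_A$, on which Proposition \ref{parabolics} applies) will smooth the solution as $z\to 0^-$. Concretely, I would first establish a base-case energy bound
$$\|\nabla_{x,z}\tilde w_i\|_{X^{-\half}([-1,0])}\leq \FF(\|\eta\|_{H^{s+\half}},\|(\psi,V,B)\|_{H^{\half}}),$$
obtained from the variational formulation of the mixed boundary value problem satisfied by $w_i$ on $\Omega_1$ together with the global energy bound \cite[(4.22)]{alazard2014cauchy} for $\nabla_{x,y}\phi$ and $\nabla_{x,y}\theta_i$, both of which are controlled in $L^2(\Omega_1)$ by $\|(\psi,V,B)\|_{H^\half}$ modulo $\FF(\|\eta\|_{H^{s+\half}})$.

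Given the base case, I would then iterate the Sobolev elliptic estimate of Proposition \ref{sobolevinductive} on shrinking strips $[z_1,0]\subset(-1,0]$, exactly as in the proof of Proposition \ref{originalcoordestsobolev}. Because the right-hand side of (\ref{eqn:flatelliptic}) vanishes for $\tilde w_i$ and $\tilde w_i|_{z=0}=0$, each application of the induction trades a half-derivative on the bottom data for a half-derivative at the top, and after finitely many steps one obtains
$$\|\nabla_{x,z}\tilde w_i\|_{X^{s-\half}([z_0,0])}\leq \FF(\|\eta\|_{H^{s+\half}},\|(\psi,V,B)\|_{H^\half}),$$
for any fixed $z_0\in(-1,0)$. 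Restricting to $z=0$ and combining with the product estimate on $\nabla\eta \in H^{s-\half}$ via $H^{s-\half}(\R^d)$ being an algebra (using $s-\half>\frac{d}{2}$), the trace $(\D_y - \nabla\eta\cdot\nabla)w_i|_{y=\eta}$ lies in $H^{s-\half}$ with the desired bound, and the analogous argument for $B$ in place of $V$ yields the estimate on $\Gamma_y$.

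The main obstacle I anticipate is the base-case energy estimate, where one must be careful with the mixed boundary condition for $w_i$ (Dirichlet at the surface, inhomogeneous Neumann at the bottom) and track that the dependence on $(\psi,V,B)$ remains only through the $H^\half$ norm; the remainder of the argument is a clean induction that parallels the already-established harmonic estimates of Section \ref{sec:dirichletprob}.
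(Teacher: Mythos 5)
Your overall strategy — reduce to the harmonic function $w_i = \D_i \phi - \theta_i$, observe it vanishes on $\Sigma$, flatten, establish a low-regularity base case, and bootstrap with Proposition \ref{sobolevinductive} to $X^{s-\half}$ — matches the argument Alazard--Burq--Zuily use for these estimates. The conclusion (algebra property of $H^{s-\half}$ to handle the $\nabla\eta$ and $(\D_z\rho)^{-1}$ factors in the trace at $z=0$) is also fine. But the step you flag as the main obstacle does contain a genuine gap, and your proposed resolution would not work as stated.

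You describe the base case as coming from ``the variational formulation of the mixed boundary value problem satisfied by $w_i$ on $\Omega_1$ (Dirichlet at the surface, inhomogeneous Neumann at the bottom).'' There is no usable Neumann condition at the bottom. The bottom $\Gamma$ is allowed to be arbitrarily rough (that is the whole point of $\Gamma_x, \Gamma_y$), and $\D_n(\D_i\phi)|_\Gamma$ is not controlled in any Sobolev norm. If instead you integrate by parts only on $\Omega_1$, the boundary term at $\{y = \eta - h\}$ is $\int w_i \D_y w_i$, which couples $w_i|_{y=\eta-h}$ (in $H^\half$) with $\D_y w_i|_{y=\eta-h}$ (in $H^{-\half}$); the first requires $w_i \in H^1(\Omega_1)$, which is exactly what you are trying to prove, so the argument is circular. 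Separately, note that if one were to try to estimate $\|\nabla_{x,y} w_i\|_{L^2(\Omega_1)}$ directly from the cited energy bound $\|\nabla_{x,y}\phi\|_{L^2(\Omega)} \lesssim \|\psi\|_{H^\half}$, one would be off by a derivative, since $\nabla_{x,y} w_i$ involves \emph{second} derivatives of $\phi$.

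The correct ingredient is a Caccioppoli estimate with a horizontal cutoff, which needs no boundary condition at the bottom whatsoever. Take $\zeta$ supported in $\{\eta - h < y < \eta\}$ with $\zeta \equiv 1$ on $\Omega_{1/2}$ (depth $h/2$). Since $w_i$ is harmonic and $w_i|_\Sigma = 0$,
$$\int |\nabla w_i|^2 \zeta^2 = - \int w_i\,\nabla w_i \cdot 2\zeta\nabla\zeta \lesssim h^{-2}\int_{\Omega_1} |w_i|^2 + \tfrac12 \int |\nabla w_i|^2\zeta^2,$$
so $\|\nabla w_i\|_{L^2(\Omega_{1/2})} \lesssim h^{-1}\|w_i\|_{L^2(\Omega_1)}$. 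Now the right-hand side involves $w_i = \D_i\phi - \theta_i$, i.e.\ only \emph{first} derivatives of $\phi$ and zeroth-order $\theta_i$, and both of these are controlled by the variational energy and $\FF(\|\eta\|_{H^{s+\half}},\|(\psi,V,B)\|_{H^\half})$. The catch, as this makes clear, is that your base case holds only on $[z_1,0]$ for fixed $z_1 > -1$, not on all of $[-1,0]$ as written; this is harmless, because Proposition \ref{sobolevinductive} only requires the base case on a slightly larger interior interval $J_1$. With the base case established this way, a single application of Proposition \ref{sobolevinductive} with $\sigma = s-\half$ (and $f = 0 = F_0$ since $\tilde w_i|_{z=0}=0$ and the interior equation is homogeneous) gives $X^{s-\half}$ directly, so no further iteration is needed.
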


\subsection{Estimates on the Taylor coefficient}

We recall H\"older estimates on the Taylor coefficient:
\begin{prop}[{\cite[Proposition C.1]{alazard2014strichartz}}]\label{taylorbd}
Remain in the setting of Proposition \ref{prop:paralinearization}. Let $0 < \eps < \min(r - 1, s - \frac{d}{2} - \frac{3}{4})$. Then for all $t \in [0, T]$,
$$\|a(t) - g\|_{L^\infty} \lesssim \|a(t) - g\|_{H^{s - \half}} \leq \FF(M(t)),$$
$$\|a(t)\|_{W^{\half + \eps, \infty}} + \|La(t)\|_{W^{\eps, \infty}} \leq \FF(M(t))Z(t).$$
\end{prop}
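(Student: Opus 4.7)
The strategy is to derive all three estimates from elliptic analysis of the pressure $P$, which satisfies the inhomogeneous Dirichlet problem (\ref{pressureeqn}) with quadratic source $F = -\nabla_{x,y}^2\phi \cdot \nabla_{x,y}^2\phi$. By definition $a = -(\D_y P)|_{y=\eta}$, so every bound on $a$ reduces to a trace estimate for $\D_y P$ after flattening the boundary via the diffeomorphism of Section \ref{ssec:flattening}.

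For the Sobolev bound $\|a - g\|_{H^{s - \half}} \le \FF(M(t))$, the role of the constant $g$ is to subtract off the hydrostatic part of the pressure, which dominates at horizontal infinity. The cleanest way to capture this is the identity $LB = a - g$ from (\ref{LandTaylor}): expanding $LB = \D_t B + V\cdot\nabla B$ and eliminating $\D_t B$ using the defining formula (\ref{BVformulas}) together with the Zakharov system (\ref{zak}), one obtains an expression entirely in terms of $\eta, \psi, V, B$ and the Dirichlet--Neumann map. Standard $H^{s - \half}$ paraproduct estimates, combined with the tame bound Proposition \ref{DNparalineartame}, give $\|LB\|_{H^{s - \half}} \le \FF(M(t))$. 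The $L^\infty$ bound is then Sobolev embedding, since $s - \half > d/2$.

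For the H\"older bound $\|a\|_{W^{\half + \eps, \infty}} \le \FF(M(t))Z(t)$, I would establish an H\"older counterpart to the pressure estimate in Proposition \ref{firstpressureest}, paralleling the proof of Proposition \ref{oldholderellipticest} but applied to the inhomogeneous equation (\ref{pressureeqn}). The source $F$ is quadratic in $\nabla_{x,y}^2\phi$, and its H\"older regularity in $U^{r - 1 +}(J)$ follows from Proposition \ref{oldholderellipticest} applied to the harmonic potential $\phi$ with surface data $\psi$. Feeding this into the H\"older version of the elliptic iteration for $P$ (flatten, factor via the forward/backward parabolic evolutions of Section \ref{sec:localdnpar}, apply Proposition \ref{parabolics}), and taking the trace at $z = 0$ after reverting the flattening, yields the claimed $W^{\half + \eps, \infty}$ estimate on $a$.

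The main obstacle is the H\"older bound on $La$. Differentiating $a = -(\D_y P)|_{y=\eta}$ along $L$ and using $L\eta = B$ from (\ref{etatoB}) gives
$$La = -[(\D_t + V\cdot\nabla)\D_y P]|_{y=\eta} - B(\D_y^2 P)|_{y=\eta}.$$
The trace $(\D_y^2 P)|_{y=\eta} = (-\Delta_x P + F)|_{y=\eta}$ is controlled by the H\"older pressure estimate of the previous step. The delicate term is $(\D_t \D_y P)|_{y=\eta}$, in which the time derivative falls on an elliptic quantity. Here I would use the Bernoulli relation $-P - gy = \D_t\phi + \half|\nabla_{x,y}\phi|^2$ to rewrite each occurrence of $\D_t$ acting on $\phi$ or $P$ in terms of $P$, $\nabla_{x,y}\phi$, and their spatial derivatives. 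Each resulting factor is a trace of an elliptic quantity, controlled by the H\"older estimate of Proposition \ref{oldholderellipticest} for $\phi$, its pressure analogue from the previous step, and Moser-type product estimates. The principal technical burden is combinatorial: each substitution introduces a quadratic nonlinearity, and one must track all such terms to ensure that the final bound is linear in $Z(t)$ rather than quadratic.
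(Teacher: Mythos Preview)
The paper does not give its own proof of this proposition; it is quoted from the cited reference \cite[Proposition C.1]{alazard2014strichartz}, with only a remark about the admissible range of $s$. There is therefore no proof in the paper to compare against. Your outline---elliptic analysis of the pressure for the $a$ bounds, then differentiation along $L$ combined with Bernoulli substitution for the $La$ bound---is essentially the strategy of the cited reference, and is sound.

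One remark on your Sobolev step. Your plan to obtain $\|a-g\|_{H^{s-\half}}$ from the identity $LB = a-g$ by computing $\D_t B$ via (\ref{BVformulas}) and the Zakharov system (\ref{zak}) is correct in principle, but it requires the shape-derivative formula for $G(\eta)$ with respect to $\eta$, which you do not mention and which is itself a nontrivial result. The more direct route, taken both in the cited reference and implicitly in the present paper (Proposition \ref{firstpressureest}), is to work with the pressure elliptic problem (\ref{pressureeqn}) directly: flatten, apply Proposition \ref{sobolevinductive} at $\sigma = s-\half$, estimate the source $F_0$ in $Y^{s-\half}$ via the velocity-potential bounds of Proposition \ref{originalcoordestsobolev}, and take the trace. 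This bypasses any time differentiation at the Sobolev stage and is what actually underlies the first displayed estimate. Your H\"older and $La$ steps are in line with the reference; the warning you flag about keeping the final bound linear in $Z(t)$ is exactly the point that makes the $La$ estimate the most delicate of the three.
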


\begin{rem}
The Sobolev estimate here was stated for $s > \frac{d}{2} + 1$, but the proof is actually valid for $s > \frac{d}{2} + \half$. The H\"older estimates also hold for $s > \frac{d}{2} + \half$, but require sharper arguments.
\end{rem}

We have the following straightforward consequence:
\begin{cor}\label{gammabd}
Remain in the setting of Proposition \ref{taylorbd} and fix multi-index $\beta$. Then for all $t \in [0, T]$, uniformly on $\{|\xi| = 1\}$,
$$\|\D_\xi^\beta\gamma(t, \cdot, \xi)\|_{L^\infty} \leq \FF(M(t)),$$
$$\|\D_\xi^\beta \gamma(t, \cdot, \xi)\|_{W^{\half + \eps, \infty}} \leq \FF(M(t))Z(t).$$
\end{cor}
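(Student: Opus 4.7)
The plan is to view $\D_\xi^\beta \gamma(t, x, \xi)$ on the unit sphere $\{|\xi|=1\}$ as a smooth function of $(a, \nabla\eta)$, and then reduce the two estimates to a Moser-type composition estimate combined with Proposition \ref{taylorbd}.

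First I would record the structure of $\gamma = \sqrt{a\Lambda}$. Since
$$\Lambda(t,x,\xi) = \sqrt{(1 + |\nabla\eta|^2)|\xi|^2 - (\nabla\eta\cdot\xi)^2},$$
on $\{|\xi|=1\}$ we have $\Lambda \geq 1/\sqrt{1+|\nabla\eta|^2} \geq c > 0$, so $\Lambda$ is a smooth function of $\nabla\eta$ and $\xi$ uniformly on $|\xi|=1$. Combined with the Taylor sign condition $a \geq a_{min} > 0$, we conclude that $a\Lambda$ stays uniformly bounded below by a positive constant, so $\gamma = \sqrt{a\Lambda}$, and hence $\D_\xi^\beta \gamma$, is a smooth function
$$\D_\xi^\beta \gamma(t,x,\xi) = F_\beta(a(t,x), \nabla\eta(t,x), \xi)$$
of its arguments, uniformly on $|\xi|=1$.

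The $L^\infty$ bound is then immediate: since $\|a\|_{L^\infty} \leq g + \FF(M(t))$ by Proposition \ref{taylorbd} and $\|\nabla\eta\|_{L^\infty} \lesssim \|\eta\|_{H^{s+\half}} \leq M(t)$ via Sobolev embedding (recall $s - \half > d/2$), we get
$$\|\D_\xi^\beta \gamma(t,\cdot,\xi)\|_{L^\infty} \leq \FF(M(t)).$$

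For the H\"older estimate, I would invoke the Moser-type composition estimate (\ref{smoothholder}), which gives, for any smooth $F$ and $\sigma = \half + \eps > 0$,
$$\|F(a, \nabla\eta, \xi)\|_{W^{\half+\eps,\infty}} \leq \FF(\|a\|_{L^\infty}, \|\nabla\eta\|_{L^\infty})\bigl(\|a\|_{W^{\half+\eps,\infty}} + \|\nabla\eta\|_{W^{\half+\eps,\infty}}\bigr).$$
Proposition \ref{taylorbd} supplies $\|a\|_{W^{\half+\eps,\infty}} \leq \FF(M(t))Z(t)$, and since $r + \half > \half + \eps$ (as $r > 1$ and $\eps < r - 1$), the definition of $Z(t)$ directly gives $\|\nabla\eta\|_{W^{\half+\eps,\infty}} \leq \|\eta\|_{W^{r+\half,\infty}} \leq Z(t)$. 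Combining yields the stated bound.

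This is essentially bookkeeping — there is no real obstacle. The only mildly delicate point is ensuring the smoothness of $F_\beta$ at the relevant values of the arguments, which is where the Taylor sign condition is used to keep the argument of the square root bounded away from zero so that $\gamma$ (and its $\xi$-derivatives on $|\xi|=1$) stays inside the domain of analyticity of $\sqrt{\cdot}$.
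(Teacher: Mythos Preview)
Your proposal is correct and matches the paper's approach: the paper states this corollary as a ``straightforward consequence'' of Proposition~\ref{taylorbd} without further proof, and what you have written is precisely the intended filling-in of that remark via the Moser estimate~(\ref{smoothholder}). One tiny sharpening: on $\{|\xi|=1\}$ Cauchy--Schwarz actually gives $\Lambda^2 \geq 1$, so $\Lambda \geq 1$ directly; and note that~(\ref{smoothholder}) as stated requires $F(0)=0$, so strictly speaking you should subtract the constant $F_\beta(g,0,\xi)$ (or compose with a cutoff keeping $a$ away from zero) before invoking it --- a cosmetic point you already implicitly acknowledged in your last paragraph.
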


\subsection{Vector field commutator estimate}

We record a version of Lemma 2.17 in \cite{alazard2014cauchy} for H\"older spaces:

\begin{prop}\label{com}
For any $t \in I$, $r \geq 0$, and $\eps > -r$, 
\begin{align*}
\|\left((\D_t + V \cdot \nabla )T_p - T_p(\D_t + T_V \cdot \nabla)\right)&u(t)\|_{W^{r, \infty}} \\
\lesssim\ &M_0^m(p(t)) \|V(t)\|_{W^{1, \infty}} \|u(t)\|_{B^{r + m}_{\infty ,1}} \\
&+ M_0^m(p(t))\|V(t)\|_{W^{r  + \eps, \infty}}\|u(t)\|_{B_{\infty, 1}^{1 + m - \eps}} \\
&+ M_0^m(\D_t p(t) + V \cdot \nabla p(t)) \|u(t)\|_{W^{r + m,\infty}}.
\end{align*}
\end{prop}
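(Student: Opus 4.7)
The proof is the Hölder/Besov counterpart of Lemma 2.17 of \cite{alazard2014cauchy}, and the strategy is to extract $T_{(\partial_t + V\cdot\nabla)p}u$ as the leading contribution (which yields the third term on the right-hand side) and then to bound three distinct error terms, two from the paradifferential symbolic calculus and one from the paraproduct decomposition of $V$ in front of $\nabla T_p u$.

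First I would perform the algebraic decomposition. Using $[\partial_t, T_p] = T_{\partial_t p}$ and writing $V = T_V + (V - T_V)$ in front of $\nabla T_p u$, one obtains
\begin{align*}
((\partial_t + V\cdot\nabla)T_p - T_p(\partial_t + T_V\cdot\nabla))u = T_{\partial_t p}u + [T_V\cdot\nabla, T_p]u + (V - T_V)\cdot\nabla T_p u.
\end{align*}
The paradifferential commutator $[T_V\cdot\nabla, T_p]$ is $\frac{1}{i}T_{\{iV\xi,p\}}$ modulo a remainder of order $m-1$, where the Poisson bracket evaluates to $i(V\cdot\nabla p - \xi\cdot\nabla V\cdot\partial_\xi p)$. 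The $T_{V\cdot\nabla p}u$ piece combines with $T_{\partial_t p}u$ to produce the main term $T_{(\partial_t + V\cdot\nabla)p}u$ from which the third right-hand side term follows via the standard Hölder action of paradifferential operators.

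Next I would estimate the residual symbolic terms. The term $T_{\xi\nabla V\cdot\partial_\xi p}u$ is a paradifferential operator of order $m$ whose symbol has one derivative on $V$ and only $M_0^m(p)$-size on $p$; this gives exactly the first right-hand side term (the $B^{r+m}_{\infty,1}$ space arises here to accommodate the borderline sum over dyadic pieces). The symbolic calculus remainder is of order $m-1$ and is controlled by $\|V\|_{W^{1,\infty}} M_0^m(p)\|u\|_{C_*^{r+m}}$, again fitting into the first term. For the paraproduct error, Bony's formula gives
\begin{align*}
(V - T_V)\cdot\nabla T_p u = T_{\nabla T_p u}\cdot V + R(V, \nabla T_p u),
\end{align*}
and using $\|\nabla T_p u\|_{C_*^{-\varepsilon}} \lesssim M_0^m(p)\|u\|_{C_*^{1 + m - \varepsilon}}$ together with the standard paraproduct bounds $\|T_fg\|_{C_*^r} \lesssim \|f\|_{C_*^{-\varepsilon}}\|g\|_{C_*^{r+\varepsilon}}$ and $\|R(f,g)\|_{C_*^r} \lesssim \|f\|_{C_*^{r+\varepsilon}}\|g\|_{C_*^{-\varepsilon}}$, both pieces are bounded by the second term on the right-hand side.

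The main obstacle I expect is controlling the summability at the Besov endpoint, which is the reason $B^{r+m}_{\infty,1}$ and $B^{1+m-\varepsilon}_{\infty,1}$ (rather than $C_*^{r+m}$ and $C_*^{1+m-\varepsilon}$) appear on the right-hand side. The natural Hölder paradifferential action produces $C_*^{s} = B^{s}_{\infty,\infty}$ norms, but the $\rho = 1$ symbolic calculus for $[T_V\cdot\nabla, T_p]$ and the paraproduct error force a summable Littlewood--Paley decomposition at the critical regularity. This is handled by a standard Besov refinement of the paradifferential estimates (as in the Bony/Meyer framework, e.g. \cite[Theorem 3.4.A]{taylor2008pseudodifferential} and the surrounding discussion). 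The remaining calculations are routine applications of the paradifferential calculus.
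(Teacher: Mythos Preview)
Your proposal is correct and follows the intended approach. The paper itself does not give a proof of this proposition: it is stated in the appendix with the remark that it is ``a version of Lemma 2.17 in \cite{alazard2014cauchy} for H\"older spaces,'' with details deferred to \cite{alazard2014strichartz} and \cite{ai2017low}. Your decomposition into $T_{\partial_t p}u + [T_{iV\cdot\xi}, T_p]u + (V - T_V)\cdot\nabla T_p u$, extraction of $T_{(\partial_t + V\cdot\nabla)p}u$ from the Poisson bracket, and separate treatment of the symbolic remainder and the Bony paraproduct error is exactly the strategy of that lemma transported to the H\"older/Besov setting, and your identification of the Besov endpoint $B^{r+m}_{\infty,1}$ as the source of the summability is accurate.
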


\section{Paradifferential Calculus}

For the reader's convenience, we provide an appendix of notation and estimates from Bony's paradifferential calculus. This is a subset of the appendix in \cite{alazard2014strichartz}.

\subsection{Notation}\label{paracalcnotation}

For $\rho = k + \sigma, \ k \in \N, \ \sigma \in (0, 1)$, denote by $W^{\rho, \infty}(\R^d)$ the space of functions whose derivatives up to order $k$ are bounded and uniformly H\"older continuous with exponent $\sigma$.

\begin{definition}
Given $\rho \in [0, 1]$ and $m \in \R$, let $\Gamma_\rho^m(\R^d)$ denote the space of locally bounded functions $a(x, \xi)$ on $\R^d \times (\R^d \backslash 0)$, which are $C^\infty$ functions of $\xi$ away from the origin and such that, for any $\alpha \in \N^d$ and any $\xi \neq 0$, the function $x \mapsto \D_\xi^\alpha a(x, \xi)$ is in $W^{\rho, \infty}(\R^d)$ and there exists a constant $C_\alpha$ such that on $\{|\xi| \geq \half\}$,
$$\|\D_\xi^\alpha a( \cdot, \xi)\|_{W^{\rho, \infty}(\R^d)} \leq C_\alpha (1 + |\xi|)^{m - |\alpha|}.$$
For $a \in \Gamma_\rho^m$, we define
$$M_\rho^m(a) = \sup_{|\alpha| \leq 1 + 2d + \rho} \sup_{|\xi| \geq 1/2} \|(1 + |\xi|)^{|\alpha| - m} \D_\xi^\alpha a(\cdot, \xi)\|_{W^{\rho, \infty}(\R^d)}.$$
\end{definition}

Given a symbol $a \in \Gamma_\rho^m(\R^d)$, define the (inhomogeneous) paradifferential operator $T_a$ by
$$\widehat{T_a u}(\xi) = (2\pi)^{-d}\int \chi(\xi - \eta, \eta)\widehat{a}(\xi - \eta, \eta) \psi(\eta) \widehat{u}(\eta) \, d\eta,$$
where $\widehat{a}(\theta, \xi)$ is the Fourier transform of $a$ with respect to the first variable, and $\chi$ and $\psi$ are two fixed $C^\infty$ functions satisfying, for small $0 < \eps_1 < \eps_2$,
\begin{equation*}
\begin{cases}
\psi(\eta) = 0 \quad \text{on } \{|\eta| \leq 1\} \\
\psi(\eta) = 1 \quad \text{on } \{|\eta| \geq 2\}, \\
\end{cases}
\end{equation*}
\begin{equation*}
\begin{cases}
\chi(\theta, \eta) = 1 \quad \text{on } \{|\theta| \leq \eps_1 |\eta|\} \\
\chi(\theta, \eta) = 0 \quad \text{on } \{|\theta| \geq \eps_2 |\eta|\}, \\
\end{cases}
\qquad |\D_\theta^\alpha \D_\eta^\beta \chi(\theta, \eta)| \leq c_{\alpha, \beta} (1 + |\eta|)^{-|\alpha| - |\beta|}.
\end{equation*}
The cutoff function $\chi$ can be chosen so that $T_a$ coincides with the usual definition  of a paraproduct (in terms of a Littlewood-Paley decomposition), where the symbol $a$ depends only on $x$.

\subsection{Symbolic calculus}

We shall use results from \cite{metivier2008differential} about operator norm estimates for the pseudodifferential symbolic calculus.

\begin{definition} 
Consider a dyadic decomposition of the identity:
$$I = S_0 + \sum_{\lambda = 1} S_\lambda.$$
If $s \in \R$, the Zygmund class $C_*^s(\R^d)$ is the space of tempered distributions $u$ such that
$$\|u\|_{C_*^s} := \sup_\lambda \lambda^s\|S_\lambda u\|_{L^\infty} < \infty.$$
\end{definition}

\begin{rem}
If $s > 0$ is not an integer, then $C_*^s(\R^d) = W^{s, \infty}(\R^d).$
\end{rem}

\begin{definition}
Let $m \in \R$. We say an operator $T$ is of order $m$ if for every $\mu \in \R$, it is bounded from $H^\mu$ to $H^{\mu - m}$ and from $C_*^\mu$ to $C_*^{\mu - m}$.
\end{definition}

The main features of the symbolic calculus for paradifferential operators are given by the following proposition:

\begin{prop}
Let $m \in \R$ and $\rho \in [0, 1]$.

i) If $a \in \Gamma_0^m(\R^d)$, then $T_a$ is of order $m$. Moreover, for all $\mu \in \R$,
\begin{equation}\label{ordernorm}
\|T_a\|_{H^\mu \rightarrow H^{\mu - m}} \lesssim M_0^m(a), \quad \|T_a\|_{C_*^\mu \rightarrow C_*^{\mu - m}} \lesssim M_0^m(a).
\end{equation}
ii) If $a \in \Gamma_\rho^m(\R^d)$ and $b \in \Gamma_\rho^{m'}(\R^d)$ then $T_a T_b - T_{ab}$ is of order $m + m' - \rho$. Moreover, for all $\mu \in \R$,
\begin{align}
\label{sobolevcommutator}
\|T_a T_b - T_{ab}\|_{H^\mu \rightarrow H^{\mu - m - m' + \rho}} \lesssim M_\rho^m(a) M_0^{m'}(b) + M_0^m(a) M_\rho^{m'}(b), \\
\label{holdercommutator}\|T_a T_b - T_{ab}\|_{C_*^\mu \rightarrow C_*^{\mu - m - m' + \rho}} \lesssim M_\rho^m(a) M_0^{m'}(b) + M_0^m(a) M_\rho^{m'}(b).
\end{align}
\end{prop}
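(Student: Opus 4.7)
The plan is to follow the classical approach of Bony--Métivier for paradifferential calculus, and to unify the Sobolev and Zygmund estimates by exploiting the fact that the frequency decomposition underlying the definition of $T_a$ is symmetric between the two scales. First, I would rewrite $T_a$ in the Weyl/Kohn--Nirenberg-type form $\sigma_a(x, D)$, where
\[
\sigma_a(x, \eta) = \left(\chi(\cdot, \eta) *_x a(\cdot, \eta)\right)(x) \, \psi(\eta),
\]
so that for fixed $\eta$, the symbol $\sigma_a(\cdot, \eta)$ is spectrally localized to $\{|\theta| \leq \eps_2 |\eta|\}$. This turns $T_a$ into a standard pseudodifferential operator with a smooth-in-$x$ symbol, whose seminorms can be bounded by $M_0^m(a)$ uniformly; in particular, every $x$-derivative of $\sigma_a$ costs at most a factor of $|\eta|$.

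For part (i), after fixing this observation, I would use the spectral localization of $T_a$: when applied to a Littlewood--Paley piece $S_\lambda u$, the output is frequency-supported in a dilated annulus $|\xi| \approx \lambda$, independently of $\mu$. Then the $H^\mu \to H^{\mu-m}$ bound follows from a summation in $\lambda$ using Plancherel and the trivial $L^2$ bound $\|\sigma_a(x, D) S_\lambda u\|_{L^2} \lesssim M_0^m(a) \lambda^m \|S_\lambda u\|_{L^2}$. The $C_*^\mu \to C_*^{\mu-m}$ bound is obtained identically, replacing Plancherel by Bernstein and using the Zygmund characterization $\|u\|_{C_*^\mu} \approx \sup_\lambda \lambda^\mu \|S_\lambda u\|_{L^\infty}$; the smooth-in-$x$ symbol bound on $\sigma_a$ gives the needed $L^\infty \to L^\infty$ estimate via the kernel representation of $\sigma_a(x, D) S_\lambda$.

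For part (ii), the plan is a first-order Kohn--Nirenberg expansion. Writing $T_a T_b = \sigma_a(x, D) \sigma_b(x, D)$, the standard composition formula expresses its full symbol as $\sigma_a \# \sigma_b$, and subtracting $\sigma_{ab}$ gives two kinds of errors: (1) the error between $\sigma_{ab}$ and $\sigma_a \sigma_b$, which is a paraproduct-type remainder $T_{ab} - T_a T_b^{\mathrm{leading}}$ controlled by balanced- and high-frequency interactions in $a,b$; and (2) the subleading terms in the composition, of the form $\D_\eta \sigma_a \, \D_x \sigma_b$ and higher. Each of these errors gains precisely $\rho$ derivatives in $x$: the first because one of $a$ or $b$ must be projected to frequencies $\gtrsim |\eta|$, paying $|\eta|^{-\rho} \cdot \|\D^\rho_x a\|_{L^\infty}$ (and symmetrically in $b$); the second because $\D_x \sigma_b$ costs $|\eta|$ but the convolution with $\chi$ at scale $|\eta|$ also gains one $x$-derivative of $b$, upgraded to $\rho$ by the limited $W^{\rho, \infty}$ regularity. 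Summing in $\lambda$ as in part (i), both scales yield the same operator bound
$M_\rho^m(a) M_0^{m'}(b) + M_0^m(a) M_\rho^{m'}(b)$.

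The main obstacle will be carefully tracking the error terms from step (2), since the symbols $a$ and $b$ are not smooth in $x$; one cannot formally invoke the standard pseudodifferential composition theorem and must instead control the kernel of the commutator piecewise in the frequency localization of $a$, $b$, and the input. The restriction $\rho \leq 1$ is essential and prevents iteration of the expansion, so one has to stop after one term and absorb everything else into the remainder of order $m + m' - \rho$. Once this is done, the Sobolev and Zygmund estimates follow in parallel because the same frequency-localized kernel bounds feed into both $L^2$ and $L^\infty$ arguments via Bernstein/Plancherel.
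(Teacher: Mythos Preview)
The paper does not prove this proposition: it is stated in the appendix as a standard result from the paradifferential calculus and cited to M\'etivier \cite{metivier2008differential} (with the Zygmund versions appearing e.g.\ in \cite{alazard2014strichartz}). Your outline is essentially the classical Bony--M\'etivier argument found in those references --- rewrite $T_a$ as $\sigma_a(x,D)$ with a symbol spectrally localized in $x$ at scale $|\eta|$, use dyadic $L^2$/$L^\infty$ bounds for part (i), and for part (ii) perform a first-order symbolic expansion and control the remainder using the $W^{\rho,\infty}$ regularity of the symbols. This is correct in outline; the only point to be careful about is that in the composition step one cannot literally use the smooth Kohn--Nirenberg formula but must instead work with the integral remainder and the frequency localization of $\sigma_a, \sigma_b$ directly, which you acknowledge. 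There is nothing to compare against in the paper itself.
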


We also need to consider paradifferential operators with negative regularity. As a consequence, we need to extend our previous definition.

\begin{definition}
For $m \in \R$ and $\rho < 0$, $\Gamma_\rho^m(\R^d)$ denotes the space of distributions $a(x, \xi)$ on $\R^d \times (\R^d \backslash 0)$ which are $C^\infty$ with respect to $\xi$ and such that, for all $\alpha \in \N^d$ and all $\xi \neq 0$, the function $x \mapsto \D_\xi^\alpha a(x, \xi)$ belongs to $C_*^\rho(\R^d)$ and there exists a constant $C_\alpha$ such that on $\{|\xi| \geq \half\}$,
$$\|\D_\xi^\alpha a(\cdot, \xi)\|_{C_*^\rho} \leq c_\alpha (1 + |\xi|)^{m - |\alpha|}.$$
For $a \in \Gamma_\rho^m$, we define
$$M_\rho^m(a) = \sup_{|\alpha| \leq \frac{3d}{2} + \rho + 1} \sup_{|\xi| \geq 1/2} \|(1 + |\xi|)^{|\alpha| - m} \D_\xi^\alpha a(\cdot, \xi)\|_{C_*^\rho(\R^d)}.$$
\end{definition}

We recall Proposition 2.12 in \cite{alazard2014cauchy} which is a generalization of (\ref{ordernorm}).

\begin{prop}
Let $\rho < 0, \ m \in \R$, and $a \in \dot{\Gamma}_\rho^m$. Then the operator $T_a$ is of order $m - \rho$:
\begin{equation}\label{negativeop}
\|T_a\|_{H^s \rightarrow H^{s - (m - \rho)}} \lesssim M_\rho^m(a), \quad \|T_a\|_{C_*^s \rightarrow C_*^{s - (m - \rho)}} \lesssim M_\rho^m(a).
\end{equation}
\end{prop}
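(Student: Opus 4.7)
The plan is to reduce the proposition to the $\rho = 0$ case (\ref{ordernorm}) by exploiting the paradifferential frequency cutoff $\chi(\xi - \eta, \eta)$, which regularizes the symbol $a$ in $x$ on a scale dictated by the frequency of the input $u$.

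First I would perform a Littlewood-Paley decomposition $u = S_0 u + \sum_{\lambda \geq 1} S_\lambda u$. Because of the cutoff $\chi(\xi - \eta, \eta)$ and the factor $\psi(\eta)$, the operator $T_a$ essentially preserves this decomposition: $T_a S_\lambda u$ has spectral support contained in a fixed dilate of $\{|\xi| \approx \lambda\}$, and summands at well-separated $\lambda$ are almost orthogonal. Consequently, the bounds
$$\|T_a\|_{H^s \to H^{s - (m - \rho)}} \lesssim M_\rho^m(a), \qquad \|T_a\|_{C^s_* \to C^{s - (m - \rho)}_*} \lesssim M_\rho^m(a)$$
follow once I establish the uniform dyadic estimates
$$\|T_a S_\lambda u\|_{L^2} \lesssim \lambda^{m - \rho} M_\rho^m(a)\|S_\lambda u\|_{L^2}, \qquad \|T_a S_\lambda u\|_{L^\infty} \lesssim \lambda^{m - \rho} M_\rho^m(a)\|S_\lambda u\|_{L^\infty}.$$

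The key step is to rewrite, for fixed $\lambda \geq 1$, the action $T_a S_\lambda$ as a pseudodifferential operator with a regularized symbol
$$\sigma_\lambda(x, \eta) = \bigl(P^x_{\leq c\lambda}\, a(\cdot, \eta)\bigr)(x)\, \psi(\eta),$$
where $P^x_{\leq c\lambda}$ is a Littlewood-Paley cutoff in $x$ at frequency $c\lambda$ (with $c$ depending on $\eps_2$). Indeed, the support property of $\chi$ forces $|\xi - \eta| \leq \eps_2 |\eta|$, so when $|\eta| \approx \lambda$ only the $x$-frequencies of $a$ below $c\lambda$ contribute; modulo a harmless smoothing remainder one has $T_a S_\lambda = T_{\sigma_\lambda} S_\lambda$. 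The Bernstein-type inequality $\|P^x_{\leq c\lambda} f\|_{L^\infty} \lesssim \lambda^{-\rho}\|f\|_{C^\rho_*}$ (valid for $\rho < 0$), applied to $a(\cdot, \eta)$ and all its $\eta$-derivatives on $|\eta| \geq 1/2$, then yields $\sigma_\lambda \in \Gamma_0^{m - \rho}$ with
$$M_0^{m - \rho}(\sigma_\lambda) \lesssim M_\rho^m(a)$$
uniformly in $\lambda$. Applying the $\rho = 0$ bound (\ref{ordernorm}) to $T_{\sigma_\lambda}$ gives the required dyadic estimates, after which square-summing (respectively taking suprema) in $\lambda$ weighted by $\lambda^{s - (m - \rho)}$ yields the two stated operator norm bounds, using the Littlewood-Paley characterizations of $H^s$ and $C^s_*$ respectively.

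The main technical obstacle is making the identification $T_a S_\lambda u \equiv T_{\sigma_\lambda} S_\lambda u$ rigorous: since $\chi(\xi - \eta, \eta)$ is smooth rather than a sharp cutoff, there is a discrepancy between the two operators supported where $|\xi - \eta| \sim \eps_2 |\eta|$. I would handle this by introducing an auxiliary Littlewood-Paley partition and verifying that the discrepancy defines an operator which is $S^{-N}$ in any order, using the rapid decay of $\hat{\chi}$ in its first slot; this term is then absorbed trivially. Once this reduction is clean, the remainder of the argument is standard Littlewood-Paley summation.
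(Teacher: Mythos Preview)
The paper does not supply its own proof of this proposition; it merely cites it as Proposition~2.12 of \cite{alazard2014cauchy}. Your approach is the standard one and is essentially correct: regularize the symbol in $x$ at the input frequency scale, use the Bernstein-type bound $\|P^x_{\leq c\lambda} f\|_{L^\infty} \lesssim \lambda^{-\rho}\|f\|_{C^\rho_*}$ (valid since $-\rho > 0$), and reduce to the $\rho = 0$ case (\ref{ordernorm}).

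One small imprecision: as written, the symbol $\sigma_\lambda(x,\eta) = (P^x_{\leq c\lambda} a)(x,\eta)\,\psi(\eta)$ does \emph{not} satisfy $M_0^{m-\rho}(\sigma_\lambda) \lesssim M_\rho^m(a)$ uniformly in $\lambda$, because for $|\eta| \gg \lambda$ the Bernstein factor $\lambda^{-\rho}$ is not controlled by $|\eta|^{-\rho}$. Of course this is irrelevant since you only apply $T_{\sigma_\lambda}$ to $S_\lambda u$, whose Fourier support lies in $|\eta| \approx \lambda$. To make the reduction clean you should include an additional $\eta$-cutoff $\tilde\psi_\lambda(\eta)$ supported on $|\eta| \approx \lambda$ in the definition of $\sigma_\lambda$; then $M_0^{m-\rho}(\sigma_\lambda) \lesssim M_\rho^m(a)$ holds genuinely, and $T_{\sigma_\lambda} S_\lambda = T_a S_\lambda$ modulo the smoothing discrepancy you already identified. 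With that adjustment the argument goes through.
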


\subsection{Paraproducts and product rules}

We recall here some properties of paraproducts, $T_a$ where $a(x, \xi) = a(x)$. A key feature is that one can define paraproducts for rough functions $a$ which do not belong to $L^\infty(\R^d)$ but merely $C_*^{-m}(\R^d)$ with $m > 0$.

\begin{definition}
Given two functions $a, b$ defined on $\R^d$, we define the remainder
$$R(a, u) = au - T_a u - T_u a.$$
\end{definition}

We record here various estimates about paraproducts (see Chapter 2 in \cite{bahouri2011fourier}).

\begin{prop}
i) Let $\alpha, \beta \in \R$. If $\alpha + \beta > 0$ then
\begin{align}
\|R(a, u)\|_{H^{\alpha + \beta - \frac{d}{2}}} &\lesssim \|a\|_{H^\alpha} \|u\|_{H^\beta} \\
\label{holderparaerror}\|R(a, u)\|_{C_*^{\alpha + \beta}} &\lesssim \|a\|_{C_*^\alpha} \|u\|_{C_*^\beta} \\
\label{sobolevparaerror}\|R(a, u)\|_{H^{\alpha + \beta}} &\lesssim \|a\|_{C_*^\alpha} \|u\|_{H^\beta}.
\end{align}
ii) Let $m > 0$ and $s \in \R$. Then
\begin{align}
\label{sobolevparaproduct}\|T_a u\|_{H^{s - m}} &\lesssim \|a\|_{C_*^{-m}} \|u\|_{H^s} \\
\label{holderparaproduct}\|T_a u\|_{C_*^{s - m}} &\lesssim \|a\|_{C_*^{-m}} \|u\|_{C_*^s} \\
\label{holderparaproduct0}\|T_a u\|_{C_*^s} &\lesssim \|a\|_{L^\infty} \|u\|_{C_*^s}.
\end{align}
\end{prop}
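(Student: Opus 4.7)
The plan is to prove both parts from the Bony paraproduct decomposition, the frequency-support structure of the two kinds of bilinear interactions, and Bernstein's inequality. With Littlewood-Paley projections $S_\kappa$, I would begin by writing
$$au = T_a u + T_u a + R(a,u), \qquad T_a u = \sum_\kappa (S_{<\kappa/8} a)(S_\kappa u), \qquad R(a,u) = \sum_{|\kappa - \kappa'| \leq 3} (S_\kappa a)(S_{\kappa'} u).$$
Each summand in $T_a u$ has frequency support in an annulus $\{|\xi| \sim \kappa\}$, whereas each summand in $R(a,u)$ has frequency support only in a ball $\{|\xi| \lesssim \kappa\}$. All six estimates reduce, after the Littlewood-Paley characterization of $H^s$ and $C_*^s$, to bounding a single dyadic piece and summing appropriately.

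For part (ii), the annular support for $T_a u$ gives almost-orthogonality for free. For the $H^{s-m}$ estimate, bound each summand in $L^2$ by H\"older, using $\|S_{<\kappa/8} a\|_{L^\infty} \lesssim \kappa^m \|a\|_{C_*^{-m}}$, so $\|(S_{<\kappa/8}a)(S_\kappa u)\|_{L^2} \lesssim \kappa^m \|a\|_{C_*^{-m}} \|S_\kappa u\|_{L^2}$; multiplying by $\kappa^{s - m}$ and taking $\ell^2_\kappa$ produces the claimed bound. The two H\"older analogues are identical on replacing $\ell^2_\kappa$ by $\ell^\infty_\kappa$, and the last (critical) case $\|T_a u\|_{C_*^s} \lesssim \|a\|_{L^\infty} \|u\|_{C_*^s}$ uses $\|S_{<\kappa/8} a\|_{L^\infty} \lesssim \|a\|_{L^\infty}$ uniformly in $\kappa$.

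For part (i), the summands in $R(a,u)$ only live on balls rather than annuli, so orthogonality is lost and one must sum geometrically, which is precisely the role of $\alpha + \beta > 0$. For the pure H\"older estimate, a typical term is bounded by $\|S_\kappa a\|_{L^\infty}\|S_{\kappa'}u\|_{L^\infty} \lesssim \kappa^{-\alpha-\beta}\|a\|_{C_*^\alpha}\|u\|_{C_*^\beta}$; projecting to a dyadic shell $\sim 2^\ell$ and summing over $\kappa \gtrsim 2^\ell$ yields $2^{-\ell(\alpha+\beta)}$ times the claimed product norm, which after multiplication by $2^{\ell(\alpha+\beta)}$ is bounded uniformly in $\ell$. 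The mixed $H^{\alpha+\beta}$ estimate uses $\|S_\kappa a\|_{L^\infty}\|S_{\kappa'}u\|_{L^2}$ and yields a Schur/Young-type convolution $\sum_{\kappa \geq \ell} 2^{(\ell - \kappa)(\alpha+\beta)} \cdot 2^{\kappa\beta}\|S_\kappa u\|_{L^2}$, which is an $\ell^2$ operator on $\{2^{\kappa\beta}\|S_\kappa u\|_{L^2}\}$ exactly when $\alpha + \beta > 0$. The purely Sobolev estimate loses $d/2$ because one must use Bernstein $L^1 \to L^2$ on the $L^1$ product of two $L^2$ factors.

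The main technical obstacle is the low-frequency summation for $R(a,u)$ in part (i); orthogonality is unavailable and one needs the condition $\alpha + \beta > 0$ to make the geometric series converge. Everything else is routine Bernstein and H\"older estimation, and a complete proof of all six inequalities can be extracted from Chapter 2 of \cite{bahouri2011fourier}.
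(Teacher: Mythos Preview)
Your proposal is correct and follows the standard Littlewood--Paley approach; the paper itself does not give a proof of this proposition, but simply records the estimates with a reference to Chapter 2 of \cite{bahouri2011fourier}, which is exactly the source you invoke. Your sketch of the argument --- annular support and almost-orthogonality for $T_a$, ball support and geometric summation under $\alpha+\beta>0$ for $R(a,u)$, with Bernstein supplying the $d/2$ loss in the pure Sobolev case --- is the content of that reference.
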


We have the following product estimates (for references and proofs, see \cite{alazard2014strichartz}):

\begin{prop}
i) Let $s \geq 0$. Then
\begin{align}
\label{sobolevproduct}\|u_1 u_2\|_{H^s} &\lesssim \|u_1\|_{H^s} \|u_2\|_{L^\infty} + \|u_1\|_{L^\infty} \|u_2\|_{H^s} \\
\label{holderproduct}\|u_1 u_2\|_{C_*^s} &\lesssim \|u_1\|_{C_*^s} \|u_2\|_{L^\infty} + \|u_1\|_{L^\infty} \|u_2\|_{C_*^s}.
\end{align}
ii) Let 
$$s_1 + s_2 > 0, \quad s_0 \leq s_1, s_2, \quad s_0 < s_1 + s_2 - \frac{d}{2}.$$
Then
\begin{equation}\label{specialsobolevproduct}
\|u_1 u_2\|_{H^{s_0}} \lesssim \|u_1\|_{H^{s_1}}\|u_2\|_{H^{s_2}}.
\end{equation}
iii) Let $\beta > \alpha > 0$. Then
\begin{align}\label{specialholderproduct}
\|u_1 u_2\|_{C_*^{-\alpha}} \lesssim \|u_1\|_{C_*^\beta} \|u_2\|_{C_*^{-\alpha}}.
\end{align}
iv) Let $s_1 > d/2$ and $s_2 \geq 0$, and consider $F \in C^\infty(\C^N)$ such that $F(0) = 0$. Then there exists a non-decreasing function $\FF: \R_+ \rightarrow \R_+$ such that
\begin{equation}\label{smoothholder}
\|F(U)\|_{H^{s_1}} \leq \FF(\|U\|_{L^\infty}) \|U\|_{H^{s_1}}, \quad \|F(U)\|_{C_*^{s_2}} \leq \FF(\|U\|_{L^\infty}) \|U\|_{C_*^{s_2}}.
\end{equation}
\end{prop}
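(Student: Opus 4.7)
My plan is to prove (\ref{smoothholder}) via the classical Meyer/Moser dyadic argument, reducing the composition $F(U)$ to a superposition of ``$0$-th order'' paraproducts acting on dyadic pieces of $U$. The starting point is the telescoping identity
\begin{equation*}
F(U) = \sum_{j \geq 0} \bigl(F(S_{\leq j}U) - F(S_{\leq j-1}U)\bigr) = \sum_{j \geq 0} m_j\, S_j U,
\qquad m_j = \int_0^1 F'\!\bigl(S_{\leq j-1}U + \tau S_j U\bigr)\,d\tau,
\end{equation*}
(with the convention $S_{\leq -1}U = 0$, using $F(0)=0$). The key point is that $\|S_{\leq j-1}U + \tau S_jU\|_{L^\infty} \lesssim \|U\|_{L^\infty}$ uniformly in $j,\tau$, so smoothness of $F$ on compact sets yields $\|m_j\|_{L^\infty} \leq \FF(\|U\|_{L^\infty})$; differentiating $m_j$ brings down factors of $\nabla S_{\leq j}U$, each controlled by $2^j\|U\|_{L^\infty}$ via Bernstein, whence $\|\partial^\alpha m_j\|_{L^\infty} \leq C_\alpha 2^{j|\alpha|}\FF(\|U\|_{L^\infty})$ for every multi-index $\alpha$.

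For the Sobolev bound, I would apply $S_\lambda$ to the telescoping identity. Terms with $2^j \gtrsim \lambda$ are estimated directly in $L^2$ using $\|m_j\|_{L^\infty} \leq \FF(\|U\|_{L^\infty})$ and $\|S_j U\|_{L^2}$. For $2^j \ll \lambda$, the derivative bounds on $m_j$ show that $m_j S_j U$ is essentially localized at frequency $2^j$, so integration by parts against the kernel of $S_\lambda$ produces $(2^j/\lambda)^N$ gains for arbitrary $N$. Summing the resulting Schur-type matrix in $(\lambda, 2^j)$ against $\sum_j 2^{2s_1 j}\|S_j U\|_{L^2}^2 \approx \|U\|_{H^{s_1}}^2$ (this is where $s_1 > 0$ enters, via summability of the off-diagonal) gives the first half of (\ref{smoothholder}).

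The Zygmund estimate is proven by the same decomposition, with $L^2$ replaced by $L^\infty$ and the dyadic characterization $\|u\|_{C_*^{s_2}} = \sup_\lambda \lambda^{s_2}\|S_\lambda u\|_{L^\infty}$ in place of Littlewood--Paley square functions. The same $L^\infty$ bounds on $m_j$ together with the rapid decay from integration by parts yield, after taking the supremum in $\lambda$,
\begin{equation*}
\lambda^{s_2}\|S_\lambda F(U)\|_{L^\infty} \leq \FF(\|U\|_{L^\infty}) \sup_j 2^{s_2 j}\|S_j U\|_{L^\infty} \leq \FF(\|U\|_{L^\infty})\|U\|_{C_*^{s_2}}.
\end{equation*}

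The main technical obstacle is handling the high-frequency tails of $m_j$: as a nonlinear function of low-frequency data, $m_j$ is not Fourier-localized at $\{|\xi|\lesssim 2^j\}$, so one cannot simply declare $m_j S_j U$ to have frequency $\lesssim 2^j$. The symbolic bounds $\|\partial^\alpha m_j\|_{L^\infty} \lesssim 2^{j|\alpha|}\FF(\|U\|_{L^\infty})$ are the substitute for such spectral localization and must be used quantitatively: integrating by parts $N$ times in the $S_\lambda$-convolution against $m_j S_j U$ (and using the Schwartz decay of the Littlewood--Paley kernel) furnishes the $(2^j/\lambda)^N$ factor that makes the off-diagonal sum in $\lambda$ absolutely convergent. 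Once this single estimate is in place, Steps~2--3 above are routine, and the hypothesis $s_1 > d/2$ (beyond the $s_1 > 0$ actually needed for the argument) is used only implicitly, ensuring $U \in L^\infty$ whenever one invokes (\ref{smoothholder}) in the body of the paper.
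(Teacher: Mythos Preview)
Your argument for part (iv) is the classical Meyer telescoping/paramultiplication proof of the Moser estimates, and it is correct. The paper does not prove this proposition at all: the line immediately preceding the statement reads ``for references and proofs, see \cite{alazard2014strichartz}'', so there is nothing to compare against beyond noting that the cited references (ultimately Meyer's paralinearization lemma, as in \cite{bahouri2011fourier} or \cite{metivier2008differential}) use exactly the decomposition you describe.

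Two remarks. First, you have only addressed part (iv); parts (i)--(iii) are not touched in your proposal. These are equally standard and follow directly from the Bony decomposition $u_1 u_2 = T_{u_1}u_2 + T_{u_2}u_1 + R(u_1,u_2)$ together with the paraproduct and remainder bounds (\ref{ordernorm}), (\ref{sobolevparaproduct})--(\ref{holderparaproduct0}), (\ref{holderparaerror})--(\ref{sobolevparaerror}) recorded just above in the appendix, so this is a small omission rather than a gap. Second, your closing observation that the hypothesis $s_1 > d/2$ is stronger than the proof requires is correct: the telescoping argument needs only $s_1 > 0$ (and even $s_1 = 0$ is trivial), with $s_1 > d/2$ serving only to guarantee $U \in L^\infty$ via Sobolev embedding when the estimate is invoked.
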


We also need the following elementary composition estimate (\cite[Lemma 3.2]{alazard2011strichartz}):

\begin{prop}\label{compositionlem}

Let $m \in \N$ and $0 \leq \sigma \leq m$. Consider a diffeomorphism $\kappa: \R \rightarrow \R$. Then
$$\|u \circ \kappa\|_{H^\sigma} \leq \FF(\|\kappa'\|_{W^{p - 1, \infty}}) \|\D_x (\kappa^{-1})\|_{L^\infty} \|u\|_{H^{\sigma}}.$$

\end{prop}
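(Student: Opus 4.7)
The plan is to establish the estimate first in the base case $\sigma = 0$, then for integer $\sigma$, and finally for non-integer values by interpolation. The only real content is a careful bookkeeping of the chain rule; the result is classical, so I would keep the argument short.

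For $\sigma = 0$, a direct change of variables $y = \kappa(x)$ gives
\begin{equation*}
\|u \circ \kappa\|_{L^2}^2 = \int |u(\kappa(x))|^2 \, dx = \int |u(y)|^2 |\D_y (\kappa^{-1})(y)| \, dy \leq \|\D_x (\kappa^{-1})\|_{L^\infty}\|u\|_{L^2}^2,
\end{equation*}
which is the desired bound (with the constant $\FF$ not yet required).

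For integer $\sigma = k \in \{1, \dots, m\}$, I would invoke Fa\`a di Bruno's formula to expand
\begin{equation*}
\D_x^k (u \circ \kappa)(x) = \sum_{j=1}^{k} (u^{(j)} \circ \kappa)(x) \, P_{k,j}(\kappa'(x), \dots, \kappa^{(k-j+1)}(x)),
\end{equation*}
where each $P_{k,j}$ is a universal polynomial in the displayed arguments. Each factor $P_{k,j}$ is bounded in $L^\infty$ by $\FF(\|\kappa'\|_{W^{k-1,\infty}})$, so applying the $L^2$ case to $u^{(j)} \circ \kappa$ yields
\begin{equation*}
\|\D_x^k (u \circ \kappa)\|_{L^2} \leq \FF(\|\kappa'\|_{W^{k-1,\infty}}) \|\D_x (\kappa^{-1})\|_{L^\infty}^{1/2} \|u\|_{H^k}.
\end{equation*}
Combined with the $\sigma = 0$ bound and summing over $0 \leq j \leq k$, this gives the integer case.

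For non-integer $\sigma \in (0,m) \setminus \N$, I would interpolate between consecutive integer cases $k = \lfloor \sigma \rfloor$ and $k+1 = \lceil \sigma \rceil$ using the fact that $u \mapsto u \circ \kappa$ is a bounded linear operator between the corresponding Sobolev spaces, with operator norms controlled as above; standard complex or real interpolation for Sobolev spaces on $\R$ then produces the claim with the exponent $\sigma$, absorbing the factor $\|\D_x(\kappa^{-1})\|_{L^\infty}^{1/2}$ into $\FF$ together with any additional dependence that arises (the statement in the excerpt allows $\FF$ to depend on $\|\kappa'\|_{W^{m-1,\infty}}$ but keeps $\|\D_x(\kappa^{-1})\|_{L^\infty}$ as an explicit linear factor; since the linear factor arises as a square root from the $L^2$ change of variables, I would at the final step use $\|\D_x(\kappa^{-1})\|_{L^\infty}^{1/2} \leq \FF(\|\kappa'\|_{L^\infty}) \|\D_x(\kappa^{-1})\|_{L^\infty}$ to match the exact form stated, where the bound follows from $\|\D_x(\kappa^{-1})\|_{L^\infty} \geq \|\kappa'\|_{L^\infty}^{-1}$).

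There is no significant obstacle here: the argument is a textbook chain-rule computation, the only mildly delicate point being to package the polynomial dependence on lower-order derivatives of $\kappa$ into the nondecreasing function $\FF$ while preserving the explicit linear dependence on $\|\D_x(\kappa^{-1})\|_{L^\infty}$. For this reason I would simply cite \cite[Lemma 3.2]{alazard2011strichartz} and omit the detailed verification.
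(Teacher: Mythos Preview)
Your proposal is correct and aligns with the paper's treatment: the paper does not prove this proposition at all but simply states it as an imported result from \cite[Lemma 3.2]{alazard2011strichartz}, which is exactly the citation you conclude with. Your sketch (change of variables for $\sigma=0$, Fa\`a di Bruno for integer $\sigma$, interpolation for the rest, and the observation $\|\D_x(\kappa^{-1})\|_{L^\infty}^{1/2} \leq \|\kappa'\|_{L^\infty}^{1/2}\|\D_x(\kappa^{-1})\|_{L^\infty}$ to match the stated form) is a valid reconstruction of the underlying argument.
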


\bibliography{allbib}
\bibliographystyle{alpha}

\end{document}